\documentclass[11pt]{amsart}

\usepackage{lmodern}
\usepackage{stmaryrd}
\usepackage[utf8]{inputenc}
\usepackage{bm}
\newcommand{\VAN}[3]{#2}
\newcommand{\VANDEN}[3]{#2}

\usepackage{csquotes}
\usepackage[dvipsnames]{xcolor}
\usepackage{tikz-cd}
\usepackage[font=small,format=plain,labelfont=bf,up,textfont=it,up]{caption}
\usepackage{adjustbox}
\usepackage[margin=1.5in]{geometry}

\usepackage{fancyhdr}
\usepackage[shortlabels]{enumitem}
\usepackage{amsmath}
\usepackage{mathtools} 
\usepackage{amsfonts}
\usepackage{amssymb} 
\usepackage{amsthm}
\usepackage{mathrsfs}
\usepackage{calligra}
\usepackage{scalerel}
\usepackage{crossreftools}

\usepackage[linktocpage=true, hyperindex,pagebackref]{hyperref}
\usepackage{cleveref}
\usepackage[OT2,T1]{fontenc}
\DeclareSymbolFont{cyrletters}{OT2}{wncyr}{m}{n}
\DeclareMathSymbol{\Sha}{\mathalpha}{cyrletters}{"58}

\begingroup
    \makeatletter
    \@for\theoremstyle:=definition,remark,plain\do{%
        \expandafter\g@addto@macro\csname th@\theoremstyle\endcsname{%
            \addtolength\thm@preskip\parskip
            }%
        }
\endgroup

\newtheorem{Thm}[subsubsection]{Theorem}

\newtheorem{Lem}[subsubsection]{Lemma}
\newtheorem{Prop}[subsubsection]{Proposition}
\newtheorem{Cor}[subsubsection]{Corollary}
\newtheorem{Conj}[subsubsection]{Conjecture}
\newtheorem*{Thm*}{Theorem}
\newtheorem{mainThm}{Theorem}

\newtheorem{mainCor}[mainThm]{Corollary}

\theoremstyle{definition}
\newtheorem{Def}[subsubsection]{Definition}
\newtheorem{Example}[subsubsection]{Example}
\newtheorem{Rem}[subsubsection]{Remark}
\numberwithin{equation}{subsection}

\newcommand{\spec}{\operatorname{Spec}}
\newcommand{\spf}{\operatorname{Spf}}
\newcommand{\spa}{\operatorname{Spa}}
\newcommand{\spd}{\operatorname{Spd}}
\newcommand{\gal}{\operatorname{Gal}}
\newcommand{\Hom}{\operatorname{Hom}}
\newcommand{\Aut}{\operatorname{Aut}}

\newcommand{\GL}{\mathrm{GL}}
\newcommand{\id}{\mathrm{id}}
\newcommand{\im}{\mathrm{im}}

\newcommand{\ql}{{\mathbb{Q}_\ell}}
\newcommand{\zl}{{\mathbb{Z}_\ell}}
\newcommand{\ml}{\mathsf{L}}
\newcommand{\mlp}{\mathsf{L}^{+}}

\newcommand{\qlbar}{\overline{\mathbb{Q}}_\ell}
\newcommand{\zlbar}{\overline{\mathbb{Z}}_\ell}
\newcommand{\flbar}{{\overline{\mathbb{F}}_{\ell}}}

\newcommand{\qp}{\mathbb{Q}_p}
\newcommand{\zp}{\mathbb{Z}_{p}}
\newcommand{\fp}{\mathbb{F}_{p}}
\newcommand{\cp}{\mathbb{C}_p}
\newcommand{\Q}{\mathbb{Q}}
\newcommand{\R}{\mathbb{R}}
\newcommand{\qpbar}{\overline{\mathbb{Q}}_p}
\newcommand{\fpbar}{\overline{\mathbb{F}}_{p}}
\newcommand{\qpbr}{\breve{\mathbb{Q}}_{p}}

\newcommand{\ebreve}{\breve{E}}
\newcommand{\afp}{\mathbb{A}_f^p}
\newcommand{\af}{\mathbb{A}_f}
\newcommand{\ebar}{\overline{\mathsf{E}}}

\newcommand{\perf}{\operatorname{Perf}}
\newcommand{\shtgmu}{\mathrm{Sht}_{\mathcal{G},\mu}}

\newcommand{\bun}{\mathrm{Bun}}
\newcommand{\bung}{\bun_{G}}
\newcommand{\bungk}{\bun_{G,k}}
\newcommand{\bungmu}{\bun_{G,\mu^{-1}}}
\newcommand{\bungmuk}{\bun_{G,\mu^{-1},k}}
\newcommand{\bgmu}{B(G,\mu^{-1})}
\newcommand{\bunh}{\operatorname{Bun}_{H}}

\newcommand{\g}{\mathsf{G}}
\newcommand{\x}{\mathsf{X}}
\newcommand{\gx}{{(\mathsf{G}, \mathsf{X})}}
\newcommand{\gxp}{(\mathsf{G}', \mathsf{X}')}

\newcommand{\hy}{{(\mathsf{H}, \mathsf{Y})}}
\newcommand{\y}{\mathsf{Y}}
\newcommand{\h}{\mathsf{H}}
\newcommand{\ah}{\mathcal{A}(\h)}
\newcommand{\hyab}{{(\mathsf{H}^\mathrm{ab}, \mathsf{Y}^\mathrm{ab})}}

\newcommand{\scrs}{\mathscr{S}}

\newcommand{\Fl}{\mathscr{F}\!\ell}
\newcommand{\grg}{\mathrm{Gr}_G}
\newcommand{\grgmu}{\mathrm{Gr}_{G,\mu^{-1}}}
\newcommand{\grgmug}{\mathrm{Gr}_{G,\mu^{-1},G(\qp)}}
\newcommand{\grgtwomu}{\mathrm{Gr}_{H,\mu_{H}^{-1}}}
\newcommand{\grgtwomug}{\mathrm{Gr}_{H,\mu_{H}^{-1},H(\qp)}}

\newcommand{\igs}{\mathrm{Igs}}
\newcommand{\igsk}{\mathrm{Igs}_{K^p,k}}

\newcommand{\igsgx}{\operatorname{Igs}^?\gx}
\newcommand{\igsfingx}{\operatorname{Igs}^?_{K^p}\gx}

\newcommand{\IgsQuot}{q_\mathrm{Igs}}

\newcommand{\gxad}{(\mathsf{G}^{\mathrm{ad}},\mathsf{X}^{\mathrm{ad}})}
\newcommand{\gxab}{(\mathsf{G}^{\mathrm{ab}},\mathsf{X}^{\mathrm{ab}})}

\newcommand{\gxtwo}{{(\mathsf{G}_2, \mathsf{X}_2)}}
\newcommand{\gxthree}{(\mathsf{G}_3, \mathsf{X}_3)}
\newcommand{\gxthreeab}{(\mathsf{G}_3^{\mathrm{ab}},\mathsf{X}_3^{\mathrm{ab}})}

\newcommand{\gad}{G^{\mathrm{ad}}}
\newcommand{\Gad}{\g^{\mathrm{ad}}}
\newcommand{\gadqone}{\g^{\mathrm{ad}}(\mathbb{Q})^{1}}

\newcommand{\gafp}{\mathsf{G}(\afp)}
\newcommand{\gaf}{\mathsf{G}(\af)}
\newcommand{\ag}{\mathcal{A}(\g)}

\newcommand{\zg}{Z_{\mathsf{G}}}
\newcommand{\zgq}{Z_{\mathsf{G}}(\mathbb{Q})}
\newcommand{\zgqbar}{Z_{\mathsf{G}}(\mathbb{Q})^{-}}

\newcommand{\Div}{\mathrm{Div}^{1}_{E}}
\newcommand{\Divk}{\mathrm{Div}^{1}_{E,k}}

\def\shd{\mathbf{Sh}\gx^{?,\diamondsuit}}
\def\shtwod{\mathbf{Sh}\hy^{?,\diamondsuit}}
\def\shdinf{\mathbf{Sh}\gx^{?,\diamondsuit}}
\def\shdfin{\mathbf{Sh}_K\gx^{?,\diamondsuit}}
\def\shdpinf{\mathbf{Sh}_{K^p}\gx^{?,\diamondsuit}}

\def\dimtrg{\operatorname{dim.trg}}

\newcommand{\ul}[1]{\underline{#1}}

\newcommand{\parg}{\operatorname{Par}_{G}}
\newcommand{\LH}{{}^\mathsf{L} \widehat{\mathsf{H}}}
\newcommand{\lH}{{}^{L} \widehat{\mathsf{H}_v}}

\newcommand{\FS}{\mathrm{FS}}
\newcommand{\SO}{\operatorname{SO}}

\newcommand{\LL}{\operatorname{LL}}
\newcommand{\temp}{\mathrm{temp}}

\NewDocumentCommand\pair{o}{%
  \IfValueTF{#1}{(R^{\sharp_#1}, R^{\sharp_#1+})}{(R^\sharp, R^{\sharp+})}%
}
\def\upair{(R, R^+)}
\def\ffcurve{\mathcal{X}_\mathrm{FF}}
\def\apg{\ul{\mathcal{A}^p(\mathsf{G})}}
\def\aph{\ul{\mathcal{A}^p(\mathsf{H})}}

\newcounter{stepcounter}
\renewcommand{\thestepcounter}{\textbf{Step \arabic{stepcounter}}}
\newcommand{\step}{%
  \refstepcounter{stepcounter}
  \textbf{\thestepcounter:}
}

\makeatletter
\newcommand{\customlabel}[2]{%
   \phantomsection
   \protected@edef\@currentlabel{#1}
   \label{#2}
}
\makeatother

\newcommand\restr[2]{{
  \left.\kern-\nulldelimiterspace 
  #1 
  \vphantom{\big\vert} 
  \right\rvert_{#2} 
  }}

\iftrue
\newcommand{\commentDaniel}[1]{\textcolor{Blue}{Daniel: #1}}
\newcommand{\commentPatrick}[1]{\textcolor{violet}{Patrick: #1}}
\newcommand{\commentPol}[1]{\textcolor{red}{Pol: #1}}
\newcommand{\commentMingjia}[1]{\textcolor{ForestGreen}{Mingjia: #1}}
\else
\newcommand{\commentDaniel}[1]{}
\newcommand{\commentPatrick}[1]{}
\newcommand{\commentPol}[1]{}
\newcommand{\commentMingjia}[1]{}
\fi

\author{Patrick Daniels}
\address{Department of Mathematics and Statistics, Skidmore College, 815 N Broadway, Saratoga Springs, NY, 12866, USA}
\email{pdaniels@skidmore.edu}

\author{Pol van Hoften} 
\address{School of Mathematical Sciences, Zhejiang University, 866 Yuhangtang Rd, Hangzhou, 310058, P. R. China}
\email{pvhoften@zju.edu.cn}
\thanks{PvH is (partly) funded by the Dutch Research Council (NWO) under the grant VI.Veni.232.127.}

\author{Dongryul Kim}
\address{Department of Mathematics, Stanford University, 450 Jane Stanford Way
(Building 380), Stanford, California, USA}
\email{dkim04@stanford.edu}

\author{Mingjia Zhang}
\address{Department of Mathematics, Princeton university, Fine Hall, Washington Road,
Princeton, NJ, 08544-1000, USA}
\email{mz9413@princeton.edu}
\thanks{MZ is supported by the NSF through the Institute for Advanced Study, under the grant No. DMS-2424441.}

\title[Igusa stacks and the cohomology of Shimura varieties II]{Igusa stacks and the cohomology of Shimura varieties II}
\begin{document}

\begin{abstract}
    We construct Igusa stacks for all Shimura varieties of abelian type and derive consequences for the cohomology of these Shimura varieties. As an application, we prove that the Fargues--Scholze local Langlands correspondence agrees with the semi-simplification of the local Langlands correspondences constructed by Arthur, Mok and others, for all classical groups of type $A$, $B$ and $D$; this extends work of Hamann, Bertoloni Meli--Hamann--Nguyen and Peng. 
\end{abstract}

\maketitle

\tableofcontents

{\section{Introduction}

\subsection{Overview} 
Igusa stacks are geometric objects, akin to Shimura varieties, whose existence has been conjectured by Scholze, motivated by questions related to local-global compatibility in the Langlands program, see \cite[Conjecture 1.1]{ZhangThesis}. The program to construct Igusa stacks was initiated 
in \cite{ZhangThesis} and continued in \cite{DvHKZIgusaStacks} and \cite{KimFunctorial}, and their existence has led to several breakthroughs in the study of the cohomology of Shimura varieties. In particular, advances have been made in the study of the generic part of the cohomology with torsion coefficients, see \cite{Hamann-Lee} and \cite{YangZhuGeneric}, the generalized Ihara's lemma for definite unitary groups has been established, see \cite{YangIhara}, and a very general Eichler--Shimura relation has been proven for Shimura varieties of Hodge type, see \cite{vdH}. \smallskip

In this paper, we construct Igusa stacks for all Shimura varieties of abelian type, and we use these to prove new results about the cohomology of Shimura varieties. Our primary application is to the local Langlands correspondence for classical groups of types $A,B,D$, as well as $\operatorname{GSp}_4$, over arbitrary $p$-adic local fields. For these groups, we show that the Fargues--Scholze local Langlands correspondence agrees with the semisimplification of the local Langlands correspondences of Arthur, Mok and others, see \cite{Arthur}, \cite{Mok}, \cite{ChenZou}, \cite{KMSW}, \cite{Ishimoto}, \cite{GanTakeda}, \cite{GanTantono}. Our work extends \cite{HamannGSp4}, \cite{BMHN}, \cite{Peng}, where the analogous result is established for unramified groups over unramified extensions of $\qp$ with $p>2$.

In addition, we prove the Eichler--Shimura relation of Blasius--Rogawski \cite{BlasiusRogawski}, at Iwahori level and with torsion coefficients, generalizing \cite{vdH}. We furthermore show that the local plectic conjecture of Nekov\'ar--Scholl holds at split primes, extending results of \cite{Lee} and \cite{LiHuertaPlectic}; the general case is being studied by Feng--Tamiozzo--Zhang. 

\subsection{Main geometric results} \label{sub:GeometricResults} Let $\gx$ be a Shimura datum of abelian type with reflex field $\mathsf{E}$. Let $p$ be a prime number, let $v$ be a prime of $\mathsf{E}$ above $p$ and set $E=\mathsf{E}_v$ and $G=\mathsf{G} \otimes \qp$. Let $K \subset \gaf$ be a neat compact open subgroup and consider the Shimura variety $\mathbf{Sh}_K\gx$ of level $K$ over $E$. There is an open subspace (the ``good reduction locus'')
\begin{align*}
    \mathbf{Sh}_K\gx^{\circ, \mathrm{an}} \subset \mathbf{Sh}_K\gx^{\mathrm{an}}
\end{align*}
of the adic space $\mathbf{Sh}_K\gx^{\mathrm{an}}$ associated with $\mathbf{Sh}_K\gx$, which is compatible with changing the level $K$. Let $\perf$ denote the category of perfectoid spaces of characteristic $p$. We let $\mathbf{Sh}_{K^p}\gx^{\circ, \diamondsuit} \subset \mathbf{Sh}_{K^p}\gx^{\diamondsuit}$ be the corresponding objects with infinite level at $p$, considered as v-sheaves on $\perf$. These come equipped with a Hodge--Tate period map 
\begin{align*}
    \pi_{\mathrm{HT}}^\diamondsuit\colon \mathbf{Sh}_{K^p}\gx^{\diamondsuit} \to \operatorname{Gr}_{G, \mu^{-1}},
\end{align*}
where $\operatorname{Gr}_{G, \mu^{-1}}$ is the Schubert cell in the $\mathbf{B}_\mathrm{dR}^+$-affine Grassmannian corresponding to the inverse of the Hodge cocharacter $\mu$. Let $\bung$ denote the stack of $G$-bundles on the Fargues--Fontaine curve. Finally we need the Beauville--Laszlo map $\operatorname{BL}:\operatorname{Gr}_{G, \mu^{-1}} \to \bun_G$, whose image is an open substack $\bungmu$ of $\bung$.

\begin{mainThm}[Theorem \ref{Thm:MainThmIgusa}, \ref{Thm:FiniteLevelIgusa}] \label{Thm:IntroIgusaGeneric}
There is an open immersion of v-stacks $\mathrm{Igs}_{K^p}^{\circ}\gx \hookrightarrow \mathrm{Igs}_{K^p}\gx$ on $\perf$ sitting in a commutative diagram
    \[ \begin{tikzcd}
        \mathbf{Sh}_{K^p}\gx^{\circ,\diamondsuit} \arrow{d}{\IgsQuot^{\circ}}
          \arrow[hook]{r} & \mathbf{Sh}_{K^p}\gx^{\diamondsuit} \arrow{r}{\pi_{\mathrm{HT}}^\diamondsuit} \arrow{d}{\IgsQuot}& \operatorname{Gr}_{G, \mu^{-1}} \arrow{d}{\mathrm{BL}} \\
          \mathrm{Igs}_{K^p}^{\circ}\gx \arrow[hook]{r} & \mathrm{Igs}_{K^p} \gx \arrow{r}{\overline{\pi}_\mathrm{HT}} & \bungmu,
    \end{tikzcd} \]
with both squares Cartesian. Moreover, for $\ell \not=p$, both $\mathrm{Igs}_{K^p}^{\circ}\gx$ and $\mathrm{Igs}_{K^p}\gx$ are $\ell$-cohomologically smooth of dimension zero with constant dualizing sheaf. Furthermore, $\varprojlim_{K^p} \mathrm{Igs}_{K^p}^{\circ}\gx$ and $\varprojlim_{K^p} \mathrm{Igs}_{K^p} \gx$ satisfy \cite[Definition 5.4]{KimFunctorial}. 
\end{mainThm}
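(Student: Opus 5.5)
The plan is to reduce to the Hodge type case, where Igusa stacks are known to exist by \cite{ZhangThesis}, \cite{DvHKZIgusaStacks}. We then transport them along the standard abelian-type construction, in which $\mathbf{Sh}\gx$ is built from a Hodge type datum $\gxtwo$ with the same adjoint datum.

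\textbf{Step 1: an auxiliary datum.} Choose a Hodge type datum $\gxtwo$ with $\gxad \cong (\mathsf{G}_2^{\mathrm{ad}},\mathsf{X}_2^{\mathrm{ad}})$. Following Deligne and Kisin, choose it so that the relevant connected components are compatible at $p$.

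\textbf{Step 2: the connected component.} Following \cite{KimFunctorial}, write $\mathbf{Sh}_{K^p}\gx^{\diamondsuit}$ as a contracted product
\begin{align*}
[\mathbf{Sh}^+ \times \mathscr{A}(\g)]/\mathscr{A}(\g_2)^{\circ},
\end{align*}
where $\mathbf{Sh}^+$ is a connected component of the infinite level Hodge type Shimura variety. The Igusa stack for $\gxtwo$ comes with an action of the relevant Deligne group $\mathscr{A}(\g_2)$, equivariant for the Hodge--Tate period map. From it we extract the \enquote{connected Igusa stack}: the image of $\mathbf{Sh}^+$, which is a quotient of $\mathbf{Sh}^+$ by $\pi_{\mathrm{HT}}$-fibrewise data. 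We then define
\begin{align*}
\mathrm{Igs}_{K^p}\gx := [\mathrm{Igs}^+ \times \mathscr{A}(\g)]/\mathscr{A}(\g_2)^{\circ}
\end{align*}
with its map to $\bun_{G,\mu^{-1}}$. The map to $\bun_G$ uses that the map to $\bun_{G_2}$ factors through $\bun_{G^{\mathrm{ad}}}$ compatibly, together with the central twisting.

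\textbf{Step 3: Cartesianness.} Deduce Cartesianness of the right square from the Hodge type Cartesian square \cite[Theorem]{DvHKZIgusaStacks}. Contracted products commute with fibre products, and $\mathrm{Gr}_{G,\mu^{-1}}\to \bun_{G,\mu^{-1}}$ is insensitive to central modifications. For the good reduction locus, define $\mathrm{Igs}^{\circ}$ as the image of $\mathbf{Sh}^{\circ,\diamondsuit}$. Openness and Cartesianness of the left square follow once we check that $\mathbf{Sh}^{\circ}$ is a union of fibres of $\IgsQuot$, i.e.\ is stable under the relevant groupoid. This holds because good reduction is detected by $p$-adic data, namely the crystalline/prismatic nature of the torsor, which is the $\bun_G$-fibre information.

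\textbf{Step 4: smoothness and the functorial definition.} Since $\mathrm{BL}$ is $\ell$-cohomologically smooth of dimension $\langle 2\rho,\mu\rangle$, and $\pi_{\mathrm{HT}}$ from the smooth Shimura variety has the same dimension, the Cartesian square gives cohomological smoothness of dimension zero by descent along the v-cover $\mathrm{BL}$. The dualizing sheaf is likewise identified with the pullback-compatible constant sheaf, as in the Hodge type argument. Finite level (Theorem \ref{Thm:FiniteLevelIgusa}) comes from the $K^p$-quotient. The axioms of \cite[Definition 5.4]{KimFunctorial}, namely Hecke equivariance and compatibility of the $\bun_G$-maps, are then checked formally from the construction.

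\textbf{Main obstacle.} I expect Step 2 to be hardest: making the $\mathscr{A}(\g_2)^{\circ}$-action descend to the Igusa stack and gluing it with $\mathscr{A}(\g)$. Here the map to $\bun_G$, not just $\bun_{G_2}$ or $\bun_{G^{\mathrm{ad}}}$, must be built. I would first try working with the $z$-extension/central-isogeny description of $\bun_G$ over $\bun_{G^{\mathrm{ad}}}$, using fibre products with $\bun_{G^{\mathrm{ab}}}$.
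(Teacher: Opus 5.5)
There is a genuine gap, and it sits exactly at the step you flag: Step~2 never produces the map $\overline{\pi}_{\mathrm{HT}}$ to $\bun_G$, and the obstruction is not only $2$-categorical. Transporting along Deligne's groups $\mathscr{A}(\cdot)$ is the right heuristic (compare the remark after Theorem~\ref{Thm:DescendingIgusaStacks}). But your Hodge-type datum shares with the target only the adjoint datum, so nothing on the Hodge-type side determines the central or abelian part of a $G$-bundle. ``Central twisting'', or a fibre product with $\bun_{G^{\mathrm{ab}}}$, presupposes a map from the Igusa stack to $\bun_{G^{\mathrm{ab}}}$ that is Cartesian with respect to the Shimura varieties of $\mathsf{G}^{\mathrm{ab}}$; that is, it presupposes the torus case of the theorem. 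Tori are of abelian type, and for them your recipe is empty: $\mathbf{Sh}^+$ is a point and the group $\mathscr{A}(\cdot)^\circ$ of the auxiliary datum is trivial. The real content there is the homomorphism $\iota_{T,b}\colon A_{T,b}\to\mathsf{T}(\mathbb{Q})^-\backslash\mathsf{T}(\af)$ of Lemma~\ref{Lem:LocalIota}, together with the identification of $\mathrm{BL}_{\mu^{-1}}\colon\spd E\to\bun_T$ with the reciprocity law via Lubin--Tate theory (Lemma~\ref{Lem:ActionComparison}, Theorem~\ref{Thm:IgusaStackTorus}).

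With that in hand, the paper proceeds as follows (in the proof of Theorem~\ref{Thm:MainThmIgusa} its notation reverses yours: $\gx$ is of Hodge type and $\gxtwo$ is the target). It realizes the roof datum as $\gxthree=\gx\times_{\gxab}\gxthreeab$. It takes the product of the Hodge-type and torus Igusa stacks and restricts to this sub-datum (Theorem~\ref{Thm:AscendingIgusaStacks}). Only then does it descend along the ad-isomorphism $\gxthree\to\gxtwo$.

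Your Step~3 also misses the reflex-field condition. The Cartesianness of $[\ul{G(\qp)}\backslash\mathrm{Gr}_{G,\mu}]\to[\ul{H(\qp)}\backslash\mathrm{Gr}_{H,\mu_H}]$ over $\bun_{G,\mu}\to\bun_{H,\mu_H}$ (Lemma~\ref{Lem:CartesianBL}) needs an actual ad-isomorphism $G\to H$ and equal reflex fields. That is why the auxiliary datum is chosen so that the $p$-adic primes of $\mathsf{E}_2$ split completely in $\mathsf{E}\mathsf{E}_2$, and why the roof of Lemma~\ref{Lem:ExistenceGoodRoof} is used. Between your Hodge-type datum and the target there is in general no morphism at all. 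Finally, the paper descends by pushing the global uniformization map $\Theta$ out along $\Xi^2(\mathsf{G}_3)\to\Xi^2(\mathsf{G}_2)$ and $\mathcal{A}(\mathsf{G}_3)\to\mathcal{A}(\mathsf{G}_2)$. It does so precisely to avoid what your $\mathrm{Igs}^+$ (an ``image'' inside a stack) and your contracted product would require: quotients of a non-$0$-truncated stack by a group in which $G(\qp)$ does not have closed image.

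Three further steps fail as written.

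\begin{itemize}
\item \textbf{Finite level.} You cannot simply quotient by $K^p$. When SV5 fails, $\mathbf{Sh}^{\diamondsuit}\to\mathbf{Sh}_{K^p}^{\diamondsuit}$ is a torsor for $(K^p\zgqbar)/\zgqbar$, where $\mathsf{G}$ is the target group, not for $K^p$. So $[\igs/\ul{K^p}]$ is a gerbe over the correct object and the square is not Cartesian. The paper instead descends $\Theta$ to $\Theta_{K^p}$, using that $(\id\times\Delta)(\zgqbar)$ acts trivially (Lemma~\ref{Lem:GroupoidFiniteLevel}).

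\item \textbf{Smoothness (Step~4).} Neither $\mathrm{BL}\colon\grgmu\to\bungmu$ nor the infinite-level Shimura variety is $\ell$-cohomologically smooth; you must pass to quotients by $\ul{G(\qp)}\times\phi^{\mathbb{Z}}$. For abelian type, $[\mathbf{Sh}_{K^p}^{\diamondsuit}/\ul{G(\qp)}]$ is only a gerbe, for a closed subgroup of $G(\qp)$, over an \'etale quotient of $\mathbf{Sh}_K^{\diamondsuit}$. This is why the unimodular-gerbe computation of Lemmas~\ref{Lem:GerbeDualizing} and~\ref{Lem:DualizingShimura} is needed. Moreover, triviality of the pullback of $f^!\Lambda$ only shows that $f^!\Lambda$ is invertible in degree $0$. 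Constancy uses the connectedness of the fibres of $[\grgmu/(\ul{G(\qp)}\times\phi^{\mathbb{Z}})]\to\bungmu$ (Proposition~\ref{Prop:DualizingSheaf}).

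\item \textbf{The $\circ$-locus.} Your reduction to stability under the groupoid is fine, but the justification is not: membership in $\mathbf{Sh}^{\circ}$ is not detected by the image in $\bungmu$. For modular curves, Tate-curve points and ordinary points of good reduction lie over the same stratum. What is actually used is the Hodge-type case together with the compatibility of the Imai--Mieda loci with products and ad-isomorphisms (Lemmas~\ref{Lem:GoodReductionProduct} and~\ref{Lem:GoodReductionAdIsom}), so that the $\circ$-case runs through the same ascent and descent.
\end{itemize}
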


This confirms a conjecture of Scholze, see \cite[Conjecture 1.1.(4)]{ZhangThesis}. For PEL-type Shimura varieties at unramified primes $p$, the existence of $\mathrm{Igs}^{\circ}\gx$ was proved by one of us (MZ) as part of \cite[Theorem 1.3]{ZhangThesis}. For Shimura varieties of Hodge type, it is \cite[Theorem I]{DvHKZIgusaStacks} or alternatively \cite[Theorem D]{KimFunctorial}. For certain abelian type Shimura varieties closely related to the PEL cases of \cite[Theorem 1.3]{ZhangThesis}, it is \cite[Theorem 1.3]{Schnelle}. The existence of $\mathrm{Igs}_{} \gx $ for Shimura varieties of Hodge type was proved in \cite[Theorem D]{KimFunctorial}. Note that the Igusa stacks of Theorem \ref{Thm:MainThmIgusa} are functorial in morphisms of Shimura data by \cite[Theorem B]{KimFunctorial}, and in particular unique. We remark also that in \cite{LiHuertaGlobal}, Li-Huerta studies the global function field analog of the Igusa stack and establishes the analogous fiber product diagram, see \cite[Theorem D]{LiHuertaGlobal}.

We will discuss the proof of Theorem \ref{Thm:IntroIgusaGeneric} in Section \ref{sub:IntroGeometric}. First, we will discuss applications to the local Langlands correspondence for classical groups, and to the cohomology of Shimura varieties.

\subsection{Compatibility of local Langlands correspondences} Let $F$ be a $p$-adic local field and let $G$ be a connected reductive group over $F$. The (conjectural) local Langlands correspondence is a map
\begin{align*}
    \operatorname{LL}_G:\Pi_{G} \to \Phi(G),
\end{align*}
where $\Pi_{G}$ is the set of isomorphism classes of irreducible smooth $\mathbb{C}$-linear representations of $G(F)$, and $\Phi(G)$ is the set of $\hat{G}$-conjugacy classes of $L$-parameters $W_{F} \times \operatorname{SL}_{2}(\mathbb{C}) \to {}^LG(\mathbb{C})$, see Section \ref{subsub:LParameters}. Fargues and Scholze \cite{FarguesScholze} have recently constructed a map\footnote{This map depends a priori on a choice of $\ell$, a choice of $\sqrt{p} \in \qlbar$ and a choice of isomorphism $\iota:\qlbar \to \mathbb{C}$. By recent work of Scholze \cite[Theorem 1.1]{ScholzeMotivicGeometrization}, it is independent of these choices if we choose $\iota$ to take the fixed $\sqrt{p}$ to the positive square root of $p$ in $\mathbb{R}$.}
\begin{align*}
    \operatorname{LL}_G^{\mathrm{FS}}:\Pi_{G} \to \Phi^{\mathrm{ss}}(G),
\end{align*}
where the target is now the set of $\hat{G}$-conjugacy classes of semisimple $L$-parameters $W_F \to {}^LG(\mathbb{C})$. This map is expected to recover the composition of (the conjectural) map $\operatorname{LL}_{G}$ with the semisimplification map $(-)^{\mathrm{ss}}:\Phi(G) \to \Phi^{\mathrm{ss}}(G)$, see Section \ref{subsub:Semisimplification}.  

\subsubsection{} \label{subsub:ClassicalGroups} By work of Arthur \cite{Arthur}, Mok \cite{Mok}, and many others\footnote{The work of Arthur and hence the subsequent work of Mok and many others is conditional on several preprints of Arthur which have not yet appeared. By recent work of Atobe--Gan--Ichino--Kaletha--Minguez--Shin \cite{AGIKMS}, the work of Arthur is now only conditional on the twisted weighted fundamental lemma, see \cite[Assumption H1]{ShinWeakTransfer}.} a map $\operatorname{LL}_G$ has been constructed for many classical groups. More precisely, we consider the following cases.
\begin{itemize}[leftmargin=25pt]
    \item[$U$] The group $G$ is an inner form of a quasi-split unitary group over $F$, and the map $\operatorname{LL}_{G}$ is the one constructed by Mok \cite[Theorem 2.5.1, 3.2.1]{Mok} if $G$ is quasi-split and by Kaletha--Minguez--Shin--White \cite[Theorem 1.6.1]{KMSW} if $G$ is not quasi-split.

    \item[$B$] The group $G$ is an inner form of the split special orthogonal group $\operatorname{SO}_{2n+1,F}$, and the map $\operatorname{LL}_G$ is the one constructed by Arthur \cite[Theorem 1.5.1]{Arthur} if $G$ is quasi-split and by Ishimoto \cite[Theorem 1.2]{Ishimoto} if $G$ is not quasi-split. 
    
    \item[$C_2$] The group $G$ is an inner form of $\operatorname{GSp}_4$, and the map $\operatorname{LL}_G$ is the one constructed by Gan--Takeda \cite[Main theorem]{GanTakeda} if $G$ is split and Gan--Tantono \cite[Main Theorem]{GanTantono} if $G$ is nonsplit.  

    \item[$D$] The group $G$ is a pure inner form of a quasi-split special orthogonal group $\operatorname{SO}(V')$ for an even dimensional quadratic space $V'$ over $F$ (which implies that $G \simeq \operatorname{SO}(V)$ for some quadratic space $V$ over $F$ of the same dimension as $V'$). Noting that the dual group of $\hat{G}$ is $\operatorname{SO}_{2n}$, which has a conjugation action by $\operatorname{O}_{2n}$, we consider the map
    \begin{align*}
        \widetilde{\operatorname{LL}}_G:\Pi_G \to \widetilde{\Phi}(G)=\Phi(G)/\operatorname{O}_{2n},
    \end{align*}
    constructed by Arthur \cite[Theorem 1.5.1]{Arthur} if $G$ is quasi-split and by Chen--Zou \cite[Theorem A.2]{ChenZou} if $G$ is not quasi-split. In this case, we define $\widetilde{\operatorname{LL}}_{G}^{\operatorname{FS}}$ as the composition of $\operatorname{LL}_{G}^{\operatorname{FS}}$ with the map $ \Phi^{\mathrm{ss}}(G) \to  \Phi^{\mathrm{ss}}(G)/\operatorname{O}_{2n}$.
\end{itemize}
\begin{mainThm} \label{Thm:IntroCompatibility}
In cases $U,B,C_2$ listed in Section \ref{subsub:ClassicalGroups}, we have the equality
$(-)^{\mathrm{ss}} \circ \operatorname{LL}_{G} = \operatorname{LL}_G^{\mathrm{FS}}$.
In case $D$, we have the equality
$(-)^{\mathrm{ss}} \circ \widetilde{\operatorname{LL}}_G = \widetilde{\operatorname{LL}}_{G}^{\operatorname{FS}}$.
\end{mainThm}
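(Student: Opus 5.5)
The plan follows the strategy of Hamann and Bertoloni Meli--Hamann--Nguyen, with their unramified/Hodge-type inputs replaced by the abelian-type Igusa stacks of Theorem \ref{Thm:IntroIgusaGeneric}.

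\textbf{Globalization.} First, using the standard compatibility of both correspondences with central characters, restriction to derived groups and $z$-extensions, I reduce to a form of the group that occurs as $\mathsf{G}\otimes \mathbb{Q}_p$ (or a factor of it) for a Shimura datum $\gx$ of abelian type. Examples are unitary similitude or unitary groups for $U$, $\operatorname{GSpin}$-type data for $B$ and $D$, and $\operatorname{GSp}_4$ and its inner forms for $C_2$. Given $\pi\in\Pi_G$, I globalize it to a cuspidal automorphic representation $\Pi$ of $\mathsf{G}$, or of a quasi-split inner form, with $\Pi_p\simeq\pi$, chosen with cohomological regular weight at infinity and unramified or supercuspidal components at auxiliary places. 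For supercuspidal $\pi$ this uses a trace-formula globalization with a pseudo-coefficient. The global parameter of $\Pi$, given by Arthur/Mok/KMSW, localizes at $p$ to $\operatorname{LL}_G(\pi)$.

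\textbf{Cohomological step.} Using Theorem \ref{Thm:IntroIgusaGeneric}, the Cartesian diagram identifies $R\Gamma(\mathbf{Sh}_{K^p}\gx,\mathbb{Z}_\ell)$ with $R\Gamma(\bungmu, \overline{\pi}_{\mathrm{HT}*}\mathbb{Z}_\ell\otimes \mathrm{BL}_*\ldots)$. In the form of \cite{KimFunctorial}, this is the statement that the cohomology equals a Hecke operator $T_\mu$ applied to the sheaf $\mathcal{F}=R\overline{\pi}_{\mathrm{HT}*}$-pushforward on $\bung$, restricted to the neutral stratum. Because $\mathrm{Igs}$ is $\ell$-cohomologically smooth of dimension zero with constant dualizing sheaf, $\mathcal{F}$ is ULA and Verdier-self-dual up to twist. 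This gives compatibility with spectral actions of the excursion algebra. Taking the $\Pi^{p}$-isotypic part, the spectral action of the Bernstein center shows that the $W_E$-action on $H^*(\mathbf{Sh},\qlbar)[\Pi^p]$ factors through the $\mathrm{FS}$-parameter of the relevant components, composed with $r_\mu$. Concretely, as in Hamann's argument, every Weil group constituent of $H^*[\Pi^p]$ is a subquotient of $r_\mu\circ\varphi^{\mathrm{FS}}_{\pi'}$ for $\pi'$ in the packet.

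\textbf{Comparison and conclusion.} On the other side, Kottwitz-type point counting, which is already established for these abelian-type varieties through Arthur/Mok/KMSW, computes the semisimplified Galois action as $r_\mu\circ\operatorname{LL}(\Pi_p)^{\mathrm{ss}}$ up to multiplicity. Comparing the two yields $r_\mu\circ\varphi^{\mathrm{FS}}_\pi \simeq r_\mu\circ\operatorname{LL}_G(\pi)^{\mathrm{ss}}$. For these groups $r_\mu$ is the standard or spin representation, which determines semisimple parameters up to conjugacy, up to $\operatorname{O}_{2n}$ in case $D$. Combining this with the compatibility of $\mathrm{FS}$ with parabolic induction handles the nonsupercuspidal $\pi$, via Jacquet module reduction to Levis of the same type (products of $\GL$'s and smaller classical groups). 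That reduction also covers the supercuspidal case once the cuspidal support matches.

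I expect the main obstacle to be the cohomological step. Specifically, I need to control the non-basic strata and show that the $\Pi^p$-isotypic cohomology detects all of $r_\mu\circ\varphi^{\mathrm{FS}}_\pi$ rather than a contaminated sum. My first attempt would be to work with $\pi$ supercuspidal and with a generic regular global parameter, so that Eisenstein contributions and non-basic terms vanish by a genericity argument in the style of Hamann--Lee. I would then deduce the remaining cases by induction.
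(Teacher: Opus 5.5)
There is a genuine gap at the core of the argument. It concerns supercuspidal $\pi$ whose parameter $\phi=\operatorname{LL}_G(\pi)$ is discrete but not supercuspidal, i.e.\ nontrivial on $\operatorname{SL}_2$.

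The cohomological input the Igusa stack gives you (Theorem \ref{Thm:SV-FS-Compatibility}) says only that every irreducible $W_E$-constituent of the isotypic cohomology is \emph{some} subquotient of $r_\mu\circ\phi^{\mathrm{FS}}_\pi$. It carries no multiplicity information. With a dimension count, this determines $r_\mu\circ\phi^{\mathrm{FS}}_\pi$ only when $(r_\mu\circ\phi^{\mathrm{ss}})|_{W_{F'}}$ is multiplicity free (Corollary \ref{Cor:WeakCompatibility}). That holds because $\phi$ is discrete \emph{and} semisimple (Lemma \ref{Lem:Multiplicity}), a step you never state. It fails otherwise. For example, in type $B$ take $\phi=\rho\boxtimes S_1\oplus\rho\boxtimes S_3$ with $\rho$ irreducible symplectic. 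Then $\rho$ occurs twice in $\phi^{\mathrm{ss}}$, and your comparison leaves one constituent of $r_\mu\circ\phi^{\mathrm{FS}}_\pi$ undetermined.

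Your Jacquet-module induction does not reach these $\pi$ either, since a supercuspidal $\pi$ has no proper Jacquet modules. So the claim that the reduction ``also covers the supercuspidal case'' is wrong. The missing idea is the following. For a non-supercuspidal tempered $\phi$ on a quasi-split group, M{\oe}glin's results give a member $\pi'\in\Pi_\phi$ that is a subquotient of a proper parabolic induction. The Fargues--Scholze parameter is constant on $\Pi_\phi$, because the image of $\Psi_G$ consists of (very) stable distributions \cite{HansenStable} and $\Theta^1_\phi$ is atomically stable \cite{Varma}. One then replaces $\pi$ by $\pi'$ and inducts (Proposition \ref{prop: SemisimpleLLQuasiSplit}). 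Globalization is needed only for supercuspidal \emph{parameters}.

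The same problem is worse for non-quasi-split inner forms. There, a packet with non-supercuspidal parameter can consist entirely of supercuspidals (already for $D^\times$), so neither globalization nor induction applies. The paper instead transfers from $G^\ast$ using endoscopic character identities together with \cite[Theorem 1.4]{HansenStable} (Proposition \ref{prop: SemisimpleLLNonQuasiSplit}). In case $D$ it uses a version of the constancy argument that is invariant under $\operatorname{O}(V)(F)$.

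Your preliminary reduction is also off target. Passing through similitude or $\operatorname{GSpin}$ groups to reach a group over $\mathbb{Q}_p$ would require a classical correspondence for those groups compatible with Arthur's, which is not available as input. It also ignores that $F$ is an arbitrary $p$-adic field. The point of abelian-type Igusa stacks is that you may take $\mathsf{G}=\operatorname{Res}_{\mlp/\mathbb{Q}}\mathsf{H}$, where $\mlp\neq\mathbb{Q}$ is totally real with $\mlp_v\simeq F$ and $\mathsf{H}$ is the unitary, special orthogonal or $\operatorname{GSp}_4$-form itself. This datum is not of Hodge type, but its Shimura varieties are compact. Matsushima's formula then removes the Eisenstein and non-basic contamination you flag as the main obstacle, and no Hamann--Lee genericity or ULA/self-duality of $\mathcal{F}$ is needed.

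Finally, in case $C_2$ the $4$-dimensional representation does not by itself determine the parameter. You must also match similitude (central) characters, via \cite[Lemma 6.1]{GanTakeda}.
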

\begin{Rem}
If $F$ is a finite unramified extension of $\qp$ for $p>2$ and $G$ is unramified over $F$, then Theorem \ref{Thm:IntroCompatibility} was previously known by work of Hamann \cite{HamannGSp4} (case $C_2$), and Peng \cite{Peng} (cases $U,B,D$). If moreover $F=\qp$ and $G$ is a unitary group in an odd number of variables, then case $U$ was dealt with in earlier work of Bertoloni Meli--Hamann--Nguyen \cite{BMHN}. A similar theorem for $\operatorname{GL}_N$ and its inner forms is proved in earlier work of Fargues--Scholze \cite{FarguesScholze} and Hansen--Kaletha--Weinstein \cite{HansenKalethaWeinstein}.
\end{Rem}
\begin{mainCor}[Theorem \ref{Thm:ActualEvenOrthogonalLanglands}]\label{Cor:IntroActualEvenOrthogonalLL}
Let $F$ be a $p$-adic local field and let $G$ be a pure inner form of an even orthogonal group over $F$. There is a unique local Langlands correspondence $\operatorname{LL}_{\operatorname{SO}_{2n}}:\Pi_{\mathrm{temp}} \to \Phi_{\mathrm{temp}}(G)$ lifting $\widetilde{\operatorname{LL}}_{\operatorname{SO}_{2n}}$ such that $(-)^{\mathrm{ss}} \circ \operatorname{LL}_{\operatorname{SO}_{2n}}= \operatorname{LL}_{\operatorname{SO}_{2n}}^{\FS}$. Moreover, it satisfies the conditions listed in \cite[Theorem 7.1.1]{Peng}.
\end{mainCor}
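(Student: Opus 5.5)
The plan is to follow the strategy of \cite[Section 7]{Peng}: use Theorem \ref{Thm:IntroCompatibility} in case $D$ to pin down, among the (at most two) $\operatorname{O}_{2n}$-conjugate lifts of $\widetilde{\operatorname{LL}}_G(\pi)$, the one whose semisimplification is $\operatorname{LL}_G^{\FS}(\pi)$.

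\textbf{Construction.} Fix $\pi \in \Pi_{\mathrm{temp}}(G)$ and let $\tilde\phi = \widetilde{\operatorname{LL}}_G(\pi) \in \widetilde{\Phi}(G)$. Its preimage in $\Phi(G)$ is a single $\operatorname{O}_{2n}$-orbit, consisting of either one or two $\operatorname{SO}_{2n}$-classes $\{\phi, \phi^\theta\}$, where $\theta$ is outer conjugation. By Theorem \ref{Thm:IntroCompatibility}, $\operatorname{LL}^{\FS}_G(\pi)$ lies in the $\operatorname{O}_{2n}$-orbit of $\phi^{\mathrm{ss}}$, so it equals $\phi^{\mathrm{ss}}$ or $(\phi^\theta)^{\mathrm{ss}}$.

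\textbf{Well-definedness.} The key point is that a tempered parameter is determined by its semisimplification together with its $\operatorname{O}_{2n}$-class. Suppose $\phi$ and $\phi^\theta$ are not $\operatorname{SO}_{2n}$-conjugate but $\phi^{\mathrm{ss}}$ and $(\phi^\theta)^{\mathrm{ss}}$ are. Compose with the standard representation into $\GL_{2n}$ and decompose $\phi = \bigoplus \rho_i \boxtimes S_{a_i}$ into orthogonal and symplectic pieces. The $\operatorname{SO}_{2n}$- and $\operatorname{O}_{2n}$-classes differ only when every irreducible summand has even dimension, i.e. there is no odd-dimensional orthogonal summand. I would check, via the standard classification of orthogonal parameters, that this condition and the gluing data are read off from the multiset of pairs $(\rho_i, a_i)$, and that the semisimplification $\bigoplus \rho_i\otimes(|\cdot|^{(a_i-1)/2}\oplus\cdots)$ recovers this multiset in the tempered case (the $\rho_i$ are bounded, so the exponents separate the $S_{a_i}$). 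The goal is to show that an element of $\operatorname{O}_{2n}\setminus\operatorname{SO}_{2n}$ conjugating $\phi^{\mathrm{ss}}$ to itself can be chosen compatibly with the $\operatorname{SL}_2$-part, which would force $\phi \sim \phi^\theta$, a contradiction. I expect this to be the main obstacle: semisimplification can merge centralizers, so I would compare centralizers in $\operatorname{O}_{2n}$ of $\phi$ and $\phi^{\mathrm{ss}}$ directly, as Peng does. Granting this, there is a unique lift $\phi$ with $\phi^{\mathrm{ss}} = \operatorname{LL}^{\FS}_G(\pi)$, and we set $\operatorname{LL}_{\operatorname{SO}_{2n}}(\pi) := \phi$. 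Uniqueness of the map is then immediate.

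\textbf{Properties.} The conditions in \cite[Theorem 7.1.1]{Peng} are compatibility with characters, central characters, parabolic induction, $L$-packets and their internal parametrization, and compatibility with $\theta$-twisting. Peng deduces these in the unramified setting purely formally from the analogue of Theorem \ref{Thm:IntroCompatibility}, together with Arthur's and Chen--Zou's results on $\widetilde{\operatorname{LL}}$ and the known properties of $\operatorname{LL}^{\FS}$ (compatibility with parabolic induction, central characters and twisting, \cite[Theorem I.9.6]{FarguesScholze}). Since none of that argument uses unramifiedness beyond the input compatibility, I would transcribe it verbatim, now feeding in Theorem \ref{Thm:IntroCompatibility} for arbitrary $F$. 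The one point to recheck is $\theta$-equivariance: $\pi^\theta$ must map to $\phi^\theta$. This follows because $\operatorname{LL}^{\FS}$ is functorial for the outer automorphism $\theta$ of $G$.
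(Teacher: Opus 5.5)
Your proposal follows the paper's route: the paper's proof simply says that, once the case-$D$ compatibility (Theorem \ref{Thm:EvenOrthogonalCompatibility}) is available for arbitrary $F$ and pure inner forms, the proof of \cite[Theorem 7.1.1]{Peng} goes through verbatim, which is exactly your plan of picking the lift via $\operatorname{LL}^{\mathrm{FS}}_G$ and transcribing Peng's argument for the properties. The step you flag as the main obstacle is a short parity count rather than a problem of lifting centralizer elements: for tempered $\phi$, the $\operatorname{O}_{2n}$-centralizer of $\phi$, and likewise that of $\phi^{\mathrm{ss}}$, meets $\operatorname{O}_{2n}\setminus\operatorname{SO}_{2n}$ if and only if $\operatorname{Std}\circ\phi$ has an odd-dimensional orthogonal irreducible summand $\rho\boxtimes S_a$. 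Such a summand forces $a$ odd, so $\rho$ survives in $\phi^{\mathrm{ss}}$ as a self-dual exponent-zero summand. Hence $\phi\not\sim\phi^\theta$ forces $\phi^{\mathrm{ss}}\not\sim(\phi^{\mathrm{ss}})^\theta$.
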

Corollary \ref{Cor:IntroActualEvenOrthogonalLL} is proved in \cite{Peng} if $p>2$ and $F$ is unramified, see \cite[Theorem 7.1.1]{Peng}. Given Theorem \ref{Thm:IntroCompatibility}, his proof goes through verbatim. 
\begin{Rem}
As observed by Peng and Hamann, the strategy of \cite{HamannGSp4}, \cite{BMHN}, and \cite{Peng} would work essentially verbatim for groups over ramified local fields, if basic uniformization were established for certain Shimura varieties associated to Shimura data $\gx$ of abelian type at primes $p$ where $G$ is ramified. Nevertheless, our focus in this work is on the construction of the Igusa stack and its applications, so we do not study basic uniformization in this article, and our proofs do not rely upon it.

Moreover, proving basic uniformization in full generality does not seem straightforward. With some work, the abelian type case likely reduces to the case of suitably chosen Hodge type Shimura varieties, but already in these Hodge type cases, the result is not known. To generalize the strategy of \cite[Proposition 5.2.2]{HeZhouZhu}, the missing ingredient is a CM lifting result for (basic) mod $p$ isogeny classes. For quasi-split groups this is \cite[Theorem 2.2.7]{KisinZhouII} under an assumption at $p=2$, but this assumption notably excludes ramified unitary groups. The authors have been informed that, in forthcoming work \cite{ShenWu}, Xu Shen and Peihang Wu will prove basic uniformization for Shimura varieties of abelian type under the restriction $p\ne 2$.
\end{Rem}

\subsubsection{} We now explain the proof of Theorem \ref{Thm:IntroCompatibility} in the quasi-split case; for notational simplicity, we focus on the case of odd orthogonal groups. We first reduce to the case of supercuspidal representations $\pi$ using induction on the rank of our classical group and compatibility of semisimplified $L$-parameters with parabolic induction; this part of our argument closely follows \cite{HamannGSp4, BMHN, Peng}. The case of supercuspidal $\pi$ moreover breaks up into two subcases depending on whether $\operatorname{LL}_G(\pi)$ is a supercuspidal $L$-parameter or not. \smallskip 

If $\phi=\operatorname{LL}_G(\pi)$ is not supercuspidal, then the $L$-packet of $\phi$ will contain a parabolically induced representation $\pi'$ of $G(\qp)$. By the induction hypothesis, we know that $(-)^{\mathrm{ss}} \circ \operatorname{LL}_{G}(\pi') = \operatorname{LL}_G^{\mathrm{FS}}(\pi')$, and we would like to deduce the same statement for the other elements of the $L$-packet. For this, we can apply
recent work of Hansen \cite[Theorem 1.1]{HansenStable} and Varma \cite[Theorem 4.4.2]{Varma}, to deduce the constancy on the $L$-packet of the Fargues--Scholze $L$-parameter, see \cite[Corollary 1.2]{HansenStable}. \smallskip 

If $\phi=\operatorname{LL}_G(\pi)$ is supercuspidal, then all the members $\pi$ of the $L$-packet of $\phi$ will be cuspidal, and we give a direct proof by a globalization argument: First we choose a totally real field $\mlp \not=\mathbb{Q}$ with a finite place $v$ of $\mlp$ such that $\mlp_{v}=F$. We then globalize $G$ to a connected reductive group $\h$ over $\mlp$ which is compact at all but one infinite place of $\mlp$, so that $\g=\operatorname{Res}_{\mlp/\mathbb{Q}}$ admits a Shimura datum $\x$ of abelian type.\footnote{The Shimura datum $\gx$ is essentially never of Hodge type.} We then globalize $\pi$ to a cohomological cuspidal automorphic representation $\Pi$ of $\h(\mathbb{A}_{\mlp})=\g(\mathbb{A})$ with good properties (e.g. cohomological of regular weight, isomorphic to a twist of Steinberg at an auxiliary place). It now follows from work of Arthur \cite{Arthur}, Kisin--Shin--Zhu \cite{KisinShinZhu} and others that we understand the $\Pi^{\infty}$-isotypic part of the cohomology of the Shimura varieties for $\gx$. More precisely, write 
\begin{align*}
    r_{\mu}: {}^L H \to \operatorname{GL}(V_{\mu})
\end{align*}
for the representation of the $L$-group of $H$ associated to $\x$ and $v$. For example, in the case of odd orthogonal groups $\operatorname{SO}_{2n+1}$ under consideration, this is just the standard representation
\begin{align*}
    \operatorname{Sp}_{2n} \xhookrightarrow{\mathrm{Std}} \operatorname{GL}_{2n}.
\end{align*}
Then by work of Arthur, Mok and others (see Theorem \ref{Thm:ExistenceLParameters}) there is an irreducible global Galois representation $R_{\Pi}:\gal_{\mlp} \to \operatorname{GL}_{2n}(\qlbar)$ such that (up to Tate-twist)
\begin{align*}
    \restr{R_{\Pi}}{W_{F}} \simeq r_{\mu} \circ \phi. 
\end{align*}
The complex $ R\Gamma(\mathbf{Sh}\gx, \mathbb{W})[\Pi^{\infty}]$, where $\mathbb{W}$ is an automorphic $\qlbar$ local system corresponding to $\Pi_{\infty}$, is concentrated in middle degree. By work of Kisin--Shin--Zhu \cite{KisinShinZhu}, its middle cohomology group is isomorphic up to semisimplification to $R_{\Pi}^{\oplus m}$ for some $m \in \mathbb{Z}_{\ge 0}$ (up to the same Tate-twist). It is at this point that we use the existence of Igusa stacks through Corollary \ref{Cor:StrongCompatibility}, to deduce that
\begin{align} \label{eq:IntroEqualityComposition}
    \left(\restr{R_{\Pi}}{W_{F}}\right)^{\mathrm{ss}} \simeq r_{\mu} \circ \operatorname{LL}_{G}^{\mathrm{FS}}(\pi).
\end{align}
To deduce this from Corollary \ref{Cor:StrongCompatibility}, we crucially use the supercuspidality of $\phi=\operatorname{LL}_{G}(\pi)$ which implies that all the irreducible $W_E$-subquotients of $\restr{R_{\Pi}^{\oplus m}}{W_{F}}$ occur with multiplicity $1$. To conclude, we note that the equality \cref{eq:IntroEqualityComposition} together with work of Gan--Gross--Prasad \cite[Theorem 8.1]{GGP} implies that the $L$-parameters $\phi$ and $\operatorname{LL}_G^{\mathrm{FS}}(\pi)$ are conjugate to one another.

\subsubsection{} The above argument does not work for non quasi-split groups, because it is not necessarily true that if $\phi$ is not supercuspidal, then its $L$-packet must contain a parabolically induced representation. Instead, we show that the conclusion of Theorem \ref{Thm:IntroCompatibility} propagates from quasi-split groups $G^{\ast}$ to their extended pure inner forms $G=G_b$. We deduce this from another recent result of Hansen \cite[Theorem 1.4]{HansenStable} together with the endoscopic character relations between $G$ and $G^{\ast}$ for $\operatorname{LL}_{G}$ and $\operatorname{LL}_{G^{\ast}}$. 

\begin{Rem}
It is crucial that we can take $r_{\mu}$ to be the standard representation of $\operatorname{Sp}_{2n}$, since otherwise $\operatorname{LL}_{G}^{\mathrm{FS}}(\pi)$ cannot be recovered from $r_{\mu} \circ \operatorname{LL}_{G}^{\mathrm{FS}}(\pi)$. This explains why we cannot prove Theorem \ref{Thm:IntroCompatibility} for $\operatorname{GSp}_{2n}$ with $n \ge 3$: The representation $r_{\mu}$ of the dual group $\operatorname{GSpin}_{2n+1}$ coming from the Hodge cocharacter $\mu$ of a Shimura datum for $\operatorname{GSp}_{2n}$ is given by the $2^{n}$-dimensional spin representation. For $\operatorname{GSp}_{2n}$, we do prove a statement after composition with the spin representation, under some assumptions, see Corollary \ref{Cor:StrongCompatibility}.
\end{Rem}

\subsection{Cohomological results}\label{Sec:CohomSummary} Let $\gx$ be a Shimura datum of abelian type with reflex field $\mathsf{E}$ and suppose that $\g=\operatorname{Res}_{\mathsf{F}/\mathbb{Q}} \h$ for a totally real field $\mathsf{F}$ and a connected reductive group $\h$ over $\mathsf{F}$; let $K \subset \gaf$ be a neat compact open subgroup. Nekov\'ar--Scholl \cite[Conjecture 6.1]{NekovarScholl} predict that the action of $\gal_{\mathsf{E}}$ on $R \Gamma_{\text{\'et}}(\mathbf{Sh}_L\gx_{\ebar}, \zl)$ extends to the action of a larger group, the plectic reflex Galois group. In particular, for a place $v$ of $\mathsf{E}$ with completion $E$, the action of $\gal_{E}$ should extend to the action of a local version of the plectic reflex Galois group. 

Suppose that $v$ lies above a prime $p$ that splits completely in $\mathsf{F}$. Then the local plectic reflex Galois group is a product $\prod_{i} \operatorname{Gal}_{E_i}$, see Section \ref{subsub:LocalPlecticReflex}, where the product is indexed by places of $\mathsf{F}$ above $p$ and each $E_i$ is a finite extension of $\qp$. There is moreover a natural map $\gal_{E} \to \prod_{i} \operatorname{Gal}_{E_i}$. Let $\ell$ be a prime distinct from $p$ and let $\Lambda$ be a $\mathbb{Z}/\ell^n \mathbb{Z}$-algebra containing a fixed square root of $p$.
\begin{mainThm}[Theorem \ref{Thm:Plectic}] \label{Thm:IntroPlectic}
Suppose that $\ell$ is coprime to the order of $\pi_0(Z(G))$. If $K=K_pK^p$ with $K^p$ sufficiently small $($see Definition \ref{Def:acceptable}$)$, then the cohomology complex
    \[R\Gamma(\mathbf{Sh}_{K}(\mathsf{G},\mathsf{X})_{\overline{E}}, \Lambda)\in D(W_{E},\Lambda)\]
    lifts canonically to an object in $D(\prod_i W_{E_i},\Lambda)$, compatibly with the Hecke actions. 
\end{mainThm}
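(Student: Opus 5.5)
The approach is to use the Igusa stack of Theorem \ref{Thm:IntroIgusaGeneric} to express the cohomology as a spectral action. Then the plectic structure comes from the fact that, at a split prime, $G=\mathsf{G}\otimes\qp=\prod_i H_i$ splits as a product indexed by the places of $\mathsf{F}$ above $p$. In that setting the relevant moduli stacks and Hecke operators factor.

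\textbf{Step 1 (geometric realisation).} Since $\mathrm{Igs}_{K^p}\gx$ is $\ell$-cohomologically smooth of dimension zero with constant dualizing sheaf, and the right square in Theorem \ref{Thm:IntroIgusaGeneric} is Cartesian, proper base change along $\mathrm{BL}$ rewrites the cohomology. As in the Hodge-type case of \cite{DvHKZIgusaStacks}, we get
\[
R\Gamma(\mathbf{Sh}_{K}\gx_{\overline{E}},\Lambda)\simeq R\Gamma\bigl(\bun_{G},\, T_{\mu}\bigl(\overline{\pi}_{\mathrm{HT},*}\Lambda\bigr)^{K_p}\bigr)
\]
or a closely related formula: the $K_p$-invariants of the $*$-pushforward $\mathcal{F}:=\overline{\pi}_{\mathrm{HT},*}\Lambda$ from the Igusa stack, paired against the Hecke operator $T_\mu$ attached to $r_\mu$ and evaluated on the unit (or Whittaker) sheaf. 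Under this identification the $W_E$-action is the one coming from the Weil-group structure on Hecke operators in \cite{FarguesScholze}. This formula should hold with torsion coefficients provided $\mathcal{F}$ is ULA / compact enough, which is where the hypothesis that $K^p$ is sufficiently small (Definition \ref{Def:acceptable}) and the condition $\ell\nmid|\pi_0(Z(G))|$ enter. Concretely, I would use them to pass between $\mathsf{G}$ and its adjoint or derived group, where the Igusa stack is constructed via the Hodge-type cover, without losing $\ell$-torsion information.

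\textbf{Step 2 (factorisation).} Because $p$ splits completely in $\mathsf{F}$, we have $G=\prod_i H_{v_i}$, so $\bun_G=\prod_i \bun_{H_{v_i}}$ and $\mu=(\mu_i)_i$ with each $\mu_i$ defined over $E_i$. Fargues--Scholze Hecke operators are defined for several legs: $T_{\mu_1}\boxtimes\cdots\boxtimes T_{\mu_r}$ is a Hecke operator with $r$ independent legs and carries an action of $\prod_i W_{E_i}$. Its restriction along the diagonal $W_E\to\prod_i W_{E_i}$ recovers $T_\mu$, by the fusion/compatibility isomorphism in \cite[IX]{FarguesScholze}; this uses that $r_\mu=\boxtimes_i r_{\mu_i}$. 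Substituting the multi-leg operator into the formula from Step 1 gives an object of $D(\prod_i W_{E_i},\Lambda)$ lifting the cohomology. Hecke-equivariance away from $p$ follows because $\gafp$ acts through $\mathcal{F}$. At $p$, the $K_p$-invariants commute with the Hecke operators.

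\textbf{Step 3 (canonicity).} One must check that the lift is independent of choices, and functorial in $K^p$. Here I would invoke the functoriality and uniqueness of Igusa stacks from \cite{KimFunctorial}, together with the canonical nature of fusion.

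The main obstacle is Step 1 in the torsion-coefficient, abelian-type setting: establishing the spectral-action formula for cohomology with $\Lambda$ coefficients. The Igusa stack for abelian type is built from Hodge type via a quotient by a (pro-)finite group related to $\mathsf{G}^{\mathrm{der}}$ and the center. Taking invariants under this group is exact only when $\ell$ is prime to its order, which is why the hypotheses are needed. I would first try to prove the formula at the level of connected components, using the twisting construction, and then glue.
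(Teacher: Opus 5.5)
Your Step 2 is essentially the paper's argument. The paper uses $\operatorname{Par}_G\simeq\prod_i\operatorname{Par}_{H_i}$ and the decomposition $\mathcal V_\mu\simeq\boxtimes_i\mathcal V_{\mu_i}$, equivariant along $W_E\to\prod_iW_{E_i}$; this is the spectral-action form of your multi-leg Hecke operator plus fusion. Step 1, however, is misstated and you misdiagnose where its difficulty lies.

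The input you need is Theorem \ref{Thm:WeilCohShimVar}: $R\Gamma(\mathbf{Sh}_{K^p}\gx_C,\mathbb W)\simeq T^{[1]}_\mu(\mathcal F_{\mathbb W}[-d])(-d/2)$ in $\mathcal D(G(\qp),\Lambda)^{BW_E}$. This is the restriction to the neutral stratum $\bun_{G,k}^{[1]}$ (whose sheaf category is $\mathcal D(G(\qp),\Lambda)$) of the Hecke transform of $\mathcal F_{\mathbb W}=R\overline{\pi}_{\mathrm{HT},\ast}\mathbb W$. It is not global sections over $\bun_G$, and no Whittaker sheaf is involved. It is proved by base change along the Cartesian square, properness of $\grgmu\to\spd E$, and the canonical $\phi$-descent of the appendix. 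No ULA or compactness property of $\mathcal F$ is used; that is Conjecture \ref{Conj:ULAPerverse}. Likewise, the abelian-type Igusa stack is built by purely geometric operations: products with toral Igusa stacks, sub-Shimura data, and pushouts along maps of locally profinite groups. No cohomological invariants are taken in that construction, so no coprimality condition on $\ell$ arises there.

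The genuine gap is the step you write as $(\cdot)^{K_p}$, i.e.\ the passage from $\mathbf{Sh}_{K^p}$ (infinite level at $p$) to $\mathbf{Sh}_K$.

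\begin{itemize}
\item Because SV5 can fail in abelian type, Lemma \ref{Lem:NonSV5TorsorGroup} shows that $\mathbf{Sh}_{K^p}\gx\to\mathbf{Sh}_K\gx$ is only a torsor for $K_p/N_p(K)$. Here $N_p(K)$ is the image of $K\cap\zgqbar$ in $K_p$.
\item Hence $[\mathbf{Sh}_{K^p}\gx^\diamondsuit/\ul{K_p}]\to\mathbf{Sh}_K\gx^\diamondsuit$ is a gerbe banded by $N_p(K)$. Derived $K_p$-invariants of $R\Gamma(\mathbf{Sh}_{K^p}\gx_C,\Lambda)$ therefore need not compute $R\Gamma(\mathbf{Sh}_K\gx_C,\Lambda)$.
\item This is exactly what acceptability (Definition \ref{Def:acceptable}) repairs. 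If $N_p(K)$ is pro-$p$, then since $\ell\neq p$ the unit $1\to Rz_\ast z^\ast$ is an isomorphism for this gerbe (Lemma \ref{Lem:CohomologicallyProper}). This gives Proposition \ref{Prop:AcceptableInvariants}, and hence the lift at level $K$ compatibly with the level-$K_p$ Hecke algebra.
\end{itemize}

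The hypothesis $\ell\nmid|\pi_0(Z(G))|$ has nothing to do with this step or with the Igusa stack. The paper uses it only to have the Fargues--Scholze spectral action, through which $\mathcal V_\mu\simeq\boxtimes_i\mathcal V_{\mu_i}$ acts.

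Your proposed remedy (work componentwise, take invariants under a group built from $\mathsf G^{\mathrm{der}}$ and the center, then reglue) targets an obstacle that is not there and never confronts the $N_p(K)$-gerbe. As written, the level-$K$ statement is unjustified whenever $K_p$ is not pro-$p$. Hyperspecial $K_p$ is such a case, and it is exactly the case needed for Corollary \ref{Cor:PartialEichlerShimura}.
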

\begin{Rem}
This is proved at unramified primes $p$ and with $\zlbar$ coefficients for the basic part of the cohomology of the Shimura variety by Li-Huerta \cite[Theorem A]{LiHuertaPlectic}.
\end{Rem}
If $K_p$ is hyperspecial, then the action of $\gal_{E}$ on the cohomology of the Shimura variety is unramified. Theorem \ref{Thm:IntroPlectic} then gives us commuting partial Frobenius operators acting on the cohomology. It turns out these partial Frobenii satisfy a version of the Eichler--Shimura relation of Blasius--Rogawski \cite[Section 6]{BlasiusRogawski}. 
\begin{mainCor}[Corollary \ref{Cor:PartialEichlerShimura}] \label{Cor:IntroPartialEichlerShimura}
Suppose that $\ell$ is coprime to the order of $\pi_0(Z(G))$. If $K=K_pK^p$ with $K_p$ hyperspecial and with $K^p$ sufficiently small, then there exist mutually commuting partial Frobenius operators $\sigma_{i}$ acting on $R\Gamma(\mathbf{Sh}_{K}(\mathsf{G},\mathsf{X})_{\overline{E}}, \Lambda)$, such that for any lift $\sigma$ of $\operatorname{Frob}_q$, we have
    \[\sigma=\prod_{i=1}^r \sigma_i\]
as endomorphisms of $R\Gamma(\mathbf{Sh}_{K}(\mathsf{G},\mathsf{X})_{\overline{E}}, \Lambda)$. Moreover, each $\sigma_i$ is annihilated by the renormalized Hecke polynomial $H_{\mu_i}(X)\in \mathcal{H}_{\mathcal{G}_i}[X]$ constructed in Section \ref{subsub:EichlerShimura}.
\end{mainCor}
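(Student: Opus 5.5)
The plan is to deduce the statement from Theorem \ref{Thm:IntroPlectic} together with the spectral action of Fargues--Scholze on the Igusa stack side. Since $p$ splits completely in $\mathsf{F}$, we have $G=\prod_i H_i$ with $H_i=\h\otimes_{\mathsf{F}}\mathsf{F}_{v_i}$ and $\mu=(\mu_i)_i$, where $\mu_i$ is defined over $E_i$. The lift of $R\Gamma(\mathbf{Sh}_K\gx_{\overline{E}},\Lambda)$ to $D(\prod_i W_{E_i},\Lambda)$ comes from the Igusa stack description. By Theorem \ref{Thm:IntroIgusaGeneric} and proper base change, the cohomology is
\[
R\Gamma_c(\bun_{G,\mu^{-1}}, T_{\mu}\,\mathcal{F}_{K^p})^{K_p},
\]
where $\mathcal{F}_{K^p}=R\overline{\pi}_{\mathrm{HT},*}\Lambda$ is pushed from the Igusa stack and $T_\mu$ is the Hecke operator attached to $r_\mu=\boxtimes_i r_{\mu_i}$. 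The $W_E$-action arises from the Hecke operator. Because $\bun_G=\prod_i\bun_{H_i}$, the operator $T_\mu$ factors as a composite of commuting Hecke operators $T_{\mu_i}$ attached to the legs $\operatorname{Div}^1_{E_i}$. These give the $W_{E_i}$-actions, and the diagonal restriction recovers the $W_E$-action by the factorization property of Hecke operators. I take this to be the content of the proof of Theorem \ref{Thm:Plectic}.

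For hyperspecial $K_p$, the first step is to observe that the $W_{E_i}$-actions are unramified. The Satake-type argument shows that $T_{\mu_i}$, applied to the $K_{p,i}$-invariants, only involves the unramified spectral action. Concretely, the sheaf $\mathcal{F}_{K^p}$ restricted to the relevant level comes from Hecke eigen-data through the spherical Hecke algebra, so each $W_{E_i}$-action factors through $W_{E_i}/I_{E_i}$. I would then define $\sigma_i$ as the action of a Frobenius lift $\operatorname{Frob}_{q_i}$ in the $i$-th factor. The $\sigma_i$ commute because the factors of $\prod_i W_{E_i}$ commute. The identity $\sigma=\prod_i\sigma_i$ follows from two facts: the image of $\operatorname{Frob}_q$ under $W_E\to\prod_i W_{E_i}$ is a tuple of Frobenius lifts (here $E\supset E_i$ with suitable residue degrees; I would check the normalization so that powers match), and inertia acts trivially.

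For the Eichler--Shimura relation, I would argue through the spectral action. The spherical Hecke algebra $\mathcal{H}_{\mathcal{G}_i}$ acts through the center of the category of $H_i$-sheaves via the Bernstein center, which maps from global functions on the unramified part of the stack of $L$-parameters. Along the unramified locus, $T_{\mu_i}$ acts on the invariants via the vector bundle $r_{\mu_i}\circ\phi$. The Frobenius $\sigma_i$ then corresponds to $r_{\mu_i}(\phi(\operatorname{Frob}))$, up to the $\sqrt{p}$-twist. By Cayley--Hamilton, $\sigma_i$ is annihilated by its characteristic polynomial, viewed as a function on parameters, and via Satake this is exactly the renormalized Hecke polynomial $H_{\mu_i}(X)$.

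The main obstacle is this last identification, done integrally with $\Lambda$ torsion coefficients. One needs that the relevant endomorphisms lie in the image of the spherical Hecke algebra. One also needs that the spectral action on $K_p$-invariants factors through $\mathcal{H}_{\mathcal{G}_i}$, compatibly with the Satake isomorphism in the form of \cite{vdH}. I would follow the hyperspecial argument of \cite{vdH} factor by factor, and use the condition that $\ell$ is coprime to $|\pi_0(Z(G))|$ to pass between $G$ and its factors.
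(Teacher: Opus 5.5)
Your overall architecture matches the paper's. You split $\mathcal{V}_\mu\simeq\boxtimes_i\mathcal{V}_{\mu_i}$ (equivalently, the Hecke operator over $G=\prod_i H_i$) to get the plectic lift, use the Cayley--Hamilton annihilator from the spectral Hecke polynomial, and then apply van den Hove's argument from Theorem \ref{Thm:EichlerShimura} to land in $\mathcal{H}_{\mathcal{G}_i}[X]$. The gap is in how you define the partial Frobenii, which fails in two ways.

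First, you take $\sigma_i$ to be the action of a Frobenius lift $\operatorname{Frob}_{q_i}$ of $W_{E_i}$. But $E$ is the compositum of the $E_i$, and $W_E\to W_{E_i}$ is the inclusion. So a lift of $\operatorname{Frob}_q$ maps to a lift of $\operatorname{Frob}_{q_i}^{[k_E:k_{E_i}]}$, and with your definition you get $\sigma=\prod_i\sigma_i^{[k_E:k_{E_i}]}$. The right partial Frobenius is the image of the $E$-Frobenius, not a Frobenius of $E_i$.

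Second, your definition presupposes that each partial $W_{E_i}$-action is unramified, and you do not establish this. The sentence that ``$\mathcal{F}_{K^p}$ comes from Hecke eigen-data through the spherical Hecke algebra'' is not an actual argument. The spectral/Bernstein-center argument on $K_p$-invariants gives conclusions of the form in Theorem \ref{Thm:EichlerShimura} (inertia acting unipotently), not triviality of each partial inertia action. The available geometric input (smooth integral models plus Lan--Stroh) gives unramifiedness only for the diagonal $W_E$-action.

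The paper's proof avoids both problems. Fix one lift $\sigma_0\in W_E$ of $\operatorname{Frob}_q$, and let $\sigma_i$ be the action of its image in $W_{E_i}$ through the $i$-th factor of $\mathcal{V}_\mu\simeq\boxtimes_i\mathcal{V}_{\mu_i}$, i.e.\ of $(1,\dots,\sigma_0,\dots,1)\in\prod_i W_{E_i}$ on the lift of Theorem \ref{Thm:Plectic}. Then:
\begin{itemize}
\item The $\sigma_i$ commute.
\item $\sigma_0=\prod_i\sigma_i$ is exactly the $W_E$-equivariance of the tensor decomposition.
\item $\sigma=\sigma_0$ for every other lift $\sigma$, because the total $W_E$-action is unramified. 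No partial unramifiedness is needed.
\item $H^{\mathrm{spec}}_{H_i,V_{\mu_i},\sigma_i}$ kills $\sigma_i$ via \eqref{eq:SpectralHeckePolyMap}. What must be independent of the lift is this polynomial after restriction to the unramified locus of $\operatorname{Par}_{H_i}$, not the operator $\sigma_i$ itself.
\item Van den Hove's argument then identifies that polynomial with $H_{\mu_i}$, formed with respect to this $E$-Frobenius.
\end{itemize}

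Two smaller corrections. The cohomology is $T^{[1]}_\mu(\mathcal{F}_{\mathbb{W}}[-d])(-d/2)$ as in Theorem \ref{Thm:WeilCohShimVar}, a restriction to the $[1]$-stratum, and you pass to level $K_p$ via the acceptability hypothesis of Proposition \ref{Prop:AcceptableInvariants}; it is not $R\Gamma_c(\bun_{G,\mu^{-1}},\cdot)^{K_p}$. And the condition that $\ell$ is coprime to $|\pi_0(Z(G))|$ is what makes the spectral action available at all; it is not a device for passing between $G$ and the $H_i$.
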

Note that Corollary \ref{Cor:IntroPartialEichlerShimura} has previously been announced by Zhu \cite[Theorem 3.2.2]{zhu2025ICM}. When $E=\qp$, Corollary \ref{Cor:PartialEichlerShimura} is also proved by Lee \cite[Theorem 1.0.4]{Lee} by a completely different method.
\begin{Rem}
Corollary \ref{Cor:IntroPartialEichlerShimura} is new even when $\mathsf{F}=\mathbb{Q}$ and $E \not=\qp$. In this case, it verifies the conjectural Eichler--Shimura relation of Blasius--Rogawski at hyperspecial level, see \cite[Section 6]{BlasiusRogawski}. See \cite[Corollary 1.2]{WuSEqualsT} for the Hodge type case, and \cite[the discussion preceding Theorem III]{DvHKZIgusaStacks} for an overview of previous work on the conjecture.
\end{Rem}
\begin{Rem}
We also prove a version of Corollary \ref{Cor:IntroPartialEichlerShimura} at Iwahori level, see Theorem \ref{Thm:EichlerShimura}. This generalizes \cite[Theorem 1.1]{vdH}, which deals with the case of Hodge type Shimura varieties, generalizing \cite[Theorem III]{DvHKZIgusaStacks} which dealt only with the case that $G$ is unramified. Our proof uses the methods of van den Hove \cite{vdH} in combination with Theorem \ref{Thm:WeilCohShimVar}.
\end{Rem}

\subsubsection{} We now give a brief summary of our other cohomological results. For the sake of brevity, and because many are generalizations of results proved in \cite{DvHKZIgusaStacks}, we do not state these results in full here. \smallskip

Theorem \ref{Thm:WeilCohShimVar} expresses the (compactly supported) cohomology of the Shimura variety of level $K^p$ in terms of a Hecke operator applied to the sheaf on $\bun_{G,\fpbar}$, given by the (exceptional) pushforward of a constant sheaf along $\igs_{K^p} \gx_{\fpbar} \to \bun_{G,\fpbar}$. The Hodge type analogue is \cite[Theorem IV]{DvHKZIgusaStacks}, although we give a different proof. \smallskip

Theorem \ref{Thm:SV-FS-Compatibility} gives a weak compatibility result between the cohomology of Shimura varieties and the Fargues--Scholze local Langlands correspondence, it is the abelian type analogue of \cite[Theorem II]{DvHKZIgusaStacks} and is proved analogously. For compact Shimura varieties of abelian type, we use Matsushima's formula to prove a strengthening of Theorem \ref{Thm:SV-FS-Compatibility}, see Corollary \ref{Cor:WeakCompatibility}. This corollary is crucially used in the proof of Theorem \ref{Thm:IntroCompatibility}. \smallskip

Theorem \ref{Thm:MantovanProduct} is a version of Mantovan's product formula (see \cite[Theorem 1]{Mantovan}) expressing the compactly supported cohomology of the Shimura varieties in terms of the compactly supported cohomology of the fibers of $\pi_{\mathrm{HT}}$\footnote{The Mantovan product formula is usually stated in terms of the cohomology of the fibers of $\pi_{\mathrm{HT}}^{\circ}$, which can often be identified with the cohomology of perfect Igusa varieties.} and the cohomology of local Shimura varieties. In the non-compact case, this generalizes \cite[Proposition 3.17]{Hamann-Lee} and in the compact case it generalizes \cite[Theorem 8.5.7]{DvHKZIgusaStacks}. \smallskip

\begin{Rem}
We do not prove a generalization of \cite[Theorem VII]{DvHKZIgusaStacks}, which gives fiber product diagrams on the level of canonical integral models of Shimura varieties. However, see Conjecture \ref{Conj:IgusaMainInt} for a precise conjecture. 
\end{Rem}
\begin{Rem}
We do not address torsion vanishing in this paper, since the results of \cite{YangZhuGeneric} apply to most Shimura varieties of abelian type with parahoric level at $p$. For the same reason, we do not attempt to prove perversity of the Igusa sheaf, but see Conjecture \ref{Conj:ULAPerverse} for a precise conjecture. 
\end{Rem}

\begin{Rem}
We expect a version of Theorem \ref{Thm:WeilCohShimVar} to hold with coefficients in Berkovich motives as in \cite{scholze2026berkovichmotives} and \cite{ScholzeMotivicGeometrization}. Taking $\ell$-adic realizations for $\ell \not=p$ this should then recover Theorem \ref{Thm:WeilCohShimVar} as well as a version with $\zl$ or $\ql$-coefficients. More interestingly, there is a $p$-adic or Hyodo--Kato realization (as in \cite{HyodoKatoStacks}) of Berkovich motives, see \cite[Theorem 1.7]{AokiBerkovich}. The Hyodo--Kato realization of the Igusa sheaf is expected to encode the rigid cohomology of Igusa varieties, see \cite[Remark 1.4.7]{HyodoKatoStacks}.
\end{Rem}

\begin{Rem}
Recently, Scholze has suggested in a talk \cite[49:00 onwards]{IAS2024AnalyticPrismatization} that there should be a ``locally analytic'' version of the Igusa stack diagram which might be used to prove results about the locally analytic vectors in the completed cohomology of Shimura varieties; this is the subject of a work-in-progress of Ansch\"utz--Le Bras--Rodr\'iguez Camargo--Scholze. A variant of such a locally analytic Igusa stack diagram is worked out in \cite[Section 10]{howe2026inscriptiontwistorspadicperiods}, using the existence of an Igusa stack as an input. 
\end{Rem}

\subsection{The proof of Theorem \ref{Thm:IntroIgusaGeneric}} \label{sub:IntroGeometric} Let $\gxtwo$ be a Shimura datum of abelian type with reflex field $\mathsf{E}_2$. Work of Deligne and Lovering, \cite{DeligneVarietes}, \cite{LoveringAutomorphic}, tells us that we can find a diagram of Shimura data
\begin{equation*}
    \begin{tikzcd}
        & \gxthree \arrow{dr} \arrow{dl} \\
        \gxtwo && \gx,
    \end{tikzcd}
\end{equation*}
where $\gx$ is of Hodge type, where the left arrow is an ad-isomorphism and where the right arrow induces an isomorphism of derived groups. We moreover have fairly precise control over the reflex fields of $\gx$ and $\gxthree$. We will start with the Igusa stack $\igs \gx := \varprojlim_{K^p} \igs_{K^p}\gx$ and construct an Igusa stack for $\gxtwo$, the $\circ$-variants are treated in the same way. \smallskip

\step\label{step:tori} We construct Igusa stacks for tori using the geometric interpretation of class field theory of Fargues and Fargues--Scholze. More precisely, we consider the v-sheaf $[\ul{\mathsf{T}(\mathbb{Q})^-
\backslash \mathsf{T}(\af)} / \ul{T(\qp)}]$ over $\fpbar$ and explicitly write
down a Galois descent datum for $\fpbar/\fp$ using global class field theory and the Hodge cocharacter $\mu$. This defines the Igusa stack
$\mathrm{Igs}(\mathsf{T}, \{\mu\})$ over $\fp$. To check that this is an Igusa stack, we explicitly write down a presentation of $\bun_{T,\mu^{-1}}$ over $\spd \fp$, and give an explicit formula for the Beauville--Laszlo map
$\mathrm{BL} \colon \spd E \to \bun_T$ in this presentation. To do this, we reduce to the case that $T_{\qp}=\operatorname{Res}_{E / \qp} \mathbb{G}_m$
for some finite extension $E$ of $\qp$, and then use the geometric interpretation of Lubin--Tate theory (for $E$) of Fargues and Fargues--Scholze. Finally, we take the base change of
$\mathrm{Igs}(\mathsf{T}, \{\mu\})$ along $\mathrm{BL}$ and show that it recovers the canonical model of $\mathbf{Sh}(\mathsf{T},\{\mu\})$. This is carried out in \Cref{Sec:Tori}.

\step\label{step:ascend} To construct Igusa stacks for $\gxthree$, we note that following diagram is Cartesian (where $\mathrm{ab}$ means maximal abelian quotient)
\begin{equation*}
    \begin{tikzcd}
        \gxthree \arrow{r} \arrow{d} & \gxthreeab \arrow{d} \\ 
        \gx \arrow{r}&  \gxab.
    \end{tikzcd}
\end{equation*}
Thus $\gxthree$ sits inside $\gxthreeab \times \gx$. Since $\gx$ admits an Igusa stack by \cite[Theorem D]{KimFunctorial}, and $\gxthreeab$ admits an Igusa stack by Step 1, it is relatively straightforward to prove that their product admits an Igusa stack. We now apply \cite[Theorem C]{KimFunctorial} to deduce that $\gxtwo$ admits an Igusa stack. 

\step\label{step:descend} We descend the Igusa stack for $\gxthree$ down to an Igusa stack for $\gxtwo$. The Shimura variety $\mathbf{Sh}\gxthree$ has an action of Deligne's locally profinite group $\mathcal{A}(\g_3)$, and it is well known\footnote{We were not able to locate a proof in the literature, and so we have given a proof of this fact in Proposition \ref{Prop:PushoutShimura} using work of Deligne \cite{DeligneVarietes}.} that $\mathbf{Sh}\gxtwo$ is the pushout of $\mathbf{Sh}\gxthree$ along the natural morphism of locally profinite group schemes $\ul{\mathcal{A}(\g_3)} \to \ul{\mathcal{A}(\g_2)}$. 

It follows from the functoriality results of one of us (DK) \cite[Theorem B]{KimFunctorial} that $\ul{\mathcal{A}(\g_3)}$ acts on $\igs \gxthree$, and in fact it acts through the quotient\footnote{The group $G_3(\qp)$ injects into $\mathcal{A}(\g_3)$ but does not typically have closed image, thus our $\ul{\mathcal{A}(\g_3)}^{p}$ does not agree with the v-sheaf associated to the topological quotient $\mathcal{A}(G_3)/G_3(\qp)$.} $\ul{\mathcal{A}(\g_3)}^{p}:=\ul{\mathcal{A}(\g_3)}/\ul{G_3(\qp)}$. Roughly speaking, we then show that the pushout of $\igs \gxthree$ along $\ul{\mathcal{A}(\g_3)}^{p} \to \ul{\mathcal{A}(\g_2)}^{p}$ defines an Igusa stack for $\gxtwo$. Our actual argument proceeds in a different way, using the perspective of uniformization maps of \cite{KimFunctorial}, essentially because of subtleties of $2$-categorical nature. 

\step\label{step:finitelevel} We go from the Igusa stack $\igs \gxtwo$ to the finite level object $\igs_{K^p} \gxtwo$. This is straightforward if $\zg(\Q) \subset \zg(\af)$ is closed (i.e., Milne's axiom SV5 holds), where $\zg$ is the center of $\g$. However, if SV5 fails then this is somewhat painful, see \Cref{Cor:FiniteLevelIgusa1}. Here again we use the formalism from \cite{KimFunctorial}.

\step\label{step:dualizing} We prove that the Igusa stack $\igs_{K^p}\gxtwo$ is $\ell$-cohomologically smooth of $\ell$-dimension zero, with trivial dualizing sheaf. If SV5 holds, then the proof of \cite[Theorem 8.3.1]{DvHKZIgusaStacks} essentially goes through. In general, we need to compute the relative dualizing sheaves of gerbes for locally profinite groups, which we do in Lemma \ref{Lem:GerbeDualizing}.

\subsection{Organization of the paper} Let us give a brief guide to help the reader navigate this paper.

Section \ref{Sec:Prelims} contains some preliminaries. In particular, we recall the abstract definition of Igusa stack given in \cite{KimFunctorial}, and the equivalence between Igusa stacks and global uniformization maps, established in \textit{loc. cit.}.

Sections \ref{Sec:Tori} and \ref{Sec:IgusaStacksAbelianType} construct the Cartesian diagram in Theorem \ref{Thm:IntroIgusaGeneric}. In Section \ref{Sec:Tori}, we prove Theorem \ref{Thm:IntroIgusaGeneric} for toral-type Shimura data. In Section \ref{Sec:IgusaStacksAbelianType} we construct Igusa stacks for arbitrary Shimura data of abelian type, using as inputs the results of Section \ref{Sec:Tori} and the results of \cite{KimFunctorial}. In particular, the Cartesian diagram in Theorem \ref{Thm:IntroIgusaGeneric} is proved at the end of this section.

In Section \ref{Sec:Cohomology}, we first complete the proof of Theorem \ref{Thm:IntroIgusaGeneric} by proving that the Igusa stack is $\ell$-cohomologically smooth with constant dualizing sheaf. This section also contains the cohomological results described in Section \ref{Sec:CohomSummary}. 

In Section \ref{Sec:CohCompact}, we study the cohomology of compact Shimura varieties of abelian type and establish its compatibility with Fargues--Scholze parameters for certain classical groups of type $A$, $B$, $D$, and $C_2$. These compatibilities are then used in the proof of Theorem \ref{Thm:IntroCompatibility}, which occupies most of Section \ref{Sec:Compatibility}. We prove Corollary \ref{Cor:IntroActualEvenOrthogonalLL} in Section \ref{Sec:Compatibility} as well. 

Finally, Appendix \ref{sub:SixFunctor} is devoted to extending results of \cite[Section 8.2]{DvHKZIgusaStacks} on canonical Frobenius descent data, using six-functor formalisms.

\subsection{Notation and conventions} \label{Sec:Conventions}

\begin{itemize}[leftmargin=*]
  \item A map $f \colon G \to H$ of reductive groups over a field $F$ is an \textit{ad-isomorphism} if $f(Z(G)) \subset Z(H)$ and the induced map $f^\mathrm{ad} \colon G^\mathrm{ad} \to H^\mathrm{ad}$ is an isomorphism.
  \item Let $\perf$ be the category of perfectoid spaces in characteristic $p$.
  \item If $T$ is a topological space, we denote by $\ul{T}$ the sheaf on
    $\perf$ defined such that $\ul{T}(X)$ is the set of continuous maps from the
    underlying topological space $\lvert X \rvert$ of $X$ to $T$.
  \item For every $X \in \perf$ we denote by $\phi_X \colon X \to X$ the
    absolute Frobenius automorphism. This naturally extends to an absolute
    Frobenius $\phi \colon X \to X$ for an arbitrary v-stack $X \in
    \operatorname{Stk}(\perf_\mathrm{v})$, and induces $\phi \colon
    \mathcal{Y}_{[0,\infty)}(R, R^+) \to \mathcal{Y}_{[0,\infty)}(R, R^+)$ as
    well as $\phi \colon \qpbr \to \qpbr$ a lift of the absolute Frobenius.
  \item For $G$ an algebraic group or a profinite group, by a $G$-torsor we mean
    a left $G$-torsor unless specified otherwise.
  \item Let $G/\qp$ be a connected reductive group. By a $G$-isocrystal on a
    perfect $\fp$-algebra $R$ we mean a $G$-torsor $\mathscr{P}$ on $\spec
    W(R)[1/p]$ together with a morphism $\phi_\mathscr{P} \colon \phi^\ast
    \mathscr{P} \dashrightarrow \mathscr{P}$. Given an element $b \in G(\qpbr)$,
    we denote by $\mathscr{P}_b$ the $G$-isocrystal
    \[
      (G_{\qpbr}, \phi^\ast G_{\qpbr} \cong G_{\qpbr} \xrightarrow{r_{b^{-1}}}
      G_{\qpbr})
    \]
    where $r_{b^{-1}}$ is right multiplication by $b^{-1}$. We also denote by
    $\mathscr{P}_b$ the induced map $\spd \fpbar \to \bung$, see
    \cite[Theorem~7.14]{GleasonIvanovZillinger}. Its automorphism group is given
    by
    \[
      G_b(\qp) = \lbrace g \in G(\qpbr) : g b \phi(g)^{-1} = b \rbrace,
    \]
    with the right translation action. 
  \item For $E/F$ a Galois extension, we let $\gal(E/F)$ act from the left on
    $E$ and act from the right on $\spec E$.
  \item Given $E$ a local field, we use the geometric normalization for the
    local Artin map $\mathrm{Art}_E \colon E^\times \hookrightarrow
    \gal(E^\mathrm{ab}/E)$ so that the uniformizer maps to a lift of
    $\phi^{-[k_E:\fp]} \in \gal(\fpbar/k_E)$. For the global Artin map, we use
    the normalization that is compatible with the local Artin map, via inclusion
    in the source and restriction in the target. For example,
    $\mathrm{Art}_\mathbb{Q} \colon \widehat{\mathbb{Z}}^\times \cong
    \mathbb{Q}^\times \backslash \mathbb{A}_\mathbb{Q}^\times / \mathbb{R}_{>0}
    \to \gal(\mathbb{Q}(\zeta_\infty)/\mathbb{Q})$ satisfies
    $\mathrm{Art}_\mathbb{Q}(\gamma)(\zeta_n) = \zeta_n^{\gamma \bmod{n}}$.

    \item We will denote Shimura data and their reflex fields by sans serif fonts, e.g., $\gx$ for a Shimura datum and $\mathsf{E}$ for its reflex field. For a prime $p$ and a place $v$ of $\mathsf{E}$ above $p$, we will write $E$ for the completion of $\mathsf{E}$ at $v$, which has ring of integers $\mathcal{O}_E$ and residue field $k_E$. We will then write $G$ for the base change of $\mathsf{G}$ to $\qp$.

    \item Our (conjugacy class of) Hodge cocharacter(s), normalized as in \cite[Section 1.3.1]{KisinPoints}, is denoted by $\mu$ and is defined over $\mathsf{E}$. We consider this as a $G(\qpbar)$-conjugacy class using the place $v$ of $\mathsf{E}$. We use $\mathrm{Gr}_G$ to denote the $\mathbf{B}_\mathrm{dR}^+$-affine Grassmannian of Scholze--Weinstein, with its stratification by Schubert cells as defined in \cite[Definition 19.2.2]{ScholzeWeinsteinBerkeley} (this
    agrees with \cite{PappasRapoportShtukas} but is the opposite of
    \cite{CaraianiScholzeCompact}). The Shimura variety over $E$ with infinite level at $p$ has a Hodge--Tate period map with target in $\mathrm{Gr}_{G,{\mu^{-1}}}$. Its Newton stratification is moreover indexed by the $\mu^{-1}$-admissible set $\bgmu \subset B(G)$.
\end{itemize}

\subsection{Acknowledgements} We are very grateful to David Hansen for suggesting to us that Theorem \ref{Thm:IntroIgusaGeneric} could be used to prove Theorem \ref{Thm:IntroCompatibility}, and for sketching the strategy of the proof. We thank Linus Hamann sincerely for many helpful conversations and for patiently answering our questions regarding \cite{HamannGSp4}, \cite{BMHN}. We would like to thank Sandeep Varma for answering our questions about \cite{Varma}, and Lucas Mann for suggesting the argument in the Appendix. Thanks also to Alexander Bertoloni Meli, Rui Chen, Yuanyang Jiang, Tasho Kaletha, Masao Oi, Peter Scholze, Jack Sempliner, Sug Woo Shin, Matteo Tamiozzo, Richard Taylor, Alex Youcis, and Xinwen Zhu for helpful conversations and correspondences.
}

{\section{Preliminaries} \label{Sec:Prelims}

\NewDocumentCommand\shgx{o t0 t? t\d}{\mathbf{Sh}\IfValueT{#1}{_{#1}}\gx\IfBooleanTF{#4}{^{\IfBooleanT{#2}{\circ,}\IfBooleanT{#3}{?,}\diamondsuit}}{\IfBooleanT{#2}{^\circ}\IfBooleanT{#3}{^?}}}

We work
with the category of sheaves (or stacks) on $\perf$ with the v-topology. For a
reductive group $G/\qp$, we may consider the moduli space $\bung$ of $G$-torsors
over the Fargues--Fontaine curve, and given a geometric cocharacter $\mu \colon
\mathbb{G}_{m,\qpbar} \to G_{\qpbar}$ the corresponding subset $B(G, \mu^{-1})$
defines an open v-subsheaf $\bungmu \subseteq \bung$. For each scheme or adic
space $X/\qp$, there is a natural v-sheaf $X^\diamondsuit \to \spa \qp$. We refer the reader to \cite[Section~2.1]{DvHKZIgusaStacks} for
the details.

\subsection{Igusa stacks}

\subsubsection{}
Let $\gx$ be a Shimura datum with reflex field $\mathsf{E}$. We choose a place
$v \mid p$ of $\mathsf{E}$ and write $E = \mathsf{E}_ v$ and $G =
\mathsf{G}_{\qp}$. There is an \'{e}tale scheme $X_\ast(\g) \sslash \g$ over
$\mathbb{Q}$ that parametrizes cocharacters of $\g$ up to conjugacy, where the Hodge
cocharacter defines an open and closed embedding $\mu \colon \spec \mathsf{E}
\hookrightarrow X_\ast(\g) \sslash \g$. Over $\qp$, we then get a conjugacy class
$\lbrace \mu \colon \mathbb{G}_{m,\qpbar} \to G_{\qpbar} \rbrace$ upon fixing an
embedding $E \hookrightarrow \qpbar$. 

\subsubsection{}
For each neat open compact subgroup $K \subseteq \gaf$ there is a Shimura
variety $\shgx[K]$ over $\mathsf{E}$. We may base change to $E$ and consider the
associated v-sheaf $\shgx[K]\d_E$. We also consider the infinite level Shimura
variety
\[
  \shgx\d_E = \varprojlim_{K \to \{1\}} \shgx[K]\d_E,
\]
where the limit is taken in the category of v-sheaves. Because the locally
profinite group $\gaf$ acts continuously on the tower, the limit $\shgx\d_E$ carries
an action of $\ul{\gaf}$.

\subsubsection{}
When $\gx$ is of abelian type, \cite{ImaiMieda} constructs for every neat open
compact subgroup $K \subseteq \gaf$ a quasi-compact open subset
\[
  \shgx[K]0_E \subseteq \shgx[K]^\mathrm{an}_E
\]
of the rigid analytification of $\shgx[K]_E$ called the \textit{potentially
crystalline locus}. It is uniquely characterized by quasi-compactness together
with the property that its classical points are precisely those whose associated
$p$-adic Galois representation is potentially crystalline. The potentially
crystalline locus is compatible with pulling back along $K^\prime
\hookrightarrow K$, and hence we obtain a quasi-compact open v-subsheaf
\[
  \shgx0\d_E \subseteq \shgx\d_E.
\]
When $\mathsf{G}$ is $\mathbb{Q}$-anisotropic modulo center, this inclusion is
an equality, see \cite[Example 5.20]{ImaiMieda}.

\subsubsection{}
As discussed in \cite[Section~5.4]{HansenOberwolfach},
\cite[Section~2.6]{PappasRapoportShtukas}, \cite[Section~4]{KimFunctorial},
there is a $\ul{G(\qp)}$-equivariant morphism
\[
  \pi_\mathrm{HT} \colon \shgx\d_E \to \grgmu,
\]
called the Hodge--Tate period map. We recall also the Beauville--Laszlo map
\[
  \mathrm{BL} \colon \grgmu \to \bungmu,
\]
which is surjective in the pro-\'{e}tale topology, see the proof of
\cite[Proposition~6.4.1]{DvHKZIgusaStacks}.

\begin{Def}[{\cite[Definition~5.5]{KimFunctorial}}] \label{Def:IgusaStack}
  Let $\gx$ be a Shimura datum, choose a place $v \mid p$ of $\mathsf{E}$, and
  consider $? \in \{\circ, \emptyset\}$. An \textit{Igusa stack} (for $?$) is a v-stack
  $\igs^?\gx$ carrying a $\ul{\g(\afp)}$-action, together with a
  $\ul{\g(\afp)}$-equivariant map $\bar{\pi}_\mathrm{HT} \colon \igs^?\gx \to
  \bungmu$, and a $2$-Cartesian diagram
  \[ \begin{tikzcd}
    \shgx?\d_E \arrow{r}{\pi_\mathrm{HT}} \arrow{d} & \grgmu
    \arrow{d}{\mathrm{BL}} \\ \igs^?\gx \arrow{r}{\bar{\pi}_\mathrm{HT}} &
    \bungmu
  \end{tikzcd} \]
  such that the induced isomorphism
  \[
    \shgx?\d_E \cong \igs^?\gx \times_{\bungmu} \grgmu
  \]
  is equivariant for the natural $\ul{\g(\af)}$-actions on both sides, and $\phi
  \times \id$ acts trivially on the right hand side.
\end{Def}

Oftentimes, it is more convenient to encode the information of the Igusa stack
as a single map between v-sheaves.

\begin{Def}[{\cite[Definition~5.11]{KimFunctorial}}] \label{Def:UniversalUniformization}
  In the setting of \Cref{Def:IgusaStack}, a \textit{global uniformization} is a
  map of v-sheaves
  \[
    \Theta \colon \shgx?\d_E \times_{\bungmu} \grgmu \to \shgx?\d_E
  \]
  such that
  \begin{enumerate}
    \item the map $\Theta$ is $\ul{\g(\af) \times G(\qp)}$-equivariant, where
      the action on $\shgx?\d_E$ is given via $\g(\af) \times G(\qp) = \g(\afp)
      \times G(\qp) \times G(\qp) \xrightarrow{\mathrm{pr}_{13}} \g(\afp) \times
      G(\qp) = \g(\af)$,
    \item the map $\Theta$ intertwines the $\phi \times \id$-action on the
      source and the trivial action on the target,
    \item the diagrams
      \[ \begin{tikzcd}
        \shgx?\d_E \arrow[equals]{r} \arrow{d}{(\id,\pi_\mathrm{HT})} &
        \shgx?\d_E \arrow{d}{\pi_\mathrm{HT}} \\ \shgx?\d_E \times_{\bungmu}
        \grgmu \arrow{r}{\mathrm{pr}_2} \arrow{ru}{\Theta} & \grgmu
      \end{tikzcd} \]
      \[ \begin{tikzcd}
        \shgx?\d_E \times_{\bungmu} \grgmu \times_{\bungmu} \grgmu
        \arrow{r}{\Theta \times \id} \arrow{d}{\mathrm{pr}_{13}} & \shgx?\d_E
        \times_{\bungmu} \grgmu \arrow{d}{\Theta} \\ \shgx?\d_E
        \times_{\bungmu} \grgmu \arrow{r}{\Theta} & \shgx?\d_E
      \end{tikzcd} \]
      commute.
  \end{enumerate}
\end{Def}

We may now summarize the results of \cite{KimFunctorial} as follows.

\begin{Thm} \label{Thm:IgusaStackKim}
  \def\shgxpqd{\mathbf{Sh}\gxp^{?,\diamondsuit}_{E^\prime}}
  \def\bungpmu{\bun_{G^\prime,\mu^{\prime-1}}}
  \def\flgpmud{\Fl_{G^\prime,\mu^{\prime-1}}^\diamondsuit}
  Let $\gx$ be a Shimura datum with reflex field $\mathsf{E}$.
  \begin{enumerate}[$($i\,$)$]
    \item The existence of an Igusa stack $\igs^?\gx$
      is equivalent to the existence of a global uniformization $\Theta$.
    \item If there exists an Igusa stack $\igs^?\gx$, then it is unique up to
      unique isomorphism. Equivalently, if there exists a global uniformization
      $\Theta$, then it is unique. 
    \item Let $\gx \to \gxp$ be a morphism of Shimura data inducing a morphism
      $\shgx?\d_E \to \shgxpqd$. If there exist Igusa stacks $\igs^?\gx$ and
      $\igs^?\gxp$, then there exists an
      induced map $\igs^?\gx \to \igs^?\gxp$ which is unique up to unique isomorphism. Equivalently, the global
      uniformization maps fit in a commutative diagram
      \[ \begin{tikzcd}[row sep=small]
        \shgx?\d_E \times_{\bungmu} \grgmu \arrow{r}{\Theta} \arrow{d} &
        \shgx?\d_E \arrow{d} \\ \shgxpqd \times_{\bungpmu} \operatorname{Gr}_{G', \mu^{\prime-1}}
        \arrow{r}{\Theta^\prime} & \shgxpqd.
      \end{tikzcd} \]
  \end{enumerate}
\end{Thm}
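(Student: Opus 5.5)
The statement is a summary of \cite{KimFunctorial}, so the plan is to reconstruct the dictionary between Igusa stacks and global uniformizations and read off (ii) and (iii) from it.

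\textbf{Part (i), Igusa stack to uniformization.} Given an Igusa stack $\igs^?\gx$ with its $2$-Cartesian square, I set
\[
  \shgx?\d_E \times_{\bungmu} \grgmu \cong \igs^?\gx \times_{\bungmu} \grgmu \times_{\bungmu} \grgmu .
\]
Projecting away the middle factor, via $\mathrm{pr}_{13}$, and using the Cartesian square once more gives a map $\Theta$ to $\shgx?\d_E$. The equivariance in Definition~\ref{Def:UniversalUniformization}(1) comes from the equivariance built into Definition~\ref{Def:IgusaStack}. The $\phi\times\id$ condition holds because $\phi$ acts trivially on v-stacks up to the canonical $2$-isomorphism. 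The two commutative diagrams in (3) are formal consequences of associativity of fiber products.

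\textbf{Part (i), uniformization to Igusa stack.} Given $\Theta$, I define $\igs^?\gx$ as the v-stack quotient of $\shgx?\d_E$ by the groupoid
\[
  R=\shgx?\d_E\times_{\bungmu}\grgmu \rightrightarrows \shgx?\d_E,
\]
whose two maps are $\mathrm{pr}_1$ and $\Theta$. The conditions in (3) supply the unit and composition of the groupoid, and the $G(\qp)$-equivariance ensures the result is compatible with the $\ul{G(\qp)}$-action. One then checks that the natural map from $\shgx?\d_E$ to $\igs^?\gx\times_{\bungmu}\grgmu$ is an isomorphism. This uses that $\mathrm{BL}$ is a pro-étale surjection, so $\bungmu$ is itself the quotient of $\grgmu$ by the groupoid $\grgmu\times_{\bungmu}\grgmu$; the required identification is then the base change of this groupoid presentation. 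I expect this descent and effectivity step, with its $2$-categorical bookkeeping, to be the main obstacle.

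\textbf{Part (ii).} Uniqueness of $\Theta$ is proved on points $(x,y)$ with $\pi_{\mathrm{HT}}(x)$ and $y$ having the same image in $\bungmu$. The first diagram in (3) forces $\Theta(x,\pi_{\mathrm{HT}}(x))=x$, and equivariance then forces $\Theta$ on the orbit. To reach all points I would reduce, by v-descent, to the locus where $\pi_{\mathrm{HT}}$ and $\mathrm{BL}$ interact through $G(\qp)$-orbits on fibers. The $\phi$-invariance kills the remaining ambiguity, since two uniformizations differ by an automorphism of $\shgx?\d_E$ over $\grgmu$ that is $\phi$-invariant, hence locally constant, hence the identity by density. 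This is where I would lean most heavily on separatedness and the structure of $\shgx?\d_E$ over $\spd E$; it is delicate but standard. Uniqueness of the Igusa stack then follows from (i), since the stack is recovered as the groupoid quotient.

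\textbf{Part (iii).} To get the commutative square of uniformizations, I compare $f\circ\Theta$ and $\Theta'\circ(f\times\id)$, both defined on $\shgx?\d_E\times_{\bungmu}\grgmu$ with target $\shgxp$. They agree on the diagonal $(x,\pi_{\mathrm{HT}}(x))$, and the rigidity argument from (ii), run relative to $\shgxp$, gives equality everywhere. Passing to groupoid quotients then yields the map $\igs^?\gx\to\igs^?\gxp$, unique up to unique isomorphism.
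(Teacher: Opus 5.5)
The paper gives no proof of this statement; it quotes it from \cite{KimFunctorial}. Your part (i) is the right construction. Write $\mathbf{Sh}$ for $\mathbf{Sh}\gx^{?,\diamondsuit}_E$. The groupoid $R=\mathbf{Sh}\times_{\bungmu}\grgmu\rightrightarrows\mathbf{Sh}$ has source $\mathrm{pr}_1$, target $\Theta$, unit $(\id,\pi_{\mathrm{HT}})$ and inverse $(x,y,\alpha)\mapsto(\Theta(x,y,\alpha),\pi_{\mathrm{HT}}(x),\alpha^{-1})$. The identification $\mathbf{Sh}\cong[\mathbf{Sh}/R]\times_{\bungmu}\grgmu$ then holds because the source square over $\grgmu\times_{\bungmu}\grgmu\rightrightarrows\grgmu$ is Cartesian and $\mathrm{BL}$ is surjective.

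One justification in (i) is wrong. Absolute Frobenius is not $2$-isomorphic to the identity on v-stacks in general: on $\spd E$ or on $\mathbf{Sh}$ it is a nontrivial automorphism of a sheaf. The $\phi\times\id$-invariance of $\Theta$ comes instead from the hypothesis in Definition~\ref{Def:IgusaStack}. Under $\mathbf{Sh}\times_{\bungmu}\grgmu\cong\igs^?\gx\times_{\bungmu}\grgmu\times_{\bungmu}\grgmu$, the map $\phi\times\id$ becomes $\phi\times\phi\times\id$. The projection $\mathrm{pr}_{13}$ carries this to the $\phi\times\id$ that the definition requires to act trivially.

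The genuine gap is (ii), which is the real content of \cite{KimFunctorial}, and with it (iii).

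\textbf{What the axioms determine.} The unit axiom, the equivariance and the $\phi$-invariance determine $\Theta$ only on the saturation of $(\id,\pi_{\mathrm{HT}})(\mathbf{Sh})$ under $\ul{\gaf}\times\ul{G(\qp)}\times\phi^{\mathbb{Z}}$. For a geometric point $x$, the fibre $\{x\}\times_{\bungmu}\grgmu$ is the space of all type-$\mu^{-1}$ modifications of the trivial bundle into $\mathcal{E}_{\pi_{\mathrm{HT}}(x)}$ at an arbitrary second untilt; this is essentially $\mathcal{M}_{G,b,\mu,\infty}$. The saturation meets this fibre in a single $\ul{G(\qp)}\times\phi^{\mathbb{Z}}$-orbit, lying over the $\phi^{\mathbb{Z}}$-translates of one untilt. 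That orbit is neither the whole fibre nor dense in it. Since this is a statement about geometric points, no v-descent can ``reduce to the orbit locus''.

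\textbf{The comparison map.} Two uniformizations do not differ by an automorphism of $\mathbf{Sh}$ over $\grgmu$. Comparing them instead produces
$\delta(x,y,\alpha)=\Theta_2(\Theta_1(x,y,\alpha),\pi_{\mathrm{HT}}(x),\alpha^{-1})$,
and the groupoid axioms give $\Theta_1=\Theta_2$ if and only if $\delta=\mathrm{pr}_1$. For fixed $x$, the map $\delta(x,-)$ is $\ul{G(\qp)}$- and $\phi$-invariant, lands in $\pi_{\mathrm{HT}}^{-1}(\pi_{\mathrm{HT}}(x))$, and equals $x$ at $(\pi_{\mathrm{HT}}(x),\id)$.

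\textbf{What is missing.} To conclude you need a rigidity theorem: such maps must be constant along the fibres of $[\grgmu/(\ul{G(\qp)}\times\phi^{\mathbb{Z}})]\to\bungmu$. Connectedness of these fibres (Gleason--Louren\c{c}o, as used in Proposition~\ref{Prop:DualizingSheaf}) would be one ingredient. The other ingredient is a reason why $\delta(x,-)$ cannot vary along a connected fibre. That is exactly what ``$\phi$-invariant, hence locally constant, hence the identity by density'' asserts without proof; it is not a standard fact, and there is no dense set on which agreement is known. Part (iii), which you reduce to the same rigidity relative to $\mathbf{Sh}\gxp^{?,\diamondsuit}_{E'}$, inherits this gap.
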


\subsection{Locally profinite groups}

\subsubsection{}
We recall that a short exact sequence of locally profinite groups is an
(algebraic) short exact sequence
\[
  1 \to H \to G \to K \to 1
\]
where $H \subseteq G$ is closed with respect to the subspace topology, and $G \to K$ has the
quotient topology. Note that every closed subgroup of a locally profinite group
$G$ is locally profinite, and given such a closed subgroup $H \subseteq G$, the
quotient $G/H$ with its quotient topology is a locally profinite set. 

\begin{Prop} \label{Prop:LocProfExact}
  For a short exact sequence $1 \to H \to G \to K \to 1$ of locally profinite
  groups, the induced sequence
  \[
    1 \to \underline{H} \to \underline{G} \to \underline{K} \to 1
  \]
  of presheaves on $\perf ($ hence of pro-\'{e}tale sheaves or v-sheaves$)$ is short exact.
\end{Prop}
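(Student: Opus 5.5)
We check exactness objectwise: for $X \in \perf$ we show that
\[
  1 \to \ul{H}(X) \to \ul{G}(X) \to \ul{K}(X) \to 1
\]
is exact, where $\ul{T}(X) = C(\lvert X\rvert, T)$. Exactness of presheaves gives exactness of the associated pro-\'etale and v-sheaves, since sheafification is exact.

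\emph{Left exactness.} Injectivity of $\ul{H}(X) \to \ul{G}(X)$ is clear. Let $f\colon \lvert X\rvert \to G$ be continuous with $f$ mapping to the identity in $K$. Then $f$ lands in $H$ set-theoretically. Since $H \subseteq G$ has the subspace topology, $f$ is continuous as a map to $H$. Hence the sequence is exact at $\ul{G}(X)$.

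\emph{Surjectivity.} This is the main point. We must lift a continuous map $g\colon \lvert X\rvert \to K$ to a continuous map $\lvert X\rvert \to G$. The plan is to produce a continuous section $s\colon K \to G$ of the projection and set $f = s\circ g$. Such a section exists because $K = G/H$ is a quotient of locally profinite spaces, and we build it in three steps.

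\begin{enumerate}
  \item Fix a compact open subgroup $U \subseteq G$. Its image $\bar U \subseteq K$ is open, because $G\to K$ is a quotient map by a group and is therefore open. It is also compact.
  \item The projection $U \to \bar U \cong U/(U\cap H)$ is a continuous surjection of profinite sets. We use the standard fact that a quotient map of profinite groups $U \to U/(U\cap H)$ admits a continuous section (Serre, \emph{Galois cohomology}, I.1.2, Prop.~1). This gives a continuous section over $\bar U$.
  \item Since $K$ is locally profinite, $K/\bar U$ is discrete. Choosing coset representatives $k_i$ gives $K = \bigsqcup_i k_i\bar U$ as a disjoint union of open sets. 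Translating the section from step 2 by lifts of the $k_i$ yields a global continuous section $s\colon K\to G$.
\end{enumerate}

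The expected obstacle is step 2, the existence of continuous sections for quotients of profinite groups. If one prefers to avoid Serre's result, there is a more local argument that uses only that $\lvert X\rvert$ is locally spectral and that the question is local for the pro-\'etale topology. Locally on $X$, the map $g$ factors through a compact open subset of $K$. For a quasi-compact $X$ we can then refine $g$ to a finite open cover over which it lands in translates $k_i\bar U$. The lifting problem is thereby reduced to the profinite case. There the cleanest route is still the existence of a section, so I would cite Serre directly.

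If only sheaf-level exactness were needed, one could weaken the requirement. It would suffice to lift after a v-cover $X' = X\times_{\ul K}\ul G \to X$, which is a pro-\'etale torsor. However, the statement claims exactness of presheaves, so the section argument is the one to give.
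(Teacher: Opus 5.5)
Your proof is correct and takes the same route as the paper. Left exactness is immediate, and surjectivity comes from a continuous section $s$ of $G \to K$, composed with $g$. The paper simply cites Ribes--Zalesskii for the existence of that section. You instead derive the locally profinite case from the profinite one, using a compact open subgroup and translating over the discrete coset space $K/\bar U$, which is a harmless elaboration of the same argument.
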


\begin{proof}
  It is clear that the sequence is left exact, and it remains to verify surjectivity of $\underline{G} \to \underline{K}$. This follows
  from the fact that $G \to K$ admits a continuous section, see
  \cite[Proposition~2.2.2]{RZProfinite}.
\end{proof}

\subsubsection{} \label{Sec:UnimodularOuter}
Let $G$ be a locally profinite group. There is a corresponding v-sheaf of
automorphisms $\Aut(\ul{G})$, and its action on the $1$-dimensional
$\mathbb{Q}$-vector space $\operatorname{Haar}(G, \mathbb{Q})$ of right Haar
measures defines a character 
\[
  \delta_G \colon \Aut(\ul{G}) \to \ul{\mathbb{Q}_{>0}^\times}, \quad
  \delta_G(\sigma) \mu(\sigma(U)) = \mu(U),
\]
see Lemma \ref{Lem:ModularCharacter}. If $G$ is moreover unimodular, then the inner
automorphisms act trivially on $\operatorname{Haar}(G, \mathbb{Q})$, and hence
we obtain
\[
  \delta_G \colon \operatorname{Out}(\ul{G}) \coloneqq \Aut(\ul{G}) / \ul{G} \to
  \ul{\mathbb{Q}_{>0}^\times}.
\]

\begin{Lem} \label{Lem:ModularCharacter}
  Let $X$ be a topological space and $G$ be a locally profinite group. Let $f
  \colon X \times G \to X \times G$ be a homeomorphism over $X$ with the
  property that $f_x \colon G \to G$ is a group automorphism for every $x \in
  X$. Then the function
  \[
    \delta_G(f) \colon X \to \mathbb{Q}_{>0}^\times; \quad x \mapsto
    \delta_G(f_x)
  \]
  is locally constant. 
\end{Lem}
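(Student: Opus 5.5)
The plan is to reduce to a local statement at a point $x_0 \in X$ and compare Haar measures of a single compact open subgroup. Fix $x_0 \in X$ and a compact open subgroup $U \subseteq G$. Since $\delta_G(f_x)$ is characterised by $\mu(f_x(U)) = \delta_G(f_x)^{-1}\mu(U)$ for any right Haar measure $\mu$, it suffices to find an open neighbourhood $V \ni x_0$ and a compact open subgroup $W \subseteq G$ such that $f_x(U) \cap W$ and $U\cap f_x^{-1}(W)$ are independent of $x\in V$, or more simply such that the relative index data determining $\mu(f_x(U))$ are constant on $V$.

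The first step is to use the compactness of $U$ and continuity of $f$ to control $f_x(U)$ for $x$ near $x_0$. Set $U' = f_{x_0}(U)$, a compact open subgroup. Choose a compact open normal subgroup $N \subseteq U'$; one can take $N$ normal in $U'$ since $U'$ is profinite. The composite $X \times U \to X \times G \to G/N$, $(x,u) \mapsto f_x(u)N$, is continuous into a discrete space. By compactness of $U$ (the tube lemma applied to each fibre of this locally constant map), there is an open $V \ni x_0$ such that $f_x(u) N = f_{x_0}(u) N$ for all $x \in V$ and $u \in U$. Hence $f_x(U) \subseteq U'N = U'$ for $x \in V$. Applying the same argument to the inverse homeomorphism $f^{-1}$, which is again fibrewise a group automorphism, with $U'$ in place of $U$, and shrinking $V$, gives $f_x^{-1}(U') \subseteq U$, i.e.\ $U' \subseteq f_x(U)$. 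Thus $f_x(U) = U'$ for all $x \in V$, so $\delta_G(f_x) = \mu(U)/\mu(U')$ is constant on $V$.

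The only delicate point is the uniformity in $x$. Continuity of $f$ is only joint continuity on $X \times G$, not any equicontinuity of the family $f_x$. The compactness argument handles this: for each $u\in U$ there are neighbourhoods $V_u \ni x_0$ and $U_u\ni u$ on which $f$ lands in $f_{x_0}(u)N$, and finitely many $U_u$ cover $U$. One should also check that $f^{-1}$ is continuous as a map $X\times G\to X\times G$ of the same form; this holds because $f$ is assumed to be a homeomorphism over $X$, and $(f^{-1})_x = (f_x)^{-1}$ is again a group automorphism. No assumption on $X$ beyond being a topological space is needed.
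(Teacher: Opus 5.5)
Your proposal is correct and is essentially the paper's argument: both show that $x \mapsto f_x(K)$ is constant near $x_0$ for a fixed compact open $K$, using the tube lemma once for $f$ and once for $f^{-1}$. The paper phrases the two steps as ``$f(X\times K)$ is open and contains $\{x_0\}\times L$'' and ``$f^{-1}(X\times L)$ is open and contains $\{x_0\}\times K$''. Your detour through the discrete quotient $G/N$ is an equivalent, slightly roundabout way to obtain the inclusion $f_x(U)\subseteq U'$, and normality of $N$ is not needed for it.
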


\begin{proof}
  Fix $x \in X$, fix a compact open $K \subseteq G$, and write $L = f_x(K)$.
  It suffices to find a neighborhood $U \ni x$ such that $f(U \times K) = U
  \times L$. Because $f(X \times K)$ is open and contains $\{x\} \times L$, it
  contains $U_1 \times L$ for some open neighborhood $U_1 \ni x$ since $L$ is
  compact. Similarly, $f^{-1}(X \times L)$ contains $U_2 \times K$ for some
  neighborhood $U_2 \ni x$. We now observe that $U = U_1 \cap U_2$ does the
  trick.
\end{proof}

\begin{Example} \label{Ex:ModularCharQp}
  When $G$ is profinite, every automorphism of $G$ preserves every Haar measure
  since the Haar measure $\mu$ can be normalized to satisfy $\mu(G) = 1$. It
  follows that $\delta_G = 1$. There are unimodular groups $G$ with non-trivial
  $\delta_G$. For example, $G = \qp$ has
  \[
    \delta_G \colon \operatorname{Out}(\ul{G}) \cong \ul{\qp^\times} \to
    p^\mathbb{Z}, \quad \delta_G(x) = p^{v_p(x)}.
  \]
\end{Example}

\subsection{An integral conjecture}
Fix $\gx$ and $v \mid p$ as before. Let $\mathcal{G}$ be a parahoric model of
$G$. Recall the cuspidal quotient $\g \to \g^c$ as in \cite[1.5.8]{KisinShinZhu}, which induces a parahoric model
$\mathcal{G}^c$ of $G^c=(\g^c)_{\qp}$ (see \cite[Section~4.1.1]{DanielsYoucis}).
There is a (conjectural) canonical integral model $\scrs_{K_p}\gx$ of
$\mathbf{Sh}_{K_p}\gx$ over $\mathcal{O}_{E}$, characterized by a morphism
\[
  \scrs_{K_p}\gx^{\diamondsuit/} \to \operatorname{Sht}_{\mathcal{G}^c, \mu^c},
\]
see \cite[Conjecture~4.5]{Daniels} and
\cite[Conjecture~4.2.2]{PappasRapoportShtukas}. The existence is known when $p \ge 3$ and $\gx$ is of abelian type, see \cite{DanielsYoucis}, or when $\gx$ is of Hodge type, see \cite{PappasRapoportShtukas} and \cite{Companion}.

\begin{Conj} \label{Conj:IgusaMainInt}
  Assume that $\gx$ admits an Igusa stack $\igs^\circ\gx$ at $v$. Given a
  parahoric model $\mathcal{G}$ of $G$ with $K_p=\mathcal{G}(\zp)$, we consider
  the fiber product
  \[ \begin{tikzcd} \label{Eq:ConjectureCartesianDiagramInt}
    S \arrow{r}{\pi_{\mathrm{crys}}} \arrow{d} & \shtgmu
    \arrow{d}{\mathrm{BL}^{\circ}} \\ \igs^\circ\gx
    \arrow{r}{\overline{\pi}_\mathrm{HT}} & \bungmu.
  \end{tikzcd} \]
  Then the composition $S \to \shtgmu \to \mathrm{Sht}_{\mathcal{G}^c, \mu^c}$ is
  $\ul{\gafp}$-equivariantly isomorphic to $\scrs_{K_p}\gx^{\diamond} \to
  \mathrm{Sht}_{\mathcal{G}^c, \mu^c}$.
\end{Conj}

If $\gx$ is of Hodge type, then $\g=\g^c$ and this is
\cite[Theorem~VII]{DvHKZIgusaStacks}.

\begin{Rem}
One could also ask for an analogue of Conjecture \ref{Conj:IgusaMainInt} for $\igs\gx$ instead of $\igs^\circ\gx$. In this case, we expect the fiber product to be $\scrs_{K_p}\gx^{\diamondsuit}$. The existence of the map $\scrs_{K_p}\gx^{\diamondsuit} \to \mathrm{Sht}_{\mathcal{G}^c, \mu^c}$ is being investigated in ongoing work \cite{MaoWu} of Mao--Wu.
\end{Rem}

\begin{Rem} \label{Rem:IntegralNonSV5}
  Recall that the generic fiber $\shtgmu \times_{\spd \mathcal{O}_E} \spd E$ is
  isomorphic to the quotient stack (see \cite[Proposition 11.16]{ZhangThesis})
  \[
    \left[ \grgmu / \ul{K_p} \right].
  \]
  Since we have a $G(\qp)$-equivariant Hodge--Tate period map $\shgx\d \to
  \grgmu$, the existence of the map $\shgx[K_p]\d \to \shtgmu$ on the generic
  fiber comes down to $\shgx \to \shgx[K_p]$ being a $K_p$-torsor. To see this,
  it suffices to verify that the closure $Z_\mathsf{G}(\mathbb{Q})^- \subseteq
  \gaf$ intersects $G(\qp)$ trivially, see \cite[Section 1.5.8]{KisinShinZhu}. This assertion is verified in Lemma \ref{Lem:ChevalleyCongruence} below. Note
  that at finite level, the map $\mathbf{Sh}_{K^p}\gx \to \mathbf{Sh}_{K}\gx$ is
  not necessarily a $K_p$-torsor, and hence we cannot generally expect a
  $\mathcal{G}$-shtuka over $\scrs_{K}\gx^{\diamond}$.
\end{Rem}

\begin{Lem} \label{Lem:ChevalleyCongruence}
  Let $\mathsf{H}/\mathbb{Q}$ be a multiplicative (not necessarily connected)
  group. Then the closure $\mathsf{H}(\mathbb{Q})^-$ of $\mathsf{H}(\mathbb{Q})
  \subseteq \mathsf{H}(\af)$ intersects $\prod_{p \in S} \mathsf{H}(\qp)$
  trivially for every finite set of primes $S$.
\end{Lem}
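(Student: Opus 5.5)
The plan is to reduce the statement to Chevalley's theorem that for arithmetic subgroups of multiplicative groups, the congruence topology agrees with the profinite topology. The one extra input is that congruence conditions can be imposed away from any fixed finite set of primes. Let $x \in \mathsf{H}(\mathbb{Q})^- \cap \prod_{p \in S}\mathsf{H}(\qp)$, and fix a compact open subgroup $K = K_S \times K^S \subseteq \mathsf{H}(\af)$ with $x \in K_S$. Since $K$ is an open neighbourhood of $x$, every approximating sequence $\gamma_n \in \mathsf{H}(\mathbb{Q})$ with $\gamma_n \to x$ eventually lies in the arithmetic subgroup $\Gamma := \mathsf{H}(\mathbb{Q}) \cap K$. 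Hence $x$ lies in the closure $\overline{\Gamma}$ of $\Gamma$ inside $K$. The group $\Gamma$ is a finitely generated abelian group. To see this, pass to $\mathsf{H}^\circ$, which has finite index, and embed the torus into $\operatorname{Res}_{L/\mathbb{Q}}\mathbb{G}_m^d$ for a splitting field $L$. Then $\Gamma$ is commensurable with a subgroup of $(\mathcal{O}_L^\times)^d$, and Dirichlet's unit theorem applies.

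The key step is the following strengthened form of Chevalley's theorem. For every $n \geq 1$ there is a compact open subgroup $K'^S \subseteq K^S$ such that $\Gamma \cap (K_S \times K'^S) \subseteq \Gamma^n$; that is, the level can be chosen trivial at the primes in $S$. For $\mathbb{G}_m$ over a number field $L$, this is Chevalley's statement that for each $n$ there is a modulus $\mathfrak{m}$, which may be taken coprime to any given finite set of primes, such that units congruent to $1$ modulo $\mathfrak{m}$ are $n$-th powers. The proof goes via Kummer theory and Chebotarev: a unit that is not an $n$-th power fails to be an $n$-th power modulo infinitely many primes of $L$, and only finitely many classes in $\mathcal{O}_L^\times/(\mathcal{O}_L^\times)^n$ need to be excluded. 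I would then transfer this to a torus $\mathsf{T}$ split by $L$ using the finite-index inclusions $\mathsf{T}(\mathbb{Q}) \hookrightarrow \mathsf{T}(L) \cong (L^\times)^d$ and the norm maps back, losing only a bounded power. Finally I would pass from $\mathsf{H}^\circ$ to the disconnected $\mathsf{H}$ using that $\mathsf{H}^\circ$ has finite index. I expect this step to be the main obstacle: keeping track of the finite-index losses in both directions and checking that the congruence conditions really avoid $S$.

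Granting this, the topology on $\Gamma$ induced from $K^S$ alone coincides with the profinite topology, and so does the topology induced from $K$. Since $\Gamma$ is finitely generated abelian, its profinite completion $\widehat{\Gamma}$ is Hausdorff. It follows that both $\overline{\Gamma} \subseteq K$ and the closure of $\Gamma$ in $K^S$ are identified with $\widehat{\Gamma}$. Moreover, the projection $K \to K^S$ restricts to an isomorphism between these closures, because it is compatible with the identity on $\widehat{\Gamma}$. Our element $x \in \overline{\Gamma}$ maps to $1$ in $K^S$, because its components outside $S$ are trivial. Hence $x = 1$, which proves the lemma.
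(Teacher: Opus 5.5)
Your proof has a gap at its first step. You fix a compact open subgroup $K_S \subseteq \prod_{p\in S}\mathsf{H}(\qp)$ with $x\in K_S$, but such a $K_S$ need not exist. The group $\prod_{p\in S}\mathsf{H}(\qp)$ is in general neither compact nor a union of compact subgroups. Already for $\mathsf{H}=\mathbb{G}_m$ and $S=\{p\}$, the element $x=(p,1,1,\dotsc)$ lies in no compact subgroup of $\mathbb{Q}_p^\times$, and excluding such elements from $\mathsf{H}(\mathbb{Q})^-$ is part of what the lemma asserts. The rest of your argument is fine: the comparison of the $K$- and $K^S$-topologies on the arithmetic group $\Gamma$ with its profinite topology, and the resulting injectivity of $\overline{\Gamma}\to K^S$, are correct. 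But as written it only proves $\mathsf{H}(\mathbb{Q})^-\cap K_S=\{1\}$ for every compact open $K_S$. That says the intersection in question is discrete, not that it is trivial.

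To close the gap, you must first show that $x$ lies in the maximal compact subgroup at $S$, and this needs an $S$-arithmetic input. Start by embedding $\mathsf{H}\hookrightarrow\prod_i\operatorname{Res}_{F_i/\mathbb{Q}}\mathbb{G}_m$. This already reduces the lemma to a single $\operatorname{Res}_{F/\mathbb{Q}}\mathbb{G}_m$, because adelic points inject and closures map into closures, so your norm-map transfer is unnecessary. Approximants $\gamma_n\to x$ are then eventually $S$-units. Their valuations at places above $S$ are eventually constant, equal to those of $x$.

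Now apply Chevalley's theorem to the $S$-unit group $\mathcal{O}_{F,S}^\times$, with congruence conditions at places not above $S$. This is exactly what the paper does, using open subgroups $U^{S\prime}\times\prod_{p\in S}F_p^\times$ that are full at $S$. It says the topology on $\mathcal{O}_{F,S}^\times$ induced from the places not above $S$ is its profinite topology. Since $\gamma_n\to 1$ away from $S$, their images in the finitely generated free quotient $\mathcal{O}_{F,S}^\times/\mathcal{O}_F^\times$ tend to $0$ profinitely while being eventually constant. Hence $x$ has valuation $0$ at every place above $S$, and only then does your compact argument apply. Your Kummer--Chebotarev sketch of the key step works verbatim for $S$-units, so the fix is within reach, but it is a genuinely necessary step.
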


\begin{proof}
  This easily follows from the congruence subgroup property for tori: Upon
  choosing a set of generators of $X^\ast(\mathsf{H})$, we obtain an embedding
  of $\mathsf{H} \hookrightarrow \prod_{i=1}^N
  \operatorname{Res}_{F_i/\mathbb{Q}} \mathbb{G}_m$ for finite extensions
  $F_i/\mathbb{Q}$. Then the problem reduces to the case when $\mathsf{H} =
  \operatorname{Res}_{F/\mathbb{Q}} \mathbb{G}_m$. At this point, we are
  considering the closure of $F^\times \hookrightarrow \mathbb{A}_{F,f}^\times$.
  Once we intersect with the open subset $U = \prod_{p \notin S}
  \mathcal{O}_{F,p}^\times \times \prod_{p \in S} F_p^\times \subseteq
  \mathbb{A}_{F,f}^\times$, we get
  \[
    (F^\times)^- \cap U = (\text{closure of } \mathcal{O}_{F,S}^\times
    \hookrightarrow U),
  \]
  where the group of $S$-units $\mathcal{O}_{F,S}^\times$ is a finitely
  generated abelian group. Applying \cite[Theorem~1]{ChevalleyDeuxTheoremes} to
  $\mathcal{O}_{F,S}^\times$, we see that for every integer $m \ge 1$ there
  exists an open subgroup $U^\prime = U^{S\prime} \times \prod_{p \in S}
  F_p^\times \subseteq U$ for which $U^\prime \cap \mathcal{O}_{F,S}^\times
  \subseteq (\mathcal{O}_{F,S}^\times)^m$. Taking the intersection over all $m$,
  we obtain the desired result.
\end{proof}

}

{\section{Igusa stacks for tori} \label{Sec:Tori}

\def\eab{E^\mathrm{ab}}
\def\eabhat{\widehat{E}^\mathrm{ab}}
\def\eabhatflat{\widehat{E}^{\mathrm{ab},\flat}}
\def\shigeompts{\mathsf{T}(\mathbb{Q})^- \backslash \mathsf{T}(\af)}
\def\XFF{\mathcal{X}_\mathrm{FF}}
\def\YFF{\mathcal{Y}_{(0,\infty)}}
\def\XFFeabhat{\XFF(\eabhatflat, \mathcal{O}_{\eabhatflat})_E}

In this section we prove Theorem \ref{Thm:IntroIgusaGeneric} for Shimura data $\gx$ with $\g$ a torus. To this end, we give a definition of the Igusa stack in Section \ref{Sub:ToralConstruction}. We prove that our definition satisfies Theorem \ref{Thm:IntroIgusaGeneric} in Section \ref{Sub:ProofTori}, which relies on an explicit description of the Beauville--Laszlo map for tori in Section \ref{Sub:BLTori}.

\subsection{Construction of the Igusa stack} \label{Sub:ToralConstruction}

\subsubsection{} \label{Sec:AGb}
Let $G/\qp$ be a connected reductive group and let $b \in G(\qpbr)$ be an
element. We consider the group
\[
  A_{G,b} = \{ (x, \sigma) \in G(\qpbr) \rtimes \gal(\qpbr/\qp) : x^{-1} b
  \phi(x) = \sigma(b) \},
\]
which is the centralizer of the element $(b, \phi)$. This is a closed subgroup
of $G(\qpbr) \rtimes \gal(\qpbr/\qp)$, and fits in a short exact sequence
\[
  1 \to G_b(\qp) \to A_{G,b} \to \gal(\qpbr/\qp) \cong \gal(\fpbar/\fp) \to 1
\]
of locally profinite groups, where $G_b(\qp) = \{(x, \sigma) : x^{-1} b \phi(x)
= b\}$. We also consider the subgroup $A_{G,b}^0 \subset A_{G,b}$ generated by
$G_b(\qp)$ and $(b, \phi)$, which can also be described as the preimage of
$\phi^\mathbb{Z} \subseteq \gal(\fpbar/\fp)$. The element $(b, \phi)$ induces a
splitting
\[
  A_{G,b}^0 \cong G_b(\qp) \times \mathbb{Z}.
\]

\begin{Lem} \label{Lem:ExtensionCentral}
  There exists an open subgroup $U \subseteq \gal(\fpbar/\fp)$ over which there
  exists a section $U \to A_{G,b}$. Moreover, the locally profinite group
  $A_{G,b}$ is the closure of the subgroup $A_{G,b}^0$ in $G(\qpbr) \rtimes
  \gal(\fpbar/\fp)$. In particular, if $G = T$ is a torus, then $A_{T,b}$ is
  abelian.
\end{Lem}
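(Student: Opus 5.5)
The plan is to reduce everything to the existence of a \emph{decent} representative of $b$. We may replace $b$ by a $\phi$-conjugate $b' = g^{-1} b \phi(g)$, since $(x,\sigma) \mapsto (g^{-1} x \sigma(g), \sigma)$ identifies $A_{G,b}$ with $A_{G,b'}$. This is an isomorphism of topological groups commuting with the projection to $\gal(\fpbar/\fp)$ and carrying $(b,\phi)$ to $(b',\phi)$; one checks $(g,1)^{-1}(b,\phi)(g,1) = (g^{-1}b\phi(g),\phi)$. So all assertions are invariant under $\phi$-conjugation. By Kottwitz, every class in $B(G)$ contains a decent element: for some $s \ge 1$ we have $b \in G(\mathbb{Q}_{p^s})$ and
\[
  b\phi(b)\cdots\phi^{s-1}(b) = (s\nu_b)(p),
\]
with $s\nu_b$ an honest cocharacter defined over $\mathbb{Q}_{p^s}$.

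\textbf{Step 1: the section.} Let $U = \gal(\qpbr/\mathbb{Q}_{p^s}) \cong \gal(\fpbar/\mathbb{F}_{p^s})$, and define the candidate section $\sigma \mapsto (1,\sigma)$ on $U$. The condition for $(1,\sigma)$ to lie in $A_{G,b}$ reads $b = \sigma(b)$. This holds because $b \in G(\mathbb{Q}_{p^s})$. The map is a continuous homomorphism into $G(\qpbr)\rtimes\gal$, so it gives the required section over the open subgroup $U$.

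\textbf{Step 2: the closure.} The subgroup $A_{G,b}$ is closed, being defined by a continuous equation, and it contains $A^0_{G,b}$. So $\overline{A^0_{G,b}} \subseteq A_{G,b}$, and we need the reverse inclusion. Take $(x,\tau) \in A_{G,b}$, and write $\tau = \lim \phi^{n_k}$ in $\widehat{\mathbb{Z}}$ with $n_k \in \mathbb{Z}$. Using Step 1, $(x,\tau) = (x,1)(1,\tau)$ up to an element of $G_b(\qp)$ once we choose $n_k \equiv$ appropriately. More cleanly, write $\tau = \tau_0 u$ with $\tau_0 \in \phi^{\mathbb{Z}}$ and $u \in U$. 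It then suffices to show $(1,u) \in \overline{A^0_{G,b}}$ for $u \in U$.

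Write $u = \lim \phi^{s m_k}$ with $m_k \in \mathbb{Z}$, and note that
\[
  (b,\phi)^{s m} = \bigl((s\nu_b)(p)^{m}, \phi^{sm}\bigr).
\]
Since $(s\nu_b)(p)^m$ does not converge to $1$, we correct by central-ish elements of $G_b(\qp)$. The cocharacter $s\nu_b$ is defined over $\mathbb{Q}_{p^s}$, and by decency it is $\phi$-invariant, hence defined over $\qp$ and central in $G_b$. So $((s\nu_b)(p)^{-m},1) \in G_b(\qp)$. Multiplying gives $(1,\phi^{sm}) \in A^0_{G,b}$ for all $m$. These converge to $(1,u)$, so $(1,u) \in \overline{A^0_{G,b}}$, which proves the closure statement. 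This is the step I expect to need the most care: one has to check that $\nu_b$ really lies in $G_b$ and is fixed by $\phi$, which follows from $b\,\phi(\nu_b)\,b^{-1} = \nu_b$ and decency.

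\textbf{Step 3: the torus case.} If $G = T$ is a torus, then $T_b = T$. The element $(b,\phi)$ commutes with $T(\qp)$, because $\phi$ acts trivially on $T(\qp)$ and $T$ is commutative. Hence $A^0_{T,b} \cong T(\qp)\times\mathbb{Z}$ is abelian. Its closure $A_{T,b}$, by Step 2, is then abelian as well, since the closure of an abelian subgroup of a Hausdorff topological group is abelian.
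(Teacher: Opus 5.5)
Your route is essentially the paper's. The paper also uses Kottwitz \S4.3, finding $g_0$ with $g_0^{-1}b\phi(g_0)\in G(\mathbb{Q}_{p^n})$. It takes the section $\sigma\mapsto(g_0\sigma(g_0)^{-1},\sigma)$, which is your $\sigma\mapsto(1,\sigma)$ transported by $c_{g_0}$. It then gets density from the homeomorphism $A_{G,b}\vert_U\cong U\times G_b(\qp)$, under which $A^0_{G,b}\vert_U$ becomes the dense subset $(U\cap\phi^{\mathbb{Z}})\times G_b(\qp)$. Your Steps 1 and 3 are fine.

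One assertion in Step 2 is false, however. Decency does not make $\nu_b$ fixed by $\phi$, nor defined over $\qp$ as a cocharacter of $G$. It only gives the twisted invariance $\mathrm{Int}(b)\circ\phi(\nu_b)=\nu_b$. For a counterexample in $\mathrm{GL}_2$, take $g=\begin{pmatrix}1&\alpha\\0&1\end{pmatrix}$ with $\alpha\in\mathbb{Q}_{p^2}\setminus\qp$, and set $b=g\,\mathrm{diag}(1,p)\,\phi(g)^{-1}$. Then $b\phi(b)=g\,\mathrm{diag}(1,p^2)\,g^{-1}$, so $b$ is $2$-decent with $\nu_b=\mathrm{Int}(g)\circ\nu_0$, where $\nu_0(t)=\mathrm{diag}(1,t)$. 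But $\phi(\nu_b)\neq\nu_b$, because $g^{-1}\phi(g)$ is not diagonal.

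The twisted identity is exactly what you actually need. It says $b\,\phi(y)\,b^{-1}=y$ for $y=(s\nu_b)(p)$, i.e.\ $y\in G_b(\qp)$, hence $(y^{-m},1)\in G_b(\qp)$. Centrality and $\qp$-rationality play no role.

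Better still, the detour through $(b,\phi)^{sm}$ is unnecessary. Knowing only $b\in G(\mathbb{Q}_{p^s})$ already gives $(1,\phi^{sm})\in A_{G,b}$. Since $A^0_{G,b}$ is the full preimage of $\phi^{\mathbb{Z}}$, this element lies in $A^0_{G,b}$. Your reduction to $(1,u)$ already relies on that preimage fact, since it needs $(x,\tau)(1,u)^{-1}=(x,\tau_0)\in A^0_{G,b}$. So neither decency nor $\nu_b$ is needed, just as in the paper. With this correction your proof is complete.
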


\begin{proof}
  For the first claim, we use \cite[Section~4.3]{Kottwitz1} to find an element
  $g_0 \in G(\qpbr)$ for which $g_0^{-1} b \phi(g_0) \in G(\mathbb{Q}_{p^n})$
  for some positive integer $n$. Then we see that
  \[
    U = \gal(\qpbr/\mathbb{Q}_{p^n}) \ni \sigma \mapsto (g_0 \sigma(g_0)^{-1},
    \sigma) \in A_{G,b}
  \]
  is a section, where we compute that $\sigma(g_0^{-1} b \phi(g_0)) = g_0^{-1} b
  \phi(g_0)$ implies
  \[
    (g_0 \sigma(g_0)^{-1})^{-1} b \phi(g_0 \sigma(g_0)^{-1}) = \sigma(b).
  \]
  Such a section induces a homeomorphism between $A_{G,b} \vert_U$ and $U \times
  G_b(\qp)$, under which the subset $A_{G,b}^0 \vert_U$ corresponds to $(U \cap
  \phi^\mathbb{Z}) \times G_b(\qp)$. This is dense, and hence the closure of
  $A_{G,b}^0 \subseteq G(\qpbr) \rtimes \gal(\fpbar/\fp)$ is $A_{G,b}$.

  Finally, if $G = T$ is a torus, then $G_b(\qp) = T(\qp)$. As $T(\qp) \subseteq
  T(\qpbr) \rtimes \gal(\qpbr/\qp)$ is central, the subgroup $A_{T,b}^0$
  generated by $T(\qp)$ and $(b, \phi)$ is abelian. It follows that its
  closure $A_{T,b}$ is abelian as well.
\end{proof}

\subsubsection{} \label{Sec:AGbFunctorial}
Given a $\phi$-conjugate $g b \phi(g)^{-1}$ of $b$, where $g \in G(\qpbr)$ is an
arbitrary element, there is an isomorphism of locally profinite groups
\[
  c_g \colon A_{G,b} \xrightarrow{\cong} A_{G,gb\phi(g)^{-1}}; \quad (x, \sigma)
  \mapsto (g x \sigma(g)^{-1}, \sigma).
\]
Under this isomorphism, we observe that $(b, \phi) \in A_{G,b}$ is mapped to
$(gb\phi(g)^{-1}, \phi)$. It is also clear that for a homomorphism $f \colon G
\to H$ of connected reductive groups and $b \in G(\qpbr)$, there is a natural
continuous group homomorphism
\[
  f \colon A_{G,b} \to A_{H,f(b)}
\]
that respects the projection to $\gal(\fpbar/\fp)$.

\subsubsection{} \label{Sec:AutOfPb}
We may also interpret the locally profinite groups $A_{G,b}$ as follows. By \cite[Theorem 5.3]{AnschutzIsocrystal}, every element $b \in G(\qpbr)$ defines a map $b \colon \spd \fpbar \to
\bung$. We may consider the automorphism group of this map, regarded as a
functor on perfectoid $\fp$-algebras. More precisely, let $(R, R^+)$ be a
perfectoid $\fp$-algebra that admits an $\fpbar$-algebra structure $f \colon
\fpbar \to R^+$. Any other $\fpbar$-structure may be obtained by conjugating $f$
by an element $\sigma \in \underline{\gal(\fpbar/\fp)}(R, R^+)$, i.e., as $f
\circ \sigma \colon \fpbar \to (R, R^+)$. Both of these $\fpbar$-algebra
structures give rise to $G$-torsors $f^\ast \mathscr{P}_b$ and $(f \circ
\sigma)^\ast \mathscr{P}_b$ on $\XFF(R, R^+)$, see \Cref{Sec:Conventions}.
Given a lift of $\sigma$ to a continuous map $x \colon \lvert \spa(R, R^+)
\rvert \to A_{G,b}$ we obtain an isomorphism
\[
  f^\ast \mathscr{P}_b \xrightarrow{\cong} (f \circ \sigma)^\ast \mathscr{P}_b
\]
given by $\phi$-descending right multiplication $r_{f(x)}$ by $f(x) \in
G(\YFF(R, R^+))$. (The map $f$ defines a structure map $\YFF(R, R^+) \to \spd
\qpbr$, and we denote by $f(x) \in G(\YFF(R, R^+))$ the pullback of $x \in
G(\qpbr)$ along this structure map.) This is because the diagram
\[ \begin{tikzcd}
  \phi^\ast G_{\YFF(R, R^+)} \arrow{r}{r_{f(\phi(x))}}
  \arrow{d}[']{r_{f(b)^{-1}}} & \phi^\ast G_{\YFF(R, R^+)}
  \arrow{d}{r_{f(\sigma(b))^{-1}}} \\ G_{\YFF(R, R^+)} \arrow{r}{r_{f(x)}} &
  G_{\YFF(R, R^+)}
\end{tikzcd} \]
commutes, as we have $x^{-1} b \phi(x) = \sigma(b)$ from \Cref{Sec:AGb}. This
construction defines a morphism of v-stacks
\[
  [\spd \fpbar / \underline{A_{G,b}}] \to \bung^{[b]} \subseteq \bung,
\]
where the right $A_{G,b}$-action on $\spd \fpbar$ is through $A_{G,b}
\twoheadrightarrow \gal(\fpbar/\fp)$.

\begin{Prop}
  Assume that $[b] \in B(G)$ is basic. Then the map
  \[
    [\spd \fpbar / \underline{A_{G,b}}] \xrightarrow{\cong} \bung^{[b]}
  \]
  constructed in \Cref{Sec:AutOfPb} is an isomorphism of v-stacks.
\end{Prop}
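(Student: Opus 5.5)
We reduce to the known description of the basic stratum over $\spd \fpbar$, due to Fargues--Scholze: for $[b]$ basic, the map $[\spd \fpbar/\ul{G_b(\qp)}] \to \bung^{[b]} \times_{\spd \fp} \spd \fpbar$ is an isomorphism \cite[Proposition~III.4.5, Theorem~III.0.2]{FarguesScholze}. Here $\bung$ is regarded over $\spd\fp$, so $\bung^{[b]}\times_{\spd\fp}\spd\fpbar$ classifies pairs consisting of a $G$-bundle pointwise isomorphic to $\mathscr{P}_b$ and an $\fpbar$-structure. The automorphism sheaf of $\mathscr{P}_b$ over $\spd\fpbar$ is $\ul{G_b(\qp)}$, because $b$ is basic, so there is no unipotent part $\widetilde{G}_b^{>0}$. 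Our map $[\spd\fpbar/\ul{A_{G,b}}] \to \bung^{[b]}$ is a map of stacks over $[\spd\fpbar/\ul{\gal(\fpbar/\fp)}] \cong \spd\fp$. Here we use $A_{G,b}\to\gal(\fpbar/\fp)$, which is surjective on v-sheaves by Proposition~\ref{Prop:LocProfExact}. Hence it suffices to check that it becomes an isomorphism after base change along $\spd\fpbar\to\spd\fp$.

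The steps are as follows. First, compute the base change of the source:
\[
[\spd\fpbar/\ul{A_{G,b}}]\times_{\spd\fp}\spd\fpbar \cong [(\spd\fpbar\times_{\spd\fp}\spd\fpbar)/\ul{A_{G,b}}] \cong [(\spd\fpbar\times\ul{\gal(\fpbar/\fp)})/\ul{A_{G,b}}] \cong [\spd\fpbar/\ul{G_b(\qp)}].
\]
The last isomorphism uses the short exact sequence $1\to\ul{G_b(\qp)}\to\ul{A_{G,b}}\to\ul{\gal(\fpbar/\fp)}\to1$ from Proposition~\ref{Prop:LocProfExact}, since $\ul{A_{G,b}}$ acts transitively on the $\ul{\gal}$-factor with stabilizer $\ul{G_b(\qp)}$. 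Second, check that under this identification the base-changed map is the Fargues--Scholze map $[\spd\fpbar/\ul{G_b(\qp)}]\to\bung^{[b]}_{\fpbar}$. On the fiber over the identity of $\gal$, the construction of \Cref{Sec:AutOfPb} with $\sigma=1$ sends $x\in G_b(\qp)$ to $\phi$-descended right multiplication $r_{f(x)}$. This is exactly the standard identification of $\ul{G_b(\qp)}$ with $\Aut(\mathscr{P}_b)$. Third, conclude by the descent argument above. Isomorphy of v-stacks can be checked after the v-cover $\spd\fpbar\to\spd\fp$; this is a pro-étale $\ul{\gal(\fpbar/\fp)}$-torsor.

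I expect the second step to be the main obstacle, together with the 2-categorical compatibility it requires. One must verify that the map of \Cref{Sec:AutOfPb} is actually a well-defined morphism of stacks, compatible with composition in $A_{G,b}$ and continuous in families. One must also check that, after base change, the two group actions and the descent data match the Fargues--Scholze chart, including the left/right conventions for $\gal$ from \Cref{Sec:Conventions}. I would first verify the cocycle identity $r_{f(x)}\circ r_{(f\circ\sigma)(y)} = r_{f(x\sigma(y))}$, which matches multiplication in the semidirect product. Then I would use Lemma~\ref{Lem:ExtensionCentral} to trivialize $A_{G,b}$ over an open $U\subseteq\gal$. This reduces continuity to the case of $\ul{G_b(\qp)}$, which is already handled by Fargues--Scholze.
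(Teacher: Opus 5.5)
Your proposal is correct and follows the same route as the paper's proof. The paper descends along the v-cover $\spd\fpbar\to\spd\fp$ and uses $\spd\fpbar\times_{\spd\fp}\spd\fpbar\cong\spd\fpbar\times\ul{\gal(\fpbar/\fp)}$ to reduce to $[\spd\fpbar/\ul{G_b(\qp)}]\to\bun^{[b]}_{G,\fpbar}$, which it then concludes by \cite[Theorem~III.0.2]{FarguesScholze}; the compatibility checks you flag (cocycle identity, continuity, matching with the Fargues--Scholze chart) are left implicit in the paper's construction in \Cref{Sec:AutOfPb}.
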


\begin{proof}
  As $\spd \fpbar \to \spd \fp$ is a v-cover, we may check that the map is an
  isomorphism after base changing to $\fpbar$. Upon identifying $\spd \fpbar
  \times \spd \fpbar \cong \spd \fpbar \times \underline{\gal(\fpbar/\fp)}$,
  this boils down to checking that
  \[
    [\spd \fpbar / \underline{G_b(\qp)}] \to \mathrm{Bun}_{G,\fpbar}^{[b]}
  \]
  is an isomorphism. This is done in \cite[Theorem~III.0.2]{FarguesScholze}.
\end{proof}

\begin{Rem}
  Even when $b$ is not basic, a similar description of $\bung^{[b]}$ as a
  quotient of $\spd \fpbar$ can be obtained by using an extension of
  $\gal(\fpbar/\fp)$ by $\tilde{G}_b$.
\end{Rem}

\subsubsection{} \label{Sec:BunGPresentation}
Given elements $b, g, b^\prime = gb\phi(g)^{-1} \in G(\qpbr)$, the element $g$
induces an isomorphism between $b, b^\prime \colon \spd \fpbar \to \bung$, and 
there is an isomorphism $c_g \colon A_{G,b} \cong A_{G,b^\prime}$ from
\Cref{Sec:AGbFunctorial}. Assume from now on that $[b] = [b^\prime] \in B(G)$ is basic.
It follows from the construction of \Cref{Sec:AutOfPb} that the isomorphism
\[
  [\spd\fpbar / \underline{A_{G,b}}] \cong \bung^{[b]} = \bung^{[b^\prime]}
  \cong [\spd\fpbar / \underline{A_{G,b^\prime}}]
\]
agrees with $[\mathrm{id} / c_g]$. We also recall from
\cite[Proposition~2.1.10]{DvHKZIgusaStacks} that there exists a canonical
isomorphism between $\phi, \id
\colon \bung \to \bung$, and hence between $\phi, \id \colon \bung^{[b]} \to
\bung^{[b]}$. Using the presentation $\bung^{[b]} \cong [\spd\fpbar /
\underline{A_{G,b}}]$, we see that the absolute Frobenius $\phi$ can be
presented as
\[
  \phi = [\phi_{\fpbar} / \id] \colon [\spd\fpbar / \underline{A_{G,b}}]
  \xrightarrow{\cong} [\spd\fpbar / \underline{A_{G,b}}].
\]
Unraveling the construction, we verify that the isomorphism from $\id$ to $\phi$
is given as right multiplication by the element $(b, \phi)$, where we note that
$(b, \phi) \in A_{G,b}$ is central.

\subsubsection{}
Let $\mathsf{T}/\mathbb{Q}$ be a torus, and write $T = \mathsf{T}_{\qp}$. For
each $b \in T(\qpbr)$, we have an extension of locally profinite abelian groups
\[
  1 \to T(\qp) \to A_{T,b} \to \gal(\fpbar/\fp) \to 1.
\]
We also have a profinite group $\shigeompts$ and a continuous group homomorphism
\[
  \iota \colon T(\qp) \hookrightarrow \mathsf{T}(\af) \twoheadrightarrow
  \shigeompts.
\]

\begin{Lem} \label{Lem:LocalIota}
  There exists a unique continuous group homomorphism
  \[
    \iota_{T,b} \colon A_{T,b} \to \shigeompts
  \]
  with the property that $\iota_{T,b}(b, \phi) = 1$ and the restriction of
  $\iota_{T,b}$ to $T(\qp) \subset A_{T,b}$ recovers the inclusion $\iota$.
\end{Lem}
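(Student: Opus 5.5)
Uniqueness is immediate: by Lemma \ref{Lem:ExtensionCentral}, $A_{T,b}$ is the closure of $A_{T,b}^0$, the subgroup generated by $T(\qp)$ and $(b,\phi)$. Any continuous homomorphism to the Hausdorff group $\shigeompts$ is therefore determined by its values on $T(\qp)$ and on $(b,\phi)$, and both are prescribed.

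For existence, I would first define $\iota_{T,b}$ on $A_{T,b}^0$. The element $(b,\phi)$ induces the splitting $A_{T,b}^0 \cong T(\qp) \times \mathbb{Z}$, and on this product I set $\iota_{T,b}(t, n) = \iota(t)$. This is a group homomorphism because $A_{T,b}$ is abelian (Lemma \ref{Lem:ExtensionCentral}). It remains to extend this map continuously to the closure. Since the target is a complete (indeed profinite) abelian group, it suffices to show that the map is uniformly continuous for the subspace topology that $A_{T,b}^0$ inherits from $A_{T,b}$. Equivalently, I need to show it is continuous at the identity.

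\textbf{Key step: continuity at the identity.} Fix the open section $U = \gal(\qpbr/\mathbb{Q}_{p^n}) \to A_{T,b}$, $\sigma \mapsto (g_0\sigma(g_0)^{-1}, \sigma)$, from the proof of Lemma \ref{Lem:ExtensionCentral}. Replacing $b$ by $b' = g_0^{-1} b \phi(g_0)$ via $c_{g_0^{-1}}$ (which sends $(b,\phi)$ to $(b',\phi)$ and is the identity on $T(\qp)$), I may assume $b \in T(\mathbb{Q}_{p^n})$, so the section is $\sigma \mapsto (1,\sigma)$.

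Now compute $(b,\phi)^n = (b\phi(b)\cdots\phi^{n-1}(b), \phi^n) = (N b, 1)\cdot(1,\phi^n)$, where $Nb \in T(\qp)$ is the norm. Elements of $A^0_{T,b}$ near the identity have the form $t\cdot(1,\phi^{nk})$ with $t$ near $1$ in $T(\qp)$ and $\phi^{nk}$ near $1$ in the profinite Galois group, that is, $k$ divisible by a large power of every integer. Writing $t\cdot(1,\phi^{nk}) = t\,(Nb)^{-k}(b,\phi)^{nk}$, its image is $\iota(t)\,\iota(Nb)^{-k}$. So it suffices to show that $\iota(Nb)^{-k} \to 1$ in $\shigeompts$ as $k \to 0$ in $\widehat{\mathbb{Z}}$. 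Since $\shigeompts$ is profinite abelian, every element $y$ generates a procyclic subgroup, and $k \mapsto y^k$ extends continuously to $\widehat{\mathbb{Z}}$. This gives continuity at the identity, hence the desired extension.

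\textbf{Main obstacle.} The step I expect to need the most care is the bookkeeping between the two topologies: checking that the homeomorphism $A_{T,b}|_U \cong U \times T(\qp)$ identifies neighbourhoods of $1$ in $A^0_{T,b}$ with pairs ($t$ near $1$, $k$ near $0$ in $\widehat{\mathbb{Z}}$). Once this is in place, I can also write the extension explicitly: on $U \times T(\qp)$, send $(\sigma, t)$ with $\sigma = \phi^{n\kappa}$, $\kappa\in\widehat{\mathbb{Z}}$, to $\iota(t)\,\iota(Nb)^{-\kappa}$. This formula makes continuity evident, and the map then extends to all of $A_{T,b}$ by translating by powers of $(b,\phi)$.
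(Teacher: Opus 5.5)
Your argument is correct and is essentially the paper's proof. Uniqueness comes from density of $A^0_{T,b}$. Existence comes from the local section over $U=\gal(\qpbr/\mathbb{Q}_{p^n})$ from Lemma~\ref{Lem:ExtensionCentral}, together with the fact that a homomorphism from $\mathbb{Z}$ into a profinite group extends continuously to $\widehat{\mathbb{Z}}$. The paper skips your uniform-continuity/completeness step and your conjugation to $b\in T(\mathbb{Q}_{p^n})$: it defines the extension directly on the open subgroup $A_{T,b}\vert_U\cong U\oplus T(\qp)$ and glues it with the map on $A^0_{T,b}$ using $A_{T,b}\vert_U + A^0_{T,b} = A_{T,b}$, which is exactly the explicit formula in your last paragraph.
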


\begin{proof}
  Because $A_{T,b}^0$ is the direct sum of $(b, \phi)^\mathbb{Z}$ and $T(\qp)$,
  we have a group homomorphism
  \[
    \iota_{T,b}^0 \colon A_{T,b}^0 \to \shigeompts; \quad (b, \phi) \mapsto 1,
    \quad (t, 1) \mapsto \iota(t)
  \]
  that we wish to extend to $A_{T,b}$. Uniqueness follows from the density of
  $A_{T,b}^0 \subset A_{T,b}$, see \Cref{Lem:ExtensionCentral}. For existence,
  we use \Cref{Lem:ExtensionCentral} to find an open subgroup $U = n
  \gal(\fpbar/\fp) \subseteq \gal(\fpbar/\fp)$ over which there exists an
  isomorphism $A_{T,b} \vert_U \cong U \oplus T(\qp)$ of topological abelian
  groups. This restricts to $A_{T,b}^0 \vert_U \cong \phi^{n\mathbb{Z}} \oplus
  T(\qp)$, and because $\shigeompts$ is profinite, any group homomorphism
  $\phi^{n\mathbb{Z}} \to \shigeompts$ uniquely extends to a continuous group
  homomorphism $U \to \shigeompts$. This gives a continuous extension
  \[
    A_{T,b} \vert_U \to \shigeompts
  \]
  of $\iota_{T,b}^0 \vert_U$. We now combine the two group homomorphisms, one on
  $A_{T,b} \vert_U$ and one on $A_{T,b}^0$, to a group homomorphism on $A_{T,b}$
  using
  \[
    A_{T,b} \vert_U + A_{T,b}^0 = A_{T,b}, \quad A_{T,b} \vert_U \cap A_{T,b}^0
    = A_{T,b}^0 \vert_U.
  \]
  The resulting group homomorphism $\iota_{T,b} \colon A_{T,b} \to \shigeompts$
  is indeed continuous because its restriction to the open subgroup $A_{T,b}
  \vert_U$ is continuous.
  
\end{proof}

\subsubsection{} \label{Sec:ExtensionActionCompatibility}
Recall that given $b \in T(\qpbr)$ and $b^\prime = gb\phi(g)^{-1}$ there is a
canonical isomorphism $c_g \colon A_{T,b} \cong A_{T,b^\prime}$. Because $T$ is
commutative, the restriction of $c_g$ to $T(\qp)$ is the identity map. It
follows from uniqueness above that we have a commutative diagram
\[ \begin{tikzcd}
  A_{T,b} \arrow{d}{\cong}[']{c_g} \arrow{r}{\iota_{T,b}} & \shigeompts
  \arrow[equals]{d} \\ A_{T,b^\prime} \arrow{r}{\iota_{T,b^\prime}} &
  \shigeompts.
\end{tikzcd} \]

\begin{Def} \label{Def:IgusaTori}
  For each $b \in T(\qpbr)$ corresponding to a map $\spd \fpbar \to \bun_T$,
  hence inducing an identification $\bun_T^{[b]} \cong [\spd \fpbar /
  \underline{A_{T,b}}]$, we define the v-stack
  \[
    \mathrm{Igs}(\mathsf{T}, b) = [(\underline{\shigeompts} \times \spd \fpbar)
    / \underline{A_{T,b}}] \to [\spd \fpbar / \underline{A_{T,b}}] \cong
    \bun_T^{[b]},
  \]
  where $(x, \sigma) \in A_{T,b}$ acts on $\shigeompts$ via translation by
  $\iota_{T,b}(x, \sigma)$ and on $\spd \fpbar$ via $\sigma$.
\end{Def}

\subsubsection{}
Write $b^\prime = gb\phi(g)^{-1}$ for some $g \in T(\qpbr)$. We see from
\Cref{Sec:ExtensionActionCompatibility} that the action of $\underline{A_{T,b}}$
on $\underline{\shigeompts} \times \spd \fpbar$ is compatible with the action of
$A_{T,b^\prime}$ under the identification $c_g \colon A_{T,b} \cong
A_{T,b^\prime}$. It follows that there is a natural commutation relation
\[ \begin{tikzcd}[row sep=0em]
  \mathrm{Igs}(\mathsf{T}, b) \arrow{dd}[']{\lbrack \id\times\id/c_g
  \rbrack}{\cong} \arrow{r} & \lbrack\spd \fpbar / \underline{A_{T,b}}\rbrack
  \arrow{dd}[']{\lbrack \id/c_g \rbrack}{\cong} \arrow{dr}{\cong} \\ & &
  \bun_T^{[b]} \\ \mathrm{Igs}(\mathsf{T}, b^\prime) \arrow{r} & \lbrack\spd
  \fpbar / \underline{A_{T,b^\prime}}\rbrack \arrow{ru}{\cong}
\end{tikzcd} \]
that is moreover associative in the sense that it is compatible with a further
$\phi$-conjugation $b^{\prime\prime} = g^\prime b^\prime \phi(g^\prime)^{-1}$.
Using these identifications, we may regard $\mathrm{Igs}(\mathsf{T}, b)$ as
depending only on the $\phi$-conjugacy class $[b] \in B(T)$.

\begin{Def}
  We define the \textit{Igusa stack} for $\mathsf{T}/\mathbb{Q}$ as the stack
  \[
    \mathrm{Igs}(\mathsf{T}) = \coprod_{[b] \in B(T)} \mathrm{Igs}(\mathsf{T},
    b) \to \coprod_{[b] \in B(T)} \bun_T^{[b]} = \bun_T.
  \]
\end{Def}

\begin{Rem}
  We remark that this does not depend on the choice of the cocharacter $\mu$ nor
  the choice of a place $v \mid p$ of the reflex field. See also
  \Cref{Rem:NotCanonical}.
\end{Rem}

\subsubsection{} \label{Sec:IgsTTameHecke}
For every element $t \in \mathsf{T}(\afp)$, there is an induced map
\[
  t \colon [(\underline{\shigeompts} \times \spd \fpbar) / \underline{A_{T,b}}]
  \to [(\underline{\shigeompts} \times \spd \fpbar) / \underline{A_{T,b}}]
\]
given by translation by $t^{-1}$ on the abelian group $\shigeompts$. This
defines a left action of $\underline{\mathsf{T}(\afp)}$ on
$\mathrm{Igs}(\mathsf{T}, b)$, where this action does not change when we replace
$b$ with a $\phi$-conjugate. Moreover, the natural map
\[
  \mathrm{Igs}(\mathsf{T}, b) \to \mathrm{Bun}_T^{[b]}
\]
is naturally $\underline{\mathsf{T}(\afp)}$-equivariant, where we give the
trivial action on $\mathrm{Bun}_T^{[b]}$.

\subsubsection{} \label{Sec:IgsTFrobenius}
As in \Cref{Sec:BunGPresentation} we may define an isomorphism
\[
  \id_{\mathrm{Igs}(\mathsf{T}, b)} \xrightarrow{\cong}
  \phi_{\mathrm{Igs}(\mathsf{T}, b)}
\]
given by right multiplication by the element $(b, \phi) \in A_{T,b}$. This is
compatible with the isomorphism $\id_{\bun_T} \cong \phi_{\bun_T}$. More
precisely, the diagram
\[ \begin{tikzcd}
  \mathrm{Igs}(\mathsf{T}) \arrow{r} \arrow[bend right]{d}[']{\phi} \arrow[bend
  left]{d}{\id} & \bun_T \arrow[bend right]{d}[']{\phi} \arrow[bend
  left]{d}{\id} \\ \mathrm{Igs}(\mathsf{T}) \arrow{r} & \bun_T
\end{tikzcd} \]
is 2-commutative, i.e., the two isomorphisms between $\mathrm{Igs}(\mathsf{T})
\xrightarrow{\phi} \mathrm{Igs}(\mathsf{T}) \to \bun_T$ and
$\mathrm{Igs}(\mathsf{T}) \to \bun_T \xrightarrow{\id} \bun_T$ agree.

\subsection{The Beauville--Laszlo map} \label{Sub:BLTori}

\subsubsection{} \label{Sec:BLToriDef}
Let $T$ be a torus over $\qp$, and let $\mu \colon \mathbb{G}_{m,\qpbar} \to
T_{\qpbar}$ be a geometric cocharacter whose field of definition is $E \subseteq
\qpbar$. Writing $[b] \in B(T, \mu^{-1})$ for the unique element, we have a map
\[
  \mathrm{BL}_{\mu^{-1}} \colon \mathrm{Gr}_{T,\mu^{-1}} \cong \spd E \to
  \bun_T^{[b]} \cong [\spd \fpbar / \underline{A_{T,b}}]
\]
that sends a perfectoid $E$-algebra $(R^\sharp, R^{\sharp+})$ to the
$T$-torsor on the Fargues--Fontaine curve $\XFF(R, R^+)$
obtained by modifying the trivial $T$-torsor along the specified untilt by
$\mu^{-1}$. Our goal is to understand the map $\mathrm{BL}_{\mu^{-1}}$ in terms
of group theory.

\subsubsection{}
Let us write $S = \operatorname{Res}_{E/\qp} \mathbb{G}_m$. There is a
decomposition $E \otimes_{\qp} E \cong E \times \prod_i {L_i}$ where the
projection $E \otimes_{\qp} E \twoheadrightarrow E$ is given by multiplication.
It follows that there is a canonical group homomorphism $E^\times \to (E
\otimes_{\qp} E)^\times = E^\times \times \prod_i L_i^\times$ given by $x
\mapsto (x, 1, \dotsc, 1)$, which can be promoted to a morphism
\[
  \mu_S \colon \mathbb{G}_{m,E} \to (\operatorname{Res}_{E/\qp} \mathbb{G}_m)_E
  = S_E
\]
of $E$-tori. As $\mu$ has field of definition $E$, there is an induced map
\[
  r_\mu \colon S = \operatorname{Res}_{E/\qp} \mathbb{G}_m \to T
\]
of $\qp$-tori, where $(r_\mu)_E \circ \mu_S$ recovers $\mu$.

\subsubsection{} \label{Sec:Choosingb}
Let $E_0 \subseteq E$ be the maximal subfield unramified over $\qp$, and let
$\ebreve = E \qpbr$. Choose a uniformizer $\pi_E$ of $E$. The homomorphism
$r_\mu$ induces a map
\[
  B(S, \mu_S^{-1}) \xrightarrow{r_\mu} B(T, \mu^{-1}),
\]
where both sides are singletons. As in \cite[Section~2.5]{Kottwitz1}, we may and
will choose
\[
  b_S = (\pi_E^{-1}, 1, \dotsc, 1) \in \prod_{E_0 \hookrightarrow \qpbr}
  \ebreve^\times = S(\qpbr)
\]
so that $[b_S] \in B(S, \mu_S^{-1})$, and use its image $b = r_\mu(b_S) \in
T(\qpbr)$ as a representative of $[b] \in B(T, \mu^{-1})$. We note that $b_S$
may also be understood as the norm $\operatorname{Nm}_{E/E_0}(\mu_S(\pi_E^{-1}))
\in S(E_0) \subseteq S(\qpbr)$, and hence $b$ may be described as $b =
\operatorname{Nm}_{E/E_0}(\mu(\pi_E^{-1})) \in T(E_0) \subseteq T(\qpbr)$. We
also compute
\[
  b_S \phi(b_S) \dotsm \phi^{f-1}(b_S) = \pi_E^{-1} \in S(\qp) \subseteq
  S(\qpbr).
\]
where $f = [E_0 : \qp]$ is the inertial degree and $\phi \colon S(\qpbr) \to
S(\qpbr)$ is the Frobenius on $S(\qpbr)$.

\subsubsection{} \label{Sec:TorsorVsVector}
Since $S = \operatorname{Res}_{E/\qp} \mathbb{G}_m$, a left $S$-torsor over
$\XFF(R, R^+)$ is equivalently a line bundle on the adic space
\[
  \XFF(R, R^+)_E = \XFF(R, R^+) \otimes_{\spa \qp} \spa E.
\]
More precisely, given a vector bundle $\mathscr{V}$ of rank $n$ on $\XFF(R,
R^+)_E$, its pushforward $\pi_\ast \mathscr{V}$ along $\pi \colon \XFF(R, R^+)_E
\to \XFF(R, R^+)$ is a vector bundle with an $E$-action, and the sheaf of
$E$-linear isomorphisms
\[
  \mathscr{P}(\mathscr{V}) = \underline{\operatorname{Isom}}_E(\pi_\ast
  \mathscr{V}, \mathscr{O}_{\XFF(R, R^+)}^{\oplus n} \otimes_{\qp} E)
\]
is a left $\operatorname{Res}_{E/\qp} \mathrm{GL}_n$-torsor.

\subsubsection{}
We now construct two left $S$-torsors on the Fargues--Fontaine curve. Let $k_E$
be the residue field of $E$. For every perfectoid $k_E$-algebra $(R, R^+)$, we
have a morphism $\YFF(R, R^+)_E \to \spa(E_0 \otimes_{\qp} E)$. Hence we may
define the line bundle
\[
  \mathscr{O}_{\XFF(R, R^+)_E}(b_S) = \mathscr{O}_{\YFF(R, R^+)_E} / (\phi^\ast
  \mathscr{O}_{\YFF(R, R^+)_E} \cong \mathscr{O}_{\YFF(R, R^+)_E}
  \xrightarrow{b_S} \mathscr{O}_{\YFF(R, R^+)_E})
\]
where we note $b_S \in (E_0 \times_{\qp} E)^\times = S(E_0)$. Under the
construction of \Cref{Sec:TorsorVsVector}, this corresponds to the $S$-torsor on
$\XFF(R, R^+)$ given by Frobenius-descending the isomorphism
\[
  \phi^\ast(S \times_{\qp} \YFF(R, R^+)) \cong S \times_{\qp} \YFF(R, R^+)
  \xrightarrow{r_{b_S^{-1}}} S \times_{\qp} \YFF(R, R^+),
\]
which is the torsor $\mathscr{P}_{b_S}$ considered in \Cref{Sec:AutOfPb}.

\subsubsection{}
On the other hand, for every perfectoid $E$-algebra $(R^\sharp, R^{\sharp+})$
with tilt $(R, R^+)$, there is a distinguished point
\[
  \infty \colon \spa(R^\sharp, R^{\sharp+}) \hookrightarrow \XFF(R,
  R^+)_E
\]
that is moreover a Cartier divisor, see
\cite[Proposition~11.3.1]{ScholzeWeinsteinBerkeley}. We now define the line
bundle
\[
  \mathscr{O}_{\XFF(R, R^+)_E}(\infty) \supseteq \mathscr{O}
\]
that is the sheaf of rational functions on $\XFF(R, R^+)_E$ with
possibly a pole at $\infty$ of order at most $1$. Again, this defines a map
\[
  \mathrm{BL}_{\mu_S^{-1}} \colon \spd E \to \mathrm{Bun}_S,
\]
where it is clear from the construction that this agrees with map described in
\Cref{Sec:BLToriDef} for the torus $S$ and the cocharacter $\mu_S$.

\subsubsection{}
By \cite{LubinTate} there exists, up to isomorphism, a unique $1$-dimensional
formal group $\mathcal{LT}_{\pi_E}$ over $\mathcal{O}_E$ with an
$\mathcal{O}_E$-module structure with the property that $[\pi_E] \equiv
\mathrm{Frob}_q \pmod{\pi_E}$. As in \cite[Section~II.2.1]{FarguesScholze}, one
may choose a coordinate $\mathcal{LT}_{\pi_E} \cong \spf \mathcal{O}_E[[X]]$ for
which the map
\[
  \log \colon \mathcal{LT}_{\pi_E,E} \to \widehat{\mathbb{G}}_{a,E}; \quad X
  \mapsto \sum_{n=0}^\infty \pi_E^{-n} X^{q^n}
\]
of formal schemes over $E$ is $\mathcal{O}_E$-linear. We consider the universal
cover
\[
  \widetilde{\mathcal{LT}}_{\pi_E} = \varprojlim_{\pi_E} \mathcal{LT}_{\pi_E}
  \cong \spf \mathcal{O}_E[[\tilde{X}^{1/p^\infty}]], \quad \tilde{X} = \lim_{n
  \to \infty} X_n^{q^n}
\]
where $X_n$ is the coordinate on the $n$th copy of $\mathcal{LT}_{\pi_E} \cong
\spf \mathcal{O}_E[[X_n]]$ and $q$ is the order of the residue field $k_E$ of $E$.

\begin{Prop}[{\cite[Proposition~II.2.2]{FarguesScholze}}] \label{Prop:BanachColmez}
  Let $(R^\sharp, R^{\sharp+})$ be a perfectoid $E$-algebra with tilt $(R,
  R^+)$, which is naturally a $k_E$-algebra. Write
  \[
    c = (1, 0, \dotsc, 0) \in \prod_{E_0 \hookrightarrow E} E \cong E_0
    \otimes_{\qp} E,
  \]
  where $1$ is at the coordinate corresponding to the natural embedding. Then
  the map
  \begin{align*}
    \widetilde{\mathcal{LT}}_{\pi_E}(R^{\sharp+}) \cong R^{\circ\circ} &\to
    H^0(Y_{(0,\infty)}(R, R^+)_E, \mathscr{O}_{Y_{(0,\infty)}(R,
    R^+)_E})^{\phi=b_S^{-1}} \\ &\qquad\cong H^0(\XFF(R, R^+)_E,
    \mathscr{O}_{\XFF(R, R^+)_E}(b_S)) \\
    X &\mapsto \sum_{n=-\infty}^\infty b_S \phi(b_S) \dotsm
    \phi^{n-1}(b_S) \phi^n(c) ([X^{p^n}] \otimes 1)
  \end{align*}
  is an $E$-linear isomorphism. Under this isomorphism, the evaluation map
  \[
    H^0(\XFF(R, R^+)_E, \mathscr{O}_{\XFF(R, R^+)_E}(b_S)) \to
    R^\sharp
  \]
  at $\infty$ is identified with
  \[
    \widetilde{\mathcal{LT}}_{\pi_E}(R^{\sharp+}) \to
    \mathcal{LT}_{\pi_E}(R^{\sharp+}) \xrightarrow{\log} R^{\sharp}.
  \]
\end{Prop}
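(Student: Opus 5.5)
The plan is to reduce the statement to the version of \cite[Proposition~II.2.2]{FarguesScholze} written for the ``$E$-adic'' curve with residue field $k_E$. In that version $E_0$ plays no role, and the twist is the single scalar $\pi_E^{-1}$ under the $q$-Frobenius. The reduction takes place entirely at the level of global functions on $\YFF(R,R^+)_E$, using the decomposition $E_0 \otimes_{\qp} E \cong \prod_{E_0 \hookrightarrow E} E$.

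\textbf{Decomposition into components.} Since $(R,R^+)$ is a $k_E$-algebra, the structure map $\YFF(R,R^+)_E \to \spa(E_0 \otimes_{\qp} E)$ splits $\YFF(R,R^+)_E$ as a disjoint union of $f = [E_0:\qp]$ open and closed pieces $Y_j$, indexed by the embeddings $E_0 \hookrightarrow E$. The Frobenius $\phi$ (acting on the $\YFF$ factor) permutes these pieces cyclically, and $\phi^f$ stabilizes each one. The piece $Y_0$ cut out by the idempotent $c$ is the ``$E$-adic'' space $\YFF(R,R^+)$ built from $W_{\mathcal{O}_E}(R^+)$, with $\phi^f$ acting as the $q$-Frobenius $\phi_q$.

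\textbf{Reduction to $Y_0$.} A function $h$ on $\YFF(R,R^+)_E$ satisfying $\phi(h) = b_S^{-1}h$ is determined by its component $h_0 = ch$. The equation then forces $\phi_q(h_0) = \pi_E\, h_0$ on $Y_0$, using $b_S\phi(b_S)\cdots\phi^{f-1}(b_S) = \pi_E^{-1}$ from \Cref{Sec:Choosingb}. Conversely, any such $h_0$ extends uniquely: one defines the other components by applying $\phi$ to $h_0$ and inserting the factors $b_S\phi(b_S)\cdots$. This is exactly what the displayed series does. Its terms $\phi^n(c)$ sit on the various components, and grouping the terms in each residue class modulo $f$ recovers the standard series $\sum_{m} \pi_E^{-m}[X^{q^m}]$ (with the appropriate sign convention) on $Y_0$ together with its $\phi$-translates.

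\textbf{The standard statement on $Y_0$.} Here I would invoke the Lubin--Tate description, namely $H^0(Y_0,\mathscr{O})^{\phi_q=\pi_E} \cong \widetilde{\mathcal{LT}}_{\pi_E}(R^{\sharp+}) \cong R^{\circ\circ}$. This is proved in \cite{FarguesScholze} as follows. Convergence of $\sum_m \pi_E^{-m}[X^{q^m}]$ on $Y_{(0,\infty)}$ is checked directly from the topological nilpotence of $X$: the terms for $m \to +\infty$ are controlled by $X^{q^m}\to 0$, and those for $m\to -\infty$ by $\pi_E^{-m}\to 0$. For bijectivity, injectivity comes from expanding a $\phi_q$-eigenfunction in its Teichmüller expansion. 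Surjectivity reduces, via the identification with the Banach--Colmez space, to the bijection $\widetilde{\mathcal{LT}}(R^{\sharp+}) = \varprojlim_{x\mapsto x^q} R^{\circ\circ}$.

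\textbf{Evaluation at $\infty$ and the main obstacle.} For the compatibility with evaluation at $\infty$, note that $\infty$ lies on $Y_0$ (this is the component corresponding to the natural embedding). The map $\theta$ sends $\sum_m \pi_E^{-m}[X^{q^m}]$ to $\lim_n \sum_{m\ge -n}\pi_E^{-m}(X^\sharp)^{q^m}$. Rewriting this with $X^\sharp = \lim X_n^{q^n}$ gives $\lim_n \pi_E^{n}\log(X_n) = \log(X_0)$, by the $\mathcal{O}_E$-linearity of $\log$. I expect the main obstacle to be the bookkeeping of conventions rather than any conceptual point. Specifically, one must match the $b_S$ versus $b_S^{-1}$ normalization, and check that the component singled out by $c$ is the one containing $\infty$. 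The $E$-linearity is immediate since everything is $E$-linear in the second tensor factor.
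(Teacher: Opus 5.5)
Your proposal is correct and takes essentially the same approach as the paper. The paper simply cites \cite[Proposition~II.2.2]{FarguesScholze}, and your argument rests on the same result. What you add is the translation the paper leaves implicit: restricting via the idempotent $c$ identifies the $\phi=b_S^{-1}$ eigenspace with the $\phi^f=\pi_E$ eigenspace on one component, because $b_S\phi(b_S)\cdots\phi^{f-1}(b_S)=\pi_E^{-1}$; the displayed series restricts there to $\sum_m\pi_E^{-m}[X^{q^m}]$; and $\infty$ lies on that component.

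One small correction: $E$-linearity is not ``immediate'' from the second tensor factor. Addition on $R^{\circ\circ}\cong\widetilde{\mathcal{LT}}_{\pi_E}(R^{\sharp+})$ is given by the reduced Lubin--Tate group law, not by $+$. So only your reduction steps are trivially $E$-linear, and the $E$-linearity on the $c$-component is part of what you are citing.
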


\subsubsection{}
We choose a nonzero $\pi_E$-torsion point $\lambda_1 \in
\mathcal{LT}(\mathcal{O}_{\cp}) \cong \mathfrak{m}_{\cp}$, and also choose
points $\lambda_2, \lambda_3, \dotsc \in \mathcal{LT}(\mathcal{O}_{\cp}) =
\mathfrak{m}_{\cp}$ so that $[\pi_E](\lambda_i) = \lambda_{i-1}$. Let
\[
  \lambda = (0, \lambda_1, \lambda_2, \dotsc) \in
  \widetilde{\mathcal{LT}}(\mathcal{O}_{\cp}).
\]
For each element $u \in \mathcal{O}_E^\times$, we may define
\[
  [u](\lambda) = (0, [u](\lambda_1), [u](\lambda_2), \dotsc) \in
  \widetilde{\mathcal{LT}}(\mathcal{O}_{\cp}),
\]
noting that each $[u](\lambda_n)$ is well-defined as it only depends on $u
\bmod{\pi_E^n \mathcal{O}_E}$.

\begin{Thm}[{\cite[Theorem~3]{LubinTate}}] \label{Thm:LubinTate}
  We have $\lambda_1, \lambda_2, \dotsc \in \eab$, where
  $\eab$ is the maximal abelian extension of $E$. Moreover, for every
  $n \in \mathbb{Z}$ and $u \in \mathcal{O}_E^\times$, we have
  \[
    \mathrm{Art}_E(u \pi_E^n)(\lambda) = [u](\lambda)
  \]
  where $\mathrm{Art}_E \colon E^\times \to \gal(\eab/E)$ is the Artin
  reciprocity map.
\end{Thm}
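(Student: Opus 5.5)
The statement is the classical theorem of Lubin--Tate, and the plan is to run their argument while keeping careful track of the normalizations fixed in \Cref{Sec:Conventions}. \textbf{First}, I would show that the torsion points lie in an abelian extension. The $\mathcal{O}_E$-module $\mathcal{LT}_{\pi_E}[\pi_E^n](\mathcal{O}_{\cp})$ is free of rank one over $\mathcal{O}_E/\pi_E^n$. This follows because $[\pi_E^n]$ is a polynomial-like series of degree $q^n$, Eisenstein over $\mathcal{O}_E$ after removing the factor for the previous level, so all its roots are simple and lie in $\mathfrak{m}_{\cp}$. Hence $E_{\pi,n} := E(\lambda_n)$ is totally ramified of degree $(q-1)q^{n-1}$, since $\lambda_n$ is a root of the Eisenstein polynomial $[\pi_E^n](X)/[\pi_E^{n-1}](X)$. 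The Galois action commutes with the formal group law and with $[a]$ for $a \in \mathcal{O}_E$. Therefore $\gal(E_{\pi,n}/E)$ embeds into $\operatorname{Aut}_{\mathcal{O}_E}(\mathcal{LT}[\pi_E^n]) = (\mathcal{O}_E/\pi_E^n)^\times$, and by counting degrees this embedding is an isomorphism. So each $\lambda_n \in \eab$, and $\gal(E_{\pi,\infty}/E) \cong \mathcal{O}_E^\times$ via $\sigma(\lambda) = [\chi(\sigma)](\lambda)$.

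\textbf{Second}, I would identify $\mathrm{Art}_E$ on $E_{\pi,\infty}$, using the norm-group characterization of local class field theory. The constant term of the minimal polynomial of $\lambda_n$ is $\pm\pi_E$. Hence $\pi_E = \pm \operatorname{Nm}_{E_{\pi,n}/E}(\lambda_n)$, and with a small parity adjustment (as in Lubin--Tate, or by using $-\lambda_n$ when $p=2$) one gets $\pi_E \in \operatorname{Nm}(E_{\pi,n}^\times)$. Since the index is $(q-1)q^{n-1} = [E^\times : \pi_E^{\mathbb{Z}}(1+\pi_E^n\mathcal{O}_E)]$, the norm group is exactly $\pi_E^{\mathbb{Z}}(1+\pi_E^n\mathcal{O}_E)$. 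It follows that $\mathrm{Art}_E(\pi_E)$ fixes every $\lambda_n$, which gives the case $u = 1$. Combined with the fact that $\mathrm{Art}_E$ is a homomorphism, this reduces the claim to computing $\mathrm{Art}_E(u)$ on $\lambda$ for $u \in \mathcal{O}_E^\times$.

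\textbf{Third}, to compute $\mathrm{Art}_E(u)$, I would use the independence of the choice of uniformizer. For $\pi' = u\pi_E$, there is an isomorphism $\theta\colon \mathcal{LT}_{\pi_E} \to \mathcal{LT}_{\pi'}$ defined over $\widehat{E^{\mathrm{ur}}}$ with $\theta^{\mathrm{Frob}} = \theta \circ [u]$. Applying the previous step to $\pi'$ shows that $\mathrm{Art}_E(\pi')$ fixes $\theta(\lambda)$. Writing $\mathrm{Art}_E(\pi') = \mathrm{Art}_E(u)\mathrm{Art}_E(\pi_E)$ and comparing the actions on $\widehat{E^{\mathrm{ur}}}$ and on $\theta(\lambda)$ then pins down $\mathrm{Art}_E(u)(\lambda)$ as $[u^{\pm 1}](\lambda)$.

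\textbf{Main obstacle.} I expect the hardest part to be this sign. The exponent depends on whether $\mathrm{Art}_E(\pi_E)$ restricts to arithmetic or geometric Frobenius, and \Cref{Sec:Conventions} fixes the geometric normalization. So the decisive check is running the $\theta$-twisting argument with $\mathrm{Art}_E(\pi_E)|_{E^{\mathrm{ur}}} = \mathrm{Frob}^{-1}$ and confirming that the exponent comes out as $[u]$ rather than the $[u^{-1}]$ of the arithmetic convention. If it instead gives $[u^{-1}]$, the statement should be read with $\mathrm{Art}_E$ replaced by its inverse.
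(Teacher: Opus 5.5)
Your outline is the classical Lubin--Tate proof: norms of torsion points handle the uniformizer, and the twisting isomorphism $\theta$ handles units. That is all the paper relies on; it gives no argument beyond the citation, and its only contribution is stating the result in the geometric normalization. The argument is correct once the following three things are repaired.

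\emph{The sign.} You leave this check open and hedge that the statement might need reinterpreting. It does come out as stated, so you should carry it out. Put $\pi' = u\pi_E$ and $\tau = \mathrm{Art}_E(\pi')$. In the geometric normalization $\tau|_{E^{\mathrm{ur}}} = \mathrm{Frob}^{-1}$, where $\mathrm{Frob}$ is the arithmetic $q$-Frobenius appearing in your relation $\theta^{\mathrm{Frob}} = \theta\circ[u]$. Apply $\mathrm{Frob}^{-1}$ to the coefficients of that relation; since $[u]$ has coefficients in $\mathcal{O}_E$, this gives $\theta^{\tau} = \theta\circ[u^{-1}]$. Two further facts are needed:
\begin{itemize}
\item $\tau$ acts continuously on $\widehat{E}^{\mathrm{ab}}$;
\item $\tau$ fixes all torsion of $\mathcal{LT}_{\pi'}$, by your second step applied to $\pi'$, and $\theta(\lambda_i)$ is such torsion.
\end{itemize}
Hence $\theta(\lambda_i) = \tau(\theta(\lambda_i)) = \theta^{\tau}(\tau(\lambda_i)) = \theta([u^{-1}](\tau(\lambda_i)))$, so $\tau(\lambda_i) = [u](\lambda_i)$. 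Since $\mathrm{Art}_E(\pi_E)$ fixes $\lambda_i$, this says $\mathrm{Art}_E(u)(\lambda_i) = [u](\lambda_i)$. With the arithmetic normalization the same lines give Lubin--Tate's $[u^{-1}]$.

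\emph{The norm group in your second step.} The sentence ``since the index is $(q-1)q^{n-1}$, the norm group is exactly $\pi_E^{\mathbb{Z}}(1+\pi_E^n\mathcal{O}_E)$'' is a non sequitur. Equal indices together with $\pi_E \in N$ give neither inclusion; you would need $1+\pi_E^n\mathcal{O}_E \subseteq N$, which you have not shown. You never actually use it: only $\pi_E \in \operatorname{Nm}(E(\lambda_n)^\times)$ is needed to conclude that $\mathrm{Art}_E(\pi_E)$ fixes $\lambda_n$. So drop that sentence.

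\emph{The constant term.} The claim that the minimal polynomial of $\lambda_n$ has constant term $\pm\pi_E$ is false in the paper's coordinate. There $[\pi_E]$ is a genuine power series, and the Weierstrass polynomial has constant term $\pi_E$ times a unit that is only $\equiv 1 \bmod \pi_E$. Fix this by passing to the Lubin--Tate series $f(X) = \pi_E X + X^q$ via an $\mathcal{O}_E$-linear isomorphism over $\mathcal{O}_E$, which changes neither $E(\lambda_n)$ nor the statement. The image of $\lambda_n$ then has minimal polynomial $\pi_E + f^{\circ(n-1)}(X)^{q-1}$, whose constant term is exactly $\pi_E$. Its negative therefore has norm exactly $\pi_E$, for every $p$.
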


\subsubsection{}
We consider the completed maximal abelian extension $\eabhat/E$, so that
$\ebreve \subseteq \eabhat$. Because $\eabhat$ is a (completed) pro-\'{e}tale
extension of $\qp^\mathrm{cyc} = \qp(\zeta_{p^\infty})^\wedge$, which is
perfectoid, we see that $\eabhat$ is a perfectoid field as well.
Using \Cref{Prop:BanachColmez}, we obtain a nonzero section
\[
  s \in H^0(\XFFeabhat, \mathscr{O}_{\XFFeabhat}(b_S))
\]
corresponding to $\lambda \in
\widetilde{\mathcal{LT}}_{\pi_E}(\mathcal{O}_{\eabhat})$. On the other hand, it
is clear that $\log(\lambda) = 0$, and hence $s \vert_{\spa \eabhat} = 0$. That
is, the section $s$ defines a morphism
\[
  s \colon \mathscr{O}_{\XFFeabhat}(\infty)
  \to \mathscr{O}_{\XFFeabhat}(b_S),
\]
which is an isomorphism by \cite[Proposition~2.17]{FarguesLCFT}. Equivalently,
$s$ defines an homotopy filling in
\[ \begin{tikzcd}[column sep=large]
  \spd \eabhat \arrow{r}{\rho} \arrow{d} & \spd \fpbar \arrow{d}{b_S} \\ \spd E
  = [\spd \eabhat / \underline{\gal(\eab/E)}]
  \arrow{r}{\mathrm{BL}_{\mu_S^{-1}}} & \bun_S^{[b_S]} = [\spd \fpbar /
  \underline{A_{S,b_S}}],
\end{tikzcd} \]
where $\rho \colon \spd \eabhat \to \spd \fpbar$ is projection to its perfect
residue field.

\subsubsection{} \label{Sec:EquivRelation}
Taking the \v{C}ech nerve of both vertical maps, we obtain a map 
\begin{align*}
  \underline{\gal(\eab/E)} \times \spd \eabhat &\xrightarrow{(\mathrm{pr}_2,
  \mathrm{act}), \cong} \spd \eabhat \times_{\spd E} \spd \eabhat \\ &\to \spd
  \fpbar \times_{\bun_S} \spd \fpbar \xleftarrow{(\mathrm{pr}_2, \mathrm{act}),
  \cong} \underline{A_{S,b_S}} \times \spd \fpbar.
\end{align*}
This may be described as follows. Given any $\sigma \in \gal(\eab/E)$, we obtain
an isomorphism
\begin{align*}
  \gamma_\sigma \colon \mathscr{O}_{\XFFeabhat}(b_S) &\xrightarrow{s^{-1}}
  \mathscr{O}_{\XFFeabhat}(\infty) \\ &\qquad\cong \sigma^\ast
  \mathscr{O}_{\XFFeabhat}(\infty) \xrightarrow{\sigma^\ast s} \sigma^\ast
  \mathscr{O}_{\XFFeabhat}(b_S).
\end{align*}
A $\sigma$-conjugated automorphism of $\mathscr{O}(b_S)$ corresponds to an
element of $A_{S,b_S}$ contained in the fiber of $\sigma \vert_{\fpbar} \in
\gal(\fpbar/\fp)$, see \Cref{Sec:AutOfPb}. Thus the construction $\sigma \mapsto
\gamma_\sigma$ defines a continuous homomorphism of locally profinite groups\footnote{Note that the functor from locally profinite groups to v-sheaves is fully faithful, see e.g. \cite[Example 11.12]{EtCohDiam}.}
\[
  \alpha \colon \gal(\eab/E) \to A_{S,b_S}
\]
lifting the restriction map $\gal(\eab/E) \to \gal(\fpbar/\fp)$. It follows from
the construction that the map on morphisms $\spd \eabhat \times
\underline{\gal(\eab/E)} \to \spd \fpbar \times \underline{A_{S,b_S}}$ is simply
$\rho \times \alpha$, and hence the Beauville--Laszlo map is identified as
\[
  \mathrm{BL}_{\mu_S^{-1}} \colon \spd E = [\spd \eabhat /
  \underline{\gal(\eab/E)}] \xrightarrow{[\rho/\alpha]} [\spd \fpbar /
  \underline{A_{S,b_S}}] = \bun_S^{[b_S]}.
\]

\subsubsection{} \label{Sec:SplittingOfA}
Our goal now is to compute the continuous group homomorphism $\alpha$. Recall
that for our choice of $b_S$, we have $b_S = \operatorname{Nm}_{E/E_0}
(\mu_S(\pi_E^{-1})) \in S(E_0)$. It follows from the definition in
\Cref{Sec:AGb} that the preimage of $\gal(\fpbar/k_E) \subseteq
\gal(\fpbar/\fp)$ under $A_{S,b_S} \to \gal(\fpbar/\fp)$ is
\[
  S(\qp) \times \gal(\fpbar/k_E) \subseteq A_{S,b_S}.
\]
That is, $A_{S,b_S} \to \gal(\fpbar/\fp)$ naturally splits over
$\gal(\fpbar/k_E)$. The image of $\alpha$ is contained in this open subgroup,
since the image of $\gal(\eab/E) \to \gal(\fpbar/\fp)$ is $\gal(\fpbar/k_E)$.

\begin{Prop} \label{Prop:AlphaUsingLCFT}
  Consider the composition
  \[
    \beta \colon \gal(E^\mathrm{ab}/E) \xrightarrow{\mathrm{Art}_E^{-1}}
    \widehat{E^\times} \cong \mathcal{O}_E^\times \times
    \widehat{\pi_{E}^\mathbb{Z}} \xrightarrow{\pi_{E} \mapsto
    \phi^{[k_E:\fp]}} \mathcal{O}_E^\times \times \gal(\fpbar/k_E).
  \]
  Then we have $\alpha = -\beta$ where $\mathcal{O}_E^\times \times
  \gal(\fpbar/k_E) \subseteq A_{S,b_S}$ as in \Cref{Sec:SplittingOfA} upon
  identifying $E^\times \cong S(\qp)$.
\end{Prop}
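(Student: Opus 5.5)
Both $\alpha$ and $-\beta$ are continuous homomorphisms $\gal(\eab/E)\to \mathcal{O}_E^\times\times\gal(\fpbar/k_E)\subseteq A_{S,b_S}$. By Theorem \ref{Thm:LubinTate}, $\mathrm{Art}_E$ identifies $\widehat{E^\times}$ with $\gal(\eab/E)$, and $\mathcal{O}_E^\times\times\pi_E^{\mathbb{Z}}$ is dense in $\widehat{E^\times}$. It therefore suffices to check $\alpha(\mathrm{Art}_E(u\pi_E^n))=-\beta(\mathrm{Art}_E(u\pi_E^n))$ for $u\in\mathcal{O}_E^\times$ and $n\in\mathbb{Z}$. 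Writing $\sigma=\mathrm{Art}_E(u\pi_E^n)$, the element $\gamma_\sigma$ is characterized as the unique element of $A_{S,b_S}$ over $\sigma|_{\fpbar}$ with $\gamma_\sigma\circ s=\sigma^\ast s$, viewed as sections of $\mathscr{O}(b_S)$ over $\XFFeabhat$. Both sides lie over the same element of $\gal(\fpbar/k_E)$: the geometric normalization sends $\pi_E$ to a lift of $\phi^{-[k_E:\fp]}$, matching $-\beta$. So it remains to compute the $S(\qp)=E^\times$-component of $\gamma_\sigma$ in the splitting of \Cref{Sec:SplittingOfA}.

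To compute this, I would use Proposition \ref{Prop:BanachColmez} to transport everything to $\widetilde{\mathcal{LT}}_{\pi_E}(\mathcal{O}_{\eabhat})$, where $s$ corresponds to $\lambda$. I need two compatibilities. First, the $\mathcal{O}_E$-action (and hence the $E$-action) on $\widetilde{\mathcal{LT}}$ corresponds to the action of $E^\times=S(\qp)$ on $H^0(\mathscr{O}(b_S))$ by $E$-linearity of the isomorphism. Second, $\sigma^\ast$ on sections corresponds to the Galois action on the coordinates of $\lambda$, twisted by the Frobenius on the $\fpbar$-structure. Then Theorem \ref{Thm:LubinTate} gives $\sigma(\lambda)=[u](\lambda)$. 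I would first treat $n=0$, that is, $\sigma$ in inertia acting trivially on $\fpbar$. There the computation gives $\gamma_\sigma\circ s=[u]s$, so the $S(\qp)$-component is $u$ or $u^{-1}$ depending on whether $\gamma_\sigma$ acts on sections by pushforward or pullback. The convention in \Cref{Sec:AutOfPb}, where $x$ acts by right multiplication $r_{f(x)}$ and $\mathscr{O}(b_S)$ is a quotient by $b_S$, should produce the inverse. This matches the sign "$-\beta$" written additively, with $\beta(\sigma)$ having component $u$.

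The case $n\neq 0$ is the main obstacle. Here $\sigma$ acts nontrivially on $\fpbar$, so $\sigma^\ast$ changes the $\fpbar$-structure, and one must compare through the isomorphism $f^\ast\mathscr{P}_{b_S}\cong(f\circ\sigma)^\ast\mathscr{P}_{b_S}$. The formula in Proposition \ref{Prop:BanachColmez} involves $b_S\phi(b_S)\cdots\phi^{n-1}(b_S)\phi^n(c)$. Under the $\fpbar$-Frobenius shift $\phi^{f}$, with $f=[E_0:\qp]$, the sum is reindexed, and the factor $b_S\phi(b_S)\cdots\phi^{f-1}(b_S)=\pi_E^{-1}$ computed in \Cref{Sec:Choosingb} appears. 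This reindexing should show that the element $(1,\phi^{f})$ of the splitting acts on sections exactly as $\pi_E^{\pm1}$ composed with $(b_S,\phi)^f$. Combining this with $\sigma(\lambda)=[u]\lambda$ for $\sigma=\mathrm{Art}_E(u\pi_E^n)$ gives the component for general $n$. I expect the bookkeeping of signs and normalizations here (geometric Artin map, left versus right actions, $\phi^{-f}$ versus $\phi^{f}$) to be the delicate part, and I would check it carefully at $n=-1$, $u=1$. Once the values agree on the dense subgroup, continuity gives $\alpha=-\beta$.
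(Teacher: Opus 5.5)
Your residue-field comparison and your inertial case follow the paper. The $n\neq 0$ case, which you flag as the main obstacle and leave open, is where the plan has a gap, and the missing idea makes that case identical to $n=0$.

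Since $b_S$ and $c$ lie in $E_0\otimes_{\qp}E$, the line bundle $\mathscr{O}(b_S)$ is defined over $k_E$. Every $\sigma\in\gal(E^{\mathrm{ab}}/E)$ acts $k_E$-linearly on the tilt of $\widehat{E}^{\mathrm{ab}}$, hence $W(k_E)$-linearly on its Witt vectors, and it commutes with $\phi$. So $b_S$ and $c$ are fixed, and the isomorphism of \Cref{Prop:BanachColmez} is equivariant for \emph{all} of $\gal(E^{\mathrm{ab}}/E)$, not only for inertia. Here $\sigma$ acts on sections of $\mathscr{O}(b_S)$ through the canonical identification $\sigma^\ast\mathscr{O}(b_S)\cong\mathscr{O}(b_S)$, and that identification is exactly the splitting element $(1,\sigma|_{\fpbar})$ of \Cref{Sec:SplittingOfA}. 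Consequently, for every $\sigma$ the $E^\times$-component of $\gamma_\sigma$ is read off directly from $\sigma(\lambda)/\lambda=\mathrm{pr}_1(\beta(\sigma))$. No density reduction, no case split and no ``twist by the Frobenius on the $\fpbar$-structure'' is needed; this is exactly how the paper argues.

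The reindexing you propose instead conflates two different maps. One is the Galois automorphism $\sigma$, which restricts to $\phi^{-fn}$ on $\fpbar$ but is not a power of the absolute Frobenius of the tilt. The other is the absolute Frobenius $X\mapsto X^{p}$, which is what shifts the Banach--Colmez series. The relation $(1,\phi^f)=(\pi_E,1)\cdot(b_S,\phi)^f$ in $A_{S,b_S}$ is true, but it is purely group-theoretic and says nothing about how $\sigma^\ast s$ compares with $s$. If you realized the residue-field part of $\sigma$ through $(b_S,\phi)^{-fn}$ rather than $(1,\phi^{-fn})$, you would get a spurious factor $\pi_E^{\pm n}$ in the $E^\times$-component. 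A sanity check: $\mathrm{Art}_E(\pi_E)$ fixes $\lambda$ by \Cref{Thm:LubinTate}, so its $E^\times$-component in the splitting of \Cref{Sec:SplittingOfA} must be $1$. The identity $b_S\phi(b_S)\cdots\phi^{f-1}(b_S)=\pi_E^{-1}$ plays no role here; it is used only later, in \Cref{Lem:ActionComparison}.

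Finally, the inverse should not be left as ``should produce the inverse''. It comes from \Cref{Sec:TorsorVsVector}, where $\mathscr{P}(\mathscr{V})$ is the sheaf of isomorphisms \emph{out of} $\pi_\ast\mathscr{V}$. So the automorphism of the line bundle given by multiplication by $x\in E^\times$ acts on the torsor by precomposition with $x^{-1}$, that is, by right multiplication by $x^{-1}$.
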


\begin{proof}
  It follows from basic properties of $\mathrm{Art}_E$ that $\mathrm{pr}_2
  \circ \beta$ agrees with the negation of the induced map on residue fields. On
  the other hand, $\mathrm{pr}_2 \circ \alpha$ is by construction the induced
  map on residue fields, see \Cref{Sec:EquivRelation}. Hence it remains to verify
  that $\mathrm{pr}_1 \circ \alpha = -\mathrm{pr}_1 \circ \beta$.

  Let $\sigma \in \gal(\eab/E)$ be any automorphism. It follows from the
  discussion of \Cref{Sec:EquivRelation} and \Cref{Sec:SplittingOfA} that the
  element $\mathrm{pr}_1(\alpha(\sigma)) \in E^\times$ corresponds to the
  automorphism
  \begin{align*}
    \gamma_\sigma \colon \mathscr{O}_{\XFFeabhat}(b_S) & \xrightarrow{s^{-1}}
    \mathscr{O}_{\XFFeabhat}(\infty) \cong \sigma^\ast
    \mathscr{O}_{\XFFeabhat}(\infty) \\ &\xrightarrow{\sigma^\ast s}
    \sigma^\ast \mathscr{O}_{\XFFeabhat}(b_S) \cong
    \mathscr{O}_{\XFFeabhat}(b_S)
  \end{align*}
  where in the last isomorphism we are using that $b_S$ is defined over $k_E$.
  The isomorphism
  \[
    \widetilde{\mathcal{LT}}_{\pi_E}(\mathcal{O}_{\eabhat}) \cong
    H^0(\XFFeabhat, \mathscr{O}_{\XFFeabhat}(b_S))
  \]
  from \Cref{Prop:BanachColmez} is both $\gal(\eabhat/E)$-equivariant and
  $E$-linear. Hence $\gamma_\sigma$ is multiplication by
  \[
    \sigma(s)/s = \sigma(\lambda)/\lambda = \mathrm{pr}_1(\beta(\sigma))
    \in \mathcal{O}_E^\times
  \]
  by \Cref{Thm:LubinTate}, where by $\sigma(\lambda)/\lambda$ we mean the
  unique element $x \in \mathcal{O}_E^\times$ for which $x \lambda =
  \sigma(\lambda)$.
  
  Following the constructions from \Cref{Sec:TorsorVsVector} and
  \Cref{Sec:AutOfPb}, we see that the corresponding map $\mathscr{P}_{b_S} \to
  \sigma^\ast \mathscr{P}_{b_S}$ is right multiplication by
  $\mathrm{pr}_1(\beta(\sigma))^{-1}$, where the inverse comes from the fact
  that the construction involves dualizing the vector bundle. Therefore
  $\mathrm{pr}_1(\alpha(\sigma)) = \mathrm{pr}_1(\beta(\sigma))^{-1}$ by the
  construction in \Cref{Sec:AutOfPb}.
\end{proof}

\subsubsection{} \label{Sec:BLGeneral}
Recall we have a homomorphism $r_\mu \colon S = \operatorname{Res}_{E/\qp}
\mathbb{G}_m \to T$. This induces a map $r_\mu \colon \bun_S \to \bun_T$ as well
as $r_\mu \colon A_{S,b_S} \to A_{T,b}$. From the previous discussion, it
follows that the Beauville--Laszlo map
\[
  \mathrm{BL}_{\mu^{-1}} \colon \spd E \to \bun_T
\]
may be identified with
\[
  [\rho / (r_\mu \circ \alpha)] \colon [\spd \eabhat / \underline{\gal(\eab/E)}]
  \to [\spd \fpbar / \underline{A_{T,b}}].
\]

\subsection{The fiber product diagram} \label{Sub:ProofTori}

\subsubsection{}
Let $\mathsf{T}$ be a torus over $\mathbb{Q}$, and let $\mu \colon \mathbb{G}_
{m,\mathsf{E}} \to \mathsf{T}_\mathsf{E}$ be a cocharacter whose field of
definition is $\mathsf{E}$. Then $(\mathsf{T}, \mu)$ is a Shimura datum with
reflex field $\mathsf{E}$. There is a corresponding infinite level Shimura
variety 
\[
  \mathbf{Sh}(\mathsf{T}, \mu)_\mathsf{E} = \varprojlim_{K \to \{1\}}
  \mathbf{Sh}_K(\mathsf{T}, \mu)_\mathsf{E}.
\]
By \cite[Theorem~5.28]{Milne}, for example, its $\bar{\mathsf{E}}$-points are
given by
\[
  \mathsf{Sh}(\mathsf{T}, \mu)_\mathsf{E}(\bar{\mathsf{E}}) = \shigeompts,
\]
where $\mathsf{T}(\mathbb{Q})^-$ denotes the closure of $\mathsf{T}(\mathbb{Q})$
inside $\mathsf{T}(\af)$.

\subsubsection{} \label{Sec:ShimuraGalois}
The Galois action is defined as follows. There is an Artin isomorphism
\[
  \mathrm{Art}_\mathsf{E} \colon (\mathsf{E}^\times)^- \backslash \Bigl(
  \mathbb{A}_\mathsf{E}^{\infty,\times} \times \prod_{\mathsf{E}_v
  \xrightarrow{\sim} \mathbb{R}} (\mathbb{Z}/2\mathbb{Z}) \Bigr) \cong
  \gal(\mathsf{E}^\mathrm{ab}/\mathsf{E})
\]
with the conventions from \Cref{Sec:Conventions}.
On the other hand, the cocharacter $\mu$ induces a morphism $r_\mu =
\sum_{\mathsf{E} \to \bar{\mathbb{Q}}} \mu \colon
\operatorname{Res}_{\mathsf{E}/\mathbb{Q}} \mathbb{G}_m \to \mathsf{T}$ of tori.
Combining the two, we obtain a map
\[
  \gal(\mathsf{E}^\mathrm{ab}/\mathsf{E})
  \xrightarrow{\mathrm{Art}_\mathsf{E}^{-1}} (\mathsf{E}^\times)^- \backslash
  \mathbb{A}_\mathsf{E}^{\infty,\times} \xrightarrow{r_\mu} \shigeompts.
\]
The Galois descent datum on the infinite level Shimura variety
$\mathbf{Sh}(\mathsf{T}, \mu)_\mathsf{E}$ is defined so that $\sigma \in
\gal(\mathsf{E}^\mathrm{ab}/\mathsf{E})$ acts on $\mathbf{Sh}(\mathsf{T},
\mu)_\mathsf{E}(\bar{\mathsf{E}}) \cong \shigeompts$ as translation by
$r_\mu(\mathrm{Art}_\mathsf{E}^{-1}(\sigma))$. This means that we may identify
\[
  \mathbf{Sh}(\mathsf{T}, \mu)_\mathsf{E} \cong [(\underline{\shigeompts} \times
  \spec \bar{\mathsf{E}}) / \gal(\bar{\mathsf{E}}/\mathsf{E})]
\]
where the right $\gal(\bar{\mathsf{E}}/\mathsf{E})$-action on $\spec
\bar{\mathsf{E}}$ is the usual one and the right
$\gal(\bar{\mathsf{E}}/\mathsf{E})$-action on $\shigeompts$ is translation by
$-r_\mu(\mathrm{Art}_\mathsf{E}^{-1})$.

\subsubsection{} \label{Sec:LocalGalActionShi}
We now fix a place $v \mid p$ of $\mathsf{E}$ and base change to the local field
$E = \mathsf{E}_v$. We also write $T = \mathsf{T}_{\qp}$, so that using the
embedding $\mathsf{E} \hookrightarrow E$ we may regard $\mu$ as a geometric
cocharacter of $T$ with field of definition $E$. There is a corresponding map of
$\qp$-tori
\[
  r_{\mu,v} \colon \operatorname{Res}_{E/\qp} \mathbb{G}_m \to T.
\]
It is related to $r_\mu$ via the inclusion
\[ \begin{tikzcd}
  \operatorname{Res}_{E/\qp} \mathbb{G}_m \arrow[hook]{r} \arrow{d}{r_{\mu,v}} &
  (\operatorname{Res}_{\mathsf{E}/\mathbb{Q}} \mathbb{G}_m)_{\qp}
  \arrow{d}{r_\mu} \\ T \arrow[equals]{r} & \mathsf{T}_{\qp},
\end{tikzcd} \]
where on $\qp$-points the top horizontal arrow is given by $E^\times \hookrightarrow
(\mathsf{E} \otimes_{\mathbb{Q}} \qp)^\times$ corresponding to the place $v \mid
p$.

\subsubsection{}
Consider the Shimura variety base-changed to the local field $E$. It has the
same $\bar{E}$-points
\[
  \mathbf{Sh}(\mathsf{T}, \mu)_E(\bar{E}) = \shigeompts,
\]
and the Galois action is given as the restriction along $\gal(\eab/E) \to
\gal(\mathsf{E}^\mathrm{ab}/\mathsf{E})$. By compatibility of the local and
global Artin maps, we have a commutative diagram
\[ \begin{tikzcd}
  \gal(\mathsf{E}^\mathrm{ab}/\mathsf{E})
  \arrow{r}{\mathrm{Art}_\mathsf{E}^{-1}} & (\mathsf{E}^\times)^- \backslash
  \mathbb{A}_\mathsf{E}^{\infty,\times} \arrow{r}{r_\mu} & \shigeompts \\
  \gal(\eab/E) \arrow{r}{\mathrm{Art}_E^{-1}} \arrow{u}{\mathrm{res}} &
  \widehat{E^\times} \arrow{u}{\tilde{\iota}} \arrow{ru}
\end{tikzcd} \]
where $\widehat{E^\times}$ is the profinite completion of $E^\times$,
and $\tilde{\iota}$ is the unique extension of $\iota \colon E^\times
\hookrightarrow \mathbb{A}_\mathsf{E}^{\infty,\times} \twoheadrightarrow
(\mathsf{E}^\times)^- \backslash \mathbb{A}_\mathsf{E}^{\infty,\times}$ to a
continuous map. It follows that the local Galois action on the Shimura variety
can be described as translation by $r_\mu \circ \tilde{\iota}$, which is the
unique extension of
\[
  E^\times \xrightarrow{r_{\mu,v}} T(\qp) \to \shigeompts
\]
to a continuous map.

\subsubsection{} \label{Sec:FiberProduct}
As in \Cref{Sec:Choosingb}, we choose a uniformizer $\pi_E \in E$ and let
$b = \operatorname{Nm}_{E/E_0}(\mu(\pi_E^{-1})) \in T(E_0) \subseteq T(\qpbr)$.
We have constructed in \Cref{Def:IgusaTori} the Igusa stack
$\mathrm{Igs}(\mathsf{T}, b) \to \bun_T^{[b]}$, and we have given in
\Cref{Sec:BLGeneral} an explicit description of the Beauville--Laszlo map
$\mathrm{BL}_{\mu^{-1}} \colon \spd E \to \bun_T^{[b]}$. We can now compute the
fiber product $\mathrm{Igs}(\mathsf{T},b) \times_{\bun_T} \spd E$ as
\[ \begin{tikzcd}
  \lbrack (\underline{\shigeompts} \times \spd \eabhat) /
  \underline{\gal(\eab/E)} \rbrack \arrow{r} \arrow{d} & \lbrack \spd \eabhat /
  \underline{\gal(\eab/E)} \rbrack \arrow{d}{\mathrm{BL}_{\mu^{-1}} = \lbrack
  \rho / (r_{\mu,v} \circ \alpha) \rbrack} \\ \mathrm{Igs}(\mathsf{T}, b) =
  \lbrack (\underline{\shigeompts} \times \spd \fpbar) / \underline{A_{T,b}}
  \rbrack \arrow{r}{\lbrack \mathrm{pr}_2 / \id \rbrack} & \lbrack \spd \fpbar /
  \underline{A_{T,b}} \rbrack = \bun_T^{[b]},
\end{tikzcd} \]
where the action of $\gal(\eab/E)$ on $\shigeompts$ is via translation along the
homomorphism
\[
  \gal(\eab/E) \xrightarrow{\alpha} A_{S, b_S} \xrightarrow{r_{\mu,v}} A_{T,b}
  \xrightarrow{\iota_{T,b}} \shigeompts,
\]
see \Cref{Def:IgusaTori}.

\begin{Lem} \label{Lem:ActionComparison}
  We have
  \[
    -r_\mu \circ \tilde{\iota} \circ \mathrm{Art}_E^{-1} = \iota_{T,b} \circ
    r_{\mu,v} \circ \alpha
  \]
  as continuous group homomorphisms $\gal(\eab/E) \to \shigeompts$.
\end{Lem}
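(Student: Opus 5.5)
Both sides are continuous group homomorphisms from $\gal(\eab/E)$ to the profinite abelian group $\shigeompts$. Via $\mathrm{Art}_E^{-1}$ we identify $\gal(\eab/E)$ with $\widehat{E^\times} \cong \mathcal{O}_E^\times \times \widehat{\pi_E^{\mathbb{Z}}}$. The image of $\mathcal{O}_E^\times \times \pi_E^{\mathbb{Z}}$ is dense. So it suffices to compare the two sides after precomposing with $\mathrm{Art}_E$ on the generators: units $u \in \mathcal{O}_E^\times$ and the uniformizer $\pi_E$.

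\emph{Left hand side.} By \Cref{Sec:LocalGalActionShi}, $r_\mu \circ \tilde\iota$ restricted to $E^\times$ is $\iota \circ r_{\mu,v}$. Hence the left hand side sends $\mathrm{Art}_E(x)$ to $-\iota(r_{\mu,v}(x))$ for $x \in E^\times$.

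\emph{Right hand side.} By \Cref{Prop:AlphaUsingLCFT}, $\alpha = -\beta$.
\begin{itemize}
\item For a unit $u$, $\beta(\mathrm{Art}_E(u)) = (u, 1) \in \mathcal{O}_E^\times \times \gal(\fpbar/k_E) \subseteq A_{S,b_S}$. The element $(u,1)$ lies in $S(\qp) \subseteq A_{S,b_S}$. Since $r_{\mu,v}$ is compatible with $S(\qp) \to T(\qp)$, and $\iota_{T,b}$ restricts to $\iota$ on $T(\qp)$ by \Cref{Lem:LocalIota}, the right hand side gives $-\iota(r_{\mu,v}(u))$. This agrees with the left hand side.
\item For the uniformizer, $\beta(\mathrm{Art}_E(\pi_E)) = (1, \phi^{f'})$ with $f' = [k_E:\fp]$, sitting in the splitting of \Cref{Sec:SplittingOfA}. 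This is where I expect the main bookkeeping. One must express $(1,\phi^{f'})$ in terms of the generators $(b_S,\phi)$ and $S(\qp)$ of $A^0_{S,b_S}$.
\end{itemize}

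For that computation, take $f = [E_0:\qp]$ as in \Cref{Sec:Choosingb}. Then
\[
(b_S,\phi)^f = \bigl(b_S\phi(b_S)\cdots\phi^{f-1}(b_S), \phi^f\bigr) = (\pi_E^{-1}, \phi^f).
\]
The product formula $b_S\phi(b_S)\cdots\phi^{f-1}(b_S) = \pi_E^{-1}$ is the one computed there. Since $S(\qp)$ is central, $(1,\phi^f) = (\pi_E, 1)\cdot(b_S,\phi)^f$. I need to double check that $f = f'$ and that the splitting of \Cref{Sec:SplittingOfA} really identifies $(1,\phi^{f'})$ with this group element, with multiplication in the semidirect product as in \Cref{Sec:AGb}.

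Apply $r_{\mu,v}$, using \Cref{Sec:AGbFunctorial} and $b = r_{\mu,v}(b_S)$, and then $\iota_{T,b}$. Using $\iota_{T,b}(b,\phi) = 1$ we get
\[
\iota_{T,b}(r_{\mu,v}(1,\phi^f)) = \iota(r_{\mu,v}(\pi_E)).
\]
Hence the right hand side sends $\mathrm{Art}_E(\pi_E)$ to $-\iota(r_{\mu,v}(\pi_E))$, again matching the left hand side.

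Both sides agree on a dense subgroup and are continuous, so they are equal. The main obstacle is keeping the sign conventions straight: the geometric normalization of $\mathrm{Art}_E$, the right action conventions, and the inverse in $\alpha = -\beta$. Consistency with the unit computation, where no Frobenius appears, serves as a check on those signs.
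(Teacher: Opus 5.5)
Your proposal is correct and follows the paper's own proof. Both arguments reduce by density to $\mathrm{Art}_E(E^\times)$ and use $\alpha=-\beta$ from \Cref{Prop:AlphaUsingLCFT}. Both then eliminate the Frobenius component via $1=\iota_{T,b}(b,\phi)^f=\iota_{T,b}(r_{\mu,v}(\pi_E^{-1}),\phi^f)$, together with $\iota_{T,b}\vert_{T(\qp)}=\iota$. The only difference is that the paper treats $u\pi_E^n$ in one step rather than handling units and $\pi_E$ separately.

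The points you flagged both check out. First, $[E_0:\qp]=[k_E:\fp]$. Second, the splitting of \Cref{Sec:SplittingOfA} is literally the subset $S(\qp)\times\gal(\fpbar/k_E)$ of $S(\qpbr)\rtimes\gal(\qpbr/\qp)$, so $(1,\phi^f)=(\pi_E,1)\cdot(b_S,\phi)^f$ holds there as you computed.
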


\begin{proof}
  Let us write $f = [k_E : \fp]$. By \Cref{Prop:AlphaUsingLCFT}, it is enough to
  show that the diagram
  \[ \begin{tikzcd}[row sep=small]
    \widehat{E^\times} \arrow{r}{\tilde{\iota}} \arrow[equals]{d} &[3.5em]
    (\mathsf{E}^\times)^- \backslash \mathbb{A}_\mathsf{E}^{\infty,\times}
    \arrow{r}{r_\mu} & \shigeompts \\ \mathcal{O}_E^\times \times
    \widehat{\pi_E^\mathbb{Z}} \arrow{r}{r_{\mu,v} \times (\pi_E \mapsto
    \phi^f)} & T(\qp) \times \gal(\fpbar/k_E) \arrow[hook]{r} & A_{T,b}
    \arrow{u}{\iota_{T,b}}
  \end{tikzcd} \]
  commutes. Since $E^\times$ is dense in its profinite completion, it suffices
  to check that the two compositions agree upon restricting to $E^\times$.
  Using the characterization in \Cref{Sec:LocalGalActionShi}, we may instead
  show that the diagram
  \[ \begin{tikzcd}[row sep=small]
    E^\times \arrow{r}{r_{\mu,v}} \arrow[equals]{d} &[3.5em] T(\qp)
    \arrow[hook]{r}{\iota} & \shigeompts \\ \mathcal{O}_E^\times \times
    \pi_E^\mathbb{Z} \arrow{r}{r_{\mu,v} \times (\pi_E \mapsto \phi^f)} & T(\qp)
    \times \gal(\fpbar/k_E) \arrow[hook]{r} & A_{T,b} \arrow{u}{\iota_{T,b}}
  \end{tikzcd} \]
  commutes.

  We now evaluate both maps on $u \pi_E^n \in E^\times$, where $n \in
  \mathbb{Z}$ and $u \in \mathcal{O}_E^\times$. The commutativity of the diagram
  is equivalent to the identity
  \[
    \iota(r_{\mu,v}(u \pi_E^n)) = \iota_{T,b}(r_{\mu,v}(u), \phi^{fn}).
  \]
  Recall from \Cref{Lem:LocalIota} that $\iota_{T,b}(b, \phi) = 1$ by
  construction. We compute
  \[
    1 = \iota_{T,b}(b, \phi)^f = \iota_{T,b}(b \phi(b) \dotsm \phi^{f-1}(b),
    \phi^f) = \iota_{T,b}(r_{\mu,v}(\pi_E^{-1}), \phi^f)
  \]
  using the discussion in \Cref{Sec:Choosingb}. It follows that
  \[
    \iota_{T,b}(r_{\mu,v}(u), \phi^{fn}) = \iota_{T,b}(r_{\mu,v}(u \pi_E^n),
    1)
  \]
  On the other hand, we also have $\iota_{T,b}(x, 1) = \iota(x)$ for all $x \in
  T(\qp)$ by construction, see \Cref{Lem:LocalIota}. Hence
  \begin{align}
      \iota_{T,b}(r_{\mu,v}(u \pi_E^n),1)= \iota(r_{\mu,v}(u \pi_E^n)),
  \end{align}
  showing the desired identity.
\end{proof}

\begin{Thm} \label{Thm:IgusaStackTorus}
  The Igusa stack $\mathrm{Igs}(\mathsf{T}, b) \to \bun_T^{[b]}$ from
  \Cref{Def:IgusaTori}, together with prime-to-$p$ Hecke action from
  \Cref{Sec:IgsTTameHecke}, is an Igusa stack for the Shimura datum
  $(\mathsf{T}, \mu)$.
\end{Thm}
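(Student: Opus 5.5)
We verify the conditions of \Cref{Def:IgusaStack} directly, taking $? = \emptyset$ (for a torus the potentially crystalline locus is everything, since $\mathsf{T}$ is anisotropic modulo its center). Two reductions come first. Because $\mathrm{Gr}_{T,\mu^{-1}} \cong \spd E$ maps into the single stratum $\bun_T^{[b]} = \bun_{T,\mu^{-1}}$, it is enough to work with $\mathrm{Igs}(\mathsf{T}, b)$ for our chosen $b = \operatorname{Nm}_{E/E_0}(\mu(\pi_E^{-1}))$. The remaining components of $\mathrm{Igs}(\mathsf{T})$ lie over other strata and do not affect the fiber product.

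The fiber product has already been computed in \Cref{Sec:FiberProduct}. It is
\[
  [(\underline{\shigeompts} \times \spd \eabhat)/\underline{\gal(\eab/E)}],
\]
where $\gal(\eab/E)$ acts on $\shigeompts$ through $\iota_{T,b} \circ r_{\mu,v} \circ \alpha$. \Cref{Lem:ActionComparison} identifies this homomorphism with $-r_\mu \circ \tilde\iota \circ \mathrm{Art}_E^{-1}$. By \Cref{Sec:ShimuraGalois} and \Cref{Sec:LocalGalActionShi}, that is exactly the right action defining the Galois descent of $\mathbf{Sh}(\mathsf{T},\mu)_E$. So the first main step is to identify
\[
  \mathbf{Sh}(\mathsf{T},\mu)_E^\diamondsuit \cong [(\underline{\shigeompts} \times \spd \widehat{\overline{E}})/\underline{\gal(\overline{E}/E)}].
\]
Since the action on $\shigeompts$ factors through $\gal(\eab/E)$, the right-hand side reduces to the quotient over $\spd \eabhat$. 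To justify this identification, write the infinite level Shimura variety as $\varprojlim_K$ of finite étale $E$-schemes $\mathbf{Sh}_K$. Each $\mathbf{Sh}_K$ is a finite Galois-twisted disjoint union of points, and diamondification commutes with this description. In the limit, the profinite set $\shigeompts$ becomes $\underline{\shigeompts}$. The outcome is that the pullback of $\mathrm{BL}$ is isomorphic to $\mathbf{Sh}(\mathsf{T},\mu)^\diamondsuit_E$, compatibly with the map to $\mathrm{Gr}_{T,\mu^{-1}} = \spd E$. That compatibility holds because $\pi_{\mathrm{HT}}$ for a torus is just the structure map.

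Next come the equivariance checks. The $\mathsf{T}(\afp)$-action on the Igusa stack, given by translation by $t^{-1}$ in \Cref{Sec:IgsTTameHecke}, matches the Hecke action on $\shigeompts$ under the same translation conventions. The $T(\qp)$-action is where care is needed. On $\mathbf{Sh}$ it is translation through $\iota$. On the fiber product it comes from the $T(\qp)$-action on $\mathrm{Gr}_{T,\mu^{-1}}$, which is trivial on $\spd E$ but acts on the $2$-morphism data. Concretely, $t \in T(\qp)$ changes the trivialization of the modified torsor and so changes the identification with $\mathscr{P}_b$ by right multiplication by $t$, viewed in $A_{T,b}$. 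Transported through $\iota_{T,b}|_{T(\qp)} = \iota$, this becomes translation by $\iota(t)$ on $\shigeompts$. I expect the main obstacle to be keeping the signs and left/right conventions consistent, including the inverse from dualizing that already appeared in \Cref{Prop:AlphaUsingLCFT}. I would check this at a single point, where the fiber product is an $A_{T,b}$-torsor.

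Finally, I need $\phi \times \id$ to act trivially on the fiber product. By \Cref{Sec:IgsTFrobenius}, the isomorphism $\id \cong \phi$ on $\mathrm{Igs}(\mathsf{T},b)$ is right multiplication by $(b,\phi)$, and it is compatible with the one on $\bun_T$. Since $\iota_{T,b}(b,\phi) = 1$, the induced automorphism of $\underline{\shigeompts}$ is trivial. The remaining component, $\phi$ on $\spd\eabhat$ taken modulo the Galois action, is absorbed by the $2$-commutativity in \Cref{Sec:IgsTFrobenius}. Hence $\phi \times \id$ acts as the identity, and $\mathrm{Igs}(\mathsf{T}, b)$ is an Igusa stack for $(\mathsf{T},\mu)$.
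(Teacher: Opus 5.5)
Your proposal is correct and follows essentially the paper's proof: identify the fiber product computed in \Cref{Sec:FiberProduct} with $\mathbf{Sh}(\mathsf{T},\mu)_E^\diamondsuit$ using \Cref{Lem:ActionComparison} and the Galois description of \Cref{Sec:ShimuraGalois}, note the prime-to-$p$ Hecke equivariance, and obtain the $\phi\times\id$ condition from the $2$-commutative square of \Cref{Sec:IgsTFrobenius} (one small correction: the element $(b,\phi)$ absorbs the Frobenius on the $\spd\fpbar$-coordinate of the Igusa factor, not anything on $\spd\widehat{E}^{\mathrm{ab}}$). Your extra paragraph on $T(\qp)$-equivariance, via $T(\qp)\subset A_{T,b}$ and $\iota_{T,b}|_{T(\qp)}=\iota$, spells out a point the paper's proof leaves implicit (it only asserts $\underline{\mathsf{T}(\afp)}$-equivariance), though, as you acknowledge, the sign convention there still has to be checked.
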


\begin{proof}
  It follows from \Cref{Sec:ShimuraGalois}, \Cref{Sec:FiberProduct} and
  \Cref{Lem:ActionComparison} that there exists an isomorphism of v-sheaves
  \[
    \mathbf{Sh}(\mathsf{T}, \mu)_E^\diamondsuit \cong \mathrm{Igs}(\mathsf{T},
    b) \times_{\mathrm{Bun}_T} \spd E.
  \]
  It is clear that the isomorphism is
  $\underline{\mathsf{T}(\afp)}$-equivariant. Finally, it follows from
  \Cref{Sec:IgsTFrobenius} that $\phi \times_{\phi \cong \id} \id$ on the right
  hand side agrees with $\id \times_{\id} \id = \id$, see \Cref{Def:IgusaStack}.
\end{proof}

\begin{Rem} \label{Rem:NotCanonical}
  We emphasize that there are many $\mathsf{T}(\af)$-equivariant isomorphisms
  $\mathbf{Sh}(\mathsf{T}, \mu)_E^\diamondsuit \cong \mathrm{Igs}(\mathsf{T}, b)
  \times_{\mathrm{Bun}_T} \spd E$. In other words, the argument shows that there
  exists a $\mathsf{T}(\afp)$-equivariant isomorphism of Igusa stacks
  $\mathrm{Igs}(\mathsf{T}, \mu) \cong \mathrm{Igs}(\mathsf{T}, \mu^\prime)$ for
  certain pairs of cocharacters $\mu, \mu^\prime$. For Hodge type Shimura varieties, such isomorphisms are expected to be closely related to mod $p$ isogenies between the corresponding CM abelian varieties, but this relation is not clear from the above description. 
\end{Rem}

}

{\section{Igusa stacks for Shimura varieties of abelian type} \label{Sec:IgusaStacksAbelianType}

The goal of this section is to show that Shimura varieties of abelian type admit Igusa stacks, which we do in Theorem \ref{Thm:MainThmIgusa} and Corollary \ref{Cor:FiniteLevelIgusa1}. Recall that a Shimura datum is of \emph{abelian type} if there exists a pair $(\gx,\Xi)$, where $\gx$ is a Shimura datum of Hodge type and $\Xi:\g^{\mathrm{der}} \to \g_2^\mathrm{der}$ is a central isogeny inducing an isomorphism $\gxad \to (\g_2^{\mathrm{ad}}, \mathsf{X}_2^{\mathrm{ad}})$. We will refer to the pair $(\gx,\Xi)$ as an \emph{auxiliary Hodge-type Shimura datum}. Shimura data of abelian type are classified in \cite[Appendix B]{Milne}; in particular all Shimura data $\gx$ with $\g$ of type $A,B,C$ are of abelian type, and Shimura data of type $D$ are often of abelian type.

As explained in the introduction, to construct an Igusa stack for a Shimura datum of abelian type $\gxtwo$, we find a diagram of Shimura data
\begin{equation*}
    \begin{tikzcd}
        & \gxthree \arrow{dr} \arrow{dl} \\
        \gxtwo && \gx,
    \end{tikzcd}
\end{equation*}
where $\gx$ is of Hodge type, where the left arrow is an ad-isomorphism and where the right arrow induces an isomorphism of derived groups. In Section \ref{Sec:Ascending}, we show that the existence of Igusa stacks ``ascends'' from $\gx$ to $\gxthree$. In Section \ref{Sec:Pushout}, we show that the existence of Igusa stacks ``descends'' from $\gxthree \to \gxtwo$, under an assumption on the reflex field. To do this, we recall in Section \ref{Sub:PushoutShimura} how the Shimura varieties for $\gxtwo$ can be constructed out of those for $\gxthree$. In Section \ref{sub:ConclusionConstruction}, we put everything together and prove Theorem \ref{Thm:MainThmIgusa}. We construct Igusa stacks at finite level $K^p \subset \gafp$ in Section \ref{sub:FiniteLevelIgusa}.

\subsection{Ascending Igusa stacks} \label{Sec:Ascending}
The aim of this section is to prove that the construction of Igusa stacks ascends along certain pullback diagrams of Shimura data; see Theorem \ref{Thm:AscendingIgusaStacks} below. We begin with two lemmas about the potentially good reduction locus of a Shimura variety. 

\subsubsection{} \label{subsub:ProductNotation} Suppose that $\gx$ and $\gxp$ are Shimura data of abelian type,
with reflex fields $\mathsf{E}$ and $\mathsf{E}'$. Let $\mathsf{E}''$ denote the
compositum of $\mathsf{E}$ and $\mathsf{E}'$. Fix a prime $v''$ of
$\mathsf{E}''$ above $p$, and let $v$ and $v'$ be the induced primes of
$\mathsf{E}$ and $\mathsf{E}'$. Denote by $E$, $E'$, and $E''$ the completions
of $\mathsf{E}$, $\mathsf{E}'$, and $\mathsf{E}''$ at $v, v'$, and $v''$,
respectively.

\begin{Lem}\label{Lem:GoodReductionProduct}
    For any neat compact open subgroups $K$ and $K'$ of $\g(\af)$ and $\g'(\af)$, respectively, there is a canonical isomorphism
    \begin{equation}\label{Eq:GoodReductionProduct}
        \mathbf{Sh}_{K\times K'}(\g \times \g', \mathsf{X} \times \mathsf{X}')^\circ_{E''} \xrightarrow{\sim} \mathbf{Sh}_K\gx^\circ_{E''} \times_{\spa {E''}} \mathbf{Sh}_{K'}\gxp^\circ_{E''}. 
    \end{equation}
\end{Lem}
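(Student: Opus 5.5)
We first produce the isomorphism on the full analytic Shimura varieties and then check that it matches the potentially crystalline loci on both sides.

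For the first step, recall that Shimura varieties are compatible with products. The canonical model of $\mathbf{Sh}_{K\times K'}(\g\times\g',\mathsf{X}\times\mathsf{X}')$ over the reflex field of the product datum is the product of the canonical models for $\gx$ and $\gxp$. The reflex field of the product datum is the compositum $\mathsf{E}''$. This product identification holds by the uniqueness of canonical models, since the product satisfies the reciprocity law at special points. Base changing to $E''$ and analytifying, which commutes with fiber products, gives a canonical isomorphism
\[
\mathbf{Sh}_{K\times K'}(\g\times\g',\mathsf{X}\times\mathsf{X}')^{\mathrm{an}}_{E''}\cong \mathbf{Sh}_K\gx^{\mathrm{an}}_{E''}\times_{\spa E''}\mathbf{Sh}_{K'}\gxp^{\mathrm{an}}_{E''}.
\]
Note also that the product datum is of abelian type, so Imai--Mieda's potentially crystalline locus is defined for it.

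It remains to show that this isomorphism identifies the open subspaces. By the characterization recalled in the preliminaries, $\mathbf{Sh}^\circ$ is the unique quasi-compact open subset whose classical points are exactly those with potentially crystalline $p$-adic Galois representation. Here we use that the locus over $E''$ is the base change of the one over $E$ (respectively $E'$): potential crystallinity is insensitive to finite field extensions, and quasi-compactness is preserved by base change.

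The right-hand side is a product of quasi-compact opens, so it is quasi-compact and open. Its classical points are pairs $(x,x')$ of classical points. The associated Galois representation of $(x,x')$, attached to a representation of $\g\times\g'$, is determined by the pair of $\g$- and $\g'$-valued representations, because the $p$-adic local system on the product is the exterior product of the pulled-back local systems. A $(\g\times\g')$-valued representation is potentially crystalline if and only if both projections are: potential crystallinity of a $G$-valued representation can be tested on a faithful representation, and $V\oplus V'$ is faithful for the product. Hence the classical points of the two opens agree.

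To conclude we need that two quasi-compact opens with the same classical points coincide. Two quasi-compact opens of a rigid space with the same classical points have the same underlying rigid-analytic space, since the classical points are dense in the Tate-ringed site of quasi-compact opens. This is the uniqueness statement implicit in Imai--Mieda, so we invoke it directly.

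The main obstacle is the compatibility of Galois representations at classical points. Imai--Mieda define these using the abelian type structure, through an auxiliary Hodge type datum and the $\g^c$-valued local system, and the product of auxiliary data must serve as auxiliary data for the product. To avoid this bookkeeping, we work with the canonical pro-étale $\g^c(\qp)$-local system on $\mathbf{Sh}$ coming from the tower $\mathbf{Sh}\to\mathbf{Sh}_{K}$. This local system is manifestly compatible with products, since $(\g\times\g')^c=\g^c\times\g'^c$ and the product tower is the product of the towers.
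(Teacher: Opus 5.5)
\textbf{Gap: the Galois representation you use is not the one in the available criterion for $\mathbf{Sh}^\circ$.}

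Your overall strategy is the paper's. Both sides are quasi-compact opens of $\mathbf{Sh}_{K\times K'}(\g\times\g',\mathsf{X}\times\mathsf{X}')^{\mathrm{an}}_{E''}$, so by \cite[Lemma~3.6(i)]{ImaiMieda} it suffices to compare classical points, and potential crystallinity is then tested on the faithful representation $V\oplus V'$.

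The characterization of $\mathbf{Sh}^\circ$ by classical points that you invoke is \cite[Corollary~5.14]{ImaiMieda}. It is phrased in terms of the $p$-part $\phi^{\mathrm{ad}}_{x,p}$ of the \emph{adjoint} representation $\phi^{\mathrm{ad}}_x$ of \cite[Definition~5.1]{ImaiMieda}. No auxiliary Hodge type datum and no $\g^c$-valued local system enters that criterion.

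When you switch to the pro-\'etale $\g^c(\qp)$-local system ``to avoid bookkeeping'', you are tacitly asserting that $x\in\mathbf{Sh}^\circ$ if and only if the resulting $G^c(\qp)$-valued representation is potentially crystalline. Nothing in your proposal proves this. It is likely true, but it is not free. You would need:
\begin{enumerate}
\item that pushing your torsor along $G^c\to G^{\mathrm{ad}}$ recovers $\phi^{\mathrm{ad}}_{x,p}$;
\item that the component along $G^c\to G^{c,\mathrm{ab}}$, which comes from a torus Shimura variety, is always potentially crystalline, via the reciprocity law and local algebraicity;
\item that potential crystallinity is unchanged under the central isogeny $G^c\to G^{\mathrm{ad}}\times G^{c,\mathrm{ab}}$.
\end{enumerate}
So the bookkeeping has not been avoided, only moved into an unproved comparison.

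\textbf{Repair.} Use $\phi^{\mathrm{ad}}$ from the start, which is exactly what the paper does. Choose faithful representations $\xi$ and $\xi'$ of $\g^{\mathrm{ad}}$ and $\g'^{\mathrm{ad}}$ on $V$ and $V'$. Since $(\g\times\g')^{\mathrm{ad}}=\g^{\mathrm{ad}}\times\g'^{\mathrm{ad}}$, apply \cite[Proposition~5.3]{ImaiMieda} to the two projections. For a classical point $x''$ with images $x$ and $x'$, this shows that $(\xi\oplus\xi')\circ\phi^{\mathrm{ad}}_{x'',p}$ is, up to conjugation by $\xi(K^{\mathrm{ad}})$ and $\xi'(K'^{\mathrm{ad}})$, the direct sum of the restrictions of $\xi\circ\phi^{\mathrm{ad}}_{x,p}$ and $\xi'\circ\phi^{\mathrm{ad}}_{x',p}$ to $\operatorname{Gal}(\overline{\kappa}_{x''}/\kappa_{x''})$. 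With \cite[Lemma~2.9]{ImaiMieda} to pass to faithful representations, the rest of your argument goes through unchanged.

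\textbf{Minor point.} A classical point of the product is not literally a pair $(x,x')$: its residue field $\kappa_{x''}$ only contains $\kappa_x$ and $\kappa_{x'}$. The comparison must therefore be made after restriction to $\operatorname{Gal}(\overline{\kappa}_{x''}/\kappa_{x''})$. This is harmless, since potential crystallinity is insensitive to finite extensions.
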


\begin{proof}
    Both sides of \eqref{Eq:GoodReductionProduct} are quasicompact open subsets of $\mathbf{Sh}_{K\times K'}(\g\times\g', \mathsf{X} \times \mathsf{X}')^\mathrm{an}_{E''}$, so by \cite[Lemma 3.6 (i)]{ImaiMieda}, it is enough to check that the two have the same classical points. To a classical point $x''$ of $\mathbf{Sh}_{K\times K'}(\g\times\g', \mathsf{X} \times \mathsf{X}')^\mathrm{an}_{E''}$ we can attach a $(\g^\mathrm{ad}\times \g'^{,\mathrm{ad}})(\af)$-valued Galois representation $\phi_{x''}^\mathrm{ad}$ (see \cite[Definition 5.1]{ImaiMieda}), and by \cite[Corollary 5.14]{ImaiMieda}, the point $x''$ lies in $\mathbf{Sh}_{K\times K'}(\g\times\g', \mathsf{X} \times \mathsf{X}')^\circ$ if and only if the projection $\phi_{x'',p}^\mathrm{ad}$ of $\phi_{x''}^\mathrm{ad}$ onto $(\g^\mathrm{ad}\times \g'^{,\mathrm{ad}})(\qp)$ is potentially crystalline. 
    
    Fix faithful algebraic representations $\xi$ and $\xi'$ of $\g^\mathrm{ad}$ and $\g'^{,\mathrm{ad}}$ on finite free $\qp$-vector spaces $V$ and $V'$, respectively. Then $\xi'' = \xi \oplus \xi'$ is a faithful representation of $\g^\mathrm{ad} \times \g'^{,\mathrm{ad}}$, and by \cite[Lemma 2.9 (ii) and (iii)]{ImaiMieda}, $\phi_{x'',p}^\mathrm{ad}$ is potentially crystalline if and only if $\xi'' \circ \phi_{x'',p}^\mathrm{ad}$ is potentially crystalline.
    
    Let $\kappa_{x''}$ denote the residue field of $x''$, and let $\overline{\kappa}_{x''}$ denote an algebraic closure of $\kappa_{x''}$. Let $x$ and $x'$ be the classical points of $\mathbf{Sh}_K\gx^\mathrm{an}_{E''}$ and $\mathbf{Sh}_{K'}\gxp^\mathrm{an}_{E''}$ mapped to by $x''$ under the projection morphisms. By \cite[Proposition 5.3]{ImaiMieda}, the representation $\xi'' \circ \phi_{x'',p}^\mathrm{ad}$ decomposes as the direct sum of the restrictions of $\xi\circ \phi_{x,p}^\mathrm{ad}$ and $\xi'\circ \phi_{x',p}^\mathrm{ad}$ to $\operatorname{Gal}(\overline{\kappa}_{x''}/\kappa_{x''})$, up to conjugation by $\xi(K^\mathrm{ad})$ and $\xi'(K'^{,\mathrm{ad}})$. Therefore $\xi''\circ \phi_{x'',p}^\mathrm{ad}$ is potentially crystalline if and only if both $\xi\circ\phi_{x,p}^\mathrm{ad}$ and $\xi'\circ\phi_{x',p}^\mathrm{ad}$ are potentially crystalline. In turn, $\phi_{x'',p}^\mathrm{ad}$ is potentially crystalline if and only if both $\phi_{x,p}^\mathrm{ad}$ and $\phi_{x',p}^\mathrm{ad}$ are potentially crystalline, and the lemma follows. 
\end{proof}

\begin{Lem} \label{Lem:GoodReductionAdIsom}
    Suppose $f\colon\gx \to \hy$ is an ad-isomorphism of Shimura data. Fix a prime $v$ of $\mathsf{E}$, and let $w$ be the induced place of $\mathsf{F}$ under $\mathsf{F} \subset \mathsf{E}$, and let $E$ and $F$ denote the completions of $\mathsf{E}$ and $\mathsf{F}$ at $w$ and $v$, respectively. Let $\mathsf{K}$ and $\mathsf{L}$ be neat compact open subgroups of $\g(\af)$ and $\h(\af)$, respectively, such that $f(\mathsf{K}) \subset \mathsf{L}$. Then $f$ induces an isomorphism
	\begin{equation}\label{Eq:GoodReductionAdIsomII}
		\mathbf{Sh}_{\mathsf{K}}\gx^\circ \xrightarrow{\sim} \mathbf{Sh}_{\mathsf{L}} \hy_{E}^\circ \times_{\mathbf{Sh}_{\mathsf{L}}\hy^{\mathrm{an}}_{E}} \mathbf{Sh}_{\mathsf{K}}\gx^\mathrm{an}.
	\end{equation}
\end{Lem}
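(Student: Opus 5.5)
We argue exactly as in the proof of Lemma \ref{Lem:GoodReductionProduct}, using the characterization of the potentially crystalline locus by classical points. Via $f$ we get a finite morphism $\mathbf{Sh}_{\mathsf{K}}\gx^{\mathrm{an}} \to \mathbf{Sh}_{\mathsf{L}}\hy^{\mathrm{an}}_E$ of rigid spaces over $E$. The right-hand side of \eqref{Eq:GoodReductionAdIsomII} is the preimage of the quasi-compact open $\mathbf{Sh}_{\mathsf{L}}\hy^\circ_E$ under this morphism, so it is an open subset of $\mathbf{Sh}_{\mathsf{K}}\gx^{\mathrm{an}}$. It is also quasi-compact, since a finite (hence quasi-compact) morphism pulls back quasi-compact opens to quasi-compact opens. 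The left-hand side is likewise a quasi-compact open of $\mathbf{Sh}_{\mathsf{K}}\gx^{\mathrm{an}}$. By \cite[Lemma 3.6 (i)]{ImaiMieda}, it therefore suffices to show that the two subsets have the same classical points. Here one must check that the potentially crystalline locus of $\hy$ base changed from $F$ to $E$ is the potentially crystalline locus over $E$. This holds because potential crystallinity is insensitive to finite extensions of the base field.

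Let $x$ be a classical point of $\mathbf{Sh}_{\mathsf{K}}\gx^{\mathrm{an}}$ and $y = f(x)$. By \cite[Corollary 5.14]{ImaiMieda}, $x$ lies in the potentially crystalline locus if and only if $\phi^{\mathrm{ad}}_{x,p}$ is potentially crystalline, and similarly for $y$ with $\phi^{\mathrm{ad}}_{y,p}$. The key point is that $\phi^{\mathrm{ad}}_{x}$ is built only from the adjoint Shimura datum. So I expect a functoriality statement: the $\g^{\mathrm{ad}}(\af)$-valued representation of $x$, pushed forward along $f^{\mathrm{ad}}\colon \g^{\mathrm{ad}} \xrightarrow{\sim} \h^{\mathrm{ad}}$, agrees with the restriction of $\phi^{\mathrm{ad}}_{y}$ to $\operatorname{Gal}(\overline{\kappa}_x/\kappa_x)$, up to conjugation by the adjoint level group. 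This should follow from the construction in \cite[Definition 5.1, Proposition 5.3]{ImaiMieda}, which is compatible with morphisms of Shimura data, exactly as it was used for products above.

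Given this, potential crystallinity of $\phi^{\mathrm{ad}}_{x,p}$ and of $\phi^{\mathrm{ad}}_{y,p}|_{\operatorname{Gal}(\overline{\kappa}_x/\kappa_x)}$ are equivalent, since $f^{\mathrm{ad}}$ is an isomorphism. We can check this after a faithful representation, by \cite[Lemma 2.9]{ImaiMieda}. Since $\kappa_x/\kappa_y$ is finite, restricting to an open subgroup does not change potential crystallinity. So $x$ lies in the potentially crystalline locus for $\gx$ if and only if $y$ lies in the one for $\hy$, which is what we need.

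The main obstacle is the functoriality of the Galois representation $\phi^{\mathrm{ad}}$ under the ad-isomorphism $f$. I would first look for it as an explicit statement in \cite{ImaiMieda}. Failing that, I would deduce it from the description of $\phi^{\mathrm{ad}}$ via the $\g^{\mathrm{ad}}$-torsor over the tower of Shimura varieties of adjoint type, combined with the fact that $f$ induces a map of towers compatible with $f^{\mathrm{ad}}$.
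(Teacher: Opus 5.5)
Your proposal is correct and follows the paper's own proof: you reduce via \cite[Lemma 3.6 (i)]{ImaiMieda} to comparing classical points, characterize membership by potential crystallinity of $\phi^{\mathrm{ad}}_{x,p}$ via \cite[Corollary 5.14]{ImaiMieda}, and observe that this depends only on the image in the adjoint Shimura variety, which $f^{\mathrm{ad}}$ identifies for $\gx$ and $\hy$. The paper dismisses that last functoriality as clear, and it treats both sides as locally closed constructible subsets rather than as your quasi-compact opens; your remarks on finiteness, on base change from $F$ to $E$, and on restricting to $\operatorname{Gal}(\overline{\kappa}_x/\kappa_x)$ simply fill in details the paper leaves implicit.
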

\begin{proof}
	This follows by assembling various results in \cite{ImaiMieda} as in the proof of Lemma \ref{Lem:GoodReductionProduct}. Both the source and the target of \eqref{Eq:GoodReductionAdIsomII} are locally closed constructible subsets of $\mathbf{Sh}_{\mathsf{K}}\gx^\mathrm{an}$, so by \cite[Lemma 3.6 (i)]{ImaiMieda}, it is enough to check that the two have the same classical points. To a classical point $x$ we can attach a $\mathsf{G}^\mathrm{ad}(\af)$-valued Galois representation $\phi_x^\mathrm{ad}$ (see \cite[Definition 5.1]{ImaiMieda}), and by \cite[Corollary 5.14]{ImaiMieda}, $x$ is in the potentially good reduction locus of $\mathbf{Sh}_{\mathsf{K}}\gx^\mathrm{an}$ if and only if the projection $\phi_{x,p}^\mathrm{ad}$ of $\phi_{x}^\mathrm{ad}$ onto $\mathsf{G}^\mathrm{ad}(\qp)$ is potentially crystalline. It is clear that this can be checked after projecting to the Shimura variety for the adjoint group, and the lemma follows. 
\end{proof}

\begin{Prop}\label{Prop:ProductIgusaStack}
Let the notation be as in Section \ref{subsub:ProductNotation}. Fix $? \in \{\emptyset, \circ\}$. Suppose $\gx$ admits an Igusa stack $\igs^{?}\gx$ for $?$ at the place $v$, and $\gxp$ admits an Igusa stack $\igs^?\gxp$ for $?$ at the place $v'$. Then the product $\igs^{?} \gx \times \igs^?\gxp$ is an Igusa stack for the product Shimura datum $(\g \times \g', \mathsf{X} \times \mathsf{X}')$ for $?$ at the place $v''$.
\end{Prop}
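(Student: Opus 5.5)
We verify Definition \ref{Def:IgusaStack} directly for the product, working throughout over $E''$. Write $\g'' = \g \times \g'$, $G'' = G \times G'$, $\mu'' = (\mu, \mu')$. The natural identifications are
\[
\grgmu'' \cong \grgmu \times_{\spd \qp} \operatorname{Gr}_{G',\mu'^{-1}}, \qquad \bun_{G'',\mu''^{-1}} \cong \bungmu \times \bun_{G',\mu'^{-1}}.
\]
Here the first product, base changed to $E''$, becomes a fiber product over $\spd E''$. The second holds because $B(G\times G', \mu''^{-1}) = B(G,\mu^{-1}) \times B(G',\mu'^{-1})$, so the open substacks match. The Beauville--Laszlo map for $G''$ is the product of the two Beauville--Laszlo maps, since modification of a product torsor at the untilt is computed factorwise. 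We define $\bar\pi''_{\mathrm{HT}} = \bar\pi_{\mathrm{HT}} \times \bar\pi'_{\mathrm{HT}}$, equipped with the product $\ul{\g''(\afp)} = \ul{\g(\afp)} \times \ul{\g'(\afp)}$-action.

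The first key step is the Shimura variety side. At infinite level we use $\mathbf{Sh}(\g'',\x\times\x')_{E''} \cong \mathbf{Sh}\gx_{E''} \times_{E''} \mathbf{Sh}\gxp_{E''}$. This holds at each finite level $K \times K'$ (the canonical model of a product is the product of canonical models), and since levels of the form $K\times K'$ are cofinal it passes to the limit. For $? = \circ$ we apply Lemma \ref{Lem:GoodReductionProduct} at each level and then take the limit. Passing to diamonds commutes with fiber products, so we get
\[
\mathbf{Sh}(\g'',\x\times\x')^{?,\diamondsuit}_{E''} \cong \mathbf{Sh}\gx^{?,\diamondsuit}_{E} \times_{\spd E} \spd E'' \times_{\spd E''} \bigl(\mathbf{Sh}\gxp^{?,\diamondsuit}_{E'} \times_{\spd E'} \spd E''\bigr).
\]
We also check that $\pi_{\mathrm{HT}}$ for the product is the product of the two Hodge--Tate maps. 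This can be tested on the Hodge-type (or classical-point) description, or deduced from functoriality of $\pi_{\mathrm{HT}}$ under the two projections, which are morphisms of Shimura data.

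The second key step is to base change the given Cartesian squares to $E''$ and take their product. Using the identifications above, this gives
\[
(\igs^?\gx \times \igs^?\gxp) \times_{\bun_{G'',\mu''^{-1}}} \grgmu''_{E''} \cong (\igs^?\gx \times_{\bungmu} \grgmu) \times_{\spd E''} (\igs^?\gxp \times_{\bun_{G',\mu'^{-1}}} \operatorname{Gr}_{G',\mu'^{-1}}).
\]
Here we use that the two Igusa stacks live over $\spd \fp$ (i.e.\ over $\perf$) with no further structure, and that a fiber product of two Cartesian squares over $\bungmu\times \bun_{G',\mu'^{-1}}$ is Cartesian. By the first step, the right-hand side is the product Shimura variety. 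The $\ul{\g''(\af)}$-equivariance holds factorwise.

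The remaining point, and the only mildly delicate one, is the Frobenius condition, because the $\phi$ appearing there is the absolute Frobenius of the product stack. Absolute Frobenius on a product of v-stacks is $\phi\times\phi$. The trivialization $\phi\cong\id$ on $\bun_{G''}$ from \cite[Proposition~2.1.10]{DvHKZIgusaStacks} is the product of those on $\bung$ and $\bun_{G'}$, by its naturality in $G$. Hence $\phi\times\id$ on the fiber product factors as $(\phi\times\id)\times(\phi\times\id)$, and it acts trivially because each factor does. The remaining subtlety is $2$-categorical bookkeeping: the identification must be compatible with the chosen $2$-isomorphisms. We handle this most cleanly by passing through Theorem \ref{Thm:IgusaStackKim}(i) and taking $\Theta'' = \Theta \times \Theta'$ as the global uniformization. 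Each axiom of Definition \ref{Def:UniversalUniformization} is then a product of the corresponding axioms for $\Theta$ and $\Theta'$, since all the maps involved are $1$-morphisms of v-sheaves.
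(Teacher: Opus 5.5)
Your proposal is correct and follows essentially the paper's route: identify $\bun_{G\times G'}$ and the relevant Schubert cell of the product as products, use Lemma~\ref{Lem:GoodReductionProduct} together with a limit over levels $K\times K'$ for the potentially crystalline locus, and obtain the Cartesian square for the product formally by taking the product of the two given squares. The one point the paper argues in more detail than you is the decomposition $\operatorname{Gr}_{G\times G',(\mu^{-1}\times\mu'^{-1})}\cong(\operatorname{Gr}_{G,\mu^{-1}})_{E''}\times_{\operatorname{Spd}E''}(\operatorname{Gr}_{G',\mu'^{-1}})_{E''}$, which you simply assert. The paper checks it by noting that the natural map is a qcqs map of spatial diamonds that is bijective on geometric points. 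In the other direction, you spell out the Hodge--Tate, equivariance and Frobenius verifications that the paper leaves as formal.
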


\begin{proof}
    Let $G = \g_{\qp}$ and $G'=\g'_{\qp}$. Then we have a natural isomorphism $\bun_{G\times G'} \equiv \bun_G \times \bun_{G'}$, and thus the proposition for the entire Shimura variety (i.e., $? = \emptyset$) will follow formally once we verify the decomposition
    \begin{equation*}
        \operatorname{Gr}_{G \times G', (\mu^{-1} \times {\mu'}^{-1})} \cong (\operatorname{Gr}_{G,\mu^{-1}})_{E''} \times_{\spd {E''}} (\operatorname{Gr}_{G',{\mu'}^{-1}})_{E''}.
    \end{equation*}
    There is a natural map \begin{equation}\label{Eq:GrProduct}
    	\operatorname{Gr}_{G \times G', (\mu^{-1} \times {\mu'}^{-1})} \to (\operatorname{Gr}_{G,\mu^{-1}})_{E''} \times_{\spd {E''}} (\operatorname{Gr}_{G',{\mu'}^{-1}})_{E''}
    \end{equation}
    induced by the projections $G \times G' \to G$ and $G\times G' \to G'$. By \cite[Proposition 20.2.3]{ScholzeWeinsteinBerkeley} and \cite[Corollary 11.29]{EtCohDiam}, both $\operatorname{Gr}_{G \times G', (\mu^{-1} \times {\mu'}^{-1})}$ and $(\operatorname{Gr}_{G,\mu^{-1}})_{E''} \times_{\spd {E''}} (\operatorname{Gr}_{G',{\mu'}^{-1}})_{E''}$ are spatial diamonds, and therefore the map \eqref{Eq:GrProduct} is qcqs. By \cite[Lemma 12.5]{EtCohDiam} it is then enough to check that \eqref{Eq:GrProduct} is a bijection on $\spa(C,C^+)$-points for all algebraically closed perfectoid fields $(C,C^+)$ of characteristic $p$, and this follows from the definition of Schubert cells in the affine Grassmannian \cite[Definition 19.2.2]{ScholzeWeinsteinBerkeley}.

    For the potentially crystalline locus (i.e., $? = \circ$), we observe that by \Cref{Lem:GoodReductionProduct}, for any pair of neat compact open subgroups $K$ and $K'$ of $\g(\af)$ and $\g'(\af)$ we have the decomposition \eqref{Eq:GoodReductionProduct}. Since any compact open subgroup of $(\g \times \g')(\af)$ is contained in one of the form $K \times K'$ for such a $K$ and $K'$, by taking limits of both sides of \eqref{Eq:GoodReductionProduct} we obtain
    \begin{equation*}
        \mathbf{Sh}(\g \times \g', \mathsf{X} \times \mathsf{X}')^\circ \cong \mathbf{Sh}\gx^\circ_{E''} \times_{\spa {E''}} \mathbf{Sh}\gxp^\circ_{E''},
    \end{equation*}
    and once again the result follows formally.
\end{proof}

Now we can prove the main result of this section. 

\begin{Thm} \label{Thm:AscendingIgusaStacks}
Fix $? \in \{\emptyset, \circ\}$. Let $\gx$ and $\hy$ be Shimura data of abelian type with reflex fields $\mathsf{E}$ and $\mathsf{F}$, and let 
  \[
    f \colon \gx \to \hy
  \]
  be a morphism of Shimura data which induces an isomorphism on derived groups.
  Fix a prime $v$ of $\mathsf{E}$ above a rational prime $p$, let $w$ be the
  induced prime of $\mathsf{F}$ under $\mathsf{F} \subset \mathsf{E}$, and let
  $E$ and $F$ be the corresponding completions. If $\hy$ admits an Igusa stack
  $\igs^?\hy$ for $?$ at the place $w$, then $\gx$ admits an Igusa stack
  $\igs^?\gx$ at the place $v$.
\end{Thm}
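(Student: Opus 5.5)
The plan follows Step 2 of the introduction. Let $\g^{\mathrm{ab}} = \g/\g^{\mathrm{der}}$ and $\h^{\mathrm{ab}} = \h/\h^{\mathrm{der}}$ be the maximal abelian quotients, and let $\mu^{\mathrm{ab}}$, $\mu_{\h}^{\mathrm{ab}}$ be the induced cocharacters. These give toral Shimura data $\gxab$ and $\hyab$.

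Since $f$ induces an isomorphism $\g^{\mathrm{der}} \cong \h^{\mathrm{der}}$, the square of groups
\[ \g \to \g^{\mathrm{ab}}, \qquad \h \to \h^{\mathrm{ab}} \]
is Cartesian: $\g \cong \h \times_{\h^{\mathrm{ab}}} \g^{\mathrm{ab}}$. Consequently the morphism
\[ (f, \mathrm{ab}) \colon \gx \to (\h \times \g^{\mathrm{ab}}, \y \times \{\mu^{\mathrm{ab}}\}) \]
is a morphism of Shimura data, and it is injective on groups.

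By Theorem \ref{Thm:IgusaStackTorus}, $\gxab$ has an Igusa stack at every place. By hypothesis, $\hy$ has an Igusa stack at $w$. The reflex field of the product is contained in $\mathsf{E}$, so Proposition \ref{Prop:ProductIgusaStack} gives an Igusa stack for the product datum at the place induced by $v$. It therefore suffices to show that Igusa stacks pull back along the closed embedding $\gx \hookrightarrow \gxp := (\h\times\g^{\mathrm{ab}}, \y\times\{\mu^{\mathrm{ab}}\})$.

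For this step I would invoke \cite[Theorem C]{KimFunctorial}, which is the statement that Igusa stacks descend along such embeddings; this is how the introduction phrases Step 2. If one wants to argue directly with global uniformizations instead (Theorem \ref{Thm:IgusaStackKim}(i)), the procedure is as follows. Restrict $\Theta'$ to
\[ \mathbf{Sh}\gx^{?,\diamondsuit}_E \times_{\bungmu} \grgmu. \]
Then check that it lands in $\mathbf{Sh}\gx^{?,\diamondsuit}_E \subset \mathbf{Sh}\gxp^{?,\diamondsuit}_E$.

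The main obstacle is this landing claim. The infinite-level Shimura variety for $\gx$ is a union of $\g(\af)$-translates of connected pieces of the one for $\gxp$. Whether a point lies in it can be detected after projecting to the Shimura varieties for $\h$, $\h^{\mathrm{ab}}$ and $\g^{\mathrm{ab}}$, using the fiber product of groups. Because uniformization is functorial (Theorem \ref{Thm:IgusaStackKim}(iii)), $\Theta'$ is compatible with the uniformizations for $\hy$, for the tori, and with the maps of the Cartesian square. Combining these should give the landing claim. I expect the delicate point to be that $\mathbf{Sh}\gx \to \mathbf{Sh}\hy \times_{\mathbf{Sh}\hyab} \mathbf{Sh}\gxab$ need not be an isomorphism, only an open and closed immersion, because of $\pi_0$ and center-closure (SV5) issues. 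I would handle this by the same connected-component comparison, where the action of $\g(\af)$ is transitive on components.

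For $?=\circ$, the inputs are the following. Lemma \ref{Lem:GoodReductionProduct} handles the product. Lemma \ref{Lem:GoodReductionAdIsom}, applied to the ad-isomorphism $\gx \to \hy$, shows that the good reduction locus of $\gx$ is pulled back from that of $\hy$, hence from that of the product. For tori, $\circ$ imposes no condition. With these, the same argument goes through for $?=\circ$.
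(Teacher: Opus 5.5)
Your proposal is correct and follows the paper's proof essentially verbatim. Both use the Cartesian square of abelianizations to embed $\gx$ into $(\h\times\g^{\mathrm{ab}},\y\times\x^{\mathrm{ab}})$, obtain an Igusa stack for the product from Theorem \ref{Thm:IgusaStackTorus} and Proposition \ref{Prop:ProductIgusaStack}, pass to the sub Shimura datum via Kim's theorem (cited in the proof as \cite[Theorem 11.4]{KimFunctorial}), and handle $?=\circ$ with Lemma \ref{Lem:GoodReductionAdIsom}, so the direct uniformization sketch you offer as a fallback is not needed.
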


\begin{proof}
    Let $\mathsf{E}'\subset \mathsf{E}$ denote the reflex field of $\gxab$, and let $v'$ be the prime of $\mathsf{E}'$ induced by $v$. The product $(\h\times \g^\mathrm{ab}, \y \times \x^\mathrm{ab})$ is a Shimura datum with reflex field given by the compositum $\mathsf{E}''\subset \mathsf{E}$ of $\mathsf{F}$ and $\mathsf{E}'$. By Theorem \ref{Thm:IgusaStackTorus}, $\gxab$ admits an Igusa stack for $?$ at $v'$, so by Proposition \ref{Prop:ProductIgusaStack}, $(\h \times \g^\mathrm{ab}, \y\times \x^\mathrm{ab})$ admits an Igusa stack $\igs^?(\h \times \g^\mathrm{ab}, \y \times \x^\mathrm{ab})$ for $?$ at the prime $v''$ of $\mathsf{E}''$ induced by $v$.
    Since $f \colon \g \to \h$ induces an isomorphism on derived groups, the diagram
    \begin{equation}\label{Eq:AscendingDiagram}
	\begin{tikzcd}
		\gx 
		\arrow[r] \arrow[d]
		& \gxab 
		\arrow[d]
		\\ \hy  
		\arrow[r]
		& \hyab
	\end{tikzcd} 
    \end{equation}
    is Cartesian. Thus we have a closed embedding of Shimura data
    \begin{equation*}
        \gx \hookrightarrow (\h\times \g^\mathrm{ab}, \y\times \x^\mathrm{ab}),
    \end{equation*}
    and we conclude using \cite[Theorem 11.4]{KimFunctorial}, which shows that the existence of Igusa stacks passes to sub Shimura data, together with Lemma \ref{Lem:GoodReductionAdIsom}.
\end{proof}

\subsection{Descending Shimura varieties} \label{Sub:PushoutShimura}
The goal of this section is to prove Proposition \ref{Prop:PushoutShimura}, which relates two Shimura varieties along an ad-isomorphism of Shimura data.

\subsubsection{} Let $\gx$ be a Shimura datum and let $\zg$ be the center of $\g$. Let $\Gad(\mathbb{R})^1$ be the image of $\g(\mathbb{R}) \to \Gad(\mathbb{R})$ and let $\Gad(\mathbb{Q})^1$ be its intersection with $\Gad(\mathbb{Q})$. Note that $\Gad(\mathbb{Q})$ acts on $\g$ by conjugation and that $\Gad(\mathbb{Q})^1$ acts on $\gx$ by morphisms of Shimura data.

\subsubsection{} For $K \subset \gaf$ a neat compact open subgroup we will write $\mathbf{Sh}_K\gx$ for the Shimura variety over $\mathsf{E}$ of level $K$, and we will write $\mathbf{Sh}\gx=\varprojlim_K \mathbf{Sh}_K\gx$ for the inverse limit. This inverse limit comes equipped with a continuous left action of the locally profinite group $\gaf$, or equivalently, a left action of the locally profinite group scheme $\ul{\gaf}$ over $\mathsf{E}$. We will write $\zgqbar \subset \gaf$ for the closure of $\zgq$ inside $\gaf$. The following lemma is well known.

\begin{Lem} \label{Lem:NonSV5}
    The action of $\ul{\zgqbar} \subset \ul{\gaf}$ on $\mathbf{Sh}\gx$ is trivial.
\end{Lem}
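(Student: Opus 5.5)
The plan is to reduce to the complex points, where the action is explicit, and then descend. Since $\mathbf{Sh}\gx$ is the inverse limit of the finite level varieties $\mathbf{Sh}_K\gx$, it suffices to show two things for each $z \in \zgqbar$ and each neat $K$. First, the action map $z \colon \mathbf{Sh}_K\gx \to \mathbf{Sh}_{z^{-1}Kz}\gx = \mathbf{Sh}_K\gx$ is the identity; here $z^{-1}Kz = K$ because $z$ is central in $\gaf$. Second, the action of the locally profinite group scheme $\ul{\zgqbar}$ is the trivial one. The second point follows from the first by continuity. Since $\mathbf{Sh}_K\gx$ is a variety over $\mathsf{E}$, and hence reduced and separated, and since $\mathsf{E}\subset\mathbb{C}$, it is enough to check that $z$ acts as the identity on $\mathbf{Sh}_K\gx(\mathbb{C})$.

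On complex points we have the double coset description
\[
\mathbf{Sh}_K\gx(\mathbb{C}) = \g(\mathbb{Q})\backslash \x \times \gaf/K,
\]
and $z$ acts by $[x,g]\mapsto [x,gz]$. Because $K$ is open, the set $zK$ is an open neighbourhood of $z$. Hence it contains some element $z_0\in\zgq$, and we may write $z = z_0 k$ with $k \in K$ central. Then
\[
[x,gz]=[x,z_0 g k]=[z_0 x, gk] = [x,g],
\]
using two facts: $z_0\in \zgq$ is central, and $z_0$ acts trivially on $\x$. The latter holds because $\x$ is a $\g(\mathbb{R})$-conjugacy class of morphisms $h$, and conjugation by a central element fixes each $h$. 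Thus $z$ acts trivially on $\mathbf{Sh}_K\gx(\mathbb{C})$, and therefore on $\mathbf{Sh}_K\gx$ itself, since the Hecke action is defined over $\mathsf{E}$ and a morphism of reduced varieties agreeing with the identity on $\mathbb{C}$-points is the identity.

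Passing to the limit, each $z\in\zgqbar$ acts trivially on $\mathbf{Sh}\gx$. The remaining step, which is the main subtlety, is upgrading this pointwise statement to triviality of the action of the group scheme $\ul{\zgqbar}$. I would argue at finite level. The action of $\ul{\zgqbar}$ on $\mathbf{Sh}_K\gx$ factors through the discrete quotient $\zgqbar/(\zgqbar\cap K)$, since $K$ acts trivially on $\mathbf{Sh}_K\gx$ via the right translation defining the tower. So the action of $\ul{\zgqbar}$ on each $\mathbf{Sh}_K\gx$ is a disjoint union over cosets of pointwise actions, and each of these is trivial by the argument above. Compatibility of the actions with the transition maps then gives triviality on the inverse limit.
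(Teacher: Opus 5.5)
Your proof is correct, but it takes a somewhat different route from the paper's.

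\textbf{The paper's route.} It works directly at infinite level. Since $\mathbf{Sh}\gx$ is separated, the locus in $\ul{\zgqbar}\times\mathbf{Sh}\gx$ where the action map agrees with the projection is a closed subscheme. It contains all $\mathbb{C}$-points by \cite[Corollary~2.1.12]{DeligneVarietes}, and hence is everything.

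\textbf{Your route.} You work level by level, in three steps.
\begin{enumerate}
\item The density argument $z\in z_0K$ with $z_0\in\zgq$ (equivalently, $\zgqbar\subset\zgq K$, because $\zgq K$ is an open, hence closed, subgroup) gives a self-contained proof of the $\mathbb{C}$-point statement that the paper imports from Deligne.
\item Reducedness of the finite-type variety $\mathbf{Sh}_K\gx$ upgrades this to an equality of morphisms.
\item The passage from individual elements to the group scheme $\ul{\zgqbar}$ comes from the construction of the action on the tower: at level $K$, the composite $\ul{\zgqbar}\times\mathbf{Sh}\gx\to\mathbf{Sh}_K\gx$ is constant on cosets of the open subgroup $\zgqbar\cap K$.
\end{enumerate}

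\textbf{What each buys.} In your approach every density argument takes place on finite-type varieties. The paper's one-line argument tacitly uses that a closed subscheme of the non-finite-type scheme $\ul{\zgqbar}\times\mathbf{Sh}\gx$ containing all $\mathbb{C}$-points is everything. That is true, since this scheme is reduced and is a disjoint union of limits of finite-type varieties along finite \'etale maps, but it needs a short Jacobson-type argument the paper does not spell out. The paper's route, in turn, buys brevity.

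One cosmetic slip: moving $z_0$ across the double coset gives $[x,z_0gk]=[z_0^{-1}x,gk]$, not $[z_0x,gk]$. This does not matter, since $z_0$ fixes every point of $\x$.
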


\begin{proof}
  Since the inverse limit $\mathbf{Sh}\gx$ is separated scheme, the locus
  \[
    Z = \lbrace (g, x) \in \ul{\zgqbar} \times \mathbf{Sh}\gx : gx = x \rbrace
  \]
  is a closed subscheme. By \cite[Corollary~2.1.12]{DeligneVarietes}, the locus
  $Z$ contains all $\mathbb{C}$-points, and thus $Z = \ul{\zgqbar} \times
  \mathbf{Sh}\gx$.
\end{proof}

\subsubsection{} By considering the long exact sequences in cohomology with $\mathbb{Q}$ and $\mathbb{R}$ coefficients associated with $1 \to \zg \to \g \to \Gad \to 1$, we see that the subgroup $\g(\mathbb{Q})/\zgq \subset \gadqone$ is normal with abelian cokernel given by $\ker (H^1(\mathbb{Q}, \zg) \to H^1(\mathbb{R}, \zg) \times H^1(\Q, \g))$.\footnote{In particular, this cokernel is finite if $\zg$ is connected.} Together with Lemma \ref{Lem:NonSV5}, this gives rise to an action of the abstract group\footnote{The multiplication in the semidirect product is given by $(x, g) (y, h) = (x \cdot {}^g y, gh)$ and the map $\g(\mathbb{Q}) \to (\gaf/\zgqbar) \rtimes \Gad(\mathbb{Q})^1$ is given by $g \mapsto (g^{-1}, g)$, cf. \cite[Section 3.3.1]{KisinModels}.}
\begin{align*}
    \mathcal{A}(\g):=\left(\frac{\gaf}{\zgqbar} \rtimes \gadqone\right) \Big/ \frac{\g(\mathbb{Q})}{\zgq}
\end{align*}
on $\mathbf{Sh}\gx$. We now equip ${\gaf}/{\zgqbar}$ with the quotient topology, the groups $\gadqone$ and $\g(\mathbb{Q})/\zgq$ with the discrete topology, the product $({\gaf}/{\zgqbar}) \rtimes \gadqone$ with the product topology, and $\mathcal{A}(\g)$ with the quotient topology; note that this is Hausdorff since $\g(\mathbb{Q})/\zgq$ has closed image. It follows that $\ul{\mathcal{A}(\g)}$ acts on $\mathbf{Sh}\gx$ since $\ul{\gaf}$ and $\gadqone$ both act on $\mathbf{Sh}\gx$. 

\subsubsection{} If $f:\gx \to \hy$ is a morphism of Shimura data such that $f(\zg) \subset Z_{\h}$, then there is an induced morphism $\ag \to \ah$ and the corresponding morphism of Shimura varieties $\mathbf{Sh}\gx \to \mathbf{Sh} \hy_{\mathsf{E}}$ is $\ul{\ag}$-equivariant in the obvious way. The following result is well known (see e.g. \cite[Lemma 4.56]{YangZhuGeneric}, \cite[Lemma 2.7.11.(b)]{DeligneVarietes}). We give a proof for the sake of completeness, following \cite[Section 2]{DeligneVarietes}. 

\begin{Prop} \label{Prop:PushoutShimura}
If $f:\gx \to \hy$ is an ad-isomorphism, then the natural map $\mathbf{Sh}\gx \to \mathbf{Sh}\hy_{\mathsf{E}}$ induces an isomorphism
  \[
    \ul{\ah} \times^{\ul{\ag}} \mathbf{Sh}\gx \to \mathbf{Sh}\hy_{\mathsf{E}},
  \]
  where the quotient is taken in the pro-\'etale topology.
\end{Prop}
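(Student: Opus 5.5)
Since both sides are (pro-)schemes over $\mathsf{E}$ that are limits of finite étale covers of normal varieties, we first reduce to a statement on geometric points together with a Galois-descent compatibility. Both $\mathbf{Sh}\gx$ and $\mathbf{Sh}\hy_{\mathsf{E}}$ carry continuous actions of $\ul{\ag}$ and $\ul{\ah}$, respectively. The map $\mathbf{Sh}\gx \to \mathbf{Sh}\hy_{\mathsf{E}}$ is $\ul{\ag}$-equivariant, so it induces a map from the pushout $\ul{\ah}\times^{\ul{\ag}}\mathbf{Sh}\gx$. The plan is to follow \cite[Section 2.1, 2.7]{DeligneVarietes}: describe a connected component of each side and show that the pushout recovers the full set of components with the right stabilizers.

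\textbf{Step 1} (connected components). Let $\mathbf{Sh}\gx^+\subset \mathbf{Sh}\gx_{\ebar}$ be the geometric connected component containing the image of $X^+\times\{1\}$. By \cite[Theorem 2.1.8, Corollary 2.1.12]{DeligneVarietes}, $\mathbf{Sh}\gx^+$ depends only on $(\Gad,X^+)$ together with the topology on $\Gad(\mathbb{Q})^+$ induced by $\g^{\mathrm{der}}$. Since $f$ induces $\g^{\mathrm{der}}\to\h^{\mathrm{der}}$, which is a central isogeny, the map $\mathbf{Sh}\gx^+\to\mathbf{Sh}\hy^+$ is an isomorphism. This uses that the completion of $\g^{\mathrm{der}}(\mathbb{Q})$ for the congruence topology of $\g^{\mathrm{der}}$ versus that of $\h^{\mathrm{der}}$ agree after passing to the adjoint images, which is \cite[2.1.15]{DeligneVarietes}.

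\textbf{Step 2} (group-theoretic comparison). The stabilizer of $\mathbf{Sh}\gx^+$ in $\ag$ is the completion $\mathcal{A}(\g)^\circ$ of $\g^{\mathrm{der}}(\mathbb{Q})^+$-type data, and $\pi_0(\mathbf{Sh}\gx_{\ebar})=\ag/\mathcal{A}(\g)^\circ$ as an $\ag$-set. By \cite[Proposition 2.1.13, 2.7.11]{DeligneVarietes}, $\mathcal{A}(\g)^\circ\to\mathcal{A}(\h)^\circ$ is an isomorphism, and $\ah=\ag\cdot\mathcal{A}(\h)^\circ$ after passing through $\ah$ generated by the image of $\ag$ and $\mathcal{A}(\h)^\circ$. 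Actually $\mathcal{A}(\h)/\mathcal{A}(\h)^\circ$ acts transitively on $\pi_0$. Hence $\ul{\ah}\times^{\ul{\ag}}\mathbf{Sh}\gx_{\ebar}\cong \ul{\ah}\times^{\ul{\mathcal{A}(\h)^\circ}}\mathbf{Sh}\hy^+\cong\mathbf{Sh}\hy_{\ebar}$. Here one must check that the pushout as a pro-étale sheaf is representable by this disjoint union. Because $\mathcal{A}(\h)^\circ$ is open, or at least $\ah/\mathcal{A}(\h)^\circ$ is profinite, Proposition \ref{Prop:LocProfExact}-type continuous sections make the contracted product a quotient by a free action with local sections, hence a scheme.

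\textbf{Step 3} (descent to $\mathsf{E}$). The isomorphism over $\ebar$ is $\gal(\ebar/\mathsf{E})$-equivariant because the original map is defined over $\mathsf{E}$ and the $\ag$, $\ah$-actions are defined over $\mathsf{E}$. It therefore descends.

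The main obstacle is Step 2: matching the topologies, namely showing that $\mathcal{A}(\g)^\circ \to \mathcal{A}(\h)^\circ$ is a homeomorphism and that the pro-étale contracted product is genuinely the disjoint union indexed by $\ah/\ag\cdot\mathcal{A}(\h)^\circ$. To do this, I would first work at finite level with neat $K$, where the components are arithmetic quotients. I would then pass to the limit using closedness of $\g(\mathbb{Q})/\zgq$ as in the definition of $\ag$.
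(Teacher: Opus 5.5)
\textbf{There is a genuine gap.} Step 1, and the matching claim in Step 2 that $\mathcal{A}(\g)^\circ\to\mathcal{A}(\h)^\circ$ is an isomorphism, are false for a general ad-isomorphism.

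By Deligne, $\mathbf{Sh}\gx^+$ is $\varprojlim\Gamma\backslash\x^+$ over arithmetic $\Gamma\subset\g^{\mathrm{ad}}(\mathbb{Q})^+$ open for $\tau(\g^{\mathrm{der}})$. Likewise $\mathcal{A}(\g)^\circ$ is the completion of $\g^{\mathrm{ad}}(\mathbb{Q})^+$ for $\tau(\g^{\mathrm{der}})$. Both depend on $\g^{\mathrm{der}}$ as a covering of $\g^{\mathrm{ad}}$, not only on $\g^{\mathrm{ad}}$. When $\g^{\mathrm{der}}\to\h^{\mathrm{der}}$ is a non-trivial central isogeny, $\tau(\g^{\mathrm{der}})$ is in general strictly finer than $\tau(\h^{\mathrm{der}})$.

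This already happens for $\operatorname{GL}_2\to\operatorname{PGL}_2$. Take $M$ divisible by $15$ and choose $\lambda\in(\mathbb{Z}/M)^\times$ with $\lambda^2=1$, $\lambda\equiv 1\pmod 3$ and $\lambda\equiv -1\pmod 5$. Lift $\lambda I\in\operatorname{SL}_2(\mathbb{Z}/M)$ to some $g\in\operatorname{SL}_2(\mathbb{Z})$. The image of $g$ lies in the level-$M$ congruence subgroup of $\operatorname{PGL}_2(\mathbb{Q})^+$. It does not lie in the image of $\Gamma(15)$, since neither $g$ nor $-g$ is $\equiv 1 \pmod{15}$.

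Consequently $\mathcal{A}(\g)^\circ\to\mathcal{A}(\h)^\circ$ is surjective with a non-trivial profinite kernel $N$ (here essentially $\mu_2(\af)/\mu_2(\mathbb{Q})$). Correspondingly, $\mathbf{Sh}\gx^+\to\mathbf{Sh}\hy^+$ is a non-trivial pro-finite \'etale $N$-torsor, not an isomorphism. The ``main obstacle'' you set out to prove is therefore false. This is also exactly the case the paper needs: in Lemma \ref{Lem:ExistenceGoodRoof}, the map $\gxthree\to\gxtwo$ is only a central isogeny on derived groups.

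\textbf{How to repair your route.} The input you actually need is that $\mathbf{Sh}\hy^+\cong\mathbf{Sh}\gx^+/N$, i.e.\ Deligne's functoriality of connected Shimura varieties under a change of covering group. Then
$\ul{\ah}\times^{\ul{\ag}}\mathbf{Sh}\gx_{\ebar}\cong\ul{\ah}\times^{\ul{\mathcal{A}(\g)^\circ}}\mathbf{Sh}\gx^+\cong\ul{\ah}\times^{\ul{\mathcal{A}(\h)^\circ}}(\mathbf{Sh}\gx^+/N)\cong\mathbf{Sh}\hy_{\ebar}$.
Here the kernel $N$, which sits inside $\mathcal{A}(\g)^\circ\subset\ag$, is absorbed by the contracted product. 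You would still have to justify that Deligne's decomposition $\mathbf{Sh}=\mathcal{A}\times^{\mathcal{A}^\circ}\mathbf{Sh}^+$ holds as an identity of pro-\'etale sheaves. Your remark about continuous sections is only a sketch of this, and $\mathcal{A}(\h)^\circ$ is not open in $\ah$.

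\textbf{How the paper avoids both issues.} It shows directly that $\ul{\ah}\times\mathbf{Sh}\gx\to\mathbf{Sh}\hy_{\mathsf{E}}$ is weakly \'etale, an fpqc cover and a quasi-torsor for $\ul{\ag}$. The quasi-torsor property is reduced, via the equivalence between disjoint unions of pro-finite \'etale covers and locally profinite $\pi_1$-sets, to a statement about $\mathbb{C}$-points. That statement is handled with Deligne's uniformization of the \emph{whole} Shimura variety. It comes down to the elementary fact that $\h^{\mathrm{ad}}(\mathbb{Q})^1\times\x\to\y$ is a Galois covering for $\g^{\mathrm{ad}}(\mathbb{Q})^1$, so no comparison of congruence topologies is ever needed.
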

\begin{proof}
  \def\shgx{\mathbf{Sh}\gx}
  \def\shhy{\mathbf{Sh}\hy_\mathsf{E}}
  \def\shhyfin{\mathbf{Sh}_K\hy_\mathsf{E}}
  \def\pizhy{\mathsf{H}(\af)/\mathsf{H}(\Q)_+^-}
  There is indeed a natural map of $\mathsf{E}$-schemes
  \[
    \pi \colon \ul{\ah} \times \shgx \to \shhy
  \]
  that is invariant for the $\ul{\ag}$-action on the source. It remains to show
  that $\pi$ is a pro-\'{e}tale $\ul{\ag}$-torsor.

  It suffices to show that $\pi$ is weakly \'{e}tale, an fpqc cover, and a
  quasi-torsor for $\ul{\ag}$ in the sense that the action map
  \[
    \alpha \colon (\ul{\ah} \times \shgx) \times \ul{\ag} \to (\ul{\ah} \times
    \shgx) \times_{\shhy} (\ul{\ah} \times \shgx)
  \]
  is an isomorphism. Note that once we fix a neat level $K \subseteq
  \mathsf{H}(\af)$, both sides of $\pi$ are disjoint unions of pro-finite
  \'{e}tale covers of $\shhyfin$, and hence weakly \'{e}tale. It then follows
  from \cite[Proposition~2.2.3.(4)]{BhattScholzeProEtale} that $\pi$ is weakly
  \'{e}tale as well. To show that $\pi$ is an fpqc cover, it suffices to find a
  compact open subset $U \subseteq \ah$ for which $\ul{U} \times \shgx \to
  \shhy$ is surjective. This follows from the fact that a connected component of
  $\shgx$ surjects onto a connected component of $\shhy$, and that
  $\pi_0(\mathbf{Sh}\hy_\mathbb{C}) = \pizhy$ as
  profinite sets, see \cite[Proposition~2.1.14]{DeligneVarietes}.\footnote{Here we write $\g(\R)_{+}$ for the inverse image of the identity component (in the real topology) of $\Gad(\R)$ under the natural map $\g(\mathbb{R}) \to \g^{\mathrm{ad}}(\mathbb{R})$, and $\g(\Q)_{+}=\g(\Q) \cap \g(\R)_{+}$.}Indeed,
  because $\pizhy$ is profinite, there exists a compact open $\tilde{U}
  \subseteq \mathsf{H}(\af)$ that surjects onto $\pizhy$ as in
  \Cref{Prop:LocProfExact}, and then an open neighborhood $U$ of $\im(\tilde{U}
  \to \ah)$ has the property that $\ul{U} \times \shgx \to \shhy$ is surjective.

  We now prove that $\alpha$ is an isomorphism. Recall that both sides are
  disjoint unions of pro-finite \'{e}tale covers over $\shhyfin$. If we write
  $\shhyfin = \coprod_i V_i$ where $V_i$ are connected, then the category of
  schemes that are a disjoint union of pro-finite \'{e}tale covers of $V_i$ is
  equivalent to the category of locally profinite sets with a continuous
  $\pi_1^\mathrm{\acute{e}t}(V_i, \bar{v}_i)$-action. Thus it is enough to show
  that $\alpha$ induces isomorphisms on fibers over $\mathbb{C}$-points of
  $\shhyfin$. Since these fibers correspond to locally profinite sets, we may
  further reduce to showing that the continuous map
  \[
    \ah \times \shgx(\mathbb{C})^\mathrm{an} \to
    \mathbf{Sh}\hy(\mathbb{C})^\mathrm{an}
  \]
  of topological spaces is an $\ag$-torsor (i.e., a principal bundle), where we
  give $\shgx(\mathbb{C})$ and $\mathbf{Sh}\hy(\mathbb{C})$ the inverse limit
  topologies of the analytic topologies at finite level.

  In view of \cite[Section~2.1.9]{DeligneVarietes}, we have
  \[
    \shgx(\mathbb{C})^\mathrm{an} = \biggl( \frac{\mathsf{G}(\af)}{\zgqbar}
    \times \mathsf{X} \biggr) \bigg/ \frac{\mathsf{G}(\Q)}{\zgq}
  \]
  as topological spaces, where the quotient is properly discontinuous. We need
  to check
  \[
    S_1 = \ah \times \frac{(\mathsf{G}(\af)/Z_\mathsf{G}(\Q)^-) \times
    \mathsf{X}}{\mathsf{G}(\Q)/Z_\mathsf{G}(\Q)} \to
    \frac{(\mathsf{H}(\af)/Z_\mathsf{H}(\Q)^-) \times
    \mathsf{Y}}{\mathsf{H}(\Q)/Z_\mathsf{H}(\Q)}
  \]
  is a torsor for $\ag$. It suffices to check that
  \[
    S_2 = \ah \times ((\mathsf{G}(\af)/Z_\mathsf{G}(\Q)^-) \times \mathsf{X})
    \to \frac{(\mathsf{H}(\af)/Z_\mathsf{H}(\Q)^-) \times
    \mathsf{Y}}{\mathsf{H}(\Q)/Z_\mathsf{H}(\Q)}
  \]
  is a torsor for $\Gamma = (\mathsf{G}(\af)/Z_\mathsf{G}(\Q)^-) \rtimes
  \mathsf{G}^\mathrm{ad}(\Q)^1$ with the source having the diagonal action,
  because $S_2 \to S_1$ is a Galois covering space for the induced
  $\mathsf{G}(\Q)/Z_\mathsf{G}(\Q)$-action. Because being a torsor can be
  checked after base changing by a surjective covering map, it remains to show
  that
  \[
    ((\mathsf{H}(\af)/Z_\mathsf{H}(\Q)^-) \rtimes \mathsf{H}^\mathrm{ad}(\Q)^1)
    \times ((\mathsf{G}(\af)/Z_\mathsf{G}(\Q)^-) \times \mathsf{X}) \to
    (\mathsf{H}(\af)/Z_\mathsf{H}(\Q)^-) \times \mathsf{Y}
  \]
  is a $\Gamma$-torsor. On the other hand, this map is
  $(\mathsf{H}(\af)/Z_\mathsf{H}(\Q)^-)$-equivariant, hence is a base change of
  \[
    \mathsf{H}^\mathrm{ad}(\Q)^1 \times ((\mathsf{G}(\af)/Z_\mathsf{G}(\Q)^-)
    \times \mathsf{X}) \to \mathsf{Y},
  \]
  and so it suffices to show that this is a $\Gamma$-torsor. As the map is
  evidently $\Gamma$-invariant, this is equivalent to showing that
  \[
    \mathsf{H}^\mathrm{ad}(\Q)^1 \times \mathsf{X} \to \mathsf{Y}
  \]
  is a Galois covering space for $\mathsf{G}^\mathrm{ad}(\Q)^1$. This is now
  clear since $\mathsf{X} \hookrightarrow \mathsf{Y}$ is a union of connected
  components, $\mathsf{G}^\mathrm{ad}(\Q)^1 \hookrightarrow
  \mathsf{H}^\mathrm{ad}(\Q)^1$ is a subgroup, and
  \[
    \mathsf{H}^\mathrm{ad}(\Q)^1 \times^{\mathsf{G}^\mathrm{ad}(\Q)^1}
    \pi_0(\mathsf{X}) = 
    \mathsf{H}^\mathrm{ad}(\mathbb{R})^1 \times^{\mathsf{G}^\mathrm{ad}(\mathbb{R})^1} \pi_0(\mathsf{X}) = \pi_0(\mathsf{Y})
  \]
  because $\mathsf{X}, \mathsf{Y}$ are orbits for $\mathsf{G}(\mathbb{R}),
  \mathsf{H}(\mathbb{R})$. 
\end{proof}

\subsubsection{} Let $\gx \to \hy$ and $\mathsf{F} \subset \mathsf{E}$ be as above. For a finite place $v$ of $\mathsf{E}$ with induced place $w$ of $\mathsf{F}$, we consider the completions $F \subset E$.

\begin{Cor} \label{Cor:PushoutShimuraII}
If $\gx \to \hy$ is an ad-isomorphism, then the natural map of Proposition \ref{Prop:PushoutShimura} induces an isomorphism $($where the quotient is taken in the pro-\'etale topology$)$
    \begin{align*}
        \ul{\ah} \times^{\ul{\ag}} \mathbf{Sh}\gx^{\circ,\diamondsuit} \to \mathbf{Sh}\hy^{\circ,\diamondsuit}_{E}
    \end{align*}
\end{Cor}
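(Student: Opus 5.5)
The plan is to deduce the statement from Proposition \ref{Prop:PushoutShimura} by analytifying, passing to diamonds, and restricting to the potentially crystalline loci. First, base change the isomorphism of Proposition \ref{Prop:PushoutShimura} from $\mathsf{E}$ to $E$ and apply the functor $X \mapsto X^{\diamondsuit}$ (via analytification over $E$).

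\textbf{Step 1: the full Shimura varieties.} The goal here is
\[
\ul{\ah} \times^{\ul{\ag}} \mathbf{Sh}\gx^{\diamondsuit}_E \xrightarrow{\sim} \mathbf{Sh}\hy^{\diamondsuit}_E .
\]
In the proof of Proposition \ref{Prop:PushoutShimura} we showed more than the bare statement. The map $\pi$ is weakly \'etale and surjective, and after choosing a neat level $K$ on the target it becomes a disjoint union of pro-finite \'etale covers of $\mathbf{Sh}_K\hy_{\mathsf{E}}$. In the same proof, $\alpha$ was shown to be an isomorphism.

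Pro-finite \'etale covers of finite-type schemes go to pro-\'etale covers of the associated diamonds. Fiber products and the group action are compatible with $(-)^{\diamondsuit}$. Hence $\pi^{\diamondsuit}$ is a v-surjective $\ul{\ag}$-quasi-torsor, that is, a $\ul{\ag}$-torsor in the pro-\'etale topology, which gives the displayed isomorphism. One should check that for a locally profinite set $S$ we have $(\ul{S}\times X)^{\diamondsuit} = \ul{S}\times X^{\diamondsuit}$. This holds because $\ul{S}$ is a disjoint union of profinite sets, each a cofiltered limit of finite sets, and $(-)^\diamondsuit$ commutes with such limits of affine transition maps.

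\textbf{Step 2: restriction to the $\circ$-locus.} By Lemma \ref{Lem:GoodReductionAdIsom}, applied at every neat level and passed to the limit, the square
\[
\mathbf{Sh}\gx^{\circ,\diamondsuit}_E = \mathbf{Sh}\hy^{\circ,\diamondsuit}_E \times_{\mathbf{Sh}\hy^{\diamondsuit}_E} \mathbf{Sh}\gx^{\diamondsuit}_E
\]
is Cartesian. Next, $\mathbf{Sh}\hy^{\circ}$ is stable under the $\ul{\ah}$-action. For $\ul{\h(\af)}$ this holds because the locus is compatible with change of level and Hecke correspondences. For $\Gad(\mathbb{Q})^1$, and more generally $\h^{\mathrm{ad}}(\mathbb{Q})^1$, this holds because the locus is characterized by potential crystallinity of $\phi^{\mathrm{ad}}_{x,p}$, which is invariant under automorphisms of the Shimura datum. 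I would cite \cite[Corollary 5.14]{ImaiMieda} together with the ad-invariance used in Lemma \ref{Lem:GoodReductionAdIsom}.

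Consequently the preimage of $\mathbf{Sh}\hy^{\circ,\diamondsuit}_E$ under $\ul{\ah}\times\mathbf{Sh}\gx^{\diamondsuit}_E \to \mathbf{Sh}\hy^{\diamondsuit}_E$ is $\ul{\ah}\times\mathbf{Sh}\gx^{\circ,\diamondsuit}_E$. To see this, write $(a,x)\mapsto a\cdot f(x)$ and use that $\mathbf{Sh}\hy^{\circ}$ is $\ah$-stable, so $a\cdot f(x)$ lies in it if and only if $f(x)$ does. This preimage is $\ul{\ag}$-stable. Torsors restrict along open subobjects, so the pullback of the torsor from Step 1 to the open $\mathbf{Sh}\hy^{\circ,\diamondsuit}_E$ gives exactly the claimed isomorphism.

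\textbf{Main obstacle.} I expect the delicate point to be the $\ah$-stability of the potentially crystalline locus, especially for the $\h^{\mathrm{ad}}(\mathbb{Q})^1$ part, together with the justification that $(-)^{\diamondsuit}$ preserves the torsor property. For the first, I would check on classical points, using \cite[Lemma 3.6]{ImaiMieda}. Conjugation by $\gamma\in\h^{\mathrm{ad}}(\mathbb{Q})$ changes $\phi^{\mathrm{ad}}_x$ only by conjugation by $\gamma$ in $\h^{\mathrm{ad}}(\qp)$, and this does not affect potential crystallinity.
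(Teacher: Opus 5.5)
Your proposal is correct and is essentially the paper's argument: the paper simply calls the corollary a direct consequence of Lemma \ref{Lem:GoodReductionAdIsom} and Proposition \ref{Prop:PushoutShimura}, which is exactly your Step~2 (restricting the torsor to the open potentially crystalline locus) applied to the diamond version of Proposition \ref{Prop:PushoutShimura} from your Step~1. Your extra checks fill in points the paper leaves implicit: that the torsor property survives passage to diamonds, and that $\mathbf{Sh}\hy^{\circ}$ is $\ah$-stable, so that its preimage is $\ul{\ah}\times\mathbf{Sh}\gx^{\circ,\diamondsuit}$.
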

\begin{proof}
This is a direct consequence of Lemma \ref{Lem:GoodReductionAdIsom} and Proposition \ref{Prop:PushoutShimura}. 
\end{proof}

\subsection{Descending Igusa stacks} \label{Sec:Pushout}

The goal of this section is to prove the following result. Let $\gx \to \hy$ be a morphism of abelian-type Shimura data which induces an inclusion $\mathsf{F} \subset \mathsf{E}$ of reflex fields. Fix a prime $v$ of $\mathsf{E}$ above a rational prime $p$, let $w$ be the induced prime of $\mathsf{F}$, and let $F$ and $E$ be the corresponding completions. Write $G=\g \otimes \qp$ and $H=\h \otimes \qp$.

\begin{Thm} \label{Thm:DescendingIgusaStacks}
Assume that $F=E$. If $\gx$ admits an Igusa stack for $? \in \{\circ, \emptyset\}$ at the place $v$ and $\g \to \h$ is an ad-isomorphism, then $\hy$ admits an Igusa stack for $?$ at the place $w$.
\end{Thm}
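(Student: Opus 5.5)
By Theorem \ref{Thm:IgusaStackKim}(i), it suffices to construct a global uniformization $\Theta_{\h}$ for $\hy$ from the one for $\gx$. We work with global uniformization maps (Definition \ref{Def:UniversalUniformization}) rather than Igusa stacks directly, which avoids the $2$-categorical subtleties of taking quotients of stacks. Since $F=E$, both Shimura varieties live over the same base $\spd E$. The ad-isomorphism $G\to H$ induces compatible maps $\grgmu \to \mathrm{Gr}_{H,\mu_H^{-1}}$ and $\bungmu\to \bun_{H,\mu_H^{-1}}$. By Corollary \ref{Cor:PushoutShimuraII} (or Proposition \ref{Prop:PushoutShimura} for $?=\emptyset$), we have
\[
\mathbf{Sh}\hy^{?,\diamondsuit}_E \cong \ul{\ah}\times^{\ul{\ag}} \mathbf{Sh}\gx^{?,\diamondsuit}_E .
\]
We define $\Theta_{\h}$ on the source $\mathbf{Sh}\hy^{?,\diamondsuit}_E\times_{\bun_{H,\mu_H^{-1}}}\mathrm{Gr}_{H,\mu_H^{-1}}$ by descent: pulled back along the pro-\'etale cover $\ul{\ah}\times \mathbf{Sh}\gx^{?,\diamondsuit}_E\to \mathbf{Sh}\hy^{?,\diamondsuit}_E$, the source becomes $\ul{\ah}\times\bigl(\mathbf{Sh}\gx^{?,\diamondsuit}_E\times_{\bun_{H}}\mathrm{Gr}_{H,\mu_H^{-1}}\bigr)$.

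The key step is to compare the fibre products over $\bun_G$ and over $\bun_H$. I would show that $\mathrm{Gr}_{G,\mu^{-1}}\to \mathrm{Gr}_{H,\mu_H^{-1}}$ is an open and closed immersion onto a union of connected components. This should hold because the groups are ad-isomorphic, the cocharacters have the same adjoint image, and the reflex fields agree, so $\mathrm{Gr}_{G,\mu^{-1}}\cong\mathrm{Gr}_{G^{\mathrm{ad}},\mu^{\mathrm{ad},-1}}$. One then needs to see how $G(\qp)$ versus $H(\qp)$ translates relate the fibre products $\mathbf{Sh}_G\times_{\bun_G}\mathrm{Gr}_G$ and $\mathbf{Sh}_G\times_{\bun_H}\mathrm{Gr}_H$. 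Concretely, I expect
\[
\mathbf{Sh}\gx^{?,\diamondsuit}_E\times_{\bun_H}\mathrm{Gr}_{H,\mu_H^{-1}} \cong \ul{H(\qp)}\times^{\ul{G(\qp)}}\bigl(\mathbf{Sh}\gx^{?,\diamondsuit}_E\times_{\bun_G}\mathrm{Gr}_{G,\mu^{-1}}\bigr)
\]
up to the centres. Controlling the central discrepancy, namely the difference between $\bun_G\to\bun_H$ and the quotient by the image of $Z$, is where the $\mathcal{A}$-group formalism enters. Here we use that $\ul{\mathcal{A}(\g)}$ acts on $\igs\gx$ through $\ul{\mathcal{A}(\g)}/\ul{G(\qp)}$, by functoriality (Theorem \ref{Thm:IgusaStackKim}(iii) applied to the automorphisms of $\gx$ given by $\gadqone$, together with the $\ul{\gafp}$-action).

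On the cover, $\Theta_{\h}$ is then defined as $(a,y)\mapsto a\cdot \Theta_{\g}(y)$, extended $H(\qp)$-equivariantly in the second $G(\qp)$-factor as in condition (1). To descend, we must check that this is invariant under the diagonal $\ul{\ag}$-action. For the $\gaf\times G(\qp)$-part this follows from the equivariance of $\Theta_{\g}$. For the $\gadqone$-part it follows from the uniqueness and functoriality of global uniformizations (Theorem \ref{Thm:IgusaStackKim}(ii),(iii)), applied to the automorphism of $\gx$ induced by $\gamma\in\gadqone$, which intertwines $\Theta_{\g}$ with itself. The axioms of Definition \ref{Def:UniversalUniformization} (the $\phi\times\id$-invariance, the two commutative diagrams, and equivariance) can all be checked after pulling back to the cover, where they reduce to the corresponding properties of $\Theta_{\g}$.

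The main obstacle I expect is the comparison of fibre products over $\bun_G$ versus $\bun_H$, including the centre, and showing that the extension along $G(\qp)\to H(\qp)$ is well defined. The hypothesis $F=E$ is used here, so that no Galois descent between reflex completions is needed. My first attempt would be to check the needed isomorphism on geometric points, using the fact that $\bun_{G}^{b}\to\bun_H^{b}$ is a gerbe-like map with fibres controlled by $H(\qp)/G(\qp)$ modulo centres. I would then conclude via qcqs/v-descent, as in Proposition \ref{Prop:ProductIgusaStack}.
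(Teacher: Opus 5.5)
Your overall architecture matches the paper's. You work with global uniformizations, use $\shtwod_E\cong\ul{\ah}\times^{\ul{\ag}}\shd_E$, get invariance under $\gadqone$ from functoriality of $\Theta$ for $\operatorname{ad}(\gamma)$ and the rest from the Hecke axiom (this is Lemma \ref{Lem:EquivarianceLemma}), and check the axioms after pulling back to covers. The paper packages your descent as a pushout of $\Theta$ along $\Xi^2(\g)\to\Xi^2(\h)$ and $\ag\to\ah$. Here $\Xi^2(\g)=\ag\times_{\gad(\qp)}\Sigma^2(G)$ accounts for the fact that $\ag$ is compatible with $\pi_{\mathrm{HT}}$ only via $\ag\to\gad(\qp)$, which acts on $\grgmu\times_{\bung}\grgmu$ by conjugation. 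Your identification of the pulled-back source with $\ul{\ah}\times(\shd_E\times_{\bunh}\grgtwomu)$ tacitly uses this action, because $\mathrm{BL}(\pi_{\mathrm{HT}}(a\cdot x))$ and $\mathrm{BL}(\pi_{\mathrm{HT}}(x))$ differ by the automorphism of $\bunh$ induced by the image of $a$ in $H^{\mathrm{ad}}(\qp)$. That part is bookkeeping. The genuine gap is the step you yourself single out as the main obstacle. It is the only non-formal input, you do not prove it, and the ingredients you propose for it are not the relevant ones: a central discrepancy handled by the $\mathcal{A}$-formalism, and the structure of $\bun_G^{[b]}\to\bunh^{[b]}$ stratum by stratum.

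What is needed is Lemma \ref{Lem:CartesianBL}: over $\bungmu\to\bun_{H,\mu_H^{-1}}$, the square with top row $[\ul{G(\qp)}\backslash\grgmu]\to[\ul{H(\qp)}\backslash\grgtwomu]$ is $2$-Cartesian. Given this, your expected isomorphism $\shd_E\times_{\bunh}\grgtwomu\cong\ul{H(\qp)}\times^{\ul{G(\qp)}}(\shd_E\times_{\bung}\grgmu)$ follows formally and holds exactly, with no central correction. The $\mathcal{A}$-formalism enters only through Proposition \ref{Prop:PushoutShimura}.

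To prove the lemma, take $\mathcal{E}\in\bungmu$ and a modification of type $\mu_H^{-1}$ from a trivial $H$-bundle to $\mathcal{E}\times^G H$. Because $F=E$ and $\mu$ is minuscule, $\grgmu\to\grgtwomu$ is an isomorphism (both are the same flag variety of $\gad$ over $E$), not merely an open and closed immersion. So the modification lifts uniquely to a modification $\mathcal{E}'\dashrightarrow\mathcal{E}$ of type $\mu^{-1}$; this is where $F=E$ does its real work. One must then show $\mathcal{E}'$ is trivial:
\begin{itemize}
\item its Newton point dies in $H$, hence in $\gad$, so $\mathcal{E}'$ is basic;
\item its Kottwitz invariant vanishes, because $\mathcal{E}$ lies in $\bgmu$ and the modification has type $\mu^{-1}$.
\end{itemize}
A basic class with trivial Kottwitz invariant is trivial.

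Neither the restriction to the $\mu^{-1}$-admissible locus nor this Kottwitz-invariant count appears in your plan, and both are indispensable. Over all of $\bung$ the square is not Cartesian: if $\ker(G\to H)$ contains a copy of $\mathbb{G}_m$, then infinitely many classes of $B(G)$ map to $1\in B(H)$. Finally, the pointwise criterion you cite from Proposition \ref{Prop:ProductIgusaStack} needs a qcqs map of spatial diamonds. Here you are instead comparing non-quasicompact maps of stacks, so that criterion does not apply as stated.
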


The idea of the proof is relatively simple: We note that $\ul{\ag}$ acts on
$\igs^{?} \gx$ via its quotient $\apg:=\ul{\ag}/\ul{G(\qp)}$, and the Igusa
stacks for $\hy$ should be the quotient
\begin{align*}
    \igs^{?}\hy := \aph \times^{\apg} \igs^{?} \gx.
\end{align*}
From this perspective, it is however not easy to see that $\igs^{?}\hy$ admits a natural map to $\operatorname{Bun}_{H}$. Moreover, it is perhaps not clear how to make sense of the quotient if $\igs^{?} \gx$ is not $0$-truncated. Therefore, in the actual proof we will work from the perspective of the uniformization maps introduced by Kim in \cite[Section 5]{KimFunctorial}, see Definition \ref{Def:UniversalUniformization}.

\subsubsection{Local preparations} We will need the following result in our proof of Theorem \ref{Thm:DescendingIgusaStacks}. Let $f:G \to H$ be an ad-isomorphism of connected reductive groups over $\qp$, and let $\mu$ be a minuscule $G(\qpbar)$-conjugacy class of cocharacters with $\mu_{H}$ the induced $H(\qpbar)$-conjugacy class. There is an induced map $B(G,\mu) \to B(H, \mu_{H})$ which is a bijection by the discussion in \cite[Section 6.5]{Kottwitz2}. We have the following slight generalization of \cite[Lemma 6.4]{KimFunctorial}.

\begin{Lem} \label{Lem:CartesianBL}
Suppose that $G \to H$ is an ad-isomorphism. If $\mu$ and $\mu_{H}$ have the same reflex field, then the $2$-commutative diagram
\begin{equation}
    \begin{tikzcd}
        \lbrack \ul{G(\qp)} \backslash \operatorname{Gr}_{G, \mu} \rbrack \arrow{r} \arrow{d} & \lbrack \ul{H(\qp)} \backslash \operatorname{Gr}_{H, \mu_{H}} \rbrack \arrow{d} \\
        \operatorname{Bun}_{G,\mu} \arrow{r} & \operatorname{Bun}_{H,\mu_{H}}
    \end{tikzcd}
\end{equation}
is $2$-Cartesian.
\end{Lem}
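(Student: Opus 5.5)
To show the square is $2$-Cartesian, I will compare the fiber product $\mathcal{F} := \operatorname{Bun}_{G,\mu} \times_{\operatorname{Bun}_{H,\mu_H}} [\ul{H(\qp)} \backslash \operatorname{Gr}_{H,\mu_H}]$ with $[\ul{G(\qp)}\backslash \operatorname{Gr}_{G,\mu}]$. Concretely, a point of $\mathcal{F}$ over $S$ is a $G$-bundle $\mathcal{E}$ on $X_S$, an $H(\qp)$-torsor $P$, and a modification of $P \times^{H(\qp)} H$ of type $\mu_H$ at the untilt, together with an isomorphism between the modified $H$-bundle and $f_\ast\mathcal{E}$. The source $[\ul{G(\qp)}\backslash\operatorname{Gr}_{G,\mu}]$ parametrizes $G$-bundles $\mathcal{E}$ which are fiberwise modifications of type $\mu$ of the bundle attached to a $\ul{G(\qp)}$-torsor. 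Here the untilt is defined over the common reflex field $E$, which makes sense since both Grassmannians live over $\spd E$.

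First I reduce to showing that the natural map $[\ul{G(\qp)}\backslash\operatorname{Gr}_{G,\mu}] \to \mathcal{F}$ is an isomorphism after pulling back along $\operatorname{Gr}_{H,\mu_H} \to [\ul{H(\qp)}\backslash \operatorname{Gr}_{H,\mu_H}]$. Pulled back this way, the question becomes: the map $\ul{H(\qp)} \times^{\ul{G(\qp)}} \operatorname{Gr}_{G,\mu} \to \operatorname{Gr}_{H,\mu_H} \times_{\operatorname{Bun}_{H}} \operatorname{Bun}_G$ is an isomorphism. Here the pushout is taken along $f\colon G(\qp)\to H(\qp)$, interpreted as a quotient stack when $f$ is not injective. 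The right side parametrizes a modification $\mathcal{E}_H$ of the trivial $H$-bundle together with a $G$-reduction $\mathcal{E}$ of $\mathcal{E}_H$ along $f$.

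Next I use $\operatorname{Gr}_{G,\mu} \cong \operatorname{Gr}_{H,\mu_H}$ (more precisely, $\operatorname{Gr}_{G,\mu}\to \operatorname{Gr}_{H,\mu_H}$ is an isomorphism onto a union of components). Minuscule Schubert cells are flag varieties, which depend only on the adjoint group, and equality of reflex fields ensures that both Schubert cells are defined over $E$ without extra Galois orbits. Given this, the fiber of the right side over a point $x$ of $\operatorname{Gr}_{H,\mu_H}$ is the groupoid of lifts of $\mathcal{E}_H$ to $\operatorname{Bun}_G$. By modifying back at $\infty$, this is the same as lifts of the trivial $H$-bundle to $G$-bundles which are modifiable by $\mu$. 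So it suffices to identify the fiber of $\operatorname{Bun}_G \to \operatorname{Bun}_H$ over the trivial bundle, restricted to the basic (semistable, trivial) locus, with $\ul{H(\qp)}/\ul{G(\qp)}$.

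I expect this to be the main obstacle: controlling $\operatorname{Bun}_G\to\operatorname{Bun}_H$ when $\ker f$ is a central multiplicative group and $\operatorname{coker} f$ is a torus. I would factor $f$ as $G \to G/Z' \hookrightarrow H$, with $Z'=\ker f$ central, and treat the two cases separately.
\begin{itemize}
\item \textbf{Central quotient by $Z'$.} Fibers of $\operatorname{Bun}_G\to\operatorname{Bun}_{G/Z'}$ are $\operatorname{Bun}_{Z'}$-gerbes and torsors. Over the $\mu$-admissible loci, the bijection $B(G,\mu)\to B(H,\mu_H)$ from \cite[Section 6.5]{Kottwitz2} pins down the component. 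Since $\operatorname{Bun}_{Z'}$ for multiplicative $Z'$ has basic locus $[\ast/\ul{Z'(\qp)}]$ in the component of degree zero, the relevant fiber is $B\ul{Z'(\qp)}$, matching $\ul{G(\qp)}\to\ul{(G/Z')(\qp)}$ up to the $H^1(\qp,Z')$ discrepancy.
\item \textbf{Injection with torus cokernel $D$.} The fiber over the trivial bundle is the fiber of $\operatorname{Bun}_G\to \operatorname{Bun}_H$, that is, $H$-bundle reductions. It maps to $\operatorname{Bun}_D$ in degree zero, whose semistable point is $[\ast/\ul{D(\qp)}]$, giving $\ul{H(\qp)}/\ul{G(\qp)}$ via Proposition \ref{Prop:LocProfExact}.
\end{itemize}
Alternatively, and more cleanly, I would follow the proof of \cite[Lemma 6.4]{KimFunctorial}: check the statement on geometric points $\spa(C,C^+)$ together with automorphism groups (comparing $G_b(\qp)$ and $H_{f(b)}(\qp)$ via $B(G,\mu)\cong B(H,\mu_H)$), and then conclude by a v-descent argument that the map of stacks is an isomorphism. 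The reflex field hypothesis enters exactly to make the Schubert cells match over $E$.
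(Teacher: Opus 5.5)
Your reduction is compatible with the paper's route (the paper runs the proof of \cite[Lemma 6.4]{KimFunctorial}, whose final step shows that a certain class $[b]$ is trivial). You pull back to $\operatorname{Gr}_{H,\mu_H}$, use the reflex-field hypothesis to identify $\operatorname{Gr}_{G,\mu}\cong\operatorname{Gr}_{H,\mu_H}$ (also after twisting by a $G$-torsor over $\mathbf{B}_{\mathrm{dR}}^+$), and undo the modification at $\infty$ to get a $G$-bundle $\mathcal{E}'$ with $f_\ast\mathcal{E}'$ trivial. The gap is the decisive step: you never show that $\mathcal{E}'$ lies in $\operatorname{Bun}_G^{[1]}$, i.e.\ that at every geometric point $\mathcal{E}'\simeq\mathcal{E}_b$ with $[b]=1$ in $B(G)$. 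You simply restrict to ``the basic (semistable, trivial) locus'', but basic is not the same as trivial. Moreover, in your reformulation you replaced $\operatorname{Bun}_{G,\mu}$ by $\operatorname{Bun}_G$, which discards the only information that forces triviality. In that form the claim is false. Take the ad-isomorphism $G=H\times\mathbb{G}_m\to H$ with $\mu=(\mu_H,\mu_2)$, which has the same reflex field. Then $\operatorname{Gr}_{H,\mu_H}\times_{\operatorname{Bun}_H}\operatorname{Bun}_G$ contains a copy of $\operatorname{Gr}_{H,\mu_H}\times[\ast/\ul{\qp^\times}]$ for every degree, whereas $\ul{H(\qp)}\times^{\ul{G(\qp)}}\operatorname{Gr}_{G,\mu}$ contains only one.

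Your case analysis does not close the gap. In the injective case, the fiber of $\operatorname{Bun}_G\to\operatorname{Bun}_H$ over the trivial bundle is $\ul{D(\qp)}$ (sections of $H/G=D$). The sequence $1\to G(\qp)\to H(\qp)\to D(\qp)\to 1$ need not be right exact, since its cokernel is $\ker(H^1(\qp,G)\to H^1(\qp,H))$, so \Cref{Prop:LocProfExact} does not apply. The extra points are nontrivial basic $G$-bundles with nonzero torsion Kottwitz invariant, and only the $\mu$-condition rules them out. The central case is likewise left ``up to the $H^1(\qp,Z')$ discrepancy''. The missing argument is short, and it is how the paper adapts the last step of Kim's proof. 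First, $f(\nu_b)=\nu_{f(b)}$ is trivial, so $\nu_b$ is trivial in $H^{\mathrm{ad}}=G^{\mathrm{ad}}$ and $b$ is basic. Second, $\mathcal{E}'$ arises from a point of $\operatorname{Bun}_{G,\mu}$ by undoing a modification of type $\mu$, so $\kappa_G(b)=0$. A basic class with trivial Kottwitz invariant is trivial. Since $\operatorname{Bun}_G^{[1]}$ is open (it is the semistable locus of the $\kappa_G=0$ component), this pointwise check suffices. After it, the fiber is formally $\ul{H(\qp)}/\ul{G(\qp)}$ and the factorization through $G/Z'$ is unnecessary. Finally, ``check geometric points and automorphism groups, then v-descend'' is not by itself a criterion for an isomorphism of v-stacks. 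Construct the inverse on $S$-points as above, and use geometric points only for this open condition.
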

\begin{proof}
The proof of \cite[Lemma 6.4]{KimFunctorial} works almost verbatim. To show that $[b]$ is trivial as in the last paragraph of the proof, we can instead argue that the Newton cocharacter $\nu_{b}$ has trivial image in $H$ and thus in $\gad$ since $G \to H$ is an ad-isomorphism. The rest of the argument works to show that $[b]$ is trivial. 
\end{proof}

\subsubsection{Local preparations II}
There is a left action of $\gad$ on the algebraic group $G$ given by
conjugation, and hence also a left action of $\gad(\qp)$ on $G(\qp)$. For
$i=2,3$ we consider the locally profinite group and the closed normal subgroup
\[
  G(\qp)^i \rtimes \gad(\qp) \supseteq \lbrace (\Delta^i(g), g^{-1}) : g \in
  G(\qp) \rbrace,
\]
where $\gad(\qp)$ acts on $G(\qp)^i$ diagonally and where $\Delta^i \colon
G(\qp) \to G(\qp)^i$ is the diagonal inclusion. We see that the quotient fits in
a short exact sequence
\[
  1 \to G(\qp)^{i-1} \xrightarrow{\underline{y} \mapsto ((1, \underline{y}), 1)}
  \frac{G(\qp)^i \rtimes \gad(\qp)}{\lbrace (\Delta^i(g), g^{-1}) : g \in G(\qp)
  \rbrace} \xrightarrow{((x, \underline{y}), z) \mapsto xz} \gad(\qp) \to 1
\]
of locally profinite groups. We will denote the locally profinite group in the
middle of the above short exact sequence by $\Sigma^i(G)$.

\begin{Lem}
  Consider the natural $\underline{G(\qp)}^2$-action on $\grg \times_{\bung}
  \grg$ as well as the $\underline{\gad(\qp)}$-action on $\grg \times_{\bung}
  \grg$ induced by the conjugation action on $G$. The two actions combine into an
  action of $\Sigma^2(G)$ on $\grg \times_{\bung} \grg$. Similarly, consider the natural $\underline{G(\qp)}^3$-action on $\grg \times_{\bung}
  \grg \times_{\bung} \grg$ as well as the $\underline{\gad(\qp)}$-action on $\grg \times_{\bung}
  \grg \times_{\bung} \grg$ induced by conjugation action on $G$. The two actions combine into an
  action of $\Sigma^3(G)$ on $\grg \times_{\bung} \grg \times_{\bung} \grg$.
\end{Lem}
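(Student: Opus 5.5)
The plan is to first make the two actions explicit and then check that they satisfy the relations defining $\Sigma^i(G)$. Everything is done at the level of v-sheaves on $\perf$, so it suffices to describe functorial actions on $S$-points compatible with pullback. An $S$-point of $\grg \times_{\bung} \grg$ consists of two points $(x_1,x_2)$ of $\grg$, i.e.\ two modifications $\mathcal{E}_1,\mathcal{E}_2$ of the trivial $G$-bundle at the untilt, together with an isomorphism $\gamma\colon \mathcal{E}_1 \cong \mathcal{E}_2$ of $G$-bundles on $\XFF(S)$.

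\emph{The two actions.} The group $\ul{G(\qp)}^2$ acts by $(h_1,h_2)\cdot(x_1,x_2,\gamma) = (h_1x_1, h_2x_2,\gamma)$. This uses that $\mathrm{BL}$ is $\ul{G(\qp)}$-invariant, with $\mathrm{BL}(hx)\cong\mathrm{BL}(x)$ canonically through the automorphism $h$ of the trivial bundle; hence $\gamma$ transports canonically. For $g\in\gad(\qp)$, conjugation gives an automorphism $c_g$ of the $\qp$-group $G$. It therefore acts on $\grg$, on $\bung$ by pushout of torsors, and on isomorphisms between torsors, so it acts on the fiber product by functoriality. Together these give an action of the semidirect product $\ul{G(\qp)^2}\rtimes\ul{\gad(\qp)}$. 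To verify the semidirect product relation, check that $c_g(h x) = c_g(h)\, c_g(x)$ on $\grg$ and that the transported isomorphisms agree. Both follow from the naturality of the Beauville--Laszlo gluing in the group.

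\emph{Killing the normal subgroup.} This is the key step: show that $(\Delta^2(g), g^{-1})$ acts trivially for $g\in G(\qp)$. Let $\bar g$ be the image of $g$ in $\gad(\qp)$. Then $c_{\bar g^{-1}}$ is inner conjugation by $g^{-1}$, which on a $G$-torsor $\mathcal{E}$ is realised by the torsor isomorphism "right multiplication by $g$", $\mathcal{E}\cong c_{g^{-1},*}\mathcal{E}$. On $\grg$ this identifies $c_{g^{-1}}(x)$ with $g^{-1}x$. The point is that a torsor modification is sent by the inner automorphism to the translated modification of the trivial torsor.

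Applying this to both factors and to $\gamma$ simultaneously shows that $c_{g^{-1}}(x_1,x_2,\gamma) = (g^{-1}x_1, g^{-1}x_2, \gamma)$ as points of the fiber product. This is an equality of points rather than a mere isomorphism, since $\grg$ is a sheaf and the fiber product of sheaves over a stack is a sheaf. Composing with $\Delta^2(g)$ then gives the identity. Continuity is automatic, because everything is defined by maps of v-sheaves $\ul{\Sigma}\times Y\to Y$. The subgroup is normal and the quotient $\ul{\Sigma^2(G)}$ is the v-sheaf quotient by \Cref{Prop:LocProfExact}, so the action descends.

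\emph{The main obstacle and the case $i=3$.} The main obstacle is the compatibility of the inner-automorphism identification with the isomorphism $\gamma$. One must check that the identification $c_{g^{-1},*}\mathcal{E}\cong\mathcal{E}$ is natural in $\mathcal{E}$, so that it commutes with $\gamma$. I would verify this directly, by writing $\mathcal{E}$ as a glued torsor and observing that right multiplication by the constant $g$ commutes with every morphism of $G$-torsors after twisting by $c_g$. The triple fiber product $\grg\times_{\bung}\grg\times_{\bung}\grg$ is treated identically, now with two isomorphisms $\gamma_{12},\gamma_{23}$ and the diagonal $\Delta^3$.
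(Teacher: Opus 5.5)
Your proposal is correct and follows essentially the same route as the paper. You first assemble an action of $\ul{G(\qp)^i}\rtimes\ul{\gad(\qp)}$ by checking the semidirect-product relation. You then show that the subgroup $\{(\Delta^i(g),g^{-1})\}$ acts trivially, using the natural isomorphism ${}^g\mathscr{P}\cong\mathscr{P}$ (the paper's $\tau_g$, which is $r_g$ on the trivial torsor) together with its naturality with respect to the isomorphism between the two torsors; your extra remarks that the fiber product is a sheaf and that the action descends to $\ul{\Sigma^i(G)}$ via \Cref{Prop:LocProfExact} simply make explicit details the paper leaves implicit.
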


\begin{proof}
  We first combine the two actions into an action of $\underline{G(\qp)^2
  \rtimes \gad(\qp)}$. To show this, we need to check that first
  acting by $(g, h) \in \underline{G(\qp)}^2$ and then acting by $\gamma \in
  \underline{\gad(\qp)}$ is the same as first acting by $\gamma$ and then by
  $({}^\gamma g, {}^\gamma h) \in \underline{G(\qp)}^2$. Given a triple $(x, y,
  \alpha)$ where $x \in \grg\pair[1]$, $y \in \grg\pair[2]$ and $\alpha \colon
  \mathscr{P}_x \cong \mathscr{P}_y$ is an isomorphism of $G$-torsors on
  $\ffcurve\upair$, acting by $(g, h)$ yields the triple $(gx, hy, r_{h^{-1}}
  \alpha r_g)$. Then acting by $\gamma$ yields $({}^\gamma g {}^\gamma x,
  {}^\gamma h {}^\gamma y, r_{{}^\gamma h^{-1}} {}^\gamma \alpha r_{{}^\gamma
  g})$. This is the same as first acting by $\gamma$ and then acting by
  $({}^\gamma g, {}^\gamma h)$.

  We next show that the subgroup $\lbrace ((g^{-1}, g^{-1}), g) \rbrace$ acts
  trivially. We first note that for $g \in \underline{G(\qp)}\upair$ and
  $\mathscr{P}$ a $G$-torsor on $\ffcurve\upair$, there is a canonical
  isomorphism
  \[
    \tau_g \colon {}^{g} \mathscr{P} = G \times^{\mathrm{conj}_g,G}
    \mathscr{P} \xrightarrow{\cong} \mathscr{P}; \quad (x, p) \mapsto xgp
  \]
  of $G$-torsors. We now consider a point $(x, y, \alpha) \in (\grg
  \times_{\bung} \grg)\upair$ as above. By naturality of $\tau$, there is a
  commutative diagram
  \[ \begin{tikzcd}
    G \vert_{\ffcurve\upair} \arrow[dashed]{r}{x} & \mathscr{P}_ x
    \arrow{r}{\alpha} & \mathscr{P}_ y & G \vert_{\ffcurve\upair}
    \arrow[dashed]{l}[']{y} \\ G \vert_{\ffcurve\upair} \arrow[dashed]{r}{{}^g
    x} \arrow{u}{\tau_g} & {}^g \mathscr{P}_ x \arrow{r}{{}^g \alpha}
    \arrow{u}{\tau_g} & {}^g \mathscr{P}_ y \arrow{u}{\tau_g} & G
    \vert_{\ffcurve\upair}. \arrow[dashed]{l}[']{{}^g y} \arrow{u}{\tau_g}
  \end{tikzcd} \]
  On the other hand, we see from the formula that $\tau_g$ on the trivial
  $G$-torsor $G \vert_{\ffcurve\upair}$ agrees with right multiplication $r_g$
  by $g$. Because the composition
  \[ \begin{tikzcd}
    G \vert_{\ffcurve\upair} \arrow{r}{r_{g^{-1}}} & G \vert_{\ffcurve\upair}
    \arrow[dashed]{r}{x} & \mathscr{P}_ x
  \end{tikzcd} \]
  recovers $g^{-1}x \in \grg\pair[1]$, the action of $((g^{-1}, g^{-1}), g)$ on
  $(x, y, \alpha)$ recovers $(x, y, \alpha)$.

  The statement for $\Sigma^3(G)$ follows from a similar argument.
\end{proof}

\subsubsection{}
Recall that there is an action of $\underline{\ag}$ on the v-sheaf $\shd$. There
is a homomorphism of locally profinite groups
\[
  \ag = \biggl( \frac{\g(\af)}{Z_\mathsf{G}(\mathbb{Q})^-} \rtimes
  \Gad(\mathbb{Q})^1 \biggr) \bigg/ \frac{\g(\mathbb{Q})}{Z_\g(\mathbb{Q})} \to
  \gad(\qp); \quad (x, y) \mapsto xy.
\]
We observe that the Hodge--Tate period map
\[
  \pi_\mathrm{HT} \colon \shd \to \grgmu
\]
is equivariant with respect to this group homomorphism. It follows that the v-sheaf
\begin{align*}
  \shd_{(2)}&:=\shd \times_{\bungmu} \grgmu \\
  &= \shd \times_{\grgmu} (\grgmu \times_{\bungmu} \grgmu)
\end{align*}
has an action of the locally profinite group
\[
  \Xi^2(\g) = \ag \times_{\gad(\qp)} \Sigma^2(G).
\]
This group fits in a short exact sequence
\[
  1 \to G(\qp) \xrightarrow{\mathrm{incl}_2} \Xi^2(\g)
  \xrightarrow{\mathrm{pr}_1} \ag \to 1
\]
of locally profinite groups. Similarly, the v-sheaf
\[
  \shd_{(3)}:=\shd \times_{\bungmu} \grgmu \times_{\bungmu} \grgmu
\]
has an action of the locally profinite group $\Xi^3(\g)$ defined as
\[
  1 \to G(\qp)^2 \xrightarrow{\mathrm{incl}_2} \Xi^3(\g) = \ag
  \times_{\gad(\qp)} \Sigma^3(G) \xrightarrow{\mathrm{pr}_1} \ag \to 1.
\]

\begin{Lem} \label{Lem:PushoutRelation}
  For $i = 2$ or $i = 3$, consider the natural map
  \[
    \shd_{(i)} \to \shtwod_{(i)}
  \]
  which is equivariant with respect to the natural group homomorphism $\Xi^i(\g)
  \to \Xi^i(\h)$ by construction. The induced map
  \[
    [\underline{\Xi^i(\g)} \backslash \shd_{(i)}] \to
    [\underline{\Xi^i(\h)} \backslash \shtwod_{(i)}]
  \]
  is an isomorphism of v-stacks. In other words, there exists an
  $\underline{\Xi^i(\h)}$-equivariant isomorphism
  \[
    \underline{\Xi^i(\h)} \times^{\underline{\Xi^i(\g)}} \shd_{(i)}
    \xrightarrow{\cong} \shtwod_{(i)}
  \]
\end{Lem}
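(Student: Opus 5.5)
The plan is to compute both quotient stacks in two stages: first quotient by the normal subgroup $\ul{G(\qp)}^{i-1}$, then by $\ul{\ag}$. The comparison at the first stage uses Lemma \ref{Lem:CartesianBL}; at the second it uses Corollary \ref{Cor:PushoutShimuraII}, or Proposition \ref{Prop:PushoutShimura} for $?=\emptyset$. Throughout, a map of quotient stacks $[\ul{\Xi^i(\g)}\backslash X]\to[\ul{\Xi^i(\h)}\backslash Y]$ is an isomorphism exactly when the induced map $\ul{\Xi^i(\h)}\times^{\ul{\Xi^i(\g)}}X\to Y$ is. So the two formulations in the statement are equivalent, and it suffices to prove the quotient-stack isomorphism.

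\textbf{Step 1: quotient by $\ul{G(\qp)}^{i-1}$.} The short exact sequence $1\to G(\qp)^{i-1}\to\Xi^i(\g)\to\ag\to1$ from Proposition \ref{Prop:LocProfExact} gives
\[
[\ul{\Xi^i(\g)}\backslash \shd_{(i)}]\cong\bigl[\ul{\ag}\backslash [\ul{G(\qp)}^{i-1}\backslash\shd_{(i)}]\bigr].
\]
The subgroup $\ul{G(\qp)}^{i-1}$ acts only on the $\grgmu$ factors. Hence
\[
[\ul{G(\qp)}^{i-1}\backslash\shd_{(i)}]\cong \shd\times_{\bungmu}[\ul{G(\qp)}\backslash\grgmu]\times_{\bungmu}\cdots,
\]
with $i-1$ factors $[\ul{G(\qp)}\backslash\grgmu]$.

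Since $F=E$, the classes $\mu^{-1}$ and $\mu_H^{-1}$ have the same reflex field, so Lemma \ref{Lem:CartesianBL} applies. It identifies $[\ul{G(\qp)}\backslash\grgmu]$ with $\bungmu\times_{\bun_{H,\mu_H^{-1}}}[\ul{H(\qp)}\backslash\mathrm{Gr}_{H,\mu_H^{-1}}]$. Substituting this gives
\[
[\ul{G(\qp)}^{i-1}\backslash\shd_{(i)}]\cong \shd\times_{\bun_{H,\mu_H^{-1}}}[\ul{H(\qp)}\backslash\mathrm{Gr}_{H,\mu_H^{-1}}]^{\times_{\bun_H}(i-1)},
\]
and the analogous identity holds for $\h$ with $\shtwod$ in place of $\shd$.

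\textbf{Step 2: quotient by $\ul{\ag}$.} Write $W$ for $[\ul{H(\qp)}\backslash\mathrm{Gr}_{H,\mu_H^{-1}}]^{\times_{\bun_H}(i-1)}$. The group $\ul{\ag}$ acts on $W$ only through $\ag\to\gad(\qp)=H^{\mathrm{ad}}(\qp)$, by conjugation, and likewise $\ul{\ah}$ acts through $\ah\to H^{\mathrm{ad}}(\qp)$. I would then show that the quotient of $\shd\times_{\bun_H}W$ by the diagonal $\ul{\ag}$-action is the fiber product of $[\ul{\ag}\backslash\shd]$ and $[\ul{H^{\mathrm{ad}}(\qp)}\backslash W]$ over $[\ul{H^{\mathrm{ad}}(\qp)}\backslash\bun_H]$. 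The same description holds for $\h$. This reduces the lemma to the isomorphism $[\ul{\ag}\backslash\shd]\cong[\ul{\ah}\backslash\shtwod]$ of stacks over $[\ul{H^{\mathrm{ad}}(\qp)}\backslash\bun_H]$. That isomorphism is exactly Corollary \ref{Cor:PushoutShimuraII} (resp. Proposition \ref{Prop:PushoutShimura}), transported to v-sheaves over $E$. Compatibility with the maps down to $[\ul{H^{\mathrm{ad}}(\qp)}\backslash\bun_H]$ holds because the period maps are equivariant along $\ag\to\ah\to H^{\mathrm{ad}}(\qp)$.

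\textbf{Main obstacle.} I expect the hardest part to be the $2$-categorical bookkeeping in Step 2, namely making sense of the action of $H^{\mathrm{ad}}(\qp)$ on the stack $\bun_H$ and on $W$. The action on $\bun_H$ is by twisting torsors via conjugation. It is only a $2$-action, because inner elements act through the canonical isomorphisms $\tau_g$ from the preceding lemma. My first attempt would avoid this by never forming $[\ul{H^{\mathrm{ad}}(\qp)}\backslash\bun_H]$. Instead I would use the relation of the preceding lemma directly: show that the map $\ul{\Xi^i(\h)}\times^{\ul{\Xi^i(\g)}}\shd_{(i)}\to\shtwod_{(i)}$ is a v-local isomorphism by pulling back along the v-cover $\shtwod\to[\ul{\ah}\backslash\shtwod]$. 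On the resulting fibers the statement becomes the $0$-truncated identification of Step 1, and equivariance of the $\tau_g$ takes care of the inner part.
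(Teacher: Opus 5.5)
Your proposal is correct and is essentially the paper's proof. Step 1 is exactly the Cartesian square the paper obtains from Lemma \ref{Lem:CartesianBL}: your $\shd\times_{\bun_H}W$ is the base change of $\shd\to\shtwod$. Step 2 is the paper's passage to $\ul{\ag}$- and $\ul{\ah}$-quotients, followed by Proposition \ref{Prop:PushoutShimura} and Corollary \ref{Cor:PushoutShimuraII}.

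The detour through $[\ul{H^{\mathrm{ad}}(\qp)}\backslash\bun_H]$ is unnecessary. Since the Step 1 square is equivariant for $\ag\to\ah$, the induced square of quotient stacks is Cartesian, and this can be checked after pulling back along the v-cover $\shd\to[\ul{\ag}\backslash\shd]$. That is what the paper's ``formally follows'' amounts to. It is also slightly cleaner than your pullback along $\shtwod$, because no twisting by elements of $\ah$ or the maps $\tau_g$ has to be tracked.
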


\begin{proof}
  Write $\grgmug=[\underline{G(\qp)} \backslash \grgmu]$ and similarly for $H$.
  Because $G(\qp) \subseteq \Xi^2(\g)$ is the subgroup consisting of elements of
  the form $((1, 1), ((1, y), 1))$, we naturally identify
  \[
    \lbrack \underline{G(\qp)} \backslash (\shd \times_{\bung} \grgmu) \rbrack =
    \shd \times_{\bung} \grgmug,
  \]
  and moreover this v-sheaf has an induced $\underline{\ag}$-action. By
  \Cref{Lem:CartesianBL} the natural diagram
  \[ \begin{tikzcd}[row sep=small]
    \shd \times_{\bung} \grgmug \arrow{r}{\mathrm{pr}_1} \arrow{d} & \shd
    \arrow{d} \\ \shtwod \times_{\bunh} \grgtwomug \arrow{r}{\mathrm{pr}_1} &
    \shtwod
  \end{tikzcd} \]
  is Cartesian. It is also clear from the construction that this is equivariant
  with respect to the actions of $\ag \to \ah$. It then formally follows that
  \[ \begin{tikzcd}[row sep=small]
    \lbrack \underline{\ag} \backslash (\shd \times_{\bung} \grgmug) \rbrack
    \arrow{r}{\mathrm{pr}_1} \arrow{d} & \lbrack \underline{\ag} \backslash \shd
    \rbrack \arrow{d} \\ \lbrack \underline{\ah} \backslash (\shtwod
    \times_{\bunh} \grgtwomug) \rbrack \arrow{r}{\mathrm{pr}_1} & \lbrack
    \underline{\ah} \backslash \shtwod \rbrack
  \end{tikzcd} \]
  is also Cartesian. Because the right vertical map is an isomorphism by
  \Cref{Prop:PushoutShimura} and \Cref{Cor:PushoutShimuraII}, so is the left
  vertical map. On the other hand, it follows from the short exact sequence
  \[
    1 \to G(\qp) \to \Xi^2(\mathsf{G}) \to \ag \to 1
  \]
  that the left vertical map can be identified with
  \[
    [\Xi^2(\mathsf{G}) \backslash \shd_{(2)}] \to [\Xi^2(\mathsf{H}) \backslash
    \shtwod_{(2)}].
  \]
  The argument for $i = 3$ is similar.
\end{proof}

\subsubsection{}
We define a second continuous group homomorphism $\xi_\g \colon \Xi^2(\g) \to
\ag$. Consider the continuous map
\begin{align*}
  (\g(\af) \rtimes \Gad(\mathbb{Q})^1) \times_{\gad(\qp)} (G(\qp)^2 \rtimes
  \gad(\qp)) &\to (\g(\af) \rtimes \Gad(\mathbb{Q})^1), \\
  (((a^p, a_p), b), ((x, y), z)) &\mapsto ((a^p, y x^{-1} a_p), b),
\end{align*}
which is a group homomorphism because
\begin{align*}
  y x^{-1} a_p\, {}^b (y^\prime x^{\prime-1} a_p^\prime) &= yx^{-1}\, {}^{a_p
  b}(y^\prime x^{\prime-1})\, a_p {}^b a_p^\prime = yx^{-1}\, {}^{xz}(y^\prime
  x^{\prime-1})\, a_p {}^b a_p^\prime \\ &= y\, {}^z(y^\prime x^{\prime-1})
  x^{-1}\, a_p {}^b a_p^\prime = (y\, {}^z y^\prime) (x\, {}^z x^\prime)^{-1}
  (a_p {}^b a_p^\prime)
\end{align*}
as $a_p b = x z \in \gad(\qp)$. This sends the closed normal subgroup
\[
  ((Z_\g(\mathbb{Q})^- \rtimes \{1\}) \cdot \lbrace (g, g^{-1}) : g \in
  \g(\mathbb{Q}) \rbrace) \times \lbrace ((g,g), g^{-1}) : g \in G(\qp) \rbrace
\]
on the left hand side to the closed normal subgroup
\[
  (Z_\g(\mathbb{Q})^- \rtimes \{1\}) \cdot \lbrace (g, g^{-1}) : g \in
  \g(\mathbb{Q}) \rbrace
\]
on the right hand side. Taking quotients on both sides, we obtain a continuous
group homomorphism
\[
  \xi_\g \colon \Xi^2(\g) \to \ag.
\]

\begin{Lem} \label{Lem:EquivarianceLemma}
  The global uniformization map
  \[
    \Theta \colon \shd \times_{\bungmu} \grgmu \to \shd
  \]
  is equivariant with respect to the group homomorphism $\xi_\g \colon \Xi^2(\g)
  \to \ag$.
\end{Lem}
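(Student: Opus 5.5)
We check equivariance separately on a generating set of $\Xi^2(\g)$, using the properties of $\Theta$ from Definition \ref{Def:UniversalUniformization}. Elements of $\Xi^2(\g)$ are represented by pairs $(((a^p,a_p),b),((x,y),z))$ with $a_pb=xz$ in $\gad(\qp)$. Modulo the relations, this group is generated by three kinds of elements.
\begin{enumerate}
\item Elements $((a^p,a_p),1)$ paired with $((a_p,1),1)$, that is, $\g(\af)$ acting on the $\shd$ factor together with $G(\qp)$ acting on the first $\grgmu$ factor.
\item Elements of $G(\qp)$ acting on the second factor, i.e.\ $((1,1),((1,y),1))$.
\item Elements of $\Gad(\mathbb{Q})^1$, paired with their image in $\gad(\qp)$ acting by conjugation.
\end{enumerate}
Because $\xi_\g$ is a continuous homomorphism and the actions are continuous, it suffices to check equivariance on each type. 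Since $\Theta$ and the actions are maps of v-sheaves, we may verify this on $(R,R^+)$-points.

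For types (1) and (2), equivariance is precisely condition (1) of Definition \ref{Def:UniversalUniformization}. There $\Theta$ is $\g(\af)\times G(\qp)$-equivariant via $\mathrm{pr}_{13}$: the $\g(\afp)$-component and the second $G(\qp)$-factor act on the target, and the first $G(\qp)$-factor is forgotten. On generators, $\xi_\g$ sends $((a^p,a_p),1)\times((a_p,1),1)$ to $(a^p,a_p^{-1}a_p)=(a^p,1)$. It sends $((1,y),1)$ to $(1,y)$. These match $\mathrm{pr}_{13}$, noting that the $\g(\af)$-action on the source $\shd\times_{\bungmu}\grgmu$ acts diagonally on $\shd$ and on the first copy of $\grgmu$ through $\pi_{\mathrm{HT}}$. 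The bookkeeping here is routine, and we must also confirm that the closed normal subgroups quotiented out act compatibly, which is exactly what the construction of $\xi_\g$ guarantees.

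The main obstacle is type (3), equivariance for $\Gad(\mathbb{Q})^1$, since this is not part of the definition of a global uniformization. We would use Theorem \ref{Thm:IgusaStackKim}(iii) (functoriality) together with (ii) (uniqueness). For $\gamma\in\Gad(\mathbb{Q})^1$, conjugation gives an automorphism of Shimura data $\gx\to\gx$, inducing the action of $\gamma$ on $\shd$ and, via $G\to G$, on $\grgmu$ and $\bungmu$. Functoriality yields a commutative square
\[ \Theta\circ(\gamma\times\gamma)=\gamma\circ\Theta . \]
One subtlety is that Theorem \ref{Thm:IgusaStackKim}(iii) is stated for morphisms of Shimura data. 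Here the induced map on $\bung$ is conjugation by $\gamma$, which is only canonically isomorphic to an automorphism after choosing a lift; this is where the twisting by $\Sigma^2(G)$ enters. Concretely, if $\gamma$ lifts to $g\in G(\qp)$ locally, conjugation by $g$ on $\grgmu\times_{\bungmu}\grgmu$ agrees with the action of $((g,g),g^{-1})$-type elements, using $\tau_g$ as in the preceding lemma. Applying $\xi_\g$ then gives $(1,gg^{-1}\cdot)$, so the second-factor compensation cancels correctly.

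We would therefore (a) check that the square from Theorem \ref{Thm:IgusaStackKim}(iii) for the automorphism $\gamma$ is compatible with the identification of the $\gad(\qp)$-action, and (b) combine the three cases. If the functoriality statement does not directly apply to automorphisms acting nontrivially on $\bung$, the alternative is to use uniqueness. The conjugate $\gamma^{-1}\circ\Theta\circ(\gamma\times\gamma)$ again satisfies conditions (1)--(3) of Definition \ref{Def:UniversalUniformization}, and these conditions are easily verified because $\gamma$ normalizes $\g(\af)$ and $G(\qp)$ compatibly with the $\mathrm{pr}_{13}$ structure. By Theorem \ref{Thm:IgusaStackKim}(ii) the conjugate then equals $\Theta$, which proves equivariance for type (3).
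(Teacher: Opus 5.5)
Your proposal is correct and follows essentially the paper's proof. After reducing modulo the trivially acting elements $((g,g),g^{-1})$, the paper splits an element of $\Xi^2(\g)$ into a Hecke part, handled by the equivariance axiom of Definition~\ref{Def:UniversalUniformization}, and a $\Gad(\mathbb{Q})^1$ part, handled by functoriality of $\Theta$ for the morphism of Shimura data $\operatorname{ad}(b)\colon \gx \to \gx$ (via \cite[Proposition~10.12]{KimFunctorial}). Your digression about lifting $\gamma$ to $G(\qp)$ is unnecessary, and such lifts need not exist even locally since $G(\qp)\to\gad(\qp)$ need not be surjective: $\operatorname{ad}(\gamma)$ induces an honest automorphism of $\bung$, and the $\gad(\qp)$-action on $\grgmu\times_{\bungmu}\grgmu$ is defined through it, so functoriality (or your uniqueness argument) applies directly.
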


\begin{proof}
  We need to check equivariance with respect to an arbitrary
  \[
    (((a^p, a_p), b), ((x, y), z)) \in \underline{(\g(\af) \rtimes
    \Gad(\mathbb{Q})^1) \times_{\gad(\qp)} (G(\qp)^2 \rtimes \gad(\qp))}\pair.
  \]
  Because the subgroup $\lbrace 1 \rbrace \times \lbrace ((g, g), g^{-1}) : g
  \in G(\qp) \rbrace$ acts trivially on both sides, we may reduce to the case
  when $x = a_p$. Note that this implies $b \mapsto z \in \gad(\qp)$.

  We may now write
  \[
    (((a^p, a_p), b), ((x, y), z)) = (((a^p, a_p), 1), ((a_p, y), 1)) \cdot
    (((1,1), b), ((1,1), z)),
  \]
  and check equivariance for both elements separately. For $b \in
  \underline{\Gad(\mathbb{Q})^1}\pair$, equivariance with respect to $((1,b),
  (1,b)) \mapsto (1, b)$ follows from functoriality of $\Theta$ for the morphism
  of Shimura data
  \[
    \operatorname{ad}(b) \colon \gx \to \gx,
  \]
  see \cite[Proposition~10.12]{KimFunctorial}. On the other hand, equivariance
  with respect to $(((a^p, a_p), 1), ((a_p, y), 1)) \mapsto ((a^p, y), 1)$ is
  the axiom for the Hecke action.
\end{proof}

\subsubsection{} \label{Sec:XiGCocycle}
The map $\xi_\mathsf{G} \colon \Xi^2(\mathsf{G}) \to \ag$ fits in the
commutative diagram
\[ \begin{tikzcd}
  \ag \arrow[equals]{rr} \arrow{d}[']{(a,b) \mapsto ((a,b),((1,1),a_pb))} &
  &[2.5em]
  \ag \arrow{d}{(x,y) \mapsto xy} \\ \Xi^2(\mathsf{G})
  \arrow{rru}{\xi_\mathsf{G}} \arrow{r} & \Sigma^2(G) \arrow{r}{((x,y),z)
  \mapsto yz} & \gad(\qp)
\end{tikzcd} \]
of locally profinite groups. We also see that the diagram
\[ \begin{tikzcd}
  \Xi^3(\mathsf{G}) \arrow{d}{((a,b), ((x,y,z),w)) \mapsto ((a,b), ((x,z),w))}
  \arrow{r}{((a,b), ((x,y,z),w)) \mapsto ((a^p, yx^{-1}a_p, b), ((y,z), w))}
  &[12em] \Xi^2(\mathsf{G}) \arrow{d}{\xi_\mathsf{G}} \\ \Xi^2(\mathsf{G})
  \arrow{r}{\xi_\mathsf{G}} & \ag
\end{tikzcd} \]
commutes.

We are now ready to prove \Cref{Thm:DescendingIgusaStacks}. 

\begin{proof}[Proof of \Cref{Thm:DescendingIgusaStacks}]
  Using \Cref{Lem:EquivarianceLemma} and \Cref{Lem:PushoutRelation}, we can push
  both sides of $\Theta$ out along the vertical maps of the commutative diagram
  \[ \begin{tikzcd}[row sep=small]
    \Xi^2(\g) \arrow{r}{\xi_\g} \arrow{d} & \ag \arrow{d} \\ \Xi^2(\h)
    \arrow{r}{\xi_{\h}} & \ah
  \end{tikzcd} \]
  to obtain
  \[
    \Theta^\prime \colon \shtwod \times_{\bunh} \grgtwomu \to \shtwod.
  \]
  By \cite[Proposition~5.12]{KimFunctorial}, to prove the theorem, it suffices
  to check that $\Theta^\prime$ is indeed a global uniformization map for $\hy$
  in the sense of \cite[Definition~5.10]{KimFunctorial}. By construction, the
  map $\Theta^\prime$ is equivariant with respect to the group homomorphism
  $\xi_{\h} \colon \Xi^2(\h) \to \ah$. Because the diagram
  \[ \begin{tikzcd}[column sep=8em]
    \h(\af) \times H(\qp) \arrow{r}{\mathrm{pr}_{13} \colon (a, y) \mapsto (a^p,
    y)} \arrow{d}{(a, y) \mapsto ((a, 1), ((a_p, y), 1))} & \h(\af) \arrow{d}{a
    \mapsto (a, 1)} \\ \Xi^2(\h) \arrow{r}{\xi_{\h}} & \ah
  \end{tikzcd} \]
  commutes, we see that $\Theta^\prime$ is also equivariant with respect to
  $\mathrm{pr}_{13} \colon \h(\af) \times H(\qp) \to \h(\af)$.

  We now check that the diagrams
  \begin{equation}
    \begin{tikzcd}
      \shtwod \arrow[r, equals] \arrow{d}{(\mathrm{id}, \pi_\mathrm{HT})} &
      \shtwod \arrow{d}{\pi_\mathrm{HT}} \\ \shtwod_{(2)}
      \arrow{r}{\mathrm{pr}_2} \arrow{ur}[']{\Theta^\prime} & \grgtwomu
    \end{tikzcd}
    \begin{tikzcd}
      \shtwod_{(3)} \arrow{r}{\Theta^\prime \times \mathrm{id}}
      \arrow{d}{\mathrm{pr}_{13}} & \shtwod_{(2)} \arrow{d}{\Theta^\prime} \\
      \shtwod_{(2)} \arrow{r}{\Theta^\prime} & \shtwod
    \end{tikzcd}
  \end{equation}
  commute. These two diagrams are compatible with the corresponding diagrams for
  $\gx$, which commute by \cite[Proposition~5.12]{KimFunctorial}. The
  commutativity of the diagrams for $\hy$ follows formally once we observe that
  the natural maps
  \[
    \ul{\ah} \times^{\ul{\ag}} \shtwod \to \shtwod, \quad \ul{\Xi^i(\h)}
    \times^{\ul{\Xi^i(\g)}} \shtwod_{(i)} \to \shtwod_{(i)}
  \]
  are isomorphisms. Indeed, they can be obtained by considering corresponding
  diagrams for $\gx$, and then pushing out along the diagrams of
  \Cref{Sec:XiGCocycle} mapping to the corresponding diagrams for $\mathsf{H}$. 

  Finally, for the absolute Frobenius, we note that the isomorphism
  $\phi_{\bung} \cong \id_{\bung}$ is functorial in $G$, and hence the action of
  $\phi \times \id$ on $\shd_{(2)}$ pushes out to the action of $\phi \times
  \id$ on $\shtwod_{(2)}$. Because $\Theta$ is assumed to be $\phi \times
  \id$-invariant, we conclude that $\Theta^\prime$ is also $\phi \times
  \id$-invariant.
\end{proof}

\begin{Rem}
  The two maps $\mathrm{pr}_1, \xi_\g \colon \Xi^2(\g) \to \ag$ identify the
  sheaf $\ul{\Xi^2(\g)}$ with the fiber product $\ul{\ag}
  \times_{\apg} \ul{\ag}$ where we define $\apg =
  \ul{\ag}/\ul{G(\qp)}$.\footnote{Note that $G(\qp)$ is not typically a closed subgroup of $\ag$, so it is better to define the quotient in the world of v-sheaves (or condensed groups).} Then when we push out the \v{C}ech nerve of $\shd \to
  \igs\gx$ with the action of the \v{C}ech nerve of $\ul{\ag} \to \apg$ to
  $\ul{\ah} \to \aph$ we see that the induced map
  \[
    \aph \times^{\apg} \igs^?\gx \to \igs^?\hy
  \]
  is an isomorphism. This justifies the heuristic description given after the
  statement of \Cref{Thm:DescendingIgusaStacks}.
\end{Rem}

{\subsection{Finishing the construction} \label{sub:ConclusionConstruction} In this section we prove the following theorem.
\begin{Thm} \label{Thm:MainThmIgusa}
Fix $? \in \{\circ, \emptyset\}$. Let $\gxtwo$ be a Shimura datum of abelian type and let $v_2$ be a place of the reflex field $\mathsf{E}_2$ above a rational prime $p$. Then $\gxtwo$ admits an Igusa stack for $?$ at the place $v_2$. 
\end{Thm}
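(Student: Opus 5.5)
We combine Theorem \ref{Thm:AscendingIgusaStacks} and Theorem \ref{Thm:DescendingIgusaStacks} along a diagram $\gxtwo \leftarrow \gxthree \rightarrow \gx$ as in the introduction. The left arrow is an ad-isomorphism, the right arrow induces an isomorphism on derived groups, and $\gx$ is of Hodge type. The main input for $\gx$ is \cite[Theorem D]{KimFunctorial} (for $?=\emptyset$) and \cite[Theorem I]{DvHKZIgusaStacks} or \cite[Theorem D]{KimFunctorial} (for $?=\circ$). These give an Igusa stack for $\gx$ at every place of its reflex field above $p$.

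\textbf{Step 1: choosing the auxiliary data.} Start from the abelian type structure on $\gxtwo$: an auxiliary Hodge-type datum $(\gx,\Xi)$ with $\g^{\mathrm{der}} \to \g_2^{\mathrm{der}}$ a central isogeny inducing $\gxad\cong(\g_2^{\mathrm{ad}},\x_2^{\mathrm{ad}})$. The work of Deligne and Lovering \cite{DeligneVarietes}, \cite{LoveringAutomorphic} lets us modify $\gx$ so that this holds and we can build $\gxthree$. The key requirement is local control of reflex fields: Theorem \ref{Thm:DescendingIgusaStacks} needs $F=E$ for $\gxthree\to\gxtwo$. So we want $\mathsf{E}_3$ to have a place $v_3$ over $v_2$ with $(\mathsf{E}_3)_{v_3}=(\mathsf{E}_2)_{v_2}$.

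Concretely, we take $\g_3$ to be the fiber product of $\g$ and a suitable torus-extension of $\g_2$ over a common adjoint or abelian quotient, in Deligne's construction $\g_3=\g^{\mathrm{der}}\cdot$(torus). The torus is chosen, following Lovering's refinement, so that $\mathsf{E}_3$ equals the compositum of $\mathsf{E}_2$ with a field $\mathsf{E}$ in which $p$ (or rather $v_2$) splits appropriately. This is possible by choosing the auxiliary CM field/torus with prescribed local behaviour at $p$, via weak approximation. Then:
\begin{itemize}
\item $\gxthree\to\gxtwo$ is an ad-isomorphism;
\item $\gxthree\to\gx$ is an isomorphism on derived groups;
\item the local reflex fields at $p$ agree for $\gxthree\to\gxtwo$.
\end{itemize}

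\textbf{Step 2: ascending and descending.} Let $v$ be the place of $\mathsf{E}$ below $v_3$. $\gx$ admits an Igusa stack at $v$ by the Hodge type results. Theorem \ref{Thm:AscendingIgusaStacks}, applied to $f\colon\gxthree\to\gx$, gives an Igusa stack for $\gxthree$ at $v_3$. Its proof already uses Theorem \ref{Thm:IgusaStackTorus}, Proposition \ref{Prop:ProductIgusaStack} and \cite[Theorem 11.4]{KimFunctorial}. Theorem \ref{Thm:DescendingIgusaStacks}, applied to the ad-isomorphism $\gxthree\to\gxtwo$ with $F=E$, then yields an Igusa stack for $\gxtwo$ at $v_2$, for both $?=\circ$ and $?=\emptyset$.

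\textbf{Main obstacle.} I expect the main obstacle to be Step 1: producing $\gxthree$ with the local reflex field condition $(\mathsf{E}_3)_{v_3}=(\mathsf{E}_2)_{v_2}$ while keeping $\gx$ of Hodge type. The difficulty is that the Hodge embedding forces the auxiliary torus to be of a specific CM shape.

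I would first try Deligne's construction \cite[2.3.10]{DeligneVarietes}, in Lovering's version, where the auxiliary CM field $\mathsf{K}$ is chosen such that the place $v_2$ splits completely in $\mathsf{K}\mathsf{E}_2$. This would make the extra reflex contribution trivial locally. If the completions still differ, a fallback is to note that an Igusa stack is insensitive to finite unramified base change issues only up to Galois descent. Since a descent argument there would be messy, I would rather insist on the splitting condition through approximation in the choice of $\mathsf{K}$.
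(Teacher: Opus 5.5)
Your proposal is correct and follows the paper's proof. The paper gets the auxiliary Hodge-type datum $(\gx,\Xi)$, with every prime of $\mathsf{E}_2$ above $p$ splitting completely in $\mathsf{E}'=\mathsf{E}\cdot\mathsf{E}_2$, from the proof of \cite[Lemma~4.6.22]{KisinPappas}, which is your choice of CM field with prescribed splitting at $p$; it then builds the roof $\gxtwo\leftarrow\gxthree\rightarrow\gx$ with reflex field exactly $\mathsf{E}'$ via Lovering's construction (Lemma \ref{Lem:ExistenceGoodRoof}, after conjugating $\Xi$ so that $\mathsf{X}\cap\mathsf{X}_2\neq\emptyset$), and ascends via Theorem \ref{Thm:AscendingIgusaStacks} and descends via Theorem \ref{Thm:DescendingIgusaStacks} using $\mathsf{E}'_{w'}=\mathsf{E}_{2,v_2}$, just as you describe.
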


\subsubsection{} 
Let $\gxtwo$ be a Shimura datum of abelian type with reflex field $\mathsf{E}_2$ and fix a $p$-adic place $v_2$ of $\mathsf{E}_2$. Recall from the beginning of this section that an \emph{auxiliary Hodge-type Shimura datum} for $\gxtwo$ is a pair $(\gx,\Xi)$, where $\gx$ is a Shimura datum of Hodge type and $\Xi:\g^{\mathrm{der}} \to \g_2^\mathrm{der}$ is a central isogeny inducing an isomorphism $\gxad \to (\g_2^{\mathrm{ad}}, \mathsf{X}_2^{\mathrm{ad}})$. The following lemma should be compared to \cite[Lemma~4.53]{YangZhuGeneric}.

\begin{Lem} \label{Lem:ExistenceGoodRoof}
  Let $(\gx,\Xi)$ be an auxiliary Hodge type Shimura datum with reflex field
  $\mathsf{E}$, and write $\mathsf{E}'=\mathsf{E} \cdot \mathsf{E}_2$. Then
  there exists a Shimura datum $\gxthree$ with reflex field $\mathsf{E}'$
  together with maps of Shimura data
  \[
    \gxthree \to \gx, \quad \gxthree \to \gxtwo,
  \]
  where the first map induces an isomorphism on derived groups and the second
  map is an ad-isomorphism.
\end{Lem}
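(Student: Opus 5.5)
The natural candidate is a fibre product of groups over the common adjoint group. Write $\g^{\mathrm{ab}}=\g/\g^{\mathrm{der}}$ and consider
\[
  \g_3' := \g \times_{\g^{\mathrm{ab}}\times \Gad} \bigl(\g^{\mathrm{ab}}\times \g_2\bigr),
\]
where $\g\to\Gad\cong\g_2^{\mathrm{ad}}$ and $\g_2\to\g_2^{\mathrm{ad}}$. This is the subgroup of $\g\times\g_2$ of pairs $(g,g_2)$ with the same image in $\Gad=\g_2^{\mathrm{ad}}$. This group is in general too large: its derived group is $\g^{\mathrm{der}}\times_{\Gad}\g_2^{\mathrm{der}}$, which is not $\g^{\mathrm{der}}$. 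I will therefore instead take
\[
  \g_3 \subset \g\times \g_2
\]
to be the subgroup generated by the graph of $\Xi$ on derived groups and by $\zg\times Z_{\g_2}$. Concretely, $\g_3=\bigl(\g^{\mathrm{der}}\times(\zg\times Z_{\g_2})\bigr)/\ker$, with $\g^{\mathrm{der}}$ embedded via $(\id,\Xi)$. This is a connected reductive group once I replace the centres by their identity components if needed; I will use $\zg^\circ$ and $Z_{\g_2}^\circ$, which already suffice since $\g=\g^{\mathrm{der}}\zg^\circ$. Its derived group is the graph of $\Xi$, hence isomorphic to $\g^{\mathrm{der}}$ via the first projection. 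Both projections $\g_3\to\g$ and $\g_3\to\g_2$ send centre to centre and induce the given identifications on adjoint groups. So $\g_3\to\g$ is an isomorphism on derived groups and $\g_3\to\g_2$ is an ad-isomorphism.

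Next I construct $\x_3$. For $h\in\x$, let $h_2\in\x_2$ be a point with the same image in $\x^{\mathrm{ad}}=\x_2^{\mathrm{ad}}$, choosing connected components compatibly (possible after translating by $\g_2(\mathbb R)$). The pair $(h,h_2)\colon\mathbb S\to\g_\mathbb R\times\g_{2,\mathbb R}$ lands in $\g_{3,\mathbb R}$. To see this, note that on the derived parts the two homomorphisms are compatible via $\Xi$ up to the central-isogeny ambiguity, and the central parts land in $\zg^\circ\times Z_{\g_2}^\circ$. I would verify this by checking that the image of $\mathbb S$ lies in the preimage of the diagonal of $\Gad$ and that, after raising to a suitable power, it lies in $\g_3$; connectedness of $\mathbb S$ then gives the claim, since $\g_3$ has finite index in the fibre product $\g\times_{\Gad}\g_2$. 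Let $\x_3$ be the $\g_3(\mathbb R)$-orbit of $(h,h_2)$. Milne's axioms SV1 and SV2 follow because $\g_3^{\mathrm{ad}}=\Gad$ and the adjoint Hodge structure is unchanged. SV3 holds because $\g_3^{\mathrm{ad}}$ is the same adjoint group with no compact factor defined over $\mathbb Q$ carrying the projection of $h$.

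The reflex field of $(\g_3,\x_3)$ is the field of definition of the conjugacy class of $\mu_3=(\mu,\mu_2)$ in $\g\times\g_2$. Since $\g_3\to\g\times\g_2$ is injective, conjugacy classes of cocharacters of $\g_3$ map injectively up to the Weyl group, so the reflex field equals that of $(\mu,\mu_2)$ in $\g\times\g_2$, namely the compositum $\mathsf E\cdot\mathsf E_2=\mathsf E'$. The main obstacle is exactly this injectivity. A $\sigma\in\gal(\bar{\mathbb Q}/\mathbb Q)$ fixing both $\g$- and $\g_2$-conjugacy classes must fix the $\g_3$-class. I expect to prove this by using $\g_3=(\g\times\g_2)\cap(\text{preimage of the diagonal})$ up to central finite index: two cocharacters of $\g_3$ conjugate in $\g\times\g_2$ by $(g,g_2)$ have $g,g_2$ agreeing in $\Gad$ modulo the centralizer, so one can adjust to an element of $\g_3(\bar{\mathbb Q})$. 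If this step fails, a fallback is to first replace $\gx$ by a Hodge-type datum with a better adapted centre, as in \cite[Lemma~4.53]{YangZhuGeneric}.
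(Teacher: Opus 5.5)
The step that fails as written is the choice of $h_2$. Identify $\x^{\mathrm{ad}}$ with $\x_2^{\mathrm{ad}}$ via $\Xi^{\mathrm{ad}}$. The image of $\x$ is an orbit of the open subgroup $\operatorname{im}(\g(\mathbb{R})\to\Gad(\mathbb{R}))$, and the image of $\x_2$ is an orbit of $\operatorname{im}(\g_2(\mathbb{R})\to\Gad(\mathbb{R}))$. Both are therefore unions of connected components of $\x^{\mathrm{ad}}$, and nothing in the definition of an auxiliary Hodge type datum forces these two unions to meet.

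Translating $h_2$ by $\g_2(\mathbb{R})$ does not move the image of $\x_2$, so it cannot repair this. If the images are disjoint, there is no pair $(h,h_2)\in\x\times\x_2$ with $h^{\mathrm{ad}}=h_2^{\mathrm{ad}}$, and hence no $h_3\colon\mathbb{S}\to\g_{3,\mathbb{R}}$ at all.

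The missing idea is the first step of the paper's proof. $\Gad(\mathbb{R})$ acts transitively on $\x^{\mathrm{ad}}$, and by real approximation $\Gad(\mathbb{Q})$ meets every connected component of $\Gad(\mathbb{R})$. So there is $g\in\Gad(\mathbb{Q})$ such that the images of $\operatorname{Ad}(g)\x$ and $\x_2$ in $\x^{\mathrm{ad}}$ share a connected component $\x^+$. Then replace $\Xi$ by $\Xi\circ\operatorname{Ad}(g)$, equivalently $\gx$ by the isomorphic datum $(\g,\operatorname{Ad}(g)\x)$. This is harmless, since the lemma asks for no compatibility with the original $\Xi$. Afterwards every $h$ with image in $\x^+$ admits an $h_2$ as you want.

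With that repair your proof is correct, and it takes a genuinely different route from the paper's. The paper never builds $\g_3$ inside $\g\times\g_2$. Instead it feeds the connected Shimura data $(\g^{\mathrm{der}},\x^+)$ and $(\g_2^{\mathrm{der}},\x^+)$, together with the fields $\mathsf{E},\mathsf{E}_2,\mathsf{E}'$, into Lovering's version of Deligne's construction. This produces data $(\mathsf{B}',\x')$ and $(\mathsf{B}_2',\x_2')$ with reflex field $\mathsf{E}'$ by design, linked to $\gx$ and $\gxtwo$ through intermediate data $(\mathsf{B},\x)$ and $(\mathsf{B}_2,\x_2)$. Reflex-field control is thus outsourced to that proposition.

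Your $\g_3=(\g\times_{\Gad}\g_2)^\circ$ is more elementary and explicit, at the price of computing the reflex field yourself. The argument you sketch for this does work:
\begin{itemize}
\item Take $\sigma\in\operatorname{Aut}(\mathbb{C}/\mathsf{E}\cdot\mathsf{E}_2)$, and write $\sigma\mu=\operatorname{Ad}(g)\mu$ and $\sigma\mu_2=\operatorname{Ad}(g_2)\mu_2$.
\item Since $\sigma\mu_3\in X_*(\g_3)$, the images satisfy $\bar g^{-1}\bar g_2\in C_{\Gad}(\mu^{\mathrm{ad}})$, with images taken in $\Gad\cong\g_2^{\mathrm{ad}}$.
\item This centralizer is the image of $C_{\g}(\mu)$, so after modifying $g$ you may assume $\bar g=\bar g_2$.
\item Any lift $x\in\g^{\mathrm{der}}(\mathbb{C})$ of $\bar g$ then gives $(x,\Xi(x))\in\g_3(\mathbb{C})$ conjugating $\mu_3$ to $\sigma\mu_3$.
\end{itemize}
Hence the reflex field is exactly $\mathsf{E}\cdot\mathsf{E}_2$, and no fallback is needed.

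One incidental correction: your stated reason for discarding the full fibre product is off. Its derived group is already the graph of $\Xi$, because commutators depend only on images in $\Gad$. Its actual defect is that it may be disconnected, which is exactly what passing to your $\g_3$ cures.
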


\begin{proof}
This is a direct consequence of \cite[Proposition 3.4.2]{lovering2017filtered}\footnote{We are citing an unpublished preprint here, however the proposition we need is simply a convenient summary of the results in the (published) \cite[Section 4.6]{LoveringAutomorphic}.} as we will now explain. Recall that there are natural inclusions $\x \subset \x^{\mathrm{ad}}$ and $\x_2 \subset \x^{\mathrm{ad}}$. By real approximation for $\Gad$, there is an element $g \in \Gad(\Q)$ such that $\left(\operatorname{Ad} g \x \right) \cap \x_2$ is nonempty. Therefore, after replacing $(\gx,\Xi)$ by $(\gx,\Xi \circ \operatorname{Ad} g)$, we may assume that $\x_2 \cap \x$ is nonempty and thus contains a connected component $\x^+$ of $\x^{\mathrm{ad}}$. \smallskip

We now apply the construction of \cite[Proposition 3.4.2]{lovering2017filtered} for the triples $$(\g^{\mathrm{der}}, \mathsf{X}^+, \mathsf{E}),(\g^{\mathrm{der}}, \mathsf{X}^+, \mathsf{E}'), (\g_2^{\mathrm{der}}, \mathsf{X}^+, \mathsf{E}_2), (\g_2^{\mathrm{der}}, \mathsf{X}^+, \mathsf{E}'),$$ which gives us Shimura data
\begin{align*}
    (\mathsf{B}, \mathsf{X}), (\mathsf{B}', \mathsf{X}'), (\mathsf{B}_2, \mathsf{X}_2), (\mathsf{B}_2', \mathsf{X}_2'),
\end{align*}
respectively, with reflex fields $ \mathsf{E}, \mathsf{E}', \mathsf{E}_2, \mathsf{E}'$, respectively. By construction, these come equipped with morphisms of Shimura data
  \[
    (\mathsf{B}, \mathsf{X}) \to \gx, \quad
    (\mathsf{B}_2, \mathsf{X}_2) \to \gxtwo
  \]
that induce isomorphisms of derived groups. By \cite[Proposition 3.4.2.(2)]{lovering2017filtered}, there are morphisms of Shimura data
  \[
     (\mathsf{B}', \mathsf{X}') \to (\mathsf{B}, \mathsf{X}), \quad
     (\mathsf{B}_2', \mathsf{X}_2') \to (\mathsf{B}_2, \mathsf{X}_2)
   \]
that are isomorphisms on derived groups. By \cite[Proposition 3.4.2.(1)]{lovering2017filtered} there is a morphism of Shimura data (induced by $\Xi$)
\begin{align*}
   (\mathsf{B}', \mathsf{X}') \to (\mathsf{B}_2', \mathsf{X}_2')
\end{align*}
which induces a central isogeny on derived groups, it is thus an ad-isomorphism. The lemma is proved by taking $\gxthree=(\mathsf{B}', \mathsf{X}')$ with its corresponding maps to $\gx$ and $\gxtwo$. For the convenience of the reader, we record the following diagram summarizing the argument:
\begin{equation*}
    \begin{tikzcd}
            (\mathsf{B}', \mathsf{X}') \arrow{r} \arrow{d} & (\mathsf{B}_2', \mathsf{X}_2') \arrow{r} & (\mathsf{B}_2, \mathsf{X}_2) \arrow{r} & \gxtwo \\
           (\mathsf{B}, \mathsf{X}) \arrow{rrr} & & &\gx.
    \end{tikzcd}
\end{equation*}
\end{proof}

\begin{proof}[Proof of Theorem \ref{Thm:MainThmIgusa}]
  By the proof of \cite[Lemma~4.6.22]{KisinPappas}, we can find an auxiliary
  Hodge type Shimura datum $(\gx,\Xi)$  with reflex field $\mathsf{E}$ such that
  every prime above $p$ in $\mathsf{E}_2$ splits completely in the extension
  $\mathsf{E}'=\mathsf{E} \cdot \mathsf{E}_2$. Choose $\gxtwo \leftarrow
  \gxthree \rightarrow \gx$ as in \Cref{Lem:ExistenceGoodRoof}. Choose a place
  $w'$ of $\mathsf{E}'$ lifting $v_2$, which induces a place $w$ of $\mathsf{E}$, and
  note that we have an equality of local reflex fields
  $\mathsf{E}'_{w'}=\mathsf{E}_{2,v_2}$.

  The existence of the Igusa stack for $\gx$ is
  \cite[Theorem~I]{DvHKZIgusaStacks} and \cite[Theorem~D]{KimFunctorial} for $?=\circ$, $? = \emptyset,$ respectively. The existence of the Igusa stack for $\gxthree$ now
  follows from \Cref{Thm:AscendingIgusaStacks} and the existence of the Igusa
  stack for $\gxtwo$ is then a consequence of \Cref{Thm:DescendingIgusaStacks}.
\end{proof}

\subsection{A finite level Igusa stack} \label{sub:FiniteLevelIgusa} 

Let $\gx$ be a Shimura datum of abelian type with reflex field $\mathsf{E}$ and let $v$ be a place of $\mathsf{E}$ above a prime number $p$. Let $G=\g \otimes \qp, E=\mathsf{E}_v$ and $\mu$ be as before. Then by \Cref{Thm:MainThmIgusa}, $\gx$ admits an Igusa stack $\igsgx \to \bung$ for $?$ in $\{\circ, \emptyset\}$ at the place $v$. The goal of this section is to construct a finite-level Igusa stack $\igsfingx$ which fits in a Cartesian diagram with $\shdpinf$. This is more difficult in the case of abelian type Shimura data than in the case of Hodge type Shimura data, because $K^p$ does not generally act freely on $\shdinf$ and so we cannot simply take the stack quotient. 

\begin{Thm} \label{Thm:FiniteLevelIgusa}
  Let $K^p \subseteq \gafp$ be a neat compact open subgroup. For $? \in \{\circ,
  \emptyset\}$, there is a v-stack $\igsfingx$ fitting into the $2$-commutative
  diagram
  \[ \begin{tikzcd}[row sep=small] \label{Eq:CartesianSquareFiniteLevel}
    \shdinf \arrow{r} \arrow{d} & \shdpinf \arrow{r} \arrow{d} & \operatorname{Gr}_{G, \mu^{-1}} \arrow{d} \\
    \igsgx \arrow{r} & \igsfingx \arrow{r} & \operatorname{Bun}_{G},
  \end{tikzcd} \]
  where both squares are $2$-Cartesian. Moreover, the absolute Frobenius acts trivially
  on $\igsfingx$ with the trivialization being compatible with that of $\bung$.
\end{Thm}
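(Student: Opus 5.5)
The plan is to construct $\igsfingx$ as a quotient of $\igsgx$ by a suitable group action, modelled on how $\shdpinf$ arises from $\shdinf$. The naive quotient $[\igsgx/\ul{K^p}]$ fails because $K^p$ need not act freely: the subgroup $K^p \cap \zgqbar$ (more precisely its image in the prime-to-$p$ part) acts trivially on the Shimura variety by \Cref{Lem:NonSV5}. So the first step is to identify the correct group. Let $Z_{K^p}$ be the closure of $\zgq \cap (K^p \times G(\qp))$, and let $\bar K^p$ be the image of $K^p$ in $\gafp/(\zgqbar)^p$, where $(\zgqbar)^p$ denotes the projection of $\zgqbar$ to $\gafp$. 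By \Cref{Lem:ChevalleyCongruence}, $\zgqbar \cap G(\qp)$ is trivial. I would then check that at infinite level $\shdpinf = \shdinf / \ul{K^p}$, where the quotient is taken as a v-sheaf, and that the stabilizers are exactly the elements of $K^p$ that lift to $\zgqbar$. Since $K^p$ is neat, this identifies $\shdpinf$ with the quotient of $\shdinf$ by a free action of $\ul{K^p/(K^p\cap \mathrm{pr}^p(\zgqbar))}$, twisted by the $G(\qp)$-component of the lift. This twist is exactly the subtlety that appears when SV5 fails.

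Next I would work with the global uniformization $\Theta$ from \Cref{Thm:IgusaStackKim} instead of manipulating $\igsgx$ directly. The idea is to descend $\Theta$ to
\[
  \Theta_{K^p} \colon \shdpinf \times_{\bungmu} \grgmu \to \shdpinf,
\]
using its $\ul{\gaf \times G(\qp)}$-equivariance. For this I need the action of $K^p$ on the source $\shdinf\times_{\bungmu}\grgmu$, through the first factor only, to be compatible with the quotient: an element $k\in K^p$ acting trivially on $\shdinf$ lifts to $z=(k,z_p)\in\zgqbar$. Equivariance of $\Theta$ then shows that $k$ acts on the target as $z_p^{-1}$ applied to the second $\mathrm{Gr}$-factor, composed with the trivial action. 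Hence I would descend along the larger group $\{(k,g)\}$ consisting of $K^p$ together with the compensating $G(\qp)$-factor. This is the analogue of the $\Xi^2$-pushouts used in the proof of \Cref{Thm:DescendingIgusaStacks}. Having obtained $\Theta_{K^p}$ satisfying the axioms of \Cref{Def:UniversalUniformization} (with $\gafp$ replaced by the centralizer data), I would define $\igsfingx$ as the quotient of $\shdpinf$ by the groupoid $\shdpinf\times_{\bungmu}\grgmu \rightrightarrows \shdpinf$ given by $\mathrm{pr}_1$ and $\Theta_{K^p}$. This is the same recipe \cite[Proposition~5.12]{KimFunctorial} uses at infinite level, and it yields the right-hand Cartesian square by the same formal argument.

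The left square and the Frobenius statement should then be formal. The map $\igsgx\to\igsfingx$ is induced by $\shdinf\to\shdpinf$ on groupoids. Cartesianness can be checked after pulling back along the v-cover $\grgmu\to\bungmu$, where it reduces to the Cartesianness of $\shdinf\to\shdpinf$ over $\grgmu$, which was established above. Triviality of $\phi$ comes from the $\phi\times\id$-invariance of $\Theta$, which descends to $\Theta_{K^p}$.

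I expect the main obstacle to be the first step: making precise, as an equivalence of v-stacks rather than merely on points, that $\shdpinf$ is the quotient of $\shdinf$ by the twisted free action. This requires showing that the relevant subgroup of $K^p\times G(\qp)$ is closed, so that it is locally profinite and \Cref{Prop:LocProfExact} applies, and checking continuity of the splitting $k\mapsto z_p$. I would try to deduce this from \Cref{Lem:ChevalleyCongruence} applied to the multiplicative group $\zg$.
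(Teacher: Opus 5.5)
Your overall architecture is the paper's: descend $\Theta$ to $\Theta_{K^p}$ using its $\mathrm{pr}_{1,3}$-equivariance (this is \Cref{Lem:GroupoidFiniteLevel}), take the groupoid quotient as in \cite[Proposition~5.12]{KimFunctorial}, and obtain the Cartesian squares and the Frobenius trivialization formally. The problem is your first step. It misidentifies how $K^p$ acts on $\shdinf$, and that error is what produces the ``twist'' and the obstacle you single out.

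Take $k \in K^p$ with $(k,z_p) \in \zgqbar$ and $z_p \neq 1$. Such $k$ does \emph{not} act trivially; in fact it acts as $z_p^{-1} \in G(\qp)$. Indeed, $K^p$ acts on $\shdinf$ freely through $(K^p\cdot\zgqbar)/\zgqbar$, so $k$ acts trivially only if $(k,1) \in \zgqbar$. But then $(1,z_p) = (k,1)^{-1}(k,z_p) \in \zgqbar$, which contradicts \Cref{Lem:ChevalleyCongruence}.

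Such $k$ exist even when SV5 holds. For instance, if $\mathbb{G}_m \subseteq \zg$, take $k = p^n \in \zg(\afp) \cap K^p$ for suitable $n$, with $z_p = p^n$. So the kernel of the $K^p$-action is $(K^p\times\{1\}) \cap \zgqbar$, not $K^p \cap \mathrm{pr}^p(\zgqbar)$. The group $K^p/(K^p\cap\mathrm{pr}^p(\zgqbar))$ does not act on $\shdinf$, and there is no coherent ``twisted free action'' to make precise. For the same reason, in your second paragraph an element acting trivially on $\shdinf$ automatically has compensating factor $z_p = 1$.

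The correct input, which the paper cites from \cite[Section~1.5.8]{KisinShinZhu}, is simply that $\shdinf \to \shdpinf$ is a torsor for the profinite group $(K^p\cdot\zgqbar)/\zgqbar$. With this, you need no twist, no closedness statement, and no continuity of a splitting $k \mapsto z_p$. Because $\pi_{\mathrm{HT}}$ is $\ul{\gafp}$-invariant, the map $\shdinf\times_{\bung}\grgmu \to \shdpinf\times_{\bung}\grgmu$ is a base change of that torsor. Moreover, $\Theta$ intertwines the two actions since $\mathrm{pr}_{1,3}(k,1,1) = (k,1)$.

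The paper writes the source torsor group as
\[
\bigl((K^p\times\{1\}\times\{1\})\cdot(\id\times\Delta)(\zgqbar)\bigr)/(\id\times\Delta)(\zgqbar),
\]
using \cite[Lemma~6.2]{KimFunctorial} that the diagonal center acts trivially on the fiber product. It then observes that $\mathrm{pr}_{1,3}$ maps this group isomorphically onto $(K^p\cdot\zgqbar)/\zgqbar$. This diagonal quotient is the only place where a compensating $G(\qp)$-component legitimately appears. If you replace your first step with this, the rest of your plan goes through as written.
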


\subsubsection{} To construct the Igusa stack at finite level, we will use the
following lemma. Recall from Theorem \ref{Thm:IgusaStackKim} that giving
an Igusa stack $\igs^{?}\gx$ is equivalent to giving a uniformization map
\[
  \Theta \colon \shdinf \times_{\bung} \grgmu \to \shdinf
\]
in the sense of Definition \ref{Def:UniversalUniformization}.

\begin{Lem} \label{Lem:GroupoidFiniteLevel}
  For $? \in \{\circ, \emptyset\}$, there is a unique map $\Theta_{K^p}$ filling
  in the diagram
  \[ \begin{tikzcd}[row sep=small]
    \shdinf \times_{\bung} \grgmu \arrow{r}{\Theta} \arrow{d} & \shdinf \arrow{d} \\
    \shdpinf  \times_{\bung} \grgmu  \arrow[r, dashed, "\Theta_{K^p}"] & \shdpinf.
  \end{tikzcd} \]
  Moreover, the map $\Theta_{K^p}$ intertwines the action of $\phi \times \id$
  on the source with the action of $\id$. Furthermore, the following diagrams
  commute.
  \[ \begin{tikzcd}
    \shdpinf \arrow[r, equals] \arrow{d}[']{(\id, \pi_{\mathrm{HT}, K^p})} & \shdpinf \arrow{d}{\pi_{\mathrm{HT}, K^p}} \\
    \shdpinf  \times_{\bung} \grgmu \arrow{r}{\operatorname{pr}_{2}} \arrow{ur}{\Theta_{K^p}} & \grgmu
  \end{tikzcd} \]
  \[ \begin{tikzcd}
    \shdpinf  \times_{\bung} \grgmu \times_{\bung} \grgmu \arrow{r}{\Theta_{K^p} \times \id} \arrow{d}{\operatorname{pr}_{1,3}} & \shdpinf  \times_{\bung} \grgmu \arrow{d}{\Theta_{K^p}} \\
    \shdpinf  \times_{\bung} \grgmu \arrow{r}{\Theta_{K^p}} & \shdpinf.
  \end{tikzcd} \]
\end{Lem}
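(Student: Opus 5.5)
The plan is to descend $\Theta$ along the quotient map $q\colon \shdinf \to \shdpinf$. Since $K^p$ need not act freely, this map is not a $K^p$-torsor in general. It is, however, a v-cover, and $\shdpinf$ is its coequalizer. Because $\shdinf$ is a limit of finite-level Shimura varieties, each finite étale over the previous one, we have
\[
\shdinf \times_{\shdpinf} \shdinf \cong \text{image of } \ul{K^p} \times \shdinf \text{ under (act, pr)}.
\]
In other words, two points of $\shdinf$ become equal in $\shdpinf$ exactly when they differ by an element of $\ul{K^p}$, although that element need not be unique. The same statement holds after base change along $\grgmu \to \bung$. The reason is that the $\ul{K^p}$-action on $\shdinf$ is compatible with $\pi_{\mathrm{HT}}$, and $K^p$ acts trivially on $\grgmu$. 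So the left vertical map $\shdinf\times_{\bung}\grgmu \to \shdpinf\times_{\bung}\grgmu$ is also a v-surjection whose equivalence relation is the image of the $\ul{K^p}$-action.

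\textbf{Existence and uniqueness.} Uniqueness of $\Theta_{K^p}$ follows because the left vertical map is an epimorphism of v-sheaves. For existence we must show that $q\circ\Theta$ is constant on the equivalence relation. Suppose $(x,g)$ and $(kx,g)$ are sections with $k\in \ul{K^p}$. By Definition \ref{Def:UniversalUniformization}(1), $\Theta$ is equivariant for $\g(\af)\times G(\qp)$ acting through $\mathrm{pr}_{13}$. Hence $\Theta(kx,g)=k\,\Theta(x,g)$, and $q(k\Theta(x,g))=q(\Theta(x,g))$. Equality on the image of $\ul{K^p}\times(\cdots)$ is exactly what is needed, since the coequalizer sees only that image.

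\textbf{Main obstacle.} The step I expect to be the main obstacle is verifying that this image really is the full fiber product $\shdinf\times_{\shdpinf}\shdinf$, i.e.\ that $\shdpinf$ is the v-sheaf quotient of $\shdinf$ by $\ul{K^p}$ even without SV5. I would check this at each finite level $K'^p\subset K^p$ normal. Let $K_p$ shrink, and set $K=K_pK'^p$ and $L=K_pK^p$. The map $\mathbf{Sh}_K\to\mathbf{Sh}_L$ is a finite étale Galois cover with group $K^p/K'^p$ modulo the image of $\zgqbar$, by Lemma \ref{Lem:NonSV5} and the neatness argument. Passing to the limit then expresses the fiber product as a quotient of $\ul{K^p}\times\shdinf$. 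Lemma \ref{Lem:ChevalleyCongruence} may be needed to control how $\zgqbar$ meets the $p$-part.

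\textbf{Remaining properties.} The remaining properties all descend because the source maps are epimorphisms. The $\phi\times\id$ compatibility follows from Definition \ref{Def:UniversalUniformization}(2), since $\phi$ commutes with $q$. The triangle commutes because $\pi_{\mathrm{HT},K^p}\circ q=\pi_{\mathrm{HT}}$ together with Definition \ref{Def:UniversalUniformization}(3). The square descends from its infinite-level version, using that
\[
\shdinf\times\grgmu\times\grgmu \to \shdpinf\times\grgmu\times\grgmu
\]
(fiber products over $\bung$) is again a v-surjection.
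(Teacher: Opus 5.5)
Your proposal is correct and is essentially the paper's argument. You descend $\Theta$ along the quotient by $\ul{K^p}$ using its $\mathrm{pr}_{13}$-equivariance, and you get the Frobenius compatibility and both diagrams by precomposing with the v-covers from infinite level. The input you flag as the main obstacle is exactly what the paper takes from \cite[Section 1.5.8]{KisinShinZhu}: $\shdinf\to\shdpinf$ is a torsor for $K^p\zgqbar/\zgqbar$, which gives your description of the kernel pair because $\ul{K^p}\to\ul{K^p\zgqbar/\zgqbar}$ is surjective. Working with the image of the non-free $\ul{K^p}$-action only lets you skip the paper's appeal to \cite[Lemma~6.2]{KimFunctorial}. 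Lemma~\ref{Lem:ChevalleyCongruence} plays no role, since the relevant intersection is $\zgqbar\cap\gafp$, not $\zgqbar\cap G(\qp)$.
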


\begin{proof}
  The group $\ul{\gaf} \times \ul{G(\qp)}$ acts on $\shdinf \times_{\bung}
  \grgmu$ and the map $\Theta$ is equivariant for the action of $\ul{\gaf}
  \times \ul{G(\qp)}$ along $\operatorname{pr}_{1,3} \colon \ul{\gaf} \times
  \ul{G(\qp)} = \ul{\gafp} \times \ul{G(\qp)} \times \ul{G(\qp)} \to \ul{\gafp}
  \times \ul{G(\qp)}$. By \cite[Lemma~6.2]{KimFunctorial}, the closed subgroup
  \[
    \ul{(\id \times \Delta)(\zgqbar)} \subseteq \ul{\gafp} \times \ul{G(\qp)}
    \times \ul{G(\qp)}
  \]
  acts trivially on $\shdinf \times_{\bung} \grgmu$. Since $\shdinf \to
  \shdpinf$ is a torsor for the profinite group (see \cite[Section 1.5.8]{KisinShinZhu})
  \[
    \frac{K^p \cdot \zgqbar}{\zgqbar} \subseteq \frac{\gaf}{\zgqbar},
  \]
  it follows that 
  \[
    \shdinf \times_{\bung} \grgmu \to \shdpinf \times_{\bung} \grgmu
  \]
  is a torsor for the profinite group 
  \[
    \frac{(K^p \times \{1\} \times \{1\}) \cdot (\id \times \Delta)(\zgqbar)}{
      (\id \times \Delta)(\zgqbar)} \subseteq \frac{\gafp \times G(\qp) \times
    G(\qp)}{(\id \times \Delta)(\zgqbar)}.
  \]
  This maps isomorphically to $(K^p \cdot \zgqbar) / \zgqbar$ under
  $\operatorname{pr}_{1,3}$, showing that $\Theta$ descends to a map
  $\Theta_{K^p}$. The statement about the $\phi$-action follows formally from
  the statement about the $\phi$-action for $\Theta$, which holds by
  \cite[Proposition~5.12]{KimFunctorial}.

  The two diagrams that we have to show are commutative, sit in commutative
  diagrams with the same diagrams without the subscripts $K^p$ as in
  \cite[Definition~5.10.(3)]{KimFunctorial}; these latter diagrams moreover
  commute by \cite[Proposition~5.12]{KimFunctorial}. Since the maps from the
  objects without subscript $K^p$ to the objects with subscript $K^p$ are
  v-covers, it follows that the diagrams at level $K^p$ must also commute.
\end{proof}

As a corollary, we have the following result (which, together with Theorem \ref{Thm:MainThmIgusa} and Proposition \ref{Prop:DualizingSheaf}, proves Theorem \ref{Thm:IntroIgusaGeneric} from the introduction).

\begin{Cor} \label{Cor:FiniteLevelIgusa1}
  For $? \in \{\circ, \emptyset\}$, there is a v-stack $\igsfingx$ fitting into
  the following $2$-commutative diagram
  \[ \begin{tikzcd}[row sep=small]
    \shdinf \arrow{r} \arrow{d} & \shdpinf \arrow{r}{\pi_{\mathrm{HT}}^?} \arrow{d} & \operatorname{Gr}_{G, \mu^{-1}} \arrow{d} \\
    \igsgx \arrow{r} & \igsfingx \arrow{r}{\overline{\pi}_{\mathrm{HT}}^{?}} & \operatorname{Bun}_{G},
  \end{tikzcd} \]
  where both squares are $2$-Cartesian. Moreover, the absolute Frobenius acts
  trivially on $\igsfingx$ where there is a canonical trivialization that is
  compatible with that of $\bung$.
\end{Cor}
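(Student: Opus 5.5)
The plan is to construct $\igsfingx$ directly from the descended uniformization $\Theta_{K^p}$ of Lemma \ref{Lem:GroupoidFiniteLevel}, imitating the equivalence between global uniformizations and Igusa stacks in Theorem \ref{Thm:IgusaStackKim}(i), i.e.\ the argument of \cite[Proposition~5.12]{KimFunctorial}. Write $R = \shdpinf \times_{\bung} \grgmu$. The pair of maps $s = \mathrm{pr}_1$ and $t = \Theta_{K^p}$ from $R$ to $\shdpinf$ will be shown to be a groupoid in v-sheaves. The identity section is $(\id, \pi_{\mathrm{HT},K^p})$, and the first commutative diagram of Lemma \ref{Lem:GroupoidFiniteLevel} says that $t$ composed with it is the identity. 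Composition is given by $R \times_{t,\shdpinf,s} R \cong \shdpinf \times_{\bung}\grgmu\times_{\bung}\grgmu \xrightarrow{\mathrm{pr}_{13}} R$. Associativity and the compatibility of source and target with composition follow from the second diagram. Inverses are $(x,y)\mapsto(\Theta_{K^p}(x,y),\pi_{\mathrm{HT}}(x))$, using the isomorphism $\mathscr{P}_{\pi_{\mathrm{HT}}(\Theta(x,y))}\cong\mathscr{P}_{\pi_{\mathrm{HT}}(x)}$ provided by the fiber product. I then define $\igsfingx$ to be the quotient v-stack of this groupoid.

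To build the maps in the diagram, note that $\overline{\pi}_{\mathrm{HT}}$ is induced by $\mathrm{BL}\circ\pi_{\mathrm{HT}}$ on objects, together with the tautological isomorphism carried by a point of $R$ on morphisms. The cocycle condition for these isomorphisms reduces to the composition law in $\grgmu\times_{\bung}\grgmu$. For the map $\igsgx \to \igsfingx$, the projection $\shdinf\to\shdpinf$ intertwines $\Theta$ with $\Theta_{K^p}$ by the defining square of Lemma \ref{Lem:GroupoidFiniteLevel}. So it is a morphism of groupoids, and the infinite-level groupoid has quotient $\igsgx$ by \cite[Proposition~5.12]{KimFunctorial}.

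Next comes Cartesianness of the right square. The key observation is that for a groupoid quotient $X\to[X/R]$ one has $X\times_{[X/R]}X\cong R$. By construction, $\igsfingx\times_{\bung}\grgmu$ is then computed via the v-cover $\shdpinf\to\igsfingx$. Pulling back, one gets $\shdpinf\times_{\igsfingx}(\igsfingx\times_{\bung}\grgmu)\cong R$ over $\shdpinf$ via $s$, compatibly with the descent data, and descent identifies the fiber product with $\shdpinf$ itself via $t$. For this I first check that $\shdpinf\to\igsfingx$ is a v-surjection; this holds because $\mathrm{BL}$ is a pro-\'etale surjection onto $\bungmu$ and the objects of the quotient are, by definition, v-locally in the image. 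The left square then follows by two-out-of-three for Cartesian squares, since the outer rectangle is Cartesian by Theorem \ref{Thm:MainThmIgusa}. Alternatively, $\shdinf\to\shdpinf$ is a $(K^p\zgqbar)/\zgqbar$-torsor, and the same torsor structure is visible on the groupoids.

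Finally, the Frobenius statement comes from $\Theta_{K^p}$ intertwining $\phi\times\id$ with $\id$. This gives a morphism in the groupoid from $x$ to $\phi(x)$, namely $(x,\pi_{\mathrm{HT}}(x))$ with the canonical isomorphism $\phi_{\bung}\cong\id$, and these morphisms assemble into a natural isomorphism $\id\cong\phi$ compatible with that of $\bung$ by construction. I expect the main obstacle to be the bookkeeping in showing that $\Theta_{K^p}$ genuinely defines a groupoid, in particular the inverse axiom and the $2$-categorical compatibility of $\overline{\pi}_{\mathrm{HT}}$. Since SV5 may fail, one cannot simply take $[\shdinf/K^p]$; the point is that all these identities hold after pulling back along the v-cover from infinite level, where they are known by \cite[Proposition~5.12]{KimFunctorial}.
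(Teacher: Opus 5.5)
Your proposal is correct and is essentially the paper's proof, which simply combines Lemma~\ref{Lem:GroupoidFiniteLevel} with the argument of \cite[Proposition~5.12]{KimFunctorial} that turns a uniformization map into an Igusa stack; that argument is precisely the groupoid quotient of $\shdpinf$ by $R=\shdpinf\times_{\bung}\grgmu$, with source $\mathrm{pr}_1$ and target $\Theta_{K^p}$, that you write out. Two small slips do not affect the argument: the groupoid arrow relating $x$ and $\phi(x)$ is $(\phi(x),\pi_{\mathrm{HT}}(x))$ with the canonical isomorphism, which is an arrow $\phi(x)\to x$ whose inverse is $(x,\pi_{\mathrm{HT}}(\phi(x)))$; and v-surjectivity of $\shdpinf\to\igsfingx$ is automatic for a groupoid quotient, rather than a consequence of the surjectivity of $\mathrm{BL}$.
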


\begin{proof}
This follows from \Cref{Lem:GroupoidFiniteLevel} together with arguments in \cite[Proposition~5.12]{KimFunctorial}.
\end{proof}
}

}

{\section{Cohomology of Shimura varieties} \label{Sec:Cohomology}

In this section we generalize many of the results of \cite[Section 8, Section 9]{DvHKZIgusaStacks} to abelian type Shimura varieties. We compute the dualizing sheaf of the Igusa stack in Section \ref{sub:DualizingSheaf}, define the Igusa sheaf and study its (conjectural) properties in Section \ref{Sub:SheafConjecture}. In Section \ref{Sub:WeilCohShim}, we compute the (compactly supported) cohomology of the Shimura variety in terms of a Hecke operator acting on the Igusa sheaf. In Section \ref{sub:ProductFormula} we prove a version of the Mantovan product formula, in Section \ref{sub:WeakCompatibility} we prove a compatibility statement of the cohomology of Shimura varieties with the Fargues--Scholze local Langlands correspondence, in Section \ref{Sub:ES} we prove the Eichler--Shimura relation and in Section \ref{sub:Plectic} we prove the split local plectic conjecture.

\subsection{The dualizing sheaf of the Igusa stack} \label{sub:DualizingSheaf} Let $\gx$ be a Shimura datum of abelian type with reflex field $\mathsf{E}$ and let $v$ be a place of $\mathsf{E}$ above a prime number $p$. Let $G=\g \otimes \qp, E=\mathsf{E}_v$ and $\mu$ be as before and let $K^p \subset \gafp$ be a neat compact open subgroup. For $?$ in $\{\circ, \emptyset\}$, we study the Igusa stack $\igsfingx$ of \Cref{Cor:FiniteLevelIgusa1}. The goal of this section is to prove that $\igsfingx$ is an $\ell$-cohomologically smooth v-stack of $\ell$-cohomological dimension zero with trivial dualizing sheaf, generalizing \cite[Theorem~8.3.1]{DvHKZIgusaStacks}. This is more difficult in the case of abelian type Shimura data than in the case of Hodge type Shimura data, because $K^p$ does not generally act freely on $\shdinf$. 

\begin{Lem} \label{Lem:PiHTBarfdcs}
For $? \in \{\emptyset, \circ\}$, the morphism $\overline{\pi}_{\mathrm{HT}}^? \colon \igs^?_{K^p}\gx \to \bungmu$
  is compactifiable, representable in locally spatial
  diamonds, and has $\dimtrg \overline{\pi}_{\mathrm{HT}} < \infty$. If $?=\emptyset$, then it is moreover partially proper. 
\end{Lem}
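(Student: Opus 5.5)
The plan is to check all of these properties after base change along the Beauville--Laszlo map $\mathrm{BL}\colon \grgmu \to \bungmu$, using the $2$-Cartesian right-hand square of \Cref{Cor:FiniteLevelIgusa1}. By the proof of \cite[Proposition~6.4.1]{DvHKZIgusaStacks}, $\mathrm{BL}$ is surjective in the pro-\'etale topology, hence a v-cover. Its base change along $\overline{\pi}_{\mathrm{HT}}^?$ is the map $\pi_{\mathrm{HT}}^?\colon \shdpinf \to \grgmu$. Representability in locally spatial diamonds can be checked v-locally on the target (\cite[Proposition~13.4]{EtCohDiam}). So can compactifiability, finiteness of $\dimtrg$, and, when it applies, partial properness (\cite[Proposition~22.3, 22.5]{EtCohDiam} and the analogues for $\dimtrg$ in \cite[Section~21]{EtCohDiam}). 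It therefore suffices to prove the corresponding statements for $\pi_{\mathrm{HT}}^?$ at level $K^p$, exactly as in the Hodge type argument of \cite[Section~8]{DvHKZIgusaStacks}.

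First, $\shdpinf$ is a locally spatial diamond, since it is the limit over $K_p$ of the diamonds of the rigid spaces $\mathbf{Sh}_{K_pK^p}\gx^{?}_E$ (qcqs open pieces in the $\circ$ case). The same holds for $\grgmu$, which is a spatial diamond. A map between locally spatial diamonds is representable in locally spatial diamonds. Next, $\pi_{\mathrm{HT}}^?$ factors as the closed/open structure over $\spd E$: the graph embedding $\shdpinf \to \shdpinf \times_{\spd E} \grgmu$ followed by the projection.
\begin{itemize}
\item The graph embedding is a closed immersion, since $\grgmu$ is separated.
\item The projection is the base change of $\shdpinf \to \spd E$.
\end{itemize}
Hence compactifiability and $\dimtrg < \infty$ reduce to the corresponding properties of $\shdpinf \to \spd E$. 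This map is an inverse limit along the profinite torsor maps over a finite type rigid space, so its $\dimtrg$ equals $\dim \mathbf{Sh}_K$. It is compactifiable because finite-type rigid spaces over $E$ are, and pro-finite-\'etale limits preserve this.

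For $? = \emptyset$, partial properness of $\pi_{\mathrm{HT}}$ follows because $\mathbf{Sh}_{K}\gx^{\mathrm{an}}_E$ is the analytification of a separated finite type scheme. Such a space is partially proper over $\spa E$, and this passes to the limit over $K_p$ and then to the map to the separated target $\grgmu$.

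The main obstacle is making the v-descent step rigorous. The first subtlety is that $\overline{\pi}_{\mathrm{HT}}^?$ is a map of v-stacks, not sheaves, so we must use the stacky versions of these notions as in \cite[Section~2]{DvHKZIgusaStacks} or \cite{KimFunctorial}. The second is that at finite level $K^p$ the map $\shdinf\to\shdpinf$ is only a torsor for $(K^p\zgqbar)/\zgqbar$, so one cannot simply quote the Hodge type argument. Our way around this is to work directly with the finite-level Cartesian square from \Cref{Cor:FiniteLevelIgusa1}, avoiding any free-action hypothesis. In the $\circ$ case, partial properness fails since the good reduction locus is only an open subspace, which is why it is not claimed.
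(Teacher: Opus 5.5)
Your proposal is correct and follows the paper's route: you establish the properties for $\pi_{\mathrm{HT}}^?$ at level $K^p$ and descend them along the pro-\'etale surjection $\mathrm{BL}$, using the finite-level Cartesian square of \Cref{Cor:FiniteLevelIgusa1}. One point of precision: the paper first descends separatedness via \cite[Proposition~10.11.(ii)]{EtCohDiam} before invoking \cite[Proposition~13.4.(iv)]{EtCohDiam} for representability, and your graph factorization already makes $\pi_{\mathrm{HT}}^?$ separated, so you should state that input explicitly.
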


\begin{proof}
  The morphism $\pi_\mathrm{HT}^?:\shdpinf \to \operatorname{Gr}_{G, \mu^{-1}}$ is separated, compactifiable, representable
  in locally spatial diamonds, and has $\dimtrg \pi_\mathrm{HT} < \infty$ since
  it is a map between pro-(finite-\'{e}tale) covers of rigid analytic varieties. We also
  note that $\grgmu \to \bungmu$ is surjective in the pro-\'{e}tale topology, see
  \cite[Corollary~6.4.2]{DvHKZIgusaStacks}. Using \cite[Proposition~10.11.(ii),
  Proposition~13.4.(iv)]{EtCohDiam} we see that $\overline{\pi}_\mathrm{HT}^{?}$ is
  separated and representable in locally spatial diamonds. To check the finiteness of $\dimtrg
  \overline{\pi}_{\mathrm{HT}}^?$ it is enough to check on geometric points on
  $\bungmu$ in the image of $\igs^?_{K^p}\gx$. By surjectivity of
  $\grgmu \to \bungmu$ in the pro-\'{e}tale topology, such a point lifts to a point of $\grgmu$, and therefore
  it follows from $\dimtrg \pi_{\mathrm{HT}}^? < \infty$. 

  Now take $?=\emptyset$. The canonical
  compactification of $\overline{\pi}_\mathrm{HT}$ base changes to the canonical
  compactification of $\pi_\mathrm{HT}$ by \cite[Proposition~18.6]{EtCohDiam},
  hence partial properness of $\pi_\mathrm{HT}$ implies partial properness of
  $\overline{\pi}_\mathrm{HT}$. 
\end{proof}

\subsubsection{} Next, we prove that the dualizing sheaf of the Igusa stack is constant. To perform this computation, we work with the nuclear sheaf theory developed in \cite{MannNuclear}.  In what follows, we let $\Lambda$ be a $\zl$-algebra in which $\ell$ is nilpotent. In this case, \cite[Proposition~3.20]{MannNuclear} states that the nuclear sheaf theory of
\cite{MannNuclear} agrees with the \'{e}tale sheaf theory of \cite{EtCohDiam},
where the six functors also agree by construction. Note that the dualizing sheaf indeed makes sense because both $\overline{\pi}_\mathrm{HT}^{?}$ and $\bungmu \to \spd \fp$ admit $!$-functors (they are $\ell$-fine in the sense of \cite[Definition 5.8]{MannNuclear}) see
\Cref{Lem:DualizingBunG}.

\begin{Def}
  Let $G$ be a locally profinite group that is unimodular, and let $X$ be a
  v-stack. Write $\Gamma_G = \im(\delta_G \colon \operatorname{Out}(G) \to
  \mathbb{Q}_{>0}^\times)$, see \Cref{Sec:UnimodularOuter}. We say that a
  $\ul{G}$-gerbe $Y \to X$ for the v-topology is \textit{unimodular} when the
  corresponding $\operatorname{Out}(\ul{G})$-torsor (see for example
  \cite[Section~IV.2.2]{GiraudGerbes} or \cite[Section~2.10]{BreenGerbes})
  \[
    \operatorname{Isom}(Y, X \times [\ast/\ul{G}]) \to X
  \]
  has the property that its pushout along $\delta_G \colon
  \operatorname{Out}(\ul{G}) \twoheadrightarrow \ul{\Gamma_G}$ is a trivial
  $\ul{\Gamma_G}$-torsor over $X$.
\end{Def}

\begin{Example} \label{Exa:UnimodularGerbes}
  Here are examples of unimodular and non-unimodular gerbes.
  \begin{enumerate}
    \item If $G$ is compact, then $\Gamma_G = 1$ and hence all $\ul{G}$-gerbes
      are unimodular.
    \item If $k$ is a discrete perfect field of characteristic $p$, and a
      $\ul{G}$-gerbe $Y \to \spd k$ splits over $\spd \bar{k}$, it is
      unimodular. This is because the $\ul{\Gamma_G}$-torsor over $k$ that
      splits over $\bar{k}$ corresponds to an element of
      \[
        H^1(\bar{k}/k, \Gamma_G) = \Hom_\mathrm{cts}(\gal(\bar{k}/k), \Gamma_G)
        = 0,
      \]
      where the last equality holds because $\Gamma_G \subseteq
      \mathbb{Q}_{>0}^\times$ is torsion free and has the discrete topology.
    \item The $\ul{\qp}$-gerbe over $[\spd \fpbar/\phi^\mathbb{Z}]$
      constructed by descending $\spd \fpbar \times [\ast / \ul{\qp}]$
      along the multiplication map $p \colon [\ast / \ul{\qp}] \to [\ast /
      \ul{\qp}]$ is not unimodular, see \Cref{Ex:ModularCharQp}.
    \item Let $1 \to K \to G \to H \to 1$ be a short exact sequence of locally
      profinite groups, where the conjugation action of $G$ on
      $\operatorname{Haar}(K, \mathbb{Q})$ is trivial (i.e., $\delta_G =
      \delta_H$ as characters of $G$). Then the $\ul{K}$-gerbe $[\ast/\ul{G}]
      \to [\ast/\ul{H}]$ is unimodular. This is because the
      $\operatorname{Out}(\ul{K})$-torsor is obtained by pushing out the
      $\ul{H}$-torsor $\ast \to [\ast/\ul{H}]$ along $\ul{H} \to
      \operatorname{Out}(\ul{K})$, where the composition
      \[
        H \to \operatorname{Out}(K) \xrightarrow{\delta_K}
        \mathbb{Q}_{>0}^\times
      \]
      is trivial.
  \end{enumerate}
\end{Example}

\begin{Lem} \label{Lem:GerbeDualizing}
  Let $G$ be a locally profinite group that is locally pro-prime-to-$\ell$ and
  unimodular. Let $\pi \colon Y \to X$ be a unimodular $\ul{G}$-gerbe for the
  v-topology, and assume that $\pi$ is $\ell$-fine. Then $\pi$ is
  $\ell$-cohomologically smooth, and moreover there exists a (noncanonical)
  isomorphism $\pi^! \Lambda \cong \Lambda[0]$.
\end{Lem}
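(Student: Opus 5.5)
We will reduce to the split case $[\ast/\ul{G}] \times X \to X$ and then track how the resulting local identifications glue. Cohomological smoothness and the formation of $\pi^!$ are v-local on $X$ (they commute with base change for $\ell$-fine maps, see \cite{MannNuclear}). Since $\pi$ is a $\ul{G}$-gerbe, there is a v-cover $X' \to X$ over which $Y \times_X X' \simeq X' \times [\ast/\ul{G}]$. It therefore suffices to treat $q\colon [\ast/\ul{G}] \to \ast$. Choose a compact open pro-prime-to-$\ell$ subgroup $K \subseteq G$. Then $\ast \to [\ast/\ul{K}]$ is a $\ul{K}$-torsor with $K$ pro-prime-to-$\ell$, and $[\ast/\ul{K}] \to [\ast/\ul{G}]$ is a $\ul{G/K}$-fibration with discrete fibers, hence étale. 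By the standard argument (as in \cite[Section V.1]{FarguesScholze} for classifying stacks of locally pro-$p$ groups, $p\ne \ell$), $q$ is $\ell$-cohomologically smooth. Moreover, $q^!\Lambda$ is a $\Lambda$-line in degree $0$, naturally identified with the space of $\Lambda$-valued Haar measures $\operatorname{Haar}(G,\Lambda) = \operatorname{Haar}(G,\mathbb{Q}) \otimes \Lambda$. (This is well-defined because $\ell$ is invertible on indices of pro-prime-to-$\ell$ opens; we take $\mathbb{Z}[1/\ell]$-valued measures normalized on $K$.) Since $G$ is unimodular, the left action of $G$ on $\operatorname{Haar}(G,\Lambda)$ through inner automorphisms is trivial. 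So $q^!\Lambda \simeq \Lambda$ as a sheaf on $[\ast/\ul{G}]$, not just after pulling back to $\ast$.

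Next we globalize. Over $X'$ we have $\pi'^!\Lambda \simeq \operatorname{Haar}(G,\Lambda)$, and on the overlaps $X'\times_X X'$ the two trivializations of the gerbe differ by a section of $\operatorname{Out}(\ul{G})$ (more precisely $\operatorname{Isom}$ of trivializations is an $\ul{\Aut}$-torsor; inner automorphisms act trivially by unimodularity). The identification $q^!\Lambda \cong \operatorname{Haar}(G,\Lambda)$ is natural in automorphisms of $G$. Hence an automorphism $\sigma$ acts on this line by the scalar $\delta_G(\sigma)^{\pm1} \in \mathbb{Q}_{>0}^\times$, viewed in $\Lambda^\times$ via $\mathbb{Z}[1/\ell]$-integrality, which holds because $\delta_G$ takes values in $\ell$-adic units: ratios of indices of pro-prime-to-$\ell$ compact opens. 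Here we use Lemma \ref{Lem:ModularCharacter} to know that $\delta_G$ of a family of automorphisms is locally constant, so the resulting descent datum is a genuine locally constant cocycle. Therefore $\pi^!\Lambda$ is the line bundle on $Y$ obtained by pulling back from $X$ the pushout of the $\operatorname{Out}(\ul{G})$-torsor $\operatorname{Isom}(Y, X\times[\ast/\ul{G}])$ along $\delta_G$, followed by $\ul{\Gamma_G}\to\Lambda^\times$. Unimodularity of the gerbe says this $\ul{\Gamma_G}$-torsor is trivial. Choosing a trivialization gives the noncanonical isomorphism $\pi^!\Lambda\cong\Lambda[0]$.

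The main obstacle is making the naturality statement precise: we must show that the identification $q^!\Lambda \simeq \operatorname{Haar}(G,\Lambda)$ is functorial in the v-sheaf $\Aut(\ul{G})$ (including families over $X'$), and that composing it with the 2-categorical descent of $\pi^!$ along the gerbe actually yields the pushed-out torsor. We would handle this by computing $q^!\Lambda$ via the trace map $q_!q^! \to \id$ and identifying $q_!\Lambda$ on $K$-invariants with integration against Haar measure. Automorphisms transport this trace by $\delta_G$, which pins down the scalar.
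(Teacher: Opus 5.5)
Your proposal is correct and follows essentially the paper's argument: reduce v-locally to $[\ast/\ul{G}]\to\ast$ (smooth by Mann's results), identify its dualizing sheaf with the Haar line (the paper uses $\operatorname{Haar}(G,\Lambda)^\ast$), note that an automorphism acts on it through $\delta_G$, and kill the resulting twist using the unimodularity of the gerbe. The paper phrases the last step slightly differently, by first fixing a trivialization of the $\ul{\Gamma_G}$-torsor and adjusting the splitting so that the pointwise automorphisms $\beta_x$ lie in $\ker\delta_G$; also, your measures and the values of $\delta_G$ should be taken in $\mathbb{Z}_{(\ell)}$ (prime-to-$\ell$ denominators), not $\mathbb{Z}[1/\ell]$, which admits no map to $\Lambda$ since $\ell$ is nilpotent there.
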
 

\begin{Rem}
  In general, even if $Y$ is not unimodular, $\pi^! \Lambda$ is canonically
  identified with the constant sheaf $\operatorname{Haar}(G, \Lambda)^\ast$
  twisted by the $\ul{\Gamma_G}$-torsor corresponding to the gerbe.
\end{Rem}

\begin{proof}[Proof of Lemma \ref{Lem:GerbeDualizing}]
  \def\dualhaar{\operatorname{Haar}(G, \Lambda)^\ast}
  We choose a trivialization of the $\ul{\Gamma_G}$-torsor
  \[
    P = \operatorname{Isom}(Y, X \times [\ast/\ul{G}])
    \times^{\operatorname{Out}(\ul{G})} \ul{\Gamma_G}.
  \]
  Let $\tilde{X} \to X$ be a v-cover by a perfectoid space $\tilde{X}$
  that splits $\pi$, so that there is an isomorphism
  \[
    f \colon \tilde{X} \times_X Y \cong \tilde{X} \times [\ast/\ul{G}]
  \]
  over $\tilde{X}$. Here, we note that this defines a trivialization of
  $P_{\tilde{X}}$, which may differ with our chosen trivialization of $P$ by an
  element of $\operatorname{Fun}_\mathrm{cts}(\lvert \tilde{X} \rvert,
  \Gamma_G)$. Upon choosing a set-theoretic section of $\Aut(G)
  \twoheadrightarrow \Gamma_G$, we may modify $f$ so that the two
  trivializations of $P$ and $P_{\tilde{X}}$ are compatible. By \cite[Theorem~10.13, Example~10.11.(b)]{MannNuclear}, the map $[\ast/\ul{G}]
  \to \ast$ is $\ell$-cohomologically smooth, and hence so is $\tilde{X} \times_X Y
  \to \tilde{X}$. Because $\pi$ is assumed to be $\ell$-fine, it is also
  $\ell$-cohomologically smooth by \cite[Lemma~8.7.(ii)]{MannNuclear}.

  Once $\pi$ is $\ell$-cohomologically smooth, we may compute the dualizing
  sheaf $\pi^! \Lambda$ v-locally using
  \cite[Proposition~8.5.(ii).(a)]{MannNuclear}. As discussed in
  \cite[Example~4.2.4]{HansenKalethaWeinstein}, the dualizing sheaf of
  $[\ast/\ul{G}] \to \ast$ is canonically identified with $\dualhaar$. Here, we
  note that $\dualhaar \in \mathcal{D}([\ast/\ul{G}], \Lambda)$ is a constant
  sheaf because $G$ is unimodular. Write $\tilde{X}^{(2)} = \tilde{X} \times_X
  \tilde{X}$ and
  \[
    \alpha \colon \tilde{X}^{(2)} \times [\ast/\ul{G}] \cong \tilde{X} \times_X
    (\tilde{X} \times_X Y) \xrightarrow{\mathrm{swap}_{23}} (\tilde{X} \times_X
    Y) \times_X \tilde{X} \cong \tilde{X}^{(2)} \times [\ast/\ul{G}]
  \]
  for the descent datum so that
  \[ \begin{tikzcd}[row sep=0em]
    \tilde{X}^{(2)} \times \lbrack \ast/\ul{G} \rbrack \arrow{rr}{\alpha}
    \arrow{dr}[']{\mathrm{pr}_1} & & \tilde{X}^{(2)} \times \lbrack \ast/\ul{G}
    \rbrack \arrow{ld}{\mathrm{pr}_1} \\ & \tilde{X}^{(2)}
  \end{tikzcd} \]
  naturally commutes. It remains to verify that the induced map
  \[
    \dualhaar \cong \mathrm{pr}_1^! \Lambda = \alpha^! \mathrm{pr}_1^! \Lambda
    \cong \alpha^! \dualhaar \cong \dualhaar
  \]
  is the identity map in $\mathcal{D}(\tilde{X} \times [\ast/\ul{G}], \Lambda)$.

  Because $\dualhaar$ is a free $\Lambda$-module of rank $1$, such an
  automorphism corresponds to an element of
  $\operatorname{Fun}_\mathrm{cts}(\lvert \tilde{X}^{(2)} \times [\ast/\ul{G}]
  \rvert, \Lambda^\times) = \operatorname{Fun}_\mathrm{cts}(\lvert
  \tilde{X}^{(2)} \rvert, \Lambda^\times)$. Therefore we may compute this
  element after pulling back along each geometric point $x = \spa(C, C^+) \to
  \tilde{X}^{(2)}$. Write $q \colon x \to x \times [\ast/\ul{G}]$ for the
  natural map. Because every $\ul{G}$-torsor over $x$ is trivial, there exists a
  2-morphism filling in
  \[ \begin{tikzcd}[row sep=0em]
    & x \arrow{ld}[']{q} \arrow{rd}{q} \\ x \times \lbrack \ast/\ul{G} \rbrack
    \arrow{rr}[']{\alpha_x} & & x \times \lbrack \ast/\ul{G} \rbrack.
  \end{tikzcd} \]
  By considering the automorphism groups, we obtain an automorphism $\beta_x
  \colon G \to G$ of topological groups together with an isomorphism $\alpha_x
  \cong [\ast/\beta_x]$. Then the automorphism
  \[
    \dualhaar \cong \mathrm{pr}_1^! \Lambda = \beta_x^! \mathrm{pr}_1^! \Lambda
    \cong \beta_x^! \dualhaar \cong \dualhaar
  \]
  is the one induced by $\beta_x$, and hence is multiplication by
  $\delta_G(\beta_x)$. Because we have chosen the isomorphism $f \colon
  \tilde{X} \times_X Y \cong \tilde{X} \times [\ast/\ul{G}]$ to respect the
  trivialization of $P$, the automorphism $\beta_x$ is contained in the kernel
  of
  \[
    \Aut(G) \twoheadrightarrow \operatorname{Out}(G) \xrightarrow{\delta_G}
    \Gamma_G,
  \]
  and therefore $\delta_G(\beta_x) = 1$.
\end{proof}

\begin{Lem} \label{Lem:DualizingShimura}
  The map $\pi \colon [\shdpinf / \ul{G(\qp)}] \to \spd E$ is
  $\ell$-cohomologically smooth, and moreover the dualizing sheaf is isomorphic
  to $\Lambda(d)[2d]$, where $d = \langle 2\rho, \mu\rangle = \dim \mathsf{X}$.
\end{Lem}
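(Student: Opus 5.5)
We factor $\pi$ as
\[
  [\shdpinf / \ul{G(\qp)}] \xrightarrow{\ \pi'\ } [\shdfin/\ul{1}] = \shdfin \to \spd E,
\]
where $K = K_pK^p$ for a compact open $K_p \subseteq G(\qp)$ chosen so that $K$ is neat. The strategy is to show that $\pi'$ is a gerbe-like map with constant dualizing sheaf, and then to use that $\shdfin$ is (an open subspace of) the diamond of a smooth rigid analytic variety of dimension $d$.

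First we analyze $[\shdpinf/\ul{G(\qp)}] \to \shdfin$. By \Cref{Lem:ChevalleyCongruence} (as recalled in \Cref{Rem:IntegralNonSV5}), $\zgqbar \cap G(\qp) = 1$, so $\ul{G(\qp)}$ acts on $\shdpinf$ with stabilizers controlled by $K^p$. More precisely, $\shdpinf \to \shdfin$ is a torsor for the profinite group $K_p' := (K^p K_p\cdot \zgqbar)/(K^p\cdot\zgqbar)$, which is a quotient of $K_p$. The kernel of $K_p \to K_p'$ is the compact group $N = K_p \cap (K^p\zgqbar)$. Therefore $[\shdpinf/\ul{K_p}] \to \shdfin$ is a $\ul{N}$-gerbe (in fact a trivial one, since $N$ acts trivially), and $[\shdpinf/\ul{K_p}] \to [\shdpinf/\ul{G(\qp)}]$ is a proper \'etale-type cover whose fibres are $\ul{G(\qp)/K_p}$. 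To avoid these fibres, it is cleaner to use the open substack $[\shdpinf/\ul{K_p}]$, since $\ell$-cohomological smoothness and the computation of dualizing sheaves are local on the source for the map $[\shdpinf/\ul{K_p}]\to[\shdpinf/\ul{G(\qp)}]$, which is $\ell$-cohomologically \'etale: its fibres are the discrete set $G(\qp)/K_p$.

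After shrinking $K_p$, we may assume $K_p$ is pro-$p$, hence pro-prime-to-$\ell$. The gerbe $[\shdpinf/\ul{K_p}] \to \shdfin$ for the compact group $N$ is then unimodular by \Cref{Exa:UnimodularGerbes}(1). By \Cref{Lem:GerbeDualizing} it is $\ell$-cohomologically smooth with $\pi^!\Lambda \cong \Lambda$. The required $\ell$-fineness follows because the map is a base change of $[\ast/\ul{N}] \to \ast$ after passing to a v-cover.

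Finally, $\shdfin \to \spd E$ is the diamond of a smooth rigid space of dimension $d = \dim \x$ (or an open subspace of one when $?=\circ$). It is therefore $\ell$-cohomologically smooth with dualizing sheaf $\Lambda(d)[2d]$ by \cite[Proposition~24.4]{EtCohDiam}. Composing the three maps gives the claim, and the twist does not change because the first two maps have trivial dualizing sheaves. The main obstacle is the careful identification of the group acting, i.e.\ showing that the map $[\shdpinf/\ul{K_p}] \to \shdfin$ is precisely a $\ul{N}$-gerbe with $N$ compact, using the torsor description of $\shdinf \to \shdpinf$ from \cite[Section 1.5.8]{KisinShinZhu}. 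A secondary point is verifying $\ell$-fineness, for which we would cite \cite[Theorem~10.13]{MannNuclear}.
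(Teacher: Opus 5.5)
Your proposal has a genuine gap in the passage between $[\shdpinf/\ul{K_p}]$ and $[\shdpinf/\ul{G(\qp)}]$. The factorization you start from does not exist: there is no map $[\shdpinf/\ul{G(\qp)}]\to\shdfin$, because the $G(\qp)$-action does not preserve level $K_p$. Moreover, $g\colon[\shdpinf/\ul{K_p}]\to[\shdpinf/\ul{G(\qp)}]$ is not an open substack. It is a surjective \'etale map with infinite discrete fibres $\ul{G(\qp)/K_p}$.

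Pulling back along $g$ is enough for $\ell$-cohomological smoothness. That still requires knowing that $\pi$ itself is $\ell$-fine, which you never check; the paper gets it from $\ast\to[\ast/\ul{G(\qp)}]$ being fdcs with universal $\ell$-codescent. Pulling back along $g$ is \emph{not} enough for the dualizing sheaf. The identity $g^\ast\pi^!\Lambda=(\pi\circ g)^!\Lambda\cong\Lambda(d)[2d]$ only shows that $\pi^!\Lambda$ is \'etale-locally isomorphic to $\Lambda(d)[2d]$. You must still show that the descent datum along $g$ is trivial. ``Local on the source'' fails for this conclusion. For instance, for a non-unimodular $H$ with pro-$p$ open $K\subseteq H$, the dualizing sheaf of $[\ast/\ul{H}]\to\ast$ is the nontrivial character $\operatorname{Haar}(H,\Lambda)^\ast$, yet it is trivial on $[\ast/\ul{K}]$; compare also \Cref{Exa:UnimodularGerbes}(3).

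In the present situation the obstruction lives on the subgroup $G(\qp)\cap K^p\zgqbar$. This subgroup acts trivially on $\shdpinf$ and is usually non-compact: for $\g=\mathrm{GL}_2$ with $K^p$ of level $N$, it contains $p^{m\mathbb{Z}}$ whenever $p^m\equiv 1\bmod N$. You must show that it acts trivially on the stalks of $\pi^!\Lambda$. Your $\ul{N}$-gerbe only sees the compact part $K_p\cap K^p\zgqbar$, which is the easy case \Cref{Exa:UnimodularGerbes}(1). So the one step where unimodularity is genuinely needed is skipped.

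The paper avoids the \'etale cover altogether. It factors $\pi$ as
\[
[\shdpinf/\ul{G(\qp)}]\xrightarrow{f}[\shdinf/\ul{B}]\xrightarrow{\rho}\spd E,
\]
where $A=G(\qp)\times(K^p\zgqbar/\zgqbar)\twoheadrightarrow B=(K^pG(\qp))\zgqbar/\zgqbar$. Then $f$ is a gerbe for $\ker(A\to B)\cong G(\qp)\cap K^p\zgqbar$, which is non-compact but locally pro-$p$. This gerbe is unimodular by \Cref{Exa:UnimodularGerbes}(4), since $A$ and $B$ are unimodular, so $f^!\Lambda\cong\Lambda$ by \Cref{Lem:GerbeDualizing}. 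Meanwhile $\rho^!\Lambda\cong\Lambda(d)[2d]$ because $[\shdinf/\ul{B}]$ is an \'etale quotient of the smooth rigid space $\shdfin$.

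Your claim that the $\ul{N}$-gerbe is trivial is also unjustified, since $K_p\to K_p/N$ need not split, but you do not actually need it.
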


\begin{proof}
  \def\shdgqp{[\shdinf/\ul{B}]}
  We first verify that $\pi \colon [\shdpinf / \ul{G(\qp)}] \to \spd E$ is
  $\ell$-fine. By \cite[Lemma~10.8]{MannNuclear}, the map $\ast \to
  [\ast/\ul{G(\qp)}]$ is fdcs and admits universal
  $\ell$-codescent.\footnote{While this is stated only for profinite groups, we
  can simply choose a pro-$p$ compact open subgroup $K_p \subseteq \ul{G(\qp)}$
  and observe that $\ast \to [\ast/\ul{K_p}]$ and $[\ast/\ul{K_p}] \to
  [\ast/\ul{G(\qp)}]$ are both fdcs and admit universal $\ell$-codescent,
  because the second map is \'{e}tale. Then \cite[Lemma~3.3.2.(4)]{LiuZheng}
  shows that their composition also admits universal $\ell$-codescent.} Hence
  its base change $\shdpinf \to [\shdpinf / \ul{G(\qp)}]$ is also fdcs and
  admits universal $\ell$-codescent. On the other hand, the map $\shdpinf \to
  \spd E$ is a limit of smooth rigid analytic varieties along finite \'{e}tale
  maps, hence is fdcs. It now follows from \cite[Definition~5.8]{MannNuclear}
  that $\pi$ is $\ell$-fine.

  We now compute the dualizing sheaf. Choose a sufficiently small open compact
  subgroup $K_p \subseteq G(\qp)$ and consider the locally profinite groups
  \[
    A = G(\qp) \times \frac{K^p \zgqbar}{\zgqbar} \twoheadrightarrow B =
    \frac{(K^p G(\qp)) \zgqbar}{\zgqbar} \supseteq \frac{(K^p K_p)
    \zgqbar}{\zgqbar},
  \]
  where the first map is multiplication (noting that the image of the two
  factors commute with each other), and the second map is an inclusion of an
  open subgroup. These groups naturally act on $\shdinf$, and taking the
  respective quotients recovers
  \[
    [\shdpinf / \ul{G(\qp)}] \xrightarrow{f} \shdgqp \leftarrow \shdfin.
  \]
  Note that $\shdfin \to \shdgqp$ is an \'{e}tale quotient because it
  corresponds to an open subgroup, hence $\rho \colon \shdgqp \to \spd E$ is
  $\ell$-fine. Applying \cite[Lemma~5.10.(iv)]{MannNuclear}, we see that $f$ is
  $\ell$-fine as well.

  We note that $f$ is the base change of
  \[
    \bar{f} \colon [\ast/\ul{A}] \to [\ast/\ul{B}].
  \]
  The group $A$ is unimodular because it is the product of two unimodular
  groups. The group $B$ is unimodular because it is an open subgroup of 
  $\gaf/\zgqbar$, which is unimodular as it is a central quotient of the
  unimodular group $\gaf$. It follows from \Cref{Exa:UnimodularGerbes}.(4) that
  $\bar{f}$ is a unimodular gerbe for $\ker(A \to B)$, and hence its base change
  $f$ is also a unimodular gerbe. The kernel of $A \to B$ is a closed subgroup
  of $G(\qp)$, hence locally pro-$p$, and therefore by \Cref{Lem:GerbeDualizing}
  we obtain an isomorphism
  \[
    \pi^! \Lambda = f^! \rho^! \Lambda \cong f^\ast \rho^! \Lambda
    \otimes_\Lambda f^! \Lambda \cong f^\ast \rho^! \Lambda.
  \]
  On the other hand, by \cite[Remark~8.3.5]{DvHKZIgusaStacks} the dualizing
  sheaf $\rho^! \Lambda$ is isomorphic to $\Lambda(d)[2d]$ because $\shdgqp$ is
  an \'{e}tale quotient of a smooth rigid analytic variety.

  Finally, to show that $\pi$ is $\ell$-cohomologically smooth, we note that
  $\shdfin$ is $\ell$-cohomologically smooth over $\spd E$. Because $\shdfin \to
  \shdgqp$ is \'{e}tale and $f$ is $\ell$-cohomologically smooth by
  \Cref{Lem:GerbeDualizing}, we may use \cite[Lemma~8.7.(ii)]{MannNuclear} to
  conclude that $[\shdpinf/\ul{G(\qp)}]$ is $\ell$-cohomologically smooth over
  $\spd E$.
\end{proof}

\begin{Lem} \label{Lem:DualizingBunG}
  The map $\pi \colon \bung \to \spd \fp$ is $\ell$-cohomologically smooth with
  dualizing sheaf $\pi^! \Lambda \cong \Lambda[0]$.
\end{Lem}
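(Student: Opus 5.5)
The plan is to work stratum by stratum, using the decomposition of $\bung$ into locally closed Harder--Narasimhan strata $\bung^{b}$ for $[b]\in B(G)$. I will then glue the local computations using the known structure from Fargues--Scholze.

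The first step is cohomological smoothness. By \cite[Theorem~IV.1.19]{FarguesScholze}, $\bung \to \spd \fp$ is $\ell$-cohomologically smooth of $\ell$-dimension $0$ over $\spd\fpbar$; this descends to $\spd\fp$ because $\spd\fpbar\to\spd\fp$ is a pro-étale $\ul{\gal(\fpbar/\fp)}$-torsor, hence étale-locally trivial in a profinite sense. Moreover $\ell$-fineness in the sense of \cite[Definition~5.8]{MannNuclear} follows as in the proof of \Cref{Lem:DualizingShimura}. The chart $\grg$-type or the charts $\mathcal{M}_b \to \bung$ exhibit $\bung$ as covered by smooth maps from locally spatial diamonds. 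Concretely, I would use the chart $\pi_b\colon\mathcal{M}_b\to\bung$ of \cite[Theorem~V.3.7]{FarguesScholze} together with \cite[Lemma~8.7.(ii)]{MannNuclear} to get smoothness and to reduce the computation of $\pi^!\Lambda$ to a local computation.

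The second step is to show that the dualizing sheaf is invertible and trivial. By \cite[Proposition~8.5]{MannNuclear}, $\pi^!\Lambda$ is an invertible object, so it is $\Lambda$-locally free of rank one placed in a single degree on each connected component. Restricting to the open substack $\bung^{b}$ for basic $b$, which is $[\spd\fpbar/\ul{A_{G,b}}]$ by the Proposition following \Cref{Sec:AutOfPb}, reduces the question to the classifying stack of $\ul{G_b(\qp)}$ over $\spd\fpbar$ descended along Frobenius. Here $G_b(\qp)$ is unimodular, and the gerbe $[\spd\fpbar/\ul{A_{G,b}}]\to[\spd\fpbar/\phi^{\mathbb{Z}}]$ splits over $\fpbar$. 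It is therefore unimodular by \Cref{Exa:UnimodularGerbes}.(2) and (4): conjugation by $(b,\phi)$ preserves Haar measure on $G_b(\qp)$, being an algebraic automorphism of a reductive $p$-adic group. \Cref{Lem:GerbeDualizing} then gives $\pi^!\Lambda|_{\bung^b}\cong\Lambda[0]$, up to a twist by $\operatorname{Haar}$, which is constant.

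The third step, and the main obstacle, is to extend the triviality from the semistable locus to all of $\bung$, including the Frobenius-descent to $\fp$. An invertible sheaf on $\bung$ that is trivial over an open dense substack need not be trivial, since the connected components of $\bung$ are indexed by $\pi_1(G)_\Gamma$ and each contains exactly one basic stratum. The nontrivial input is that $\pi^!\Lambda$ is trivialized globally. I would try to quote \cite[Theorem~IV.1.19]{FarguesScholze} and \cite[Proposition~8.2.x]{DvHKZIgusaStacks}, where over $\fpbar$ the dualizing complex of $\bung$ is shown to be trivial. Invertible sheaves on each connected component are determined by restriction to the open dense basic stratum, because $\pi^!\Lambda$ lives in degree $0$ and the components are connected.

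It then remains to descend the isomorphism $\pi^!\Lambda\cong\Lambda$ along $\spd\fpbar\to\spd\fp$. The obstruction is a continuous character of $\gal(\fpbar/\fp)$ on each component, namely the Frobenius action on $H^0$ of $\operatorname{Haar}(G_b(\qp),\Lambda)^\ast$. This character is trivial because Frobenius acts through $(b,\phi)$, which preserves Haar measure, as in the second step. I will use the canonical Frobenius descent datum of \Cref{Sec:BunGPresentation} and \cite[Proposition~2.1.10]{DvHKZIgusaStacks} to make this identification precise.
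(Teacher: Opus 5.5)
Your architecture is the paper's. You get smoothness from \cite[Theorem~IV.1.19]{FarguesScholze} and descend it to $\fp$ with \cite[Lemma~8.7.(ii)]{MannNuclear}. You take a global trivialization over $\fpbar$ as input. You then check the descent to $\fp$ on the basic strata, which are open and meet every connected component; there $\bung^{[b]}\to\spd\fp$ is a $\ul{G_b(\qp)}$-gerbe split over $\fpbar$, hence unimodular by \Cref{Exa:UnimodularGerbes}.(2), so \Cref{Lem:GerbeDualizing} applies. The gap is exactly the input you yourself call nontrivial. \cite[Theorem~IV.1.19]{FarguesScholze} gives $\ell$-cohomological smoothness of $\ell$-dimension $0$ but does not identify the dualizing complex, and \cite[Section~8.2]{DvHKZIgusaStacks} is about canonical Frobenius descent data, not about $\bung$. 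The triviality of the dualizing complex of $\bun_{G,\fpbar}$ is \cite[Proposition~3.18]{HamannImai}, which is what the paper quotes. Without it, your argument only controls $\pi^!\Lambda$ on the basic strata.

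Your sentence ``invertible sheaves on each connected component are determined by restriction to the open dense basic stratum, because $\pi^!\Lambda$ lives in degree $0$ and the components are connected'' cannot substitute for that input. It contradicts the sentence just before it, and connectedness does not imply it. What connectedness does give is the weaker fact that endomorphisms of the constant sheaf $\Lambda$ (i.e.\ locally constant functions) are determined by their restriction to any nonempty open substack of a component. That weaker fact is what makes your final paragraph correct once $\pi^!\Lambda|_{\bun_{G,\fpbar}}\cong\Lambda$ is known. The descent datum to $\fp$ is then a continuous character of $\gal(\fpbar/\fp)$ on each component, and it is detected on the basic stratum.

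Two minor points:
\begin{enumerate}
\item The basic stratum is a gerbe over $\spd\fp=[\spd\fpbar/\ul{\gal(\fpbar/\fp)}]$, not over $[\spd\fpbar/\phi^{\mathbb{Z}}]$.
\item \Cref{Exa:UnimodularGerbes}.(2) alone gives unimodularity, so the Haar-measure argument for $(b,\phi)$ is unnecessary. It is harmless, though, since $(b,\phi)$ is central in $A_{G,b}$.
\end{enumerate}
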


\begin{proof}
  The $\ell$-fineness appears in \cite[Theorem~IV.1.19]{FarguesScholze}; while
  the theorem is stated over $\fpbar$, the proof also works over $\fp$. For
  $\ell$-cohomological smoothness, we use \cite[Lemma~8.7.(ii)]{MannNuclear} to
  reduce to the statement over $\fpbar$. The dualizing complex over $\fpbar$ was
  computed in \cite[Proposition~3.18]{HamannImai}, and it remains to descend it
  to $\fp$. Because the union of $\bung^{[b]} \subseteq \bung$ over all basic $b
  \in B(G)$ intersects all connected components of $\bung$, it suffices to show
  that the dualizing complex of $\bung^{[b]} \to \spd \fp$ is trivial. Because
  this map is a $\ul{G_b(\qp)}$-gerbe, it now follows from
  \Cref{Exa:UnimodularGerbes}.(2) and \Cref{Lem:GerbeDualizing}.
\end{proof}

\begin{Prop} \label{Prop:DualizingSheaf}
  The map $\igsfingx \to \spd \fp$ is $\ell$-cohomologically smooth with
  dualizing sheaf isomorphic to $\Lambda[0]$.
\end{Prop}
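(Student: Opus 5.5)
The plan is to follow the strategy of \cite[Theorem~8.3.1]{DvHKZIgusaStacks}, with Lemma \ref{Lem:DualizingShimura} as the new input. We transport the computation of the dualizing sheaf across the Cartesian square of Corollary \ref{Cor:FiniteLevelIgusa1}, after quotienting the top row by $\ul{G(\qp)}$. Concretely, the Cartesian diagram with $\shdpinf \to \grgmu$ over $\igsfingx \to \bungmu$ is $\ul{G(\qp)}$-equivariant, so it descends to a $2$-Cartesian square
\[
  [\shdpinf/\ul{G(\qp)}] \to \grgmug, \qquad \igsfingx \to \bungmu,
\]
whose vertical maps are $q \colon [\shdpinf/\ul{G(\qp)}] \to \igsfingx$ and $\mathrm{BL} \colon \grgmug \to \bungmu$. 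Write $h \colon \igsfingx \to \spd \fp$ for the structure map.

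\textbf{Step 1: $\ell$-fineness of $h$.} This comes from Lemma \ref{Lem:PiHTBarfdcs}, which gives that $\overline{\pi}_{\mathrm{HT}}$ is compactifiable, representable in locally spatial diamonds and of finite $\dimtrg$. Combined with Lemma \ref{Lem:DualizingBunG}, this shows $h$ is $\ell$-fine.

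\textbf{Step 2: the Beauville--Laszlo side.} The map $\mathrm{BL} \colon \grgmug \to \bungmu$ is $\ell$-cohomologically smooth. Its dualizing sheaf is computable: $\grgmug \to \spd E \to \spd \fp$ has dualizing sheaf $\Lambda(d)[2d]$ twisted by Haar measures, exactly as in the Hodge type case (\cite[Section 8.3]{DvHKZIgusaStacks}). Together with Lemma \ref{Lem:DualizingBunG}, this gives $\mathrm{BL}^!\Lambda \cong \Lambda(d)[2d]$, and $\mathrm{BL}$ is a v-cover (pro-\'etale surjective).

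\textbf{Step 3: descend along the base change.} Base change gives $q^!\Lambda \cong \Lambda(d)[2d]$ and shows $q$ is cohomologically smooth and surjective. Lemma \ref{Lem:DualizingShimura} gives $(h\circ q)^!\Lambda \cong \Lambda(d)[2d]$, the composite $[\shdpinf/\ul{G(\qp)}] \to \spd E \to \spd\fp$ being smooth. Smoothness of $h$ then follows from \cite[Lemma~8.7.(ii)]{MannNuclear}, since $h$ is $\ell$-fine and $h\circ q$ and $q$ are smooth with $q$ a cover. Next,
\[
  q^*h^!\Lambda \otimes q^!\Lambda \cong (h\circ q)^!\Lambda,
\]
so $q^*h^!\Lambda \cong \Lambda[0]$. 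Since $h^!\Lambda$ is invertible and $q$ is a v-cover, $h^!\Lambda$ is a line bundle concentrated in degree $0$.

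\textbf{Step 4: triviality of $h^!\Lambda$.} The main obstacle is showing that this invertible sheaf is trivial, not merely trivial after pullback. The descent datum along $q$ must be checked to be trivial, and this is where the Haar-measure and unimodularity bookkeeping of Lemma \ref{Lem:GerbeDualizing} reenters. My first attempt is the route of \cite[Theorem~8.3.1]{DvHKZIgusaStacks}: descent data on $\Lambda$ correspond to continuous characters valued in $\Lambda^\times$. Pulling back further to $\shdinf$, all identifications are canonical and compatible with the $\ul{G(\qp)}$- and $\ul{\gafp}$-actions, since the isomorphisms in Lemma \ref{Lem:DualizingShimura} came from unimodular gerbes and the local Haar normalizations on both sides cancel. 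If this compatibility is hard to verify directly, I would instead reduce along the Hodge-type roof $\gxtwo \leftarrow \gxthree \to \gx$. There, triviality is known by \cite[Theorem~8.3.1]{DvHKZIgusaStacks}, and the remaining comparison maps are gerbes for locally pro-$p$ unimodular groups, to which Lemma \ref{Lem:GerbeDualizing} applies.
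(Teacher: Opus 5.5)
Your Steps 1--3 essentially match the paper. The paper uses the same Cartesian square, except that it quotients the top row by $\ul{G(\qp)}\times\phi^{\mathbb{Z}}$ rather than $\ul{G(\qp)}$ alone, and it proves smoothness via \cite[Lemma~8.7.(ii)]{MannNuclear} exactly as you do. There is one small slip. Lemma~\ref{Lem:DualizingShimura} computes the dualizing sheaf relative to $\spd E$, and $\spd E\to\spd\fp$ is itself $\ell$-cohomologically smooth of dimension $1$. So neither $[\shdpinf/\ul{G(\qp)}]$ nor $\grgmug$ has dualizing sheaf $\Lambda(d)[2d]$ over $\spd\fp$. The extra factor is the same on both sides, so $q^*h^!\Lambda\cong\Lambda[0]$ still follows.

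The genuine gap is Step~4, which is the step that needs an idea, and neither of your suggestions supplies it.

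\emph{Why the character idea fails.} The map $q$ is not a torsor or gerbe for a group: its geometric fibers are fibers of $\mathrm{BL}$, i.e.\ admissible-locus type spaces. The obstruction to descending your trivialization is therefore a locally constant $\Lambda^\times$-valued cocycle on $[\shdpinf/\ul{G(\qp)}]\times_{\igsfingx}[\shdpinf/\ul{G(\qp)}]$, not a character of a group. Calling the identifications ``canonical'' does not make this cocycle a coboundary. The isomorphisms with $\Lambda(d)[2d]$ on the two sides are non-canonical, and the unimodularity bookkeeping of Lemma~\ref{Lem:GerbeDualizing} only concerns the $\ul{G(\qp)}$-gerbe directions, not the $\mathrm{BL}$ directions.

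\emph{Why the roof fallback fails.} The Igusa stacks for $\gxtwo$, $\gxthree$, $\gx$ are related through pushouts along $\apg\to\aph$, products with toral Igusa stacks, and passage to sub-Shimura data. They are not related by gerbes for locally pro-$p$ unimodular groups, so Lemma~\ref{Lem:GerbeDualizing} has nothing to apply to.

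\emph{What is missing: connectedness of the fibers.} Let $g\colon[\grgmu/(\ul{G(\qp)}\times\phi^{\mathbb{Z}})]\to\bungmu$, let $f=\overline{\pi}_{\mathrm{HT}}$, and let $\tilde g$ be the base change of $g$ along $f$. The fiber of $\tilde g$ over a point $\spa(C,C^+)$ is a fiber of $g$, namely a relative admissible locus over $\ffcurve(C,C^+)_E$. This is connected by \cite[Theorem~1.1]{GleasonLourenco}, hence $\tau_{\le 0}R\tilde g_*\Lambda\cong\Lambda[0]$. Using the trivialization $\tilde g^*f^!\Lambda\cong\Lambda$, adjunction then produces
\[
f^!\Lambda=\tau_{\le0}f^!\Lambda\to\tau_{\le0}R\tilde g_*\Lambda\cong\Lambda[0].
\]
This map pulls back along $\tilde g$ to the chosen trivialization, so it is an isomorphism. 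Lemma~\ref{Lem:DualizingBunG} then gives $h^!\Lambda\cong\Lambda[0]$. This is what the paper does, following the second half of the proof of \cite[Theorem~8.3.1]{DvHKZIgusaStacks}. Including $\phi^{\mathbb{Z}}$ in the quotient is what makes the fibers exactly the relative admissible loci to which Gleason--Lourenço applies.
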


\begin{proof}
  We first compute the dualizing sheaf. The second half of the proof of
  \cite[Theorem~8.3.1]{DvHKZIgusaStacks} works almost verbatim. Consider the
  Cartesian diagram
  \[ \begin{tikzcd}[row sep=small]
    \lbrack \shdpinf / (\ul{G(\qp)} \times \phi^\mathbb{Z}) \rbrack
    \arrow{r}{\tilde{f}} \arrow{d}{\tilde{g}} & \lbrack \grgmu / (\ul{G(\qp)}
    \times \phi^\mathbb{Z}) \rbrack \arrow{d}{g} \\ \igsfingx \arrow{r}{f} &
    \bungmu.
  \end{tikzcd} \]
  By \Cref{Lem:DualizingBunG} it suffices to show that $f^! \Lambda \cong
  \Lambda[0]$. The map $g$ is $\ell$-cohomologically smooth by combining
  \cite[Theorem~IV.1.19]{FarguesScholze} with \cite[Lemma~8.7.(i)]{MannNuclear}.
  Hence by \cite[Proposition~8.5.(ii)]{MannNuclear} we have
  \[
    \tilde{g}^\ast f^! \Lambda \cong \tilde{f}^! g^\ast \Lambda =
    \tilde{f}^! \Lambda \cong \Lambda[0]
  \]
  where the last isomorphism follows from \cite[Lemma~8.3.4]{DvHKZIgusaStacks},
  \Cref{Lem:DualizingShimura}. This shows that
  $f^! \Lambda$ is an invertible sheaf in degree zero, and by adjunction we
  produce a map $f^! \Lambda = \tau_{\le 0} f^! \Lambda \to \tau_{\le 0}
  R\tilde{g}_\ast \Lambda \cong \Lambda[0]$ that we check is an isomorphism upon
  pulling back along $\tilde{g}$. Here, the isomorphism $\tau_{\le 0}
  R\tilde{g}_\ast \Lambda \cong \Lambda[0]$ comes from the fact that the fiber
  of $\tilde{g}$ over a $(C, C^+)$-point, which is a fiber of $g$, is a relative
  admissible locus over $\ffcurve(C, C^+)_E$ and hence connected by
  \cite[Theorem~1.1]{GleasonLourenco}.

  To show that $\pi \colon \igsfingx \to \spd \fp$ is $\ell$-cohomologically
  smooth, we simply note that $[\shdfin/\ul{G(\qp)}]$ is $\ell$-cohomologically
  smooth over $\spd \fp$ by \Cref{Lem:DualizingShimura} and
  \cite[Proposition~24.5]{EtCohDiam}, and also $\ell$-cohomologically smooth
  over $\igsfingx$. Because $\pi$ is already $\ell$-fine by
  \Cref{Lem:PiHTBarfdcs}, we may use \cite[Lemma~8.7.(ii)]{MannNuclear} to
  conclude that $\pi$ is $\ell$-cohomologically smooth.
\end{proof}

\subsection{Conjectures about the Igusa sheaf} \label{Sub:SheafConjecture} In this section, we define the Igusa sheaf and state some conjectures.

\subsubsection{} Let $\Lambda$ be a $\zl$ algebra in which $\ell$ is nilpotent. Let $W$ be a continuous representation of $K^p$ on a finite free $\Lambda$-module that factors through the image of $K^p$ under $\gaf \to \g^c(\af)$. As in \cite[Section 1.5.8]{KisinShinZhu}, we can use $W$ to define a local system $\mathbb{W}$ on $\mathbf{Sh}_{K_pK^p}\gx$ for each compact open subgroup $K_p$ of $G(\qp)$; these are compatible with one another as the level changes. We will refer to local systems defined in this way from such a $W$ as \emph{standard $\Lambda$-local systems}. 

\subsubsection{} By Corollary \ref{Cor:FiniteLevelIgusa1}, the covering group of $\igs\gx \to \igs_{K^p}\gx$ is the same as the covering group of $\mathbf{Sh}\gx^{\diamondsuit} \to \mathbf{Sh}_{K^p}\gx^{\diamondsuit}$. Thus we can descend a standard $\Lambda$-local system $\mathbb{W}$ to $\igs_{K^p}$, and we will refer to both the local system and its descent as $\mathbb{W}$. For such a $\mathbb{W}$ we define (using Lemma \ref{Lem:PiHTBarfdcs})
\begin{alignat*}{2}
    \mathcal{F}_{c, \mathbb{W}}&:=\overline{\pi}_{\mathrm{HT}, !} \mathbb{W} \quad &\mathcal{F}_{\mathbb{W}}:=R \overline{\pi}_{\mathrm{HT}, \ast} \mathbb{W} \\
    \mathcal{F}_{c,\mathbb{W}}^{\circ}&:=\overline{\pi}^{\circ}_{\mathrm{HT}, !} \mathbb{W} \quad &\mathcal{F}^{\circ}_{\mathbb{W}}:=R \overline{\pi}^{\circ}_{\mathrm{HT}, \ast} \mathbb{W}.
\end{alignat*}

\begin{Rem} \label{Rem:Proper}
The sheaf denoted by $\mathcal{F}$ in \cite[Section 8.4.9]{DvHKZIgusaStacks} is the sheaf $\mathcal{F}^{\circ}_{\Lambda}$ defined above. Note that if $\mathbf{Sh}_K\gx$ is proper, then $\igs^{\circ}\gx=\igs\gx$ and moreover $\pi_{\mathrm{HT}}$, hence $\overline{\pi}_{\mathrm{HT}}$, is proper. This shows that $\mathcal{F}_{\mathbb{W}}^{\circ}=\mathcal{F}_{\mathbb{W}}=\mathcal{F}_{c,\mathbb{W}}=\mathcal{F}_{c,\mathbb{W}}^{\circ}$.
\end{Rem}

We have the following conjecture. Let $k$ be an algebraic closure of $\fp$. 

\begin{Conj} \label{Conj:ULAPerverse}
Let $\mathbb{W}$ be a standard $\Lambda$-local system.
    \begin{enumerate}[$($i\,$)$]
        \item For $? \in \{\emptyset, \circ\}$, the sheaves $\mathcal{F}_{c,\mathbb{W}}^{?}, \mathcal{F}_{\mathbb{W}}^?$ are universally locally acyclic with respect to $\bungmuk \to \spd k$.
        \item The sheaf $\mathcal{F}_{c, \mathbb{W}}$ lies in ${}^pD^{\le 0}(\bungmuk,\Lambda)$ for the perverse t-structure on $\mathcal{D}(\bungmuk, \Lambda)$ of \cite[Proposition~8.1.5]{DvHKZIgusaStacks}.
    \end{enumerate}
\end{Conj}

\begin{Rem}
Assume that $\mathbb{W}=\Lambda$. For many PEL type Shimura varieties, both parts of \Cref{Conj:ULAPerverse} follow from \cite[Proposition~3.7]{Hamann-Lee} using Lemma \ref{Lem:CohomIgusaVar}. If $\gx$ is a Shimura variety of Hodge type, then \Cref{Conj:ULAPerverse}(i) for $?=\circ$ is \cite[Corollary~8.5.4]{DvHKZIgusaStacks}. If $\mathbf{Sh}_K\gx$ is moreover compact, then \Cref{Conj:ULAPerverse}(ii) is proved under mild assumptions in \cite[Theorem~8.6.3]{DvHKZIgusaStacks}. 
\end{Rem}

\begin{Prop} \label{Prop:ULA}
  If $\mathbf{Sh}_K\gx$ is proper, then \Cref{Conj:ULAPerverse}$($i$)$ holds.
\end{Prop}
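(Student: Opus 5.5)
In the proper case all four sheaves coincide by Remark \ref{Rem:Proper}, so it suffices to prove that $\mathcal{F}_{\mathbb{W}}=\overline{\pi}_{\mathrm{HT},\ast}\mathbb{W}=\overline{\pi}_{\mathrm{HT},!}\mathbb{W}$ is universally locally acyclic over $\spd k$. I will establish this by pulling back to the affine Grassmannian along the Beauville--Laszlo map and using the Cartesian square of Corollary \ref{Cor:FiniteLevelIgusa1}. The model is the Hodge type argument of \cite[Corollary~8.5.4]{DvHKZIgusaStacks}.

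\textbf{Step 1: reduce to a ULA statement upstairs.} Universal local acyclicity can be checked after pullback along an $\ell$-cohomologically smooth surjection, by \cite[Proposition~IV.2.13]{FarguesScholze}. Consider
\[
g\colon[\grgmu/\ul{G(\qp)}]_k\to\bungmuk,
\]
or its variant with the extra $\phi^{\mathbb{Z}}$-quotient used in Proposition \ref{Prop:DualizingSheaf}. This map is $\ell$-cohomologically smooth by \cite[Theorem~IV.1.19]{FarguesScholze}, and it is surjective. Proper base change in the $2$-Cartesian square gives
\[
g^\ast\mathcal{F}_{\mathbb{W}}\cong\tilde{f}_!\mathbb{W},
\]
where $\tilde f\colon[\mathbf{Sh}_{K^p}\gx^{\diamondsuit}/\ul{G(\qp)}]_k\to[\grgmu/\ul{G(\qp)}]_k$ is induced by $\pi_{\mathrm{HT}}$. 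Here $\mathbb{W}$ pulls back to the standard local system on the Shimura tower, by the construction of its descent. It therefore suffices to show that $\tilde{f}_!\mathbb{W}$ is ULA over $\spd k$.

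\textbf{Step 2: ULA upstairs.} The map $\tilde f$ is proper, because $\pi_{\mathrm{HT}}$ is proper when $\mathbf{Sh}_K\gx$ is proper. Proper pushforward preserves ULA sheaves (\cite[Proposition~IV.2.11]{FarguesScholze}). So it suffices that $\mathbb{W}$ is ULA on $[\mathbf{Sh}_{K^p}\gx^{\diamondsuit}/\ul{G(\qp)}]_k$ over $\spd k$.

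By Lemma \ref{Lem:DualizingShimura}, the source is $\ell$-cohomologically smooth over $\spd E$. Its structure map to $\spd k$ factors through $\spd \breve E\to\spd k$, after base change to $k$. A local system on a space that is $\ell$-cohomologically smooth over $\spd \breve E$ is ULA over $\spd\breve E$. The remaining point is to pass from $\spd\breve E$ to $\spd k$. For this I would follow \cite{DvHKZIgusaStacks} and pass to the finite level $K_p$ quotient $\mathbf{Sh}_K\gx^{\diamondsuit}_{\breve E}$, which is smooth rigid analytic over $\breve E$. There one can use that $\spd\breve E\to\spd k$ is itself $\ell$-cohomologically smooth (it is the punctured open unit disc quotient in the sense of \cite{FarguesScholze}). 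Composites of smooth maps are smooth, and local systems on smooth objects are ULA.

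\textbf{Main obstacle.} The delicate point is Step 2's handling of the $\ul{G(\qp)}$-stack quotient and the non-free $K^p$-action. This is exactly why the gerbe computation of Lemma \ref{Lem:GerbeDualizing} was needed. I would argue that the map from $[\mathbf{Sh}_{K^p}\gx^{\diamondsuit}/\ul{G(\qp)}]$ to the étale quotient $[\mathbf{Sh}\gx^{\diamondsuit}/\ul{B}]$ is a unimodular gerbe for a locally pro-$p$ group, hence $\ell$-cohomologically smooth. Pullback of ULA sheaves along smooth maps stays ULA, and the étale quotient is étale-locally the smooth rigid space $\mathbf{Sh}_K\gx$. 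Given this, $\mathbb{W}$ is ULA on the source, and the reduction in Step 1 finishes the proof.
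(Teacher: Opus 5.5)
Your argument is correct, but it is arranged differently from the paper's.

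\textbf{The paper's route.} It never leaves the Igusa stack. It uses that $\igs_{K^p}\gx_k \to \spd k$ is $\ell$-cohomologically smooth with trivial dualizing sheaf (Proposition \ref{Prop:DualizingSheaf}), so $\mathbb{W}$ is already ULA (dualizable) on $\igs_{K^p}\gx_k$. It then pushes forward along the proper map $\overline{\pi}_{\mathrm{HT}}$, using \cite[Proposition~9.10]{MannNuclear}.

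\textbf{Your route.} You check ULA smooth-locally on $\bungmuk$ by pulling back along the chart $g$, and use proper base change to reduce to $\tilde f_!\mathbb{W}$. You then run the same ``proper pushforward of a ULA local system on a smooth space'' argument on the Shimura side. There the needed smoothness is Lemma \ref{Lem:DualizingShimura} together with smoothness of $\spd\breve E \to \spd k$; the paper cites \cite[Proposition~24.5]{EtCohDiam} for the analogous step.

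\textbf{Comparison.} The two proofs use the same ingredients. The smoothness half of Proposition \ref{Prop:DualizingSheaf} is itself proved by descending along the base change of your chart $g$. So your route trades the appeal to smoothness of the Igusa stack for smooth-locality of ULA on the source plus a proper base change. The paper's version is shorter once Proposition \ref{Prop:DualizingSheaf} is available.

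\textbf{Minor remarks.}
\begin{itemize}
\item Unimodularity of the gerbe plays no role in $\ell$-cohomological smoothness; it matters only for identifying the dualizing sheaf.
\item The detour to finite level $K_p$ in Step 2 is unnecessary. Once Lemma \ref{Lem:DualizingShimura} is cited, you can simply compose smooth maps, so your ``main obstacle'' paragraph just re-derives that lemma.
\end{itemize}
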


\begin{proof}
  We note that universal locally acyclicity is equivalent to dualizability in
  the sense of \cite[Definition~7.5]{MannNuclear}, compare
  \cite[Proposition~IV.2.19, Definition~IV.2.22,
  Definition~IV.2.31]{FarguesScholze} with \cite[Proposition~7.7, Corollary~7.8,
  Proposition~8.6]{MannNuclear}. By \cite[Proposition~9.10]{MannNuclear} applied to
  \[
    \igsk \xrightarrow{\bar{\pi}_\mathrm{HT}} \bungmuk \to \spd k,
  \]
  it suffices to check that $\bar{\pi}_\mathrm{HT}$ is cohomologically proper and that $\mathbb{W}$ is universally locally acyclic with
  respect to $\igsk \to \spd k$. The properness follows from the properness of the Shimura variety, see
  \Cref{Rem:Proper}. The sheaf $\mathbb{W}$ is universally locally acyclic with
  respect to $\igsk \to \spd k$ because the map is $\ell$-cohomologically smooth
  by \Cref{Prop:DualizingSheaf}, and $\mathbb{W}$ is locally constant with
  finite projective stalks; for example, in
  \cite[Proposition~7.7.(ii)]{MannNuclear} both sides are identified with
  $\pi_1^\ast \mathbb{W}^\vee \otimes \pi_2^\ast \mathbb{W}$ as the dualizing
  sheaf is trivial.
\end{proof}

\subsubsection{Igusa varieties and the Igusa sheaves} For any element $b \in G(\qpbr)$ with corresponding class $[b] \in B(G,\mu^{-1})$, we denote by $i_b$ the locally closed immersion $i_b \colon \bungk^{[b]} \hookrightarrow \bungk$. Consider also the Newton stratum corresponding to $[b]$ in the Igusa stack $\igs_{K^p}\gx$ with finite level away from $p$ 
\begin{equation*}
    \igs_{K^p}^{[b]}\gx := \igs_{K^p} \times_{\bung} \bung^{[b]},
\end{equation*}
with $\overline{\pi}_b:\igs_{K^p}^{[b]}\gx \to \bung^{[b]}$ the structure map. It follows from Lemma \ref{Lem:PiHTBarfdcs} that $\overline{\pi}_b$ is compactifiable, representable in locally spatial diamonds, and $\dimtrg \overline{\pi}_b < \infty$. For $b:\spd \fpbar \to \bung^{[b]}$ we define the \emph{v-sheaf Igusa variety}\footnote{If $\mathbf{Sh}_K\gx$ is not compact, then this conflicts with the notation in \cite[Section 4.4]{DvHKZIgusaStacks}. The v-sheaf Igusa varieties there are (the inverse limit over $K^p$) of $\operatorname{Ig}^{b, \circ,\mathrm{v}}_{K^p}\gx:=\igs^{\circ}_{K^p} \times_{\bung} \spd \fpbar$} $\operatorname{Ig}^{b,\mathrm{v}}_{K^p}\gx$ as the fiber product \[ \igs_{K^p}^{[b]}\gx \times_{\bung^{[b]}} \spd \fpbar.\] If $x:\spa C \to \spd \fpbar \to \bung^{[b]}$ is a morphism, then we write $\operatorname{Ig}^{b,\mathrm{v}}_{K^p,C}$ for the base-change of the v-sheaf Igusa variety to $\spa C$. By pro-\'etale surjectivity of $\grgmu \to \bungmu$, the map $x$ lifts to a $\spa C$-point of $\grgmu$ and identifies $\operatorname{Ig}^{b,\mathrm{v}}_{K^p,C}$ with the fiber of $\mathbf{Sh}_{K^p}\gx \to \grgmu$. Let $\mathbb{W}$ be a standard $\Lambda$-local system. 
\begin{Lem} \label{Lem:CohomIgusaVar}
In $\mathcal{D}(G_b(\qp), \Lambda)$, we have isomorphisms
	\begin{equation*}
		i_b^\ast \mathcal{F}_{c,\mathbb{W}} \simeq R\overline{\pi}_{b,!} \mathbb{W} \simeq R\Gamma_c(\operatorname{Ig}^{b,\mathrm{v}}_{K^p,C}, \mathbb{W}),
	\end{equation*}
where $\mathbb{W}$ denotes both the restriction of $\mathbb{W}$ to $\igs_{K^p}^{[b]}\gx$ and its pullback to $\operatorname{Ig}^{b,\mathrm{v}}_{K^p}$.
\end{Lem}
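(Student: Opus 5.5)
The plan is to prove the two isomorphisms separately, both by proper base change in the six-functor formalism of \cite{EtCohDiam}. The needed hypotheses come from Lemma \ref{Lem:PiHTBarfdcs}: $\overline{\pi}_{\mathrm{HT}}$ is compactifiable, representable in locally spatial diamonds, and of finite $\dimtrg$. We work over $k=\fpbar$ throughout.

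\textbf{First isomorphism.} By definition, $\igs_{K^p}^{[b]}\gx$ is the base change of $\igs_{K^p}\gx$ along the locally closed immersion $i_b\colon \bung^{[b]}\hookrightarrow \bung$, with structure map $\overline{\pi}_b$. Proper base change for $!$-pushforward then gives
\[
i_b^\ast \overline{\pi}_{\mathrm{HT},!}\mathbb{W} \simeq R\overline{\pi}_{b,!}(\mathbb{W}|_{\igs^{[b]}_{K^p}}).
\]
This is valid because $i_b$ is an arbitrary map of v-stacks and $\overline{\pi}_{\mathrm{HT}}$ is representable in locally spatial diamonds and compactifiable. We also need the convention that $\mathcal{D}(\bung^{[b]},\Lambda)$ is identified with $\mathcal{D}(G_b(\qp),\Lambda)$. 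When $b$ is basic this comes from the presentation $[\spd\fpbar/\ul{G_b(\qp)}]$; in general it comes from \cite[Proposition V.2.2]{FarguesScholze}, which identifies $\mathcal{D}(\bung^{[b]},\Lambda)\simeq\mathcal{D}(G_b(\qp),\Lambda)$ through pullback along $\spd\fpbar\to\bung^{[b]}$, and this pullback is conservative and remembers the smooth action.

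\textbf{Second isomorphism.} Pull back further along $x\colon\spa C\to\spd\fpbar\to\bung^{[b]}$. Proper base change again gives
\[
x^\ast R\overline{\pi}_{b,!}\mathbb{W}\simeq R\Gamma_c(\operatorname{Ig}^{b,\mathrm{v}}_{K^p,C},\mathbb{W}).
\]
To check that this is an isomorphism in $\mathcal{D}(G_b(\qp),\Lambda)$, and not merely of underlying complexes, we must see that the $G_b(\qp)$-actions match. The action on the left is by automorphisms of $\spd\fpbar\to\bung^{[b]}$, which induce automorphisms of the fiber $\operatorname{Ig}^{b,\mathrm{v}}_{K^p}$ over the point $b$. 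Functoriality of base change for these automorphisms yields the equivariance. One can also use that, by the remark before the lemma, $\operatorname{Ig}^{b,\mathrm{v}}_{K^p,C}$ is identified with a fiber of $\pi_{\mathrm{HT}}$, but this is not needed for the statement.

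\textbf{Main obstacle.} The delicate point is the non-basic case. There $\bung^{[b]}\simeq[\spd\fpbar/\widetilde{G}_b]$, and $\widetilde{G}_b$ contains a unipotent, non-locally-profinite part. One must check that the equivalence $\mathcal{D}(\bung^{[b]})\simeq\mathcal{D}(G_b(\qp))$ is compatible with the equivariant base change described above. I would handle this by factoring through the map $[\spd\fpbar/\ul{G_b(\qp)}]\to\bung^{[b]}$, whose pullback is an equivalence by \cite[Proposition V.2.2]{FarguesScholze}. The argument then reduces to base change along this map followed by base change along $\spa C\to[\spd\fpbar/\ul{G_b(\qp)}]$. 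For this last step, the $G_b(\qp)$-action on $R\Gamma_c$ is the one coming from the $\ul{G_b(\qp)}$-torsor.
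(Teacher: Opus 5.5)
Your proposal is correct and follows the paper's proof. The first isomorphism is proper base change along $i_b$, using Lemma \ref{Lem:PiHTBarfdcs}, and the second is the identification of $\operatorname{Ig}^{b,\mathrm{v}}_{K^p,C}$ with the fiber of $\overline{\pi}_{\mathrm{HT}}$ over $x$, i.e.\ proper base change along $x$; your discussion of $G_b(\qp)$-equivariance via $[\spd k/\ul{G_b(\qp)}]\to\bungk^{[b]}$ and \cite[Proposition V.2.2]{FarguesScholze} only makes explicit what the paper leaves implicit.
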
 

\begin{proof}
	The first isomorphism follows from proper base change \cite[Proposition~22.19]{EtCohDiam}. The second follows because $\operatorname{Ig}^{b,\mathrm{v}}_{K^p,C}$ is naturally identified with the fiber $\overline{\pi}_{\mathrm{HT}}^{-1}(x)$.
\end{proof}

\begin{Rem}
  If $\mathbf{Sh}_K\gx$ is proper and Milne's axiom SV5 holds, then \Cref{Conj:ULAPerverse}(1) would also follow from a variant of \Cref{Lem:CohomIgusaVar}; let us explain. Let $\mathcal{G}$ be a parahoric $\zp$-model for $G$ and let $K_p=\mathcal{G}(\zp)$. When $p>2$, by \cite[Theorem~A]{DanielsYoucis} there is a morphism
  \begin{equation} \label{eq:DanielsYoucis}
      \mathscr{S}_{K}\gx^\diamond \to \operatorname{Sht}_{\mathcal{G},\mu},
  \end{equation}
  where $\mathscr{S}_{K}\gx$ is the $\mathcal{O}_E$-integral model for $\mathbf{Sh}_{K}\gx$ defined in \cite{KisinPappasZhou}, and $\shtgmu$ is the v-stack of $\mathcal{G}$-shtukas bounded by $\mu$. Using \eqref{eq:DanielsYoucis}, one can apply the construction of \cite[Section~2.14]{HamacherKimPointCounting} to obtain a \textit{perfect Igusa variety} $\operatorname{Ig}_{K^p}^b$. We conjecture that, as in \cite[Lemma~8.5.3]{DvHKZIgusaStacks}, one can identify $\operatorname{Ig}^{b,\mathrm{v}}_{K^p,C}$ with $(\operatorname{Ig}_{K^p}^b)^{\diamond}_{C}$; this would follow from Conjecture \ref{Conj:IgusaMainInt}. If so, the proof of \cite[Corollary~8.5.4]{DvHKZIgusaStacks} would imply that $\mathcal{F}$ is universally locally acyclic with respect to $\bungmuk \to \spd k$.
\end{Rem}

\begin{Rem}
  Additionally in the compact case, part (2) of \Cref{Conj:ULAPerverse} would follow from the arguments of \cite[Section~8.6]{DvHKZIgusaStacks} using the conjectural identification $\operatorname{Ig}^{b,\mathrm{v}}_{K^p,C} \simeq (\operatorname{Ig}_{K^p}^b)_{C}$, but it would require additional inputs as well. Namely, the arguments of loc.\ cit.\ make use of a scheme-theoretic local model diagram in the sense of \cite[Conjecture~4.1.5]{DvHKZIgusaStacks} and the smoothness and affineness of central leaves in Newton strata. Neither of these results are known in the abelian-type case, but we expect (at least under mild assumptions) that they should be accessible.
\end{Rem}

\subsection{Cohomology of Shimura varieties} \label{Sub:WeilCohShim} The goal of this section is to give a formula for the (compactly supported) cohomology of $\mathbf{Sh}_{K^p}\gx_C$ in terms of the action of a Hecke operator on the relative (compactly supported) cohomology of the Igusa stack over $\bun_{G,k}$. This generalizes the main result of \cite[Section~8.4]{DvHKZIgusaStacks}.

\subsubsection{} Let $\gx$, $\mathsf{E}$, $v$, $E$, $G$, and $\mu$ be as in \Cref{sub:DualizingSheaf}. We also continue to fix a neat compact open subgroup $K^p \subset \g(\afp)$. Let $d = \langle 2\rho, \mu\rangle=\dim \mathsf{X}$. Let $k$ be an algebraic closure of $k_{E}$, and let $\Lambda$ be a $\zl$-algebra in which $\ell$ is nilpotent, that contains a square root of $p$. We choose such a square root and denote it by $\sqrt{p} \in \Lambda$. Let $C$ be the completion of an algebraic closure of $E$, and denote by $\mathcal{O}_C$ the ring of integers of $C$. Fix an identification between $k$ and the residue field of $\mathcal{O}_C$. \begin{Lem} \label{Lem:NonSV5TorsorGroup}
The morphism
\begin{align*}
    \mathbf{Sh}_{K^p}\gx \to \mathbf{Sh}_{K}\gx
\end{align*}
is a pro-\'etale torsor for the profinite group $U_p=K_p/N_p$, where $N_p=N_p(K) \subset K_p$ is the closed subgroup that is the image of the projection
\begin{align*}
    \left(K \cap \zgqbar\right) \to K_p.
\end{align*}
\end{Lem}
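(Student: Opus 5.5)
We derive the statement from the description of the full tower $\mathbf{Sh}\gx$ as a pro-\'etale torsor over finite-level Shimura varieties. The basic input, recalled in the proof of Lemma \ref{Lem:GroupoidFiniteLevel} and going back to \cite[Section 1.5.8]{KisinShinZhu}, is the following. For a neat compact open $K \subseteq \gaf$, the map $\mathbf{Sh}\gx \to \mathbf{Sh}_K\gx$ is a pro-\'etale torsor for the profinite group
\[
  \frac{K \cdot \zgqbar}{\zgqbar} \cong \frac{K}{K \cap \zgqbar}.
\]
This holds because the stabilizer of a point of $\mathbf{Sh}\gx$ in $\gaf$ is exactly $\zgqbar$ (Lemma \ref{Lem:NonSV5} and Deligne's description of $\mathbb{C}$-points), and neatness makes the induced action of $K/(K \cap \zgqbar)$ free. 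Similarly, $\mathbf{Sh}\gx \to \mathbf{Sh}_{K^p}\gx$ is a torsor for $K^p\zgqbar/\zgqbar \cong K^p/(K^p \cap \zgqbar)$. Here $K^p$ is regarded as the subgroup $K^p \times \{1\}$, and $K = K_pK^p$.

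Next, pass to the quotient. Write $Z = K \cap \zgqbar$, which is a closed normal subgroup of $K$, and write $Z' = K^p \cap \zgqbar = Z \cap (K^p \times \{1\})$. Then $\mathbf{Sh}_{K^p}\gx \to \mathbf{Sh}_K\gx$ is the quotient of the $(K/Z)$-torsor $\mathbf{Sh}\gx \to \mathbf{Sh}_K\gx$ by the closed subgroup $K^pZ/Z$. It is therefore a torsor for
\[
  \frac{K/Z}{K^pZ/Z} \cong \frac{K}{K^pZ}.
\]
Since $K = K_p \times K^p$, projecting to the $p$-component identifies $K/K^pZ$ with $K_p/\mathrm{pr}_p(Z) = K_p/N_p$. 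Here $N_p$ is the image of $K \cap \zgqbar \to K_p$, which is closed because it is the continuous image of a compact set.

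It remains to justify the torsor claim in the pro-\'etale topology. The map $\mathbf{Sh}\gx \to \mathbf{Sh}_{K^p}\gx$ is a torsor under a profinite group, and $\mathbf{Sh}_{K^p}\gx = \varprojlim_{K_p'} \mathbf{Sh}_{K_p'K^p}\gx$ is a limit of finite \'etale covers of $\mathbf{Sh}_K\gx$. So one checks at each finite stage $K_p' \subseteq K_p$ normal and open: the map $\mathbf{Sh}_{K_p'K^p}\gx \to \mathbf{Sh}_K\gx$ is a finite \'etale Galois cover with group $K_p/K_p'N_p$. This follows from the same quotient computation applied to $K' = K_p'K^p$, and the limit then gives a pro-\'etale $U_p$-torsor.

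The main point needing care is the first step: that $K/(K \cap \zgqbar)$ acts freely and that $\mathbf{Sh}\gx \to \mathbf{Sh}_K\gx$ is a torsor for exactly this group. We will cite \cite[Section 1.5.8]{KisinShinZhu} for this, possibly checking freeness on $\mathbb{C}$-points using \cite[Section~2.1.9]{DeligneVarietes} together with neatness.
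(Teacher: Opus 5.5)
Your argument is correct and is essentially the paper's: the paper's proof simply cites \cite[Section 1.5.8]{KisinShinZhu} (cf.\ \cite[Section 4.1.2]{DanielsYoucis}), and your quotient computation $K/K^pZ \cong K_p/N_p$ spells out the formal step from the torsor statement for the full tower to the one for $\mathbf{Sh}_{K^p}\gx \to \mathbf{Sh}_K\gx$; here $Z = K\cap\zgqbar$ is central, so $K^pZ$ is a closed normal subgroup. One correction to your heuristic: the stabilizer in $\gaf$ of a point of $\mathbf{Sh}\gx$ is in general strictly larger than $\zgqbar$ (for instance at CM points), and what actually holds for neat $K$, and what you need, is that the stabilizer in $K$ of every point is $K\cap\zgqbar$.
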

\begin{proof}
This follows from the discussion in \cite[Section 1.5.8]{KisinShinZhu}, cf. \cite[Section 4.1.2]{DanielsYoucis}. 
\end{proof}
\begin{Def}\label{Def:acceptable}
    We say that $K^p$ is \emph{acceptable with respect to} $K_p$ if the group $N_p(K_pK^p)$ is pro-$p$.
\end{Def}
Note that if $K^p$ is acceptable with respect to $K_p$, then it is also acceptable with respect to $K_p'$ for any $K_p' \subset K_p$. The following lemma shows that there are many compact open subgroups $K^p$ that are acceptable with respect to $K_p$.

\begin{Lem} \label{Lem:CofinalProP}
For any compact open subgroup $K_p \subset G(\qp)$, there is a cofinal collection of compact open subgroups $U^p \subset \gaf$ such that the group $N_p(K_pU^p)$ is pro-$p$.
\end{Lem}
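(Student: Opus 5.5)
Fix a compact open subgroup $K_p \subset G(\qp)$. I will show that for every compact open $U^p \subset \gafp$ there is an open subgroup $V^p \subset U^p$ with $N_p(K_pV^p)$ pro-$p$. Since the family of such $V^p$ is then cofinal among compact open subgroups of $\gafp$, this gives the lemma (reading $U^p \subset \gaf$ in the statement as the prime-to-$p$ component). The basic observation is that $N_p(K_pU^p)$ is a closed subgroup of $K_p$ contained in $\zgqbar \cap \gaf$ projected to $G(\qp)$. More precisely, it is contained in the image in $Z_G(\qp)$ of
\[
  \zgqbar \cap (K_p U^p).
\]
So it lies in the compact group $Z_G(\qp) \cap K_p$, which is abelian. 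Write $Z_G(\qp)\cap K_p = P \times Q$, where $P$ is its pro-$p$ part and $Q$ its prime-to-$p$ part. The group $Q$ is finite: $Z_G(\qp)\cap K_p$ is a compact $p$-adic Lie group, hence has an open pro-$p$ subgroup.

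It therefore suffices to kill the finite prime-to-$p$ part. Choose an open pro-$p$ subgroup $K_p' \subset Z_G(\qp)\cap K_p$. The image of $N_p(K_pU^p)$ modulo $K_p'$ is a finite abelian group. The key step is to show that, after shrinking $U^p$, every element $z \in \zgqbar$ whose $p$-component lies in $K_p$ and whose prime-to-$p$ component lies in $U^p$ has $p$-component in $K_p'$. I would argue this via the structure of $\zgqbar$. By Lemma \ref{Lem:ChevalleyCongruence}, applied to the multiplicative group $Z_\g$, the closure $\zgqbar$ meets $G(\qp)$ trivially, so the projection $\zgqbar \to \g(\afp)$ is injective. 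Moreover, $\zgqbar \cap (K_p \times \g(\afp))$ is a compact (profinite) group, and its projection to $\g(\afp)$ is a continuous injection, hence a homeomorphism onto its image. The inverse map is then continuous on this compact image. Consider the open subgroup $M \subset \zgqbar \cap (K_p\times \g(\afp))$ consisting of elements whose $p$-component lies in $K_p'$. Its image is open in the image of the projection, so it equals the intersection of that image with some open $V^p \subset U^p$. Then $N_p(K_pV^p) \subset K_p'$, which is pro-$p$.

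The main obstacle is this continuity and openness step: one has to check carefully that $\zgqbar \cap (K_p U^p)$ is compact and that injectivity of the prime-to-$p$ projection (from Lemma \ref{Lem:ChevalleyCongruence}, which uses Chevalley's congruence theorem) yields a topological embedding. Compactness comes from $K_pU^p$ being compact and $\zgqbar$ being closed. Finally, the resulting groups $V^p$, ranging over all choices of $U^p$ and intersected with arbitrarily small open subgroups, form the required cofinal collection, since shrinking $V^p$ further only shrinks $N_p$.
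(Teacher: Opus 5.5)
Your proof is correct once one local slip is repaired. It also takes a somewhat more direct route than the paper.

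\textbf{The slip.} In your middle paragraph you assert that $\zgqbar\cap(K_p\times\gafp)$ is compact, and from this deduce that the prime-to-$p$ projection is a homeomorphism onto its image. That set is not compact in general. Take $\g=\GL_2$ and $K_p=\GL_2(\zp)$. Then $\zgq=\mathbb{Q}^\times$ is already discrete (hence closed) in $\af^\times$, because $\mathbb{Q}^\times\cap\widehat{\mathbb{Z}}^\times=\{\pm1\}$. The intersection is therefore the infinite discrete group $\mathbb{Z}_{(p)}^\times$.

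The fix is the one your last paragraph points to: run the argument on $Z_0=\zgqbar\cap(K_p\times U^p)$ for the fixed $U^p$.
\begin{itemize}
\item $Z_0$ is compact, and its projection to $U^p$ is injective by Lemma \ref{Lem:ChevalleyCongruence}. It is therefore a homeomorphism onto its closed image $I$.
\item Let $M=\{z\in Z_0: z_p\in K_p'\}$, an open subgroup of $Z_0$. Its image is open in $I$, so it contains $I\cap V^p$ for some open subgroup $V^p\subseteq U^p$.
\item Every $z\in\zgqbar$ with $z_p\in K_p$ and $z^p\in V^p$ lies in $Z_0$. Injectivity then gives $z_p\in K_p'$, i.e.\ $N_p(K_pV^p)\subseteq K_p'$.
\end{itemize}
The splitting $P\times Q$ and the finiteness of $Q$ play no role; only the open pro-$p$ subgroup $K_p'$ is used.

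\textbf{Comparison with the paper.} The paper argues through the Shimura tower.
\begin{itemize}
\item It identifies $N_p(K_pU^p)$ with the kernel of the surjection $K_p\to\gal(\mathbf{Sh}_{U^p}/\mathbf{Sh}_K)$.
\item It uses Lemma \ref{Lem:ChevalleyCongruence} in the form that $K_p\to\gal(\mathbf{Sh}/\mathbf{Sh}_K)$ is injective.
\item It passes to the inverse limit over $U^p$, using that kernels commute with limits, to get $\bigcap_{U^p}N_p(K_pU^p)=\{1\}$.
\item The conclusion then rests on a compactness argument it leaves implicit: a directed decreasing family of compact subgroups with trivial intersection eventually lies inside any open pro-$p$ subgroup of $K_p$.
\end{itemize}
Both proofs therefore rest on the same two inputs: Chevalley's congruence theorem and compactness. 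Yours is purely group-theoretic inside $\zgqbar\subseteq\gaf$ and avoids the Galois-theoretic description of the tower. It also makes the compactness step explicit, as the fact that a continuous injection from a compact group into a Hausdorff group is an embedding. The paper's version is shorter, given that description of the tower.
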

\begin{proof}
Fix $K_p$ as in the statement of the lemma and let $K^p \subset \gafp$ be compact open. It follows from the discussion in \cite[Section 1.5.8]{KisinShinZhu} that $K$ naturally surjects onto $\gal(\mathbf{Sh}/\mathbf{Sh}_{K})$ and similarly that
\begin{align*}
    f_{K^p}: \{1 \} \times K_p \subset K \to \gal(\mathbf{Sh}/\mathbf{Sh}_{K}) \to \gal(\mathbf{Sh}_{K^p} /\mathbf{Sh}_{K})
\end{align*}
is surjective. We are trying to show that $f_{K^p}$ has pro-$p$ kernel for a cofinal collection of $K^p$. Now, it is a direct consequence of the definitions that
\begin{align*}
    \gal(\mathbf{Sh}/\mathbf{Sh}_{K}) = \varprojlim_{U^p \subset K^p} \gal(\mathbf{Sh}_{U^p} /\mathbf{Sh}_{K}). 
\end{align*}
Since $\{1 \} \times K_p \subset K \to \gal(\mathbf{Sh}/\mathbf{Sh}_{K})$ is injective, see Lemma \ref{Lem:ChevalleyCongruence}, and since kernels commute with inverse limits, we deduce that
\begin{align*}
    \{1\} = \varprojlim_{U^p} \ker f_{U^p}.
\end{align*}
In other words $\cap_{U^p}  \ker f_{U^p} =\{1\}$, and thus there is a cofinal collection of $U^p$ such that $\ker f_{U^p}$ is pro-$p$. Indeed, the group $N_p$ contains a compact open pro-$p$ group, since $K_p$ does. 
\end{proof}
\subsubsection{} \label{subsub:BigDiagramI} Consider the commutative diagram
\begin{equation*}
    \begin{tikzcd}
         \left[\mathbf{Sh}_{K^p}\gx/(\ul{G(\qp)} \times \phi^{\mathbb{Z}})\right]  \arrow{d}{y} & \left[\mathbf{Sh}_{K^p}\gx / (\underline{K_p} \times \phi^{\mathbb{Z}})\right] \arrow{l}{x} \arrow{r}{z} \arrow{d}{h}&\left[\mathbf{Sh}_{K}\gx^\diamondsuit /\phi^{\mathbb{Z}}\right] \arrow{d}{f}  \\
         \left[\spd E / (\ul{G(\qp)} \times \phi^{\mathbb{Z}})\right] & \left[\spd E/ (\underline{K_p} \times \phi^{\mathbb{Z}})\right] \arrow{l}{b} \arrow{r}{a} & \left[\spd E / \phi^{\mathbb{Z}}\right],
    \end{tikzcd}
\end{equation*}
where the left square is Cartesian. Let $\mathbb{W}$ be a standard $\Lambda$-local system, and let us also write $\mathbb{W}$ for the canonical descent (in the sense of Section \ref{subsub:CanonicalDescent}) of $\mathbb{W}$ to the small v-stacks in the top row of the diagram. An important ingredient of our proof of Theorem \ref{Thm:WeilCohShimVar} is an identification of $y_{k,!} \mathbb{W}$ and $R y_{k,\ast} \mathbb{W}$, where $k$ denotes the base change of the morphism along $\spd k \to \spd \fp$. Since these objects are smooth representations of $G(\qp)$, it suffices to identify their $K_p$ fixed vectors for a cofinal collection of compact open subgroups $K_p$. This is achieved by the following proposition.
\begin{Prop} \label{Prop:AcceptableInvariants}
If $K^p$ is acceptable with respect to $K_p$, then there are natural isomorphisms 
\begin{align*}
    a_{k,!} b_k^{\ast} y_{k,!} \mathbb{W} &\to f_{k,!} \mathbb{W} \\
    a_{k,!} b_k^{\ast} Ry_{k,\ast} \mathbb{W} &\to R f_{k,\ast} \mathbb{W}.
\end{align*}
\end{Prop}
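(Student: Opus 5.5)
Both isomorphisms come from base change for the left Cartesian square, combined with an analysis of the right square, where $z$ is the quotient by the torsor group of Lemma \ref{Lem:NonSV5TorsorGroup}.

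\textbf{Step 1: base change for the left square.} Since the left square is Cartesian and all maps are $\ell$-fine (as in Lemma \ref{Lem:DualizingShimura}), proper base change gives $b_k^\ast y_{k,!}\mathbb{W} \cong h_{k,!} x_k^\ast \mathbb{W}$. For the $\ast$-version I use that $b$ is $\ell$-cohomologically smooth: it is a base change of $[\ast/\ul{K_p}]\to[\ast/\ul{G(\qp)}]$, which is étale since $K_p$ is open. Smooth base change then gives $b_k^\ast Ry_{k,\ast}\mathbb{W}\cong Rh_{k,\ast}x_k^\ast\mathbb{W}$. The canonical descents of $\mathbb{W}$ are compatible with pullback, so $x_k^\ast\mathbb{W}\cong z_k^\ast\mathbb{W}$. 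It therefore remains to identify $a_{k,!}h_{k,!}z_k^\ast\mathbb{W}=f_{k,!}z_{k,!}z_k^\ast\mathbb{W}$ with $f_{k,!}\mathbb{W}$, and similarly for $\ast$-pushforwards.

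\textbf{Step 2: structure of $z$.} I will show that $z_{k,!}z_k^\ast\mathbb{W}\cong\mathbb{W}$ and $Rz_{k,\ast}z_k^\ast\mathbb{W}\cong\mathbb{W}$ via the counit and unit. By Lemma \ref{Lem:NonSV5TorsorGroup}, $\mathbf{Sh}_{K^p}\gx^\diamondsuit\to\mathbf{Sh}_K\gx^\diamondsuit$ is a $U_p=K_p/N_p$-torsor. Hence
\[
[\mathbf{Sh}_{K^p}\gx/\ul{K_p}]\to[\mathbf{Sh}_{K^p}\gx/\ul{U_p}]=\mathbf{Sh}_K\gx^\diamondsuit
\]
is a $\ul{N_p}$-gerbe, or more precisely a base change of $[\ast/\ul{K_p}]\to[\ast/\ul{U_p}]$. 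Taking the quotient by $\phi^{\mathbb{Z}}$ does not affect this. So $z$ is v-locally $[\ast/\ul{N_p}]\to\ast$ with $N_p$ profinite. Acceptability says $N_p$ is pro-$p$, hence pro-prime-to-$\ell$.

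\textbf{Step 3: cohomology of the gerbe.} For a profinite pro-prime-to-$\ell$ group $N$, the map $\pi\colon[\ast/\ul{N}]\to\ast$ satisfies $R\pi_\ast\Lambda=\Lambda$, since continuous cohomology of $N$ with $\Lambda$ coefficients vanishes in positive degrees. Also $\pi_!=\pi_\ast$, because $\pi$ is proper. By Example \ref{Exa:UnimodularGerbes}(1) and Lemma \ref{Lem:GerbeDualizing}, $\pi$ is $\ell$-cohomologically smooth with $\pi^!\Lambda\cong\Lambda$; with the Haar measure of total mass $1$ this is canonical, which gives naturality. The projection formula then yields $z_!z^\ast\mathbb{W}\cong\mathbb{W}\otimes z_!\Lambda\cong\mathbb{W}$ and $Rz_\ast z^\ast\mathbb{W}\cong\mathbb{W}$. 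These identities can be checked v-locally on the base, because $z$ is a base change of $[\ast/\ul{K_p}]\to[\ast/\ul{U_p}]$ and pushforwards commute with base change here: proper base change for $!$, and for $\ast$ because $z$ is proper with $z_!=z_\ast$.

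\textbf{Main obstacle.} The hard part is Step 3 in this stacky setting. One must make sure the descended local system $\mathbb{W}$ on $[\mathbf{Sh}_{K^p}/\ul{K_p}]$ is pulled back from $\mathbf{Sh}_K$. This holds because $W$ factors through $\g^c$, so $N_p$ acts trivially on it. One must also keep the unit and counit maps natural and compatible with the $\phi^{\mathbb{Z}}$-descent, which by the appendix formalism amounts to functoriality of the identifications above.
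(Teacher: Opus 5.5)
Your proposal follows the paper's proof essentially step by step: base change on the left square, the Cartesian square exhibiting $z$ as a base change of the $\ul{N_p}$-gerbe $[\ast/\ul{K_p}]\to[\ast/\ul{U_p}]$, and the vanishing of higher continuous cohomology of the pro-$p$ group $N_p$, which gives $\mathbb{W}\simeq Rz_{k,\ast}z_k^\ast\mathbb{W}=z_{k,!}z_k^\ast\mathbb{W}$.

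There is one step you skip in the second isomorphism. The statement has $a_{k,!}$, not $Ra_{k,\ast}$, applied to $b_k^\ast Ry_{k,\ast}\mathbb{W}\simeq Rh_{k,\ast}\mathbb{W}$. So ``similarly for $\ast$-pushforwards'' first needs $a_{k,!}\simeq Ra_{k,\ast}$. Only then can you use $a\circ h=f\circ z$ to write
\[
Ra_{k,\ast}Rh_{k,\ast}\simeq Rf_{k,\ast}Rz_{k,\ast}.
\]
This identification holds because $a$ is an $\ell$-fine $\ul{K_p}$-gerbe with $K_p$ compact, hence cohomologically proper. The argument is the same one you already give for $z$, and it is exactly what the paper invokes (the first part of Lemma \ref{Lem:CohomologicallyProper}).

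Your discussion of $\pi^!\Lambda$ and Haar measures in Step 3 is not needed for the argument.
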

\begin{Lem} \label{Lem:CohomologicallyProper}
Let $c: X \to Y$ be an $\ell$-fine morphism of small v-stacks that is a gerbe for $\ul{H}$, with $H$ a $p$-adic Lie group. The morphism $c$ is cohomologically proper, i.e., $R c_{\ast}=c_{!}$. If $H$ is pro-$p$, then the natural transformation $1 \to R c_{\ast} c^{\ast}$ is an isomorphism.
\end{Lem}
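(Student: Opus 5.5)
Both assertions are v-local on $Y$, and I would prove them by reducing to the split gerbe $[\ast/\ul{H}] \to \ast$. The nuclear six-functor formalism of \cite{MannNuclear} satisfies base change. The formation of $Rc_\ast$ also commutes with v-local base change along $Y$, because $c$ is $\ell$-fine and a gerbe. This lets us choose a v-cover $\tilde{Y} \to Y$ by a perfectoid space over which $X \times_Y \tilde{Y} \cong \tilde{Y} \times [\ast/\ul{H}]$. For the map $c$ itself I plan to check both statements after this pullback. If the commutation of $Rc_\ast$ with arbitrary base change is not available in general, it can be extracted from the cohomological properness shown in the first step, since proper base change holds for $c_!$.

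\textbf{Step 1: cohomological properness.} By \cite[Theorem~10.13]{MannNuclear}, the map $q\colon [\ast/\ul{H}] \to \ast$ is $\ell$-fine. I would factor it through $[\ast/\ul{K}]$, where $K \subseteq H$ is a compact open pro-$p$ subgroup; such a $K$ exists because $H$ is a $p$-adic Lie group. The map $[\ast/\ul{K}] \to [\ast/\ul{H}]$ is étale with fibers the discrete set $H/K$. It is not quasi-compact, so I expect this to be the delicate point. The intended resolution follows \cite[Example~10.11, Lemma~10.8]{MannNuclear}: classifying stacks of locally profinite groups are proper in the ``stacky'' sense, and the comparison $c_! \to Rc_\ast$ is an isomorphism for $[\ast/\ul{K}] \to \ast$ with $K$ profinite. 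Concretely, both functors compute continuous group cohomology with compact supports equal to ordinary cohomology, because $K$ is compact. The passage to $H$ then rests on the fact that $\ul{H}$ has a $p$-torsion-free open subgroup of finite $\ell$-cohomological dimension. Under this, $c_!$ and $Rc_\ast$ are both computed by smooth $H$-cohomology: the natural map $c_! \to Rc_\ast$ constructed from the $\ell$-fine structure becomes, on the split gerbe, the identity of $R\Gamma(H, -)$ computed via $K$-invariants and the finite-index/étale descent. The main obstacle is making this identification of the comparison map precise in Mann's formalism. My first approach would be to use that $q^!\Lambda \cong \operatorname{Haar}(H,\Lambda)^\ast$ is invertible, as in the proof of Lemma~\ref{Lem:GerbeDualizing}, and then verify the comparison on the generating objects.

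\textbf{Step 2: pro-$p$ case.} Now suppose $H$ is pro-$p$. After the v-local splitting, the question reduces to whether $\Lambda \to R\Gamma_{\mathrm{cts}}(H, \Lambda)$ is an isomorphism. Since $\ell \neq p$ and $H$ is pro-$p$, continuous cohomology of $H$ with $\ell$-power torsion coefficients vanishes in positive degrees: it is a colimit of the cohomologies of finite $p$-groups, each of which is killed by a $p$-power and is therefore zero on $\Lambda$-modules. The trivial action then gives $H^0 = \Lambda$. Because the unit $1 \to Rc_\ast c^\ast$ is compatible with the base change, we conclude that it is an isomorphism v-locally, and hence globally.
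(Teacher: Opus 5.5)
Your skeleton is the paper's: reduce v-locally to the split gerbe $\tilde Y\times[\ast/\ul{H}]\to\tilde Y$, invoke Mann's properness result for classifying stacks, and for the unit use that a pro-$p$ group has no higher continuous cohomology with $\ell$-torsion coefficients. Your Step~2 is essentially the paper's argument.

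\textbf{The gap in Step~1.} It sits exactly where you expected trouble: passing from a compact open pro-$p$ subgroup $K$ to a non-compact $H$ along the \'{e}tale, non-quasi-compact map $h\colon[\ast/\ul{K}]\to[\ast/\ul{H}]$. Since $h$ is \'{e}tale, $h_!$ is left adjoint to $h^\ast=h^!$ (restriction), so $h_!$ is compact induction. Let $q\colon[\ast/\ul{H}]\to\ast$ and $V=h_!\Lambda=C_c(K\backslash H,\Lambda)$, the finitely supported functions on the discrete set $K\backslash H$. Then $q_!V=(qh)_!\Lambda=\Lambda$, using the compact pro-$p$ case, which you treat correctly. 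On the other hand $H^0(Rq_\ast V)=V^H=0$ as soon as $K\backslash H$ is infinite.

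So $q_!$ is Haar-twisted smooth \emph{homology}, not smooth cohomology, and $q_!\not\simeq Rq_\ast$ for, e.g., $H=\qp$, $K=\zp$. Your claim that both functors are computed by smooth $H$-cohomology is false, and no check on generating objects can rescue it. Classifying stacks of non-compact locally profinite groups are not cohomologically proper. The first assertion therefore has to be read for compact $H$. That is the profinite situation covered by the result of Mann the paper cites, and the only case used: the groups $K_p$, $U_p(K)$, $N_p(K)$ in Proposition~\ref{Prop:AcceptableInvariants} are compact.

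Even for compact $H$, your finite-index step compares homotopy orbits and homotopy fixed points of the finite group $H/K$ (taking $K$ open normal pro-$p$). That comparison needs $\ell\nmid[H:K]$, e.g.\ $H$ pro-$p$. Your remark about an open subgroup of finite $\ell$-cohomological dimension does not address this, since every pro-$p$ group already has $\ell$-cohomological dimension $0$ when $\ell\neq p$.

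\textbf{A circularity in the reduction.} To deduce properness of $c$ from properness of its pullback, you appeal to base change for $Rc_\ast$, and you propose to extract that base change from the properness you are trying to prove. What you need instead is that cohomological properness of an $\ell$-fine map can be checked v-locally on the target; this is how the paper argues. Once properness is known, proper base change for $c_!=Rc_\ast$ does make the unit statement v-local, as you use in Step~2.
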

\begin{proof}
Because $c$ is $\ell$-fine, we can check that it is cohomologically proper after a v-cover. Thus we may assume that $X \simeq Y \times_{\spd \fpbar} [\spd \fpbar / \ul{H}]$, and then the cohomological properness is \cite[Proposition 10.9, Lemma 9.8.(ii)]{MannNuclear}. Showing that $1 \to c_{\ast} c^{\ast} = c_{!} c^{\ast}$ is an isomorphism can also be done $v$-locally (by proper base change). Thus we may assume that $X \simeq Y \times_{\spd \fpbar} [\spd \fpbar / \ul{H}]$, and using proper base change again, we may further reduce to $c':\spd \fpbar \to \spd \fpbar / \ul{H}$. Using \cite[Lemma 10.3]{MannNuclear} to identify $c'_{\ast}=c'_{!}$ with continuous group cohomology, this follows from the fact that a pro-$p$ group has trivial continuous group cohomology on $\Lambda$-modules.
\end{proof}
\begin{proof}[Proof of Proposition \ref{Prop:AcceptableInvariants}]
We first apply smooth (resp. proper) base-change to the left hand square of Section \ref{subsub:BigDiagramI} to identify
\begin{align*}
    b_{k}^{\ast} R y_{k,\ast} \mathbb{W} &= R h_{k,\ast} \mathbb{W} \\
    b_{k}^{\ast} y_{k,!} \mathbb{W} &= h_{k,!} \mathbb{W}. 
\end{align*}
Next, recall that $K_p / N_p(K) = U_p(K)$, and consider the commutative diagram (the arrow is labeled $c$ by abuse of notation because we will apply Lemma \ref{Lem:CohomologicallyProper} to it) 
\begin{equation*}
    \begin{tikzcd}
         \left[\mathbf{Sh}_{K^p}\gx^\diamondsuit / (\underline{K_p} \times \phi^{\mathbb{Z}})\right] \arrow{r}{z} \arrow{d}{h}& \left[\mathbf{Sh}_{K}\gx^\diamondsuit /\phi^{\mathbb{Z}}\right] \arrow{d}{g} \arrow{dr}{f}  \\
          \left[\spd E/ (\underline{K_p} \times \phi^{\mathbb{Z}})\right]  \arrow{r}{d} & \left[\spd E/ (\underline{U_p(K)} \times \phi^{\mathbb{Z}})\right]  \arrow{r}{c} & \left[\spd E / \phi^{\mathbb{Z}}\right],
    \end{tikzcd}
\end{equation*}
where the square is Cartesian by Lemma \ref{Lem:NonSV5TorsorGroup}. Noting that $a=c \circ d$, we identify (using the first part of Lemma \ref{Lem:CohomologicallyProper})
\begin{align*}
    a_{k,!} R h_{k,\ast} \mathbb{W} &= a_{k,\ast} R h_{k,\ast} \mathbb{W} = R c_{k,\ast} R g_{k,\ast} R z_{k,\ast} z_{k}^{\ast} \mathbb{W} \\
    a_{k,!} h_{k,!} \mathbb{W} &= c_{k,!} g_{k,!} z_{k,!} z_{k}^{\ast} \mathbb{W}.
\end{align*}
By the second part of Lemma \ref{Lem:CohomologicallyProper} and the fact that $z$ is a gerbe for the pro-$p$ (by assumption) group $N_p(K)$, we deduce that the natural map $\mathbb{W} \to R z_{k,\ast} z_{k}^{\ast} \mathbb{W}= z_{k!} z_{k}^{\ast} \mathbb{W}$ is an isomorphism; since $f=c \circ g$, this allows us to conclude.
\end{proof}
\begin{Rem} \label{Rem:HeckeAction}
It follows from Proposition \ref{Prop:AcceptableInvariants} that if $K^p$ is acceptable with respect to $K_p$, then the (derived) Hecke algebra of level $K_p$ acts on $R \Gamma(\mathbf{Sh}_{K}\gx_{C}^{\diamondsuit},\mathbb{W})$ and $R \Gamma_c(\mathbf{Sh}_{K}\gx_{C}^{\diamondsuit},\mathbb{W})$.
\end{Rem}

\subsubsection{Hecke operators} Let $\Div = [\spd E / \phi^\mathbb{Z}]$, where $\phi=\phi_{\spd E}$ denotes the absolute Frobenius of $\spd E$ (see \cite[Section 2.1.9]{DvHKZIgusaStacks}). By \cite[Section III.3]{FarguesScholze}, the morphism $[\grgmu/\phi^\mathbb{Z}] \to \Div$ represents the functor sending $S \to \Div$ to the set of isomorphism classes of pairs consisting of a $G$-bundle $\mathscr{E}_1$ on $X_S$ and a modification $\mathscr{E}^0 \mid_{(X_S)_E} \dashrightarrow \mathscr{E}_1 \mid_{(X_S)_E}$ from the trivial bundle $\mathscr{E}^0$ which is bounded by $\mu^{-1}$. The functor sending such a modification to $\mathscr{E}^0$ defines a morphism
\begin{equation*}
	\tilde{h}_2 \colon [\grgmu / (\phi^\mathbb{Z} \times \underline{G(\qp)})] \to \bun_G^{[1]} \times \Div,
\end{equation*}
see \cite[Section~8.4]{DvHKZIgusaStacks} for justification of the naming convention here. Additionally, the Beauville--Laszlo morphism 
\begin{equation*}
	\mathrm{BL} \colon  [\grgmu / (\phi^\mathbb{Z} \times \underline{G(\qp)})] \to \bun_G
\end{equation*}
factors through $\bungmu$, and we denote the factorization $[\grgmu / (\phi^\mathbb{Z} \times \underline{G(\qp)})] \to \bungmu$ by $g$. As in \cite[Definition~8.4.7]{DvHKZIgusaStacks}, we define the functor
\begin{align*}
	T_\mu^{[1]} \colon \mathcal{D}(\bungmuk, \Lambda) &\to \mathcal{D}(\bungk^{[1]}\times_k \Divk,\Lambda) \\
	A &\mapsto R\tilde{h}_{2,k,\ast}(g_k^\ast A[d](d/2)).
\end{align*}
The operator $T_\mu^{[1]}$ is a version of a Hecke operator as defined in \cite{FarguesScholze}; see \cite[Section 8.4.6]{DvHKZIgusaStacks} for the precise relation. We furthermore recall from \cite[Proposition IV.7.1]{FarguesScholze} that there is a fully faithful functor
\begin{align*}
    \mathcal{D}(G(\qp), \Lambda)^{BW_E} \to \mathcal{D}(\bungk^{[1]}\times_k \Divk,\Lambda),
\end{align*}
and that $T_\mu^{[1]}$ lands in its essential image, see \cite[Corollary IX.2.3]{FarguesScholze}.

\subsubsection{} Let $\Lambda$ be a $\zl$-algebra in which $\ell$ is nilpotent, containing $\sqrt{p} \in \Lambda$ and let $\mathbb{W}$ be a standard local system. Recall that there is a natural $\gal_{E}$-equivariant isomorphism
\begin{align*}
    R\Gamma(\mathbf{Sh}_{K^p}\gx_{C}, \mathbb{W}) \simeq \varinjlim_{K_p} R\Gamma(\mathbf{Sh}_{K_pK^p}\gx_{C}, \mathbb{W}),
\end{align*}
and we define (usually one defines compactly supported cohomology only for finite type separated schemes)
\begin{align*}
    R\Gamma_{c}(\mathbf{Sh}_{K^p}\gx_{C}, \mathbb{W}):=\varinjlim_{K_p} R\Gamma_{c}(\mathbf{Sh}_{K_pK^p}\gx_{C}, \mathbb{W}).
\end{align*}
Both of these are naturally (complexes of) continuous representations of $G(\qp) \times \operatorname{Gal}_{E}$, such that the underlying (complex) of $G(\qp)$-representations consists of smooth representations. In other words, they can be considered as objects of $\mathcal{D}(G(\qp), \Lambda)^{BW_E}$. The following is the main result of this section, cf. \cite[Theorem 8.4.10]{DvHKZIgusaStacks}. 
\begin{Thm} \label{Thm:WeilCohShimVar}
For a standard $\Lambda$-local system $\mathbb{W}$, there are isomorphisms
\begin{align*}
&R\Gamma_c(\mathbf{Sh}_{K^p}\gx_C, \mathbb{W}) \simeq T_\mu^{[1]}(\mathcal{F}_{\mathbb{W},c}[-d])(-d/2), \\
&R\Gamma(\mathbf{Sh}_{K^p}\gx_C, \mathbb{W}) \simeq T_\mu^{[1]}(\mathcal{F}_{\mathbb{W}}[-d])(-d/2)
\end{align*}
in $\mathcal{D}(G(\qp), \Lambda)^{BW_E}$. 
\end{Thm}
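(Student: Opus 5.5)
The plan is to follow the proof of \cite[Theorem 8.4.10]{DvHKZIgusaStacks}, with one change: the torsor step at finite level is replaced by Proposition \ref{Prop:AcceptableInvariants}. Throughout, $\mathbb{W}$ also denotes its descents, and we work after base change to $k$. Consider the big diagram whose top row is $[\mathbf{Sh}_{K^p}\gx^{\diamondsuit}/(\ul{G(\qp)}\times\phi^{\mathbb{Z}})]$. The Cartesian square of Corollary \ref{Cor:FiniteLevelIgusa1}, quotiented by $\ul{G(\qp)}\times\phi^{\mathbb{Z}}$, gives a Cartesian square
\[
  [\mathbf{Sh}_{K^p}\gx^{\diamondsuit}/(\ul{G(\qp)}\times\phi^{\mathbb{Z}})] \xrightarrow{\tilde f} [\grgmu/(\ul{G(\qp)}\times\phi^{\mathbb{Z}})] \xrightarrow{g} \bungmu,
\]
lying over $f\colon\igs_{K^p}\gx\to\bungmu$. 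The Frobenius compatibility of Corollary \ref{Cor:FiniteLevelIgusa1} is what lets us pass to the quotient by $\phi^{\mathbb{Z}}$. Composing $\tilde f$ with $\tilde h_2$ recovers the map $y$ of Section \ref{subsub:BigDiagramI}, followed by the identification $[\spd E/(\ul{G(\qp)}\times\phi^{\mathbb Z})]\simeq \bun_G^{[1]}\times \Div$ composed with the structure map.

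The key steps, in order, are as follows.
\begin{enumerate}
\item Apply base change along $g$. Since $g$ is $\ell$-cohomologically smooth, we get $g_k^\ast Rf_\ast\mathbb{W}\simeq R\tilde f_{k,\ast}\mathbb{W}$. By proper base change, $g_k^\ast f_!\mathbb{W}\simeq \tilde f_{k,!}\mathbb{W}$.
\item Apply $R\tilde h_{2,k,\ast}$. In the noncompact case this gives $T^{[1]}_\mu(\mathcal{F}_{\mathbb{W}})\simeq Ry_{k,\ast}\mathbb{W}[d](d/2)$.
\item For compact supports, we need to trade $R\tilde h_{2,\ast}$ for $\tilde h_{2,!}$ on the image of $\tilde f_!$. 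We would use that $\grgmu\to\spd E$ is proper (as $\mu$ is minuscule, $\grgmu$ is a flag variety), so that $\tilde h_{2,\ast}=\tilde h_{2,!}$. This yields $T^{[1]}_\mu(\mathcal{F}_{c,\mathbb{W}})\simeq y_{k,!}\mathbb{W}[d](d/2)$. Both statements live in $\mathcal{D}(G(\qp),\Lambda)^{BW_E}$ via \cite[Proposition IV.7.1]{FarguesScholze}.
\item Identify $y_{k,!}\mathbb{W}$ and $Ry_{k,\ast}\mathbb{W}$, as smooth $G(\qp)$-representations with $W_E$-action, with $R\Gamma_c(\mathbf{Sh}_{K^p}\gx_C,\mathbb{W})$ and $R\Gamma(\mathbf{Sh}_{K^p}\gx_C,\mathbb{W})$, up to the appropriate shift. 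Since smooth objects are determined by their $K_p$-invariants compatibly in $K_p$, it suffices to identify $K_p$-invariants for a cofinal family of $K_p$. The $K_p$-invariant functor on $\mathcal{D}(G(\qp),\Lambda)$ is $a_{k,!}b_k^\ast$, since $K_p$ pro-$p$ makes invariants exact. Proposition \ref{Prop:AcceptableInvariants}, for $K^p$ acceptable relative to $K_p$, identifies this with $f_{k,!}\mathbb{W}$ and $Rf_{k,\ast}\mathbb{W}$ on $[\mathbf{Sh}_K\gx^\diamondsuit/\phi^{\mathbb Z}]$, that is, with $R\Gamma_c$ and $R\Gamma$ of $\mathbf{Sh}_{K}\gx_C$ with its Weil action. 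Comparison of analytic and algebraic (compactly supported) cohomology (Huber) is used here.
\item Pass to the colimit over $K_p$. Lemma \ref{Lem:CofinalProP} handles a fixed $K^p$: for a given $K^p$, shrinking $K_p$ keeps acceptability by the remark after Definition \ref{Def:acceptable}. Some care is needed, since acceptability of a fixed $K^p$ requires a starting $K_p$ with $N_p$ pro-$p$; taking $K_p$ pro-$p$ makes $N_p\subset K_p$ automatically pro-$p$.
\end{enumerate}

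The main obstacle is step 4. One must check that the identifications in Proposition \ref{Prop:AcceptableInvariants} are compatible with transition maps as $K_p$ shrinks and with the Hecke and $W_E$ actions, so that they assemble into an isomorphism in $\mathcal{D}(G(\qp),\Lambda)^{BW_E}$ rather than merely level by level. I would handle this by constructing a single natural map. Adjunction for $x$ and $z$ gives maps from the $K_p$-level pushforwards to $b_k^\ast$ of $y_{k,!}\mathbb{W}$. We then check it is an isomorphism on invariants using Proposition \ref{Prop:AcceptableInvariants}. Finally, we track the shift $[-d]$ and twist $(-d/2)$ coming from the definition of $T_\mu^{[1]}$.
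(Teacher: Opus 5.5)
Your proposal is correct and follows the paper's proof essentially step for step. The steps match: the Cartesian square over $[\mathbf{Sh}_{K^p}\gx^{\diamondsuit}/(\ul{G(\qp)}\times\phi^{\mathbb{Z}})]$, smooth and proper base change along $g$, and properness of $\tilde h_2$ to reach $y_{k,!}\mathbb{W}$ and $Ry_{k,\ast}\mathbb{W}$. Then, as in the paper, you recover these smooth representations from their $K_p$-invariants via Proposition \ref{Prop:AcceptableInvariants}, and identify the finite-level terms with $R\Gamma_{(c)}(\mathbf{Sh}_K\gx_C,\mathbb{W})$ via Proposition \ref{Prop:GaloisRestriction} and Lemma \ref{Lem:HuberComparison}. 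Your observation that every $K^p$ is acceptable for pro-$p$ $K_p$ is the right justification for the colimit; Lemma \ref{Lem:CofinalProP}, which varies $K^p$ rather than $K_p$, is not needed.
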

\begin{Rem}
Our proof of Theorem \ref{Thm:WeilCohShimVar} below (or the proof of \cite[Theorem 8.4.10]{DvHKZIgusaStacks}) can also be used to show a version of Theorem \ref{Thm:WeilCohShimVar} with $\mathcal{F}^{\circ}_{\mathbb{W}}$ instead of $\mathcal{F}_{\mathbb{W}}$. The main difficulty is to generalize \cite[Proposition 8.4.12]{DvHKZIgusaStacks}, comparing the cohomology of the good reduction locus with the cohomology of the whole Shimura variety, to the case of abelian type Shimura varieties. This is now possible thanks to the recent work of Wu, see \cite[Proposition 5.21]{wu2025arithmeticcompactificationsintegralmodels}, which replaces the use of \cite[Corollary 5.20]{LanStrohII} in the proof of \cite[Proposition 8.4.12]{DvHKZIgusaStacks}. 
\end{Rem}

\begin{proof}[Proof of \Cref{Thm:WeilCohShimVar}]
From the Cartesian diagram \eqref{Eq:CartesianSquareFiniteLevel} we obtain a 2-commutative diagram, where we relabel the arrows in the interest of brevity:
	\begin{equation*}
		\begin{tikzcd}
			\left[\mathbf{Sh}_{K^p}\gx^\diamondsuit / (\underline{G(\qp)} \times \phi^{\mathbb{Z}})\right]
							\arrow[r, "\pi"] \arrow[d, "\tilde{g}"]
						& \left[\grgmu / (\underline{G(\qp)} \times \phi^{\mathbb{Z}})\right]
							\arrow[d, "g"]
						\\ \igs_{K^p}\gx 
							\arrow[r, "\overline{\pi}"]
						& \bungmu.
		\end{tikzcd}
	\end{equation*}
As in the proof of \cite[Theorem 8.4.10]{DvHKZIgusaStacks}, we see that $\tilde{h}_{2} \circ \pi$ is the structure morphism
\begin{equation*}
    y:[\mathbf{Sh}_{K^p}\gx^\diamondsuit / (\phi^{\mathbb{Z}} \times \ul{G(\qp)})] \to
    [\spd E/( \phi^{\mathbb{Z}} \times \ul{G(\qp)})].
\end{equation*}
By definition we have
	\begin{align*}
		T_\mu^{[1]}(\mathcal{F}_{\mathbb{W},c}[-d])(-d/2) &\simeq R\tilde{h}_{2,k,\ast}g_k^\ast \overline{\pi}_{k,!}\mathbb{W} \\
        T_\mu^{[1]}(\mathcal{F}_{\mathbb{W}}[-d])(-d/2) &\simeq R\tilde{h}_{2,k,\ast}g_k^\ast R\overline{\pi}_{k,\ast}\mathbb{W}
	\end{align*}
	By \cite[Proposition 20.2.3]{ScholzeWeinsteinBerkeley}, the map $\grgmu \to \spd E$ is proper, and therefore $\tilde{h}_2$ is proper as well. Thus by applying proper (resp. smooth) base change, we obtain 
	\begin{align*}
		T_\mu^{[1]}(\mathcal{F}_{\mathbb{W},c}[-d])(-d/2) &\simeq \tilde{h}_{2,k,!} \pi_{k,!} \mathbb{W} = y_{k,!} \mathbb{W} \\
        T_\mu^{[1]}(\mathcal{F}_{\mathbb{W}}[-d])(-d/2) &\simeq R\tilde{h}_{2,k,\ast} R\pi_{k,\ast} \mathbb{W} = R y_{k,\ast} \mathbb{W}.
	\end{align*}
Since these are (complexes of) smooth representations of $G(\qp)$, we can recover them from the colimit of their $K_p$-invariants. Applying Proposition \ref{Prop:AcceptableInvariants} then gives us $G(\qp)$-equivariant isomorphisms in $\mathcal{D}(\Lambda)^{B W_E}$
	\begin{align*}
		T_\mu^{[1]}(\mathcal{F}_{\mathbb{W},c}[-d])(-d/2) &\simeq \varinjlim_{K_p} f_{K_p,k,!} \mathbb{W}, \\
        T_\mu^{[1]}(\mathcal{F}_{\mathbb{W}}[-d])(-d/2) &\simeq \varinjlim_{K_p} R f_{K_p,k,\ast} \mathbb{W},
	\end{align*}
where $f_{K_p}$ is the structure map $$\left[\mathbf{Sh}_{K_pK^p}\gx^{\diamondsuit}/(\ul{K_p} \times \phi^{\mathbb{Z}})\right] \to \left[\spd E / \phi^{\mathbb{Z}}\right].$$
By Proposition \ref{Prop:GaloisRestriction} and comparison theorems between the \'etale cohomology of an algebraic variety and its analytification, see Lemma \ref{Lem:HuberComparison} and \cite[Lemma 15.6, Proposition 27.5]{EtCohDiam}, for each $K_p$, there are canonical $W_E$-equivariant identifications 
\begin{align*}
    R f_{K_p,k,\ast} \mathbb{W} &\simeq R\Gamma(\mathbf{Sh}_{K}\gx_{C}, \mathbb{W}), \\
    f_{K_p,k,!} \mathbb{W} &\simeq R\Gamma_c(\mathbf{Sh}_{K}\gx_{C}, \mathbb{W}),
\end{align*}
finishing the proof.
\end{proof}

\subsection{A product formula} \label{sub:ProductFormula} In this section we prove a Mantovan-style product formula, following \cite[Section 8.5]{DvHKZIgusaStacks}. Let $\Lambda$ be a $\zl$-algebra in which $\ell$ is nilpotent containing a fixed square root of $p$. Let $k$ be an algebraic closure of the residue field of $E$ and base-change everything to $k$ as before. 

\subsubsection{} Before we can state the Mantovan product formula, we need to establish a bit more notation. First fix a left-invariant Haar measure on $G_b(\qp)$ and write $\mathcal{H}(G_b)$ for the algebra $C_c(G_b(\qp),\Lambda)$ of compactly supported locally constant $\Lambda$-valued functions on $G_b(\qp)$. Additionally, define the local Shimura variety at infinite level attached to $(G,b,\mu)$ as the fiber product
\begin{equation}\label{eq:LSVinfinity}
	\begin{tikzcd}
		\mathcal{M}_{G,b,\mu,\infty}
			\arrow[r] \arrow[d]
		& \grgmu^{[b]}
			\arrow[d, "\mathrm{BL}"]
		\\ \spd k
			\arrow[r, "b"]
		& \bung^{[b]}.
	\end{tikzcd}
\end{equation}
We note that the top horizontal arrow in this diagram is a $\tilde{G}_b$-torsor, and the action of $\underline{G(\qp)}$ on $\grgmu^{[b]}$ lifts to an action on $\mathcal{M}_{G,b,\mu,\infty}$ commuting with the $\tilde{G}_b$-action. 

By \cite[Proposition~3.15]{HamannImai}, there is a character $\delta_b$ of $G_b(\qp)$ such that the dualizing complex of $\bungk^{[b]}$ is given by $\delta_b^{-1}[-2d_b]$, where $d_b = \langle 2\rho, \nu_b\rangle$ (here, as usual, $\delta_b$ is viewed as a sheaf on $\bungk^{[b]}$ using the equivalence $\mathcal{D}(\bungk^{[b]},\Lambda) \simeq \mathcal{D}(G_b(\qp),\Lambda)$ as in \cite[Theorem~I.5.1.(ii)]{FarguesScholze}). We define $R\Gamma_c(\mathcal{M}_{G,b,\mu,\infty},\delta_b)$ to be the exceptional pushforward of $\delta_b$ along the structure map $\mathcal{M}_{G,b,\mu,\infty} \to \spd C$, where we view $\delta_b$ as a sheaf on $\mathcal{M}_{G,b,\mu,\infty}$ by pulling back along either composition in the diagram \eqref{eq:LSVinfinity}. As in \cite[Section~8.5]{DvHKZIgusaStacks}, $\mathcal{M}_{G,b,\mu,\infty}$ admits a descent to $\mathrm{Div}_{E,k}^1$, and it follows that $R\Gamma_c(\mathcal{M}_{G,b,\mu,\infty},\delta_b)$ is equipped with an action of $G_b(\qp)\times G(\qp) \times W_E$. 

\begin{Thm} \label{Thm:MantovanProduct} There exists a filtration on $R\Gamma_c(\mathbf{Sh}_{K^p}\gx_{\overline{E}}, \mathbb{W})$ of complexes of smooth representations of $G(\qp)\times W_E$, whose graded pieces are given by
	\begin{equation*}
			R\Gamma_c(\operatorname{Ig}^{b,\mathrm{v}}_{K^p,C},\mathbb{W})^{\mathrm{op}} \otimes_{\mathcal{H}(G_b)}^\mathbb{L} R\Gamma_c(\mathcal{M}_{G,b,\mu,\infty},\delta_b)[2d_b].
	\end{equation*}
\end{Thm}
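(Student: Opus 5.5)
We start from the compactly supported half of \Cref{Thm:WeilCohShimVar}, which identifies $R\Gamma_c(\mathbf{Sh}_{K^p}\gx_C,\mathbb{W})$ with $T_\mu^{[1]}(\mathcal{F}_{c,\mathbb{W}}[-d])(-d/2)$ in $\mathcal{D}(G(\qp),\Lambda)^{BW_E}$. Since $\bungmu$ has a finite stratification by the locally closed Newton strata $\bung^{[b]}$, $[b]\in\bgmu$, we order the strata compatibly with closure relations (for instance by the dimension $d_b$) and apply the open--closed excision triangles $j_!j^\ast\to \mathrm{id}\to i_\ast i^\ast$ repeatedly to $\mathcal{F}_{c,\mathbb{W}}$. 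This produces a finite filtration of $\mathcal{F}_{c,\mathbb{W}}$ whose graded pieces are $i_{b,!}i_b^\ast\mathcal{F}_{c,\mathbb{W}}$. Because $T^{[1]}_\mu$ is exact, being given by pullback followed by a pushforward, it carries this to a filtration of $R\Gamma_c(\mathbf{Sh}_{K^p}\gx_{\overline{E}},\mathbb{W})$ in which each graded piece is $T^{[1]}_\mu(i_{b,!}i_b^\ast\mathcal{F}_{c,\mathbb{W}}[-d])(-d/2)$. Compatibility with $G(\qp)\times W_E$ holds because every functor involved lives over $\bungk^{[1]}\times\Divk$.

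Next we compute each graded piece. By \Cref{Lem:CohomIgusaVar}, under the equivalence $\mathcal{D}(\bungk^{[b]},\Lambda)\simeq\mathcal{D}(G_b(\qp),\Lambda)$, the sheaf $i_b^\ast\mathcal{F}_{c,\mathbb{W}}$ corresponds to the smooth $G_b(\qp)$-complex $R\Gamma_c(\operatorname{Ig}^{b,\mathrm{v}}_{K^p,C},\mathbb{W})$. What remains is a purely local computation, identical to the Hodge type case: we need to show that for $A\in\mathcal{D}(G_b(\qp),\Lambda)$,
\[
T^{[1]}_\mu(i_{b,!}A[-d])(-d/2)\simeq A^{\mathrm{op}}\otimes^{\mathbb{L}}_{\mathcal{H}(G_b)}R\Gamma_c(\mathcal{M}_{G,b,\mu,\infty},\delta_b)[2d_b].
\]
To prove this, we base change $g\colon[\grgmu/(\ul{G(\qp)}\times\phi^{\mathbb{Z}})]\to\bungmu$ along $i_b$, which gives the stratum $\grgmu^{[b]}$. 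We then pull back further along $b\colon\spd k\to\bungk^{[b]}$. Up to the $\phi$-descent to $\Divk$, this yields $\mathcal{M}_{G,b,\mu,\infty}$, which is a $\tilde G_b$-torsor over $\grgmu^{[b]}$. The pushforward along $\tilde h_2$ of a sheaf coming from $A$ then becomes the $G_b(\qp)$-coinvariants, meaning the derived tensor product over $\mathcal{H}(G_b)$, of $A$ against the compactly supported cohomology of $\mathcal{M}_{G,b,\mu,\infty}$. Here $\tilde h_2$ is proper, so $R\tilde h_{2,\ast}=\tilde h_{2,!}$.

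The twist by $\delta_b$ and the shift $[2d_b]$ come from the dualizing complex $\delta_b^{-1}[-2d_b]$ of $\bungk^{[b]}$ from \cite[Proposition~3.15]{HamannImai}. They arise when we convert the $!$-pushforward along $\bungk^{[b]}\to\spd k$, together with the unipotent part of $\tilde G_b$, into Hecke-algebra tensor products. We would not redo this computation. Instead we would cite the proof of \cite[Theorem~8.5.7]{DvHKZIgusaStacks}, observing that it uses only the Cartesian diagram for the Igusa stack (\Cref{Cor:FiniteLevelIgusa1}), properness of $\tilde h_2$, and local geometry of $\bung$. None of these depend on the Hodge type hypothesis, and the Shimura-variety input is already packaged in \Cref{Thm:WeilCohShimVar}.

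The main obstacle I expect is making this local identification rigorous and canonical. The two delicate points are checking that the $\tilde G_b$-torsor structure combined with the descent to $\Divk$ produces precisely $\mathcal{H}(G_b)$-coinvariants, keeping track of the $\mathrm{op}$ convention along the way, and pinning down the correct normalization $\delta_b$ versus $\delta_b^{-1}$. I would handle both by first checking the case $A=\mathcal{H}(G_b)$ (compact induction), where the claim reduces to computing $R\Gamma_c$ of $\grgmu^{[b]}\times_{\bung^{[b]}}\spd k$. The general case then follows by a colimit argument, since both sides commute with colimits in $A$.
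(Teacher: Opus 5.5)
Your proposal is correct and follows essentially the same route as the paper. The paper invokes the argument of \cite[Theorem~8.5.7]{DvHKZIgusaStacks} (the Newton-stratum filtration pushed through $T^{[1]}_\mu$ via \Cref{Thm:WeilCohShimVar}, together with the local computation involving $\mathcal{M}_{G,b,\mu,\infty}$) to get graded pieces $R\overline{\pi}_{b,!}\mathbb{W}\otimes^{\mathbb{L}}_{\mathcal{H}(G_b)}R\Gamma_c(\mathcal{M}_{G,b,\mu,\infty},\delta_b)[2d_b]$, and then concludes with \Cref{Lem:CohomIgusaVar} exactly as you do.
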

\begin{proof}
	The arguments in the proof of \cite[Theorem 8.5.7]{DvHKZIgusaStacks} show that $R\Gamma_c(\mathbf{Sh}_{K^p}\gx_{\overline{E}}, \mathbb{W})$ admits a filtration whose graded pieces are given by
	\begin{equation*}
		R\overline{\pi}_{b,!}\mathbb{W} \otimes_{\mathcal{H}(G_b)}^\mathbb{L} R\Gamma_c(\mathcal{M}_{G,b,\mu,\infty},\delta_b)[2d_b].
	\end{equation*}
	We then conclude using Lemma \ref{Lem:CohomIgusaVar}.
\end{proof}

\subsection{Compatibility with the Fargues--Scholze correspondence} \label{sub:WeakCompatibility}
We continue to let $\gx$, $\mathsf{E}, v, E, G$, $k$, and $\mu$ be as in Section \ref{sub:DualizingSheaf}. Let us assume that $\Lambda$ is a $\zl$-algebra containing a square root of $p$ in which $\ell$ is nilpotent. We fix a compact open subgroup $K^p \subset \g(\afp)$, and for any compact open subgroup $K_p \subset G(\qp)$, we write $K = K_pK^p$. Moreover, we let $d=\langle2\rho,\mu\rangle$ be the dimension of the Shimura variety. In this section we study the Igusa sheaves using the spectral action of \cite{FarguesScholze}. Our exposition here largely follows \cite[Section 9]{DvHKZIgusaStacks}, and we refer the reader there for detailed explanations of the arguments which are here only briefly summarized.

\subsubsection{} To state the theorem in this section we need to recall a bit of notation. Let $\widehat{G}$ be a dual group of $G$ over $\zl(\sqrt{p})$, equipped with an action of $W_{\qp}$ restricted from an action of $\gal_{\qp}$. As in \cite[Section 2.1]{BuzzardGee}, define the $L$-group $^LG$ to be $\widehat{G} \rtimes W_{\qp}$. Associated with $\mu$ is a flat perverse sheaf $\mathcal{S}_\mu$, which lies in the Satake category for the local Hecke stack $\mathcal{H}\mathrm{ck}_{G,\mathrm{Div}^1}$, see \cite[Section 8.4.1]{DvHKZIgusaStacks} for the precise definition. Applying the fiber functor $F$ defined in \cite[Definition / Proposition IV.7.10]{FarguesScholze} to $\mathcal{S}_\mu$, we obtain a $W_{\qp}$-equivariant algebraic representation of the dual group $\widehat{G}$, which formally corresponds to a  representation $r_\mu$ of $^LG = \widehat{G} \rtimes W_{\qp}$. Note here we are applying \cite[Theorem VI.11.1]{FarguesScholze} to identify the dual group $\widehat{G}$ with the Tannaka group $\check{G}$ of the Satake category $\operatorname{Sat}(\mathcal{H}\mathrm{ck}_{G,\mathrm{Div}^1},\Lambda)$; this uses our choice of $\sqrt{p} \in \zl(\sqrt{p})$.

Denote by $\parg$ the stack of $L$-parameters over $\zl(\sqrt{p})$ as in \cite{DHKMModuli}, \cite{ZhuCoherent}, and \cite{FarguesScholze}, and let $\perf(\parg)$ be the $\infty$-category of perfect complexes on $\parg$. By \cite[Theorem X.0.2, Theorem V.4.1]{FarguesScholze}, when $\ell$ is coprime to the order of $\pi_0(Z(G))$, there is a natural $\Lambda$-linear action (the spectral action) of $\perf(\parg)$ on $\mathcal{D}(\bun_{G,k},\Lambda)$, which preserves the compact objects. If $\mathcal{V}$ is a perfect complex on $\parg$, we denote the spectral action of $\mathcal{V}$ on $\mathcal{D}(\bun_{G,k},\Lambda)$ by $\mathcal{V} \ast (-)$. 

\subsubsection{} When $\mathcal{V}$ is defined from an algebraic representation $V$ of $\widehat{G}$, the spectral action is naturally identified with the Hecke operator $T_V$ defined in \cite[Chapter IX]{FarguesScholze}, see \cite[Example 9.1.2]{DvHKZIgusaStacks}. In particular, the representation $r_\mu$ determines a vector bundle $\mathcal{V}_\mu$ on $\parg$, and the spectral action of $\mathcal{V}_\mu$ recovers the action of the Hecke operator $T_\mu$. We write $\pi_0 \operatorname{End}_{\parg}(\mathcal{V}_\mu)$ for the ring of endomorphisms of $\mathcal{V}_\mu$, considered as a vector bundle on $\parg$ (as opposed to the endomorphisms in the category of perfect complexes). 

For an algebraic representation $V$ and corresponding vector bundle $\mathcal{V}$ on $\parg$, via the action of $\operatorname{End}_{\parg}(\mathcal{V})$ the objects $\mathcal{V} \ast A$ for $A$ in $\mathcal{D}(\bun_{G,k}, \Lambda)$ inherit an action of $W_E$, which we call the \textit{spectral $W_E$-action}. This action is compatible with the natural action of $W_E$ on the Hecke operators $T_V$ under the identification $\mathcal{V}\ast(-) = T_V$, see \cite[Remark 9.1.3]{DvHKZIgusaStacks}.

\subsubsection{} In the rest of this section, we will implicitly base-change the above objects along $\zl(\sqrt{p}) \to \Lambda$, using our fixed $\sqrt{p} \in \Lambda$. 

\subsubsection{} If $\ell$ is coprime to the order of $\pi_0(Z(G))$, then by \cite[Corollary IX.0.3]{FarguesScholze}, there is a morphism
\begin{equation}\label{eq:BernsteinCenters}
	\Psi_G \colon \mathcal{Z}^\mathrm{spec}(G,\Lambda) \to \mathcal{Z}(G(\qp),\Lambda),
\end{equation}
where $\mathcal{Z}^\mathrm{spec}(G,\Lambda) = \Gamma(\parg,\mathcal{O})$ is the spectral Bernstein center \cite[Definition IX.0.2]{FarguesScholze} and $\mathcal{Z}(G(\qp),\Lambda)$ is the (usual) Bernstein center of the abelian category of smooth $G(\qp)$-representations on $\Lambda$-modules. 

Finally, let us denote by $j_\mu\colon \bungmu \hookrightarrow \bung$ the open immersion of the $\mu^{-1}$-admissible locus and by $i_1 \colon \bung^{[1]} \hookrightarrow \bung$ the open immersion of the Newton stratum corresponding to $1 \in G(\qpbr)$. 

\begin{Prop}\label{Prop:SpectralActionSV}
Assume that $\ell$ is coprime to the order of $\pi_0(Z(G))$, and let $\mathbb{W}$ be a standard $\Lambda$-local system. Then there is an isomorphism
	\begin{equation*}
		R\Gamma(\mathbf{Sh}_{K^p}\gx_{\overline{E}}, \mathbb{W}) \simeq i_{1,k}^\ast \left( \mathcal{V}_\mu \ast (j_{\mu, k, !} \mathcal{F}_{\mathbb{W}}[-d])(-\frac{d}{2})\right)
	\end{equation*}
	in $\mathcal{D}(G(\qp), \Lambda)^{BW_E} \simeq \mathcal{D}(\bun^{[1]}_{G,k}, \Lambda)^{BW_E}$. Moreover, if $K_p$ is a compact open subgroup of $\g(\qp)$ such that $K^p$ is acceptable with respect to $K_p$, then there is a map of $\Lambda$-algebras
	\begin{equation*}
		\pi_0 \operatorname{End}_{\parg}(\mathcal{V}_\mu)  \to \operatorname{End}_{\mathcal{D}(G(\qp),\Lambda)}(R\Gamma(\mathbf{Sh}_{K_pK^p}\gx_{\overline{E}},\mathbb{W})).
	\end{equation*}
\end{Prop}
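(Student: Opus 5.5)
The plan is to combine Theorem \ref{Thm:WeilCohShimVar} with the identification of the Hecke operator $T_\mu^{[1]}$ with the spectral action. The first isomorphism will follow from this, and the second from functoriality of the spectral action together with Proposition \ref{Prop:AcceptableInvariants}. This follows the pattern of \cite[Section 9]{DvHKZIgusaStacks}.

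\textbf{Step 1.} By Theorem \ref{Thm:WeilCohShimVar} we have $R\Gamma(\mathbf{Sh}_{K^p}\gx_C,\mathbb{W}) \simeq T_\mu^{[1]}(\mathcal{F}_{\mathbb{W}}[-d])(-d/2)$. Recall from \cite[Section 8.4.6]{DvHKZIgusaStacks} how $T_\mu^{[1]}$ relates to the Fargues--Scholze Hecke operator $T_\mu$. On $A \in \mathcal{D}(\bungmuk,\Lambda)$, the operator $T_\mu^{[1]}$ is computed as $i_{1,k}^\ast T_\mu(j_{\mu,k,!}A)$, with its $W_E$-equivariant structure. The key input is that the Hecke correspondence for $\mu$ starting at the trivial stratum only sees $\bungmu$. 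In other words, the restriction to $\bun_G^{[1]}$ of the Hecke transform is the pull–push along $\tilde h_2$ and $g$, and extending $A$ by zero via $j_{\mu,!}$ loses nothing. This should be checked using proper base change along the Hecke stack, whose fiber over $\bun_G^{[1]}\times\mathrm{Div}^1$ is $[\grgmu/(\ul{G(\qp)}\times\phi^{\mathbb{Z}})]$, and the fact that its image lies in $\bungmu$.

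\textbf{Step 2.} Under the hypothesis that $\ell$ is coprime to $|\pi_0(Z(G))|$, we identify $T_\mu$ with $\mathcal{V}_\mu \ast(-)$ together with its spectral $W_E$-action, by \cite[Example 9.1.2, Remark 9.1.3]{DvHKZIgusaStacks}. Combining this with Step 1 gives the first isomorphism in $\mathcal{D}(\bun^{[1]}_{G,k},\Lambda)^{BW_E}\simeq\mathcal{D}(G(\qp),\Lambda)^{BW_E}$. The only care needed is matching the twists $[d](d/2)$ and the $W_E$-structures, which are compatible by construction of $T_\mu^{[1]}$.

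\textbf{Step 3.} For the endomorphism map, $\pi_0\operatorname{End}_{\parg}(\mathcal{V}_\mu)$ acts on the functor $\mathcal{V}_\mu\ast(-)$ by functoriality of the spectral action, and hence acts on $R\Gamma(\mathbf{Sh}_{K^p}\gx_{\overline E},\mathbb{W})$. The spectral action is $\Lambda$-linear and commutes with $i_{1,k}^\ast$, so this action is by $G(\qp)$-equivariant endomorphisms. Taking $K_p$-invariants, which is functorial, gives an action on the $K_p$-invariants.

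\textbf{Step 4.} We then use Proposition \ref{Prop:AcceptableInvariants} and the proof of Theorem \ref{Thm:WeilCohShimVar}. When $K^p$ is acceptable for $K_p$, the derived $K_p$-invariants $a_{k,!}b_k^\ast Ry_{k,\ast}\mathbb{W}$ are identified with $R\Gamma(\mathbf{Sh}_{K_pK^p}\gx_{\overline E},\mathbb{W})$. This produces the desired algebra map, with values in endomorphisms in $\mathcal{D}(\Lambda)$ commuting with the Hecke action of Remark \ref{Rem:HeckeAction}. The statement as written targets $\operatorname{End}_{\mathcal{D}(G(\qp),\Lambda)}$, and this is how I would interpret it.

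The main obstacle is Step 1: pinning down $i_1^\ast T_\mu j_{\mu,!}=T_\mu^{[1]}$ with the correct Weil-equivariant structure. I would do this by reducing to the analogous computation in the Hodge type case, which is identical because only local geometry of $\bung$ and the Hecke stack enters.
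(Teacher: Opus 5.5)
Your proposal is correct and is essentially the paper's proof. The paper gets the isomorphism from Theorem \ref{Thm:WeilCohShimVar} together with the identifications $T_\mu^{[1]}(A)\simeq i_{1,k}^\ast T_\mu(j_{\mu,k,!}A)$ and $T_\mu\simeq\mathcal{V}_\mu\ast(-)$ from \cite{DvHKZIgusaStacks}, and the algebra map from Proposition \ref{Prop:AcceptableInvariants} plus the argument of \cite[Theorem 9.1.4]{DvHKZIgusaStacks}. One wording slip: the spectral action does not literally commute with $i_{1,k}^\ast$, but all you actually use is that $i_{1,k}^\ast$ is a functor, so endomorphisms of $\mathcal{V}_\mu\ast(j_{\mu,k,!}\mathcal{F}_{\mathbb{W}}[-d])$ induce $G(\mathbb{Q}_p)$-equivariant endomorphisms of its restriction to $\bun^{[1]}_{G,k}$.
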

\begin{proof}
The isomorphism follows from Theorem \ref{Thm:WeilCohShimVar}. Moreover, by Proposition \ref{Prop:AcceptableInvariants}, when $K^p$ is acceptable with respect to $K_p$, the complex $R\Gamma(\mathbf{Sh}_{K_pK^p}\gx_{\overline{E}},\mathbb{W})$ is identified with the derived $K_p$-invariants of $R\Gamma(\mathbf{Sh}_{K^p}\gx_{\overline{E}},\mathbb{W})$. In that case the existence of the map of $\Lambda$-algebras is proven exactly as in \cite[Theorem 9.1.4]{DvHKZIgusaStacks}. 
\end{proof}

\subsubsection{} Suppose $K^p$ is acceptable with respect to $K_p$, and let $K = K_pK^p$. Then from Proposition \ref{Prop:SpectralActionSV}, we obtain an action of $W_E$ on $R\Gamma(\mathbf{Sh}_{K}\gx_{\overline{E}}, \mathbb{W})$ for any $W$ and $\Lambda$ as in the statement of the theorem. We refer to this as the spectral $W_E$-action on $R\Gamma(\mathbf{Sh}_{K}\gx_{\overline{E}},\mathbb{W})$. If $\Lambda$ is instead the ring of integers in a finite extension of $\ql$, then we can consider local systems $\mathbb{W}$ constructed from a continuous action of $K^p$ on a finite free $\Lambda$-module $W$, which factors through the image of $K^p$ in $\g^c(\afp)$; we will call such local systems standard $\Lambda$-local systems. For such a standard $\Lambda$-local system, if we fix $\sqrt{p} \in \Lambda$, then we similarly obtain a $W_E$-action on $R\Gamma(\mathbf{Sh}_{K}\gx_{\overline{E}}, \mathbb{W})$ (again called the spectral $W_E$-action) by taking a limit of the actions on $R\Gamma(\mathbf{Sh}_{K}\gx_{\overline{E}}, \mathbb{W}/\ell^n\mathbb{W})$ for each $n$. 

\begin{Thm}\label{Thm:SpectralAction}
	Suppose $\Lambda$ is a $\zl$-algebra containing a fixed square root of $p$ which is either $\ell$-torsion or the ring of integers in a finite extension of $\ql$, and let $\mathbb{W}$ be a standard $\Lambda$-local system. Assume that $\ell$ is coprime to the order of $\pi_0(Z(G))$. If $K^p$ is acceptable with respect to $K_p$, then $\pi_0\operatorname{End}_{\parg}(\mathcal{V}_\mu)$ acts on $R\Gamma(\mathbf{Sh}_{K_pK^p}\gx_{\overline{E}},\mathbb{W})$. Moreover, the spectral and usual $W_E$-action on $R\Gamma(\mathbf{Sh}_{K_pK^p}\gx_{\overline{E}}, \mathbb{W})(\tfrac{d}{2})$ agree\footnote{The Tate twist by $(d/2)$ was erroneously omitted in \cite[Theorem 9.1.4 and Corollary 9.1.5 of v1]{DvHKZIgusaStacks}.}, and the action of $\mathcal{Z}^\mathrm{spec}(G,\Lambda) \subset \pi_0 \operatorname{End}_{\parg}(\mathcal{V}_{\mu})$ on $R\Gamma(\mathbf{Sh}_{K_pK^p}\gx_{\overline{E}},\mathbb{W})$ factors via $\Psi_G$ through the natural action of $\mathcal{Z}(G(\qp), \Lambda)$.
\end{Thm}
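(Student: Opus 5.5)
The plan is to deduce the theorem from Proposition \ref{Prop:SpectralActionSV}, following the argument of \cite[Theorem 9.1.4, Corollary 9.1.5]{DvHKZIgusaStacks}. The only abelian-type input needed there is the identification of the finite-level cohomology with derived $K_p$-invariants of the infinite-level cohomology, which is Proposition \ref{Prop:AcceptableInvariants}.

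\textbf{Torsion coefficients.} Assume first that $\Lambda$ is $\ell$-torsion. By Proposition \ref{Prop:SpectralActionSV}, $R\Gamma(\mathbf{Sh}_{K^p}\gx_{\overline{E}},\mathbb{W})(\tfrac d2)$ is isomorphic to
\[
i_{1,k}^\ast\bigl(\mathcal{V}_\mu \ast (j_{\mu,k,!}\mathcal{F}_{\mathbb{W}}[-d])\bigr).
\]
Since $i_{1,k}^\ast$ is a functor, $\pi_0\operatorname{End}_{\parg}(\mathcal{V}_\mu)$ acts on the right-hand side through its action on $\mathcal{V}_\mu\ast(-)$. This action commutes with $G(\qp)$, so it passes to derived $K_p$-invariants. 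By Proposition \ref{Prop:AcceptableInvariants} and acceptability, these invariants are $R\Gamma(\mathbf{Sh}_{K_pK^p}\gx_{\overline{E}},\mathbb{W})$, which gives the action.

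\textbf{Agreement of the two $W_E$-actions.} The spectral $W_E$-action on $\mathcal{V}_\mu\ast(-)$ agrees with the natural $W_E$-action on the Hecke operator $T_\mu$ by \cite[Remark 9.1.3]{DvHKZIgusaStacks}. Through the identification $T_\mu^{[1]}=i_1^\ast T_\mu j_{\mu,!}$ of \cite[Section 8.4.6]{DvHKZIgusaStacks}, this $W_E$-action is exactly the one appearing in Theorem \ref{Thm:WeilCohShimVar}. That theorem identifies it with the Galois action on cohomology twisted by $(d/2)$, since the isomorphism there lives in $\mathcal{D}(G(\qp),\Lambda)^{BW_E}$. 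I then pass to $K_p$-invariants, which is compatible with $W_E$ because Proposition \ref{Prop:AcceptableInvariants} is an isomorphism of objects over $[\spd E/\phi^{\mathbb Z}]$.

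\textbf{Spectral Bernstein center.} For $\mathcal{Z}^{\mathrm{spec}}(G,\Lambda)\to \pi_0\operatorname{End}(\mathcal V_\mu)$, I use that the spectral action of $\mathcal{O}_{\parg}$ commutes with Hecke operators and acts on $\mathcal{D}(\bun_{G,k},\Lambda)$ via the center. On $\bun_G^{[1]}$ this central action is the action of $\mathcal{Z}(G(\qp),\Lambda)$ through $\Psi_G$, by \cite[Corollary IX.0.3]{FarguesScholze}. Hence on $i_1^\ast(\mathcal{V}_\mu\ast A)$ the action factors through the Bernstein center action on the smooth $G(\qp)$-representation, and this is inherited by $K_p$-invariants.

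\textbf{Integral coefficients.} For $\Lambda$ the ring of integers of a finite extension of $\ql$, I apply the torsion case to $\Lambda/\ell^n$ and $\mathbb{W}/\ell^n$. All the constructions above are compatible with reduction modulo $\ell^n$ because they are functorial in the coefficients, so I take the derived limit over $n$. The main obstacle is this limit step: I must check that the actions at each finite level assemble into an action on $R\varprojlim_n$ rather than merely a compatible system. For this I would use that $\pi_0\operatorname{End}(\mathcal{V}_\mu)$ over $\zl(\sqrt p)$ is finite over $\mathcal{Z}^{\mathrm{spec}}$ and that the finite-level complexes are perfect, as in loc.\ cit.
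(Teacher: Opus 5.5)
Your proposal is correct and follows essentially the paper's route: the paper deduces the theorem from Proposition \ref{Prop:SpectralActionSV}, with Proposition \ref{Prop:AcceptableInvariants} as the only abelian-type input, by the arguments of \cite[Theorem 9.1.4, Corollary 9.1.5]{DvHKZIgusaStacks}, and you track the $(d/2)$-twist correctly. In the integral case the paper simply defines the spectral action as the limit of the torsion actions; if you want to justify that this gives an honest action on $R\varprojlim_n$, the relevant point is that $\operatorname{Hom}\bigl(R\Gamma(\mathbf{Sh}_K\gx_{\overline{E}},\mathbb{W}),\,R\Gamma(\mathbf{Sh}_K\gx_{\overline{E}},\mathbb{W}/\ell^n)[-1]\bigr)$ is finite by perfectness, so the $\varprojlim^1$ obstruction vanishes, whereas the finiteness of $\pi_0\operatorname{End}(\mathcal{V}_\mu)$ over $\mathcal{Z}^{\mathrm{spec}}$ is not needed.
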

\begin{proof}
	The theorem follows from Proposition \ref{Prop:SpectralActionSV} by the arguments in the proofs of \cite[Theorem 9.1.4, Corollary 9.1.5]{DvHKZIgusaStacks}.
\end{proof}

\begin{Rem}
    Let us denote the (possibly) non-commutative $\Lambda$-algebra $\pi_0\operatorname{End}_{\parg}(\mathcal{V}_\mu)$ by $E_{v,\mu}$. For each place $v$ of $\mathsf{E}$ not above $\ell$, the above result gives an algebra homomorphism $E_{v,\mu}\to \operatorname{End}_\Lambda(R\Gamma(\mathbf{Sh}_K(\mathsf{G},\mathsf{X})_{\overline{\mathsf{E}}},\mathbb{W}))$. Then by varying $v$, we get an algebra homomorphism from the free product $\ast_{v \nmid \ell}E_{v,\mu}\to \operatorname{End}_\Lambda(R\Gamma(\mathbf{Sh}_K(\mathsf{G},\mathsf{X})_{\overline{\mathsf{E}}},\mathbb{W}))$. The image of this map contains the (global) unramified Hecke algebra, and should encode certain (global) Galois theoretic information, since the vector bundles $\mathcal{V}_\mu$ carry Weil group actions. Though it seems hard to describe this image explicitly.
\end{Rem}

\subsubsection{} In the remainder of this subsection we assume $\Lambda$ is the ring of integers in a finite extension of $\ql$ and that $\Lambda$ contains a fixed square root of $p$. We fix $K=K_pK^p$ as before with $K^p$ acceptable with respect to $K_p$. We write $\mathcal{H}_{K_p}$ for the Hecke algebra $\Lambda[G(\qp) \sslash K_p]$ of level $K_p$ with coefficients in $\Lambda$, and fix a standard $\Lambda$-local system $\mathbb{W}$. Since $K^p$ is acceptable with respect to $K_p$, the complex $R\Gamma(\mathbf{Sh}_{K}\gx_{\overline{E}},\mathbb{W})$ carries a natural right action by $\mathcal{H}_{K_p}$, see Remark \ref{Rem:HeckeAction}. Let $\mathcal{Z}_{K_p}$ denote the center of $\mathcal{H}_{K_p}$, and suppose $\chi \colon \mathcal{Z}_{K_p} \to L$ is a character, where $L$ is either $\flbar$ or $\qlbar$. For any $i$, define\footnote{The Tate twist by $(d/2)$ was erroneously omitted in \cite[Section 1.2, Section 9.2 of v1]{DvHKZIgusaStacks}.}
\begin{equation*}
    W^i(\chi) := H^i(\mathbf{Sh}_K\gx_{\overline{E}},\mathbb{W})(d/2) \otimes_{\mathcal{Z}_{K_p},\chi} L.
\end{equation*}

The following theorem describes a compatibility between the Fargues--Scholze local Langlands correspondence and the cohomology of Shimura varieties. Recall that using \eqref{eq:BernsteinCenters} and \cite[Proposition VIII.3.2]{FarguesScholze}, we can associate to $\chi$ a semisimple $L$-parameter $\phi_{\chi}^{\mathrm{FS}}:W_{\qp} \to {}^LG(L)$.

\begin{Thm} \label{Thm:SV-FS-Compatibility}
    Suppose $\ell$ is coprime to the order of $\pi_0(Z(G))$. Then each irreducible $L$-linear $W_E$-representation occurring as a subquotient of $W^i(\chi)$ also occurs as a subquotient of $r_\mu \circ \restr{\phi_\chi^{\mathrm{FS}}}{{W_E}}$. 
\end{Thm}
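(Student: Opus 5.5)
The plan is to follow the argument of \cite[Theorem 9.2.1]{DvHKZIgusaStacks}, using Theorem \ref{Thm:SpectralAction} as the key input. Let $\mathcal{H} := H^i(\mathbf{Sh}_K\gx_{\overline{E}},\mathbb{W})(d/2)$. By Theorem \ref{Thm:SpectralAction}, the algebra $\pi_0\operatorname{End}_{\parg}(\mathcal{V}_\mu)$ acts on $R\Gamma(\mathbf{Sh}_K\gx_{\overline{E}},\mathbb{W})$, and hence on each cohomology group. This action has three properties we will use.
\begin{enumerate}
\item The usual $W_E$-action on $\mathcal{H}$ factors through the image of $\Lambda[W_E] \to \pi_0\operatorname{End}_{\parg}(\mathcal{V}_\mu)$.
\item The spectral Bernstein center $\mathcal{Z}^{\mathrm{spec}}(G,\Lambda)$ acts through $\Psi_G$ and the Bernstein center.
\item The Bernstein center acts through $\mathcal{Z}_{K_p}$, since taking $K_p$-invariants gives $\mathcal{Z}(G(\qp),\Lambda) \to \mathcal{Z}_{K_p}$.
\end{enumerate}
Consequently $W^i(\chi) = \mathcal{H} \otimes_{\mathcal{Z}_{K_p},\chi} L$ is a module over
\[
\pi_0\operatorname{End}_{\parg}(\mathcal{V}_\mu) \otimes_{\mathcal{Z}^{\mathrm{spec}}(G,\Lambda),\chi\circ\Psi_G} L,
\]
and its $W_E$-action factors through this algebra. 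One technical point: in the integral case $H^i$ is obtained as a limit over $\Lambda/\ell^n$, but the action is compatible with the limit, so it passes to $\Lambda$ and then to $L$.

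Next, I would identify this algebra modulo the maximal ideal $\mathfrak{m}_\chi$ of the spectral Bernstein center determined by $\chi \circ \Psi_G$. By \cite[Proposition VIII.3.2]{FarguesScholze}, the point $\mathfrak{m}_\chi$ corresponds to the closed $\widehat{G}$-orbit of the semisimple parameter $\phi_\chi^{\mathrm{FS}}$ in the GIT quotient $\parg \sslash \widehat{G}$. The element $w \in W_E$ acts on $\mathcal{V}_\mu$ over $\parg$, at a parameter $\phi$, by $r_\mu(\phi(w))$. The goal is to show that for every $w$, the characteristic polynomial of $r_\mu(\phi_\chi^{\mathrm{FS}}(w))$ annihilates the image of $w$ in the fiber algebra, and more generally that any polynomial identity in finitely many $w_1,\dots,w_n$ (the relations cutting out the pseudocharacter) is preserved.

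To make this precise I would use the Cayley--Hamilton type argument of \cite{DvHKZIgusaStacks}. Consider the $\widehat{G}$-invariant functions on $\parg$ given by coefficients of characteristic polynomials of $r_\mu(\phi(w_1)\cdots)$. These lie in $\mathcal{Z}^{\mathrm{spec}}$, and on the closed orbit they equal the values for $\phi_\chi^{\mathrm{FS}}$. The endomorphism $w$ of $\mathcal{V}_\mu$ satisfies its own characteristic polynomial. Therefore on $W^i(\chi)$ the determinant/pseudocharacter of the $W_E$-action on any subquotient is dominated by that of $r_\mu\circ\phi_\chi^{\mathrm{FS}}|_{W_E}$. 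More precisely, $W^i(\chi)$ is a module over an algebra $A$ which is a quotient of $\Lambda[W_E]$ satisfying the Cayley--Hamilton relations of the $\dim r_\mu$-dimensional pseudocharacter $\operatorname{tr} r_\mu\circ\phi_\chi^{\mathrm{FS}}$. Every irreducible $A$-module then appears as a Jordan--Hölder constituent of $r_\mu\circ\phi_\chi^{\mathrm{FS}}|_{W_E}$; this is standard Cayley--Hamilton algebra theory, cf. Chenevier's determinants.

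The main obstacle is the passage through the nilpotent thickening. The fiber $\pi_0\operatorname{End}(\mathcal{V}_\mu)\otimes L$ need not be semisimple, and points of $\parg$ lying over $\mathfrak{m}_\chi$ are not all equal to $\phi_\chi^{\mathrm{FS}}$; only their semisimplifications are. I would handle this exactly as in loc. cit., by working with the determinant law on $\parg \sslash \widehat{G}$ pulled back along $r_\mu$, which only sees semisimplifications. Irreducible subquotients are then controlled by the residual determinant, which equals that of $r_\mu\circ\phi^{\mathrm{FS}}_\chi$.
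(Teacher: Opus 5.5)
Your proposal is correct and matches the paper's proof, which simply runs the argument of \cite[Theorem 9.2.1]{DvHKZIgusaStacks} with Theorem \ref{Thm:SpectralAction} supplying the factorization of the $W_E$-action and the $\mathcal{Z}^{\mathrm{spec}}(G,\Lambda)$-action through $\pi_0\operatorname{End}_{\parg}(\mathcal{V}_\mu)$, compatibly with $\Psi_G$ and with the Hecke action at level $K_p$. Your Cayley--Hamilton step closes the argument and is valid in characteristic $\ell$ too: apply it to generic $L$-linear combinations of elements of $W_E$, so that every $x \in L[W_E]$ is annihilated on $W^i(\chi)$ by $\det(X - r_\mu\circ\phi^{\mathrm{FS}}_\chi(x))$.
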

\begin{proof}
    This follows from the same proof as that of \cite[Theorem 9.2.1]{DvHKZIgusaStacks}, using Theorem \ref{Thm:SpectralAction} to construct the commutative diagrams therein.
\end{proof}

\subsection{Hecke polynomials and the Eichler--Shimura relation} \label{Sub:ES}

With the results of the previous section in hand, we can now derive an Eichler--Shimura congruence relation for Shimura varieties of abelian type. For this we follow \cite[Section 9.4]{DvHKZIgusaStacks} and \cite[Section 5]{vdH}. 

Let $\Lambda$ be the ring of integers in a finite extension of $\ql$ containing a fixed square root of $p$. Fix an algebraic representation $V$ of $\widehat{G}$, and assume $V$ is of rank $n$ over $\Lambda$. Suppose that it extends to a representation $r_V:\widehat{G} \rtimes W_{E} \to \GL(V)$ for some finite extension $E$ over $\qp$, and let $\phi_E^\mathrm{univ}$ be the universal 1-cocycle on $Z^1(W_{\qp}, \widehat{G})$ restricted to $W_E$. Then for any $\gamma \in W_E$ we obtain an element $(r_V \circ \phi_E^{\mathrm{univ}})(\gamma)$ of $\pi_0 \operatorname{End}_{\parg}(\mathcal{V})$, where as before $\mathcal{V}$ is the vector bundle on $\parg$ corresponding to $V$. 

\subsubsection{} Following \cite[Definition 9.3.2]{DvHKZIgusaStacks}, for $\gamma \in W_E$, we define the spectral Hecke polynomial as
\begin{equation*}
	H^\mathrm{spec}_{G,V,\gamma}(X) = \det(X-(r_V \circ \phi_E^\mathrm{univ})(\gamma)) = \sum_{i=0}^n (-1)^i S_{\wedge^{n-i} V, \gamma} X^i \in \mathcal{Z}^\mathrm{spec}(G,\Lambda).
\end{equation*}
Here $S_{\wedge^i V,\gamma} \in \mathcal{Z}^\mathrm{spec}(G,\Lambda)$ is an excursion operator whose attached function sends an $L$-parameter $\phi$ to the trace of $\wedge^i(r_V \circ \phi)(\gamma)$, see \cite[Section 9.3]{DvHKZIgusaStacks} for details. By \cite[Corollary 9.3.3]{DvHKZIgusaStacks}, there is a $\mathcal{Z}^\mathrm{spec}(G,\Lambda)$-algebra homomorphism
\begin{equation}\label{eq:SpectralHeckePolyMap}
	\mathcal{Z}^\mathrm{spec}(G,\Lambda)[X] / (H^\mathrm{spec}_{G,V,\gamma}(X)) \to \pi_0 \operatorname{End}_{\parg}(\mathcal{V})
\end{equation}
given by mapping $X$ to $(r_V \circ \phi_E^\mathrm{univ})(\gamma)$.

\subsubsection{} Let $H_{G,V,\gamma}(X)$ denote the image of $H^\mathrm{spec}_{G,V,\gamma}$ under the map $\Psi_G$ (see \eqref{eq:BernsteinCenters}). We have the following proposition, which follows exactly as \cite[Corollary 9.3.5]{DvHKZIgusaStacks}.

\begin{Prop} \label{Prop:SpectralHecke}
	Let $\ell$ be coprime to the order of $\pi_0(Z(G))$. Let $K=K_pK^p$ be a compact open subgroup such that $K^p$ is acceptable with respect to $K_p$. Then for every element $\gamma \in W_E$ and every standard $\Lambda$-local system $\mathbb{W}$ the endomorphism $H_{G,V_\mu, \gamma}(\gamma)$ acts as zero on $R\Gamma(\mathbf{Sh}_K\gx_{\overline{E}},\mathbb{W})(\frac{d}{2})$. 
\end{Prop}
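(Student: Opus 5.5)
The plan is to transport the Cayley--Hamilton type identity \eqref{eq:SpectralHeckePolyMap} through the action constructed in Theorem \ref{Thm:SpectralAction}, exactly as in \cite[Corollary 9.3.5]{DvHKZIgusaStacks}. Take $V=V_\mu$, the representation $r_\mu$ of $\widehat{G}\rtimes W_E$ attached to $\mathcal{S}_\mu$. First reduce to $\Lambda$ torsion, i.e.\ $\Lambda/\ell^n$: the statement for the ring of integers follows by taking the limit over $n$. This is compatible with how the spectral $W_E$-action was defined in that case, and $H_{G,V_\mu,\gamma}$ is compatible with reduction. Since $K^p$ is acceptable with respect to $K_p$, Theorem \ref{Thm:SpectralAction} provides a $\Lambda$-algebra map
\[
\pi_0\operatorname{End}_{\parg}(\mathcal{V}_\mu)\to \operatorname{End}_{\mathcal{D}(G(\qp),\Lambda)}\bigl(R\Gamma(\mathbf{Sh}_K\gx_{\overline{E}},\mathbb{W})(\tfrac d2)\bigr),
\]
and this map has two properties we will use.

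\textbf{Step 1.} The element $(r_{V_\mu}\circ\phi^{\mathrm{univ}}_E)(\gamma)$ acts through the spectral $W_E$-action, which agrees with the usual Galois action of $\gamma$ on the Tate-twisted cohomology.

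\textbf{Step 2.} The subalgebra $\mathcal{Z}^{\mathrm{spec}}(G,\Lambda)$ acts via $\Psi_G$ through the Bernstein center $\mathcal{Z}(G(\qp),\Lambda)$. On $K_p$-level cohomology, the latter acts through $\mathcal{Z}_{K_p}$, which is compatible with the Hecke action of Remark \ref{Rem:HeckeAction}.

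Composing the homomorphism \eqref{eq:SpectralHeckePolyMap} with the map above gives a ring homomorphism
\[
\mathcal{Z}^{\mathrm{spec}}(G,\Lambda)[X]/(H^{\mathrm{spec}}_{G,V_\mu,\gamma}(X))\to \operatorname{End}\bigl(R\Gamma(\mathbf{Sh}_K\gx_{\overline{E}},\mathbb{W})(\tfrac d2)\bigr).
\]
Under it, $X\mapsto\gamma$ by Step 1, and the coefficients $S_{\wedge^i V_\mu,\gamma}$ map to $\Psi_G(S_{\wedge^iV_\mu,\gamma})$ by Step 2. Evaluating the zero class $H^{\mathrm{spec}}_{G,V_\mu,\gamma}(X)$ therefore yields $H_{G,V_\mu,\gamma}(\gamma)=0$ as an endomorphism.

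The main obstacle is making this evaluation legitimate in a noncommutative setting. The endomorphism ring is a ring of endomorphisms in a derived category. It is noncommutative in general, and the Hecke and Galois actions commute only because both come from the spectral action on one object. So we must check that the image of $\mathcal{Z}^{\mathrm{spec}}$ is central in the image of $\pi_0\operatorname{End}(\mathcal{V}_\mu)$, and that the two identifications in Steps 1 and 2 arise from the same algebra map rather than merely agreeing up to some automorphism.

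For this I would follow the proof of \cite[Theorem 9.1.4]{DvHKZIgusaStacks}. There the action is defined by applying $i_{1,k}^*(\mathcal{V}_\mu\ast -)$ to $j_{\mu,k,!}\mathcal{F}_\mathbb{W}$ via Proposition \ref{Prop:SpectralActionSV}. The spectral center acts on every object compatibly with $i_1^*$, and this is where $\Psi_G$ is defined. We then pass to $K_p$-invariants using Proposition \ref{Prop:AcceptableInvariants}, which is where acceptability enters.
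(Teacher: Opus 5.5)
Your proposal is correct and is essentially the paper's proof: the paper likewise pushes the identity \eqref{eq:SpectralHeckePolyMap} through the algebra map of Theorem \ref{Thm:SpectralAction}, with Proposition \ref{Prop:AcceptableInvariants} supplying the passage to level $K_p$. The obstacle you flag is not a real one, since the identity already holds in $\pi_0\operatorname{End}_{\parg}(\mathcal{V}_\mu)$ (where $\mathcal{Z}^{\mathrm{spec}}(G,\Lambda)$ acts centrally) and your Steps 1 and 2 are both properties of that single ring homomorphism, which simply transports it; likewise the reduction to torsion $\Lambda$ is unnecessary, because Theorem \ref{Thm:SpectralAction} already covers the integral case.
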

\begin{proof}
	Given the homomorphism \eqref{eq:SpectralHeckePolyMap}, this is immediate from \Cref{Thm:SpectralAction} and \Cref{Prop:AcceptableInvariants}.
\end{proof}

\begin{Rem}
If $\gamma$ is a lift of Frobenius, then the Hecke polynomial $H_{G,V_\mu, \gamma}$ is independent of $\ell$. More precisely, it follows from \cite[Theorem 6.2, proof of Corollary 6.3]{ScholzeMotivicGeometrization} that it lies inside $\mathcal{Z}(G(\qp), \mathbb{Z}[\sqrt{p},\tfrac{1}{p |\pi_0(\zg)|}]) \subset \mathcal{Z}(G(\qp),\Lambda)$. Here the inclusion of Bernstein centers is described in \cite[Lemma 3.2]{DHKMFamilies}. 
\end{Rem}

\subsubsection{} \label{subsub:EichlerShimura} Now let us fix a special parahoric subgroup $K_p \subset G(\qp)$ with $\zp$-model $\mathcal{G}$, and let $K_p' \subset K_p$ be an Iwahori subgroup with $\zp$-model $\mathcal{I}$. Write $K' = K_p'K^p$. Fix also a quasi-split inner form $G^\ast$ of $G$, along with a very special parahoric integral model $\mathcal{G}^\ast$. Let $\mathcal{H}_\mathcal{G}$, $\mathcal{H}_\mathcal{I}$, and $\mathcal{H}_{\mathcal{G}^\ast}$ denote the $\Lambda$-linear Hecke algebras for $\mathcal{G}$, $\mathcal{I}$, and $\mathcal{G}^\ast$, respectively. 

Let $q$ be the cardinality of the residue field of $E$, and let $I=I_{\mathbb{Q}_p}$ denote the inertia subgroup of $W_{\qp}$. Fix (arithmetic) Frobenius lifts $\sigma\in W_{\qp}$ and $\sigma_E \in W_E$. Because $G^\ast$ is quasi-split, \cite[Proposition 2.2]{vdH} provides an isomorphism between $\mathcal{H}_{\mathcal{G}^\ast}$ and the algebra of $\widehat{G}^I$-invariant functions on $\widehat{G}^Iq^{-1}\sigma$. Following \cite[Section 5]{vdH}, using this isomorphism we define an auxiliary renormalized Hecke polynomial in $\mathcal{H}_{\mathcal{G}^\ast}[X]$ by
\begin{equation*}
	H_\mu^\ast = \det(X-q^{\frac{d}{2}}r_\mu(g,\sigma_E)).
\end{equation*}

Following \cite[Section 3]{vdH}, we write
\begin{equation*}
	\tilde{t} \colon \mathcal{H}_{\mathcal{G}^\ast} \to \mathcal{H}_{\mathcal{G}}
\end{equation*} 
for the normalized transfer homomorphism between $\mathcal{H}_{\mathcal{G}^\ast}$ and $\mathcal{H}_{\mathcal{G}}$. Then the renormalized Hecke polynomial $H_\mu \in \mathcal{H}_{\mathcal{G}}[X]$ is defined to be the image of $H_\mu^\ast$ under $\tilde{t}$, i.e.,
\begin{equation*}
	H_\mu = \tilde{t}(H_\mu^\ast).
\end{equation*}

We can now state our main theorem of this section. Let $K_p'$ be as above.

\begin{Thm} \label{Thm:EichlerShimura}
Assume that $\ell$ be coprime to the order of $\pi_0(Z(G))$. Let $K^p \subset \gafp$ be a compact open subgroup such that $K^p$ is acceptable with respect to $K_p'$, set $K'=K_p'K^p$, and let $\mathbb{W}$ be a standard $\Lambda$-local system. Then the inertia subgroup $I_E$ of $W_E$ acts unipotently on $H^i(\mathbf{Sh}_{K'}\gx_{\overline{E}}, \mathbb{W})$ for all $i$. Moreover, the action of any Frobenius lift $\sigma_E \in W_E$ on $R\Gamma(\mathbf{Sh}_{K'}\gx_{\overline{E}}, \mathbb{W})$ satisfies $H_\mu(\sigma_E) = 0$. 
\end{Thm}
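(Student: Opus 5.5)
We follow the strategy of \cite[Section 5]{vdH}, which treats the Hodge type case, replacing its cohomological input by the results proved above for abelian type. Since $K^p$ is acceptable with respect to $K_p'$, Proposition \ref{Prop:AcceptableInvariants} identifies $R\Gamma(\mathbf{Sh}_{K'}\gx_{\overline{E}},\mathbb{W})$ with the derived $K_p'$-invariants of $R\Gamma(\mathbf{Sh}_{K^p}\gx_{\overline{E}},\mathbb{W})$. Theorem \ref{Thm:SpectralAction} then shows that the usual $W_E$-action on $R\Gamma(\mathbf{Sh}_{K'}\gx_{\overline{E}},\mathbb{W})(\tfrac d2)$ agrees with the spectral one. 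In particular, this $W_E$-action factors through $\pi_0\operatorname{End}_{\parg}(\mathcal{V}_\mu)$ applied to the Iwahori-invariant part. Everything therefore reduces to a statement about the spectral action on the Iwahori block. Here we use the $\mathcal{Z}^{\mathrm{spec}}(G,\Lambda)$-module structure, which factors through $\Psi_G$ into the Bernstein center, and via its action on $K_p'$-invariants lands in the center of $\mathcal{H}_{\mathcal{I}}$.

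For unipotence of inertia, we first show that on the Iwahori block the universal parameter restricted to inertia is unipotent. Concretely, we use that the Iwahori-fixed vectors only see the depth-zero, unipotent part of the Bernstein center. Following \cite{vdH}, this means the excursion operators $S_{\wedge^i V,\gamma}$ for $\gamma\in I_E$ act on $\mathcal{H}_{\mathcal{I}}$-modules through the same scalars as for $\gamma=1$. From this, $(r_\mu\circ\phi^{\mathrm{univ}})(\gamma)$ satisfies $(X-1)^n=0$ in $\pi_0\operatorname{End}(\mathcal{V}_\mu)$ after restriction to the block, by Cayley--Hamilton as in \eqref{eq:SpectralHeckePolyMap}. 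Consequently $I_E$ acts unipotently on each $H^i$.

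For the Frobenius relation, we apply Proposition \ref{Prop:SpectralHecke} with $\gamma=\sigma_E$. This gives $H_{G,V_\mu,\sigma_E}(\sigma_E)=0$ on $R\Gamma(\mathbf{Sh}_{K'}\gx_{\overline E},\mathbb{W})(\tfrac d2)$. It remains to identify the image of $H^{\mathrm{spec}}_{G,V_\mu,\sigma_E}$ in the center of $\mathcal{H}_{\mathcal{I}}$ with $H_\mu$, viewed via Bernstein's central embedding $Z(\mathcal{H}_{\mathcal{I}})\supset\mathcal{H}_{\mathcal{G}}$ and the Tate twist normalization $q^{d/2}$. This identification is purely local, about $G$, and we import it from \cite[Section 5]{vdH}. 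That argument uses compatibility of Fargues--Scholze parameters with the transfer homomorphism $\tilde t$ for depth-zero (principal series and unipotent) representations, together with \cite[Proposition 2.2]{vdH}.

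The main obstacle is this local identification, which is exactly where quasi-splitness fails and $\tilde t$ enters. Since it involves only $G$ and not the Shimura datum, our plan is to check that van den Hove's local results are stated for arbitrary $G$ with $\ell\nmid|\pi_0(Z(G))|$, and then cite them. The global input consists only of Theorem \ref{Thm:SpectralAction} and Proposition \ref{Prop:SpectralHecke}, which replace their Hodge-type counterparts.
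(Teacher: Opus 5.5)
Your overall route is the paper's. Its proof just says the theorem follows from Proposition~\ref{Prop:SpectralHecke} by the argument of \cite[Theorem 5.4]{vdH}. The global inputs you isolate (Proposition~\ref{Prop:AcceptableInvariants}, Theorem~\ref{Thm:SpectralAction}, Proposition~\ref{Prop:SpectralHecke}) are the ones used there. So is the purely local identification of the image of $H^{\mathrm{spec}}_{G,V_\mu,\sigma_E}$ with $H_\mu$ (Bernstein isomorphism, the $q^{d/2}$ twist, $\tilde t$) that you import from \cite{vdH}. The Frobenius half of your proposal is fine as written.

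The gap is in the unipotence step. There you assert that for $\gamma\in I_E$ the excursion operators $S_{\wedge^iV_\mu,\gamma}$ act on Iwahori-spherical representations through the same scalars as for $\gamma=1$. Compatibility of $\phi^{\mathrm{FS}}$ with parabolic induction and with tori gives less than this. It shows that such a $\pi$ has $\phi^{\mathrm{FS}}_\pi|_{I}$ conjugate to $\gamma\mapsto(1,\gamma)$, so $S_{\wedge^iV_\mu,\gamma}$ acts by $\operatorname{tr}\wedge^i r_\mu(1,\gamma)$. This equals $\binom{n}{i}$ only if $I_E$ acts trivially on $V_\mu$ through ${}^LG$, and that can fail as soon as $I_E$ acts nontrivially on $\widehat G$.

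A concrete example: take $G=\operatorname{Res}_{F/\qp}\GL_2$ with $F/\qp$ ramified quadratic and $\mu$ parallel. Then $E=\qp$ and $r_\mu=\mathrm{Std}\otimes\mathrm{Std}$, and any $\gamma\in I_{\qp}\smallsetminus I_F$ acts on it by the swap, which has eigenvalue $-1$. Globally this shows up for quaternionic Shimura surfaces over a real quadratic field in which $p$ ramifies. For $\pi$ unramified at the prime above $p$, the tensor induction of $\rho_\pi$ occurs in $H^2$ at Iwahori level, and $I_{\qp}$ acts on it through the swap. So the inertia claim as stated fails in this case.

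What your argument actually yields is quasi-unipotence: only the open subgroup $\ker(I_E\to\operatorname{Aut}(\widehat G))$ acts unipotently. Unipotence of all of $I_E$ needs an extra hypothesis such as $I_E$ acting trivially on $\widehat G$, which is automatic for unramified $G$. The paper's one-line reduction to \cite{vdH} does not address this point either.
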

\begin{proof}
	This follows from Proposition \ref{Prop:SpectralHecke} by the arguments in the proof of \cite[Theorem 5.4]{vdH}, which comprise a generalization of the arguments in the proof of \cite[Theorem 9.4.11]{DvHKZIgusaStacks}.
\end{proof}

\subsection{Partial Frobenii and their Eichler--Shimura relations} \label{sub:Plectic}
\newcommand{\Par}{\operatorname{Par}}
In this subsection, we show that when an abelian type Shimura variety comes from restriction of scalars from a totally real field, then at split primes its cohomology satisfies the local plectic conjecture of Nekov\'ar--Scholl.\footnote{The local plectic conjecture is most interesting at primes that are not split. For such primes, it will be proved in upcoming work of Feng--Tamiozzo--Zhang.} If we take the level at $p$ to be hyperspecial, this proves the existence of partial Frobenius operators each satisfying an Eichler--Shimura relation; this generalizes a result of Lee \cite[Theorem 1.0.4]{Lee}. Let $\Lambda$ be a $\zl$-algebra in which $\ell$ is nilpotent, containing a fixed $\sqrt{p} \in \Lambda$.

\subsubsection{} More precisely, we let $\gx$ be an abelian type Shimura datum, such that $\mathsf{G}=\operatorname{Res}^\mathsf{F}_\mathbb{Q} \mathsf{H}$ for some connected reductive group $\mathsf{H}$ over a totally real number field $\mathsf{F}$ of degree $r$ over $\mathbb{Q}$. We assume that $p$ splits completely in $\mathsf{F}$, and let $\mathfrak{p}_i\mid p$, $i=1,\dots,r$ be primes of $\mathsf{F}$ above $p$. Denote by $F_i\simeq \qp$ the completion of $\mathsf{F}$ at $\mathfrak{p}_i$. Then $G=\mathsf{G}_{\qp}$ is isomorphic to $\prod_{i=1}^r H_i$, where $H_i =\mathsf{H}\times_{\mathsf{F}} F_{i}$. We can similarly identify
\begin{align*}
    \operatorname{Aut}_{\mathsf{F}\otimes_{\mathbb{Q}}\qp}(\mathsf{F} \otimes_{\mathbb{Q}} \qpbar) \simeq \prod_{i=1}^r \gal_{\qp},
\end{align*}
which contains a ``diagonal'' copy of $\gal_{\qp} \to \operatorname{Aut}_{\mathsf{F} \otimes_{\mathbb{Q}} \qp}(\mathsf{F} \otimes_{\mathbb{Q}} \qpbar)$.

\subsubsection{} \label{subsub:LocalPlecticReflex} Let $\mathsf{E}$ be the reflex field of $\gx$ as before. Choose an isomorphism $\iota_p:\mathbb{C}\simeq \qpbar$, which induces a $p$-adic place $v$ of $\mathsf{E}$. Let $E=\mathsf{E}_v$ be the completion. The Hodge cocharacter $\mu$ can be viewed as a cocharacter of $G$ via $\iota_p$ and it decomposes as a product $\mu=\prod_{i=1}^{r} \mu_i$, where $\mu_i$ is a cocharacter of $H_i$. For each $i$, we let $E_i$ be the reflex field of $\mu_i$. The group $\prod_{i=1}^r \gal_{\qp}$ naturally acts on the space of conjugacy classes of cocharacters of $G$, and its stabilizer is given by $\prod_{i=1}^r \gal_{E_i}$. Intersecting with the diagonal $\gal_{\qp}$ gives the stabilizer $\gal_{E}$, and in particular there is a natural injective map $\gal_{E} \to \prod_{i=1}^r \gal_{E_i}$. We consider the induced embedding $W_{E}\to \prod_{i=1}^r W_{E_i}$, restricting along which defines a functor between the corresponding derived categories of continuous representations $\mathcal{D}(\prod_{i=1}^r W_{E_i},\Lambda)\to \mathcal{D}(W_E,\Lambda)$. We have the following result, which is a special case of the local version of Nekov\'ar--Scholl's plectic conjecture. 

\begin{Thm}\label{Thm:Plectic}
    Let $K=K_pK^p \subset \gaf$ be a neat compact open subgroup such that $K^p$ is acceptable with respect to $K_p$ and let $\mathbb{W}$ be a standard $\Lambda$-local system. If $\ell$ is coprime to the order of $\pi_0(Z(G))$, then the cohomology complex
    \[R\Gamma(\mathbf{Sh}_{K}(\mathsf{G},\mathsf{X})_{\overline{E}}, \mathbb{W})[d](d/2)\in \mathcal{D}(W_{E},\Lambda)\]
    lifts canonically to an object in $\mathcal{D}(\prod_i W_{E_i},\Lambda)$, compatibly with the Hecke actions. 
\end{Thm}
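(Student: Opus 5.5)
The plan is to use Theorem \ref{Thm:WeilCohShimVar} to write the cohomology as a Hecke operator applied to the Igusa sheaf, and then to exploit that for split $p$ everything on $\bun_G$ decomposes as a product over $i$. Since $G=\prod_i H_i$ with $H_i$ defined over $F_i\simeq\qp$, we have $\bun_G\simeq\prod_i \bun_{H_i}$, $\Par_G\simeq\prod_i\Par_{H_i}$ and $\widehat G=\prod_i\widehat H_i$ with diagonal $W_{\qp}$-action. Moreover $r_\mu=\boxtimes_i r_{\mu_i}$, where $r_{\mu_i}$ is a representation of $\widehat H_i\rtimes W_{E_i}$. By Proposition \ref{Prop:SpectralActionSV} we have
\[R\Gamma(\mathbf{Sh}_{K^p}\gx_{\overline E},\mathbb{W})\simeq i_{1,k}^\ast\bigl(\mathcal{V}_\mu\ast (j_{\mu,k,!}\mathcal{F}_{\mathbb{W}}[-d])(-\tfrac d2)\bigr).\]
The key observation is that the Hecke operator $T_\mu=\mathcal{V}_\mu\ast(-)$ factors as the composite of the commuting operators $T_{\mu_i}$, each acting in the $i$-th factor of $\bun_G=\prod_i\bun_{H_i}$. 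This is the Fargues--Scholze fact that the Hecke stack for a product group over $\mathrm{Div}^1$ with several legs recovers, after restricting to the diagonal, the fusion of the individual legs. Concretely, I would use the multi-leg Hecke operator $T_{\boxtimes_i V_{\mu_i}}$ over $\prod_i\mathrm{Div}^1_{E_i}$, which lands in $\mathcal{D}(\bun_{G,k},\Lambda)^{B(\prod_i W_{E_i})}$ by \cite[Corollary IX.2.3]{FarguesScholze} applied with $I=\{1,\dots,r\}$ legs. Its restriction along the diagonal $W_E\to\prod_iW_{E_i}$ is identified, by the fusion/diagonal compatibility \cite[Chapter IX]{FarguesScholze}, with $T_{r_\mu}$ and its $W_E$-action. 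Equivalently, on the spectral side, $\mathcal{V}_\mu$ carries an action of $\prod_i W_{E_i}$ through $\pi_0\operatorname{End}(\mathcal{V}_\mu)$, since each factor $r_{\mu_i}\circ\phi^{\mathrm{univ}}_i$ gives commuting endomorphisms. This defines the canonical lift at infinite level $K^p$; I would take it as a $\prod_iW_{E_i}$-equivariant object of $\mathcal{D}(G(\qp),\Lambda)$.

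The next step is to pass to level $K=K_pK^p$. Here I would use Proposition \ref{Prop:AcceptableInvariants}, valid because $K^p$ is acceptable with respect to $K_p$: it identifies $R\Gamma(\mathbf{Sh}_K)$ with derived $K_p$-invariants of the infinite-level complex. Since the $\prod_iW_{E_i}$-action commutes with $G(\qp)$, it descends to the invariants. Compatibility with Hecke operators (both $\mathcal{H}_{K_p}$ and prime-to-$p$ Hecke operators, the latter acting on $\mathcal{F}_{\mathbb{W}}$ via the $\ul{\gafp}$-action on the Igusa stack) follows from functoriality of the Hecke/spectral action. The shift and twist $[d](d/2)$ are there precisely to match the normalization of Theorem \ref{Thm:SpectralAction}. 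Under the shift and twist the restriction to $W_E$ agrees with the geometric action, because Theorem \ref{Thm:SpectralAction} identifies the spectral $W_E$-action with the usual one.

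The main obstacle will be making the lift \emph{canonical} as an object of the derived category $\mathcal{D}(\prod_iW_{E_i},\Lambda)$, rather than merely as a complex with a homotopy-coherent-up-to-$\pi_0$ action. The spectral approach only yields maps of $\pi_0\operatorname{End}$, which is not enough. So I would instead work with the geometric multi-leg Hecke operator on $\bun_G\times\prod_i\mathrm{Div}^1_{E_i}$ and use \cite[Proposition IV.7.1]{FarguesScholze} over the product of the $\mathrm{Div}^1_{E_i}$ to obtain an honest object with continuous $\prod_iW_{E_i}$-action. The diagonal restriction is then identified with $T_\mu^{[1]}$ of Theorem \ref{Thm:WeilCohShimVar} via the partial Frobenius/fusion isomorphism. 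The hypothesis that $\ell$ is coprime to $|\pi_0(Z(G))|$ enters through the use of the spectral action to verify agreement with the usual Galois action.
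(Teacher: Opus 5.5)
Your proposal is correct and is essentially the paper's argument. Decompose $G=\prod_i H_i$, so that $\operatorname{Par}_G\simeq\prod_i\operatorname{Par}_{H_i}$ and $\mathcal{V}_\mu\simeq\boxtimes_i\mathcal{V}_{\mu_i}$, with each factor carrying a $W_{E_i}$-action compatible with the diagonal map $W_E\to\prod_iW_{E_i}$; then conclude from Theorem \ref{Thm:WeilCohShimVar} and Proposition \ref{Prop:AcceptableInvariants}.

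Your multi-leg geometric Hecke operator over $\prod_i\mathrm{Div}^1_{E_i}$, with fusion along the diagonal, is just the geometric incarnation of the paper's $\prod_iW_{E_i}$-equivariant object $\boxtimes_i\mathcal{V}_{\mu_i}$. The spectral action is compatible with Hecke operators together with their full $W^I$-equivariance, not merely at the level of $\pi_0\operatorname{End}$, so the canonicity issue you flag is handled the same way in both formulations.
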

\begin{proof}
    In our situation, the stack of $L$-parameters for $G$ decomposes as a product 
    \[\operatorname{Par}_{G_{\qp}}\simeq \prod^r_{i=1}\operatorname{Par}_{H_{i,F_i}}.\] Under this isomorphism, we have, accordingly, a decomposition $\mathcal{V}_\mu\simeq\boxtimes_{i=1}^r\mathcal{V}_{\mu_i}$, where $\mathcal{V}_{\mu_i}$ is a vector bundle on $\Par_{H_i,F_i}$ corresponding to the tilting representations of $\widehat{H}_i$ labeled by $\mu_i$. Each $\mathcal{V}_{\mu_i}$ carries an action of $W_{E_i}$ and the decomposition $\mathcal{V}_\mu\simeq\boxtimes_{i=1}^r\mathcal{V}_{\mu_i}$ is equivariant for the $W_{E}$-action on the left hand side and the $\prod_iW _{E_i}$-action on the right hand side with respect to the diagonal embedding $W_{E} \to \prod_i W_{E_i}$. The desired result now follows from Theorem~\ref{Thm:WeilCohShimVar} and Proposition \ref{Prop:AcceptableInvariants}.
\end{proof}
\begin{Rem}
For $p>2$ and $G_{\qp}$ unramified, this is proved with $\zlbar$ coefficients for the basic part of the cohomology of the Shimura variety by Li-Huerta \cite[Theorem A]{LiHuertaPlectic}.
\end{Rem}

\subsubsection{} Now let $K_p$ be a hyperspecial parahoric subgroup of $G(\qp)$ with integral model $\mathcal{G}$, and let $\mathcal{H}_{\mathcal{G}}$ be the Hecke algebra for $K_p$ with $\Lambda$-coefficients as before. Note that we can write $\mathcal{G}=\prod_i \mathcal{G}_i$ and $\mathcal{H}_{\mathcal{G}}=\otimes_i \mathcal{H}_{\mathcal{G}_i}$. Let $k_E$ be the residue field of $E$, with  cardinality $q$, and denote by $\operatorname{Frob}_q\in \operatorname{Gal}(\overline{k}_E/k_E)$ the arithmetic Frobenius. Fix a neat compact open subgroup $K^p \subset \gafp$ which is acceptable with respect to $K_p$. Recall that $R\Gamma(\mathbf{Sh}_{K}(\mathsf{G},\mathsf{X})_{\overline{E}}, \mathbb{W})$ is an unramified representation of $W_E$.\footnote{This follows from the existence of smooth integral models due to Kisin \cite[Main theorem]{KisinModels} and Kim--Madapusi \cite[Theorem 1]{KimMadapusi}, and the comparison between the cohomology of the special fiber and the generic fiber of Lan--Stroh \cite[Corollary 4.6]{LanStrohII}.} We obtain the following corollary.
\begin{Cor} \label{Cor:PartialEichlerShimura}
If $\ell$ is coprime to the order of $\pi_0(Z(G))$, then there exist mutually commuting partial Frobenius operators $\sigma_{i}$ acting on $R\Gamma(\mathbf{Sh}_{K}(\mathsf{G},\mathsf{X})_{\overline{E}}, \mathbb{W})$, such that for any lift $\sigma$ of $\operatorname{Frob}_q$, we have
    \[\sigma=\prod_{i=1}^r \sigma_i\]
as endomorphisms of $R\Gamma(\mathbf{Sh}_{K}(\mathsf{G},\mathsf{X})_{\overline{E}}, \mathbb{W})$. Moreover, each $\sigma_i$ is annihilated by the renormalized Hecke polynomial $H_{\mu_i}(X)\in \mathcal{H}_{\mathcal{G}_i}[X]$ constructed in Section \ref{subsub:EichlerShimura}.
\end{Cor}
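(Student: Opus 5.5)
The plan is to combine the plectic lift of Theorem \ref{Thm:Plectic} with the spectral Hecke polynomial argument of Proposition \ref{Prop:SpectralHecke}, applied factor by factor. Since $p$ splits completely in $\mathsf{F}$, each $E_i$ is a finite extension of $\qp$ inside $E$. The diagonal embedding $W_E \to \prod_i W_{E_i}$ induces, on maximal unramified quotients, $\operatorname{Frob}_q \mapsto (\operatorname{Frob}_{q_i}^{f/f_i})_i$, where $q_i$ is the residue cardinality of $E_i$ and $f/f_i = [k_E:k_{E_i}]$. I would take $\sigma_i$ to be the action of an element $\tau_i \in \prod_j W_{E_j}$ that equals a Frobenius lift raised to the power $f/f_i$ in the $i$-th coordinate and is trivial in the others. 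With this choice the $\sigma_i$ commute, and $\prod_i \sigma_i$ is the action of the image of a Frobenius lift $\sigma \in W_E$.

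The first step is to check that the lifted object of Theorem \ref{Thm:Plectic} is unramified for each $W_{E_i}$ separately. Only then are the $\sigma_i$ well defined independently of the chosen lifts, and only then does $\prod_i \sigma_i = \sigma$ hold for every lift $\sigma$. For this I would use that, with $\mathcal{G} = \prod_i \mathcal{G}_i$ hyperspecial, the spectral action of $\mathcal{V}_\mu \simeq \boxtimes_i \mathcal{V}_{\mu_i}$ on $K_p$-invariants factors through the unramified component of $\operatorname{Par}_G = \prod_i \operatorname{Par}_{H_i}$. The reason is that the Bernstein block of the $K_p$-spherical part is the unramified one, and by Theorem \ref{Thm:SpectralAction} the spectral Bernstein center acts through $\mathcal{Z}(G(\qp),\Lambda)$. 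On the unramified component, the universal parameter restricted to the inertia $I_{E_i}$ of the $i$-th factor acts trivially. Equivalently, the endomorphisms $(r_{\mu_i}\circ\phi^{\mathrm{univ}}_i)(\gamma)$ for $\gamma \in I_{E_i}$ become the identity after idempotent projection to this component. This yields the required unramifiedness.

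The second step is the Eichler--Shimura relation for each $\sigma_i$. In $\pi_0\operatorname{End}_{\operatorname{Par}_G}(\mathcal{V}_\mu)$ the operator $\sigma_i$ is $\mathrm{id}\boxtimes (r_{\mu_i}\circ\phi_i^{\mathrm{univ}})(\tau_i)\boxtimes \mathrm{id}$. The Cayley--Hamilton argument behind \eqref{eq:SpectralHeckePolyMap} then gives $H^{\mathrm{spec}}_{H_i,V_{\mu_i},\tau_i}(\sigma_i)=0$, with coefficients in $\mathcal{Z}^{\mathrm{spec}}(H_i,\Lambda) \subset \mathcal{Z}^{\mathrm{spec}}(G,\Lambda)$. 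Pushing the coefficients through $\Psi_G$, which is compatible with products, produces an element of $\mathcal{Z}(H_i(\qp),\Lambda)$ that acts on $K_p$-invariants through $\mathcal{H}_{\mathcal{G}_i}$. I would then identify this image with $H_{\mu_i}$ exactly as in \cite[Section 5]{vdH} and \cite[Section 9.4]{DvHKZIgusaStacks}. Since $H_i$ is unramified and $\mathcal{G}_i$ is hyperspecial, the transfer $\tilde{t}$ is the identity, and the Fargues--Scholze parameters of unramified representations are the Satake parameters, by compatibility of $\Psi$ with the Satake isomorphism. The Tate twist $(d/2)$ splits as $\sum_i d_i/2$, and this matches the factor $q^{d/2}$ in the definition of $H_\mu$ after raising to the appropriate power of Frobenius.

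I expect the main obstacle to be the bookkeeping with normalizations. One issue is that the power $f/f_i$ has to agree with the definition of $H_{\mu_i}$, which uses $\sigma_{E_i}$ and $q_i$, whereas the Corollary states $H_{\mu_i}$ relative to $\operatorname{Frob}_q$ of $E$. Another is that the twist $(d/2)$ and the choice of $\sqrt{p}$ have to be distributed correctly among the factors. I would resolve both by working on a single factor $H_i$ and tracking the definitions in \cite[Section 5]{vdH}, if necessary interpreting $H_{\mu_i}$ in terms of the $E$-Frobenius on $V_{\mu_i}$ restricted to $W_E$.
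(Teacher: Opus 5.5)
Your Step 2 is essentially the paper's proof. It decomposes $\mathcal{V}_\mu\simeq\boxtimes_i\mathcal{V}_{\mu_i}$, applies the Cayley--Hamilton map \eqref{eq:SpectralHeckePolyMap} factor by factor, restricts the coefficients of the spectral Hecke polynomial to the unramified locus (where they no longer depend on the Frobenius lift), and identifies the result with $H_{\mu_i}$ as in \cite{vdH}. The gap is in Step 1. Your definition of $\sigma_i$, via independently chosen $E_i$-Frobenius lifts raised to the power $f/f_i$, makes Step 1 indispensable, and the argument you give for it does not work.

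Knowing the Bernstein block of the $K_p$-invariants, and that $\mathcal{Z}^{\mathrm{spec}}$ acts through $\mathcal{Z}(G(\qp),\Lambda)$, only controls functions of the \emph{semisimplified} parameter. The operator $(r_{\mu_i}\circ\phi_i^{\mathrm{univ}})(\gamma)$ for $\gamma\in I_{E_i}$ is an endomorphism of $\mathcal{V}_{\mu_i}$ that is nontrivial on parameters with unramified semisimplification. Examples are Steinberg-type parameters, where tame inertia acts by a nontrivial unipotent. The unramified locus is closed but not open in its connected component of $\operatorname{Par}_{H_i}$. That component contains such parameters, and for torsion $\Lambda$ it also contains tame characters of $\ell$-power order. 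So no idempotent projection makes $\phi(\gamma)-1$ vanish for $\gamma$ in inertia.

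As a sanity check, your argument applies verbatim at Iwahori level, which lies in the same block. There inertia acts only unipotently (Theorem \ref{Thm:EichlerShimura}), and in general nontrivially, e.g.\ on $p$-new forms on $X_0(p)$. Triviality of inertia at hyperspecial level is geometric (good reduction), and the paper uses it only for the diagonal $W_E$-action.

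The repair is to drop Step 1, as the paper does. Fix one Frobenius lift $\sigma\in W_E$. Let $\sigma_i$ be the action of the element of $\prod_j W_{E_j}$ whose $i$-th coordinate is $\sigma$, viewed in $W_{E_i}\supset W_E$, and whose other coordinates are $1$.

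\begin{itemize}
\item This element already lies over $\operatorname{Frob}_q=\operatorname{Frob}_{q_i}^{f/f_i}$, and the $\sigma_i$ commute.
\item The identity $\prod_i\sigma_i=\sigma$ holds tautologically.
\item For an arbitrary lift $\sigma'$, use that $W_E$ acts unramifiedly on $R\Gamma(\mathbf{Sh}_K(\mathsf{G},\mathsf{X})_{\overline{E}},\mathbb{W})$. This comes from smooth integral models and Lan--Stroh, as recalled just before the corollary.
\end{itemize}

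Independence of the lift is then needed only for the coefficients of $H^{\mathrm{spec}}_{H_i,V_{\mu_i},\sigma_i}$. These are functions on $\operatorname{Par}_{H_i}$, and their images in $\mathcal{H}_{\mathcal{G}_i}$ depend only on their restriction to the unramified locus. This also settles your normalization worry. $H_{\mu_i}$ should be read relative to the $E$-Frobenius $\sigma$ and $q$, i.e.\ as $\det(X-q^{d_i/2}r_{\mu_i}(g,\sigma))$ on unramified parameters with $d_i=\langle 2\rho,\mu_i\rangle$, not relative to $\sigma_{E_i}$ and $q_i$. This is what the paper's proof implicitly does.
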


\begin{proof}
Let $\mathcal{V}_\mu\simeq\boxtimes_{i=1}^r\mathcal{V}_{\mu_i}$ be the decomposition as in the proof of Theorem~\ref{Thm:Plectic}. For each $\mathcal{V}_{\mu_i}$, we can apply the discussion in Section~\ref{Sub:ES}. Namely, for each element $\gamma\in W_{E_i}$, its action on $\mathcal{V}_{\mu_i}$ is annihilated by a (renormalized) minimal polynomial $H_{\mu_i,\gamma}(X)$, with coefficients in the spectral Bernstein center for $H_i$. Choose an arithmetic Frobenius element $\sigma\in W_{E}$ and denote its image under the map $W_{E}\to \prod_i W_{E_i}\to W_{E_i}$ by $\sigma_{i}$. We thus have the relation
\[\sigma=\boxtimes_{i=1}^r\sigma_{i}\]
as endomorphisms on $\mathcal{V}_\mu$. It is annihilated by the polynomial
\[H_{\mu,\sigma}(X)=\prod_{i=1}^r H_{\mu_i,\sigma_{i}}(X).\]
When restricted to the unramified locus on $\Par_G$, i.e. where the universal $L$-parameter (up to $\widehat{G}$-conjugation) is trivial on the inertia subgroup of $W_{\qp}$, this polynomial, as well as each $H_{\mu_i,\sigma_{i}}$, becomes independent of the choice of $\sigma$. The resulting polynomial we get is the Hecke polynomial and we relabel it as $H_\mu(X)$. It has coefficients in $\mathcal{H}_{\mathcal{G}}$; similarly we have the $H_{\mu_i}(X)$'s, which have coefficients in the $\mathcal{H}_{\mathcal{G}_i}$'s. We can now invoke the argument in Theorem~\ref{Thm:EichlerShimura} to conclude the desired result about the cohomology of Shimura varieties.   
\end{proof}
\begin{Rem}
For $E=\qp$, Corollary \ref{Cor:PartialEichlerShimura} is also proved by Lee \cite[Theorem 1.0.4]{Lee} by a completely different method.
\end{Rem}

}

{\section{Cohomology of some compact Shimura varieties} \label{Sec:CohCompact} In this section we study the cohomology of compact Shimura varieties of abelian type. We focus on particular examples in types $A, B, C, D^{\mathbb{R}}$, and $ D^{\mathbb{H}}$ in which the existence of global Galois representations or $L$-parameters is known, and where these are known to be attached to Hecke-eigensystems appearing in the cohomology of Shimura varieties. We then use Theorem \ref{Thm:SV-FS-Compatibility} to prove a weak local-global compatibility for these Galois representations or $L$-parameters, see Corollary \ref{Cor:WeakCompatibility}. For certain classical groups of type $A,B,D$, and $C_2$, we prove a strong local-global compatibility, see Corollary \ref{Cor:StrongCompatibility}. This will be used in Section \ref{Sec:Compatibility} to prove Theorem \ref{Thm:IntroCompatibility}. 

\subsection{Abstract setup}
Let $\gx$ be a Shimura datum of abelian type with reflex field $\mathsf{E}$, and let $v$ be a finite place of $\mathsf{E}$ above a rational prime $p$. Assume that $\g^{\mathrm{ad}}$ is $\Q$-anisotropic, so that the Shimura varieties for $\gx$ are projective. We let $W$ be an algebraic representation of $\g^c$ over $\qlbar$, for some $\ell \neq p$ with $\ell$ coprime to the order of $\pi_0(Z(G))$. We denote by $\mathbb{W}$ the associated $\ell$-adic local system on $\mathbf{Sh}_K\gx$ for $K \subset \gaf$ neat, and write
\begin{align*} 
     H^i_{\mathrm{sm}}(\mathbf{Sh}\gx_{\ebar}, \mathbb{W}):=\varinjlim_K H^i(\mathbf{Sh}_K\gx_{\ebar}, \mathbb{W}),
\end{align*}
equipped with its continuous and commuting actions of $\gal_{\mathsf{E}}$ and $\gaf$. 

\subsubsection{} Fix an isomorphism $\iota_{\ell}:\qlbar \to \mathbb{C}$. Recall that Matsushima's formula \cite[Theorem 3.2]{BorelWallach} implies that we have a direct sum decomposition
\begin{align*}
    H_{\mathrm{sm}}^\ast(\mathbf{Sh}\gx_{\ebar}, \mathbb{W})(d/2) = \bigoplus_{\Pi^{\infty}} \Pi^{\infty} \otimes_{\qlbar} \sigma^\ast(\Pi^{\infty}),
\end{align*}
of smooth $\gaf \times \gal_{\mathsf{E}}$-representations, where $\gaf$ acts on $\Pi^{\infty}$, and $\gal_\mathsf{E}$ acts on $\sigma^\ast(\Pi^\infty)\coloneqq \operatorname{Hom}(\Pi^\infty, H^\ast(\mathbf{Sh}\gx_{\ebar}, \mathbb{W})(d/2))$, which is the isotypic part for $\Pi^\infty$ as a Hecke module. The direct sum runs over smooth representations $\Pi^{\infty}$ of $\gaf$ over $\qlbar \xrightarrow{\sim} \mathbb{C}$ such that there exists a $(\mathfrak{g},K)$-module $\Pi_{\infty}$ over $\mathbb{C}$ such that $\Pi=\Pi^{\infty} \otimes \Pi_{\infty}$ is a discrete automorphic representation of $\g(\mathbb{A})$, and such that $\Pi_{\infty}$ is cohomological with infinitesimal character equal to that of $W$.

\begin{Cor} \label{Cor:WeakCompatibility}
Assume that $\ell$ is coprime to the order of $\pi_0(Z(G))$. Fix $\Pi^{\infty}$ as above, and let $\sigma \subset \sigma^i(\Pi^{\infty})$ be a $W_E$-subrepresentation of dimension equal to $\dim V_{\mu}$. If $\sigma$ is $W_E$-multiplicity free\footnote{That is, its Jordan--H\"older factors are pairwise non-isomorphic.}, then $\sigma^{\mathrm{ss}} \simeq r_{\mu} \circ \restr{\phi^{\mathrm{FS}}_{\Pi_{p}}}{W_E}$ as representations of $W_E$.
\end{Cor}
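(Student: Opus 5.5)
We would deduce this from Theorem \ref{Thm:SV-FS-Compatibility}, with integral coefficients and reduction mod $\ell$ replaced by working directly with $\qlbar$-coefficients. The idea is that $\sigma$ is a $W_E$-summand of the $\Pi^{\infty}$-isotypic part of cohomology, on which the Bernstein center of $G(\qp)$ acts through the character $\chi_{\Pi_p}$ of $\Pi_p$. Theorem \ref{Thm:SV-FS-Compatibility} then says that every irreducible subquotient of $\sigma$ is a subquotient of $r_\mu\circ\restr{\phi^{\mathrm{FS}}_{\Pi_p}}{W_E}$. Multiplicity-freeness of $\sigma$ together with the equality of dimensions will force $\sigma^{\mathrm{ss}}\simeq r_\mu\circ\restr{\phi^{\mathrm{FS}}_{\Pi_p}}{W_E}$, provided we know that these subquotients occur in the target with the right multiplicities.

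\textbf{Step 1: descend to a finite coefficient ring and a finite level.} Choose $K=K_pK^p$ neat with $\Pi^{\infty,K}\neq 0$ and $K^p$ acceptable with respect to $K_p$, using Lemma \ref{Lem:CofinalProP}. Everything is defined over the ring of integers $\Lambda$ of a finite extension $L_0$ of $\ql$ containing $\sqrt{p}$. Let $\chi\colon\mathcal{Z}_{K_p}\to\qlbar$ be the central character of $\Pi_p^{K_p}$. Matsushima's formula identifies $\Pi^{\infty,K}\otimes\sigma^i(\Pi^\infty)$ with a Hecke-isotypic direct summand of $H^i(\mathbf{Sh}_K\gx_{\overline E},\mathbb{W})(d/2)$. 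Here compactness is used: the cohomology is semisimple for the Hecke algebra away from $p$, and Remark \ref{Rem:Proper} applies. In particular $\sigma$ is a $W_E$-subquotient of $W^i(\chi)$, as in the definition preceding Theorem \ref{Thm:SV-FS-Compatibility}, after passing from $\Lambda[1/\ell]$ to $\qlbar$.

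\textbf{Step 2: apply the theorem.} Theorem \ref{Thm:SV-FS-Compatibility} is stated for $L=\qlbar$, and it rests on Theorem \ref{Thm:SpectralAction} for integral $\Lambda$. It gives that every irreducible constituent of $\sigma$ is a constituent of $r_\mu\circ\restr{\phi_\chi^{\mathrm{FS}}}{W_E}$. By compatibility of $\Psi_G$ with the Fargues--Scholze construction, $\phi^{\mathrm{FS}}_\chi=\phi^{\mathrm{FS}}_{\Pi_p}$. Since $\sigma$ is multiplicity free, $\sigma^{\mathrm{ss}}$ injects into $(r_\mu\circ\phi^{\mathrm{FS}}_{\Pi_p})^{\mathrm{ss}}$. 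The equality $\dim\sigma=\dim V_\mu$ then forces this injection to be an isomorphism. Note that $r_\mu\circ\phi^{\mathrm{FS}}$ is already semisimple, since $\phi^{\mathrm{FS}}$ is semisimple and $r_\mu$ is algebraic.

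\textbf{Main obstacle.} The injection in Step 2 needs care, because Theorem \ref{Thm:SV-FS-Compatibility} only controls which constituents occur, not their multiplicities. The point is that a multiplicity-free $\sigma$ has each constituent occurring once in $\sigma$. Each such constituent occurs at least once in $r_\mu\circ\phi^{\mathrm{FS}}$, so the sum of their dimensions is at most $\dim V_\mu$. Equality of dimensions then forces $r_\mu\circ\phi^{\mathrm{FS}}$ to have no other constituents and each constituent to have multiplicity one, which gives the isomorphism.

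The other delicate point is the Tate twist normalisation $(d/2)$. It has to be tracked so that it matches the twist in Theorem \ref{Thm:SpectralAction}. I expect this to be routine bookkeeping.
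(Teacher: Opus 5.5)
Your proof is correct and follows essentially the same route as the paper. You embed $\sigma$, via a nonzero vector of $\Pi^{\infty}$, into the $\chi$-part of the cohomology. You then apply Theorem \ref{Thm:SV-FS-Compatibility} to see that every constituent of $\sigma$ is a constituent of $r_\mu\circ\restr{\phi^{\mathrm{FS}}_{\Pi_p}}{W_E}$, conclude from multiplicity-freeness and the dimension count, and finish with the semisimplicity of $r_\mu\circ\restr{\phi^{\mathrm{FS}}_{\Pi_p}}{W_E}$. Your choice of a finite level $K=K_pK^p$ with $K^p$ acceptable, so that Theorem \ref{Thm:SV-FS-Compatibility} applies literally, is a harmless refinement of the paper's localization of infinite-level cohomology at $\chi$. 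For the semisimplicity step, the paper's justification is fuller: the identity component of the Zariski closure of the image is reductive, and this is preserved by restricting to $W_E$ and composing with $r_\mu$.
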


\begin{proof}
Let $\chi$ be the character of $\mathcal{Z}(G(\qp), \qlbar)$ corresponding to $\Pi_p$ and consider the localization $H^i_\mathrm{sm}(\mathbf{Sh}\gx_{\ebar} \mathbb{W})_{\chi}$ of $H^i_\mathrm{sm}(\mathbf{Sh}\gx_{\ebar} \mathbb{W})$ at $\chi$. Then it follows from the semisimplicity of the Hecke action discussed above that we have a direct sum decomposition 
\begin{align*}
  H_{\mathrm{sm}}^i(\mathbf{Sh}\gx_{\ebar} \mathbb{W})_{\chi} \simeq \bigoplus_{\chi_{\Pi_{1,p}}=\chi} \Pi_1^{\infty} \otimes_{\qlbar} \sigma^i(\Pi_1^{\infty}),
\end{align*}
where $\Pi_{1}$ runs over cuspidal automorphic representations of $\g(\mathbb{A})$ such that $\Pi_{1,\infty}$ is cohomological for $W$. A choice of nonzero vector $v$ in $\Pi^{\infty}$ thus exhibits
\begin{align*}
    v \otimes \sigma \subset \Pi^{\infty} \otimes \sigma \subset H^i_\mathrm{sm}(\mathbf{Sh}\gx_{\ebar} \mathbb{W})_{\chi}
\end{align*}
as a subrepresentation. It then follows from Theorem \ref{Thm:SV-FS-Compatibility} that all the irreducible $W_E$-subquotients of $\sigma$ also occur as irreducible $W_E$-subquotients of $r_{\mu} \circ \restr{\phi^{\mathrm{FS}}_{\Pi_{p}}}{W_E}$. Since $\sigma$ has the same dimension as $r_{\mu} \circ \restr{\phi^{\mathrm{FS}}_{\Pi_{p}}}{W_E}$ and is multiplicity free, it follows that $r_{\mu} \circ \restr{\phi^{\mathrm{FS}}_{\Pi_{p}}}{W_E}$ is also multiplicity free. We deduce that $\sigma$ and $r_{\mu} \circ \restr{\phi^{\mathrm{FS}}_{\Pi_{p}}}{W_E}$ have the same set of irreducible $W_E$-subquotients, and thus have isomorphic semisimplifications. To finish the proof, we note that $\phi_{\Pi_{p}}^{\mathrm{FS}}$ is semisimple and hence $r_{\mu} \circ \restr{\phi^{\mathrm{FS}}_{\Pi_{p}}}{W_E}$ is semisimple. Indeed, since we are in characteristic zero $\phi_{\Pi_{p}}^{\mathrm{FS}}$ being semisimple is equivalent to the statement that the identity component of the Zariski closure of the image of $\phi_{\Pi_{p}}^{\mathrm{FS}}$ is reductive, see e.g. \cite[Section 2.2]{CompleteReducibility}. This is clearly preserved under restriction to $W_E$ and composition with $r_{\mu}$, and thus $r_{\mu} \circ \restr{\phi^{\mathrm{FS}}_{\Pi_{p}}}{W_E}$ is semisimple. 
\end{proof}

\begin{Rem}
We think of Corollary \ref{Cor:WeakCompatibility} as a way to turn knowledge of the cohomology of Shimura varieties into knowledge about the Fargues--Scholze local Langlands correspondence; this is how it will be applied in the rest of this article. 
\end{Rem}

\subsection{Examples} \label{sub:examples} We will now specialize the setting of the previous section to one of the following cases. In all cases $\mlp \neq\mathbb{Q}$ will denote a totally real field. 
\begin{enumerate}
\item[$A$] Let $\mathsf{L}$ be a quadratic CM extension of $\mlp$ and let $\h$ be an inner form of the quasi-split unitary group (defined by an $\mathsf{L}/\mathsf{L}^+$-hermitian space) over $\mlp$ such that for precisely one embedding $\tau_0:\mlp \to \mathbb{R}$ the group $\h \otimes_{\mlp, \tau_0} \mathbb{R}$ is isomorphic to $\operatorname{U}(1,n-1)$ for $n>2$, and for all other real embeddings $\tau$ the group has compact modulo center $\R$-points. We let $N(\h)=n$.

\item[$B$] Let $V$ be a quadratic space over $\mlp$ with special orthogonal group $\h$, such that for precisely one embedding $\tau_0:\mlp \to \mathbb{R}$ the group $\h \otimes_{\mlp, \tau_0} \mathbb{R}$ is isomorphic to $\operatorname{SO}(2,2n-1)$, and for all other real embeddings $\tau$ the group is isomorphic to $\operatorname{SO}(0,2n+1)$. In this case we set $\ml=\mlp$ and $N(\h)=2n$.

\item[$C$] Let $\h$ be an inner form of a split general symplectic group $\operatorname{GSp}_{2g,\mlp}$ such that for precisely one embedding $\tau_0:\mlp \to \mathbb{R}$ the group $\h \otimes_{\mlp, \tau_0} \mathbb{R}$ is split, and for all other embeddings $\tau:\mlp \to \mathbb{R}$ the group $\h \otimes_{\mlp, \tau} \mathbb{R}$ has compact modulo center $\R$-points. In this case we set $\ml=\mlp$ and $N(\h)=1+\tfrac{g(g+1)}{2}$.

\item[$D^{\mathbb{R}}$] Let $V$ be a quadratic space over $\mlp$ with special orthogonal group $\h$, such that for precisely one embedding $\tau_0:\mlp \to \mathbb{R}$ the group $\h \otimes_{\mlp, \tau_0} \mathbb{R}$ is isomorphic to $\operatorname{SO}(2,2n-2)$, and for all other real embeddings $\tau$ the group is isomorphic to $\operatorname{SO}(0,2n)$. In this case we set $\ml=\mlp$ and $N(\h)=2n$.

\item[$D^{\mathbb{H}}$] Let $n$ be an integer and let $\ml$ be an extension of $\mlp$ which is totally imaginary quadratic if $n$ is odd and equal to $\mlp$ if $n$ is even. Let $\h^{\ast}$ be the (quasi-split) connected reductive group denoted by $\operatorname{GSO}^{\ml/\mlp}_{2n}$ in \cite[beginning of Section 6]{KretShinOrthogonal}. Let $\h$ be an inner form of $\h^{\ast}$ such that for precisely one embedding $\tau_0:\mlp \to \mathbb{R}$ the group $\h \otimes_{\mlp, \tau_0} \mathbb{R}$ is isomorphic to the group $\operatorname{GSO}^J_{2n}$ of \cite[Section 8, discussion before Lemma 8.1]{KretShinOrthogonal}, and for all other embeddings $\tau:\mlp \to \mathbb{R}$ the group $\h \otimes_{\mlp, \tau} \mathbb{R}$ has compact modulo center $\R$-points. We let $N(\h)=1+\tfrac{n(n-1)}{2}$.

\end{enumerate}
In all of these cases we set $\g=\operatorname{Res}_{\mlp/\mathbb{Q}} \h$ and equip it with the Shimura datum $\x$ which is trivial at all real embeddings except $\tau_0$, and which at $\tau_0$ is given by:
\begin{itemize}
  \item[$D^{\mathbb{H}}$] For $\epsilon \in \{-,+\}$, the Shimura datum $\x^{\epsilon}$ described in \cite[Equation (9.1)]{KretShinOrthogonal}.

  \item[other cases] The conjugacy class of the morphism $\mathbb{S} \to \h \otimes_{\mlp,\tau_0} \mathbb{R}$ described in \cite[Appendix B]{Milne}. To be precise, in type $B, D^{\mathbb{R}}$, the morphism described in loc. cit. lands in $\operatorname{GSpin}(V)$ and we consider its composition with $\operatorname{GSpin}(V) \to \operatorname{SO}(V)$. 
\end{itemize}
It follows from the classification of abelian type Shimura data, see \cite[Appendix B]{Milne}, that $\gx$ is of abelian type. It follows from a standard argument, see e.g. \cite[Remark 2.3.12]{DeligneVarietes} that the reflex field is given by $\ml \subset \mathbb{C}$, where the embedding $\ml \to \mathbb{C}$ is one that extends $\tau_0:\mlp \to \mathbb{R}$. Note that since $\mlp \neq\mathbb{Q}$, the group $\g^{\mathrm{ad}}$ is $\mathbb{Q}$-anisotropic. 

\subsubsection{} Fix a rational prime $p$ and let $G=\g \otimes \qp$. Then we have a direct product decomposition 
\begin{align*}
    G = \prod_{\mathfrak{p} | p} \operatorname{Res}_{\mlp_{\mathfrak{p}}/\qp} \h_{\mlp_{\mathfrak{p}}}
\end{align*}
over primes $\mathfrak{p}$ above $p$ in $\mlp$, and 
\begin{align} \label{Eq:GeometricProductDecompositionGroup}
    G_{\qpbar} = \prod_{\tau} \h \otimes_{\mlp,\tau} \qpbar
\end{align}
over embeddings $\tau:\mlp \to \qpbar$. We have the following description of the dual group of $H_{\mathfrak{p}}=\h_{\mlp_{\mathfrak{p}}}$:
\begin{itemize}
    \item[$A$] If $\mathfrak{p}$ splits in $\ml$, then $H_{\mathfrak{p}}=\operatorname{GL}_{n}$ and its dual group is $\operatorname{GL}_{n}$ with trivial $W_{\mlp_{\mathfrak{p}}}$-action. If $\mathfrak{p}$ does not split, then the dual group of $H_{\mathfrak{p}}$ is given by $\operatorname{GL}_{n}$ with $W_{\mlp_{\mathfrak{p}}}$-action factoring through $\gal(\ml_{\mathfrak{p}}/\mlp_{\mathfrak{p}})$ with the nontrivial element acting by transpose inverse. 

    \item[$B$] The dual group of $H_{\mathfrak{p}}$ is given by $\operatorname{Sp}_{2n}$ with trivial $W_{\mlp_{\mathfrak{p}}}$-action. 
    
    \item[$C$] The dual group of $H_{\mathfrak{p}}$ is given by the general spinor group $\operatorname{GSpin}_{2n+1}$ with trivial $W_{\mlp_{\mathfrak{p}}}$-action, see \cite[Section 1]{KretShinSymplectic}.
    \item[$D^{\mathbb{R}}$] The dual group of $H_{\mathfrak{p}}$ is given by $\operatorname{SO}_{2n}$ with the action of $ W_{\mlp_{\mathfrak{p}}}$ determined by the discriminant $\operatorname{Disc}(V \otimes_{\mlp} \mlp_{\mathfrak{p}}) \in (\mlp_{\mathfrak{p}})^{\times}/((\mlp_{\mathfrak{p}})^{\times})^2$ of the quadratic space $V \otimes_{\mlp} \mlp_{\mathfrak{p}}$: If the discriminant is $1$, then the action of $ W_{\mlp_{\mathfrak{p}}}$ is trivial. Otherwise, the discriminant determines a quadratic extension $K/\mlp_{\mathfrak{p}}$ and the action of $ W_{\mlp_{\mathfrak{p}}}$ factors through $\operatorname{Gal}(K/\mlp_{\mathfrak{p}})$ via the unique outer automorphism of $\operatorname{SO}_{2n}$.

    \item[$D^{\mathbb{H}}$] The dual group of $H_{\mathfrak{p}}$ is given by $\operatorname{GSpin}_{2n}$ with trivial $W_{\mlp_{\mathfrak{p}}}$-action if $n$ is even, and if $n$ is odd with $W_{\mlp_{\mathfrak{p}}}$-action factoring through $\gal(\ml_{\mathfrak{p}}/\mlp_{\mathfrak{p}})$ with the nontrivial element acting by the involution $\theta$ of \cite[discussion before Lemma 2.3]{KretShinOrthogonal}. 
\end{itemize}

\subsubsection{} \label{subsub:DualGroupRepresentations} Fix an isomorphism $\mathbb{C} \to \qpbar$, which induces a morphism $\tau_0:\ml \to \qpbar$. The induced conjugacy class of cocharacters $\mu$ of $G(\qpbar)$ induced by $v$ and $\x$ is trivial (in the decomposition of \eqref{Eq:GeometricProductDecompositionGroup}) in the factors corresponding to $\tau \neq\tau_0$. The cocharacter $\mu_{\tau_{0}}$ corresponds to the following (highest weight) representation of the dual group of $H$:
\begin{itemize}
    \item[$A$] The identity $\operatorname{GL}_n \to \operatorname{GL}_n$.

    \item[$B$] The standard representation $\operatorname{Sp}_{2n} \to \operatorname{GL}_{2n}$.

    \item[$C$] The spin representation $\operatorname{GSpin}_{2n+1} \to \operatorname{GL}_{2^n}$.

    \item[$D^{\mathbb{R}}$] The standard representation $\operatorname{SO}_{2n} \to \operatorname{O}_{2n} \to \operatorname{GL}_{2n}$.  

    \item[$D^{\mathbb{H}}$] The half-spin representation $\operatorname{spin}^{\epsilon}: \operatorname{GSpin}_{2n} \to \operatorname{GL}_{2^{n-1}}$ of \cite[Definition 2.6]{KretShinOrthogonal}, see \cite[Section 4, Equation (4.2)]{KretShinOrthogonal}.
\end{itemize}
We denote by $\sim^0$ the equivalence relation on $\widehat{\h}$-valued
$L$-parameters given by $\widehat{\h}$-conjugacy in types $A,B, C,$ and $
D^{\mathbb{H}}$, and given by $\operatorname{O}_{2n}$-conjugacy in type
$D^{\mathbb{R}}$.

\subsection{The existence of global Galois representations }\label{sec: GlobalParameters}
In this section we state the existence of global Galois representations or $L$-parameters for the groups in Section \ref{sub:examples}, under some assumptions. 

\subsubsection{} Recall the global $L$-group $\LH = \widehat{\mathsf{H}} \rtimes \operatorname{Gal}_{\mlp}$, considered as a group scheme over $\mathbb{Q}$, where the Galois group is considered as a pro-finite algebraic group. 

\subsubsection{} \label{subsub:Satake} For a finite place $v$ of $\mlp$ we also consider the local $L$-group
\begin{align*}
    \lH &= \widehat{\mathsf{H}} \rtimes W_{\mlp_v}. 
\end{align*}
If $\mathsf{H}$ is unramified at $v$, then we recall from
\cite[Section 2.2]{BuzzardGee} that to an irreducible smooth unramified representation $\pi$ of $\mathsf{H}(\mlp_v)$ with $\mathbb{C}$-coefficients, we may associate an $\widehat{\mathsf{H}}(\mathbb{C})$-conjugacy class of morphisms $\phi_{\pi}:W_{\mlp_v} \to \lH$; this is the Satake parameter of $\pi$.

\subsubsection{} \label{subsub:NonSplitEvenOrthogonal} Recall that for a finite place $v'$ of $\ml$ above $v$, the cocharacter $\mu$ (depending on $v'$) induces a representation
\begin{align}
    r_{\mu}: \widehat{\mathsf{H}} \rtimes W_{\ml_{v'}} \to \operatorname{GL}(V_{\mu}).
\end{align}
Note that the action of $W_{\ml_{v'}}$ on $\widehat{\mathsf{H}}$ is trivial in all cases, except in type $D^{\mathbb{R}}$ where it factors through the natural map $W_{\mlp_{v}}=W_{\ml_{v'}} \to \gal(K/\mlp_{v})$. If $K \neq \mlp_v$, then the semidirect product $\widehat{\mathsf{H}} \rtimes \gal(K/\mlp_{v}) = \operatorname{SO}_{2n} \rtimes \gal(K/\mlp_{v})$ is isomorphic to $\operatorname{O}_{2n}$, see \cite[first Table in Section 7]{GGP}. Moreover, the representation $r_{\mu}$ is the standard representation $\operatorname{O}_{2n} \to \operatorname{GL}_{2n}$; this is well known.\footnote{We recall that $r_{\mu}$ is the unique extension of the standard representation of $\operatorname{SO}_{2n}$, such that $1 \rtimes \gal(K/\mlp_{v})$ acts trivially on the highest weight vector, see \cite[Lemma 2.1.2]{KottwitzOrbital}. Consider the description of $\operatorname{SO}_{2n} \subset \operatorname{GL}_{2n}$ of \cite[Section 18.1]{FultonHarris} with $T$ the torus $(t_1, \dotsc, t_n, t_1^{-1}, \dotsc, t_n^{-1})$, and $B$ the Borel given by the stabilizer of the isotropic flag $\langle e_1 \rangle \subset \dotsb \subset \langle e_1, \dotsc , e_n \rangle$. Then the element $g \in \operatorname{GL}_{2n}$ swapping $e_{n}$ and $e_{2n}$ lies in $\operatorname{O}_{2n}$, and conjugation by $g$ induces the correct outer automorphism of $\operatorname{SO}_{2n}$. It also preserves the highest weight vector, which is $e_1$.}

\subsubsection{} To state the existence of global Galois representations, we need to formulate some assumptions. Let $\Pi$ be a cuspidal automorphic representation of $\h(\mathbb{A}_{\mlp})$, with $\h$ as in Section \ref{sub:examples}, and consider the following assumptions on $\Pi$ and $\h$. Let $S_{\mathrm{sc}}$ be the set of finite places $w$ of $\mlp$ where $\Pi_w$ is supercuspidal.
\begin{enumerate}
    \item[\textbf{coh}] \customlabel{\textup{\textbf{coh}}}{ass:coh} For all real places $\infty$ of $\mlp$, the representation $\Pi_{\infty}$ is cohomological in the sense of \cite[Definition 1.12]{KretShinSymplectic}. 

    \item[\textbf{coh-reg}] \customlabel{\textup{\textbf{coh-reg}}}{ass:coh-reg} Assumption \ref{ass:coh} holds. Moreover, the representation $\Pi$ is std-regular in the sense of \cite[Definition 3.2.1]{ShinWeakTransfer}.
    
    \item[\textbf{ssc}] \customlabel{\textup{\textbf{ssc}}}{ass:ssc} In cases $A$, $B$, and $D^{\mathbb{R}}$, there is a place $w_{\mathrm{s}}$ of $\mlp$ such that the classical $L$-parameter\footnote{By the \emph{classical $L$-parameter} of $\Pi_{w}$ we mean the $L$-parameter associated to $\Pi_{w}$ by the local Langlands correspondences listed in Section \ref{subsub:ClassicalGroups}, see \cite[Theorem 2.5.1, 3.2.1]{Mok} and \cite[Theorem 1.6.1]{KMSW} in type $A$, \cite[Theorem 1.5.1]{Arthur} and \cite[Theorem 1.2]{Ishimoto} in type $B$, \cite[Theorem 1.5.1]{Arthur} and \cite[Theorem A.2]{ChenZou} in type $D^{\mathbb{R}}$, and \cite[Main Theorem]{GanTakeda} and \cite[Main Theorem]{GanTantono} in type $C_{2}$.} of $\Pi_{w_{\mathrm{s}}}$ has the property that $r_{\mu} \circ \phi_{\Pi_{w_{\mathrm{s}}}}$ is irreducible, cf. \cite[Section 2.2]{Peng}.

    \item[\textbf{simgen}] \customlabel{\textup{\textbf{simgen}}}{ass:simgen} In cases $A$, $B$, and $D^{\mathbb{R}}$, the formal (Arthur) parameter $\Psi$ of $\Pi$ consists of a single cuspidal representation, see \cite[Theorem 4.2.1, Definition 4.3.3 of v3]{Peng}.\footnote{The notation \ref{ass:simgen} stands for \emph{simple generic}. We note that $\Pi$ satisfies \ref{ass:simgen} precisely when its Arthur parameter is simple and generic in the sense of Arthur.} 
    
    \item[\textbf{St}] \customlabel{\textup{\textbf{St}}}{ass:St} There is a finite place $v_{\mathrm{st}}$ of $\mlp$ such that $\Pi_{v_{\mathrm{st}}}$ is isomorphic to an unramified character twist of the Steinberg representation of $\h(\mlp_{v_{\mathrm{st}}})$. Furthermore, the group $\h$ is quasi-split at all finite places except possibly at $w \in S_{\mathrm{sc}}$ and $v_{\mathrm{st}}$. In type $C$ and $D^{\mathbb{H}}$, we further assume that $\h$ is quasi-split at $v \in S_{\mathrm{sc}}$. 

    \item[\textbf{St}$'$] \customlabel{\textup{\textbf{St}}$'$}{ass:Stprime} Assumption \ref{ass:St} holds. Moreover, in cases $A$, $B$, and $D^{\mathbb{R}}$, the place of $\mathbb{Q}$ underlying $v_{\mathrm{st}}$ is inert in $\mlp$ and is not $2$. 
    \end{enumerate}
Then we have the following theorem due to many mathematicians. 
\begin{Thm} \label{Thm:ExistenceLParameters}
Assume \cite[Assumption H1]{ShinWeakTransfer}. Fix any rational prime $\ell$ and an isomorphism $\iota_{\ell}:\mathbb{C}\xrightarrow{\sim}\qlbar$. Let $\Pi$ be a cuspidal automorphic representation of $\h(\mathbb{A}_{\mlp})$.
\begin{enumerate}[$($i$\,)$]
    \item If $\mathsf{H}$ is of type $C$ or $D^{\mathbb{H}}$ and $\Pi$ satisfies \ref{ass:coh-reg} and \ref{ass:St}, then there is a unique (up to $\sim^0$) continuous semisimple Galois representation 
    \[\rho_{\Pi}:\operatorname{Gal}_{\mlp} \to \LH(\qlbar)\]
    satisfying: a) Its composition with the projection $\LH(\qlbar) \to \operatorname{Gal}_{\mlp}$ is the identity map. b) For all finite places $w\neq v_\mathrm{st}$, $w\nmid \ell$ of $\mlp$ where $\h$ and $\Pi$ are unramified, we have 
    \[\restr{\rho_{\Pi}}{W_{\mlp_w}} \sim \iota_{\ell}(\phi_{\Pi_{w}}) \otimes \chi_{\ell}^{\tfrac{-1+N(\h)}{2}}\]
    where $\phi_{\Pi_{w}}$ is as in Section \ref{subsub:Satake} and $\chi_{\ell}$ is the $\ell$-adic cyclotomic character.
    \item If $\mathsf{H}$ is of type $A$, $B$, or $D^{\mathbb{R}}$ and $\Pi$ satisfies \ref{ass:coh-reg}, \ref{ass:ssc}, \ref{ass:St}, \ref{ass:simgen}, then there is a unique continuous semisimple Galois representation 
    \begin{align*}
    \rho_{\Pi}:\operatorname{Gal}_{\ml} \to \operatorname{GL}(V_{\mu})(\qlbar)
    \end{align*}
    such that for all finite places $w\nmid \ell$ of $\ml$ with restriction $w^+$ to $\mlp$, we have 
    \begin{align*}
    \restr{\rho_{\Pi}}{W_{\ml_w}} \simeq \iota_{\ell}(r_{\mu} \circ \restr{\phi_{\Pi_{w^+}}}{W_{\ml_{w}}}) \otimes \chi_{\ell}^{\tfrac{-1+N(\h)}{2}},
\end{align*}
where $\phi_{\Pi_{w^+}}$ is the classical $L$-parameter of $\Pi_{w^+}$.
\end{enumerate}
\end{Thm}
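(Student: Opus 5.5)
This theorem assembles known results, so the plan is to reduce each case to a citation, transferring $\Pi$ to a general linear group where necessary. We treat the two parts separately.

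For part (i), types $C$ and $D^{\mathbb{H}}$, the statement is essentially the main theorem of Kret--Shin on Galois representations for general symplectic groups (\cite{KretShinSymplectic}) and its orthogonal analogue (\cite{KretShinOrthogonal}). Both are proved under exactly the hypotheses \ref{ass:coh-reg} and \ref{ass:St}, plus the quasi-splitness at supercuspidal places built into \ref{ass:St}. We would check that our $\mathsf{H}$ matches their setup and that their normalization of the twist is $\chi_\ell^{(N(\h)-1)/2}$, with $N(\h)=1+g(g+1)/2$, respectively $1+n(n-1)/2$. Uniqueness up to $\sim^0$ follows from Chebotarev: Frobenius conjugacy classes determine $\rho_\Pi$ once it is composed with a faithful representation of ${}^L\widehat{\mathsf H}$. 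This uses the results in loc. cit. that pin down $\widehat{\mathsf H}$-conjugacy, not only conjugacy after composing with spin.

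For part (ii), types $A$, $B$, $D^{\mathbb{R}}$, we would first transfer $\Pi$ to $\operatorname{GL}_{N}$ over $\ml$, with $N=\dim V_\mu$. In type $A$ this is the base change of Mok and KMSW; in types $B$, $D^{\mathbb R}$ it is the twisted endoscopic transfer of Arthur, extended to inner forms by Ishimoto and Chen--Zou, with Assumption H1 covering the dependence on Arthur's unpublished work. Assumption \ref{ass:simgen} makes the transfer a single cuspidal, conjugate self-dual (respectively self-dual) representation $\Pi^\sharp$. Assumption \ref{ass:coh-reg} makes $\Pi^\sharp$ regular algebraic. The construction of Galois representations for such $\Pi^\sharp$ (Harris--Taylor, Shin, Chenevier--Harris, Caraiani, Barnet-Lamb--Gee--Geraghty--Taylor) then gives $\rho_\Pi$. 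Local--global compatibility at all $w\nmid\ell$, up to Frobenius semisimplification, is due to Caraiani, and it identifies $\rho_\Pi|_{W_{\ml_w}}$ with the Langlands parameter of $\Pi^\sharp_w$. Because the local classical parameters are characterized by compatibility with this transfer (the local results of Arthur, Mok, KMSW, Ishimoto, Chen--Zou), the parameter of $\Pi^\sharp_w$ equals $r_\mu\circ\phi_{\Pi_{w^+}}|_{W_{\ml_w}}$, with $r_\mu$ the standard representation as in \S\ref{subsub:DualGroupRepresentations}, including the $\operatorname{O}_{2n}$ description of \S\ref{subsub:NonSplitEvenOrthogonal}. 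This gives the displayed isomorphism with the twist $\chi_\ell^{(N-1)/2}$. Uniqueness is Chebotarev plus semisimplicity.

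We expect the main obstacle to be getting the isomorphism exactly, not just after semisimplification, at every place $w\nmid\ell$, including ramified places and the Steinberg place. Three points need care there. First, monodromy: Caraiani's result gives Frobenius-semisimple local--global compatibility, and we would use the Steinberg place \ref{ass:St} together with \ref{ass:ssc}, irreducibility at $w_{\mathrm s}$, to guarantee that $\rho_\Pi$ is irreducible and that no pathologies arise. Second, the precise compatibility of the inner-form local correspondences with transfer to $\operatorname{GL}_N$. Third, in type $D^{\mathbb R}$, that the extension $r_\mu$ to $\operatorname{O}_{2n}$ matches the determinant character, which we would settle by comparing determinants via the discriminant character of $V$. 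We would simply cite \cite{Peng}, whose \S2 and \S4 assemble exactly this statement in the unramified setting; the argument there does not depend on $p$ being unramified.
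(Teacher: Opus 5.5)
Your part (ii) follows the paper's route. You reduce via the transfers of Arthur, Mok, KMSW, Ishimoto and Chen--Zou to a single regular algebraic (conjugate) self-dual cuspidal representation of $\operatorname{GL}_N$, using \ref{ass:simgen}. You then invoke the construction and local--global compatibility of Harris--Taylor, Shin, Chenevier--Harris and Caraiani. The paper packages exactly this by citing \cite[Theorem 4.3.6]{Peng}, noting only that $\chi_\ell$ is the inverse of Peng's norm character.

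The gap is in part (i). \cite[Theorem A]{KretShinSymplectic} and \cite[Theorem A]{KretShinOrthogonal} concern cuspidal automorphic representations of the \emph{quasi-split} groups, namely $\operatorname{GSp}_{2g}$ and the quasi-split $\operatorname{GSO}^{\ml/\mlp}_{2n}$. Your $\h$ is a non-quasi-split inner form: it is compact modulo center at every real place other than $\tau_0$, and there is at least one such place since $\mlp\neq\mathbb{Q}$. It may also fail to be quasi-split at $v_{\mathrm{st}}$. So the step ``check that $\h$ matches their setup'' fails, and citing Theorem A directly does not produce $\rho_\Pi$.

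The missing step is a transfer to the quasi-split inner form $\h^\ast$. The paper uses \ref{ass:St} together with \cite[Proposition 6.3]{KretShinSymplectic} to produce a cuspidal automorphic representation $\Pi^\sharp$ of $\h^\ast(\mathbb{A}_{\mlp})$. This $\Pi^\sharp$ satisfies $\Pi^\sharp_w\simeq\Pi_w$ at the finite places $w\neq v_{\mathrm{st}}$ where $\h$ and $\Pi$ are unramified. That is precisely why condition b) is stated only at such places, and why it is what the Steinberg hypothesis buys beyond being a hypothesis of Theorem A.

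The paper also records that \ref{ass:coh-reg} forces the weight $\xi$ to be regular, because std-regular implies regular by \cite[Lemma 3.2.2]{ShinWeakTransfer}. You should make this explicit rather than asserting the cited results hold ``under exactly'' \ref{ass:coh-reg} and \ref{ass:St}. Only then does Kret--Shin's Theorem A, applied to $\Pi^\sharp$, give $\rho_\Pi:=\rho_{\Pi^\sharp}$. Both existence and uniqueness up to $\sim^0$ pass back to $\Pi$, because b) only involves places where $\Pi_w\simeq\Pi^\sharp_w$.
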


\begin{proof}
For part (i), we first transfer $\Pi$ to a cuspidal automorphic representation $\Pi^{\sharp}$ of $\h^{\ast}(\mathbb{A}_{\mlp})$, where $\h^{\ast}$ is the quasi-split inner form of $\h$, such that $\Pi_w$ and $\Pi^{\sharp}_w$ are isomorphic at the unramified places $w$ described in the theorem; this is possible by \cite[Proposition 6.3]{KretShinSymplectic} and assumption \ref{ass:St}. Note that if $\Pi$ is standard regular, then it is regular by \cite[Lemma 3.2.2]{ShinWeakTransfer}. Hence in particular, if it is $\xi$-cohomological, then $\xi$ is regular. It now suffices to prove the analogous theorem for $\Pi^{\sharp}$. In type $C$, the result is \cite[Theorem A]{KretShinSymplectic}, and in type $D^{\mathbb{H}}$ it is \cite[Theorem A]{KretShinOrthogonal}. 

Part (ii) follows from work of many authors, see \cite[Theorem 4.3.6]{Peng} for the statement and attributions, noting that our $\chi_{\ell}$ is the inverse of his norm character $|-|$. Namely, first by \cite{Arthur}, \cite{Mok}, \cite{KMSW}, \cite{Ishimoto}, \cite{ChenZouMultiplicity} we can reduce to the construction of Galois representations for cuspidal automorphic representations of $\operatorname{GL}_N$ and its local-global compatibility, which is due to \cite{Clozel}, \cite{HarrisTaylor}, \cite{TaylorYoshida}, \cite{ShinCohomology}, \cite{CaraianiI}, \cite{ChenevierHarris}.
\end{proof}

\subsection{Some local-global compatibility} We now state some local-global compatibility for the Galois representations from the conclusion of Theorem \ref{Thm:ExistenceLParameters}. 

\subsubsection{} In type $C$ and $D^{\mathbb{H}}$ we define $R_{\Pi} = r_{\mu} \circ \restr{\rho_{\Pi}}{W_{\ml}}$. We have the following corollary of Theorem \ref{Thm:ExistenceLParameters} and Corollary \ref{Cor:WeakCompatibility}. \textbf{We will assume in the rest of Section \ref{Sec:CohCompact} that \cite[Assumption H1]{ShinWeakTransfer} holds}.\footnote{The results of \cite{KretShinOrthogonal} and \cite{KretShinSymplectic} used in the proof of Corollary \ref{Cor:StrongCompatibility} below, also rely on the work of Arthur \cite{Arthur}. By the work of \cite{AGIKMS}, they are thus (only) conditional on \cite[Assumption H1]{ShinWeakTransfer}. }
\begin{Cor} \label{Cor:StrongCompatibility}
Let $\Pi$ be a cuspidal automorphic representation of $\h(\mathbb{A}_{\mlp})$ satisfying 
\ref{ass:coh-reg}, \ref{ass:ssc}, \ref{ass:simgen} and \ref{ass:Stprime}. Let $\ell$ be a prime together with an isomorphism $\iota_{\ell}:\mathbb{C} \xrightarrow{\sim} \qlbar$, and assume that $\ell \neq2$ in type $D^{\mathbb{R}}$. If $v$ is a prime of $\ml$ not dividing $\ell$, with induced prime $v^+$ of $\mlp$ such that $v^+ \neq v_{\mathrm{st}}$ and such that $\restr{R_{\Pi}}{W_{\ml_{v}}}$ is a multiplicity free representation of $W_{\ml_{v}}$, then
\begin{align*}
    \left(\restr{R_{\Pi}}{W_{\ml_{v}}}\right)^{\operatorname{ss}}(d/2) &\simeq r_{\mu} \circ \restr{\phi_{\Pi_{v^{+}}}^{\mathrm{FS}}}{W_{\ml_{v}}}.
\end{align*}
\end{Cor}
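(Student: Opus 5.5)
The strategy is to realize $R_{\Pi}$, up to the Tate twist $(d/2)$ and semisimplification, as a $W_{\ml_v}$-subrepresentation of the cohomology of the compact Shimura variety attached to $\gx$ with $\g=\operatorname{Res}_{\mlp/\mathbb{Q}}\h$, and then to invoke Corollary \ref{Cor:WeakCompatibility}. Concretely, regard $\Pi$ as a cuspidal automorphic representation of $\g(\mathbb{A})$. By \ref{ass:coh} choose the algebraic representation $W$ of $\g^c$ over $\qlbar$ for which $\Pi_\infty$ is cohomological. Since $\g^{\mathrm{ad}}$ is $\mathbb{Q}$-anisotropic, Matsushima's formula describes $H^\ast_{\mathrm{sm}}(\mathbf{Sh}\gx_{\ebar},\mathbb{W})(d/2)$, and we look at the $\Pi^\infty$-isotypic part $\sigma^\ast(\Pi^\infty)$ as a $\gal_{\ml}$-representation.

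The key global input is the identification of $\sigma^\ast(\Pi^\infty)$ with a multiple of $R_\Pi$. More precisely, we expect that $\sigma^\ast(\Pi^\infty)$ is concentrated in degree $d$ and that $\sigma^d(\Pi^\infty)^{\mathrm{ss}}\simeq (R_\Pi)^{\oplus m}$ for some $m\ge 1$, up to the twist normalized by $\chi_\ell^{(N(\h)-1)/2}$ versus $(d/2)$. We intend to deduce this from Kisin--Shin--Zhu's description of the trace of Frobenius on cohomology (and from Kret--Shin in types $C$, $D^{\mathbb{H}}$). The comparison uses the stabilized trace formula: \ref{ass:Stprime} kills endoscopic contributions and gives concentration in middle degree by a Steinberg/Euler characteristic argument, \ref{ass:simgen} makes the Arthur parameter stable and simple, and \ref{ass:coh-reg} gives regularity. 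Together these identify the Frobenius traces at almost all unramified places with those of $r_\mu\circ\rho_\Pi$ (types $C,D^{\mathbb{H}}$) or with $\rho_\Pi$ (types $A,B,D^{\mathbb{R}}$) from Theorem \ref{Thm:ExistenceLParameters}. Chebotarev and Brauer--Nesbitt then give the isomorphism after semisimplification. The multiplicity $m$ is nonzero because $\Pi^\infty$ does occur in cohomology: it is cohomological, and \ref{ass:ssc} together with the Steinberg place forces it to contribute in degree $d$. The restriction $\ell\ne2$ in type $D^{\mathbb{R}}$ should enter here, through the $\mathrm{O}_{2n}$ versus $\mathrm{SO}_{2n}$ ambiguity in matching parameters.

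Next comes the local step. Choose $\sigma\subset\sigma^d(\Pi^\infty)$ a $\gal_{\ml}$-stable subrepresentation whose semisimplification is a single copy of $R_\Pi$; it has dimension $\dim V_\mu$. Its restriction to $W_{\ml_v}$ is multiplicity free by hypothesis, since the Jordan--H\"older factors of $\sigma|_{W_{\ml_v}}$ are those of $R_\Pi|_{W_{\ml_v}}$. We take $p$ to be the residue characteristic of $v$, with local reflex field $E=\ml_v$. Under the product decomposition of $G$, the cocharacter $\mu$ is nontrivial only at the factor $\tau_0$ over $v^+$, so $r_\mu\circ\phi^{\mathrm{FS}}_{\Pi_p}|_{W_E}$ reduces to $r_\mu\circ\phi^{\mathrm{FS}}_{\Pi_{v^+}}|_{W_{\ml_v}}$. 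This uses compatibility of Fargues--Scholze parameters with products and with $\operatorname{Res}$. Corollary \ref{Cor:WeakCompatibility} then gives $\sigma^{\mathrm{ss}}\simeq r_\mu\circ\phi^{\mathrm{FS}}_{\Pi_{v^+}}|_{W_{\ml_v}}$, which is the claim once the twist $(d/2)$ is tracked; it is already built into $\sigma^\ast$ in Matsushima's formula.

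The main obstacle is the global step: establishing that the $\Pi^\infty$-part of the cohomology is, after semisimplification, exactly $m$ copies of $R_\Pi$ with the correct twist and with $m\ge1$, and in particular that no other Galois constituents occur in the isotypic part. This must be done uniformly across all types, including the non-Hodge abelian cases, by carefully matching the Kisin--Shin--Zhu point-counting formula with the stable trace formula under the stated hypotheses. I would first treat it at places away from $v_{\mathrm{st}}$ and the ramified set using Kottwitz's conjecture as proved by Kisin--Shin--Zhu, and then pass to the full Galois representation via Chebotarev.
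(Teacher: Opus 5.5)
Your outline matches the paper's proof: place a copy of $R_\Pi(d/2)$ inside $\sigma^d(\Pi^\infty)$, then apply Corollary~\ref{Cor:WeakCompatibility}.

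The paper does not redo any trace-formula comparison. In types $C$ and $D^{\mathbb{H}}$ it cites Kret--Shin for $\sigma^d(\Pi^\infty)^{\mathrm{ss}}\simeq R_\Pi(d/2)$, with exactly one copy. In types $A$, $B$, $D^{\mathbb{R}}$ it cites \cite[Corollary~4.5.7]{Peng}, which rests on the Kisin--Shin--Zhu input you describe, for a subrepresentation of $\sigma^d(\Pi^\infty)$ isomorphic to $R_\Pi(d/2)$. So you do not need to rule out other Galois constituents of the isotypic part; one copy as a subrepresentation suffices. Your reduction from $\phi^{\mathrm{FS}}_{\Pi_p}$ (for $G=\g_{\qp}$) to $\phi^{\mathrm{FS}}_{\Pi_{v^+}}$, via compatibility of Fargues--Scholze parameters with products and Weil restriction, is correct. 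It fills in a step the paper leaves implicit.

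Two points need repair.

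First, the choice of $\sigma$. From $\sigma^d(\Pi^\infty)^{\mathrm{ss}}\simeq R_\Pi^{\oplus m}$ alone you cannot in general extract a subrepresentation containing each constituent of $R_\Pi$ exactly once, if $R_\Pi$ is reducible and $m>1$.
\begin{itemize}
\item In types $A$, $B$, $D^{\mathbb{R}}$ the fix is that $R_\Pi$ is irreducible: its restriction to the decomposition group at $w_{\mathrm{s}}$ is irreducible by \ref{ass:ssc}. That is the actual role of \ref{ass:ssc} here. It does not force a contribution in degree $d$; middle-degree concentration comes from the regularity in \ref{ass:coh-reg}, and nonvanishing comes from Matsushima's formula.
\item In types $C$ and $D^{\mathbb{H}}$, $r_\mu\circ\rho_\Pi$ can be reducible, so you need $m=1$, which is what Kret--Shin provide.
\end{itemize}

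Second, the hypothesis $\ell\neq 2$ in type $D^{\mathbb{R}}$ has nothing to do with $\mathrm{O}_{2n}$ versus $\mathrm{SO}_{2n}$. It is exactly what lets you apply Corollary~\ref{Cor:WeakCompatibility}, which requires $\ell$ coprime to $|\pi_0(Z(G))|$. Here $Z(\operatorname{SO}_{2n})=\mu_2$, while in the other types the centre is connected or trivial. You invoke that corollary without checking this hypothesis, so add the check.
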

\begin{proof}
We consider $\sigma^i(\Pi^{\infty})^{\operatorname{ss}}$ as in Corollary \ref{Cor:WeakCompatibility}, with $i$ the dimension of the Shimura variety and with $W$ the automorphic local system corresponding to the algebraic representation with respect to which $\Pi_{\infty}$ is cohomological. In type $C$, it follows from \cite[Corollary 8.7, discussion before Theorem 9.1, Corollary 12.4]{KretShinSymplectic} that under \ref{ass:coh} and \ref{ass:St} we have an isomorphism\footnote{Note that our definition of $\sigma^i(\Pi^{\infty})$ includes a Tate twist by $d/2$ (in type $C$ we have $d=\tfrac{g(g+1)}{2}$). The $L$-parameter $\rho_{\pi}$ (denoted by $\rho_{\pi}^C$ in \cite[Theorem 9.1]{KretShinSymplectic}) is normalized so that $r_{\mu} \circ \rho_{\Pi}$ occurs in the middle degree cohomology of the Shimura variety, see \cite[Theorem 10.3 and the discussion in the first paragraph of its proof]{KretShinSymplectic}, which means that $(r_{\mu} \circ \rho_{\Pi})(d/2)$ occurs in $\sigma^d(\Pi^{\infty})$. A similar comment applies to our citation of \cite{KretShinOrthogonal} in case $D^{\mathbb{H}}$. }
\begin{align*}
    \sigma^i(\Pi^{\infty})^{\operatorname{ss}} \simeq R_{\Pi}(d/2).
\end{align*}
The same result holds in type $D^{\mathbb{H}}$ by \cite[Theorem 9.6, Proposition 14.2]{KretShinOrthogonal} under \ref{ass:coh-reg} and \ref{ass:St}. Here we used Chebotarev density and Brauer--Nesbitt theorems. The result now follows from Corollary \ref{Cor:WeakCompatibility}. \smallskip 

In types $A$, $B$, and $D^{\mathbb{R}}$, the representation $R_{\Pi}$ is irreducible because $\restr{R_{\Pi}}{\gal_{\ml_{w_{\mathrm{s}}}}}$ is irreducible by assumption \ref{ass:ssc}. By \cite[Corollary 4.5.7]{Peng}, under assumptions \ref{ass:ssc}, \ref{ass:coh-reg}, \ref{ass:simgen} and \ref{ass:Stprime}, the $\gal_{\ml}$-representation $\sigma^i(\Pi^{\infty})$ contains a subrepresentation $\sigma$ isomorphic to $R_{\Pi}$.\footnote{Note that in the notation of \cite{Peng}, we have $N(\mathsf{G})-1=\operatorname{Dim}_{\mathbb{C}} \x=d$; this follows from inspection in all three cases, see \cite[Appendix B]{Milne}. Unwinding the definitions in loc. cit. and using \cite[Corollary 4.5.7]{Peng}, we see that $R_{\Pi}(d/2)$ occurs in $\sigma^i(\Pi^{\infty})$.} The result now follows from Corollary \ref{Cor:WeakCompatibility} and Theorem \ref{Thm:ExistenceLParameters}.(ii).
\end{proof}

\begin{Rem}
Assume that $\h$ is of type $C$ or $D^{\mathbb{H}}$. A particular consequence of Corollary \ref{Cor:StrongCompatibility} is that (under the assumptions of the corollary), the representation $$\left(r_{\mu} \circ \restr{\rho_{\Pi}}{W_{\ml_{v}}}\right)^{\mathrm{ss}}$$ only depends on $\Pi_{v^+}$ and not on $\Pi$. This result seems to be new in general.  
\end{Rem}

\subsubsection{} We have the following consequence of Corollary \ref{Cor:StrongCompatibility}. 
\begin{Cor} \label{Cor:StrongCompatibilityII}
Assume that $\h$ is of type $A$, $B$, or $ D^{\mathbb{R}}$, or that $\h$ is of type $C$ with $g=2$. Let $\Pi$ be a cuspidal automorphic representation of $\h(\mathbb{A}_{\mlp})$ satisfying \ref{ass:coh-reg}, \ref{ass:ssc}, \ref{ass:simgen} and \ref{ass:Stprime}. Let $\ell$ be a prime together with an isomorphism $\iota_{\ell}:\mathbb{C} \xrightarrow{\sim} \qlbar$, and assume that $\ell \neq2$ in type $D^{\mathbb{R}}$. If $v\nmid \ell$ is a prime of $\ml$ with induced prime $v^+$ of $\mlp$ such that $v^+ \not= v_{\mathrm{st}}$ and such that $r_{\mu} \circ \restr{\phi_{\Pi_{v^+}}}{W_{\ml_{v}}}$ is a multiplicity-free representation of $W_{\ml_{v}}$, then
\begin{align*}
    \left(\phi_{\Pi_{v^+}} \right)^{\operatorname{ss}} \sim^0 \phi_{\Pi_{v^{+}}}^{\mathrm{FS}},
\end{align*}
where $\phi_{\Pi_{v^{+}}}$ denotes the classical $L$-parameter of $\Pi_{v^+}$.
\end{Cor}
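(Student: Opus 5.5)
We deduce this from Corollary \ref{Cor:StrongCompatibility} together with Theorem \ref{Thm:ExistenceLParameters}, followed by a purely group-theoretic step that recovers a semisimple parameter from its composition with $r_\mu$.

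First we check that the hypotheses of Corollary \ref{Cor:StrongCompatibility} hold. In types $A,B,D^{\mathbb{R}}$, Theorem \ref{Thm:ExistenceLParameters}(ii) gives
\[
\restr{R_\Pi}{W_{\ml_v}} \simeq \iota_\ell\bigl(r_\mu\circ \restr{\phi_{\Pi_{v^+}}}{W_{\ml_v}}\bigr)\otimes\chi_\ell^{\frac{-1+N(\h)}{2}}.
\]
Since $r_\mu\circ\restr{\phi_{\Pi_{v^+}}}{W_{\ml_v}}$ is multiplicity free by assumption, so is $\restr{R_\Pi}{W_{\ml_v}}$.

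In type $C$ with $g=2$, $v^+\neq v_{\mathrm{st}}$ may be ramified, so part (i) of Theorem \ref{Thm:ExistenceLParameters} does not directly give local-global compatibility at $v^+$. Here we would instead use the known local-global compatibility for $\operatorname{GSp}_4$: the classical Gan--Takeda/Gan--Tantono parameter of $\Pi_{v^+}$ matches $\restr{\rho_\Pi}{W}$ up to semisimplification, via transfer to $\operatorname{GL}_4$ together with the spin map and the results of Kret--Shin. This identifies $\bigl(\restr{R_\Pi}{W_{\ml_v}}\bigr)^{\mathrm{ss}}$ with a twist of $r_\mu\circ\phi_{\Pi_{v^+}}^{\mathrm{ss}}$. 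Multiplicity-freeness then transfers, since it depends only on the semisimplification. This is the step most likely to need extra care.

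Applying Corollary \ref{Cor:StrongCompatibility} and matching Tate twists (the twist $\chi_\ell^{(N(\h)-1)/2}=\chi_\ell^{d/2}$ cancels against the $(d/2)$, using $N(\h)-1=d$), we obtain
\[
r_\mu\circ\restr{\phi_{\Pi_{v^+}}^{\mathrm{ss}}}{W_{\ml_v}}\simeq r_\mu\circ\restr{\phi^{\mathrm{FS}}_{\Pi_{v^+}}}{W_{\ml_v}}.
\]
Here $\ml_v=\mlp_{v^+}$ except in type $A$ with $v^+$ non-split, or in type $D^{\mathbb{H}}$, which is excluded.

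It remains to recover $\phi$ up to $\sim^0$ from $r_\mu\circ\phi$. We treat the cases separately.

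\begin{itemize}
\item \emph{Type $B$.} Here $r_\mu$ is the standard representation of $\operatorname{Sp}_{2n}$. Two semisimple symplectic parameters that are isomorphic as $\operatorname{GL}_{2n}$-representations are $\operatorname{Sp}_{2n}$-conjugate by \cite[Theorem 8.1]{GGP}.
\item \emph{Type $D^{\mathbb{R}}$.} The same argument in the orthogonal case gives $\operatorname{O}_{2n}$-conjugacy, which is exactly $\sim^0$. When $K\neq\mlp_v$ one uses the description of $r_\mu$ as the standard representation of $\operatorname{O}_{2n}$ from Section \ref{subsub:NonSplitEvenOrthogonal}.
\item \emph{Type $A$, $v^+$ split.} Then $r_\mu$ is the identity of $\operatorname{GL}_n$ and the conclusion is immediate.
\item \emph{Type $A$, $v^+$ non-split.} We only know the restriction to $W_{\ml_v}$. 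Since $L$-parameters of unitary groups are determined by their restriction to $W_{\ml_v}$ (conjugate-selfdual of the correct sign), \cite[Theorem 8.1]{GGP} again applies.
\item \emph{Type $C$, $g=2$.} Here $r_\mu$ is the spin representation, which identifies $\operatorname{GSpin}_5\cong\operatorname{GSp}_4$ with a subgroup of $\operatorname{GL}_4$. Conjugacy of semisimple $\operatorname{GSp}_4$-valued parameters is detected in $\operatorname{GL}_4$ together with the similitude character, by \cite[Theorem 8.1]{GGP}. The similitude characters agree, since both are determined by the central character.
\end{itemize}
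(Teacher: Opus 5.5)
Your proposal is correct and follows the paper's proof. Apply Corollary \ref{Cor:StrongCompatibility}, rewrite $(\restr{R_\Pi}{W_{\ml_v}})^{\mathrm{ss}}(d/2)$ as $r_\mu\circ\restr{\phi^{\mathrm{ss}}_{\Pi_{v^+}}}{W_{\ml_v}}$ using Theorem \ref{Thm:ExistenceLParameters} and $d=N(\h)-1$, then conclude by \cite[Theorem 8.1]{GGP} in types $A,B,D^{\mathbb{R}}$ and by adding the similitude character in type $C_2$. The only differences are references. The local-global compatibility at a possibly ramified $v^+$ in type $C_2$, which you leave vague, is \cite[Main Theorem (a)]{SorensenGSp4}, whose Galois representation agrees with Kret--Shin's by uniqueness. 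The $\operatorname{GSp}_4$ conjugacy criterion via the standard representation plus similitude is \cite[Lemma 6.1]{GanTakeda}, not \cite[Theorem 8.1]{GGP}. Your direct comparison of both similitudes with the central character, using Gan--Takeda's construction and \cite[Theorem I.9.6.(iii)]{FarguesScholze}, is a valid shortcut; the paper instead goes through $\rho_\Pi$ and \cite[Theorem A]{KretShin}.
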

\begin{proof}
By Corollary \ref{Cor:StrongCompatibility}, we have an isomorphism
    \begin{align} \label{Eq:IdentityLParameterComposed}
        (\restr{R_{\Pi}}{W_{\ml_v}})^{\operatorname{ss}}(d/2) \simeq r_{\mu} \circ \restr{\phi_{\Pi_{v^{+}}}^{\mathrm{FS}}}{W_{\ml_v}}.
    \end{align}
Now we observe that $d=N(\h)-1$, and we deduce from Theorem \ref{Thm:ExistenceLParameters} that (in type $C_{2}$, we use \cite[Main Theorem (a)]{SorensenGSp4} instead\footnote{The Galois representation constructed by Sorensen in loc. cit. agrees with the one constructed by Kret--Shin, because of the uniqueness proved in \cite[Theorem A]{KretShinSymplectic}.})
\begin{align*}
     (\restr{R_{\Pi}}{W_{\ml_v}})^{\operatorname{ss}}(d/2) &= (r_{\mu} \circ \restr{\phi_{\Pi_{v^+}}}{W_{\ml_v}})^{\operatorname{ss}} \\
     &\simeq r_{\mu} \circ \restr{\phi^{\mathrm{ss}}_{\Pi_{v^+}}}{W_{\ml_v}},
\end{align*}
and recall our descriptions of $r_{\mu}$ in all the different types of Section \ref{subsub:DualGroupRepresentations}. By \cite[Theorem 8.1]{GGP}, it follows that in types $A$, $B$, and $D^{\mathbb{R}}$, equation \eqref{Eq:IdentityLParameterComposed} implies
\begin{align*}
    \phi_{\Pi_{v^+}}^{\mathrm{ss}} \sim^0 \phi^{\mathrm{FS}}_{\Pi_{v^+}}. 
\end{align*}
In type $C$ (where $\ml=\mlp$), we know that $\widehat{\h}=\operatorname{GSpin}_{5} \simeq \operatorname{GSp}_4$ and under this isomorphism the representation $r_{\mu}$ corresponds to the standard embedding $\operatorname{GSp}_4 \subset \operatorname{GL}_4$. For the similitude character $\operatorname{Sim}:\operatorname{GSp}_{4} \to \operatorname{GL}_{1}$, we know that 
\begin{align*}
    \left(\operatorname{Sim} \circ \restr{\rho_{\Pi}}{W_{\mlp_{v^+}}}\right)^{\operatorname{ss}}
\end{align*}
corresponds to the central character of $\Pi_{v^{+}}$ by \cite[Theorem A]{KretShin}. The same is true for 
\begin{align*}
    \operatorname{Sim} \circ \phi_{\Pi_{v^+}}^{\mathrm{FS}}
\end{align*}
by \cite[Theorem I.9.6.(iii)]{FarguesScholze}. By \cite[Lemma 6.1]{GanTakeda}, it follows from this and \eqref{Eq:IdentityLParameterComposed} that
\begin{align*}
    \phi_{\Pi_{v^+}} \sim \phi_{\Pi_{v^+}}^{\mathrm{FS}}.
\end{align*}
\end{proof}
\begin{Rem}
The proof of Corollary \ref{Cor:StrongCompatibilityII} does not work for $\operatorname{GSp}_{2g}$ for $g > 2$, because $L$-parameters valued in $\operatorname{GSpin}_{2n+1}$ are not determined by their composition with the spin representation; the same should hold in type $D^{\mathbb{H}}$ for $n>4$. Perhaps in the latter case for $n=4$, one might expect to prove that the parameters agree up to the outer $S_3$ action on $L$-parameters. 
\end{Rem}

}

{
\section{Compatibility of local Langlands correspondences} \label{Sec:Compatibility}

In this section we use the Igusa stacks for abelian type Shimura varieties to show compatibility between the construction of local Langlands correspondence for classical groups by \cite{Arthur, Ishimoto, Mok, KMSW}, and the semisimple local Langlands correspondence constructed by Fargues--Scholze. Recently, Peng \cite{Peng} has established similar results for groups over unramified extensions of $\qp$ with $p>2$, based on the method of \cite{HamannGSp4} and \cite{BMHN}. Building on his work, we generalize this compatibility to the ramified case. Compared to the method in the literature, the major difference lies in that our main geometric result allows us to directly relate Fargues--Scholze parameters to automorphic representations, whereas the previous literature relies on $p$-adic uniformization of the basic locus of Shimura varieties, which is available only in the unramified case. This argument was suggested to us by David Hansen and Linus Hamann. We thank them heartily.

\subsection{Statement of the compatibility}\label{Section:CompatibilityStatement}
Fix a rational prime $p$. Let $F$ be a finite extension of $\qp$ and let $G$ over $F$ be a connected reductive group. We denote by $W_F$ the Weil group of $F$ and write $\phantom{}^LG$ for the $L$-group of $G$. This is a semi-direct product $\widehat{G}\rtimes W_F$, where the Langlands dual group $\widehat{G}$ is viewed as a split reductive group over $\mathbb{Z}$ equipped with an algebraic action by $W_F$ that factors through a finite quotient. We choose another prime $\ell\neq p$ and fix an isomorphism $\iota: \qlbar \xrightarrow{\sim} \mathbb{C}$. Note that the positive square root $\sqrt{p} \in \mathbb{R}$ defines a square root $\sqrt{p}\in \qlbar$ using $\iota$. 

\subsubsection{} \label{subsub:LParameters} Recall that an $L$-parameter for $G$ is a continuous homomorphism
\[W_F\times \mathrm{SL}_2(\mathbb{C})\to {}^LG(\mathbb{C}),\]
that is semisimple on the $W_F$ factor, algebraic on the $\operatorname{SL}_2$ factor and commutes with the projections to $W_F$ on the source and target. We let $\Phi(G)$ be the set of $\widehat{G}(\mathbb{C})$-conjugacy classes of $L$-parameters for $G$. An $L$-parameter is \emph{tempered} if its 
restriction to $W_F$ has image that projects to a bounded subset of $\widehat{G}(\mathbb{C})$. We let $\Phi_\mathrm{temp}(G) \subset \Phi(G)$ be the subset of equivalence classes of tempered $L$-parameters. 

\subsubsection{} Let $\Pi(G)$ be the set of isomorphism classes of $\mathbb{C}$-valued irreducible representations of $G(F)$, and $\Pi(G)_\mathrm{temp} \subset \Pi(G)$ be the subset of isomorphism classes of tempered representations. We will consider a collection of maps with finite fibers, labeled by Levi subgroups $M\subseteq G$ (including $G$ itself)
\[\operatorname{LL}_M: \Pi_\mathrm{temp}(M)\to \Phi_\mathrm{temp}(M),\, \pi\mapsto \phi_\pi.\]
We will call such a collection of maps a \textit{candidate local Langlands correspondence for $G$}. For a candidate local Langlands correspondence $\{\LL_M\}_{M\subseteq G}$, we write $\Pi_\phi(G)$ for the fiber $\LL_G^{-1}(\phi)$ and call it the (candidate) $L$-packet of a Langlands parameter $\phi$.

\subsubsection{}\label{subsub: ParabolicInd} 
Using the unique existence of the Langlands quotient and a similar classification of $L$-parameters, a candidate Langlands correspondence extends to a construction of $L$-parameters for general (not necessarily tempered) irreducible representations. To state this, recall from \cite[Section 4.3]{Taibi} that for a parabolic subgroup $P\subset G$, with Levi quotient $M$, we have an $L$-embedding $\iota_M:{}^LM\to {}^LG$, which is well-defined up to conjugation by $\widehat{G}$. 
\begin{Lem} \label{Lem:TemperedToFull}
    Let $\{\LL_M\}_{M\subseteq G}$ be a candidate local Langlands correspondence for $G$. Then it extends to a collection of well-defined maps
    \[\LL_{M}: \Pi(M)\to \Phi(M)\]
    from the set of all irreducible smooth representations of $M(F)$ to the set of $\widehat{M}$-conjugacy classes of $L$-parameters for $M$.
\end{Lem}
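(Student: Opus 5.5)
We extend the correspondence by combining the Langlands classification on both the representation side and the parameter side. On the representation side, the Langlands classification (in the form of Silberger's theorem for $p$-adic groups) states that every $\pi \in \Pi(M)$ is, up to isomorphism, the unique irreducible quotient $J(P',\sigma,\nu)$ of a normalized parabolic induction $\operatorname{Ind}_{P'}^{M}(\sigma\otimes\nu)$. Here $P' = M' N'$ is a standard parabolic of $M$, $\sigma\in\Pi_{\mathrm{temp}}(M')$, and $\nu$ is an unramified real character of $M'(F)$ lying in the open positive chamber for $P'$. The triple $(P',\sigma,\nu)$ is unique up to $M(F)$-conjugacy.

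On the parameter side, Silberger--Zink (equivalently the discussion in \cite[Section 4.3]{Taibi}) gives the analogous statement. Every $\phi\in\Phi(M)$ is $\widehat{M}$-conjugate to $\iota_{M'}\circ(\phi_0\otimes\nu)$, where $\phi_0\in\Phi_{\mathrm{temp}}(M')$ and $\nu$ is the parameter of an unramified real character in the same positive chamber. Again the triple $(P',\phi_0,\nu)$ is unique up to conjugacy, and this gives a bijection between such triples and $\Phi(M)$.

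With these two classifications, we define
\[
\LL_M(J(P',\sigma,\nu)) := \iota_{M'}\circ\bigl(\LL_{M'}(\sigma)\otimes\nu\bigr).
\]
Two points make this legitimate. First, $M'$ is a Levi subgroup of $M$, hence a Levi subgroup of $G$, so the tempered map $\LL_{M'}$ is part of the given candidate correspondence. Second, the character $\nu$ of $M'(F)$ corresponds to a central unramified parameter $W_F\to Z(\widehat{M'})$, and twisting by it does not change which chamber $\nu$ lies in.

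The main step is to check that this definition is well defined. The triple attached to $\pi$ is only determined up to conjugation by $M(F)$. Conjugating by $w\in M(F)$ normalizing the data replaces $(M',\sigma)$ by $(wM'w^{-1}, {}^w\sigma)$. We therefore need $\LL_{wM'w^{-1}}({}^w\sigma)$ and $\LL_{M'}(\sigma)$ to agree after transport along the induced identification of $L$-groups, up to $\widehat{M}$-conjugacy. This is where the argument could break down, because the candidate correspondence is only specified as a collection of abstract maps indexed by Levi subgroups. I would first try to settle it by reducing to standard Levi subgroups: the classification lets us choose $P'$ standard, and then the only residual ambiguity is the $M(F)$-conjugacy class of $\sigma$, on which $\LL_{M'}$ is defined as a function of isomorphism classes. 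Choosing $\iota_{M'}$ compatibly with the standard Borel pair makes the composite independent of choices up to $\widehat{M}$-conjugacy, since $\iota_{M'}$ is itself only defined up to $\widehat{M}$-conjugacy.

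If two distinct standard parabolics turn out to be $M(F)$-conjugate to each other, the uniqueness part of the Langlands classification shows they share the same Levi and the same $(\sigma,\nu)$ up to the relevant Weyl element. In that case I would impose, or verify for the correspondences of interest, equivariance of $\LL$ under isomorphisms of Levi subgroups induced by $M(F)$-conjugation. Finally, the extension agrees with the given map on tempered representations: for tempered $\pi$ we can take $P'=M$ and $\nu=0$, so the formula returns $\LL_M(\pi)$.
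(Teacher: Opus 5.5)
Your proposal is correct and follows essentially the same route as the paper, which simply invokes \cite[Proposition 7.1]{SilbergerZink}: use the Langlands classifications of $\Pi(M)$ and of $\Phi(M)$ by triples $(P,\sigma,\nu)$ with $P$ standard and $\nu$ regular positive, and send $J(P,\sigma,\nu)$ to the parameter attached to $(P,\LL_{L_P}(\sigma),\nu)$. The hedge in your last paragraph is unnecessary, because distinct standard parabolics of $M$ are never $M(F)$-conjugate: once $P$ is taken standard, the triple attached to $\pi$ is unique on the nose (with $\sigma$ determined up to isomorphism), so well-definedness needs no extra equivariance hypothesis on the given maps $\LL_{L_P}$.
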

\begin{proof}
This is \cite[Proposition 7.1]{SilbergerZink}. We briefly recall the constructions for the readers' convenience. Following the notation in \textit{loc. cit.}, the Langlands classification theorem, see \cite[Theorem 4.1]{Silberger} or \cite[Theorem 1.4]{SilbergerZink}, gives a bijection between $\Pi(M)$ and the set of triples $(P,\sigma, \nu)$, where $P\subseteq M$ is a standard parabolic subgroup, $\sigma$ is a tempered representation of its Levi quotient $L_P$ and $\nu\in X_F^\ast(L_P)_\mathbb{R}$ is a regular positive (with respect to $P$) character. The bijection takes $(P,\sigma,\nu)$ to the unique irreducible quotient $J(P,\sigma,\nu)$ of the normalized parabolic induction $i_P^M(\sigma\otimes\chi_\nu)$, for an unramified character $\chi_\nu$ determined by $\nu$.

On the other hand, \cite[Section 4.6]{SilbergerZink} gives a similar classification of $L$-parameters: There is a bijection between $\Phi(M)$ and the set of triples $(P, {}^t\phi, \nu)$, where $P\subseteq M$ is a standard parabolic, ${}^t\phi$ is a tempered $L$-parameter for $L_P$ and $\nu\in X_F^\ast(L_P)_\mathbb{R}$ is a regular positive character. The bijection takes $(P, {}^t\phi, \nu)$ to the composition of a twist ${}^t\phi_{z(\nu)}$ of ${}^t\phi$ by $\nu$, with the $L$-embedding $\iota_{L_P}:{}^LL_P\to {}^LM$. 

Hence for any $\pi \in \Pi(M)$ corresponding to $(P,\sigma,\nu)$, we can define $\LL_{M}(\pi)$ to be the $L$-parameter corresponding to $(P,\LL_{L_P}(\sigma),\nu)$.
\end{proof}

\subsubsection{} Below given a candidate local Langlands correspondence $\{\LL_M\}_{M\subseteq G}$ (on tempered representations), if we write $\LL_M(\pi)$ for a non-tempered representation $\pi$, we mean $\LL_{M}(\pi)$, using the above construction of $\LL_{M}$.

\subsubsection{} \label{subsub:Semisimplification} We say that an $L$-parameter is \emph{semisimple} if its restriction to $\mathrm{SL}_2(\mathbb{C})$ is trivial. In this case, we also identify a semisimple $L$-parameter as a group homomorphism $W_F \to {}^LG(\mathbb{C})$ that recovers the original $L$-parameter as the composition
\[
  W_F \times \mathrm{SL}_2(\mathbb{C}) \xrightarrow{\mathrm{pr}_1} W_F \to {}^LG(\mathbb{C}).
\]
For each $L$-parameter $\phi$, we attach to it a semisimple $L$-parameter $\phi^\mathrm{ss}$ by precomposing $\phi$ with the twisted embedding
\[ W_F\to W_F\times \operatorname{SL}_2(\mathbb{C}), \quad \gamma \mapsto (\gamma, \operatorname{diag}(|\gamma|^{\tfrac{1}{2}}, |\gamma|^{-\tfrac{1}{2}})),\]
with $|\cdot|$ being the composition $W_F\to W_F^\mathrm{ab}\xrightarrow{\mathrm{Art}_F^{-1}}F^\times\xrightarrow{|\cdot|_F}\mathbb{R}_+$.
\begin{Rem} \label{Rem:T}
Through the isomorphism $\iota: \mathbb{C} \xrightarrow{\sim} \qlbar$, we can turn an $L$-parameter $\phi$ as defined above into an $\ell$-adic version of the $L$-parameter $\psi:W_F \to {}^LG(\qlbar)$ by first passing to Weil--Deligne representations via the Jacobson--Morozov Theorem, and then using \cite[Section 8]{DeligneLFunction}, see \cite[Theorem 4.2.1]{Tate}. By construction, the semisimplification of $\psi$ as a representation agrees with $\phi^{\mathrm{ss}}$ under $\iota$.  
\end{Rem}

\subsubsection{} An $L$-parameter $\phi:W_{F} \times \operatorname{SL}_2(\mathbb{C}) \to {}^{L} G(\mathbb{C})$ is called \emph{discrete} if its centralizer $S_{\phi}$ contains $Z(\widehat{G})^{\gal_{F}}$ as a finite index subgroup. An $L$-parameter $W_F\times \mathrm{SL}_2(\mathbb{C})\to {}^LG(\mathbb{C})$ is called \emph{supercuspidal} if it is discrete and semisimple. The following lemma is well known.  
\begin{Lem}\label{Lem:SCTestedbySS}
Let $\phi$ be an $L$-parameter, if $\phi^\mathrm{ss}$ is supercuspidal, then $\phi$ is already supercuspidal.    
\end{Lem}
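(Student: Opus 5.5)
The plan is to unwind the definitions and reduce to two facts about semisimple parameters, namely that semisimplification kills the $\operatorname{SL}_2$-part and that discreteness can only get harder after enlarging the centralizer. By definition $\phi^{\mathrm{ss}}$ is semisimple (trivial on $\operatorname{SL}_2$), so the hypothesis says that $S_{\phi^{\mathrm{ss}}}$ contains $Z(\widehat{G})^{\gal_F}$ with finite index. We must show that $\phi$ is discrete and that $\phi|_{\operatorname{SL}_2(\mathbb{C})}$ is trivial.

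\textbf{Step 1: compare centralizers.} We want $S_\phi \subseteq S_{\phi^{\mathrm{ss}}}$. Indeed $\phi^{\mathrm{ss}}(\gamma)=\phi(\gamma,\operatorname{diag}(|\gamma|^{1/2},|\gamma|^{-1/2}))$, so anything in $\widehat{G}$ commuting with the whole image of $\phi$ commutes with the image of $\phi^{\mathrm{ss}}$. Together with $Z(\widehat{G})^{\gal_F}\subseteq S_\phi$, this shows that $Z(\widehat{G})^{\gal_F}$ has finite index in $S_\phi$. Hence $\phi$ is discrete.

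\textbf{Step 2: show the $\operatorname{SL}_2$-part is trivial.} This is the main point. Write $u=\phi|_{\operatorname{SL}_2}\colon \operatorname{SL}_2(\mathbb{C})\to\widehat{G}(\mathbb{C})$; it commutes with $\phi(W_F)$. Consider the torus $T=\{\phi(1,\operatorname{diag}(t,t^{-1}))\}$, the image of the diagonal torus of $\operatorname{SL}_2$. I claim $T$ centralizes the image of $\phi^{\mathrm{ss}}$. Each $\phi^{\mathrm{ss}}(\gamma)$ is a product of $\phi(\gamma,1)$, which commutes with the image of $u$, and an element of $T$, which commutes with $T$ since $T$ is abelian. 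So $T\subseteq S_{\phi^{\mathrm{ss}}}$.

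Since $T$ is connected and $S_{\phi^{\mathrm{ss}}}/Z(\widehat{G})^{\gal_F}$ is finite, $T$ lands in the identity component of $Z(\widehat{G})^{\gal_F}$, so $T$ is central in $\widehat{G}$. But the image of $u$ is a quotient of $\operatorname{SL}_2$, and a central torus in the image of such a homomorphism must be trivial: $T$ is central in $u(\operatorname{SL}_2)$, while the centre of $\operatorname{SL}_2$ is finite. Hence $T$ is trivial. Then $\ker u$ contains the diagonal torus, and since $\ker u$ is normal and $\operatorname{SL}_2$ is almost simple, $u$ is trivial.

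Therefore $\phi$ is semisimple and discrete, i.e.\ supercuspidal. I expect no real obstacle beyond making sure Step 2 is phrased correctly: one uses finite index to pass from a connected subgroup to the identity component of the centre.
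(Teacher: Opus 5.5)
Your proof is correct and is essentially the paper's argument: the image of the diagonal torus of $\operatorname{SL}_2$ centralizes $\phi^{\mathrm{ss}}$, so discreteness of $\phi^{\mathrm{ss}}$ forces it to be central, which in turn forces $\phi|_{\operatorname{SL}_2}$ to be trivial. Your Step 1 is redundant, since triviality on $\operatorname{SL}_2$ already gives $\phi=\phi^{\mathrm{ss}}$, but your treatment of the final step (identity component of a finite-index subgroup, then almost simplicity of $\operatorname{SL}_2$) is a bit more careful than the paper's.
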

\begin{proof}
    It suffices to show $\phi|_{\mathrm{SL}_2(\mathbb{C})}$ is trivial, so that $\phi=\phi^\mathrm{ss}$ is supercuspidal. Let $T$ be the diagonal torus of $\mathrm{SL}_2$. Note that the image of $T(\mathbb{C})$ under $\phi$ lies in the centralizer of $\phi^\mathrm{ss}$, since 
    \[\phi(\gamma, \operatorname{diag}(|\gamma|^{\tfrac{1}{2}}, |\gamma|^{-\tfrac{1}{2}}))\cdot \phi(e,t)=\phi(e,t)\cdot \phi(\gamma, \operatorname{diag}(|\gamma|^{\tfrac{1}{2}}, |\gamma|^{-\tfrac{1}{2}}))\]
    for all $\gamma\in W_F$ and $t\in T(\mathbb{C})$. But by assumption, $\phi^\mathrm{ss}$ is discrete, so up to a finite group, the projection of $\phi(T)$ to $\widehat{G}$ is contained in the center. In particular, up to a finite group, $\phi(T)$ is central in $\phi(\mathrm{SL}_2)$. This is possible only if $\phi(\mathrm{SL}_2)$ is itself finite. But $\mathrm{SL}_2$ is connected, so $\phi(\mathrm{SL}_2)$ has to be trivial as desired.
\end{proof}

\subsubsection{} We denote by $\Phi^\mathrm{ss}(G)$ the set of semisimple $L$-parameters. Moreover, for a collection of maps $\{\LL_M\}_{M\subseteq G}$ labeled by Levi subgroups of $G$, we denote by $\{\LL_M^\mathrm{ss}\}_{M\subseteq G}$ the collection of maps 
\[\LL_M^\mathrm{ss}\coloneqq(-)^\mathrm{ss}\circ \LL_M: \Pi_\mathrm{temp}(M)\to \Phi_\mathrm{temp}(M)\to \Phi^\mathrm{ss}(M),\]
and call it the semi-simplification of $\{\LL_M\}_{M\subseteq G}$. 

\subsubsection{} Recall that we have fixed an isomorphism $\iota:\qlbar \to \mathbb{C}$, which induces a $\sqrt{p} \in \qlbar$. For each irreducible smooth representation, and in particular for any $\pi\in \Pi_\mathrm{temp}(G)$, Fargues--Scholze constructed a semisimple\footnote{In the sense of \cite[VIII.3.1]{FarguesScholze}.} $\ell$-adic $L$-parameter $\phi_{\iota\pi}^\FS: W_F\to \phantom{}^L{G}(\qlbar)$, using the fixed $\sqrt{p} \in \qlbar$. Their construction works uniformly for all connected reductive groups, and is independent of $\ell$ and $\iota$ by \cite[Theorem 1.1]{ScholzeMotivicGeometrization}. We consider the collection of maps 
\[\{\LL^\FS_M:\Pi_\mathrm{temp}(M)\to \Phi^\mathrm{ss}(M), \pi\mapsto \iota^{-1}\phi^\FS_{\iota\pi}\}_{M\subseteq G},\]
where $M$ runs through Levi subgroups of $G$, and call it the \textit{Fargues--Scholze (semisimple) local Langlands correspondence for $G$}. For each semisimple $L$-parameter $\phi$ for $M$, we denote the fiber $\LL_M^{\FS,-1}(\phi)$ by $\Pi^\FS_\phi(M)$, and call it the Fargues--Scholze $L$-packet.

For a candidate local Langlands correspondence $\{\LL_M\}_{M\subseteq G}$, one can ask about the compatibility between $\{\operatorname{LL}^\mathrm{ss}_M\}_M$ and $\{\operatorname{LL}^{\FS}_M\}_M$. We say that the candidate local Langlands correspondence $\{\operatorname{LL}_M\}_M$ for $G$ is \textit{compatible with the Fargues--Scholze local Langlands}, if $\iota\phi_\pi^\mathrm{ss}=\phi_{\iota\pi}^\FS$ for all $\pi\in \Pi_\mathrm{temp}(M)$ and all Levi subgroups $M\subseteq G$.

\subsection{Characterization of the semisimple local Langlands correspondence} \label{sub:CharacterizationLL}
If $\{\LL_M\}_{M\subseteq G}$ is a candidate local Langlands correspondence for $G$, then we call the collection $\{\LL^\mathrm{ss}_M\}_{M\subseteq G}$ a (candidate) semisimple local Langlands correspondence. We give in this subsection a characterization of the semisimple local Langlands correspondence in terms of Fargues--Scholze's construction. We let $F/\qp$ be a finite extension and $G/F$ be a connected reductive group as before. We start with the case of quasi-split groups.

\subsubsection{Quasi-split groups}\label{subsub:SemisimpleLLQuasiSplit}

Let $G/F$ be quasi-split and let $\{\LL_M\}_{M\subseteq G}$ be a candidate local Langlands correspondence for $G$. Consider the following assumptions:
\begin{enumerate}[label=(\arabic*)]
        \item \label{ass:one} For each $M$ and each parabolic subgroup $P \subset M$ (with Levi quotient denoted by $L_P$), if $\pi \in \Pi(M)$ is a subquotient of the normalized parabolic induction $i_P^M\rho$ for some $\rho \in \Pi(L_P)$, then 
        \[\LL^\mathrm{ss}_M(\pi)=\iota_{L_P}\circ\LL_{L_P}^\mathrm{ss}(\rho),\]
        where $\iota_{L_P}$ is the $L$-embedding ${}^LL_P\to {}^LM$ as explained in Section~\ref{subsub: ParabolicInd}.
      
        \item \label{ass:two} For each $M$ and each $\phi \in \Phi_{\mathrm{temp}}(M)$ the following holds: The parameter $\phi$ is a supercuspidal $L$-parameter if and only if all elements of $\Pi_{\phi}(M)$ are supercuspidal representations. 
                
        \item \label{ass:three} For each $M$, and each supercuspidal $L$-parameter $\phi\in \Phi_\temp(M)$, the $L$-packet $\Pi_\phi(M)$ is contained in $\Pi^\FS_\phi(M)$.
        \item[(4a)] \customlabel{(4a)}{ass:foura} \customlabel{(4)}{ass:four} For each $M$, and each $\phi\in \Phi_\mathrm{temp}(M)$, there is a stable virtual character
        \[\Theta^1_{\phi}=\sum_{\pi\in \Pi_{\phi}(M)}a_\pi\Theta_\pi,\]
        with $a_\pi \neq 0$ for all $\pi \in \Pi_\phi(M)$, where $\Theta_\pi$ is the Harish-Chandra character of $\pi$.

        \item[(4b)] \customlabel{(4b)}{ass:fourb} For each $M$, the stable virtual characters $\Theta_\phi^1$ are
        atomically stable, i.e., if a linear combination $\Theta = \sum_{\pi\in
    \Pi_{\phi}(M)} b_\pi \Theta_\pi$ is stable then $\Theta$ is a multiple of
    $\Theta_\phi^1$.
\end{enumerate}
We will often refer to Assumption \ref{ass:four} which is the conjunction of Assumptions \ref{ass:foura} and \ref{ass:fourb}.

\begin{Rem} \label{Rem:FSConstant}
  Assumption~\ref{ass:four} implies Assumption
  \begin{itemize}
    \item[(4$'$)] \customlabel{(4$'$)}{ass:fourprime} For each $M$, each $\phi\in \Phi_\temp(M)$, and $\pi,\pi' \in \Pi_\phi(M),$ we have $\phi_{\pi}^{\FS}=\phi_{\pi'}^{\FS}$.
  \end{itemize}
  This follows from work of Hansen, see \cite[Corollary~1.2]{HansenStable}.
\end{Rem}

\begin{Rem}[Warning!]
In general the $L$-parameter $\LL_M(\pi)$ does not have to factor through a Levi subgroup even if its semi-simplification does, like in the case of the Steinberg representation for $\GL_2/\qp$.
\end{Rem}

\begin{Rem}
    Assumption \ref{ass:one} is an expected property of a local Langlands correspondence, see for example \cite[Conjecture 6.1(10)]{Taibi}. It implies the ``only if'' direction of Assumption \ref{ass:two}. The ``if'' direction of Assumption \ref{ass:two} is not expected in general if $G$ is not quasi-split. For example, when $G=D^\times$ is the non-quasi-split inner form of $\GL_2/\qp$, then the unique existence of local Langlands correspondence is known. The trivial representation has a discrete but non-supercuspidal $L$-parameter in this case. Since the $L$-packets are singletons, this gives an example of a supercuspidal $L$-packet with non-supercuspidal $L$-parameter. But when $G$ is quasi-split, this is conjectured to always hold, see \cite[Section 3.5(iv)]{DebackerReeder}. Note also that when $G$ is quasi-split, so are its Levi subgroups. Hence the Assumption \ref{ass:two} is expected to always hold.
\end{Rem}

\begin{Rem} \label{Rem:VarmaHypo}
  Varma proves in \cite[Proposition~2.5.2, Lemma~2.5.7.(ii)]{Varma} that
  Assumption \ref{ass:four} is a consequence of \cite[Hypothesis~2.5.1]{Varma} (by setting
  $\mathcal{O}_M = \{\mathrm{id}_M\}$ for all $M$), where atomicity follows from the 
  $\Theta_\phi^1$ forming a basis of the space $\mathrm{SD}^\mathrm{temp}(G)$ of
  stable tempered virtual characters. In particular, it holds for many
  quasi-split classical groups as listed in \cite[Proposition~5.1.2]{Varma},
  including special orthogonal and unitary groups. 
\end{Rem}

\begin{Rem} \label{Rem:GLn}
These assumptions are known for the usual local Langlands correspondence for $G=\operatorname{GL}_n$ of \cite{HarrisTaylor}. Indeed, \ref{ass:one} and \ref{ass:two} are well known, \ref{ass:three} is \cite[Theorem IX.7.4]{FarguesScholze}, and \ref{ass:foura} and \ref{ass:fourb} are essentially vacuous because the $L$-packets are singletons and all virtual characters are stable.
\end{Rem}

\begin{Prop}\label{prop: SemisimpleLLQuasiSplit}
    Let $G/F$ be quasi-split. If a candidate local Langlands correspondence $\{\LL_M\}_{M\subseteq G}$ for $G$ satisfies Assumptions \ref{ass:one}, \ref{ass:two}, \ref{ass:three}, \ref{ass:fourprime}, then it is compatible with the Fargues--Scholze local Langlands. In other words, the semisimple local Langlands correspondence $\{\LL^\mathrm{ss}_M\}_{M\subseteq G}$ is uniquely determined by these assumptions.
\end{Prop}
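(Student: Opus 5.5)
We argue by induction on the semisimple rank of the Levi subgroups $M \subseteq G$, proving simultaneously for all Levi subgroups that $\iota\phi_\pi^{\mathrm{ss}} = \phi^{\FS}_{\iota\pi}$ for every $\pi \in \Pi_{\temp}(M)$. Since every Levi of a quasi-split group is quasi-split, and all four assumptions are stated for all $M$, the inductive hypothesis may be applied to every proper Levi $L \subsetneq M$. The base case is a torus. There the correspondences are singletons, and every representation is supercuspidal with supercuspidal parameter, so Assumption \ref{ass:three} gives the claim directly.

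Fix $M$ and $\pi \in \Pi_{\temp}(M)$, and write $\phi = \LL_M(\pi)$. There are three cases.

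First, suppose $\pi$ is not supercuspidal. Then $\pi$ is a subquotient of $i_P^M\rho$ for a proper parabolic $P \subset M$ and some $\rho \in \Pi(L_P)$. Assumption \ref{ass:one} gives $\LL^{\mathrm{ss}}_M(\pi) = \iota_{L_P}\circ \LL^{\mathrm{ss}}_{L_P}(\rho)$. On the Fargues--Scholze side, compatibility with parabolic induction \cite[Theorem I.9.6]{FarguesScholze} gives $\phi^{\FS}_\pi = \iota_{L_P}\circ\phi^{\FS}_\rho$. So it suffices to know $\LL^{\mathrm{ss}}_{L_P}(\rho) = \LL^{\FS}_{L_P}(\rho)$ for $\rho$, which need not be tempered. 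To handle this, use the Langlands classification (Lemma \ref{Lem:TemperedToFull}): write $\rho$ as a Langlands quotient of $i(\sigma\otimes\chi_\nu)$ with $\sigma$ tempered on a Levi of $L_P$. Apply the induction hypothesis to $\sigma$, together with compatibility of both sides with unramified twists and parabolic induction; on the $\LL$ side this is Assumption \ref{ass:one} again. Alternatively, one can choose $\rho$ supercuspidal from the outset by taking $\pi$ in the supercuspidal support, and reduce to the supercuspidal case on a smaller Levi.

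Second, suppose $\pi$ is supercuspidal and $\phi$ is supercuspidal. Then $\pi \in \Pi_\phi(M) \subseteq \Pi^{\FS}_\phi(M)$ by Assumption \ref{ass:three}. Since $\phi = \phi^{\mathrm{ss}}$, this is exactly the claim.

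Third, suppose $\pi$ is supercuspidal but $\phi$ is not supercuspidal. By Assumption \ref{ass:two}, since $\phi$ is not supercuspidal, not every member of $\Pi_\phi(M)$ is supercuspidal. So there is $\pi' \in \Pi_\phi(M)$ that is not supercuspidal. By the first case, $\LL^{\mathrm{ss}}_M(\pi') = \LL^{\FS}_M(\pi')$. By Assumption \ref{ass:fourprime}, $\phi^{\FS}_\pi = \phi^{\FS}_{\pi'}$. Since $\LL^{\mathrm{ss}}_M(\pi) = \phi^{\mathrm{ss}} = \LL^{\mathrm{ss}}_M(\pi')$, the claim follows for $\pi$.

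The main obstacle is the first case, specifically the bookkeeping for non-tempered inducing data. One must check that both $\LL^{\mathrm{ss}}$ and $\LL^{\FS}$ commute with the Langlands-classification twists and with the $L$-embeddings $\iota_{L_P}$, which are well defined only up to $\widehat{M}$-conjugacy. I would try first to reduce to supercuspidal supports so that only tempered, indeed supercuspidal, data on smaller Levis ever appear. The remaining cases are formal given the assumptions. Uniqueness of $\{\LL^{\mathrm{ss}}_M\}$ is then immediate, since the argument expresses it entirely through $\LL^{\FS}$.
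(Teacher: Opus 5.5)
Your proof is correct and uses the same three moves as the paper:
\begin{itemize}
\item Assumption \ref{ass:three} when $\LL_M(\pi)$ is supercuspidal;
\item Assumptions \ref{ass:two} and \ref{ass:fourprime} to trade $\pi$ for a non-supercuspidal member of its packet;
\item Assumption \ref{ass:one}, together with compatibility of Fargues--Scholze parameters with normalized parabolic induction, to descend to a proper Levi $L_P$.
\end{itemize}
Where you differ is in how the descent is closed.

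The paper does not induct. It chooses $L_P$ to be the minimal Levi through which $\phi_\pi^{\mathrm{ss}}$ factors, so that $\LL^{\mathrm{ss}}_{L_P}(\rho)$ is supercuspidal. It then upgrades this to $\LL_{L_P}(\rho)$ being supercuspidal via Lemma \ref{Lem:SCTestedbySS}, and applies Assumption \ref{ass:three} on $L_P$.

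Your induction on semisimple rank instead feeds in the full statement on $L_P$. This makes Lemma \ref{Lem:SCTestedbySS} unnecessary. It also avoids a claim the paper's choice makes tacitly: that some member of $\Pi_{\phi_\pi}(M)$ is a subquotient of an induction from that minimal Levi. That claim is not automatic. If the supercuspidal support $\rho_0$ of $\pi'$ has a non-supercuspidal parameter, the minimal Levi is strictly smaller than the support Levi. By uniqueness of supercuspidal support, $\pi'$ itself is then not induced from it. Justifying the choice amounts to reapplying Assumptions \ref{ass:two} and \ref{ass:fourprime} on smaller Levis, which is exactly your Case 3 run inside the induction. The paper's route is more direct when the minimal Levi is reached in one step.

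Both versions need compatibility with unramified twists to handle non-tempered inducing data, because Assumption \ref{ass:three} and your inductive hypothesis concern only tempered representations. You flag this explicitly; the paper leaves it implicit. It is supplied by the twist compatibility in \cite[Theorem I.9.6]{FarguesScholze}, together with the definition of the extended $\LL$ in Lemma \ref{Lem:TemperedToFull}.
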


\begin{proof}
    Let $M\subseteq G$ be a Levi subgroup of $G$ and $\pi\in\Pi_\temp(M)$ be a tempered irreducible representation. If $\phi_\pi:=\LL_M(\pi)$ is a supercuspidal $L$-parameter, then it agrees with $\LL^\FS_M(\pi)$ by Assumption \ref{ass:three}.

    Therefore, we may assume $\phi_\pi$ is not supercuspidal. Then by Assumption \ref{ass:two}, there is some representation $\pi'\in \Pi_{\phi_\pi}(M)$ that is a subquotient of $i_P^M\rho$ for some parabolic subgroup $P\subset M$ with Levi $L_P$ and some $\rho\in \Pi(L_P)$. By Assumption \ref{ass:fourprime}, we may replace $\pi$ by $\pi'$ and hence assume $\pi$ itself is of this form.

    Moreover, we have $\phi^\mathrm{ss}_\pi=\iota_{L_P}\circ \LL^\mathrm{ss}_{L_P}(\rho)$ by Assumption \ref{ass:one}. Since the Fargues--Scholze local Langlands is compatible with parabolic inductions, see \cite[Corollary IX.7.3]{FarguesScholze}\footnote{Note that there the parabolic induction is unnormalized, while the $L$-embedding ${}^LL_P\hookrightarrow {}^LM$ is twisted by a cyclotomic cocycle. Using normalized parabolic induction cancels out this cyclotomic twist, see \cite[Beginning of Section IX.7.1]{FarguesScholze}, also \cite[p.3 (1), cf. Section 3.1]{DHKM}. Note, however, to get the description of the twisted embedding as stated in \cite[Section IX.7.1]{FarguesScholze}, one needs to identify the Borel of the Satake group differently, see \cite[Remark 6.23]{CvdHS}.}, we also have 
    \[\phi^\FS_\pi=\iota_{L_P}\circ \LL^\FS_{L_P}(\rho).\]
    This reduces us to showing $\LL_{L_P}^\mathrm{ss}(\rho)=\LL_{L_P}^\FS(\rho)$. But by picking $L_P$ to be the minimal Levi through which $\phi_\pi^\mathrm{ss}$ factors, we can assume that $\LL_{L_P}^\mathrm{ss}(\rho)$ is supercuspidal. Since $L_P$ is itself a Levi subgroup of $G$, we may use Lemma~\ref{Lem:SCTestedbySS} and Assumption \ref{ass:three} to conclude. 
\end{proof}

\subsubsection{Non-quasi-split groups}
We now consider the case of extended pure inner forms of quasi-split groups via the theory of endoscopic transfer. This does not encompass all connected reductive groups over $F$, but the most important cases of them, see the discussion in \cite[Section 4.5]{Kaletha}. 

Let $G/F$ be a reductive group that is realized as an extended pure inner form of a quasi-split group $G^*$ via $(\xi,b)$, where $\xi$ is an isomorphism $G^\ast_{\overline{F}}\xrightarrow{\sim} G_{\overline{F}}$ and $b\in B(G^\ast)$ is an element such that $\xi^{-1}\sigma(\xi)=\mathrm{Ad}(b(\sigma))$ for all $\sigma \in \operatorname{Gal}_F$. In this case, the triple $\mathbf{1}:=(G^\ast,1,\mathrm{id})$ is an extended endoscopic triple for $G$, see \cite[Definition 2]{Kaletha} for the definition. We fix a Whittaker datum $\mathfrak{w}$ for $G^\ast$. Following \cite{HansenStable}, see Footnote 5 in \textit{loc. cit.}, cf. \cite[Remark 3.2.2(i)]{Varma}, we may and do normalize the transfer factor $\Delta_{[\mathfrak{w},\mathbf{1},b]}$ to be the Kottwitz sign $e(G)\in \{\pm 1\}$, which is defined in \cite{KottwitzSign}. Recall also the notion of matching functions (depending on the transfer factor) from \cite[Definition 3, Theorem 4]{Kaletha}.

Assume now that we have a candidate local Langlands correspondence $\{\LL_M\}_{M\subseteq G}$ for $G$, as well as one $\{\LL_{M^\ast}\}_{M^\ast\subseteq G^\ast}$ for $G^\ast$. Consider the following assumption:
\begin{itemize}
    \item[(5)] \customlabel{(5)}{ass:five}Suppose that $\{\LL_{M^\ast}\}_{M^\ast\subseteq G^\ast}$ satisfies Assumption \ref{ass:foura}. For each $\phi\in \Phi_\temp(G)=\Phi_\temp(G^\ast)$, and for all matching functions $f^\ast\in \mathcal{C}^\infty_c(G^\ast(F))$, $f\in \mathcal{C}_c^\infty(G(F))$, there is an equality
    \[\Theta^\mathbf{1}_{\phi}(f^\ast)=\sum_{\pi\in \Pi_\phi(G)}c_\pi\Theta_\pi(f),\]
    for some coefficients $c_\pi \in \mathbb{C}^\times$ (depending on $\mathfrak{w}$, but not on $f, f^\ast$), where $\Theta_\phi^\mathbf{1}$ is the stable virtual character supported on the $L$-packet $\Pi_\phi(G^\ast)$ as in Assumption \ref{ass:foura}. The same is true when $G$ is replaced by any Levi subgroup $M\subseteq G$.
\end{itemize}

We have the following analogous statement to Proposition~\ref{prop: SemisimpleLLQuasiSplit}.

\begin{Prop}\label{prop: SemisimpleLLNonQuasiSplit}
    If $\{\LL_{M^\ast}\}_{M^\ast\subseteq G^\ast}$ satisfies Assumptions \ref{ass:one}, \ref{ass:two}, \ref{ass:three}, \ref{ass:foura} and \ref{ass:fourprime}, and if Assumption \ref{ass:five} holds, then $\{\LL_{M}\}_{M\subseteq G}$ is compatible with the Fargues--Scholze local Langlands.
\end{Prop}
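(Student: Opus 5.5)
The plan is to transfer the known compatibility for $G^\ast$ (Proposition \ref{prop: SemisimpleLLQuasiSplit}) to $G$ using Hansen's endoscopic character identity for Fargues--Scholze parameters, \cite[Theorem 1.4]{HansenStable}. Fix a Levi $M \subseteq G$ and $\pi \in \Pi_\temp(M)$, and set $\phi = \LL_M(\pi)$. Since $M$ is again an extended pure inner form of a Levi $M^\ast \subseteq G^\ast$, and the assumptions are stated for all Levi subgroups, we may assume $M=G$ to lighten notation.

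First, by Proposition \ref{prop: SemisimpleLLQuasiSplit}, whose hypotheses \ref{ass:one}, \ref{ass:two}, \ref{ass:three}, \ref{ass:fourprime} hold for $\{\LL_{M^\ast}\}$ by assumption, we know that $\LL^{\FS}_{G^\ast}(\pi^\ast) = \phi^{\mathrm{ss}}$ for every $\pi^\ast \in \Pi_\phi(G^\ast)$. Hence the stable virtual character $\Theta^{\mathbf 1}_\phi$ of Assumption \ref{ass:foura} is supported on representations whose Fargues--Scholze parameter is $\phi^{\mathrm{ss}}$. Next, we use Hansen's result, which (with the transfer factor normalized to the Kottwitz sign $e(G)$, as we have done) says roughly: if $\Theta^\ast$ is a stable virtual character of $G^\ast$ whose constituents all have Fargues--Scholze parameter $\varphi$, and $\Theta$ is a virtual character of $G$ that is its transfer, i.e. $\Theta^\ast(f^\ast)=\Theta(f)$ for all matching $f^\ast,f$, then all constituents of $\Theta$ have Fargues--Scholze parameter $\varphi$. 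Assumption \ref{ass:five} says precisely that $\Theta=\sum_{\pi\in\Pi_\phi(G)}c_\pi\Theta_\pi$ is such a transfer of $\Theta^{\mathbf 1}_\phi$. Since the $c_\pi$ are nonzero and the $\Theta_\pi$ of distinct irreducibles are linearly independent, every $\pi \in \Pi_\phi(G)$ is a genuine constituent, so we conclude that $\LL^\FS_G(\pi)=\phi^{\mathrm{ss}}$, as desired.

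The main obstacle is matching the precise form of \cite[Theorem 1.4]{HansenStable} to our setup. Hansen's statement is phrased via the Hecke operator/transfer on $\bun_{G^\ast}$ at the basic stratum $b$, and we need to check several points: that the identification $\Phi_\temp(G)=\Phi_\temp(G^\ast)$ is compatible with the identification of $L$-groups used by Fargues--Scholze for $G=G^\ast_b$; that the sign conventions agree (we normalized $\Delta$ to be $e(G)$ exactly to match his Footnote 5); and that tempered virtual characters with nonvanishing coefficients detect each constituent. I would first try to apply his theorem to $\Theta^{\mathbf 1}_\phi$ directly, viewed as an element of the stable Bernstein-center-localized Grothendieck group at $\phi^{\mathrm{ss}}$. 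If only a ``Bernstein center acts by the same character'' version is available, I would instead argue through the compatibility of $\Psi_G$ and $\Psi_{G^\ast}$ under transfer of distributions.
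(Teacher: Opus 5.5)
Your proposal is correct and is essentially the paper's proof. The paper first uses Assumptions \ref{ass:foura} and \ref{ass:fourprime} to get $\Psi_{M^\ast}(z)\ast\Theta^{\mathbf 1}_\phi=z_\phi\Theta^{\mathbf 1}_\phi$. It then transfers via Assumption \ref{ass:five} and applies \cite[Theorem 1.4.ii]{HansenStable} in exactly the distributional form $(\Psi_{M^\ast}(z)\ast\Theta^\ast)(f^\ast)=(\Psi_M(z)\ast\Theta)(f)$ that you name as your fallback. Linear independence of characters gives $z_\pi=z_\phi$, and Proposition \ref{prop: SemisimpleLLQuasiSplit} finishes the argument.

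The one point you omit is that this form of Hansen's result also requires the transferred character $\sum_\pi c_\pi\Theta_\pi$ to be stable. The paper takes this from \cite[Remark 3.2.2(ii)]{Varma}; it is formal, since an unstable $f$ is matched by $f^\ast=0$.
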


Before we give the proof of this, let us briefly recall the construction of Fargues--Scholze $L$-parameters in terms of distributions, as well as results from \cite{HansenStable}. We start by recalling the action of the Bernstein center on the space of distributions, following \cite[Section 3.1, 3.2]{HainesStable}, \cite[Section 2.1]{Varma}, \cite[Section 2.1]{HansenStable}. 

\subsubsection{} Let $H/F$ be a connected reductive group and let $C^{\infty}_{c}(H(F))$ be the convolution algebra of locally constant functions $H(F) \to \mathbb{C}$ that are compactly supported. The $\mathbb{C}$-linear dual of $C^{\infty}_{c}(H(F))$ is called the space of distributions. It contains a subspace $\operatorname{D}(H)$ spanned by the Harish-Chandra characters $\Theta_\pi: f\mapsto \operatorname{tr}(\pi|f)$ attached to irreducible representations $\pi$. Distributions also have a convolution product which is defined if one of the distributions is \emph{essentially compact} (which means that the convolution by it preserves compactly supported smooth functions), see \cite[Lemma 3.1.1]{HainesStable}. 

\subsubsection{} By a result of Bernstein--Deligne, the Bernstein center $\mathcal{Z}(H)$ can be identified with the algebra of essentially compact $H(F)$-invariant distributions, \cite[Section 3.2]{HainesStable}. In particular, $\mathcal{Z}(H)$ acts on both $\operatorname{D}(H)$ and $C^{\infty}_{c}(H(F))$ by convolution. This action on $\operatorname{D}(H)$ preserves the subspace $\operatorname{D}^\temp(H)$ spanned by Harish-Chandra characters of tempered irreducible representations. The most important property for us is that if $\pi$ is a smooth irreducible representation with associated distribution $\Theta_{\pi}$, then for each $z \in \mathcal{Z}(H)$ we have
    \begin{align*}
        z \ast \Theta_{\pi} = z_{\pi} \Theta_{\pi}
    \end{align*}
    for some scalar $z_{\pi}$. The association $z \mapsto z_{\pi}$ defines a character (ring homomorphism) $\chi_{\pi}:\mathcal{Z}(H) \to \mathbb{C}$, and $\mathcal{Z}(H)$ acts on $\pi$ through $\chi_\pi$.

\subsubsection{}\label{subsub: FSviaDistributions} In \cite[IX.0.3]{FarguesScholze}, Fargues--Scholze constructed a morphism
\[\Psi_H: \mathcal{Z}^\mathrm{spec}(H)\to \mathcal{Z}(H),\]
from the spectral Bernstein center to the usual Bernstein center (base-changed to $\qlbar$ via the chosen isomorphism $\iota:\mathbb{C}\simeq \qlbar$). The Fargues--Scholze $L$-parameter $\phi^\FS_\pi$ attached to an irreducible representation $\pi$ is constructed via the composition $\chi_\pi\circ \Psi_H$. Indeed, this composition corresponds to a point on the coarse moduli of the stack of $L$-parameters, and consequently a semisimple $L$-parameter by \cite[Proposition VIII.3.2]{FarguesScholze}.

One can alternatively phrase the construction of Fargues--Scholze $L$-parameters in terms of distributions. Namely, through the map $\Psi_H$, there is an action of the spectral Bernstein center on the space of distributions $D(H)$. Then for any smooth irreducible representation $\pi$, and any $z\in \mathcal{Z}^\mathrm{spec}(H)$, $\Psi_H(z)\ast \Theta_\pi =z_\pi \Theta_\pi$, for some scalar $z_\pi\in\mathbb{C}$. In this way, we get a character $z\mapsto z_\pi$ of $\mathcal{Z}^\mathrm{spec}(H)$, and hence a semisimple $L$-parameter $\phi_\pi^\FS$.

\subsubsection{}\label{subsub: HansenStable}
According to \cite[Theorem 1.1, 1.4]{HansenStable}, the image $\mathcal{Z}^\FS(H)$ of Fargues--Scholze's map $\Psi_H$ lies in the subalgebra of very stable\footnote{That is, a stable distribution whose convolution with unstable functions gives unstable functions, see \cite[Section 2.1, Lemma 2.2]{HansenStable}.} central distributions. Moreover, if $H$ is an extended pure inner form of a quasi-split group $H^\ast$, then the map $\Psi_H$ factors through a surjective map $\tau_H: \mathcal{Z}^\FS(H^\ast)\to \mathcal{Z}^\FS(H)$. This is compatible with the usual transfer of tempered stable distributions in the following sense: Suppose $\Theta^\ast$, resp.\ $\Theta$ are stable virtual characters of $H^\ast$, resp.\ $H$, supported on tempered irreducible representations. If for all matching functions $f^\ast\in \mathcal{C}^\infty_c(H^\ast(F))$, $f\in \mathcal{C}_c^\infty(H(F))$, there is an equality
\[\Theta^\ast(f^\ast)=\Theta(f),\]
then for any $z\in \mathcal{Z}^\mathrm{spec}(H)\simeq \mathcal{Z}^\mathrm{spec}(H^\ast)$,  one has
\[(\Psi_{H^\ast}(z)\ast\Theta^\ast)(f^\ast)=(\Psi_H(z)\ast\Theta) (f).\]

\smallskip
We can now give the proof of Proposition~\ref{prop: SemisimpleLLNonQuasiSplit}.
\begin{proof}[Proof of Proposition~\ref{prop: SemisimpleLLNonQuasiSplit}]
    Let $M\subseteq G$ be a Levi subgroup. Let $\pi\in\Pi_\temp(M)$ be an irreducible tempered smooth representation and $\phi:=\LL_M(\pi)$ be its attached $L$-parameter under $\LL_M$. 
    
    By Assumption \ref{ass:foura}, there exists a stable virtual character $\Theta^1_\phi$ supported on $\Pi_{\phi}(M^\ast)$. By Assumption \ref{ass:fourprime}, for any $z\in \mathcal{Z}^\mathrm{spec}(M^\ast)$, the actions of $\Psi_{M^\ast}(z)$ on all members of $\Pi_\phi(M^\ast)$ will be through the same scalar. Hence for any such $z$, we have
    \[(\Psi_{M^\ast}(z)\ast\Theta_\phi^\mathbf{1})=z_\phi\Theta_\phi^\mathbf{1},\]
    for some scalar $z_\phi\in \mathbb{C}$ that only depends on $\phi$, but not the individual members of the $L$-packet. The association $z \mapsto z_{\phi}$ defines a nonzero character of $\mathcal{Z}^\mathrm{spec}(M^\ast)$, which corresponds to the Fargues--Scholze parameter of any member $\rho \in \Pi_\phi(M^\ast)$. We first want to compare this character with the one corresponding to $\phi_\pi^\FS$.
    
    For this, we use Assumption \ref{ass:five} to transfer $\Theta_\phi^1$ to a virtual character \[\sum_{\pi\in\Pi_\phi(M)}c_\pi\Theta_\pi,\] which is supported on the candidate $L$-packet $\Pi_\phi(M)\coloneqq\LL_M^{-1}(\phi)$. This is a stable (and in fact tempered) virtual character according to \cite[Remark 3.2.2(ii)]{Varma}, which says that under transfer, stable distributions go to stable distributions.

    But by \cite[Theorem 1.4.ii]{HansenStable} (see the explanation in Section~\ref{subsub: HansenStable}), for any $z\in \mathcal{Z}^\mathrm{spec}(M)\simeq \mathcal{Z}^\mathrm{spec}(M^\ast)$, and all matching functions $f^\ast\in \mathcal{C}^\infty_c(M^\ast(F))$, $f\in \mathcal{C}_c^\infty(M(F))$, we have
    \begin{align*}
    z_\phi\left(\sum_{\pi\in\Pi_\phi(M)}c_\pi\Theta_\pi\right)(f) &= z_\phi \Theta_\phi^\mathbf{1}(f^\ast)
        = (\Psi_{M^\ast}(z)\ast\Theta_\phi^\mathbf{1})(f^\ast)\\
        &=\sum_{\pi\in\Pi_\phi(M)}c_\pi (\Psi_{M}(z)\ast\Theta_\pi)(f)
        =\sum_{\pi\in\Pi_\phi(M)}c_\pi \cdot z_\pi\Theta_\pi(f),
    \end{align*}
    where $z_\pi\in \mathbb{C}$ is the scalar, through which $\mathcal{Z}^\mathrm{spec}(M)$ acts on $\pi$. If we compare the coefficients on both sides, then we see, by linear independence of the Harish-Chandra characters, that $z_\pi=z_\phi$ for all $\pi\in \Pi_\phi(M)$. Now as explained in Section~\ref{subsub: FSviaDistributions}, this means the Fargues--Scholze parameters for all members of the $L$-packet $\Pi_\phi(M)$ are the same, and they equal $\phi^\FS_\rho$, for any $\rho\in\Pi_\phi(M^\ast)$.

    Now under our assumptions, we know the compatibility of $\{\LL_{M^\ast}\}_{M^\ast\subset G^\ast}$ with the Fargues--Scholze local Langlands correspondence by Proposition~\ref{prop: SemisimpleLLQuasiSplit}. Therefore, we have $\phi^\FS_\pi=\phi^\FS_\rho=\phi^\mathrm{ss}$ as desired.
\end{proof}

\begin{Rem}
    Assumption \ref{ass:five} is a form of an endoscopic character identity, which is expected as part of the refined local Langlands conjecture for non-quasi-split groups, see \cite[Conjecture F]{Kaletha}. Note that here we only require the property that the $L$-packet for the parameter $\phi$ is the support of the endoscopic transfer of $\Theta_\phi^1$, whereas the refined conjecture in \textit{loc. cit.} says that if one renormalizes the transfer factor $\Delta_{[\mathfrak{w},\mathbf{1},b]}$ suitably (which will only change it by a scalar and hence does not change the support of the transfer of $\Theta_\phi^1$), then one can precisely predict the coefficients $c_\pi$. These coefficients capture the inner structure of the $L$-packet.
\end{Rem}

\subsection{Compatibility for unitary and odd special orthogonal groups.} 

In this and the next subsection, we prove the compatibility between Fargues--Scholze local Langlands correspondence with previous constructions of local Langlands correspondences for extended pure inner forms of quasi-split unitary groups, odd special orthogonal groups, and $\operatorname{GSp}_{4}$. Because the previous correspondences are constructed using the work of Arthur \cite{Arthur}, \textbf{we will assume for the rest of Section \ref{Sec:Compatibility} that \cite[Hypothesis H1]{ShinWeakTransfer} holds}. Therefore, we may appeal to \cite{AGIKMS} and unconditionally use the results of Arthur \cite{Arthur} and Mok \cite{Mok}.

\subsubsection{} \label{Sec:ABC-llc-list} Let $F/\qp$ be a finite extension and $G/F$ be a reductive group as before. We assume $G$ is an extended pure inner form of a quasi-split group $G^\ast$, which is either the quasi-split unitary group attached to an Hermitian space with respect to a quadratic extension $F'/F$ (type $A$), the split special orthogonal group $\operatorname{SO}_{2n+1,F}$ for some integer $n$ (type $B$), or $\operatorname{GSp}_{4,F}$ (type $C_2$). We set $F'=F$ in cases $B$ and $C_{2}$. We let $\{\LL_M\}_{M\subseteq G}$ be a (candidate) local Langlands correspondence for $G$ in the list below.\footnote{Here we are using the fact that Levi subgroups of unitary groups (resp. special orthogonal groups of odd rank) are isomorphic to products of general linear groups with unitary groups (resp. special orthogonal groups of odd rank), and in type $C_{2}$ that the Levi subgroups are products of inner forms of general linear groups. This allows us to define a local Langlands correspondence for these Levi subgroups $M$. Proving properties \ref{ass:one}, \ref{ass:two}, \ref{ass:three} and \ref{ass:four} for all Levi subgroups $M$ follows then, by known properties of the local Langlands correspondence for $\operatorname{GL}_n$, see Remark \ref{Rem:GLn}, by proving them for all quasi-split unitary (resp. odd orthogonal) groups; in type $C_{2}$ it is enough to deal with $M=G$.}
\begin{itemize}
    \item Type $A$: 
    \begin{enumerate}
        \item For $G=G^\ast$, the construction by Mok \cite[Theorem 2.5.1, 3.2.1]{Mok},
        \item For general $G$, by Kaletha--Minguez--Shin--White \cite[Theorem 1.6.1]{KMSW}.\footnote{We note that \cite[Theorem 1.6.1]{KMSW} is stated for all representations (or $L$-parameters), but is only proved for tempered representations (or $L$-parameters) in \cite[Theorem 1.6.1]{KMSW}. This does not affect us, since we only use their results for tempered representations.}
    \end{enumerate}

    \item Type $B$: 
    \begin{enumerate}
        \item For $G=G^\ast$, by Arthur \cite[Theorem 1.5.1]{Arthur},
        \item For general $G$, by Ishimoto \cite[Theorem 1.2]{Ishimoto}.
    \end{enumerate}

    \item Type $C_2$:
    \begin{enumerate}
        \item For $G=G^{\ast}=\operatorname{GSp}_{4,F}$ by \cite[Main theorem]{GanTakeda}. 
        \item For general $G$, by \cite[Main theorem]{GanTantono}. 
    \end{enumerate}
\end{itemize}

\begin{Prop}\label{Prop: KnownLL}
        Let $F$, $G$, $G^\ast$ be as in \Cref{Sec:ABC-llc-list}. If $G^{\ast}$ is of type $A$ or $B$, then the local Langlands correspondences $\{\LL_{M^\ast}\}_{M^\ast\subseteq G^\ast}$ from the above list satisfy Assumptions \ref{ass:one}, \ref{ass:two} and \ref{ass:four}. If $G^{\ast}$ is of type $C_{2}$, then the local Langlands correspondences $\{\LL_{M^\ast}\}_{M^\ast\subseteq G^\ast}$ satisfy Assumptions \ref{ass:one}, \ref{ass:two}, \ref{ass:foura} and \ref{ass:fourprime}. For all non quasi-split $G$ as above, the local Langlands correspondences $\{\LL_{M}\}_{M\subseteq G}$ from the above list satisfy Assumption \ref{ass:five}.
\end{Prop}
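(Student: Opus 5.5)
The plan is to verify each assumption by citing the structural properties established in the constructions of Mok, Arthur, Kaletha--Minguez--Shin--White, Ishimoto, Gan--Takeda and Gan--Tantono, working type by type. By the footnote preceding the list, Levi subgroups in types $A$ and $B$ are products of general linear groups with smaller quasi-split unitary (resp.\ odd orthogonal) groups. Remark \ref{Rem:GLn} already handles the $\operatorname{GL}$ factors. It therefore suffices to treat $M^\ast=G^\ast$ for all quasi-split groups of the given type. In type $C_2$ the Levi subgroups are products of general linear groups, so only $M^\ast=G^\ast=\operatorname{GSp}_4$ needs attention.

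For Assumption \ref{ass:one} in types $A$ and $B$, I would use the compatibility of Arthur's and Mok's tempered packets with parabolic induction. Tempered representations are constituents of inductions of discrete series, and their parameters are obtained by composing with $\iota_{L_P}$; this is \cite[Proposition 2.4.3]{Arthur} and its analogue in \cite{Mok}. I would extend this to non-tempered representations via the Langlands classification of Lemma \ref{Lem:TemperedToFull}. Then I would pass to cuspidal supports: the semisimplified parameter of any subquotient of $i_P^M\rho$ depends only on the supercuspidal support. This rests on Mœglin's description of supercuspidal supports via the Jordan blocks of the parameter, which is where the semisimplification $(-)^{\mathrm{ss}}$ enters.

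For Assumption \ref{ass:two}, I would argue in two directions. The direction ``$\phi$ supercuspidal $\Rightarrow$ all members of the packet are supercuspidal'' follows from Mœglin's result that packets of discrete parameters without $\operatorname{SL}_2$ component, i.e.\ multiplicity-free sums of irreducible self-dual representations of the appropriate sign, consist of supercuspidals. The converse follows from \ref{ass:one} together with the fact that a nondiscrete tempered parameter gives a packet of constituents of properly induced representations. It also uses Mœglin's criterion that a discrete parameter with nontrivial $\operatorname{SL}_2$ part has a nonsupercuspidal member.

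For Assumption \ref{ass:four}, Remark \ref{Rem:VarmaHypo} applies directly: \cite[Proposition 5.1.2]{Varma} verifies Varma's hypothesis for quasi-split unitary and special orthogonal groups, giving both \ref{ass:foura} and \ref{ass:fourb}.

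In type $C_2$, I would take \ref{ass:one} and \ref{ass:two} from the explicit description of Gan--Takeda via theta correspondence and Sally--Tadić's classification. For \ref{ass:foura}, I would use the stability of $\sum_{\pi\in\Pi_\phi}\Theta_\pi$, which follows by comparison with Arthur's $\operatorname{SO}_5$ packets through $\operatorname{PGSp}_4\simeq\operatorname{SO}_5$ (after twisting by a character), or by Chan--Gan. Assumption \ref{ass:fourprime} then follows from Remark \ref{Rem:FSConstant} once one knows that the stable character is the unique one on the packet. If that fails, I would instead argue directly, again via $\operatorname{SO}_5$ and central characters.

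Assumption \ref{ass:five} is part of the endoscopic character identities in each construction: \cite[Theorem 1.6.1]{KMSW}, \cite[Theorem 1.2]{Ishimoto}, and \cite{GanTantono} together with Chan's stability results. Only the support statement is needed, with coefficients the nonzero values of the sign characters.

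I expect the main obstacle to be \ref{ass:one} for non-tempered subquotients and the ``if'' direction of \ref{ass:two}. Both require Mœglin's results on supercuspidal supports and packet structure, and in the ramified setting one must check that these results hold for all $p$-adic fields.
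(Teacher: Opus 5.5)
Your overall plan is the same as the paper's. You check each assumption separately by citation, after reducing from Levi subgroups to the groups $G^\ast$ themselves via the footnote and Remark \ref{Rem:GLn}. For types $A$ and $B$ your sources essentially coincide with the paper's: M{\oe}glin for \ref{ass:two}, \cite[Proposition 5.1.2]{Varma} for \ref{ass:four}, and KMSW and Ishimoto for \ref{ass:five}. For \ref{ass:one} the paper simply cites \cite[Proposition 2.4.3]{Peng} (cf.\ \cite[Proposition 7.9]{DHKMFamilies}). That reference already covers arbitrary, possibly non-tempered, subquotients. Your reduction via the Langlands classification and M{\oe}glin's description of cuspidal supports is essentially its content. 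These inputs hold over any $p$-adic field, so the ramified case is not the obstacle you anticipate. The real weak point is \ref{ass:fourprime} in type $C_2$.

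\textbf{The gap: \ref{ass:fourprime} in type $C_2$.} You invoke Remark \ref{Rem:FSConstant}, but that remark needs all of Assumption \ref{ass:four}, including the atomic stability \ref{ass:fourb}. You neither prove nor cite \ref{ass:fourb} for $\operatorname{GSp}_4$, and the proposition itself only asserts \ref{ass:foura} and \ref{ass:fourprime} in this case.

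\textbf{Why the $\operatorname{SO}_5$ fallback fails.} It has the same defect as your $\operatorname{SO}_5$ route to \ref{ass:foura}. Twisting by $\chi\circ\operatorname{sim}$ multiplies the central character by $\chi^2$, so only square central characters can be moved to $\operatorname{PGSp}_4\simeq\operatorname{SO}_5$. Even then you would need Gan--Takeda's packets to agree with Arthur's. For \ref{ass:foura} this does not matter, since \cite{ChanGan} suffices, as in the paper.

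\textbf{How the paper handles it.} The paper avoids \ref{ass:fourb}. For non-supercuspidal $\phi$ it takes \ref{ass:fourprime} directly from \cite[Proposition 8.1.(ii)]{HamannGSp4}. For supercuspidal $\phi$ it says the packet is a singleton. That is true only for stable $\phi$: an endoscopic supercuspidal parameter $\phi=\phi_1\oplus\phi_2$ has a two-element Gan--Takeda packet $\{\pi^+,\pi^-\}$, and both members are supercuspidal.

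\textbf{How to close the gap.} In the endoscopic case your route is the right one, and it can be completed. By \cite{ChanGan}, $\Theta_{\pi^+}-\Theta_{\pi^-}$ is the transfer of a nonzero character from a proper elliptic endoscopic group. Such a transfer is not stable, so every stable combination of $\Theta_{\pi^\pm}$ is a multiple of $\Theta_{\pi^+}+\Theta_{\pi^-}$. Hence \ref{ass:fourb} holds and \cite[Corollary 1.2]{HansenStable} applies. Alternatively, \ref{ass:fourprime} for supercuspidal $\phi$ follows from \ref{ass:three}. The globalization argument of Theorem \ref{Thm:UnitaryOddOrthogonalCompatibility} establishes \ref{ass:three} for every member of such a packet. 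So Propositions \ref{prop: SemisimpleLLQuasiSplit} and \ref{prop: SemisimpleLLNonQuasiSplit} really only need \ref{ass:fourprime} for non-supercuspidal $\phi$.
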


\begin{proof}
We first discuss type $A$ and $B$: For Assumption \ref{ass:one}, this is \cite[Proposition 2.4.3]{Peng}, cf. \cite[Proposition 7.9]{DHKMFamilies}. That Assumption \ref{ass:two} is satisfied is due to Moeglin \cite[Theorem 8.4.4]{MoeglinUnitary}, Moeglin \cite[Theorem 1.5.1]{MoeglinOrthogonal}, cf. \cite[Theorem 3.3]{XuOrthogonal}, see also \cite[Corollary 2.5.2]{Peng}. For Assumption \ref{ass:four}, this is \cite[Proposition 5.1.2]{Varma}, see \Cref{Rem:VarmaHypo}.
  When $G$ is non-quasi-split, Assumption \ref{ass:five} is satisfied by \cite[Theorem 1.6.1.4]{KMSW}, \cite[Theorem 3.11.3]{Ishimoto}. In type $C_{2}$, Assumption \ref{ass:one} can be verified directly from the explicit description of the $L$-parameters for non-supercuspidal representations, see \cite[Proposition 13.1]{GanTakedaExplicit}. Assumption \ref{ass:two} is proved in \cite[discussion after Remark 2.2]{HamannGSp4}. Assumption \ref{ass:foura} is \cite[Main theorem]{ChanGan} and Assumption \ref{ass:fourprime} follows from \cite[Proposition 8.1.(ii)]{HamannGSp4} for non supercuspidal $\phi$; for supercuspidal $\phi$ Assumption \ref{ass:fourprime} is vacuous since then the $L$-packet $\Pi_{\phi}(G^{\ast})$ has cardinality one. Assumption \ref{ass:five} is \cite[Proposition 11.1.(i)]{ChanGan}.
\end{proof}

\begin{Rem}
The reader might have noticed that our treatment of $\operatorname{GSp}_{4}$ alongside the odd orthogonal and unitary cases is slightly artificial. For instance, for $G$ of type $C_{2}$, the compatibility between $\operatorname{LL}_{G}$ and $\operatorname{LL}_G^{\mathrm{FS}}$ is known for those $\pi$ with $\operatorname{LL}_G(\pi)$ not supercuspidal, by \cite[Proposition 8.1.(ii)]{HamannGSp4}. Nevertheless, for the remaining $\pi$ our argument is the same as in type $A$ and $B$, and thus we have opted for the current presentation. 
\end{Rem}

Before we prove the desired compatibility, we need some preparations. The first is a lemma about globalization. This is essentially \cite[Lemma 6.2.1, 6.2.2]{Arthur}, and \cite[Lemma 7.2.1, 7.2.3]{Mok}, except that we need to globalize the group in a way that it gives rise to a compact Shimura variety. 

\begin{Lem}\label{Lem: Globalization}
    Let $F$, $G$ be as before. Let $\pi\in \Pi_\temp(G)$ be a supercuspidal representation. Then there exists a tuple $(\ml, \mlp, v,\mathsf{H},\mathsf{X}, \Pi)$, where
    \begin{itemize}
        \item $\mlp \neq \mathbb{Q}$ is a totally real field,
        \item $v$ is a $p$-adic place of $\mlp$,
        \item $\mathsf{H}$ is a connected reductive group over $\mlp$,
        \item $\mathsf{X}$ is a Shimura datum for $\g=\operatorname{Res}_{\mlp/\mathbb{Q}} \mathsf{H}$ with reflex field $\ml$, an extension of $\mlp$ of degree $\le 2$,
        \item $\Pi$ is a cuspidal automorphic representation of $\mathsf{H}(\mathbb{A}_{\mlp})$;
    \end{itemize}
    such that $\mlp_v\simeq F$, $\mathsf{H}_v\simeq G$, such that $\gx$ gives one of the Shimura data of Section \ref{sub:examples}, and $\Pi_v\simeq \pi$, satisfies the conditions \ref{ass:coh-reg}, \ref{ass:ssc}, \ref{ass:simgen} and \ref{ass:Stprime}. 
\end{Lem}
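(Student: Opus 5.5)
We construct the tuple in three stages: the fields, the group $\mathsf{H}$, and then the representation $\Pi$ via a trace-formula globalization.

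\textbf{Fields.} By Krasner's lemma, choose a totally real field $\mlp \neq \mathbb{Q}$ with a place $v$ such that $\mlp_v \simeq F$. We arrange that $\mlp$ has at least two real places and that there is an auxiliary rational prime $q \neq 2, p$ which is inert in $\mlp$. Let $v_{\mathrm{st}}$ be the unique place above $q$. We also fix an auxiliary finite place $w_{\mathrm{s}} \neq v, v_{\mathrm{st}}$ at which $\h$ will be quasi-split. In type $A$ we further choose a CM quadratic extension $\ml/\mlp$ whose completion at $v$ is $F'/F$, again by weak approximation on quadratic characters.

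\textbf{Group.} Globalize $G$ to $\mathsf{H}$ by prescribing local invariants:
\begin{itemize}
\item at $v$, the invariant of $G$;
\item at the chosen real place $\tau_0$, the signature of Section \ref{sub:examples};
\item at the other real places, the compact form;
\item at $v_{\mathrm{st}}$, a free invariant, used to balance the parity.
\end{itemize}
The existence of such a global hermitian or quadratic space, or in type $C_2$ a quaternion algebra, with these localizations follows from the Hasse principle and the reciprocity law for local invariants. Because $v_{\mathrm{st}}$ is free, we can always satisfy the product formula. The resulting $\mathsf{H}$ is quasi-split away from $v$ and $v_{\mathrm{st}}$, as \ref{ass:St} requires. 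The datum $\x$ is then the one from Section \ref{sub:examples}, so $\gx$ is of abelian type with reflex field $\ml$ of degree $\le 2$ over $\mlp$, and $\g^{\mathrm{ad}}$ is anisotropic.

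\textbf{Representation.} Next we produce $\Pi$ using the simple trace formula, which applies because $\mathsf{H}$ is anisotropic modulo center. This is the route of \cite[Lemma 6.2.2]{Arthur} and \cite[Lemma 7.2.3]{Mok}, which rest on the results of Shin and Clozel. Take the following test function:
\begin{itemize}
\item at $v$, a matrix coefficient of $\pi$, which is cuspidal;
\item at $v_{\mathrm{st}}$, a Steinberg pseudo-coefficient;
\item at $w_{\mathrm{s}}$, a matrix coefficient of a supercuspidal $\pi_{w_{\mathrm{s}}}$ whose $L$-parameter is supercuspidal with $r_\mu\circ\phi$ irreducible;
\item at infinity, Euler--Poincar\'e functions for a sufficiently regular weight $\xi$.
\end{itemize}
By the Plancherel density and limit multiplicity argument, this produces cuspidal $\Pi$ with $\Pi_v \simeq \pi$, with $\Pi_{v_{\mathrm{st}}}$ an unramified twist of Steinberg, with $\Pi_{w_{\mathrm{s}}}\simeq\pi_{w_{\mathrm{s}}}$, and with $\Pi_\infty$ cohomological for $\xi$. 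Choosing $\xi$ regular enough gives \ref{ass:coh-reg}. Condition \ref{ass:ssc} then holds by the choice at $w_{\mathrm{s}}$. A local parameter with $r_\mu\circ\phi$ irreducible exists: in type $A$ take an irreducible $n$-dimensional conjugate-self-dual representation, and in type $B$ an irreducible symplectic one, for instance induced from a suitable character. In type $C_2$, \ref{ass:ssc} and \ref{ass:simgen} are vacuous. Condition \ref{ass:simgen} follows because the global Arthur parameter has a component of size $N$ that is irreducible at $w_{\mathrm{s}}$, so it must be a single cuspidal representation of $\mathrm{GL}_N$ with trivial $\mathrm{SL}_2$ part.

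\textbf{Main obstacle.} The hardest step is the group globalization in the non-quasi-split case. The local invariants at $v$, $\tau_0$, the other real places, and the remaining finite places must be made compatible simultaneously, with $v_{\mathrm{st}}$ as the only extra non-quasi-split place. In type $B$ this involves Hasse invariants and discriminants of quadratic spaces. In type $A$ it involves the sign of hermitian determinants, using that $\ml/\mlp$ is inert or ramified at the relevant places. We would first check the parity constraints case by case, using $v_{\mathrm{st}}$, which is inert over $q$, as the adjustable place.
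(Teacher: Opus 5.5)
Your outline is sound but departs from the paper in two places.

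\textbf{The group.} The paper does not build hermitian or quadratic spaces by hand. In types $A$ and $B$ it takes the quasi-split $\mathsf{H}^*$ and reads off an inner form $\mathsf{H}$, with the prescribed behaviour at $v$ and at the real places, from Kottwitz's exact sequence
\[
H^1(\mlp,\mathsf{H}^{*,\mathrm{ad}})\to\bigoplus_w H^1(\mlp_w,\mathsf{H}^{*,\mathrm{ad}}_w)\to\pi_1(\mathsf{H}^{*,\mathrm{ad}})_{\gal_{\mlp}}\to 1 .
\]
So what you call the main obstacle becomes a single obstruction in a group that is $\mathbb{Z}/2$ or trivial. It is absorbed at one auxiliary finite place, uniformly in both types. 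For $C_2$ the paper cites \cite[Section 7]{KretShinSymplectic}.

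\textbf{The representation.} The paper does not run the Plancherel argument on the inner form. It globalizes on $\mathsf{H}^*$ via \cite[Lemmas 4.2.1, 4.3.1]{KMSW} and \cite[Lemmas 6.4, 6.7]{Ishimoto}, which rest on Shin's Plancherel theorem. Condition \ref{ass:ssc} comes from Oi's supercuspidals at an unramified auxiliary place $w\nmid 2$; this is the actual content behind your ``induced from a suitable character''. It then transfers $\Pi^*$ to $\mathsf{H}$ by \cite[Theorem 4.3.5]{Peng}.

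\textbf{The trade-off.} The cost of your direct route is \ref{ass:simgen}. On a non-quasi-split $\mathsf{H}$, you must show that $\Pi_{w_{\mathrm{s}}}$ lies in the local A-packet of the formal parameter $\Psi$ of $\Pi$, which is a priori possibly non-generic. You then need M{\oe}glin's result that infinitesimal characters are constant on A-packets. Only then does irreducibility of $r_\mu\circ\phi_{\pi_{w_{\mathrm{s}}}}$ force $\Psi$ to be a single cuspidal $\mathrm{GL}_N$-representation with trivial $\mathrm{SL}_2$. The inner-form classifications are proved only for generic parameters, so this needs its own stabilized trace formula argument. The paper sidesteps this by doing the analysis on $\mathsf{H}^*$, where Arthur and Mok apply in full. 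In return, your route gives $\Pi_v\simeq\pi$ directly even for non-quasi-split $G$; on the paper's route this must come out of the transfer step. Only $G=G^*$ is actually needed, by the reduction in Theorem \ref{Thm:UnitaryOddOrthogonalCompatibility}.

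\textbf{Two small corrections.} First, an inert prime $q$ is not automatic, so impose it as a local condition at $q$ in your Krasner step. Second, in type $C_2$ condition \ref{ass:St} also requires $\mathsf{H}$ to be quasi-split at the supercuspidal place $v$, so there $G$ must be split.
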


\begin{proof}
We first focus on types $A$ and $B$. By \cite[Lemma 6.2.1]{Arthur}, \cite[Lemma 7.2.1]{Mok}, we can find a totally real number field $\mlp$ (and a CM extension $\ml/\mlp$ in the unitary case) with $\mlp_v\simeq F$ (and $\ml_{v'} \simeq F'$ for a unique place $v'$ above $v$) for a $p$-adic place $v$ of $\mlp$. Let $\mathsf{H}^{\ast}$ be the split odd special orthogonal group $\SO_{2n+1}$ or a quasi-split unitary group $\mathrm{U}_{\ml/\mlp}(n)$ attached to $\ml$ over $\mlp$. Consider the short exact sequence 
\[
    H^1(\mlp,\mathsf{H}^{\ast,\mathrm{ad}}) \to \bigoplus_{w} H^1(\mlp_{w},\mathsf{H}^{\ast,\mathrm{ad}}_w) \to \pi_{1}(\mathsf{H}^{\ast\mathrm{ad}})_{\gal_{\mlp}} \to 1,
\]
which comes from \cite[Corollary 2.5, Proposition 2.6]{KottwitzSTFelliptic}. It follows from this that there is an inner form $\mathsf{H}$ of $\mathsf{H}^{\ast}$ such that:
\begin{itemize}
    \item There is exactly one infinite place $\tau_0$ of $\mlp$ such that $\mathsf{H} \otimes_{\mlp,{\tau_0}}\mathbb{R}$ has non-compact $\mathbb{R}$-points. In the case of type B, $\mathsf{H}$ is isomorphic to $\operatorname{SO}(2,2n-1)$ at $\tau_0$, and in the case of type A, $\mathsf{H}$ is isomorphic to $\operatorname{U}(1,n-1)$ at $\tau_0$; 
    
    \item At the distinguished $p$-adic place $v$ the group $\mathsf{H} \otimes_{\mlp} \mlp_v$ is isomorphic to $G$.
\end{itemize}
Let $\mathsf{G}=\operatorname{Res}_{\mlp/\mathbb{Q}}\h$, which is clearly $\mathbb{Q}$-anisotropic modulo center. It is equipped with a Shimura datum $\mathsf{X}$ that is nontrivial precisely at the factor of $\mathsf{H}_{\R}$ corresponding to $\tau_0$, such that $\gx$ is a Shimura datum. By construction, it corresponds to the setup in type $A$ or $B$ of Section \ref{sub:examples}. \smallskip 

To globalize the representation, we apply \cite[Theorem 4.8]{ShinPlancherel}. More precisely, we apply \cite[Lemma 4.2.1, Lemma 4.3.1]{KMSW} in type $A$ and \cite[Lemma 6.4, Lemma 6.7]{Ishimoto} in type $B$ to construct a cuspidal automorphic representation $\Pi^{\ast}$ of $\mathsf{H}^{\ast}(\mathbb{A}_{\mlp})$ satisfying \ref{ass:coh-reg}, \ref{ass:ssc}, \ref{ass:Stprime}, \ref{ass:simgen} and $\Pi_{v} \simeq \pi$. Here we use the fact that for an auxiliary place $w \nmid 2$ of $\mlp$ where $\h_{w}$ is quasi-split and unramified, the group $\h_{w}$ admits supercuspidal representations satisfying \ref{ass:ssc}, see \cite[Theorem 1.1]{OIUnitary} in type $A$ and \cite[Theorem 1.1]{OiSimple} in type $B$. We are moreover using the fact that simple generic as in \cite[Lemma 4.3.1]{KMSW} and \cite[Lemma 6.7]{Ishimoto} is the same as \ref{ass:simgen}. We then transfer $\Pi^{\ast}$ to a cuspidal automorphic representation $\Pi$ of $\mathsf{H}(\mathbb{A}_{\mlp})$ satisfying \ref{ass:coh-reg}, \ref{ass:ssc}, \ref{ass:Stprime}, \ref{ass:simgen}, using \cite[Theorem 4.3.5]{Peng}. \smallskip 

In type $C_2$, we can let $\h$ be as in type $C$ of Section \ref{sub:examples} with $g=2$, assuming that $\h$ is quasi-split at all finite places except possibly for one place called $v_{\mathrm{st}}$ (see \cite[Section 7]{KretShinSymplectic} for the existence of such $\h$). The globalization can be done using a routine application of \cite[Theorem 4.8]{ShinPlancherel}. 
\end{proof}

\subsubsection{} Let $\operatorname{Std}:\widehat{G} \to \operatorname{GL}(V)$ be the standard representation of the dual group, which we take to be $\mathrm{GSpin}_5 \simeq  \mathrm{GSp}_4 \to \operatorname{GL}_{4}$ in type $C_{2}$. The following lemma is well known. 
\begin{Lem} \label{Lem:Multiplicity}
Let $\phi$ be a discrete and semisimple $L$-parameter. Then $\operatorname{Std} \circ \phi \vert_{W_{F'}}$ is multiplicity-free as a representation of $W_{F'}$.
\end{Lem}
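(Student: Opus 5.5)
The idea is to decompose $\operatorname{Std}\circ\phi$ into irreducibles and use discreteness to rule out repeated isotypic components. Throughout, $\phi$ is semisimple with $\operatorname{SL}_2$ trivial, and since $W_F$ acts through a finite quotient, $\rho:=\operatorname{Std}\circ\phi$ is a semisimple representation of $W_F$ (of $\widehat{G}\rtimes W_F$ composed with $\phi$; in type $A$ one uses the standard representation of $\widehat{G}\rtimes W_{F'}=\operatorname{GL}_n\times W_{F'}$ after restriction).

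\textbf{Types $B$ and $C_2$ ($F'=F$).} Here $\rho$ is symplectic, valued in $\operatorname{Sp}_{2n}$ or in $\operatorname{GSp}_4$ with some similitude character. Write $\rho=\bigoplus_i m_i\rho_i$ with the $\rho_i$ irreducible and pairwise non-isomorphic. By the standard description of centralizers of self-dual parameters (e.g.\ \cite[Section 4]{GGP}), a summand $\rho_i$ that is not self-dual (up to the similitude twist) comes paired with its dual and contributes a $\operatorname{GL}_{m_i}$ to $S_\phi$. A self-dual summand of the same sign contributes an $\operatorname{Sp}_{m_i}$, and one of the opposite sign contributes an $\operatorname{O}_{m_i}$. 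Discreteness forces $S_\phi/Z(\widehat{G})^{\gal_F}$ to be finite, so no $\operatorname{GL}$ factors occur, no symplectic factors occur (since $\operatorname{Sp}_{m}$ is never finite), and every orthogonal factor has $m_i=1$. Thus $\rho$ is multiplicity-free, and the only adjustment needed for $\operatorname{GSp}_4$ is that $Z(\widehat{G})=\mathbb{G}_m$ absorbs the scalars.

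\textbf{Type $A$.} Via Gan--Gross--Prasad, $\phi$ corresponds to a conjugate-self-dual representation $\rho$ of $W_{F'}$ of sign $(-1)^{n-1}$ \cite[Theorem 8.1]{GGP}. The same centralizer analysis applies, now with conjugate-duality: a non-conjugate-self-dual pair gives a $\operatorname{GL}_{m_i}$, and a conjugate-self-dual summand of the wrong parity gives an $\operatorname{Sp}_{m_i}$. Discreteness again forces all $m_i=1$. For the split case $F'=F\times F$, $\widehat{G}=\operatorname{GL}_n$, discreteness means $\rho$ is irreducible, since the centralizer is $\prod_i\operatorname{GL}_{m_i}$ modulo the center.

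The main obstacle is getting the centralizer description exactly right in the twisted cases, in particular for $\operatorname{GSp}_4$ and for unitary groups with $F'/F$ a field. I would cite \cite[Section 4]{GGP} for both the centralizer formula and the identification of $L$-parameters with (conjugate-)self-dual representations rather than reproving them.
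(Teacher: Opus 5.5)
This is essentially the paper's argument: decompose $\operatorname{Std}\circ\phi\vert_{W_{F'}}$ as in \cite[Section 4]{GGP}, read off $S_\phi=\prod\operatorname{O}(m_i)\times\prod\operatorname{Sp}(2n_i)\times\prod\operatorname{GL}(p_i)$, and let finiteness force $p_i=n_i=0$ and $m_i=1$. For $C_2$ the paper simply cites \cite[Lemma 6.2.(ii)]{GanTakeda}; your reduction to the similitude-one part $S_\phi^1$ of the centralizer also works, since $S_\phi=Z(\widehat{G})\cdot S_\phi^1$ with $Z(\widehat{G})\cap S_\phi^1=\{\pm1\}$.

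One correction: in type $B$ you have the roles reversed. A summand of the same sign as $\operatorname{Std}\circ\phi$ contributes $\operatorname{O}(m_i)$, and one of the opposite sign contributes $\operatorname{Sp}(m_i)$ with $m_i$ even, as you state correctly in type $A$. This does not affect your conclusion, because all you actually use is that any multiplicity $\geq 2$ produces a positive-dimensional factor.
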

\begin{proof}
In type $C_{2}$, this is \cite[Lemma 6.2.(ii)]{GanTakeda}. In type $A$ and $B$, this is a direct consequence of the descriptions of $S_{\phi}$ in \cite[Section 4]{GGP} as we will now explain: Write $M=\operatorname{Std} \circ \phi \vert_{W_{F'}}$, which is conjugate self dual in type $A$, and self dual in type $B$. Following the notation of \cite[Section 4]{GGP}, we write
\begin{align*}
    M = \bigoplus M_i^{\oplus m_i} \oplus  \bigoplus_I N_i^{\oplus 2 n_i} \oplus \bigoplus (P_i \oplus (P_i^{\sigma})^{\vee})^{\oplus p_i},
\end{align*}
where each $M_i,N_i,P_i$ is irreducible and the $M_i,N_i,P_i$ are pairwise distinct. The group $S_{\phi}$ is then described in \cite[Section 4]{GGP} (there it is denoted $C$) as
\begin{align*}
    \prod_{i} \operatorname{O}(m_i) \times \prod_i \operatorname{Sp}(2n_i) \times \prod_i \operatorname{GL}(p_i).
\end{align*}
Since $\phi$ is discrete, it follows that $S_{\phi}$ is finite and thus that all the $p_i$ and $n_i$ are zero, and that all the $m_i$ are equal to one; the lemma follows. 
\end{proof}

\begin{Rem}
If $\pi$ is supercuspidal, then $\phi=\operatorname{LL}_G(\pi)$ is a discrete $L$-parameter. However, if $\phi$ is not supercuspidal, then it is possible that $\phi^{\mathrm{ss}}$ is not discrete and thus that $\operatorname{Std} \circ \restr{\phi}{W_{F'}}$ is not multiplicity-free. This shows that we cannot directly prove Assumption \ref{ass:three} for such $\pi$ using a globalization argument and Corollary \ref{Cor:StrongCompatibilityII}, as happens for supercuspidal $\phi$ in the proof of Theorem \ref{Thm:UnitaryOddOrthogonalCompatibility} below.
\end{Rem}

\begin{Thm} \label{Thm:UnitaryOddOrthogonalCompatibility}
    Let $F$, $G$, $G^\ast$ be as in \Cref{Sec:ABC-llc-list}. The local Langlands correspondences $\{\LL_M\}_{M\subseteq G}$ from the above list are compatible with the Fargues--Scholze local Langlands correspondence.
\end{Thm}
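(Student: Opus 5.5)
The plan is to reduce everything to Proposition~\ref{prop: SemisimpleLLQuasiSplit} and Proposition~\ref{prop: SemisimpleLLNonQuasiSplit}. By Proposition~\ref{Prop: KnownLL}, the correspondences $\{\LL_{M^\ast}\}_{M^\ast\subseteq G^\ast}$ satisfy Assumptions \ref{ass:one}, \ref{ass:two}, \ref{ass:foura}, and \ref{ass:fourprime}. In types $A,B$ this uses Remark~\ref{Rem:FSConstant} to pass from \ref{ass:four} to \ref{ass:fourprime}. For non-quasi-split $G$, Proposition~\ref{Prop: KnownLL} also gives Assumption \ref{ass:five}.

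So the only missing input is Assumption \ref{ass:three} for the quasi-split groups $G^\ast$ and all their Levi subgroups $M^\ast$. Each Levi is a product of general linear groups (or their restrictions of scalars) with a smaller group of the same type. Assumption \ref{ass:three} for $\operatorname{GL}_n$ is known (Remark~\ref{Rem:GLn}). Fargues--Scholze parameters are compatible with products, so it suffices to prove \ref{ass:three} for each quasi-split $G^\ast$ of type $A$, $B$, or $C_2$ itself. Once that is done, Proposition~\ref{prop: SemisimpleLLQuasiSplit} handles $G=G^\ast$, and Proposition~\ref{prop: SemisimpleLLNonQuasiSplit} handles its extended pure inner forms.

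To prove \ref{ass:three}, let $\phi$ be supercuspidal and $\pi\in\Pi_\phi(G^\ast)$. By Assumption \ref{ass:two}, $\pi$ is supercuspidal. Apply Lemma~\ref{Lem: Globalization} to obtain $(\ml,\mlp,v,\h,\x,\Pi)$ with $\Pi_v\simeq\pi$ satisfying \ref{ass:coh-reg}, \ref{ass:ssc}, \ref{ass:simgen}, and \ref{ass:Stprime}. We need $v\ne v_{\mathrm{st}}$; this is automatic because $\Pi_{v_{\mathrm{st}}}$ is a twist of Steinberg while $\Pi_v$ is supercuspidal. Choose $\ell\ne p$ (and $\ell\ne 2$ if needed), coprime to $|\pi_0(Z(G))|$.

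The key hypothesis of Corollary~\ref{Cor:StrongCompatibilityII} is that $r_\mu\circ\phi|_{W_{\ml_{v'}}}$ is multiplicity-free. In these cases $r_\mu$ is the standard representation $\operatorname{Std}$, and $\ml_{v'}=F'$. Since $\phi$ is discrete and semisimple, Lemma~\ref{Lem:Multiplicity} gives exactly this. Corollary~\ref{Cor:StrongCompatibilityII} then yields $\phi=\phi^{\mathrm{ss}}\sim^0\phi^{\FS}_\pi$, where $\sim^0$ is $\widehat{G}$-conjugacy in types $A,B,C$. Hence $\pi\in\Pi^{\FS}_\phi(G^\ast)$, which is \ref{ass:three}.

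I expect the main obstacle to be the bookkeeping around the globalization rather than any new idea. In particular, Lemma~\ref{Lem: Globalization} must realize $\mlp_v\simeq F$, and in type $A$ must also give $\ml_{v'}\simeq F'$ with $v$ nonsplit, so that $r_\mu$ restricted to $W_{F'}$ matches $\operatorname{Std}$. The normalizations (the $\sqrt p$ and Tate twists) in Corollary~\ref{Cor:StrongCompatibility} also have to line up, so that the comparison lands on the classical parameter on the nose.
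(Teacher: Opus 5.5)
Your proposal is correct and follows essentially the same route as the paper. You reduce via Propositions~\ref{prop: SemisimpleLLQuasiSplit}, \ref{prop: SemisimpleLLNonQuasiSplit} and~\ref{Prop: KnownLL} to Assumption~\ref{ass:three} for quasi-split $G^\ast$, verify it for supercuspidal $\phi$ by globalizing with Lemma~\ref{Lem: Globalization} and applying Corollary~\ref{Cor:StrongCompatibilityII} together with Lemma~\ref{Lem:Multiplicity}, and then treat proper Levi subgroups as products of smaller groups from the same list with (restrictions of scalars of) general linear groups. You also make explicit two points the paper leaves implicit: $v\neq v_{\mathrm{st}}$ is automatic, and you may choose a convenient $\ell$, which relies on the independence of $\phi^{\FS}$ from $\ell$ and $\iota$.
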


\begin{proof}
By Propositions~\ref{prop: SemisimpleLLQuasiSplit}, \ref{prop: SemisimpleLLNonQuasiSplit} and \ref{Prop: KnownLL}, it suffices to check Assumption \ref{ass:three}, namely the compatibility between the constructions on supercuspidal $L$-packets for the quasi-split inner form $G^\ast$ (which is attached to some quadratic extension $E/F$ in the unitary case). Hence we may and do assume $G=G^\ast$. Let $\phi\in \Phi_\temp(G)$ be a supercuspidal $L$-parameter and let $\pi\in \Pi_\phi(G)$ be a member of its $L$-packet. By Assumption \ref{ass:two}, it is a supercuspidal representation. 

We globalize the triple $({F}, {G}, \pi)$ using \Cref{Lem: Globalization}. In particular, we get a tuple $(\ml,\mlp,v,\mathsf{G}, \mathsf{X}, \Pi)$ as in the conclusion of that lemma ($\ml=\mlp$ when $G$ is a special orthogonal group). The compatibility is then \Cref{Cor:StrongCompatibilityII}, using the multiplicity-freeness of \Cref{Lem:Multiplicity}. 

Now note that any proper Levi subgroup $M\subset G$ is a product of groups from the same list provided at the beginning of this subsection, or inner forms of general linear groups, which are of strictly smaller ranks. Hence we may conclude the compatibility of $\{\LL_M\}_{M\subseteq G}$ with the Fargues--Scholze local Langlands correspondence as desired.
\end{proof}

\subsection{Even special orthogonal groups} Let $F$ be a $p$-adic local field as before. Let $V'$ be a $2n$-dimensional quadratic space over $F$ with quasi-split orthogonal group $G^{\ast}$, and let $G$ be a pure inner form of $G^{\ast}$. Note that $G \simeq \operatorname{SO}(V)$ for some quadratic space $V$ over $F$, and that the action of $\gal_{F}$ on the absolute root datum of $G$ is of order $1$ or $2$.\footnote{Pure inner forms of $\operatorname{SO}(V')$ are isomorphic to $\operatorname{SO}(V)$ with $V$ of the same discriminant and dimension as $V'$, see \cite[Proposition 2.8 on page 73]{PR}. The fact that an order $3$ automorphism cannot occur follows from \cite[Proposition 2.20 on page 90]{PR}.} Write $G=G_b$ as an extended pure inner form of $G^{\ast}$. Note that $\widehat{G}=\operatorname{SO}_{2n}$, so that $\operatorname{O}_{2n}$ acts by conjugation on $\widehat{G}$. We define
\begin{align*}
    \widetilde{\Phi}_{\operatorname{temp}}(G) =  \Phi_{\operatorname{temp}}(G) / \operatorname{O}_{2n}(\mathbb{C}),
\end{align*}
and similarly define $\widetilde{\Phi}^{\mathrm{ss}}(G)$ and consider $(-)^{\mathrm{ss}}:\widetilde{\Phi}_{\operatorname{temp}}(G) \to \widetilde{\Phi}^{\mathrm{ss}}(G)$. We consider the map
\begin{align*}
    \widetilde{\operatorname{LL}}_{G}: \Pi_{\operatorname{temp}}(G) \to \widetilde{\Phi}_{\operatorname{temp}}(G)
\end{align*}
constructed by Arthur \cite[Theorem 1.5.1]{Arthur} in the quasi-split case and by Chen--Zou \cite[Theorem A.2]{ChenZou} in the non quasi-split case. We also consider $\widetilde{\operatorname{LL}}_{G}^{\operatorname{FS}}$ constructed from $\operatorname{LL}_{G}^{\operatorname{FS}}$ by composing with $\Phi^{\mathrm{ss}}(G) \to \widetilde{\Phi}^{\mathrm{ss}}(G)$. This way we get $L$-packets $\widetilde{\Pi}_{\phi}(G)$ and $\widetilde{\Pi}^{\operatorname{FS}}_{\phi}(G)$. 

\begin{Thm} \label{Thm:EvenOrthogonalCompatibility}
We have the equality $(-)^{\mathrm{ss}} \circ \widetilde{\operatorname{LL}}_G = \widetilde{\operatorname{LL}}_{G}^{\operatorname{FS}}$.
\end{Thm}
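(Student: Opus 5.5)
We follow the strategy of Theorem \ref{Thm:UnitaryOddOrthogonalCompatibility}, working throughout modulo $\operatorname{O}_{2n}(\mathbb{C})$-conjugacy. First we would set up versions of Propositions \ref{prop: SemisimpleLLQuasiSplit} and \ref{prop: SemisimpleLLNonQuasiSplit} with every $L$-parameter replaced by its $\operatorname{O}_{2n}$-class. The Levi subgroups of $G$ (and of $G^\ast$) are products $M=\prod_i \operatorname{Res}\operatorname{GL}_{n_i}\times \operatorname{SO}(V_0)$ with $V_0$ even-dimensional and of the same discriminant. For these we take the correspondence $\operatorname{LL}$ on the $\operatorname{GL}$ factors (Remark \ref{Rem:GLn}) and $\widetilde{\operatorname{LL}}$ on the orthogonal factor. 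The parabolic induction compatibility \cite[Corollary IX.7.3]{FarguesScholze} still makes sense on classes, because the $L$-embeddings ${}^LM\to {}^LG$ are $\operatorname{O}_{2n}$-equivariant up to $\widehat{G}$-conjugacy.

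Next we verify the assumptions for Arthur's correspondence on $G^\ast$ and Chen--Zou's on $G$, in their $\widetilde{\ }$-versions. Assumption \ref{ass:one} is \cite[Proposition 2.4.3]{Peng}. Assumption \ref{ass:two} comes from Moeglin's results on even orthogonal groups (cf. \cite{XuOrthogonal}). For Assumption \ref{ass:four} we use \cite[Proposition 5.1.2]{Varma}: there $\widetilde{\Pi}_\phi$ carries a stable character that is atomically stable for $\operatorname{O}_{2n}$-classes. Remark \ref{Rem:FSConstant} via \cite{HansenStable} then gives constancy of $\widetilde{\operatorname{LL}}^{\mathrm{FS}}$ on packets. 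Assumption \ref{ass:five} comes from the endoscopic character relations of \cite[Theorem A.2]{ChenZou}. With these in hand, the arguments of Section \ref{sub:CharacterizationLL} go through verbatim and reduce the theorem to Assumption \ref{ass:three} for quasi-split $G$: for $\phi$ supercuspidal and $\pi\in\widetilde{\Pi}_\phi(G)$, the parameters $\phi$ and $\operatorname{LL}^{\mathrm{FS}}_G(\pi)$ must be $\operatorname{O}_{2n}$-conjugate. Lemma \ref{Lem:SCTestedbySS} is unaffected by passing to classes.

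For the supercuspidal case we globalize using the analogue of Lemma \ref{Lem: Globalization} in type $D^{\mathbb{R}}$. Choose $\mlp\neq\mathbb{Q}$ with $\mlp_v\simeq F$, and choose a quadratic space over $\mlp$ of signature $(2,2n-2)$ at $\tau_0$, definite at the other infinite places, and isomorphic to $V'$ at $v$; this is possible by Hasse--Minkowski and the local–global sequence for $H^1$. Then globalize $\pi$ to $\Pi$ satisfying \ref{ass:coh-reg}, \ref{ass:ssc}, \ref{ass:simgen} and \ref{ass:Stprime}, using \cite[Theorem 4.8]{ShinPlancherel} together with \cite{ChenZouMultiplicity} and \cite[Theorem 4.3.5]{Peng}. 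Choose $\ell\neq 2$. By Section \ref{subsub:NonSplitEvenOrthogonal}, $r_\mu$ is the standard representation of $\operatorname{O}_{2n}$. Lemma \ref{Lem:Multiplicity}, extended to even orthogonal groups via the centralizer description in \cite[Section 4]{GGP}, shows that $\operatorname{Std}\circ\phi$ restricted to $W_{F}$ is multiplicity free; note $\ml=\mlp$ here, so $F'=F$. Corollary \ref{Cor:StrongCompatibilityII} then gives $\phi\sim^0\phi^{\mathrm{FS}}_\pi$, which is exactly the required statement, since \cite[Theorem 8.1]{GGP} recovers orthogonal parameters only up to $\operatorname{O}_{2n}$-conjugacy.

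We expect the main obstacle to be the non-quasi-split and quotient bookkeeping. Hansen's transfer result \cite[Theorem 1.4]{HansenStable} must be combined with the fact that the stable characters attached to $\widetilde{\Pi}_\phi(G^\ast)$ and the endoscopic relations of \cite{ChenZou} are only defined for $\operatorname{O}_{2n}$-orbits. We would handle this by summing the characters over the full $\widetilde{\ }$-packet, which is still stable, and then applying the linear-independence argument of Proposition \ref{prop: SemisimpleLLNonQuasiSplit} unchanged.
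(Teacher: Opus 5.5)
Your outline is the paper's. Analogues of Assumptions \ref{ass:one} and \ref{ass:two}, together with constancy of $\widetilde{\operatorname{LL}}^{\mathrm{FS}}_G$ on $\widetilde{\Pi}_\phi(G)$, reduce the quasi-split case to supercuspidal $\phi$. That case is handled by globalizing to type $D^{\mathbb{R}}$ and applying Corollary~\ref{Cor:StrongCompatibilityII}, and the non-quasi-split case then runs through \cite[Theorem~1.4]{HansenStable} and an endoscopic character identity.

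The gap is in the constancy step, which is exactly where the even orthogonal case differs. Let $G'=\operatorname{O}(V)\supset G$. Arthur's theory (see \cite[Proposition~5.1.2, Remark~2.5.10]{Varma}) provides a $G'(F)$-invariant stable character $\Theta^1_\phi=\sum_{\pi\in\widetilde{\Pi}_\phi(G)}a_\pi\Theta_\pi$. It is atomic only among stable characters that are \emph{also} $G'(F)$-invariant; this may be what you meant by ``atomically stable for $\operatorname{O}_{2n}$-classes''. That is not the hypothesis of \cite[Corollary~1.2]{HansenStable}, and plain atomic stability fails in general. If it held, Hansen's corollary would make the actual $\widehat{G}$-class $\operatorname{LL}^{\mathrm{FS}}_G$ constant on $\widetilde{\Pi}_\phi(G)$. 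But this set is $G'(F)$-stable, and by functoriality of the Fargues--Scholze construction, elements of $G'(F)\smallsetminus G(F)$ twist parameters by the outer automorphism of $\operatorname{SO}_{2n}$. So constancy would contradict the very theorem you are proving whenever $\phi$ is not $\operatorname{SO}_{2n}$-conjugate to its outer twist, e.g.\ when $\operatorname{Std}\circ\phi$ is irreducible. Hence Remark~\ref{Rem:FSConstant} does not apply. For a general $z\in\mathcal{Z}^{\mathrm{spec}}(G)$, the convolution $\Psi_G(z)\ast\Theta^1_\phi$ is stable and supported on $\widetilde{\Pi}_\phi(G)$, but need not be a multiple of $\Theta^1_\phi$.

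The missing idea is to use only the $\operatorname{O}_{2n}$-invariant subring $\widetilde{\mathcal{Z}}^{\mathrm{spec}}(G)$ of functions on $Z^1(W_F,\operatorname{SO}_{2n})\sslash\operatorname{O}_{2n}$. Its characters are exactly the $\operatorname{O}_{2n}$-classes of semisimple parameters (Section~\ref{subsub:GeometricOrthogonalParameters}). Since $\Psi_G$ is functorial for automorphisms of $G$, it is $G'(F)$-equivariant, so with \cite[Theorem~1.1]{HansenStable} it maps $\widetilde{\mathcal{Z}}^{\mathrm{spec}}(G)$ into $\mathcal{Z}^{\mathrm{vst}}(G)^{G'(F)}$. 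For such $z$, $\Psi_G(z)\ast\Theta^1_\phi$ is stable and $G'(F)$-invariant, hence a scalar multiple of $\Theta^1_\phi$ by the restricted atomicity. This gives constancy of $\widetilde{\operatorname{LL}}^{\mathrm{FS}}_G$ (not of $\operatorname{LL}^{\mathrm{FS}}_G$) on $\widetilde{\Pi}_\phi(G)$.

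The same restriction is what makes your non-quasi-split step work. In the proof of Proposition~\ref{prop: SemisimpleLLNonQuasiSplit}, the identity $\Psi_{G^\ast}(z)\ast\Theta^1_\phi=z_\phi\Theta^1_\phi$ holds only for $z\in\widetilde{\mathcal{Z}}^{\mathrm{spec}}$, so the argument cannot be run ``unchanged''. Run for those $z$, the comparison of coefficients yields equality of $\operatorname{O}_{2n}$-classes, which is what the theorem asks. Summing over the full packet $\widetilde{\Pi}_\phi$ does not address this, since $\Theta^1_\phi$ already is that sum.

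Two smaller points. The paper takes the character identity $\Theta^1_\phi(f^\ast)=\sum_{\pi\in\widetilde{\Pi}_\phi(G)}c_\pi\Theta_\pi(f)$ from \cite[Theorem~A]{PengECR}, not from the construction in \cite{ChenZou}. For condition \ref{ass:ssc}, your globalization needs an auxiliary place $w\nmid 2$, split in the discriminant field, carrying supercuspidals with irreducible orthogonal standard parameter. The paper gets these from \cite[Theorem~1.1]{EvenOrthogonalSSC} combined with \cite[Lemma~6.14]{ChenZouMultiplicity}.
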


\subsubsection{} \label{subsub:GeometricOrthogonalParameters} Before we prove the theorem, it is useful to reinterpret the set $\widetilde{\Phi}_{\operatorname{temp}}^{\mathrm{ss}}(G)$ geometrically. Recall that $Z^1(W_{F}, \operatorname{SO}_{2n})$ is the moduli space of $\operatorname{SO}_{2n}$-valued $L$-parameters of $W_{F}$, see \cite[Theorem VIII.1.3]{FarguesScholze}. It is equipped with a natural conjugation action by $\operatorname{O}_{2n}$, and we consider the diagram
\begin{equation*}
    \begin{tikzcd}
        \left[Z^1(W_{F}, \operatorname{SO}_{2n})/\operatorname{SO}_{2n} \right] \arrow{r} \arrow{d} & Z^1(W_{F}, \operatorname{SO}_{2n}) \sslash \operatorname{SO}_{2n} \arrow{d} \\
        \left[Z^1(W_{F}, \operatorname{SO}_{2n})/\operatorname{O}_{2n} \right]
        \arrow{r} & Z^1(W_{F}, \operatorname{SO}_{2n}) \sslash \operatorname{O}_{2n}.
    \end{tikzcd}
\end{equation*}
It follows from \cite[Proposition VIII.3.2]{FarguesScholze} that $\operatorname{SO}_{2n}$-conjugacy classes of semisimple $L$-parameters $W_{F} \to \operatorname{SO}_{2n}(\qlbar)$ are in bijection with closed $\operatorname{SO}_{2n}$-orbits in $Z^1(W_{F}, \operatorname{SO}_{2n})(\qlbar)$ and also with closed points in $(Z^1(W_{F}, \operatorname{SO}_{2n}) \sslash \operatorname{SO}_{2n})(\qlbar)$. The proof of that theorem also establishes that $\operatorname{O}_{2n}$-conjugacy classes of semisimple $L$-parameters $W_{F} \to \operatorname{SO}_{2n}(\qlbar)$ are in bijection with closed $\operatorname{O}_{2n}$-orbits in $Z^1(W_{F}, \operatorname{SO}_{2n})(\qlbar)$ and also with closed points in $(Z^1(W_{F}, \operatorname{SO}_{2n}) \sslash \operatorname{O}_{2n})(\qlbar)$. \smallskip

Recall that $\mathcal{Z}^{\mathrm{spec}}(\operatorname{SO}_{2n})$ is the ring of functions of $Z^1(W_{F}, \operatorname{SO}_{2n}) \sslash \operatorname{SO}_{2n}$ and we define $\widetilde{\mathcal{Z}}^{\mathrm{spec}}(\operatorname{SO}_{2n}) \subset \mathcal{Z}^{\mathrm{spec}}(\operatorname{SO}_{2n})$ to be the ring of functions of $Z^1(W_{F}, \operatorname{SO}_{2n}) \sslash \operatorname{O}_{2n}$. Note that $\widetilde{\mathcal{Z}}^{\mathrm{spec}}(\operatorname{SO}_{2n}) \subset \mathcal{Z}^{\mathrm{spec}}(\operatorname{SO}_{2n})$ is just the subring of functions fixed by the involution induced by the $\operatorname{O}_{2n}$-action.

\begin{proof}[Proof of \Cref{Thm:EvenOrthogonalCompatibility}]
We first deal with the case that $G$ is quasi-split. Since all Levi subgroups $M \subset G$ are products of general linear groups with (quasi-split) special orthogonal groups of even rank, we in fact have maps $\widetilde{\operatorname{LL}}_{M}$ for all $M \subset G$.
These satisfy the obvious analogue of Assumption \ref{ass:one} by \cite[Proposition 2.4.3]{Peng}, and the obvious analogue of Assumption \ref{ass:two} by work of M{\oe}glin \cite[Theorem 1.5.1]{MoeglinOrthogonal}, cf. \cite[Theorem 3.3]{XuOrthogonal}, see \cite[Corollary 2.5.2]{Peng}. We can prove the obvious analogue of Assumption \ref{ass:three} by a globalization argument as in the proof of Theorem \ref{Thm:UnitaryOddOrthogonalCompatibility}, using Corollary \ref{Cor:StrongCompatibilityII}. The globalization argument there can be adapted as follows: The quadratic extension $K/F$ over which $G^{\ast}$ becomes split (corresponding to the discriminant of the quadratic space) should be globalized to a totally real\footnote{So that we get a Shimura datum of type $D^{\mathbb{R}}$.} extension $\mathsf{K}/\mlp$, and we should choose $\mathsf{H}^{\ast}$ to be an even special orthogonal group over $\mlp$ such that $\gal(\mathsf{K}/\mlp)$ acts nontrivially on its root datum (for example by choosing a quadratic space with discriminant corresponding to $\mathsf{K}$). Then $\mathsf{H}^{\ast}$ is split over $\mathbb{R}$ and we can choose an appropriate inner form $\mathsf{H}$ as in the proof of Lemma \ref{Lem: Globalization} (thus in particular, we will globalize to type $D^{\mathbb{R}}$). To globalize, we will use \cite[Lemma 6.14]{ChenZouMultiplicity} together with the existence of supercuspidal representations satisfying \ref{ass:ssc} for an auxiliary place $w \nmid 2$ of $\mlp$ which splits in $\mathsf{K}$, see \cite[Theorem 1.1]{EvenOrthogonalSSC}.\footnote{More precisely, they show the existence of irreducible $2n$-dimensional orthogonal representations of $W_F$, which correspond under $\operatorname{LLC}$ to the desired supercuspidal representations.} Finally, we note that Lemma \ref{Lem:Multiplicity} generalizes to the case of even special orthogonal groups with the same proof (keeping in mind Section \ref{subsub:NonSplitEvenOrthogonal}). \smallskip

  \Cref{Thm:EvenOrthogonalCompatibility} in the case that $G$ is quasi-split
  then follows verbatim from the proof of \Cref{prop: SemisimpleLLQuasiSplit}
  using \Cref{Cor:StrongCompatibilityII}, once we can establish the constancy of
  $\widetilde{\LL}_{G}^{\mathrm{FS}}$ on the $L$-packets
  $\widetilde{\Pi}_{\phi}(G)$ for all even special orthogonal groups $G$. For
  this, write $G' \supset G$ for the orthogonal group containing the special
  orthogonal group $G$. Note that $G'(F) \supsetneq G(F)$ is a proper inclusion because reflections
  have determinant $-1$. We have the following analogue of Assumption \ref{ass:four}, which holds by \cite{Arthur}, see
  \cite[Proposition~5.1.2, Remark~2.5.10]{Varma}: For $\phi \in
  \widetilde{\Phi}_\mathrm{temp}(G)$, there is a stable virtual character
  \[
    \Theta^1_{\phi}=\sum_{\pi\in \widetilde{\Pi}_{\phi}(G)} a_\pi \Theta_\pi,
  \]
  that is $G^\prime(F)$-invariant with all coefficients $a_{\pi} \neq 0$, and
  moreover atomic with respect to this property. By
  \cite[Theorem~1.1]{HansenStable} the Fargues--Scholze map factors as
  \[
    \Psi_G^\FS \colon \mathcal{Z}^\mathrm{spec}(G) \to
    \mathcal{Z}^\mathrm{vst}(G) \subseteq \mathcal{Z}(G).
  \]
  On the other hand, by functoriality of $\Psi_G^\FS$ under isomorphisms, it is
  naturally $G'(F)$-equivariant, hence induces
  \[
    \Psi_G^\FS \colon \widetilde{\mathcal{Z}}^\mathrm{spec}(G) \to
    \mathcal{Z}^\mathrm{vst}(G)^{G^\prime(F)}.
  \]
  This means that for every $z \in \widetilde{\mathcal{Z}}^\mathrm{spec}(G)$ the
  convolution $z \ast \Theta_\phi^1$ is a linear combination of $\Theta_\pi$ for
  $\pi \in \widetilde{\Pi}_\phi(G)$ that is stable and $G^\prime(F)$-invariant,
  and hence a constant multiple of $\Theta_\phi^1$. This implies that
  $\widetilde{\mathrm{LL}}_G^\FS$ is constant on $\widetilde{\Pi}_\phi(G)$.

To deal with non quasi-split $G$, we can combine the above argument with the proof of Proposition \ref{prop: SemisimpleLLNonQuasiSplit}, if we can prove that for all matching functions $f^\ast\in \mathcal{C}^\infty_c(G^\ast(F))$, $f\in \mathcal{C}_c^\infty(G(F))$, there is an equality
    \[\Theta^\mathbf{1}_{\phi}(f^\ast)=\sum_{\pi\in \widetilde{\Pi}_\phi(G)}c_\pi\Theta_\pi(f),\]
    for some coefficients $c_\pi \in \mathbb{C}^\times$ (depending on $\mathfrak{w}$, but not on $f, f^\ast$). This holds by \cite[Theorem A]{PengECR}. 
\end{proof}
We can now prove Theorem \ref{Thm:IntroCompatibility}.

\begin{proof}[Proof of Theorem \ref{Thm:IntroCompatibility}]
By Lemma \ref{Lem:TemperedToFull}, it suffices to check the compatibility for tempered irreducible smooth representations $\pi$. In this case, it follows from Theorem \ref{Thm:UnitaryOddOrthogonalCompatibility} and Theorem \ref{Thm:EvenOrthogonalCompatibility}.
\end{proof}

\subsubsection{}
We have the following analogue of \cite[Theorem 7.1.1]{Peng}, which is Corollary \ref{Cor:IntroActualEvenOrthogonalLL}. Let $F$ be a $p$-adic local field as before. Let $V'$ be a $2n$-dimensional quadratic space over $F$ with quasi-split special orthogonal group $G^{\ast}=\operatorname{SO}(V)$, and let $G$ be a pure inner form of $G^{\ast}$.

\begin{Thm} \label{Thm:ActualEvenOrthogonalLanglands}
There is a unique local Langlands correspondence $\operatorname{LL}_{G}:\Pi_{\mathrm{temp}}(G) \to \Phi_{\mathrm{temp}}(G)$ lifting $\widetilde{\operatorname{LL}}_{G}$ such that $(-)^{\mathrm{ss}} \circ \operatorname{LL}_{G}= \operatorname{LL}_{G}^{\FS}$. Moreover, it satisfies the conditions listed in \cite[Theorem 7.1.1]{Peng}.
\end{Thm}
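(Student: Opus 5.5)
We follow the strategy of \cite[Theorem 7.1.1]{Peng}, substituting Theorem \ref{Thm:EvenOrthogonalCompatibility} for his unramified compatibility result. Fix $\pi \in \Pi_{\mathrm{temp}}(G)$ and write $\widetilde{\phi}=\widetilde{\operatorname{LL}}_G(\pi)$. This is an $\operatorname{O}_{2n}(\mathbb{C})$-orbit containing at most two $\operatorname{SO}_{2n}(\mathbb{C})$-conjugacy classes $\phi,\phi^\theta$, where $\theta$ denotes the outer involution. A lift $\operatorname{LL}_G(\pi)$ must be one of these. The condition $(-)^{\mathrm{ss}}\circ \operatorname{LL}_G = \operatorname{LL}_G^{\FS}$ forces $\operatorname{LL}_G(\pi)^{\mathrm{ss}}$ to equal the $\operatorname{SO}_{2n}$-class $\phi^{\FS}_\pi$. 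By Theorem \ref{Thm:EvenOrthogonalCompatibility}, $\phi^{\FS}_\pi$ lies in the $\operatorname{O}_{2n}$-orbit of $\phi^{\mathrm{ss}}$. So the plan is to define $\operatorname{LL}_G(\pi)$ as the unique element of $\{\phi,\phi^\theta\}$ whose semisimplification is $\operatorname{SO}_{2n}$-conjugate to $\phi^{\FS}_\pi$.

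\textbf{Step 1: existence and uniqueness.} If $\phi$ and $\phi^\theta$ are $\operatorname{SO}_{2n}$-conjugate, there is only one candidate. By Theorem \ref{Thm:EvenOrthogonalCompatibility} it satisfies the required equality, so both existence and uniqueness hold.

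Otherwise $\phi\not\sim\phi^\theta$, and we must show $\phi^{\mathrm{ss}}\not\sim_{\operatorname{SO}_{2n}} (\phi^\theta)^{\mathrm{ss}}$. This is the point where semisimplification could a priori merge the two classes, and it is the main obstacle. We handle it as Peng does. A tempered parameter $\phi$ satisfies $\phi\sim_{\operatorname{SO}_{2n}}\phi^\theta$ if and only if $\operatorname{Std}\circ\phi$ has an odd-dimensional orthogonal irreducible summand, viewed as a representation of $W_F\times \operatorname{SL}_2$. We then check that this property is detected after semisimplification. For tempered parameters, the decomposition of $\operatorname{Std}\circ\phi$ into $W_F\times\operatorname{SL}_2$-irreducibles is recovered from $(\operatorname{Std}\circ\phi)^{\mathrm{ss}}$ via the characters $|\cdot|^{k/2}$: the unitary parts and segment structure are determined. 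An odd-dimensional summand $\rho\boxtimes\mathrm{Sym}^{a}$ produces a corresponding odd-dimensional piece in the semisimplification. If this verification fails in some edge case, we instead fall back on Peng's argument verbatim, since it is purely combinatorial on parameters and does not depend on whether $F$ is unramified.

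\textbf{Step 2: the properties of \cite[Theorem 7.1.1]{Peng}.} These include compatibility with $\widetilde{\operatorname{LL}}$, with parabolic induction, with central characters, and the endoscopic character identities. Their verification in \cite{Peng} uses only the equality $(-)^{\mathrm{ss}}\circ\widetilde{\operatorname{LL}}_G=\widetilde{\operatorname{LL}}_G^{\FS}$ together with known properties of Fargues--Scholze parameters: compatibility with induction \cite[Corollary IX.7.3]{FarguesScholze}, constancy on packets \cite{HansenStable}, and the fact that $\Psi_G$ is equivariant under $G'(F)$. This equality is now available for all $F$ by Theorem \ref{Thm:EvenOrthogonalCompatibility}. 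We would therefore check each step of Peng's argument line by line and record that the hypotheses ``$p>2$'' and ``$F$ unramified'' enter only through his compatibility theorem. Once that check is done, Step 2 is immediate.
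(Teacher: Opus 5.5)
Your proposal takes the same route as the paper: its entire proof is that Peng's argument for \cite[Theorem 7.1.1]{Peng} goes through verbatim once Theorem \ref{Thm:EvenOrthogonalCompatibility} replaces his unramified compatibility input, and your Step 2 and your fallback to Peng in Step 1 are exactly this.

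One thing to fix is the direction of the implication in Step 1. You need that an odd-dimensional orthogonal $W_F$-constituent of $\phi^{\mathrm{ss}}$ forces an odd-dimensional orthogonal summand of $\operatorname{Std}\circ\phi$; the sentence you give proves only the converse. The same centralizer criterion applies to $\phi^{\mathrm{ss}}$ even though it is not tempered. The needed direction holds because, for tempered $\phi$, every self-dual irreducible constituent of $\phi^{\mathrm{ss}}$ has exponent $0$. It is therefore the middle piece $\rho$ of some summand $\rho\boxtimes\mathrm{Sym}^{a}$ with $a$ even, and that summand has the same orthogonal type as $\rho$ and odd dimension.
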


\begin{proof}
Given Theorem \ref{Thm:EvenOrthogonalCompatibility}, the proof of \cite[Theorem 7.1.1]{Peng} goes through verbatim. 
\end{proof}
Note that \Cref{Thm:ActualEvenOrthogonalLanglands} is proved in \cite{Peng} if $p>2$ and $F$ is unramified, see \cite[Theorem 7.1.1]{Peng}.
}

{\appendix

\renewcommand\thesubsubsection{\thesection.\arabic{subsubsection}}
\def\vStk{\operatorname{vStk}}
\def\Det{\mathcal{D}_\mathrm{\acute{e}t}}

\section{Canonical Frobenius descents revisited} \label{sub:SixFunctor}
The goal of this appendix is to extend \cite[Section~8.2]{DvHKZIgusaStacks} regarding canonical Frobenius descent data, using the theory of six functor formalisms of \cite{MannThesis, HeyerMann} and results of \cite{DauserKuijper}. The arguments here were suggested to us by Lucas Mann, although all mistakes should be attributed to us. In this appendix we will use the word ordinary category to mean category, the word category to mean $(\infty,1)$-category, and the word symmetric monoidal category to mean symmetric monoidal $(\infty,1)$-category. Keeping this convention in mind, we let $\operatorname{Cat}$ denote the category of categories, see \cite[Example~D.1.4]{HeyerMann}. We will let $\operatorname{Cat}^{\times}$ denote the symmetric monoidal category given by the cartesian monoidal structure, see \cite[Example~B.1.9]{HeyerMann}. 

\subsubsection{}
We consider $\vStk$, the category of small v-stacks. Let $\Lambda$ be a $\zl$-algebra in which $\ell$ is nilpotent. By \cite[Proposition 3.16]{MannNuclear}, there is a functor
\[
  \mathcal{D}^{\ast} \colon \vStk^{\mathrm{op}} \to \operatorname{Cat}, \quad X
  \mapsto \Det(X, \Lambda),
\]
sending $f \colon X \to Y$ to $f^{\ast} \colon \Det(Y, \Lambda) \to \Det(X,
\Lambda)$. There is a natural transformation $\operatorname{1}_{\vStk} \to
\operatorname{1}_{\vStk}$ sending $X$ to the morphism $\phi \colon X \to X$.
This induces a natural transformation $\mathcal{D}^{\ast}(\phi) \colon
\mathcal{D}^{\ast} \to \mathcal{D}^{\ast}$.

\begin{Lem} \label{Lem:PhiDescentFunctorialI}
  There is an isomorphism $\mathcal{D}^{\ast}(\phi) \simeq
  \id_{\mathcal{D}^\ast}$ of natural transformations.
\end{Lem}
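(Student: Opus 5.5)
The plan is to construct the isomorphism on perfectoid spaces first, where it is essentially tautological, and then right Kan extend (i.e.\ use descent) to all small v-stacks. Recall the basic fact underlying \cite[Section~8.2]{DvHKZIgusaStacks}: for a perfectoid space $X$ of characteristic $p$, the absolute Frobenius $\phi_X$ is an automorphism that induces the identity on the underlying topological space $|X|$ and on the étale (indeed v-) site, since $\phi_X$ is a universal homeomorphism and pullback along it identifies étale sites compatibly with the identity. Hence on the site level $\phi_X^\ast$ is canonically identified with the identity functor of $\Det(X,\Lambda)$. The point is to make this identification coherent, i.e.\ an equivalence of natural transformations of functors $\vStk^{\mathrm{op}}\to\operatorname{Cat}$, not just objectwise.

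Concretely, I would proceed in three steps. First, restrict $\mathcal{D}^\ast$ to the full subcategory $\perf$ (or to strictly totally disconnected perfectoid spaces). By \cite[Proposition~3.16]{MannNuclear} and the construction there, $\mathcal{D}^\ast$ on $\vStk$ is the right Kan extension of its restriction to this basis (v-descent), so a natural isomorphism of the restricted transformations extends uniquely to one on $\vStk$; thus it suffices to treat the restriction. Second, on $\perf$ I would observe that the natural transformation $\phi\colon \id_{\perf}\to\id_{\perf}$ factors through the "topos" functor: the functor $X\mapsto X_{\mathrm{v}}$ (or $X_{\mathrm{\acute{e}t}}$, the site of perfectoid spaces over $X$) sends $\phi_X$ to an equivalence of sites which is canonically the identity, because for $U\to X$ étale/v the pullback $\phi_X^\ast U= U\times_{X,\phi_X}X$ is canonically isomorphic to $U$ via $\phi_U$ (relative Frobenius is an isomorphism for perfect objects). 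This gives a natural isomorphism $\phi_X^{-1}\simeq\id$ of functors on sites, natural in $X$, at the level of ordinary categories (a strict 2-functor statement), and since $\Det(X,\Lambda)$ is functorially built from the site (as sheaves of $\Lambda$-modules, or hypercomplete/left-completed variants), the identification passes to $\mathcal{D}^\ast$. Third, assemble: the coherence data needed for an $(\infty,1)$-natural transformation come for free because the comparison isomorphisms $\phi_U\colon \phi_X^\ast U\cong U$ live in a 1-category of sites, and functors from a 1-categorical diagram of sites to $\operatorname{Cat}$ inherit the coherences.

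The main obstacle is the coherence in step two–three: making sure the identification is a natural transformation of $\infty$-functors rather than a family of objectwise equivalences. I would handle this by realizing $\mathcal{D}^\ast|_{\perf}$ as a composite $\perf^{\mathrm{op}}\to \operatorname{Site}\to\operatorname{Cat}$ where the first functor is 1-categorical (or (2,1)-categorical), so that the needed homotopy $\mathcal{D}^\ast(\phi)\simeq\id$ is induced by the 2-cell $\phi_{(-)}^{-1}\Rightarrow\id$ in the (2,1)-category of sites, whose naturality is checked by the evident commutation $f\circ\phi_Y=\phi_X\circ f$. Alternatively, following the suggestion of Lucas Mann, one could appeal to \cite{DauserKuijper} to reduce to checking that $\phi$ acts trivially on the small étale/v-topos, which is exactly the content above.
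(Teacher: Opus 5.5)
Your first step is exactly the paper's. You use the v-(hyper)descent of $\mathcal{D}^\ast$ from \cite[Proposition~3.16]{MannNuclear} to reduce the construction to a basis of perfectoid spaces. The difference is in how you trivialize $\phi$ on that basis.

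The paper takes the basis to be strictly totally disconnected affinoid perfectoid spaces. There $\mathcal{D}_{\mathrm{\acute{e}t}}(X,\Lambda)=\mathcal{D}(\operatorname{Sh}(|X|,\Lambda))$ by construction. So the restricted functor factors through the $1$-functor $X\mapsto |X|$ to topological spaces, and $|\phi_X|=\id_{|X|}$ holds on the nose. The restricted transformation $\mathcal{D}^\ast(\phi)$ is therefore literally the identity, and no $2$-cells or coherences ever appear.

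You instead work over all of $\perf$ with étale or v-sites. You use the relative Frobenius $(\phi_U,f)\colon U\xrightarrow{\sim}U\times_{X,\phi_X}X$ as a modification in the $(2,1)$-category of sites. That part is correct: the modification axiom is compatibility of relative Frobenius with base change, which follows from naturality of absolute Frobenius.

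However, your route moves the real content into the sentence ``$\mathcal{D}_{\mathrm{\acute{e}t}}(X,\Lambda)$ is functorially built from the site.'' You need $\mathcal{D}^\ast|_{\perf}$ to be equivalent, as an $\infty$-functor and not just objectwise, to a composite $\perf^{\mathrm{op}}\to\{\text{sites}\}\to\operatorname{Cat}$. The functor of \cite{MannNuclear} is not given to you in that form on non-strictly-totally-disconnected $X$, so this is an extra comparison that you assert without proof. If you use your stated alternative basis of strictly totally disconnected spaces, the comparison becomes tautological and the relative-Frobenius $2$-cell is unnecessary. That is precisely the paper's argument.

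Finally, your closing appeal to \cite{DauserKuijper} does not help here. That result says a $3$-functor formalism is determined by its $\ast$-part. This lemma is a statement about the $\ast$-part itself. The paper uses Dauser--Kuijper only afterwards, in Proposition~\ref{Prop:CanonicalPhiDescentIII}, to upgrade this lemma.
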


\begin{proof}
  \def\stdisc{\mathrm{StrTotDisc}}
  Let $\stdisc \subset \vStk$ be the full subcategory of strictly totally
  disconnected affinoid perfectoid spaces. Consider $\operatorname{HShv}(\vStk,
  \operatorname{Cat}) \subset \operatorname{Fun}(\vStk^{\mathrm{op}},
  \operatorname{Cat})$ the full subcategory of hypersheaves for the v-topology
  (in the $\infty$-categorical sense). Then by
  \cite[Proposition~3.16]{MannNuclear}, we have that $\mathcal{D}^{\ast}$ lies
  in this full subcategory. By hyperdescent, the restriction functor
  \[
    \operatorname{HShv}(\vStk, \operatorname{Cat}) \xrightarrow{\sim}
    \operatorname{HShv}(\stdisc, \operatorname{Cat})
  \]
  is an equivalence as every v-stack admits a v-hypercover by strictly totally
  disconnected perfectoid spaces, and so it suffices to construct
  $\mathcal{D}^\ast(\phi) \simeq \id_{\mathcal{D}^\ast}$ on this subcategory. We
  now note that if $X \in \stdisc$ then $\phi = \id$ as maps on underlying
  topological spaces $\lvert X \rvert \to \lvert X \rvert$. On the other hand,
  $\Det(X, \Lambda) = \mathcal{D}(\operatorname{Sh}(\lvert X \rvert, \Lambda))$
  by construction, see \cite[Remark~14.14]{EtCohDiam}. This shows that there is
  a natural equivalence $\phi^\ast \simeq \id$ of functors $\Det(X, \Lambda) \to
  \Det(X, \Lambda)$, functorial in $X \in \stdisc$.
\end{proof}

\subsubsection{} \label{Sec:DPhiAndDBZ}
There is a functor $\mathcal{D}^{\phi,\ast} \colon \vStk^\mathrm{op} \to
\operatorname{Cat}$ sending $X$ to the category
$\mathcal{D}([X/\phi^{\mathbb{Z}}])$. Indeed, we are just precomposing
$\mathcal{D}^{\ast}$ with $\vStk^{\mathrm{op}} \to \vStk^{\mathrm{op}}$ given by
$X \mapsto [X/\phi^{\mathbb{Z}}]$. Let $B\mathbb{Z}$ be the group of integers
considered as a category with one object. There is a functor $\operatorname{Cat}
\to \operatorname{Cat}$ sending $\mathcal{E} \mapsto \operatorname{Fun}(B
\mathbb{Z}, \mathcal{E}) \coloneqq \mathcal{E}^{B \mathbb{Z}}$. Let us write
$\mathcal{D}^{B \mathbb{Z},\ast} \colon \vStk^{\mathrm{op}} \to
\operatorname{Cat}$ for the composition of $\mathcal{D}^{\ast}$ with this
functor.

\begin{Lem} \label{Lem:PhiDescentFunctorialII}
  There is an isomorphism of functors $\mathcal{D}^{\phi,\ast}
  \xrightarrow{\sim} \mathcal{D}^{B \mathbb{Z},\ast}$.
\end{Lem}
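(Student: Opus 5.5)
The plan is to realize $\mathcal{D}^{\phi,\ast}$ as a limit of a $B\mathbb{Z}$-shaped diagram and then use Lemma \ref{Lem:PhiDescentFunctorialI} to trivialize the diagram.

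\textbf{Step 1: descent.} For each $X$, the quotient $[X/\phi^{\mathbb{Z}}]$ is the colimit in $\vStk$ of the $B\mathbb{Z}$-diagram given by $X$ with the generator acting by $\phi$. This is just the geometric realization of the \v{C}ech nerve of the $\mathbb{Z}$-torsor $X \to [X/\phi^{\mathbb{Z}}]$. By v-descent for $\mathcal{D}^\ast$ (\cite[Proposition~3.16]{MannNuclear}), $\mathcal{D}^\ast$ turns this colimit into a limit. This gives an equivalence
\[
  \mathcal{D}([X/\phi^{\mathbb{Z}}]) \simeq \lim_{B\mathbb{Z}} \bigl(\Det(X,\Lambda), \phi^\ast\bigr),
\]
the category of objects of $\Det(X,\Lambda)$ equipped with an equivalence to their $\phi^\ast$-pullback. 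To make this natural in $X$, I would phrase the construction functorially: there is a functor $\vStk \to \operatorname{Fun}(B\mathbb{Z},\vStk)$ sending $X$ to $(X,\phi_X)$, which is a functor precisely because $\phi$ is a natural transformation $1_{\vStk}\to 1_{\vStk}$. Composing with $\mathcal{D}^\ast$ and taking the limit over $B\mathbb{Z}$ yields a functor $\vStk^{\mathrm{op}} \to \operatorname{Cat}$. The colimit comparison then gives a natural equivalence between this functor and $\mathcal{D}^{\phi,\ast}$.

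\textbf{Step 2: trivializing the action.} The functor from Step 1 is the limit over $B\mathbb{Z}$ of the diagram $\vStk^{\mathrm{op}} \times B\mathbb{Z} \to \operatorname{Cat}$ given by $\mathcal{D}^\ast$ together with the generator acting by the natural transformation $\mathcal{D}^\ast(\phi)$. Such data is a functor $B\mathbb{Z} \to \operatorname{Fun}(\vStk^{\mathrm{op}},\operatorname{Cat})$. Since $B\mathbb{Z}$ is free on a single loop, these functors are determined up to equivalence by an object together with an automorphism of it. Lemma \ref{Lem:PhiDescentFunctorialI} identifies the automorphism $\mathcal{D}^\ast(\phi)$ with $\id_{\mathcal{D}^\ast}$. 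Hence the diagram is equivalent to the one with trivial $B\mathbb{Z}$-action, whose limit is $\operatorname{Fun}(B\mathbb{Z},\mathcal{D}^\ast(X)) = \mathcal{D}^{B\mathbb{Z},\ast}(X)$, naturally in $X$. Combining the two equivalences proves the lemma.

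\textbf{Main obstacle.} The delicate point is coherence in Step 1: the equivalence must be natural as a functor into $\operatorname{Cat}$, not merely objectwise. I would handle this by carrying out everything inside $\operatorname{Fun}(\vStk^{\mathrm{op}}\times B\mathbb{Z},\operatorname{Cat})$. Limits there are computed pointwise, and the descent equivalence comes from the functorial colimit presentation $X \mapsto \operatorname{colim}_{B\mathbb{Z}}(X,\phi)$. Checking that this colimit is the stack quotient uses only that $\mathbb{Z}$ acts freely via $\phi$. Because $\phi$ is an automorphism of small v-stacks, this should be routine.
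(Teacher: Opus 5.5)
Your argument is correct and is essentially the paper's proof. The paper likewise identifies $\mathcal{D}([X/\phi^{\mathbb{Z}}])$ by descent (citing \cite[Lemma~8.2.7]{DvHKZIgusaStacks}) with $\mathbb{Z}$-equivariant objects of $\mathcal{D}(X)$ for the $\phi^\ast$-action, and then uses Lemma~\ref{Lem:PhiDescentFunctorialI} to replace that action by the trivial one; you additionally spell out the naturality via $B\mathbb{Z}$-diagrams in $\operatorname{Fun}(\operatorname{vStk}^{\mathrm{op}},\operatorname{Cat})$. One small correction to your last remark: $\phi$ need not act freely (it acts trivially on $\spd \fp$, for instance), but freeness is irrelevant, since the stack quotient $[X/\phi^{\mathbb{Z}}]$ is the $B\mathbb{Z}$-colimit and $X \to [X/\phi^{\mathbb{Z}}]$ is a v-cover in any case.
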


\begin{proof}
  We note that by descent (see \cite[Lemma~8.2.7]{DvHKZIgusaStacks}), there is a
  canonical equivalence
  \[
    \mathcal{D}([X/\phi^\mathbb{Z}]) \simeq \mathcal{D}(X)^{B\phi^{\mathbb{Z}}},
  \]
  where the right hand side is the category $\mathbb{Z}$-equivariant objects
  with $\mathbb{Z}$ acting via $\phi^\ast$ on $\mathcal{D}(X)$. Applying
  \Cref{Lem:PhiDescentFunctorialI} to functorially identify the action of
  $\phi^\ast$ with the trivial action, we deduce that $\mathcal{D}^{\phi,\ast}$
  is isomorphic to $\mathcal{D}^{B\mathbb{Z},\ast}$.
\end{proof}

\subsubsection{} Recall the notion of geometric setup from \cite[Definition~2.1.1]{HeyerMann}. We consider the geometric setup $(\mathcal{C}, E)=(\vStk^{\mathrm{op}}, \ell\mathrm{-fine})$, where $\ell\mathrm{-fine}$ is the set of $\ell$-fine maps between small v-stacks in the sense of \cite[Definition~5.8]{MannNuclear}. Note that there is a natural morphism of geometric setups
\[
  \alpha:(\mathcal{C}, E) \to (\mathcal{C}, E), \quad X \mapsto [X/\phi^{\mathbb{Z}}]
\]
that follows from \'etale descent for $\ell$-fine maps, see \cite[Lemma~5.10]{MannNuclear}.

\subsubsection{} Associated to $(\mathcal{C},E)$ is the symmetric monoidal
category of correspondences $\operatorname{Corr}(\mathcal{C},E)^{\otimes}$, see
\cite[Definition~2.2.10]{HeyerMann}, whose underlying category receives a
functor $h \colon \mathcal{C}^\mathrm{op} \to
\operatorname{Corr}(\mathcal{C},E)$, see \cite[Remark~2.2.12]{HeyerMann}. Recall
the notion of $6$-functor formalism from \cite[Definition~3.2.1]{HeyerMann}: In
our setting these are $3$-functor formalisms, that is, lax symmetric monoidal
functors, satisfying certain properties. By \cite[Theorem~5.11]{MannNuclear},
there is a $6$-functor formalism
\[
  \mathcal{D} \colon \operatorname{Corr}(\mathcal{C},E)^{\otimes} \to
  \operatorname{Cat}^{\times}
\]
whose restriction along $h$ is isomorphic to the functor $\mathcal{D}^{\ast}$
discussed above. We similarly have $\mathcal{D}^{\phi}$, which is the
composition of the morphism of geometric setups $\alpha \colon (\mathcal{C}, E)
\to (\mathcal{C}, E)$ with the six functor formalism $\mathcal{D}$, and thus
itself a $6$-functor formalism. We also consider the $3$-functor formalism
$\mathcal{D}^{B \mathbb{Z}}:\operatorname{Corr}(\mathcal{C},E)^{\otimes} \to
\operatorname{Cat}^{\times}$ given by composing $\mathcal{D}$ with the symmetric
monoidal functor $\mathcal{E} \mapsto \mathcal{E}^{B \mathbb{Z}}$.

\begin{Prop} \label{Prop:CanonicalPhiDescentIII}
  There is an isomorphism of $3$-functor formalisms
  \[
    \mathcal{D}^{\phi} \xrightarrow{\sim} \mathcal{D}^{B \mathbb{Z}}.
  \]
  Because being a $6$-functor formalism is a property, see
  \cite[Definition~3.2.1]{HeyerMann}, it is in particular an isomorphism of
  $6$-functor formalisms.
\end{Prop}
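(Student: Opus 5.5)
The plan is to upgrade the isomorphism of functors $\mathcal{D}^{\phi,\ast} \simeq \mathcal{D}^{B\mathbb{Z},\ast}$ from Lemma \ref{Lem:PhiDescentFunctorialII} to an isomorphism of $3$-functor formalisms. For this I would use the extension results of \cite{DauserKuijper} (in the form used in \cite{HeyerMann}). Roughly, a $6$-functor formalism on $(\mathcal{C},E)$ is determined by its restriction to $\mathcal{C}^{\mathrm{op}}$, as a functor to $\operatorname{CAlg}(\operatorname{Cat})$. The requirement is that the morphisms in $E$ admit a suitable decomposition, for instance into ``étale'' maps where $f_! \dashv f^\ast$ and ``proper'' maps where $f^\ast \dashv f_!$, or more generally that $f_!$ is characterized by a universal property relative to $f^\ast$. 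With that input, a symmetric monoidal natural isomorphism between the pullback functors, compatible with these adjunctions, extends uniquely to an isomorphism of $3$-functor formalisms.

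\textbf{Step 1.} Promote the isomorphism of Lemma \ref{Lem:PhiDescentFunctorialI} to a symmetric monoidal one, i.e.\ an isomorphism $\mathcal{D}^\ast(\phi) \simeq \id$ of functors $\vStk^{\mathrm{op}} \to \operatorname{CAlg}(\operatorname{Cat})$. I would rerun the same hyperdescent argument. Restrict to $\mathrm{StrTotDisc}$, where $\phi$ is the identity on $|X|$ and $\Det(X,\Lambda) = \mathcal{D}(\operatorname{Sh}(|X|,\Lambda))$. Then $\phi^\ast$ is literally the identity as a symmetric monoidal functor, functorially in $X$. Symmetric monoidal hypersheaves also satisfy the same restriction equivalence, because limits in $\operatorname{CAlg}(\operatorname{Cat})$ are computed underlying.

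\textbf{Step 2.} Deduce, as in Lemma \ref{Lem:PhiDescentFunctorialII}, a symmetric monoidal isomorphism of the restrictions $\mathcal{D}^\phi \circ h \simeq \mathcal{D}^{B\mathbb{Z}} \circ h$. I would do this by identifying $\mathcal{D}([X/\phi^{\mathbb{Z}}]) \simeq \mathcal{D}(X)^{B\phi^{\mathbb{Z}}}$ through descent along the $\mathbb{Z}$-torsor $X \to [X/\phi^{\mathbb{Z}}]$, symmetric monoidally, and then untwisting the action using Step 1.

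\textbf{Step 3.} Extend to correspondences. Both $\mathcal{D}^\phi$ and $\mathcal{D}^{B\mathbb{Z}}$ are $3$-functor formalisms on the same geometric setup. The point to check is that the induced isomorphism on pullbacks intertwines the lower-shriek functors. For $\mathcal{D}^\phi$, the functor $f_!$ is $[f/\phi^{\mathbb{Z}}]_!$. Under the descent equivalence, this corresponds to $f_!$ on underlying objects together with the induced $\mathbb{Z}$-action, because $!$-pushforward commutes with the étale base change $X \to [X/\phi^\mathbb{Z}]$. For $\mathcal{D}^{B\mathbb{Z}}$, the functor $f_!$ is $f_!$ applied pointwise to $\mathbb{Z}$-objects. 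I would then invoke the uniqueness/extension theorem of \cite{DauserKuijper}, which says a $6$-functor formalism is determined by $\mathcal{D}^\ast$ together with a class of ``$!$-able'' maps generated by maps where $f_!$ is a left or right adjoint of $f^\ast$. This produces the isomorphism of $3$-functor formalisms.

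\textbf{Main obstacle.} The hardest step will be Step 3. The extension result requires the class $E$ of $\ell$-fine maps to be generated, in the appropriate sense, by maps where $f_!$ is determined adjointly. For $\ell$-fine maps of v-stacks this is not literally true, since $\ell$-fine maps are defined by local factorizations involving maps with universal $\ell$-codescent. My first attempt would be to work with Mann's construction directly. He builds $\mathcal{D}$ by extending from the subcategory of fdcs maps between locally spatial diamonds, which factor into étale followed by proper, using \cite[Theorem~5.11]{MannNuclear}. That extension is unique along descendable covers, so it suffices to produce the isomorphism on that subcategory. There I would apply the étale/proper criterion of \cite{DauserKuijper}: $f^\ast \dashv f_\ast = f_!$ for proper $f$ and $f_! \dashv f^\ast$ for étale $f$. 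Both formalisms have these adjunctions, and the isomorphism from Step 2 preserves them automatically, since adjoints are unique.
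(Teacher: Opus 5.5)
Your proposal follows the paper's proof. The paper likewise starts from the pullback isomorphism of Lemma~\ref{Lem:PhiDescentFunctorialII} and applies \cite[Theorem~3.3]{DauserKuijper} on a Nagata setup whose two classes are cohomologically \'etale, resp.\ proper, for both $\mathcal{D}^{\phi}$ and $\mathcal{D}^{B\mathbb{Z}}$; it then propagates the isomorphism to all $\ell$-fine maps using the uniqueness parts of Mann's and Heyer--Mann's extension results.

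One refinement: the decomposition used is open immersions and proper maps inside the quasi-compact class $E_1=\mathrm{fdcqc}$, since general fdcs maps of locally spatial diamonds need not factor the way you assert. From there the paper climbs $E_2\subset E_3\subset E_4\subset E_5$ via \cite[Proposition~A.5.12, A.5.14]{MannThesis} and \cite[Proposition~3.4.2, 3.4.8]{HeyerMann}. The monoidality you verify in Step~1 the paper simply records as automatic for the Cartesian monoidal structure.
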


\begin{proof}
As in the proof of \cite[Proposition~5.6]{MannNuclear}, we
consider the auxiliary collections of edges (in the notation of loc.\ cit.) $E_1=\mathrm{fdcqc} \supset E_2=\mathrm{fdcsqc} \subset E_3=\mathrm{fdcss} \subset E_4=\mathrm{fdcs} \subset E_5=E$. We also consider the full subcategories $\mathcal{C}_i \subset \mathcal{C}$ which equals $\mathcal{C}$ unless $i=3$, in which case it equals the category of separated locally spatial diamonds. For $i=1,\dotsc,5$, we write $\mathcal{D}_i^{\phi}, \mathcal{D}_i^{B \mathbb{Z}}$ for the restriction of $\mathcal{D}^{\phi}, \mathcal{D}^{B \mathbb{Z}}$ to $\operatorname{Corr}(\mathcal{C}_i, E_i)$. As in \cite[proof of Proposition 5.6]{MannNuclear}, we define $I \subset E_1$ to be the class of open immersions and $P \subset E_1$ to be the class of proper fdcqc maps. \smallskip

It is explained in loc. cit. that the pair $I,P \subset E_1$ is a suitable decomposition as in \cite[Definition A.5.9]{MannThesis}, which implies that $(\mathcal{C}_1, E_1, I,P)$ is a Nagata setup in the sense of \cite[Definition 2.1]{DauserKuijper}. It now follows from \cite[Theorem 3.3]{DauserKuijper} that the $3$-functor formalisms $\mathcal{D}_1^{\phi}, \mathcal{D}_1^{B \mathbb{Z}}$ are uniquely determined by $\mathcal{D}^{\phi,\ast}$ and $\mathcal{D}^{B \mathbb{Z}, \ast}$, and thus the isomorphism of Lemma \ref{Lem:PhiDescentFunctorialII} induces an isomorphism of $3$-functor formalisms $\mathcal{D}_1^{\phi} \xrightarrow{\sim} \mathcal{D}_1^{B \mathbb{Z}}$.\footnote{Note that the isomorphism of Lemma \ref{Lem:PhiDescentFunctorialII} is automatically an isomorphism of monoidal functors, because we are using the Cartesian monoidal structure.} Here we are using that morphisms in $P$ are cohomologically proper in $\mathcal{D}_1^{\phi}, \mathcal{D}_1^{B \mathbb{Z}}$, see \cite[Lemma 9.8]{MannNuclear}, and morphisms in $I$ are cohomologically \'etale in $\mathcal{D}_1^{\phi}, \mathcal{D}_1^{B \mathbb{Z}}$, see \cite[Proposition 5.6.(ii)]{MannNuclear}, to verify the hypotheses of \cite[Theorem 3.3]{DauserKuijper}. \smallskip 

We now take the isomorphism of $3$-functor formalisms $\mathcal{D}_1^{\phi} \xrightarrow{\sim} \mathcal{D}_1^{B \mathbb{Z}}$ and show that it induces isomorphisms of $3$-functor formalisms $\mathcal{D}_i^{\phi} \xrightarrow{\sim} \mathcal{D}_i^{B \mathbb{Z}}$ for $i=2,3,4,5$. Note that going from $i=1$ to $i=2$ is vacuous since $E_2 \subset E_1$. Going from $i=2$ to $i=3$ follows from the uniqueness statements of \cite[Proposition A.5.12, A.5.14]{MannThesis}; the assumptions of these propositions hold as explained in the proof of \cite[Proposition 5.6]{MannThesis}. Going from $i=3$ to $i=4$ follows directly from (the uniqueness part of) \cite[Proposition 3.4.2]{HeyerMann}, because $E_4$ consists precisely of edges representable in $E_3$. To go from $i=4$ to $i=5$, we note that the notion of $\ell$-fine morphisms $f:X \to Y$ agrees with the notion of morphisms that are $\mathcal{D}^{!}$-locally on the source fdcs, in the sense of \cite[Definition 3.4.10]{HeyerMann}, which is precisely condition (ii) of \cite[Proposition 3.4.8]{HeyerMann}. Thus we can go from $i=4$ to $i=5$ using the uniqueness part of \cite[Proposition 3.4.8]{HeyerMann}.
\end{proof}

\subsubsection{} \label{subsub:CanonicalDescent}
The natural maps $X \to [X/\phi^{\mathbb{Z}}]$ induce a morphism
$\mathcal{D}^{\phi} \to \mathcal{D}$ of $3$-functor formalisms, which
corresponds to ``forgetting the $\phi$-descent datum''. Under the isomorphism of
\Cref{Prop:CanonicalPhiDescentIII}, this corresponds to the ``forgetful map''
$\mathcal{D}^{B \mathbb{Z}} \to \mathcal{D}$. The map $\mathcal{D}^{B\mathbb{Z}}
\to \mathcal{D}$ has a section corresponding to ``taking the trivial
$\mathbb{Z}$-action'', which under our identification induces a section
$\operatorname{can} \colon \mathcal{D} \to \mathcal{D}^{\phi}$ of $3$-functor
formalisms corresponding to ``taking the canonical $\phi$-descent datum'': This
is the canonical $\phi$-descent of \cite[Lemma~8.2.4]{DvHKZIgusaStacks}.

\begin{Prop} \label{Prop:CanonicalDescentPushforward}
  Let $f \colon X \to Y$ be a morphism of small v-stacks and consider the
  associated induced map of v-stacks
  \[
    f_\phi \colon [X/\phi_X^\mathbb{Z}] \to [Y/\phi_Y^\mathbb{Z}].
  \]
  \begin{enumerate}[$($i$\,)$]
    \item There is a natural isomorphism $\operatorname{can}_Y \circ R f_\ast
      \xrightarrow{\sim} R f_{\phi,\ast} \circ \operatorname{can}_X$ of functors
      $\mathcal{D}(X) \to \mathcal{D}([Y/\phi^{\mathbb{Z}}])$.
    \item If $f$ is $\ell$-fine, then there is a natural isomorphism
      $\operatorname{can}_Y \circ f_! \xrightarrow{\sim} f_{\phi,!} \circ
      \operatorname{can}_X$ of functors $\mathcal{D}(X) \to
      \mathcal{D}([Y/\phi^{\mathbb{Z}}])$.
  \end{enumerate}
\end{Prop}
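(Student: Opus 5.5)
The plan is to deduce both statements formally from the structure built in \Cref{Sec:DPhiAndDBZ} and \Cref{Prop:CanonicalPhiDescentIII}. By that proposition, $\operatorname{can} \colon \mathcal{D} \to \mathcal{D}^{\phi}$ is a morphism of $3$-functor formalisms on $\operatorname{Corr}(\mathcal{C},E)$. Such a morphism is, by definition, compatible with pullbacks, tensor products, and the functors attached to the backward legs of correspondences, which are the exceptional pushforwards.

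For (ii), let $f$ be $\ell$-fine. Then $f$ defines the correspondence $X \xleftarrow{\id} X \xrightarrow{f} Y$ in $\operatorname{Corr}(\mathcal{C},E)$. The formalism $\mathcal{D}$ sends this correspondence to $f_!$. The formalism $\mathcal{D}^{\phi} = \mathcal{D} \circ \alpha$ sends it to $f_{\phi,!}$, since $\alpha$ sends $f$ to $f_\phi$, and $f_\phi$ is $\ell$-fine by \'etale descent (\cite[Lemma~5.10]{MannNuclear}). The natural transformation $\operatorname{can}$ then supplies the equivalence $\operatorname{can}_Y \circ f_! \simeq f_{\phi,!} \circ \operatorname{can}_X$ directly. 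I would also check that this is the intended map: under $\mathcal{D}^{\phi} \simeq \mathcal{D}^{B\mathbb{Z}}$, the functor $\operatorname{can}$ is ``trivial $\mathbb{Z}$-action'', and $f_!$ on $\mathcal{D}^{B\mathbb{Z}}$ is computed objectwise. On trivially equivariant objects the statement therefore reduces to the tautology that $f_!$ of an object with trivial action has trivial action.

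For (i), the map $f$ is arbitrary, so I would use adjunctions instead of correspondences. Restricting $\operatorname{can}$ along $h$ gives a natural equivalence $f_\phi^\ast \circ \operatorname{can}_Y \simeq \operatorname{can}_X \circ f^\ast$. Taking right adjoints produces the Beck--Chevalley (mate) map $\operatorname{can}_Y \circ Rf_\ast \to Rf_{\phi,\ast} \circ \operatorname{can}_X$. To see that it is an isomorphism, I would transport everything through \Cref{Lem:PhiDescentFunctorialII}. There $\mathcal{D}([X/\phi^{\mathbb{Z}}]) \simeq \mathcal{D}(X)^{B\mathbb{Z}}$, naturally in $X$, with $f_\phi^\ast$ corresponding to $f^\ast$ applied objectwise. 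Its right adjoint on $\operatorname{Fun}(B\mathbb{Z}, -)$ is therefore $Rf_\ast$ applied objectwise, because postcomposition with an adjunction gives an adjunction on functor categories. Under this identification $\operatorname{can}$ becomes the constant-diagram functor, and the mate is the identity of $Rf_\ast$ with trivial action.

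I expect the main obstacle to be coherence in (i). One has to make sure that the isomorphism of \Cref{Lem:PhiDescentFunctorialII} is natural in $X$ as a functor on $\vStk^{\mathrm{op}}$ (it is, by construction), and that the mate of a naturality square transported along an equivalence of functors agrees with the mate computed in $\mathcal{D}^{B\mathbb{Z}}$. This is a formal property of adjunctions in $\operatorname{Cat}$. I would verify it by observing that $\operatorname{can}$ is defined as the composite of the equivalence with the constant-diagram section, and that mates are compatible with composition.
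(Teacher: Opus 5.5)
Your proposal is correct and follows essentially the paper's route. You transport to the $\mathcal{D}^{B\mathbb{Z}}$-side, note that $f_!$ is computed objectwise because $\mathcal{D}^{B\mathbb{Z}}$ is $\mathcal{D}$ postcomposed with $(-)^{B\mathbb{Z}}$, and identify the right adjoint of $(f^\ast)^{B\mathbb{Z}}$ with $(Rf_\ast)^{B\mathbb{Z}}$ as in Lemma \ref{Lem:CategoryTheory}; your framing of (i) as a Beck--Chevalley mate, and of (ii) as a naturality square of $\operatorname{can}$, only makes explicit which isomorphism the paper's computation produces.
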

\begin{proof}
  By \Cref{Prop:CanonicalPhiDescentIII}, we may compute $Rf_{\phi,\ast}$ and
  $f_{\phi,!}$ using the $\mathcal{D}^{B\mathbb{Z}}$-theory instead of the
  $\mathcal{D}^\phi$-theory. That is, it suffices to produce natural
  isomorphisms
  \[
    Rf_{B\mathbb{Z},\ast}(A, \id_A) \cong (Rf_\ast A, \id_{Rf_\ast A}), \quad
    f_{B\mathbb{Z},!}(A, \id_A) \cong (f_! A, \id_{f_! A}),
  \]
  where by $Rf_{B\mathbb{Z},\ast}$ and $f_{B\mathbb{Z},!}$ we mean the
  corresponding functors extracted from $\mathcal{D}^{B\mathbb{Z}} \colon
  \operatorname{Corr}(\mathcal{C},E)^\otimes \to \operatorname{Cat}^\times$ as
  in \cite[Definition~3.1.4, 3.2.1]{HeyerMann}.

  For $f_!$, recall from \Cref{Sec:DPhiAndDBZ} that the $3$-functor formalism
  $\mathcal{D}^{B\mathbb{Z}}$ is simply defined as a composition of
  $\mathcal{D}$ with the endofunctor $(-)^{B\mathbb{Z}} \colon
  \operatorname{Cat} \to \operatorname{Cat}$. This endofunctor sends a functor
  $F \colon \mathcal{E} \to \mathcal{F}$ to $(A, \alpha \in
  \operatorname{Aut}(A)) \mapsto (F(A), F(\alpha))$, and therefore
  \[
    f_{B\mathbb{Z},!}(A, \id_A) = (f_! A, f_! \id_A) = (f_! A, \id_{f_! A}).
  \]
  Similarly, we see that
  \[
    f^{B\mathbb{Z},\ast}(A, \alpha) = (f^\ast A, f^\ast \alpha).
  \]

  To access $f_{B\mathbb{Z},\ast}$, we need to compute the right adjoint of
  $f^{B\mathbb{Z},\ast}$. By Lemma \ref{Lem:CategoryTheory}, the right adjoint of
  $f^{B\mathbb{Z},\ast} = (f^\ast)^{B\mathbb{Z}}$ is given by
  $(Rf_\ast)^{B\mathbb{Z}}$,  and therefore
  \[
    Rf_{B\mathbb{Z},\ast}(A, \alpha) = (Rf_\ast)^{B\mathbb{Z}}(A, \alpha) =
    (Rf_\ast A, Rf_\ast \alpha).
  \]
  Applying this to $\alpha = \id_A$, we obtain the desired identification.
\end{proof}

\begin{Lem} \label{Lem:CategoryTheory}
  Let $F \colon \mathcal{C} \to \mathcal{D}$ be a functor of categories with
  right adjoint $G$, and let $K$ be a simplicial set. Then $F^K \colon
  \mathcal{C}^K \to \mathcal{D}^K$ is left adjoint to $G^K$.
\end{Lem}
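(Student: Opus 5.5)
The argument is formal adjunction data transported along the functor $(-)^K=\operatorname{Fun}(K,-)$. I will work with an adjunction encoded by a unit $\eta\colon \id_{\mathcal{C}}\to GF$ and a counit $\varepsilon\colon FG\to \id_{\mathcal{D}}$, satisfying the triangle identities up to homotopy. By \cite[Proposition 5.2.2.8]{HTT}, such data, with triangle identities holding in the homotopy category, characterize an adjunction of $(\infty,1)$-categories.

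\textbf{Step 1: whiskering.} The construction $\mathcal{E}\mapsto \mathcal{E}^K$ is a functor $\operatorname{Cat}\to\operatorname{Cat}$, given by postcomposition. It is enriched in the sense that it also sends natural transformations to natural transformations. Concretely, a transformation $\mathcal{C}\times\Delta^1\to\mathcal{D}$ induces $\mathcal{C}^K\times\Delta^1\to \operatorname{Fun}(K,\mathcal{C}\times\Delta^1)\to\mathcal{D}^K$, where the first map is adjoint to the obvious evaluation map. This is compatible with composition of functors and with horizontal and vertical composition of transformations, up to coherent homotopy. Applying it to $\eta$ and $\varepsilon$ gives $\eta^K\colon \id\to G^KF^K$ and $\varepsilon^K\colon F^KG^K\to\id$.

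\textbf{Step 2: triangle identities.} The composite $F^K\to F^KG^KF^K\to F^K$ is $(\varepsilon F\circ F\eta)^K$. This is homotopic to $(\id_F)^K=\id_{F^K}$ because $(-)^K$ preserves homotopies between transformations. A homotopy is a map from $\Delta^1\times\Delta^1$, and postcomposition carries it along. The other triangle identity is handled symmetrically. Invoking the cited characterization then gives $F^K\dashv G^K$.

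An alternative route, if one prefers to avoid units and counits, uses mapping spaces. For $X\in\mathcal{C}^K$ and $Y\in\mathcal{D}^K$, the mapping space is an end:
\[
\operatorname{Map}_{\mathcal{D}^K}(F^KX,Y)\simeq\int_{k\in K}\operatorname{Map}_{\mathcal{D}}(FX_k,Y_k).
\]
The adjunction equivalences $\operatorname{Map}_{\mathcal{D}}(FX_k,Y_k)\simeq \operatorname{Map}_{\mathcal{C}}(X_k,GY_k)$ are natural in both variables, so they identify this end with $\operatorname{Map}_{\mathcal{C}^K}(X,G^KY)$. A cleaner form of this route uses the fact that an adjunction is a bicartesian fibration $\mathcal{M}\to\Delta^1$ (\cite[Definition 5.2.2.1]{HTT}). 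Then $\operatorname{Fun}(K,\mathcal{M})\times_{\operatorname{Fun}(K,\Delta^1)}\Delta^1$ is again bicartesian over $\Delta^1$, since (co)cartesian edges can be computed pointwise. Its fibers are $\mathcal{C}^K$ and $\mathcal{D}^K$, and its associated functors are $F^K$ and $G^K$.

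\textbf{Main obstacle.} The hardest step is making the pointwise (co)cartesian claim precise. It follows from \cite[Proposition 3.1.2.1]{HTT}: exponentiating a cocartesian fibration by $K$ gives a cocartesian fibration whose cocartesian edges are exactly the pointwise ones. The same holds for cartesian fibrations. Identifying the resulting straightened functors with $F^K$ and $G^K$ is then immediate, since the pointwise (co)cartesian lifts of an object $X$ are $k\mapsto FX_k$ and $k\mapsto GY_k$.
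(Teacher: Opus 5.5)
Your proposal is correct, and the ``cleaner form'' of your alternative route is exactly the paper's proof. The paper takes the bicartesian correspondence $p\colon\mathcal{M}\to\Delta^1$ of \cite[Definition~5.2.2.1]{HigherTopos} and base changes $\mathcal{M}^K\to(\Delta^1)^K$ along the diagonal. It then uses \cite[Proposition~3.1.2.1]{HigherTopos} to see that the result is bicartesian over $\Delta^1$ with pointwise (co)cartesian edges, and reads off $F^K$ and $G^K$ as the associated functors.

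Your primary route is genuinely different. You treat $(-)^K$ as a 2-functor on the homotopy 2-category of $\infty$-categories, and use that 2-functors preserve adjunctions presented by a unit, a counit and triangle identities. This is sound. The ``up to coherent homotopy'' caveat in Step~1 is unnecessary: $\operatorname{Fun}(\mathcal{C},\mathcal{D})\to\operatorname{Fun}(\mathcal{C}^K,\mathcal{D}^K)$ is a map of simplicial sets that is strictly compatible with composition and whiskering.

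What this route costs is the comparison, in both directions, between adjunctions in the sense of \cite[Definition~5.2.2.1]{HigherTopos} and unit--counit data satisfying the triangle identities up to homotopy (Riehl--Verity). That comparison is not literally \cite[Proposition~5.2.2.8]{HigherTopos}, which characterizes adjunctions via a unit transformation inducing equivalences $\operatorname{Map}(FC,D)\simeq\operatorname{Map}(C,GD)$. Relying on that citation leaves two additional steps:
\begin{enumerate}
\item At the start, from $F\dashv G$ you must produce a counit that satisfies the triangle identities together with $\eta$.
\item At the end, you must check that $\psi\mapsto\varepsilon_D\circ F\psi$ is a homotopy inverse of $\varphi\mapsto G\varphi\circ\eta_C$. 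This uses naturality of $\eta$ and $\varepsilon$ on mapping spaces together with the triangle identities.
\end{enumerate}

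The fibration argument avoids all of this and lands directly in the definition of adjunction that the paper uses. Your 2-categorical argument is more formal and would apply verbatim to any 2-functor. The end-formula sketch is the least precise of the three and is not needed: it requires the adjunction equivalence as a natural equivalence of bifunctors, and requires $K$ to be replaced by an $\infty$-category.
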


\begin{proof}
  As we see from \cite[Definition~5.2.2.1]{HigherTopos}, we obtain an inner
  fibration $p \colon \mathcal{M} \to \Delta^1$ that is both Cartesian and
  coCartesian, together with equivalences $\mathcal{C} \to p^{-1}\{0\}$ and
  $\mathcal{D} \to p^{-1}\{1\}$. By \cite[Proposition~3.1.2.1]{HigherTopos}, the
  projection
  \[
    q \colon \mathcal{M}^K \times_{(\Delta^1)^K} \Delta^1 \to \Delta^1
  \]
  is both Cartesian and coCartesian, and moreover $q$-Cartesian edges are
  precisely those $K \times \Delta^1 \to \mathcal{M}$ whose restriction to each
  $\{k\} \times \Delta^1$ is $p$-Cartesian and similarly for $q$-coCartesian
  edges. This shows that both $F^K$ and $G^K$ are associated to $q$.
\end{proof}

\subsubsection{} Now let $L$ be a finite extension of $\qp$. Let $C$ be a completed algebraic closure of $L$ and $\breve{L} \subset C$ the completion of the maximal unramified extension. Let $k_{L} \subset k$ be the extension on residue fields induced by $L \subset \breve{L}$, and let $r=[k_{L}:\fp]$. As in \cite[Section 8.2.13]{DvHKZIgusaStacks}, we consider the action of $\ul{\gal_L} \times \ul{\mathbb{Z}}$ on $\spd C$, where $\ul{\mathbb{Z}}$ acts by $\phi_C$. We also consider the Weil group $W_L \subset \gal_{L}$, which admits a twisted embedding $\iota:W_L \to \gal_{L} \times \mathbb{Z}$. We consider (see \cite[Lemma 8.2.12]{DvHKZIgusaStacks})
\begin{align*}
    \spd C / \ul{\iota(W_L)} \xrightarrow{\sim} \left[\spd L/ \phi_{L}^{r \mathbb{Z}}\right] \times_{\spd k_{L}} \spd k .
\end{align*}
\subsubsection{} For any small v-stack $X$ we can consider $\mathcal{D}(X)$ as a condensed $\infty$-category by considering the functor taking a profinite set $S$ to $\mathcal{D}(X \times \ul{S})$. This allows us to consider $\mathcal{D}(X)^{BH}$ for a locally profinite group $H$, see \cite[Section 8.2.8]{DvHKZIgusaStacks}. By \cite[Lemma 8.2.10]{DvHKZIgusaStacks}, there are natural identifications
\begin{align*}
    \mathcal{D}(\spd L) \xrightarrow{\sim} \mathcal{D}(\spd C)^{B \gal_{L}} \\
    \mathcal{D}(\spd C / \ul{\iota(W_L)}) \xrightarrow{\sim} \mathcal{D}(\spd C)^{B W_L},
\end{align*}
and so restricting along $W_L \to \gal_{L}$ defines a natural map 
\begin{align*}
    \rho:\mathcal{D}(\spd L) \to \mathcal{D}(\spd C / \ul{\iota(W_L)}).
\end{align*}

\subsubsection{} Let $f:X \to \spd L$ be a morphism of small v-stacks, and consider the morphism   
\begin{align*}
    f_{\phi,k}:\left[ X/ \phi_X^{r \mathbb{Z}}\right] \times_{\spd k_{L}} \spd k \to \left[\spd L/ \phi_{L}^{r \mathbb{Z}}\right] \times_{\spd k_{L}} \spd k,
\end{align*}
note that it is $\ell$-fine if $f$ is, by \'etale descent for $\ell$-fine morphisms. Note that the map $f_{\phi,k}$ is the base change of $f_\phi$ to $k$ over $\spd \fp$.

\begin{Prop} \label{Prop:GaloisRestriction}
  Let $f$ and $f_{\phi,k}$ be as above, and consider the natural projection map
  $\psi \colon [X/\phi_X^\mathbb{Z}] \times \spd k \to [X/\phi_X^\mathbb{Z}]$.
  \begin{enumerate}[$($i$\,)$]
    \item If $f$ is qcqs, then there is a natural isomorphism $\rho \circ
      Rf_\ast \simeq Rf_{\phi,k,\ast} \circ \psi^\ast \circ \mathrm{can}_X$ of
      functors $\mathcal{D}^+(X) \to \mathcal{D}^+([\spd C / \ul{\iota(W_L)}])$.
    \item If $f$ is $\ell$-fine, then there is a natural isomorphism $\rho \circ
      f_! \simeq f_{\phi,k,!} \circ \psi^\ast \circ \mathrm{can}_X$ of functors
      $\mathcal{D}(X) \to \mathcal{D}([\spd C / \ul{\iota(W_L)}])$.
    \item If $X$ is the analytification of a smooth variety $X^\mathrm{alg}/L$
      and if $A$ is the analytification of a locally constant sheaf of
      $\Lambda$-modules $A^\mathrm{alg}$ on
      $X^\mathrm{alg}_\mathrm{\acute{e}t}$, then there is a natural isomorphism
      $\rho (Rf_\ast A) \cong Rf_{\phi,k,\ast} \psi^\ast \mathrm{can}_X(A)$.
  \end{enumerate}
\end{Prop}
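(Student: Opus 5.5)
The plan is to reduce everything to the canonical descent compatibility of Proposition \ref{Prop:CanonicalDescentPushforward}, followed by an explicit identification of the base change to $k$ with restriction from $\gal_L$ to $W_L$. First I will write $\rho$ as the composite of two operations. The first is pullback along
\[
  g \colon [\spd L/\phi_L^{r\mathbb{Z}}] \times_{\spd k_L} \spd k \to [\spd L/\phi_L^{r\mathbb{Z}}] \to [\spd L/\phi_L^{\mathbb{Z}}],
\]
the second factor being the finite étale map given by the index-$r$ subgroup. The second is the canonical descent $\mathrm{can}_{\spd L}$. To check that $\rho \simeq g^\ast \circ \mathrm{can}_{\spd L}$, I plan to compare both sides after pulling back to $\spd C$. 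Using the identifications of \cite[Lemma 8.2.10, Lemma 8.2.12]{DvHKZIgusaStacks}, an object of $\mathcal{D}(\spd L)$ becomes a $\gal_L$-equivariant object on $\spd C$. Its canonical $\phi$-descent then carries the trivial $\phi$-action, which comes from Lemma \ref{Lem:PhiDescentFunctorialI}. Restricting along the twisted embedding $\iota$ therefore recovers exactly the restriction of the $\gal_L$-equivariance to $W_L$. Naturality in $A$ follows because every identification involved is functorial.

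For (ii), I will use Proposition \ref{Prop:CanonicalDescentPushforward}(ii) to get $\mathrm{can}_{\spd L} f_! \simeq f_{\phi,!}\,\mathrm{can}_X$. Next I will apply proper base change for the $\ell$-fine map $f_\phi$ along $g$ (\cite[Proposition~22.19]{EtCohDiam}, or the six-functor formalism of \cite{MannNuclear}). The base change of $f_\phi$ along $g$ is $f_{\phi,k}$, and the top map is $\psi$ composed with the finite étale cover, so this gives $g^\ast f_{\phi,!} \simeq f_{\phi,k,!}\psi^\ast$ (up to harmlessly absorbing the $r$-fold cover into $\psi$). Part (i) goes the same way, using Proposition \ref{Prop:CanonicalDescentPushforward}(i). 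In place of proper base change I need $g^\ast Rf_{\phi,\ast} \simeq Rf_{\phi,k,\ast}\psi^\ast$ on $\mathcal{D}^+$. The finite étale part is smooth base change. The part $\spd k \to \spd k_L$ is pro-finite étale, so base change for qcqs $f$ on bounded-below complexes follows from the compatibility of cohomology with cofiltered limits of qcqs v-stacks (\cite[Proposition~14.9]{EtCohDiam}), which is where the hypotheses qcqs and $\mathcal{D}^+$ enter.

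For (iii), $f$ need not be qcqs, so I will exhaust $X^{\mathrm{alg}}$ by a compactification argument. Let $j\colon X^{\mathrm{alg}} \hookrightarrow \overline{X}^{\mathrm{alg}}$ be a compactification, chosen with normal crossings boundary after resolution, which is possible in characteristic $0$. Then $Rf_\ast A = R\bar f_\ast Rj_\ast A$, with $\bar f$ proper, hence qcqs. Huber's comparison (Lemma \ref{Lem:HuberComparison}) identifies $(Rj^{\mathrm{alg}}_\ast A^{\mathrm{alg}})^{\mathrm{an}}$ with $Rj_\ast A$, and this object is bounded. The same identification holds after canonical descent and base change to $k$, since $Rj_\ast$ commutes with $\mathrm{can}$ by Proposition \ref{Prop:CanonicalDescentPushforward}(i), and since Huber's comparison holds over $C$ as well. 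Applying (i) to $\bar f$ then finishes the argument.

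I expect the main obstacle to be the first step, namely pinning down $\rho \simeq g^\ast\mathrm{can}$ as functors (including the twist in $\iota$ matching the trivialized $\phi$-action), together with the nonqcqs base-change issue in (iii).
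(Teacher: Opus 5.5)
Your (i) and (ii) follow the paper. You commute $\mathrm{can}$ past the pushforward via \Cref{Prop:CanonicalDescentPushforward}, base change along $g\colon [\spd L/\phi^{\mathbb{Z}}]\times \spd k \to [\spd L/\phi^{\mathbb{Z}}]$, and use $\rho = g^\ast\circ\mathrm{can}$. Splitting $g$ into a finite étale map and a pro-finite-étale limit is a roundabout substitute for the general qcqs base change \cite[Proposition~17.6]{EtCohDiam}, which the paper applies to $g$ in one step.

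The gap is in (iii). Compactifying does not remove the non-quasi-compactness problem; it relocates it. After applying (i) to the proper map $\bar f$ with input $Rj_\ast A$, you still have to show $\psi_{\overline{X}}^\ast Rj_{\phi,\ast}\mathrm{can}_X(A)\simeq Rj_{\phi,k,\ast}\psi_X^\ast \mathrm{can}_X(A)$. This is base change along a pro-étale map for the non-quasi-compact open immersion $j$, which is exactly the non-formal statement (iii) is about. Your sentence ``the same identification holds after canonical descent and base change to $k$ \dots{} since Huber's comparison holds over $C$ as well'' asserts it without proof. Comparison theorems on each side do not by themselves yield base change. What is missing is the étale-local reduction from the stacks to the square over $\spd \breve{L}\to\spd L$, together with the algebraic base change theorem for the field extension \cite[Corollary~XVI.1.6]{SGA4-3}; that is where the actual content lies.

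Moreover, the input you invoke is not what \Cref{Lem:HuberComparison} provides. That lemma compares $Rf_\ast$ only for the structure map of a smooth separated variety to $\spec L$, with $L$ discretely valued, so it does not apply over $C$. It says nothing about $Rj_\ast$ for an open immersion into a compactification, so such a relative comparison would need a separate reference.

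The paper avoids both issues by not compactifying. Since \Cref{Prop:CanonicalDescentPushforward}(i) holds for arbitrary $f$, it only needs $g^\ast Rf_{\phi,\ast}A_\phi \to Rf_{\phi,k,\ast}\psi^\ast A_\phi$ to be an isomorphism. Smooth base change along the étale maps $\spd\breve{L}\to[\spd L/\phi^{\mathbb Z}]\times\spd k$ and $\spd L\to[\spd L/\phi^{\mathbb Z}]$ reduces this to base change of $Rf_\ast A$ along $\spd\breve L\to\spd L$. \Cref{Lem:HuberComparison} over $L$ and over $\breve L$ then turns that into the algebraic statement of \cite[Corollary~XVI.1.6]{SGA4-3}. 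Once you supply these ingredients for $j$, your detour works, but it gains nothing over applying them to $f$ directly.
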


\begin{proof}
  The proof is the same as \cite[Proposition~8.2.15]{DvHKZIgusaStacks}. We
  consider the cartesian diagram
  \[ \begin{tikzcd}[column sep = small]
    \lbrack X / \phi^\mathbb{Z} \rbrack \times \spd k \arrow{r}{\psi}
    \arrow{d}{f_{\phi,k}} & \lbrack X / \phi^\mathbb{Z} \rbrack
    \arrow{d}{f_\phi} \\ \lbrack \spd L / \phi^\mathbb{Z} \rbrack \times \spd k
    = \lbrack \spd C / \underline{\iota(W_L)} \rbrack \arrow{r}{g} & \lbrack
    \spd L / \phi^\mathbb{Z} \rbrack = \lbrack \spd C / (\underline{\gal_L}
    \times \underline{\mathbb{Z}}) \rbrack,
  \end{tikzcd} \]
  and apply \Cref{Prop:CanonicalDescentPushforward} to identify $R
  f_{\phi,\ast} \circ \mathrm{can}_X \simeq \mathrm{can}_L \circ R f_{\ast}$
  and similarly $f_{\phi,!} \circ \mathrm{can}_X \simeq \mathrm{can}_L \circ
  f_!$ for (ii). We then use qcqs base change \cite[Proposition~17.6]{EtCohDiam}
  to identify $Rf_{\phi,k,\ast} \psi^\ast \cong g^\ast Rf_{\phi,\ast}$ for (i)
  and proper base change \cite[Proposition~22.19]{EtCohDiam} to identify
  $f_{\phi,k,!} \psi^\ast \cong g^\ast f_{\phi,!}$ for (ii). Finally, we note
  that $\rho = g^{\ast} \circ \mathrm{can}$, completing the proof.

  For (iii), it remains to verify that the base change morphism
  \[
    g^\ast Rf_{\phi,\ast} A_\phi \to Rf_{\phi,k,\ast} \psi^\ast A_\phi
  \]
  is an isomorphism, where $A_\phi \in \mathcal{D}([X/\phi^\mathbb{Z}])$ is the
  canonical descent of $A$. As isomorphisms can be detected on a v-cover, we may
  use smooth base change \cite[Proposition~23.16.(ii)]{EtCohDiam} along the
  separated \'{e}tale maps $\spd \breve{L} \to [\spd L/\phi^\mathbb{Z}] \times
  \spd k$ and $\spd L \to [\spd L/\phi^\mathbb{Z}]$ to reduce to checking that
  the base change morphism
  \[
    \tilde{g}^\ast Rf_\ast A \to Rf_{\breve{L}\ast} \tilde{\psi}^\ast A
  \]
  is an isomorphism in the diagram
  \[ \begin{tikzcd}[row sep=small]
    X_{\breve{L}} \arrow{d}[']{f_{\breve{L}}} \arrow{r}{\tilde{\psi}} & X
    \arrow{d}{f} \\ \spd \breve{L} \arrow{r}{\tilde{g}} & \spd L.
  \end{tikzcd} \]
  This follows from comparison with the algebraic theory. By \Cref{Lem:HuberComparison} below, we may instead verify that
  $\tilde{g}^{\mathrm{alg},\ast} Rf^\mathrm{alg}_\ast A^\mathrm{alg} \cong
  Rf^\mathrm{alg}_{\breve{L},\ast} \tilde{\psi}^{\mathrm{alg},\ast}
  A^\mathrm{alg}$ as \'{e}tale sheaves on $\spec \breve{L}$. This is now
  \cite[Corollary~XVI.1.6]{SGA4-3}.
\end{proof}

\begin{Lem} \label{Lem:HuberComparison}
Let $L$ be a complete nonarchimedean discretely valued field extension of $\qp$, and let $f \colon X \to \spec L$ be a smooth separated algebraic variety with analytification $f^{\diamondsuit} \colon X^\diamondsuit \to \spd L$. Let $\Lambda$ be a ring in which $\ell$ is nilpotent, and let $A$ be a locally constant sheaf of $\Lambda$-modules on the \'etale site of $X$. Then
\begin{align*}
    (R f_{\ast} A)^{\diamondsuit} \xrightarrow{\sim} R f^{\diamondsuit}_{\ast} A^{\diamondsuit}
\end{align*}
\end{Lem}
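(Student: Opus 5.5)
The plan is to reduce the statement to Huber's comparison theorem between algebraic and analytic étale cohomology, and then pass from adic spaces to diamonds using the comparison results of \cite{EtCohDiam}.

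The first step is to factor the functor $A \mapsto A^{\diamondsuit}$ as the composite
\[
  \mathcal{D}(X_{\mathrm{\acute{e}t}},\Lambda) \to \mathcal{D}(X^{\mathrm{an}}_{\mathrm{\acute{e}t}},\Lambda) \to \mathcal{D}_{\mathrm{\acute{e}t}}(X^{\diamondsuit},\Lambda).
\]
Here the first functor is analytification to the adic space $X^{\mathrm{an}}$ over $\spa L$, and the second is the functor from \cite[Lemma 15.6]{EtCohDiam}. The same factorization is used over $\spec L$, namely $\spec L \to \spa L \to \spd L$. Correspondingly, the base change morphism in the statement factors as a composite of two comparison maps:
\[
  (Rf_\ast A)^{\mathrm{an}} \to Rf^{\mathrm{an}}_\ast A^{\mathrm{an}}
  \qquad\text{and}\qquad
  (Rf^{\mathrm{an}}_\ast A^{\mathrm{an}})^{\diamondsuit} \to Rf^{\diamondsuit}_\ast A^{\diamondsuit}.
\]

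For the first map, I would invoke Huber's comparison theorem \cite[Theorem 3.8.1]{Huber}, which applies because $f$ is separated and of finite type, $A$ is torsion with $\ell$ invertible on $L$ (since $\ell\neq p$), and $L$ has characteristic $0$. Both sides are étale sheaves on $\spa L$, which correspond to discrete $\gal_L$-modules, so it is enough to check the isomorphism on stalks at a geometric point $\spa C$. By Huber's theorem, on that stalk the map becomes the comparison $R\Gamma(X_{\overline{L}}, A)\to R\Gamma(X^{\mathrm{an}}_{C}, A)$, which is an isomorphism. This is where smoothness is convenient: for non-proper $X$, Huber needs either smoothness or the cohomology-with-support variant.

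For the second map, I would use \cite[Proposition 27.5]{EtCohDiam}, which compares $Rf_\ast$ for analytic adic spaces with $Rf^{\diamondsuit}_\ast$ on locally spatial diamonds for bounded-below complexes. Alternatively, one can argue stalkwise: take a geometric point $\spd C$, and use quasicompact base change \cite[Proposition 17.6]{EtCohDiam} after writing $X^{\mathrm{an}}$ as an increasing union of quasicompact opens, combined with the fact that $R\Gamma$ commutes with this union through Milnor sequences.

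The main obstacle is that $X^{\mathrm{an}}\to\spa L$ is not quasicompact. Stalks of $Rf^{\diamondsuit}_\ast$ therefore cannot simply be computed by qcqs base change, and one has to justify that the stalk at $\spd C$ really is $R\Gamma(X^{\diamondsuit}_C,A)$. To handle this, I would first try the partial properness of $X^{\mathrm{an}}$. Failing that, I would exhaust $X^{\mathrm{an}}$ by an increasing sequence of affinoids and use the finiteness of cohomology of $X_C$, which follows from Huber together with the algebraic finiteness theorems. That finiteness makes the relevant $\varprojlim^1$ terms vanish, and then the comparison on each affinoid is given by \cite[Lemma 15.6, Proposition 14.8]{EtCohDiam}.
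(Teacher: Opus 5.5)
There is a genuine gap in how you handle the coefficients. The lemma allows an arbitrary ring $\Lambda$ with $\ell$ nilpotent and an arbitrary locally constant sheaf $A$ of $\Lambda$-modules. So $A$ is in general not constructible as a sheaf of $\mathbb{Z}/\ell^n\mathbb{Z}$-modules; take $\Lambda=\overline{\mathbb{F}}_\ell$, or stalks that are not finitely generated. The comparison theorems for $Rf_\ast$ along a non-proper morphism that you want to invoke (Huber's, Berkovich's) are statements about constructible torsion sheaves. They have to be, because $f^{\mathrm{an}}$ is not quasi-compact and so $f^{\mathrm{an}}_\ast$ does not commute with infinite direct sums. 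For instance, take $X=\mathbb{A}^1_L$ and $F=\bigoplus_{m\ge 1} i_{p^{-m},\ast}\mathbb{Z}/\ell$, and compute with the exhaustion by the discs $\{|z|\le|p|^{-n}\}$. Then $f_\ast F=\bigoplus_m\mathbb{Z}/\ell$, whereas $f^{\mathrm{an}}_\ast F^{\mathrm{an}}=\varprojlim_n\bigoplus_{m\le n}\mathbb{Z}/\ell=\prod_m\mathbb{Z}/\ell$.

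The same problem affects the finiteness you want to use against the $\varprojlim^1$-terms. It needs finite coefficients, and in any case it concerns the affinoids of the exhaustion, not $X_C$. Your sketch never reduces from $A$ to constructible coefficients. By the example, that reduction cannot be a formal colimit argument: the local constancy of $A$ must be used in an essential way.

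For constructible $A$, granting the characteristic-$0$ comparison for $Rf_\ast$, your route does work. Your second step is also less of an obstacle than you think. Once \cite[Lemma~15.6]{EtCohDiam} identifies the étale sites, $Rf^{\mathrm{an}}_\ast$ and $Rf^\diamondsuit_\ast$ agree on bounded below complexes by adjunction. This uses full faithfulness of $D^+$ of the étale site inside $\mathcal{D}_{\mathrm{\acute{e}t}}$ and needs no stalk computation.

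The missing idea, which is how the paper proceeds, is to trade $Rf_\ast$ for $Rf_!$ by duality. First pass to a finite Galois étale cover trivializing $A$ on each connected component, and take derived invariants on both sides. This reduces to the case $A=f^\ast N$ constant, with $N$ an arbitrary $\Lambda$-module and $\ell^n\Lambda=0$. Poincaré--Verdier duality for the smooth morphism $f$ of relative dimension $d$ holds both for schemes and for diamonds, and gives on both sides
\[
  Rf_\ast f^\ast N\simeq R\mathscr{H}om_{\mathbb{Z}/\ell^n}\bigl(Rf_!(\mathbb{Z}/\ell^n),\,N(-d)[-2d]\bigr).
\]
Analytification identifies $D^+(\spec L,\mathbb{Z}/\ell^n)$ with $D^+(\spd L,\mathbb{Z}/\ell^n)$. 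So everything reduces to $(Rf_!\mathbb{Z}/\ell^n)^\diamondsuit\simeq Rf^\diamondsuit_!\mathbb{Z}/\ell^n$, a comparison for a constructible object where the known theorem applies. This duality step is where smoothness is genuinely used. Smoothness is not what makes a comparison for $Rf_\ast$ applicable, as your sketch suggests.
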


\begin{proof}
  Because the comparison theorem \cite[Proposition 27.5]{EtCohDiam}
  is only stated for constructible $\mathbb{Z}/\ell^n\mathbb{Z}$-modules, we
  make the following roundabout argument. First we reduce to the case when
  $A$ is trivial. We may as well assume that $X$ is
  connected. Because $A$ comes from a locally constant sheaf on
  $X$, there is a finite Galois \'{e}tale cover $\pi \colon
  \tilde{X} \to X$ with Galois group $G$ for which
  $\pi^\ast A = M$ is a constant sheaf. Then $Rf_\ast A$ is the
  derived $G$-fixed points of $R(f \circ \pi)_\ast M$, see
  \cite[Corollary~6.2.14]{LiuZheng}, and similarly for $Rf_\ast^\diamondsuit$. Therefore we
  may replace $(X, A)$ with $(\tilde{X}, M)$ and
  assume that $A$ is trivial.

  \def\Zln{\mathbb{Z}/\ell^n\mathbb{Z}}
  \def\sHom{\mathscr{H}om}
  We now use Verdier duality, \cite[Theorem~23.3.(i)]{EtCohDiam} and
  \cite[Proposition~6.2.4.(4)]{LiuZheng}. Choose $n$ large enough so that
  $\ell^n \Lambda = 0$. Since $M$ is constant, we can write $M=f^{\ast} N$. Then $M = R\sHom_{\Zln}(\Zln, Rf^! N(-d)[-2d])$ both
  algebraically and analytically, and it thus suffices to show that
  \[
    R\sHom_{\Zln}(Rf_! (\Zln), N(-d)[-2d])^\diamondsuit \cong
    R\sHom_{\Zln}(Rf^{\diamondsuit}_! (\Zln), N^{\diamondsuit}(-d)[-2d]).
  \]
  Since analytification induces an equivalence $\mathcal{D}^+(\spec L, \Zln)
  \cong \mathcal{D}^+(\spd L, \Zln)$ by \cite[Proposition~14.15]{EtCohDiam} applied to $Y=\spd L$, whose \'etale site agrees with the \'etale site of $\spa L$ by \cite[Lemma 15.6]{EtCohDiam} and thus with the \'etale site of $\spec L$, it
  is enough to show that $(Rf_! (\Zln))^\diamondsuit \cong Rf_!^\diamondsuit
  (\Zln)$. This now follows from \cite[Proposition 27.5]{EtCohDiam}.
\end{proof}
}

\renewcommand{\VAN}[3]{#3}
\renewcommand{\VANDEN}[3]{#3}
\renewcommand\MR[1]{}
\bibliographystyle{amsalpha}
\bibliography{references}
\end{document}